\numberwithin{equation}{section}
\theoremstyle{plain}
\newtheorem{theorem}{Theorem}[section]
\newtheorem{proposition}[theorem]{Proposition}
\newtheorem{lemma}[theorem]{Lemma}
\newtheorem{corollary}[theorem]{Corollary}
\newtheorem{fact}[theorem]{Fact}
\theoremstyle{definition}
\newtheorem{definition}[theorem]{Definition}
\newtheorem{remark}[theorem]{Remark}
\newtheorem{example}[theorem]{Example}
\DeclareMathOperator{\SO}{SO}
\DeclareMathOperator{\Orth}{O}
\DeclareMathOperator{\lcm}{lcm}
\DeclareMathOperator{\OG}{O}
\DeclareMathOperator{\OS}{OS}
\DeclareMathOperator{\OC}{OC}
\DeclareMathOperator{\SOS}{SOS}
\DeclareMathOperator{\SOC}{SOC}
\DeclareMathOperator{\scal}{scal}
\DeclareMathOperator{\Scal}{Scal}
\DeclareMathOperator{\norm}{norm}
\DeclareMathOperator{\den}{den}
\DeclareMathOperator{\mul}{MR}
\DeclareMathOperator{\Sim}{sim }
\DeclareMathOperator{\gcld}{gcld}
\DeclareMathOperator{\gcrd}{gcrd}
\DeclareMathOperator{\lcrm}{lcrm}
\DeclareMathOperator{\lclm}{lclm}
\DeclareMathOperator{\cont}{cont}
\DeclareMathOperator{\Res}{res}
\DeclareMathOperator{\nr}{nr}
\DeclareMathOperator{\Real}{Re}
\DeclareMathOperator{\Imag}{Im}
\DeclareMathOperator{\card}{card}
\newcommand{\ii}{\ts\mathrm{i}\ts}
\newcommand{\jj}{\ts\mathrm{j}\ts}
\newcommand{\kk}{\ts\mathrm{k}\ts}
\newcommand{\e}{\ensuremath{\mathrm{e}}}
\newcommand{\ee}{\,\mathrm{e}}
\newcommand{\ts}{\hspace{0.5pt}}
\newcommand{\nts}{\hspace{-0.5pt}}
\newcommand{\similar}{\stackrel{\mathsf{s}}{\sim}}
\newcommand{\Sig}{\varSigma}
\newcommand{\minisquare}{\scriptscriptstyle\square}
\newcommand{\ip}[2]{\langle #1 \ts | \ts #2\rangle}
\newcommand{\bw}{\ts\ts\overline{\nts\nts w \nts\nts}\ts\ts}
\newcommand{\bom}{\ts\ts\overline{\nts\nts \omega \nts\nts}\ts\ts}
\newcommand{\ZZ}{\mathbb{Z}}
\newcommand{\QQ}{\mathbb{Q}}
\newcommand{\RR}{\mathbb{R}}
\newcommand{\NN}{\mathbb{N}}
\newcommand{\Nnull}{\mathbb{N}_{0}^{}}
\newcommand{\CC}{\mathbb{C}}
\newcommand{\HH}{\mathbb{H}}
\newcommand{\LL}{\ensuremath{\mathbb{L}}}
\newcommand{\II}{\mathbb{I}}
\newcommand{\JJ}{\ts\mathbb{J}}
\newcommand{\PP}{\mathbb{P}\ts}
\newcommand{\KK}{\mathbb{K}}
\newcommand{\cC}{\mathcal{C}}
\newcommand{\cO}{\mathcal{O}}
\newcommand{\scO}{{\scriptstyle\mathcal{O}}}
\newcommand{\cS}{\mathcal{S}}
\newcommand{\cT}{\ts\mathcal{T}}
\newcommand{\Gpc}{\vG^{}_{\!\mathsf{pc}}}
\newcommand{\Gbcc}{\vG^{}_{\!\mathsf{bcc}}}
\newcommand{\Gbccstar}{\vG^{*}_{\!\mathsf{bcc}}}
\newcommand{\Gfcc}{\vG^{}_{\!\mathsf{fcc}}}
\newcommand{\vG}{\varGamma}
\newcommand{\vL}{\varLambda}
\newcommand{\one}{\mathbbm{1}}
\newcommand{\exend}{\hfill $\Diamond$}
\newcommand{\myfrac}[2]{\frac{\raisebox{-2pt}{$#1$}}
      {\raisebox{0.5pt}{$#2$}}}
\newcommand{\badfrac}[2]{\frac{\raisebox{-2pt}{$#1$}}{\raisebox{2pt}{$#2$}}}
\begin{document}

\title[Geometric Enumeration Problems]
{Geometric Enumeration Problems for\\[2mm]
Lattices and Embedded $\ZZ\ts\ts$-Modules}

\author{Michael Baake}
\address{Fakult\"at f\"ur Mathematik, Universit\"at Bielefeld, \newline
\hspace*{\parindent}Postfach 100131, 33501 Bielefeld, Germany}
\email{$\{$mbaake,pzeiner$\}$@math.uni-bielefeld.de }

\author{Peter Zeiner}

\begin{abstract}
  In this review, we count and classify certain sublattices of a given
  lattice, as motivated by crystallography. We use methods from
  algebra and algebraic number theory to find and enumerate the
  sublattices according to their index. In addition, we use tools from
  analytic number theory to determine the asymptotic behaviour of the
  corresponding counting functions. Our main focus lies on similar
  sublattices and coincidence site lattices, the latter playing an
  important role in crystallography. As many results are algebraic in
  nature, we also generalise them to $\ZZ\ts$-modules embedded in
  $\RR^d$.
\end{abstract}

\maketitle

\section{Introduction}

Lattices in $\RR^3$ have been used for more than a century in
crystallography,\index{crystallography} as they describe the
translational symmetries of idealised, infinitely extended (periodic)
crystals. As such, they have been studied intensively, together with
space groups, which are finite extensions of lattices (viewed as
Abelian groups) and describe the full symmetry of the crystals;
compare the Epilogue to this volume. Group{\ts}-subgroup relations
have been applied to analyse various aspects such as phase transitions
in crystals.

A special case of the latter is the question of certain kinds of
sublattices of a given lattice. Ideal crystals do not exist in nature,
and the result of crystallisation is very often not a single crystal,
but a mixture of differently orientated crystals of the same kind. The
latter are called \emph{grains}\index{grain~boundary}, and an
important question in crystallography is their mutual orientation and
the border between two neighbouring grains, called a \emph{grain
  boundary}.

To study the latter, one assigns, to each of the two grains, its
corresponding lattice, say $\vG$ and $\vG'$, and computes their
intersection $\vG \cap \vG'$. If the two grains are of the same kind,
the two lattices are related by an orthogonal transformation $R$,
which means that we have $\vG'= R\vG$ for a suitable isometry $R\in
\OG(3,\RR)$. The corresponding sublattice $\vG \cap R\vG$ is called a
\emph{coincidence site lattice}\index{coincidence~site~lattice} (CSL).

It was Friedel in 1911 who first recognised the usefulness of CSLs in
describing and classifying grain boundaries of
crystals~\cite{csl-Friedel11}. Analogous ideas were later used by
Kronberg and Wilson~\cite{csl-KW49}. But it still took some time
before their ideas became popular. In fact, the widespread use of CSLs
was only triggered by a paper of Ranganathan~\cite{csl-Ranga66} in
1966. Many important papers were published in the following years. In
particular, we mention contributions by Grimmer~\cite{csl-Grimmer73,
  csl-Grimmer74, csl-Grimmer84, csl-GBW, csl-GW87, csl-Grimmer89} and
Bollmann~\cite{csl-Boll70,csl-Boll82}.

The discovery of quasicrystals sparked new interest in CSLs, and a
systematic mathematical study started. In particular, the concept of
CSLs was generalised to $\ZZ\ts$-modules embedded in $\RR^d$, which led
to the notion of coincidence site modules (CSMs). They are used to
describe grain boundaries in quasicrystals;\index{quasicrystal}
compare~\cite{csl-BLP96,csl-Pleasants,csl-Warrington} and references
therein.

This new development also triggered a more detailed study of lattices
in dimensions \mbox{$d>3$}, as they are used to generate aperiodic
point sets by the now common cut and project technique;
compare~\cite[Ch.~7]{csl-TAO}.  In particular, lattices in dimension
$d=4$ such as the hypercubic lattices~\cite{csl-Baake-rev,csl-Z3} and
the $A_4$-lattice~\cite{csl-BGHZ08,csl-HZ10} were studied.

Further applications of CSLs can be found in coding theory in
connection with so-called lattice quantisers, where lattices in large
dimensions and with high packing densities are important;
compare~\cite{csl-Slo02a,csl-Slo02b} for general background, as
well as \cite{csl-akh-sal10a} for concrete applications of the
$A_4$-lattice and \cite{csl-akh-sal12} for the hexagonal
lattice. However, not much is known about lattices in dimensions
$d>5$, although there are some partial results for rational lattices
\cite{csl-zou1,csl-zou2,csl-Huck09}.
 
The original concept of CSLs has been generalised in several ways.  In
particular, one may study the intersection of several rotated copies
of a lattice, which are known as multiple CSLs;
compare~\cite{csl-BG,csl-pzmcsl1,csl-BZ07}. They have applications to
so-called multiple junctions~\cite{csl-gerts1,csl-gerts2,csl-gerts3},
which are multiple crystal grains meeting at some common manifold.
Whereas classical CSLs involve only linear isometries, one may
consider affine isometries as well, which is directly related to the
question of coincidences of crystallographic point packings;
compare~\cite{csl-LZ10, csl-LZ14,csl-Loquias10}.  The latter are
connected to the problem of coincidences of coloured lattices and
colour coincidences~\cite{csl-LZ9,csl-Loquias10,csl-LZ15}.

The planar case is certainly the best studied. Here, also a connection
between CSLs and well-rounded sublattices has been established
\cite{csl-BSZ-well}. Moreover, even some results for the hyperbolic
plane \cite{csl-Rodr11} have been found.

Naturally, CSLs are not the only sublattices that are of
interest in crystallography and coding theory. Classifying sublattices
with certain symmetry constraints has a long tradition in mathematics
and in crystallography; compare~\cite{csl-Ruth1,csl-Ruth4} and
references therein. An interesting question is the number of
sublattices that are \emph{similar} to its parent lattice. It has been
answered in detail for a considerable collection of
lattices~\cite{csl-BG2,csl-BM,csl-BHM} in dimensions $d\leq 4$.  For
higher dimensions, some existence results have been obtained by
Conway, Rains and Sloane, who were motivated by problems in coding
theory~\cite{csl-consloa99}.

Actually, some years ago, a close connection between \emph{similar
  sublattices}\index{similar sublattice} (SSLs) and CSLs has been
established~\cite{csl-svenja1}, which was later generalised to
$\ZZ\ts$-modules embedded in $\RR^d$~\cite{csl-svenja2,csl-pzsslcsl1}.
This provides the link for our two main topics, namely the enumeration
of coincidence site lattices and similar sublattices, and its
generalisation to embedded $\ZZ\ts$-modules.\vspace{1mm}

Let us give an outline of this chapter.  Our main focus is on lattices
and certain $\ZZ\ts$-modules, the latter viewed as embedded in some
Euclidean space. This point of view is unusual from an algebraic point
of view, but motivated by the crystallographic applications to
(quasi-)crystals. Therefore, all lattices are regarded as special
cases of embedded $\ZZ\ts$-modules, and one could develop the theory
for embedded modules right from the beginning. However, the lattice
case is without doubt such an important problem in itself that we
prefer to first present the theory for lattices, and generalise
later. In fact, our text is written in such a way that readers
primarily interested in the lattice case can simply skip the
discussions of the more general modules.

The chapter is organised as follows. We start with some basic notions
and facts about lattices in Section~\ref{csl-sec:prelim}. As a
motivation and an introduction to the general theory, we consider a
variety of counting problems of the square lattice in
Section~\ref{csl-sec:square}.  This not only serves to illustrate the
special enumeration problems of SSLs and CSLs we are after, but also
puts them in a broader range of problems to emphasise the connections
to other combinatorial questions. Section~\ref{csl-sec:tools} provides
some useful tools from algebra and analysis.

In Section~\ref{csl-sec:ssl}, we discuss SSLs. After the general theory
in Section~\ref{csl-sec:ssl-gen}, we consider several examples,
including planar lattices (Section~\ref{csl-sec:two-dim}) and rational
lattices in dimensions $d\geq 4$ (Section~\ref{csl-sec:ssl-higher}),
with a detailed presentation of the lattice $A_4$ in
Section~\ref{csl-sec:sim-a4} and the hypercubic lattices in
Section~\ref{csl-sec:ssl-hycub}. The results for lattices are finally
generalised for embedded $\ZZ\ts$-modules in
Section~\ref{csl-sec:ssm}, which also includes the icosian ring as an
example (Section~\ref{csl-sec:ssm-I}). In addition, some examples for
planar modules can already be found in Section~\ref{csl-sec:two-dim}.

From Section~\ref{csl-sec:csl} onwards, we deal with CSLs and
coincidence site modules.  Section~\ref{csl-sec:csl} presents the
general theory, both for simple and multiple CSLs. It includes a
section on some connections with monotiles (Section~\ref
{csl-sec:csl-mono}). In Section~\ref{csl-sec:csl-csm}, we generalise
our results to embedded $\ZZ\ts$-modules and, finally, we investigate
the interrelations between coincidence site modules and similar
submodules in Section~\ref{csl-sec:sslcsl}. This is followed by a
series of examples. In Section~\ref{csl-sec:csl-nfold}, we deal with
planar $\ZZ\ts$-modules. After discussing the cubic lattices in
Section~\ref{csl-sec:cubic}, we move on to the four-dimensional
hypercubic lattices in Section~\ref{csl-sec:cub4} and to the lattice
$A_4$ in Section~\ref{csl-sec:icosianall}, which also covers the
icosian ring as an example of a $\ZZ\ts$-module embedded in $\RR^4$.
Section~\ref{csl-sec:mcsl-cub} is devoted to the multiple CSLs of the
cubic lattices. Finally, we present some (rudimentary) results for
dimensions $d\geq 5$ in Section~\ref{csl-sec:higher}.

Throughout this chapter, ideals play an important role. In almost all
of our examples, we are dealing with principal ideals, which have a
single generating element that is unique up to units.  Although it is
usually more elegant to formulate results in terms of ideals instead
of generating elements, we will frequently prefer to deal with
generating elements. The main reason is that we usually deal with
ideals in algebraic number fields or quaternion algebras, and their
elements can be used to parametrise rotations in dimensions $d\leq
4$. However, rotations are parametrised by concrete complex numbers or
quaternions, respectively, and not by ideals. As we want to emphasise
the direct connection to the rotations and use geometric intuition, we
accept the fact that some equations are more cumbersome when
formulated with quaternions and hold only up to units. For those who
are more interested in an exposition using ideals, we
mention~\cite[Sec.~5]{csl-BLP96}, which shows how to formulate matters
in ideal-theoretic way in the context of quaternion algebras.

As we proceed, we shall prove many of the structural properties and
results$\,$---$\,$in particular, when they are not trivial or not
easily available in the literature.  Otherwise, we state concrete
results without proof, but with proper (and precise) references.

\section{Preliminaries on lattices}
\label{csl-sec:prelim}

Let us begin with some definitions for lattices in $\RR^{d}$ (which
are co-compact discrete subgroups of $\RR^{d}$), where we start from
the notions introduced in \cite[Ch.~3]{csl-TAO} and refer to
\cite{csl-Cassels,csl-Lekker} for further background. In particular, a
lattice $\vG\subset \RR^d$ always has full rank $d$ (as a
$\ZZ\ts$-module), and any lattice basis can also serve as a basis for
$\RR^d$.

\begin{definition}\label{csl-def:comm}
  Two lattices $\vG_1,\vG_2\subset \RR^d$ are called
  \emph{commensurate}\index{lattice!commensurate}, denoted by
  $\vG_1\sim \vG_2$, if $\vG_1\cap \vG_2$ has
  finite index\index{index} in both $\vG_1$ and $\vG_2$.
\end{definition}

In our terminology, commensurateness means that $\vG_1\cap \vG_2$ is a
sublattice (of full rank) of both $\vG_1$ and $\vG_2$.  Actually,
there are several ways to characterise commensurateness
\cite{csl-habil}.

\begin{lemma}\label{csl-lem:comm-lat}
  Let\/ $\vG_1$ and\/ $\vG_2$ be lattices in\/ $\RR^d$. Then, the
  following statements are equivalent.
\begin{enumerate}\itemsep=2pt
\item $\vG_1$ and\/ $\vG_2$ are commensurate.
\item $\vG_1\cap \vG_2$ has finite index in both\/ $\vG_1$ and\/ $\vG_2$.
\item $\vG_1\cap \vG_2$ has finite index in\/ $\vG_1$ or in\/ $\vG_2$.
\item\label{csl-lem:comm-i4-lat} There exist\/ $($positive$\ts )$
  integers\/ $m_1$ and\/ $m_2$ such that\/ $m_1\vG_1\subseteq \vG_2$
  and\/ $m_2\vG_2\subseteq \vG_1$.
\item There exists an integer\/ $m\ne 0$ such that\/ $m \vG_1\subseteq
  \vG_2$ or\/ $m \vG_2\subseteq \vG_1$.
\item $\vG_1\cap \vG_2$ is a lattice\/ $(\nts$of full rank\/ $d\ts\ts )$ in\/
  $\RR^d$.  \qed
\end{enumerate}
\end{lemma}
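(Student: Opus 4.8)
The plan is to treat (1)$\Leftrightarrow$(2) as immediate, since Definition~\ref{csl-def:comm} defines commensurateness to be precisely the finite-index condition in~(2), and to note that (2)$\Rightarrow$(3) is trivial. I would then close the cycle (3)$\Rightarrow$(5)$\Rightarrow$(4)$\Rightarrow$(2), and fold in (6) via (2)$\Rightarrow$(6)$\Rightarrow$(3); together these render all six statements equivalent. Two standard facts about lattices will be used repeatedly: a subgroup of a lattice in $\RR^d$ has finite index if and only if it is itself a lattice of full rank $d$ (in which case indices multiply along inclusion chains), together with the elementary group-theoretic remark that if an Abelian group $G$ has a subgroup $H$ of finite index $m$, then $mG\subseteq H$.

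For (3)$\Rightarrow$(5), suppose $\vG_1\cap\vG_2$ has finite index $m$ in $\vG_1$ (the case of finite index in $\vG_2$ being symmetric). The group-theoretic remark gives $m\,\vG_1\subseteq\vG_1\cap\vG_2\subseteq\vG_2$, which is~(5). For (5)$\Rightarrow$(4), assume $m\,\vG_1\subseteq\vG_2$ with $m\ne 0$, and take $m>0$ without loss of generality; one of the inclusions required for~(4) is then already at hand with $m_1=m$. To obtain $m_2$, I note that $m\,\vG_1$ is a lattice of full rank $d$ contained in $\vG_2$, hence of finite index $N$ in $\vG_2$, so the same remark yields $N\vG_2\subseteq m\,\vG_1\subseteq\vG_1$, and $m_2=N$ works.

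For (4)$\Rightarrow$(2), the inclusions $m_1\vG_1\subseteq\vG_1\cap\vG_2\subseteq\vG_1$ together with $[\vG_1:m_1\vG_1]=m_1^{\,d}<\infty$ force $[\vG_1:\vG_1\cap\vG_2]$ to be finite by multiplicativity of the index, and symmetrically for $\vG_2$ using $m_2\vG_2$. Finally, (2)$\Rightarrow$(6) holds because a finite-index subgroup of a lattice is discrete and of full rank, hence itself a lattice, while (6)$\Rightarrow$(3) because a full-rank sublattice of $\vG_1$ automatically has finite index. The one genuinely geometric step, and the main obstacle, is (5)$\Rightarrow$(4): this is where the standing hypothesis that $\vG_1$ and $\vG_2$ both have full rank $d$ is essential, as it is exactly what rules out a one-sided commensurateness; all the remaining implications are bookkeeping with Lagrange's theorem and multiplicativity of the index.
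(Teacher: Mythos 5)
Your proof is correct and complete. Note, however, that there is no in-paper argument to compare it against: the paper states this lemma without proof, closing it with a tombstone and deferring to the habilitation thesis \cite{csl-habil}, in line with the authors' announced policy of quoting such structural facts with references only. What you supply is the standard argument, and it is sound: the two facts you lean on$\,$---$\,$that a subgroup of a lattice in $\RR^d$ has finite index precisely when it is itself a lattice of full rank $d$, and that $[G:H]=m$ forces $mG\subseteq H$ for Abelian $G$$\,$---$\,$are exactly the right tools, and your cycle $(2)\Rightarrow(3)\Rightarrow(5)\Rightarrow(4)\Rightarrow(2)$, together with $(1)\Leftrightarrow(2)$ and $(2)\Rightarrow(6)\Rightarrow(3)$, does tie all six statements together. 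Your identification of $(5)\Rightarrow(4)$ as the one step where the standing hypothesis that \emph{both} $\vG_1$ and $\vG_2$ have full rank does real work is also on target: it is what makes $m\vG_1$ a full-rank subgroup of $\vG_2$, hence of finite index $N$ there, so that $N\vG_2\subseteq m\vG_1\subseteq\vG_1$; without full rank, one-sided containment would not upgrade to two-sided commensurateness.
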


\begin{figure}
\includegraphics[width=0.9\textwidth]{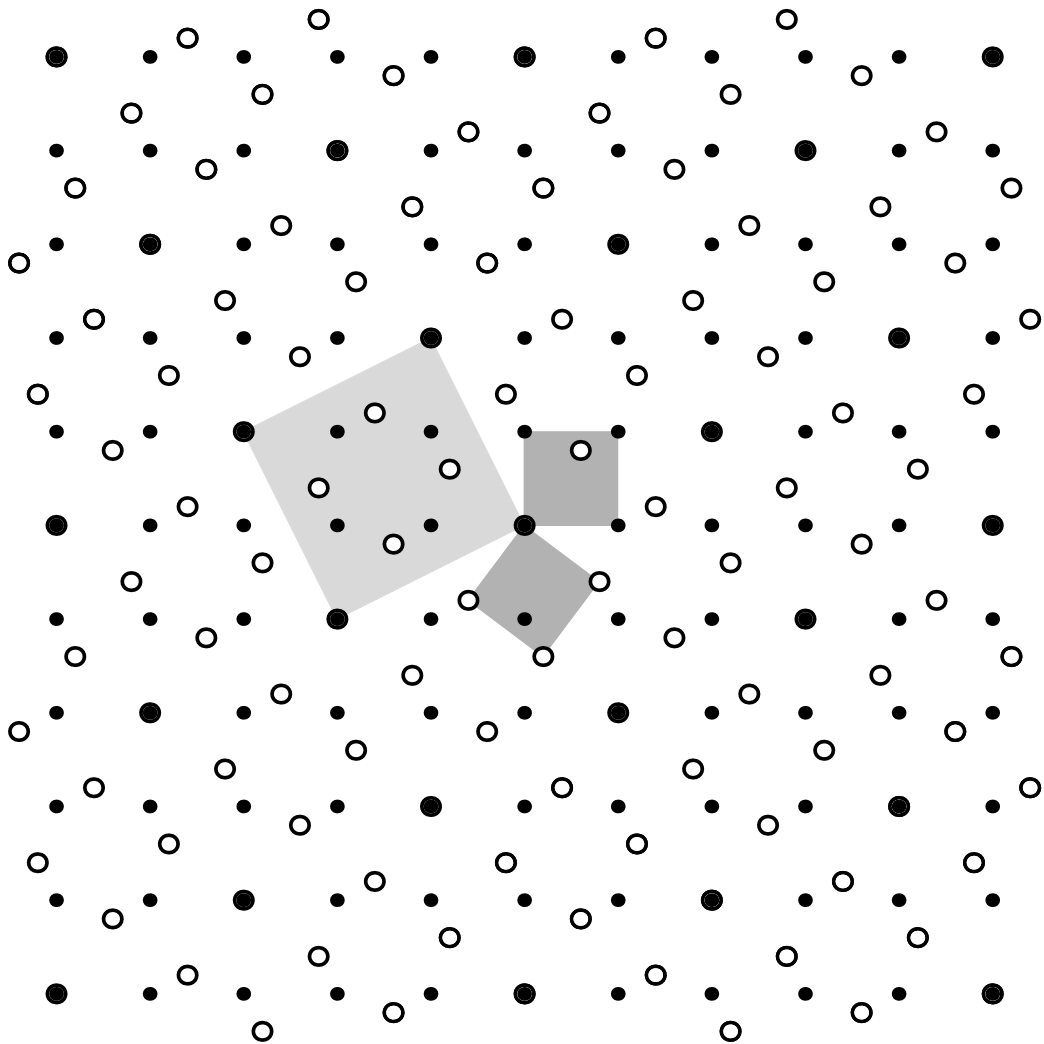}
\caption{A square lattice (all black points) and a rotated copy of it
  (open circles together with large black points), with relative
  rotation angle\break $\alpha=\arctan(\frac{4}{3})\approx
  53.13^\circ$. The large black points mark the intersection of the
  two lattices, which is the CSL and again a square lattice. The
  shaded squares show fundamental domains of the three lattices. The
  larger square is a fundamental domain of the
  CSL.}\index{square~lattice!CSL}
\label{csl-fig:csl}
\end{figure}

As an immediate consequence, for instance via applying property
(\ref{csl-lem:comm-i4}) several times, one obtains that
commensurateness is an equivalence relation.

An example of commensurate lattices is provided by similar
sublattices.  In fact, similarity of lattices is an important concept
to us.  Recall that an invertible linear map $f\! : \,
\RR^d\longrightarrow\RR^d$ is called a \emph{similarity
  transformation} if it is of the form $f=\alpha R$, where $R$ is a
(linear) isometry and $0\ne \alpha \in\RR$. Two lattices $\vG$ and
$\vG'$ are called \emph{similar}, in symbols $\vG \similar \vG'$, if
there exists a similarity transformation from one to the
other. Clearly, similarity of lattices is an equivalence relation.

\begin{definition}
  A similarity transformation that maps a lattice $\vG\in \RR^d$ onto
  a sublattice of $\vG$ is called a \emph{similarity transformation of
    $\vG$}.  A sublattice $\vG'\subseteq \vG $ is called a
  \emph{similar sublattice} (SSL)\index{similar sublattice} of $\vG$
  if $\vG'$ is similar to $\vG$.
\end{definition}

Trivial examples of SSLs are the sublattices $m\vG$, with $m\in\NN$.
Similarly, given an SSL $\vG'\subseteq \vG$, also $m\vG'$ is an
SSL. In order to exclude these cases, we introduce the notion of a
primitive SSL.

\begin{definition}
  An SSL $\vG'\subseteq \vG$ is called
  \emph{primitive}\index{similar sublattice!primitive} if
  $\frac{1}{n}\vG'\subseteq \vG$ with $n\in\NN$ implies that $n=1$.
\end{definition}

In crystallography, the intersection $\vG\cap R\vG$ plays an important
role in describing grain boundaries. If $\vG\cap R\vG$ is a lattice
(of full rank), it is called a \emph{coincidence site lattice}
\index{coincidence~site~lattice} (CSL). A planar example is shown in
Figure~\ref{csl-fig:csl}.  As we have seen, the intersection $\vG\cap
R\vG$ is a lattice if and only if $\vG$ and $R\vG$ are
commensurate. This motivates the following definition.

\begin{definition}\label{csl-def:csl}
  Let $\vG$ be a lattice in $\RR^d$, and let $R\in \OG(d,\RR)$. If
  $\vG$ and $R \vG$ are commensurate, $\vG(R):=\vG \cap R \vG$ is
  called a \emph{coincidence site lattice}
  (CSL)\index{coincidence~site~lattice}.  In this case, $R$ is called
  a \emph{coincidence isometry}\index{isometry!coincidence}.  The
  corresponding index, $\Sig^{}_\vG(R):=[\vG: \vG(R)]$, is called its
  \emph{coincidence index}\index{coincidence~index}.
\end{definition}

Before we embark on a systematic review of CSLs and their properties,
let us embed the study of such lattices, in an illustrative fashion,
into a wider context that is motivated by geometry and combinatorics.

\section{A hierarchy of planar lattice enumeration
problems}\label{csl-sec:square}

It is the intention of this section to shed some more light on the
coincidence problem and how it relates to various types of
index-oriented sublattice enumerations with geometric constraints. Let
us explain this for the square lattice in $\RR^2$ in an informal
manner. The results will be given in closed form in terms of zeta
functions, and explicitly (for small indices) in
Table~\ref{csl-tab:sq} on page~\pageref{csl-tab:sq}.

To this end, let us start with the question of how many
sublattices\index{square~lattice!sublattices} of $\ZZ^2$ have
index\index{index} $m$, without any further restriction.  Let us call
this number $a_m$. Clearly, $a_1^{}=1$ (only $\ZZ^2$ itself is a
sublattice of index $1$) and $a_2^{}=3$ (counting two different
rectangular sublattices and one square sublattice).  In general, one
has $a_{mn} = a_m a_n$ when $m,n\in\NN$ are coprime, and one can
derive, either from \cite[Appendix]{csl-Baake-rev} or from
\cite[Lemma~2 on p.~99]{csl-Serre}, the general result that
\[
    a_m \, = \,  \sigma_1^{}(m) \, = \,
    \mbox{$\sum_{d\mid m}$}\, d\ts ,
\]
where $\sigma_1^{}$ is a divisor function,\index{function!divisor}
whose Dirichlet series\index{Dirichlet~series} generating function
reads
\begin{equation}
  F(s) \, = \sum_{m=1}^{\infty} \frac{a_m}{m^s}
  \, = \,  \zeta(s) \, \zeta(s-1) \ts .  
\end{equation} 
Here, $\zeta(s) = \sum_{m=1}^{\infty} m^{-s}$ is Riemann's
zeta\index{zeta~function!Riemann} function \cite{csl-Apostol}.  From
this, it can be shown that the number of sublattices of index $\le x$,
which is the summatory\footnote{As is well known from number theory,
  arithmetic\index{function!arithmetic} functions such as $m\mapsto
  a_{m}$ are prone to strong fluctuations.  The corresponding
  summatory functions are usually more regular, and show a
  well-defined asymptotic behaviour; see \cite{csl-Apostol} for
  background.} function $\sum_{k\le x} a_k$, grows quadratically as
$x^2 \pi^2/12$; compare \cite[Thm.~324]{csl-Hardy}. More precisely, we
have
\[
   \sum_{m\leq x} a_m \, = \, \myfrac{\pi^{2}}{12} \, x^{2} 
    + \,\cO \bigl(x \log(x)\bigr) \qquad \text{as $x\to\infty$}.
\] 
This counting result is, of course, \emph{algebraic} in nature and
thus applies to any planar lattice, and to the free Abelian group of
rank $2$ in particular (where $a_{m}$ is the number of distinct
subgroups of index\index{index} $m$).

As a first geometric refinement step, let us consider those
sublattices of $\ZZ^{2}$ which are \emph{well-rounded}, which means
that the shortest non-zero lattice vectors span the plane. Here, the
result is considerably more difficult (and the most difficult one for
this informal discussion), and one finds~\cite{csl-BSZ-well} that the
counts $a_{\minisquare}^{\mathsf{wr}} (m)$ lead to the Dirichlet
series\index{Dirichlet~series}
\begin{equation}
 \Phi^{\mathsf{wr}}_{\minisquare} (s) \, = \, \Phi^{\mathsf{pr}}_{\minisquare}(s)
   \bigl( 1 + \phi^{}_0(s) + \phi^{}_1(s) \bigr) ,
\end{equation}
where $\Phi^{\mathsf{pr}}_{\minisquare}(s)$ is the generating function
for all primitive square sublattices given below in
Eq.~\eqref{csl-eq:prim-sq}, together with
\begin{align*}
   \phi^{}_0(s)\, & = \, \myfrac{2}{2^s}
   \sum_{p\in\NN}\, \sum_{p < q < \sqrt{3}p}\myfrac{1}{p^s q^s}\ts , \\[1mm]
  \phi^{}_1(s) \, & = \,
  \myfrac{2}{1+2^{-s}}
  \sum_{k\in\NN}\, \sum_{k < \ell < \sqrt{3}k+\frac{\sqrt{3}-1}{2}}
  \myfrac{1}{(2k+1)^s (2\ell+1)^s} \ts .
\end{align*}
Although no simpler closed expressions for these functions are known,
they can be approximated by explicit formulas involving Riemann's
zeta\index{zeta~function!Riemann} function and a certain
\mbox{$L\ts$-series}~\cite{csl-BSZ-well}; see also below
(Section~\ref{csl-sec:two-dim}) for further details. This enables us
to determine the asymptotic growth rate explicitly, including an error
term. As there are considerably fewer well-rounded sublattices than
sublattices in total, it is not surprising that the growth rate is
smaller, namely $\frac{\log(3) }{2\ts \pi} \, x \log(x)$ as $x \to
\infty$; see~\cite{csl-BSZ-well}. There exists a linear correction
term, and the asymptotic behaviour\index{asymptotic~behaviour} reads
in detail
\begin{align*}
  \sum_{n\leq x} a^{\mathsf{wr}}_{\minisquare} (n) \, & = \, \frac{\log(3) }{3}
      \frac{L(1,\chi^{}_{-4})}{\zeta(2)} \, x \bigl(\log(x) - 1\bigr)
      + c^{}_{\minisquare} x + \cO\bigl(x^{3/4}\log(x)\bigr) \\[1mm]
  & = \, \frac{\log(3) }{2\pi}\, x \log(x) + \left( c^{}_{\minisquare} -
    \frac{\log(3) }{2\pi} \right) x + \cO\bigl(x^{3/4}\log(x)\bigr),
\end{align*}
where, with $\gamma\approx 0.5772157$ denoting the Euler--Mascheroni
constant,
\begin{align*}
  c^{}_{\minisquare}& \, := \,\frac{L(1,\chi^{}_{-4})}{\zeta(2)} \Biggl(
  \frac{\log(3)}{3} \left(
    \frac{L'(1,\chi^{}_{-4})}{L(1,\chi^{}_{-4})} + 3\gamma -
    2\ts\frac{\zeta'(2)}{\zeta(2)}  - \frac{\log(3)}{4} -
    \frac{\log (2)}{6} \right) \\[1mm]
  &\qquad\qquad\qquad\quad + \zeta(2)\, - \sum_{p=1}^\infty \myfrac{1}{p} 
     \biggl(\frac{\log(3)}{2} -
  \sum_{p<q< p\sqrt{3}} \myfrac{1}{q} \biggr)  \\
  &\qquad\qquad\qquad\quad  - \myfrac{4}{3}\sum_{k=0}^\infty 
  \myfrac{1}{2k+1}\biggl(
  \myfrac{1}{4}\log(3) - \!\sum_{k<\ell< k\sqrt{3}+\frac{\sqrt{3}-1}{2}}
  \myfrac{1}{2\ell+1} \biggr) \Biggr) \\[1mm]
  & \,\approx\, 0.6272237 \ts .
\end{align*}
Note that the error term $\cO\bigl(x^{3/4}\log(x)\bigr)$ is certainly
not optimal; see \cite{csl-BSZ-well} for more. Here, the numerical
values are rounded to the last digit displayed, so that the error is
less than $1$ in the last digit. The same rule implicitly applies to
any numerical values that we will give in the following.

\begin{table}
  \caption{Some counts of the enumeration problems for 
    $\ZZ^{2}$. For indices \mbox{$n\leq 60$}, the number of multiple CSLs is
    the same as for CSLs, except for $n=25$, where we have $3$ multiple
    CSLs instead of $2$ simple ones.} \label{csl-tab:sq}
\begin{small}
\begin{tabular}{ | l | r | r | r | r | r | r | r | r | r | r | }
\hline
index $m$ & \hphantom{10}1 & \hphantom{10}2 & \hphantom{10}3 & 
\hphantom{10}4 & \hphantom{10}5 & \hphantom{10}6 & \hphantom{10}7 & 
\hphantom{10}8 & \hphantom{10}9 & \hphantom{1}10  \\
\hline
sublattices & 1 & 3 & 4 & 7 & 6 & 12 & 8 & 15 & 13 & 18 \\
well-rounded & 1 & 1 & 0 & 1 & 2 & 0 & 0 & 1 & 1 & 2 \\
square & 1 & 1 & 0 & 1 & 2 & 0 & 0 & 1 & 1 & 2  \\
prim.\ square & 1 & 1 & 0 & 0 & 2 & 0 & 0 & 0 & 0 & 2  \\
coincidence & 1 & 0 & 0 & 0 & 2 & 0 & 0 & 0 & 0 & 0  \\
\hline
\hline
index $m$ & 11 & 12 & 13 & 14 & 15 & 16 & 17 & 18 & 19 & 20 \\
\hline
sublattices & 12 & 28 & 14 & 24 & 24 & 31 & 18 & 39 & 20 & 42 \\
well-rounded & 0 & 2 & 2 & 0 & 2 & 1 & 2 & 1 & 0 & 2 \\
square & 0 & 0 & 2 & 0 & 0 & 1 & 2 & 1 & 0 & 2 \\
prim.\ square & 0 & 0 & 2 & 0 & 0 & 0 & 2 & 0 & 0 & 0 \\
coincidence & 0 & 0 & 2 & 0 & 0 & 0 & 2 & 0 & 0 & 0 \\
\hline
\hline
index $m$ & 21 & 22 & 23 & 24 & 25 & 26 & 27 & 28 & 29 & 30 \\
\hline 
sublattices & 32 & 36 & 24 & 60 & 31 & 42 & 40 & 56 & 30 & 72 \\
well-rounded & 0 & 0 & 0 & 4 & 3 & 2 & 0 & 0 & 2 & 2 \\
square & 0 & 0 & 0 & 0 & 3 & 2 & 0 & 0 & 2 & 0 \\
prim.\ square & 0 & 0 & 0 & 0 & 2 & 2 & 0 & 0 & 2 & 0 \\
coincidence & 0 & 0 & 0 & 0 & 2 & 0 & 0 & 0 & 2 & 0 \\
\hline
\hline
index $m$ & 31 & 32 & 33 & 34 & 35 & 36 & 37 & 38 & 39 & 40 \\
\hline 
sublattices & 32 & 63 & 48 & 54 & 48 & 91 & 38 & 60 & 56 & 90 \\
well-rounded & 0 & 1 & 0 & 2 & 2 & 1 & 2 & 0 & 0 & 4 \\
square & 0 & 1 & 0 & 2 & 0 & 1 & 2 & 0 & 0 & 2 \\
prim.\ square & 0 & 0 & 0 & 2 & 0 & 0 & 2 & 0 & 0 & 0 \\
coincidence & 0 & 0 & 0 & 0 & 0 & 0 & 2 & 0 & 0 & 0 \\
\hline
\hline
index $m$ & 41 & 42 & 43 & 44 & 45 & 46 & 47 & 48 & 49 & 50 \\
\hline 
sublattices & 42 & 96 & 44 & 84 & 78 & 72 & 48 & 124 & 57 & 93 \\
well-rounded & 2 & 0 & 0 & 0 & 2 & 0 & 0 & 4 & 1 & 3 \\
square & 2 & 0 & 0 & 0 & 2 & 0 & 0 & 0 & 1 & 3 \\
prim.\ square & 2 & 0 & 0 & 0 & 0 & 0 & 0 & 0 & 0 & 2 \\
coincidence & 2 & 0 & 0 & 0 & 0 & 0 & 0 & 0 & 0 & 0 \\
\hline
\hline
index $m$ & 51 & 52 & 53 & 54 & 55 & 56 & 57 & 58 & 59 & 60 \\
\hline 
sublattices & 72 & 98 & 54 & 120 & 72 & 120 & 80 & 90 & 60 & 168 \\
well-rounded & 0 & 2 & 2 & 0 & 0 & 0 & 0 & 2 & 0 & 6 \\
square & 0 & 2 & 2 & 0 & 0 & 0 & 0 & 2 & 0 & 0 \\
prim.\ square & 0 & 0 & 2 & 0 & 0 & 0 & 0 & 2 & 0 & 0 \\
coincidence & 0 & 0 & 2 & 0 & 0 & 0 & 0 & 0 & 0 & 0 \\
\hline
\end{tabular}
\end{small}
\end{table}

Let us next ask how many of the sublattices of $\ZZ^2$ of index $m$
are actually \emph{square}\index{square~lattice!SSL} lattices; see
\cite{csl-BG2,csl-BM1,csl-BM} for various generalisations of this
question. This number can be obtained by counting the lattice points
on circles of radius $\sqrt{m}$ (hence counting solutions of the
Diophantine\index{Diophantine!equation} equation $x^2+y^2=m$) and
afterwards dividing by 4 (the order of $C_4$, the rotation part of the
point symmetry group of $\ZZ^2$). The result is given in
\cite[Chs.~16.9, 16.10 and 17.9]{csl-Hardy} and leads to the Dirichlet
series\index{Dirichlet~series} generating function
\begin{equation}\label{csl-eq:square-sub}
 \Phi^{}_{\minisquare}(s)   \, = \, \zeta^{}_K(s)  \, = \,
            \myfrac{1}{1-2^{-s}} 
            \prod_{p \equiv 1\, (4)} \!
                   \Bigl(\myfrac{1}{1-p^{-s}}\Bigr)^2 
            \prod_{p \equiv 3\, (4)} \myfrac{1}{1-p^{-2s}} \ts , 
\end{equation} 
where here and in what follows
$\zeta^{}_K(s)=\zeta(s)\ts L(s,\chi^{}_{-4})$ is the Dedekind
zeta\index{zeta~function!Dedekind} function of the quadratic
field\index{number~field} $K=\QQ(\ii)$;
compare~\cite[$\S${\ts}11]{csl-Zagier}.  Recall that
\[
    L(s,\chi^{}_{-4}) \, = \ts
    \sum_{n=1}^\infty \frac{\chi^{}_{-4}(n)}{n^s}
\]
is the \mbox{$L\ts$-series} for the Dirichlet character
\[
  \chi^{}_{-4} (n) \, = \,
    \begin{cases} 0 , & \text{if $n$ is even, } \\
    1 , & \text{if $n\equiv 1 \bmod 4$, } \\
    -1 , & \text{if $n\equiv 3 \bmod 4$.}
    \end{cases}
\]
The meaning of Eq.~\eqref{csl-eq:square-sub} becomes clear if one
realises that the square sublattices of $\ZZ^2$ are precisely the
non-trivial ideals of $\ZZ [\ii]$, the ring of Gaussian
integers\index{Gaussian~integer} within the quadratic field $\QQ
(\ii)$; see also \cite[Ex.~2.15]{csl-TAO}.

The growth rate of the number of similar
sublattices\index{square~lattice!SSL} of index $\le x$ is linear,
namely $\frac{\pi}{4}x$, which follows either from the asymptotic
properties of $\zeta^{}_K(s)$ near its rightmost pole (at $s=1$) or
just from counting one quarter of the lattice points inside the circle
of radius $\sqrt{x}$, which is $\frac{\pi}{4}x$ to leading order in
$x$. More precisely, one has 
\[
    \sum_{m\le x} a^{}_{\minisquare} (m) \, = \,
    \myfrac{\pi}{4}\ts x + \cO \bigl(\sqrt{x}\,\bigr);
\]
see \cite[Appendix]{csl-BSZ-well} for details.

In the last case, some of the square lattices fail to be
\emph{primitive}\index{square~lattice!primitive~SSL} (such as $3\ts
\ZZ^2$ etc.), which happens whenever the sublattice is an integer
multiple of $\ZZ^2$ or of one of its primitive sublattices. If we
exclude those, primes $p\equiv 3 \; (4)$ are impossible as divisors of
the index $m$, and some solutions of $p\equiv 1 \; (4)$ also drop out
(whenever the index is divisible by a square).  Now, the generating
function reads\index{Dirichlet~series}
\begin{equation} \label{csl-eq:prim-sq}
  \Phi^{\mathsf{pr}}_{\minisquare}(s) \, = \, \left( 1 + 2^{-s}\right)\!
                 \prod_{p\equiv1 \, (4)}
                 \frac{1+p^{-s}}{1-p^{-s}}
                 \, = \,  \frac{\zeta^{}_K(s)}{\zeta(2s)} \ts , 
\end{equation}
and the asymptotic growth rate\index{asymptotic~behaviour} 
of the number of primitive square
sublattices of index $\le x$ is again linear, this time with
\[
   \sum_{m\le x} a^{\mathsf{pr}}_{\minisquare} (m) \, = \,
    \myfrac{3}{2 \pi}\ts x + \cO \bigl(\sqrt{x}\ts \log(x)\bigr).
\]
The leading term can be determined by counting one quarter of the
visible points \cite{csl-Apostol} in the circle of radius $\sqrt{x}$,
which is $\frac{1}{4}(\pi x \frac{6}{\pi^2})$;
see~\cite[Appendix]{csl-BSZ-well} for the error term.

Finally, let us see how the CSLs\index{square~lattice!CSL} fit into
this picture. As $\SO(2,\RR)$ is Abelian, the symmetry group of any
CSL of $\ZZ^{2}$ must contain a fourfold rotation, which in turn
implies that the CSL must be a square lattice. Hence, any CSL is a
similar sublattice. Note that this is a rather exceptional feature of
the square lattice, which it shares, to our knowledge, only with the
planar hexagonal lattice and some embedded modules in the
plane. Nevertheless, there is a general connection between SSLs and
CSLs, as we shall see in Section~\ref{csl-sec:sslcsl} below.  Note,
however, that not all square sublattices are CSLs$\,$---$\,$they only
are if they are primitive and if their index is \emph{odd}. This
finally results in the generating function of the coincidence problem
described above, namely\index{Dirichlet~series}
\begin{equation}\label{csl-eq:csl-square}
  \Psi^{}_{\minisquare}(s) \, = 
  \sum_{m=1}^{\infty} \frac{a_{\minisquare}^{\mathsf{CSL}}(m)}{m^{s}}\, = \!
  \prod_{p\equiv1 \ts\ts (4)} \frac{1+p^{-s}}{1-p^{-s}}
  \, = \, \left( 1 + 2^{-s}\right)^{-1} \frac{\zeta^{}_K(s)}{\zeta(2s)} \ts .
\end{equation} 
As $\Psi^{}_{\minisquare}(s)$ differs from
$\Phi^{\mathsf{pr}}_{\minisquare}(s)$ only by the factor $\left( 1 +
  2^{-s}\right)^{-1}$, the asymptotic behaviour
\[
   \sum_{m\le x} a_{\minisquare}^{\mathsf{CSL}}(m) \, = \,
    \myfrac{1}{\pi}\, x + \cO \bigl(\sqrt{x}\ts \log(x)\bigr)
\]
is similar to the previous case, with the growth rate being lower by a
factor of $\left( 1 + 2^{-1}\right)^{-1}=\frac{2}{3}$.  As before,
this equation expresses the Dirichlet series generating function in
terms of zeta functions; see Table~\ref{csl-tab:sq} for the first few
terms of the corresponding Dirichlet series.  Similar formulas will
also appear in many of our later examples.  

Now, we may even go a step further and ask for \emph{multiple}
coincidences, i.e., intersections of any finite number of CSLs. As we
shall discuss later (see Section~\ref{csl-sec:csl-nfold} and, in
particular, Example~\ref{csl-ex:mcsl-square}), the set of indices
stays the same, but some additional lattices emerge, which are still
similar sublattices, but not primitive any more.

We hope that this short digression has put the enumeration problem in
a broader perspective, and also in contact with some elementary
questions from analytic number theory.  Of course, we are also
interested in results in higher dimensions, where the picture changes
significantly because $\mathrm{O}(d,\RR)$ is no longer Abelian for
$d>2$. Before we can proceed, we first need to introduce various
methods and tools.

\section{Algebraic and analytic tools}
\label{csl-sec:tools}

In writing the square lattice as $\ZZ^{2}=\ZZ[\ii]$, we can profit
from the algebraic structure of $\ZZ[\ii]$, which is a ring that is a
\emph{principal ideal domain}\index{principal~ideal~domain} (PID). In
fact, it is the ring of integers of the imaginary quadratic field
$\QQ(\ii)$, and as such the maximal order\index{maximal~order} of the
field. Here, the term `order' means that we are dealing with a
finitely generated $\ZZ\ts$-module whose $\QQ$-span is the entire
field. $\ZZ[\ii]$ is maximal for this property in the obvious sense,
and unique as such.\index{number~field}

Note that $\QQ(\ii)$ can also be viewed as a cyclotomic
field,\index{cyclotomic~field} which will later be used for an
extension to analyse similar submodules and coincidence site modules
of the rings $\ZZ[\xi^{}_{n}]$ of integers in the cyclotomic field
$\QQ(\xi^{}_{n})$, where $\xi^{}_{n}$ is a primitive $n$-th root of
unity. We refer to \cite[Sec.~2.5]{csl-TAO} and to \cite{csl-Wash} for
general background in this context. \smallskip

As we shall see, this number-theoretic approach is truly powerful for
planar structures. Consequently, one would like to have related
methods also for higher-dimensional problems. This leads to a
non-commutative generalisation in the form of certain quaternion
algebras and their maximal orders. 

\subsection{Quaternions}
\label{csl-sec:quat}

As quaternions are pivotal in what follows, we briefly recall their
most important properties. For details, we refer to
\cite[Sec.~2.5.4]{csl-TAO}, \cite[Secs.~3 and~4]{csl-BLP96} and to the
general literature~\cite{csl-Koecher,csl-CS03,csl-Hurwitz,csl-Hardy}.

Let $\{\e,\ii,\jj,\kk\}$ be the standard basis of $\RR^4$, where
\[
    \e={(1,0,0,0)}\ts , \quad \ii={(0,1,0,0)}\ts ,\quad 
    \jj={(0,0,1,0)}\ts , \quad \kk={(0,0,0,1)}\ts .  
\]
The \emph{quaternion algebra} over $\RR$ is the associative division
algebra
\[
  \HH\, :=\, \HH(\RR)\, =\, \RR\e+\RR\ii+\RR\jj+\RR\kk\, \simeq\, \RR^4, 
\]
where multiplication is induced by Hamilton's relations
\[
     \ii^2 \, = \, \jj^2 \, = \, \kk^2 \, = \, \ii\jj\kk \, = \, -\e \ts .
\] 
Elements of $\HH$ are called quaternions, and an arbitrary quaternion
$q$ is written as either $q=q^{}_0\e+q^{}_1\ii+q^{}_2\jj+q^{}_3\kk$ or
$q=(q^{}_0,q^{}_1,q^{}_2,q^{}_3)$.  Given two quaternions $q$ and $p$,
their \emph{inner product} is defined by the standard scalar product
of $q$ and $p$ as vectors\footnote{{\ts\ts}We usually identify a quaternion
  $q=(q^{}_0,q^{}_1,q^{}_2,q^{}_3)$ with the corresponding row vector
  $(q^{}_0,q^{}_1,q^{}_2,q^{}_3)$. However, when we use Cayley's
  parametrisation for rotations (see below), we will identify $q$ with
  the corresponding column vector $(q^{}_0,q^{}_1,q^{}_2,q^{}_3)^T$.}
in $\RR^4$.

The \emph{conjugate} of $q=(q^{}_0,q^{}_1,q^{}_2,q^{}_3)$ is
$\overline{q}=(q^{}_0,-q^{}_1,-q^{}_2,-q^{}_3)$, and its \emph{norm}
is $\nr(q)=|q|^2=q\,\overline{q}=q_0^2+q_1^2+q_2^2+q_3^2\in\RR$. One
has $\overline{q\ts p\ts}=\overline{p}\,\ts\overline{\nts q\ts}$ and
$|q\,p|^2=|q|^2|p|^2$ for any $q,p\in\HH$.  Given a quaternion
$q=(q^{}_0, q^{}_1, q^{}_2, q^{}_3)$, its \emph{real} and
\emph{imaginary parts} are defined as $\Real(q)=q^{}_0$ and
$\Imag(q)=q^{}_1\ii+q^{}_2\jj+q^{}_3\kk$, respectively.  It is easy to
verify that $\Real(\HH)$ is the centre of $\HH$, wherefore we can
identify $\e$ with $1$ from now on. The \emph{imaginary space of}
$\HH$ is the three-dimensional subspace $\Imag (\HH)=\{\Imag (q) \mid
q\in\HH\} \simeq\RR^3$ of $\HH$.  For ease of notation, we will
identify $\Imag(\HH)$ and $\RR^3$ and, in addition, also the elements
$(q^{}_1, q^{}_2, q^{}_3)\in \Imag (\HH)$ with the elements
$(0,q^{}_1, q^{}_2, q^{}_3)\in \HH$. \smallskip

Another convenient feature of the quaternions\index{quaternion} is
that they can be used to parametrise rotations in 3 and 4 dimensions;
compare~\cite{csl-Koecher} and~\cite{csl-BLP96}.  In $\RR^3$, any
rotation\index{rotation!Cayley~parametrisation} can be
parametrised by a single quaternion $0\neq q\in\HH$ with
$q=(\kappa,\lambda,\mu,\nu)$ via
\begin{equation}\label{csl-eq:Rpar}
   R(q) \, = \, \myfrac{1}{|q|^2}\!
   \begin{pmatrix}
   \kappa^2 \!+\!\lambda^2\!-\!\mu^2\!-\!\nu^2 & 
   -2\kappa\nu\!+\!2\lambda\mu &
   2\kappa\mu\!+\!2\lambda\nu\\
   2\kappa\nu\!+\!2\lambda\mu & 
   \!\!\!\kappa^2\!-\!\lambda^2\!+\!\mu^2\!-\!\nu^2\!\!\! &
  -2\kappa\lambda\!+\!2\mu\nu\\
  -2\kappa\mu\!+\!2\lambda\nu & 2\kappa\lambda\!+\!2\mu\nu &
   \kappa^2\!-\!\lambda^2\!-\!\mu^2\!+\!\nu^2 
   \end{pmatrix}\! ,
\end{equation}
where elements of $\RR^3$ are written as column vectors.  In
particular, we have
\[
   R(q)\, x \, = \, R(q)\begin{pmatrix} 
   x^{}_1 \\[-0.5ex] x^{}_2 \\[-0.5ex] x^{}_3 \end{pmatrix}
   \, = \, \frac{q\ts x\ts \bar{q}}{|q|^2} 
\]
for any $x\in\RR^3$, where we have again identified $x$ with $(0,
x^{}_1, x^{}_2, x^{}_3)$.  Note that the parametrisation of
Eq.~\eqref{csl-eq:Rpar}, which is known as \emph{Cayley's
  parametrisation}, is only unique up to a scaling factor, meaning
$R(\alpha q)=R(q)$ for $\alpha\ne 0$.

As we are usually interested in specific subgroups of $\SO(3,\RR)$,
let us mention that such subgroups often can be related to suitable
subrings of $\HH$; compare~\cite[Prop.~1]{csl-BLP96}. In particular,
the rotations of $\SO(3,\QQ)$ can be parametrised by integer
quaternions as explained below.

In $\RR^4$, a pair of quaternions\index{quaternion} is needed to
parametrise a rotation\index{rotation!Cayley~parametrisation}
\cite{csl-Koecher,csl-duVal}.  These quaternions are unique up to
positive scaling factors and a common sign change. In particular,
\begin{equation}\label{csl-eq:4dimrot}
  R(p,q) \! : \,
  \RR^4 \xrightarrow{\quad} \RR^4,\quad
  x \longmapsto R(p,q)\ts x\, =\,  
  \frac{p\ts x\ts \bar{q}}{|p \ts q|} 
\end{equation}
defines a rotation in $\RR^4$, whose matrix
representation$\,$---$\,$in abuse of notation also written as
$R(p,q)=\frac{1}{|p \ts q|}M(p,q)\,$---$\,$is explicitly given by
\begin{align*}
&M(p,q)\,=\,
\begin{pmatrix}
\ip{p}{q}&
\ip{p\ii }{q}&
\ip{p\jj }{q}&
\ip{p\kk }{q}\\
\ip{p}{\ii q}&
\ip{p\ii }{\ii q}&
\ip{p\jj }{\ii q}&
\ip{p\kk }{\ii q}\\
\ip{p}{\jj q}&
\ip{p\ii }{\jj q}&
\ip{p\jj }{\jj q}&
\ip{p\kk }{\jj q}\\
\ip{p}{\kk q}&
\ip{p\ii }{\kk q}&
\ip{p\jj }{\kk q}&
\ip{p\kk }{\kk q}
\end{pmatrix}\\[2mm]
&=\,\mbox{\tiny
$\begin{pmatrix}
ak+b\ell+cm+dn & -a\ell+bk+cn-dm & -am-bn+ck+d\ell & -an+bm-c\ell+dk\\
a\ell-bk+cn-dm & ak+b\ell-cm-dn & -an+bm+c\ell-dk & am+bn+ck+d\ell \\
am-bn-ck+d\ell & an+bm+c\ell+dk & ak-b\ell+cm-dn & -a\ell-bk+cn+dm \\
an+bm-c\ell-dk & -am+bn-ck+d\ell & a\ell+bk+cn+dm & ak-b\ell-cm+dn
\end{pmatrix}$} , 
\end{align*}
where $p=(k,\ell,m,n)$ and $q=(a,b,c,d)$. 
Here, $\ip{\cdot}{\cdot}$ denotes the standard (Euclidean)
inner product in $\RR^4$.

A quaternion\index{quaternion} all of whose components are integers
is called a \emph{Lipschitz quaternion}\index{quaternion!Lipschitz}.
The set $\LL$ of Lipschitz quaternions is thus defined as
\begin{equation}
   \LL \, := \, \bigl\{ (q^{}_0,q^{}_1,q^{}_2,q^{}_3)\in\HH 
   \bigm\vert q^{}_0,q^{}_1,q^{}_2,q^{}_3\in\ZZ \bigr\} .
\end{equation}
The Lipschitz quaternions form an order in the quaternion algebra
$\HH(\QQ)$, but not a maximal one.  A \emph{primitive Lipschitz
  quaternion}\index{quaternion!primitive} $q$ is a quaternion in
$\LL$ whose components are relatively prime. Furthermore, a
\emph{Hurwitz quaternion}\index{quaternion!Hurwitz} is a quaternion
whose components are all integers or all half-integers.  The ring
$\JJ$ of Hurwitz quaternions~\cite{csl-Hurwitz} is a maximal
order\index{maximal~order} in the quaternion algebra $\HH(\QQ)$, as is
any of its conjugates (which means that one only has uniqueness up to
conjugacy here). $\JJ$ is given by
\begin{equation}
\begin{split}
    \JJ \, & := \, \bigl\{ (q^{}_0, q^{}_1, q^{}_2, q^{}_3)\in\HH 
          \bigm\vert \text{all } q^{}_i\in\ZZ
    \text{ or all } q^{}_i \in\tfrac{1}{2}+\ZZ \bigr\}  \\[1mm]
    &\hphantom{:}= \, \LL\ts \cup
   \left((\tfrac{1}{2},\tfrac{1}{2},\tfrac{1}{2},
    \tfrac{1}{2})+\LL\right)\! ;
\end{split}   
\end{equation}
compare \cite[Ex.~2.18]{csl-TAO}.  We call a Hurwitz quaternion
$q\in\JJ$ \emph{primitive}\index{quaternion!primitive} if
$\frac{1}{n}q \in \JJ$ with $n\in\NN$ implies $n=1$.  The norm $|q|^2$
of any Hurwitz quaternion is an integer. As quaternions of norm
$|q|^2=2$ play a special role in $\JJ$, we distinguish between
\emph{odd}\index{quaternion!odd} and
\emph{even}\index{quaternion!even} quaternions, where $q\in\JJ$ is
called even or odd depending on whether $|q|^2$ is even or odd. Any
quaternion of norm $|q|^2=2$ can be represented as
$q=(1,1,0,0)u=u'(1,1,0,0)$, where $u,u'$ are unit quaternions. As
the group $\JJ^{\times}$ of unit quaternions has order $24$ and consists
of the quaternions
\[
(\pm 1,0,0,0), (0, \pm 1,0,0), (0,0,\pm 1,0), (0,0,0, \pm 1)
\mbox{ and } (\pm 1,\pm 1,\pm 1,\pm 1),
\]
there are also $24$ quaternions of norm $|q|^2=2$. The latter,
normalised as $\frac{q}{\sqrt{2}}$, together with the units
$u\in\JJ^{\times}$ form a group of order $48$, which is the standard
double cover of the octahedral group $O$.

Recall from \cite{csl-Hurwitz} and \cite[Sec.~2.5.4]{csl-TAO} that
$\JJ$ is a principal ideal ring, which here means that all right
(left) ideals are principal right (left) ideals, and $\JJ$ is also a
maximal order.\index{maximal~order} Thus, for any two right ideals
$a\JJ$ and $b\JJ$, there exist quaternions $g$ and $m$ such that
$g\JJ=a\JJ+b\JJ$ and $m\JJ=a\JJ\cap b\JJ$.  These two quaternions $g$
and $m$ are unique up to multiplication by a unit quaternion from the
right. We call $g$ a \emph{greatest common left divisor}
\index{greatest~common~left~divisor} of $a$ and $b$, and $m$ a
\emph{least common right multiple} \index{least~common~right~multiple}
of $a$ and $b$, in symbols $g=\gcld(a,b)$ and $m=\lcrm(a,b)$. As $g$
and $m$ are unique only up to a unit, these equations only make sense
as a shorthand notation for the corresponding equation of ideals
$g\JJ=\gcld(a,b)\JJ$ or as equations of quaternions that hold up to a
multiplication by a unit quaternion from the right.  In some cases, we
may choose a particular $g$ or $m$. In these cases, the equations
involving them are considered to hold exactly.  Similarly, we define
the \emph{greatest common right divisor} $\gcrd$ and the \emph{least
  common left multiple} $\lclm$.

Similarly, the icosian ring $\II$\index{icosian~ring} is a maximal
order\index{maximal~order} in $\HH(\QQ(\mbox{\small $\sqrt{5}$}\,))$;
we refer to~\cite[Ex.~2.19]{csl-TAO} for the definition. The above
notions can thus analogously be defined for $\II$. One important
difference will emerge from the existence of another maximal order,
$\II'$, which is obtained from~$\II$ by algebraic conjugation in
$\QQ(\mbox{\small $\sqrt{5}$}\,)$, though it is not of the form
$q\ts\II\ts q^{-1}$.  Here, the group of units $\II^{\times}$ is
infinite and isomorphic to $\ZZ[\tau]^{\times} \times \Delta_{H_4}$,
where $\Delta_{H_4}$ is a group of order $120$;
see~\cite[Ex.~2.19]{csl-TAO} for details.

For further properties of maximal orders in quaternion algebras, we
refer to~\cite{csl-Reiner,csl-Vigneras}; see also
\cite[Sec.~4]{csl-BLP96}, where the case of $\HH(K)$ with $K$ a real
algebraic number field is considered in more detail.

\subsection{Tools from analysis}
\label{csl-sec:tool-ana}

In our enumeration problems, we naturally deal with arithmetic
functions $a(m)$, which are functions defined on $\NN$. In many
examples, these functions are
\emph{multiplicative}\index{multiplicative~function}, which means that
$a(mn)=a(m)\, a(n)$ whenever $m$ and $n$ are coprime. Note that,
unless $a\equiv 0$, this implies $a(1)=1$, and $a$ is then completely
determined by its values for $p^n$ with $p$ prime and $n\geq 1$.

In addition, we are often interested in the \emph{summatory
  function}\index{summatory~function}
\[
    A(x)\,=\sum_{m\leq x} a(m)
\]
and its behaviour for large $x$, as this function behaves more
regularly than $a(m)$ itself. This suggests to use generating
functions and to analyse their analytic properties.  In the context of
multiplicative functions (although not restricted to those), a natural
choice for the generating function is a \emph{Dirichlet series} of the
form $F(s)=\sum_{m=1}^{\infty}a(m)\ts m^{-s}$.  Let us recall one
classic result for the case that $a(m)$ is real and non-negative,
which relates $F(s)$ with the asymptotic behaviour of $A(x)$.

\begin{theorem}\label{csl-thm:meanvalues} 
  Let\/ $F(s)$ be a Dirichlet series\index{Dirichlet~series} with
  non-negative coefficients which converges for\/ $\Real (s) > \alpha
  > 0$. Suppose that\/ $F(s)$ is holomorphic at all points of the
  line\/ $\{ \Real (s) = \alpha \}$ except at\/ $s=\alpha$. Here, when
  approaching\/ $\alpha$ from the half-plane to the right of it, we
  assume\/ $F(s)$ to have a singularity of the form\/ $F(s) =
  h(s)/(s-\alpha)^{n+1}$ where\/ $n$ is a non-negative integer, and\/
  $h(s)$ is holomorphic at\/ $s=\alpha$. Then, as\/
  $x\rightarrow\infty$, we have \index{asymptotic~behaviour}
\[
    \qquad\qquad\qquad\;\;
     A(x) \; := \, \sum_{m\leq x} a(m)
          \; \sim \;  \frac{h(\alpha)}{\alpha\cdot n!}
             \; x_{}^{\alpha} \, \bigl(\log(x)\bigr)^{n}  .
    \qquad\qquad\qquad\qed
\]
\end{theorem}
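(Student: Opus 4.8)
The plan is to obtain this as an instance of the Wiener--Ikehara Tauberian theorem together with its higher-order refinement due to Delange, the analytic core being a Laplace-transform reformulation combined with the monotonicity of $A$. First I would reduce to the case $\alpha=1$. Replacing the point mass $a(m)$ at $m$ by the same mass at $m^{\alpha}$ turns $A(x)$ into $\widetilde{A}(y):=A(y^{1/\alpha})$, whose associated Dirichlet--Stieltjes transform is $\int y^{-s}\dd\widetilde{A}(y)=F(\alpha s)$. Near $s=1$ this has a pole of order $n+1$ with leading Laurent coefficient $h(\alpha)/\alpha^{\,n+1}$, and it is holomorphic on $\{\Real(s)=1\}$ away from $s=1$. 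If the case $\alpha=1$ yields $\widetilde{A}(y)\sim \frac{h(\alpha)}{\alpha^{\,n+1}\ts n!}\,y\,(\log y)^{n}$, then substituting $y=x^{\alpha}$ and using $(\log x^{\alpha})^{n}=\alpha^{n}(\log x)^{n}$ recovers exactly the claimed constant $\frac{h(\alpha)}{\alpha\cdot n!}$. Hence it suffices to treat $\alpha=1$, which I do from now on.

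The second step is the passage to a Laplace transform. By partial summation, for $\Real(s)>1$ one has $F(s)/s=\int_{1}^{\infty}A(u)\,u^{-s-1}\dd u$ (the boundary term vanishing in the limit since $\Real(s)>1$), and the substitution $u=\e^{t}$ together with $f(t):=A(\e^{t})\,\e^{-t}$ gives
\[
   \frac{F(1+z)}{1+z}\,=\,\int_{0}^{\infty} f(t)\,\e^{-zt}\dd t ,
   \qquad \Real(z)>0 .
\]
At $z=0$ the left-hand side has a pole of order $n+1$; let $P(z)=\sum_{j=1}^{n+1}c_{j}\ts z^{-j}$ be its principal part, so that the leading coefficient is $c_{n+1}=h(1)$. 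Then $P$ is the Laplace transform of the polynomial $Q(t)=\sum_{j=1}^{n+1}\frac{c_{j}}{(j-1)!}\,t^{\,j-1}$, whose leading term is $\frac{c_{n+1}}{n!}\,t^{n}$. By hypothesis $F$ is holomorphic on $\{\Real(s)=1\}$ except at $s=1$, so $\frac{F(1+z)}{1+z}-P(z)$ is holomorphic in an open neighbourhood of the whole imaginary axis (holomorphic across the axis for $z\ne 0$, and holomorphic at $z=0$ once the principal part is removed), hence continuous there. Equivalently, the Laplace transform of $f-Q$ extends continuously to $\Real(z)=0$.

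The final and decisive step is the Tauberian passage: from this continuous boundary extension, together with the fact that $A$ is non-decreasing (because $a(m)\ge 0$), one concludes $f(t)-Q(t)=o\bigl(t^{n}\bigr)$, whence $f(t)\sim \frac{c_{n+1}}{n!}\,t^{n}$ and therefore $A(x)=f(\log x)\,x\sim \frac{c_{n+1}}{n!}\,x\,(\log x)^{n}$; reinstating $c_{n+1}=h(\alpha)/\alpha^{\,n+1}$ from the reduction of the first step then yields the asserted constant. I expect this step to be the main obstacle, since$\,$---$\,$unlike a naive Perron-formula contour shift$\,$---$\,$we have no growth bound on $F$ along vertical lines and may only use its continuous extension to the critical line. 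The standard remedy is Wiener's method: one convolves $f-Q$ with a Fej\'er-type kernel $K_{\lambda}$ whose Fourier transform has compact support, applies the Riemann--Lebesgue lemma to the continuous boundary function to show that the smoothed average tends to the correct limit, and finally invokes the monotonicity of $A$ as the one-sided Tauberian condition to remove the smoothing and recover the genuine pointwise asymptotics. This is precisely the content of the Wiener--Ikehara theorem for $n=0$, and of Delange's generalisation for $n\ge 1$, and completes the proof.
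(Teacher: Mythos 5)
Your proposal is correct and ultimately rests on the same foundation as the paper's proof: the paper simply invokes Delange's theorem (Tenenbaum's formulation, with $q=0$ and $\omega=n$), which is exactly the Wiener--Ikehara/Delange Tauberian result you cite for the decisive step. The reductions you spell out$\,$---$\,$rescaling to $\alpha=1$, passing to the Laplace transform of the non-decreasing summatory function, and subtracting the principal part$\,$---$\,$are the standard scaffolding implicit in applying that theorem, so the two arguments coincide in substance.
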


The proof follows easily from Delange's
theorem,\index{Delange~theorem} for instance by taking $q=0$ and
$\omega=n$ in Tenenbaum's formulation; see \cite[Ch.\ II.7, Thm.\
15]{csl-Tenenbaum} and references given there.

\section{Similar sublattices}
\label{csl-sec:ssl}

\subsection{General results}
\label{csl-sec:ssl-gen}

Let us now have a more detailed look at similar sublattices. A
similarity transformation consists of two ingredients, an isometry and
a scaling factor. It thus makes sense to analyse these two parts
independently and to introduce the following notions. We call
\begin{equation}
    \OS(\vG)\, :=\, \{R\in \OG(d,\RR)\mid \exists \ts 
    \alpha\in\RR_{+} \text{ such that } 
    \alpha R \vG \subseteq \vG \}
\end{equation}
the set of all \emph{similarity isometries}\index{isometry!similarity}
of $\vG$. Likewise, we define
\begin{equation}
   \SOS(\vG)\, :=\, \OS(\vG) \cap \ts \SO (d,\RR)
\end{equation}
to be its orientation-preserving part\index{rotation!similarity}. 
The following result is immediate.

\begin{fact}\label{csl-fact:OS-group}
    $\OS(\vG)$ and\/ $\SOS(\vG)$ are subgroups of\/ $\OG(d,\RR)$.   \qed
\end{fact}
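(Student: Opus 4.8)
The plan is to verify the subgroup axioms for $\OS(\vG)$ directly, after which $\SOS(\vG)$ comes for free: since $\SO(d,\RR)$ is itself a subgroup of $\OG(d,\RR)$, once $\OS(\vG)$ is known to be a subgroup, $\SOS(\vG)=\OS(\vG)\cap\SO(d,\RR)$ is automatically one, being an intersection of two subgroups. So all the real content sits in $\OS(\vG)$. First I would dispose of the easy axioms. The identity lies in $\OS(\vG)$ by taking $\alpha=1$, since $\vG\subseteq\vG$. For closure under composition, suppose $R_1,R_2\in\OS(\vG)$ with $\alpha_1 R_1\vG\subseteq\vG$ and $\alpha_2 R_2\vG\subseteq\vG$ for suitable $\alpha_1,\alpha_2\in\RR_+$. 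Because scalars commute with the linear maps involved, applying $\alpha_1 R_1$ to the inclusion $\alpha_2 R_2\vG\subseteq\vG$ gives $\alpha_1\alpha_2 R_1R_2\vG\subseteq\alpha_1 R_1\vG\subseteq\vG$, so that $R_1R_2\in\OS(\vG)$ with scaling factor $\alpha_1\alpha_2\in\RR_+$.

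The main obstacle is closure under inverses, since the group structure of $\OG(d,\RR)$ alone does not obviously interact well with the \emph{one-sided} inclusion defining $\OS(\vG)$. Here I would bring in the lattice structure. Given $R\in\OS(\vG)$ with $\alpha R\vG\subseteq\vG$, observe that $\alpha R$ is an invertible linear map of $\RR^d$, so $\alpha R\vG$ is again a lattice of full rank $d$; being contained in $\vG$, it therefore has finite index, say $m:=[\vG:\alpha R\vG]\in\NN$. Since $\vG/\alpha R\vG$ is a finite abelian group of order $m$, every one of its elements is annihilated by $m$, which yields $m\vG\subseteq\alpha R\vG$. Applying the inverse similarity $\frac{1}{\alpha}R^{-1}$ to this inclusion gives $\frac{m}{\alpha}R^{-1}\vG\subseteq\vG$, and as $\frac{m}{\alpha}\in\RR_+$, this shows $R^{-1}\in\OS(\vG)$.

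This completes the verification that $\OS(\vG)$ is a subgroup of $\OG(d,\RR)$, and $\SOS(\vG)$ follows as noted above. I expect the inverse step to be the only one genuinely requiring the lattice hypothesis: the passage $m\vG\subseteq\alpha R\vG$ rests on $\alpha R\vG$ being a \emph{full-rank, finite-index} sublattice of $\vG$, which is precisely where the discreteness and co-compactness of $\vG$ enter the argument. The remaining axioms are purely formal manipulations of the scaling factors.
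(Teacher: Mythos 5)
Your proof is correct, and it supplies exactly the argument that the paper treats as immediate (Fact~\ref{csl-fact:OS-group} is stated there without proof): the identity and composition steps are formal manipulations of the scaling factors, while the inverse step is the one place the lattice hypothesis enters, via the finite index $m=[\vG:\alpha R\vG\ts]=\alpha^d$ and the Lagrange-type inclusion $m\vG\subseteq\alpha R\vG$. The reduction of $\SOS(\vG)$ to an intersection of subgroups is likewise the intended reading, so nothing is missing.
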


One would expect that similar lattices should have related
$\OS$-groups, which is indeed true.

\begin{lemma}\label{csl-lem:OS-sim}
   Let\/ $\vG$ and\/ $\vG'$ be similar lattices 
     with\/ $\vG'=\alpha R \vG$. Then,
\[
      \OS(\vG')\, =\, R\ts \OS(\vG) \ts\ts R^{-1}.
\]
\end{lemma}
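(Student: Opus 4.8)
The plan is to prove the asserted equality of sets by establishing, for an arbitrary isometry $S\in\OG(d,\RR)$, a chain of equivalences that characterise membership in $\OS(\vG')$ and in $R\,\OS(\vG)\,R^{-1}$ respectively; the crucial observation is that the scaling factor $\alpha$ plays no role and simply cancels. First I would unfold the definition: $S$ belongs to $\OS(\vG')$ precisely when there exists some $\beta\in\RR_+$ with $\beta S\vG'\subseteq\vG'$. (I deliberately name the quantified scaling factor $\beta$, to avoid clashing with the $\alpha$ appearing in the similarity $\vG'=\alpha R\vG$.)

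Next I would substitute $\vG'=\alpha R\vG$ into this containment. The condition $\beta S(\alpha R\vG)\subseteq\alpha R\vG$ is equivalent, after multiplying through by $\alpha^{-1}$, to $\beta SR\vG\subseteq R\vG$, and then, after applying the isometry $R^{-1}$ to both sides, to
\[
   \beta\,(R^{-1}SR)\,\vG\,\subseteq\,\vG .
\]
Thus the factor $\alpha$ disappears entirely, which is really the heart of the matter. Since $\OG(d,\RR)$ is a group and $R,S$ both lie in it, we have $R^{-1}SR\in\OG(d,\RR)$, so the displayed containment together with $\beta\in\RR_+$ says exactly that $R^{-1}SR\in\OS(\vG)$, i.e.\ $S\in R\,\OS(\vG)\,R^{-1}$.

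Because every manipulation above is a genuine equivalence$\,$---$\,$each map applied is invertible and the positivity of $\beta$ is preserved throughout$\,$---$\,$reading the chain in both directions yields that $S\in\OS(\vG')$ if and only if $S\in R\,\OS(\vG)\,R^{-1}$, which is the claimed set equality. I do not expect any serious obstacle here: the argument is a one-line cancellation dressed up with a conjugation. The only points requiring a moment's care are that conjugation by $R$ keeps us inside $\OG(d,\RR)$, and that positivity of the scaling factor is maintained so that we never leave $\RR_+$. Finally, I would remark that restricting the identical argument to $\SO(d,\RR)$ gives $\SOS(\vG')=R\,\SOS(\vG)\,R^{-1}$ as well, since $\det(R^{-1}SR)=\det(S)$ shows that conjugation by $R$ preserves membership in $\SO(d,\RR)$.
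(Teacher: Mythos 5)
Your proof is correct and follows essentially the same route as the paper: both rest on the single equivalence $\beta S\vG\subseteq\vG \iff \beta\,(RSR^{-1})\,\vG'\subseteq\vG'$, with the similarity scalar $\alpha$ cancelling out. The paper merely phrases it as the trivial step $\OS(\vG)=\OS(\alpha\vG)$ followed by conjugation, whereas you run the identical computation as one chain of equivalences.
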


\begin{proof}
  If $\alpha \neq 0$, the relation $\OS (\vG) = \OS (\alpha \vG)$ is
  trivial.  The general case follows from the fact that $\beta S \vG
  \subseteq \vG$ holds if and only if $\beta R S \ts R^{-1} \vG'
  \subseteq \vG'$.
\end{proof}

Next, we aim to gain some insight into the scaling factors.  Let us
define
\begin{equation}
  \begin{split}
  \Scal^{}_\vG(R)& \, :=\, \{\alpha\in\RR \mid \alpha R \vG\subseteq \vG \} 
  \quad \text{ and}\\[1mm]
  \scal^{}_\vG(R)&\, :=\, \{\alpha\in\RR \mid \alpha R \vG\sim \vG \}\ts .
  \end{split}
\end{equation}
Note that we have allowed negative values for the scaling factors
here. For an arbitrary but fixed $R\in\OG(d,\RR)$, this ensures that
$\Scal^{}_\vG(R)$ is a $\ZZ\ts$-module and that $\scal^{}_\vG(R)\cup
\{0\}$ is a vector space over the field $\QQ$. As we shall see
shortly, this vector space is one-dimensional if $R\in\OS(\vG)$,
and $\Scal^{}_\vG(R)$ is then a one-dimensional lattice.

We defined $\Scal^{}_\vG(R)$ and $\scal^{}_\vG(R)$ for arbitrary $R\in
\OG(d,\RR)$. However, we are really only interested in the non-trivial
case where $R\in\OS(\vG)$. Clearly, the scaling factor $0$ is always
contained in $\Scal^{}_\vG(R)$, so $\Scal^{}_\vG(R)$ is always
non-empty. By definition, $R\in \OS(\vG)$ if and only if there exists
an $\alpha\in \RR_+$ such that $\alpha R \vG\subseteq \vG$. Hence, we
have the following elementary result.

\begin{fact}[{\cite[Sec.~4]{csl-svenja1}}]\label{csl-lem:scal-nontriv}
   Let\/ $\vG$ be a lattice in\/ $\RR^{d}$ and $R\in\OG(d,\RR)$. Then,
   the following assertions are equivalent.
   \begin{enumerate}\itemsep=2pt
     \item $\Scal^{}_\vG(R)\ne \{ 0 \}$;
     \item $\scal^{}_\vG(R)\ne \varnothing$;
     \item $R\in \OS(\vG)$. \qed
   \end{enumerate}
\end{fact}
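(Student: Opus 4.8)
The plan is to prove the three assertions equivalent by establishing the cyclic chain of implications $(3)\Rightarrow(1)\Rightarrow(2)\Rightarrow(3)$, since each arrow is short and closing the loop then gives all equivalences at once. Two structural features of $\vG$ will do most of the work. First, $\vG$ is a lattice of full rank $d$, so for any $\alpha\neq 0$ the image $\alpha R\vG$ is again a full-rank lattice (because $\alpha R$ is an invertible linear map). Second, $\vG=-\vG$ as an additive group, which is what will let me upgrade a merely nonzero scaling factor to a positive one.

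The implication $(3)\Rightarrow(1)$ is immediate from the definitions: if $R\in\OS(\vG)$, there is some $\alpha\in\RR_{+}$ with $\alpha R\vG\subseteq\vG$, and since $\alpha\neq 0$ this already shows $\Scal^{}_\vG(R)\neq\{0\}$ (recall that $0\in\Scal^{}_\vG(R)$ always holds). For $(1)\Rightarrow(2)$, I take any $\alpha\neq 0$ with $\alpha R\vG\subseteq\vG$. As just noted, $\alpha R\vG$ is a full-rank lattice, and being contained in $\vG$ it is a sublattice of finite index. Hence $\alpha R\vG$ and $\vG$ are commensurate by Lemma~\ref{csl-lem:comm-lat}, so $\alpha\in\scal^{}_\vG(R)$ and this set is non-empty.

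The only implication needing a little more care is $(2)\Rightarrow(3)$. I start from some $\alpha$ with $\alpha R\vG\sim\vG$ and first observe that $\alpha\neq 0$, because for $\alpha=0$ the set $\alpha R\vG=\{0\}$ fails to be a full-rank lattice and so cannot be commensurate with $\vG$. Applying the characterisation of commensurateness in Lemma~\ref{csl-lem:comm-lat} (specifically condition~(\ref{csl-lem:comm-i4-lat})) to the commensurate pair $\alpha R\vG$ and $\vG$ produces a positive integer $m$ with $m\ts\alpha R\vG\subseteq\vG$. Setting $\beta:=\lvert m\alpha\rvert>0$ and using $\vG=-\vG$ to absorb the sign of $m\alpha$, I obtain $\beta R\vG\subseteq\vG$ with $\beta\in\RR_{+}$, which is precisely the condition defining $R\in\OS(\vG)$.

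The main obstacle is exactly this last step: passing from the weak relation $\alpha R\vG\sim\vG$, which only says the two lattices share a finite-index common sublattice and permits $\alpha R\vG$ to protrude from $\vG$, to the genuine inclusion demanded by $\OS(\vG)$. The resolution is to invoke Lemma~\ref{csl-lem:comm-lat} to clear the obstruction by an integer factor $m$, and then to correct the sign via the symmetry $\vG=-\vG$; both manoeuvres are elementary once that lemma is available, so the proof reduces to bookkeeping around these two observations.
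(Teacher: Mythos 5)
Your proof is correct, and it fills in exactly the elementary argument that the paper leaves implicit: the paper states this as a Fact with a reference and a \qed, remarking only that $R\in\OS(\vG)$ holds by definition precisely when some $\alpha\in\RR_{+}$ satisfies $\alpha R\vG\subseteq\vG$. Your cyclic chain $(3)\Rightarrow(1)\Rightarrow(2)\Rightarrow(3)$, using Lemma~\ref{csl-lem:comm-lat} to pass between inclusion and commensurateness and the symmetry $\vG=-\vG$ to fix the sign of the scaling factor, is precisely this intended bookkeeping, carried out in full.
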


One expects that two similar lattices should display the same sets of
scaling factors. Indeed, one has the following result.

\begin{lemma}\label{csl-lem:scal-sim}
  Let\/ $\vG$ and $\vG'$ be similar lattices, with\/ $\vG'=\alpha R
  \vG$. Then,
\[
  \Scal^{}_{\vG'}(S)\, = \,\Scal^{}_\vG(R^{-1}SR) \quad\text{and}\quad
  \scal^{}_{\vG'}(S)\, = \,\scal^{}_\vG(R^{-1}SR).
\]
\end{lemma}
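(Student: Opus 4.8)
The plan is to prove both identities by unwinding the definitions and using the substitution $\vG' = \alpha R \vG$, exactly as in the proof of Lemma~\ref{csl-lem:OS-sim}. The core observation is that an inclusion involving $\vG'$ can be rewritten as an inclusion involving $\vG$ by conjugating the isometry. Specifically, for the $\Scal$ identity, I would start from the defining condition $\beta \ts S \ts \vG' \subseteq \vG'$ and substitute $\vG' = \alpha R \vG$ on both sides. This gives $\beta \ts S \ts \alpha R \vG \subseteq \alpha R \vG$, and since $\alpha \neq 0$ the scalar $\alpha$ can be cancelled (lattices are closed under multiplication by nonzero reals in the obvious sense), reducing the condition to $\beta \ts S R \vG \subseteq R \vG$. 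Applying $R^{-1}$ on the left then yields $\beta \ts (R^{-1} S R) \vG \subseteq \vG$, which is precisely the condition $\beta \in \Scal^{}_\vG(R^{-1} S R)$.

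This string of equivalences shows that $\beta \in \Scal^{}_{\vG'}(S)$ if and only if $\beta \in \Scal^{}_\vG(R^{-1} S R)$, which establishes the first equality as an identity of sets. The key algebraic facts being used are merely that $R$ is invertible and that multiplication by $\alpha \neq 0$ is a bijection of $\RR^d$ preserving the lattice inclusion relation. The second identity, for $\scal$, is handled in exactly the same spirit, the only change being that the defining relation is $\beta \ts S \vG' \sim \vG'$ rather than the inclusion $\subseteq$. Here I would invoke that the commensurateness relation $\sim$ is invariant under applying an invertible linear map $R$ to both lattices: $\Lambda^{}_1 \sim \Lambda^{}_2$ holds if and only if $R \Lambda^{}_1 \sim R \Lambda^{}_2$, since $R$ maps $\Lambda^{}_1 \cap \Lambda^{}_2$ bijectively onto $R\Lambda^{}_1 \cap R\Lambda^{}_2$ and preserves finiteness of index. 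Combined with the trivial observation that scaling by $\alpha \neq 0$ also preserves $\sim$, the same substitution $\vG' = \alpha R \vG$ converts $\beta \ts S \vG' \sim \vG'$ into $\beta \ts (R^{-1} S R) \vG \sim \vG$.

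I expect no genuine obstacle here; this is a definition-chase of the same type already carried out for Lemma~\ref{csl-lem:OS-sim}, and the result is essentially a restatement of that lemma at the level of scaling factors rather than isometries. The only point requiring a moment's care is the behaviour of the relations under the scalar $\alpha$: for $\Scal$ one must confirm that cancelling $\alpha$ on both sides of an inclusion is valid (it is, since $x \mapsto \alpha x$ is a linear bijection), and for $\scal$ one must confirm that $\sim$ is preserved under this bijection. Both are immediate from the characterisations of commensurateness in Lemma~\ref{csl-lem:comm-lat}. If anything, the mild subtlety is purely notational, namely keeping the conjugation $R^{-1} S R$ on the correct side throughout the computation; writing the chain of equivalences carefully makes this transparent.
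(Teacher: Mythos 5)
Your proposal is correct and follows essentially the same route as the paper's proof: substitute $\vG'=\alpha R\vG$ into the defining inclusion, cancel the nonzero scalar $\alpha$, and conjugate by $R$ to obtain $\beta\ts R^{-1}SR\ts\vG\subseteq\vG$, with the $\scal$ identity handled in the same way using invariance of commensurateness. The paper's version is simply a terser rendering of this exact definition-chase, so there is nothing to add.
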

\begin{proof}
  Let $\beta\in\Scal^{}_{\vG'}(S)$, so that $\beta S \vG'\subseteq
  \vG'=\alpha R \vG$.  This is equivalent to $\beta R^{-1}S R \vG
  \subseteq \vG$, from which we infer the first identity. The second
  one follows similarly.
\end{proof}

For a fixed lattice $\vG\subset \RR^{d}$, let us have a closer look at
the elements of $\Scal^{}_\vG(R)$. By basic facts from linear algebra,
we have 
\[
 [\vG:\alpha R\vG\ts ] \, =\, 
 \lvert\det(\alpha R)\rvert \, =\, \alpha^d \ts
  \lvert\det(R)\rvert \, =\, \alpha^d,
\]
whenever $\alpha R$ is a similarity
transformation of $\vG$.  Hence, $\alpha^d$ must be an integer for all
$\alpha \in \Scal^{}_\vG(R)$. More generally, if $\alpha R\vG \sim
\vG$, there exists an integer $m$ such that $m\alpha R\vG$ is a
similar sublattice of $\vG$. Consequently, we have $\alpha^d\in\QQ$
whenever $\alpha \in \scal^{}_\vG(R)$.  We have thus proved the
following.

\begin{lemma}\label{csl-lem:scal-d}
  Let\/ $\vG\subset \RR^{d}$ be a lattice. For any\/ $\alpha \in
  \Scal^{}_\vG(R)$, we have\/ $\alpha^d \in \ZZ$.  Moreover, for any\/
  $\alpha \in \scal^{}_\vG(R)$, we have\/ $\alpha^d \in \QQ$. \qed
\end{lemma}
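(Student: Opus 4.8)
The plan is to reduce both assertions to the single elementary fact that the index of a full-rank sublattice equals the absolute value of the determinant of the linear map producing it, combined with the observation that an orthogonal map has determinant $\pm 1$.

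First I would dispose of the trivial case $\alpha = 0$, where $\alpha^d = 0 \in \ZZ$. For $0 \neq \alpha \in \Scal_{\vG}(R)$, the map $\alpha R$ is invertible, so $\alpha R \vG$ is a sublattice of $\vG$ of full rank $d$, and $[\vG : \alpha R \vG]$ is a positive integer. Expressing this index as a ratio of covolumes$\,$---$\,$equivalently, as the determinant of the base-change matrix between a basis of $\vG$ and the induced basis of $\alpha R \vG\,$---$\,$one obtains
\[
  [\vG : \alpha R \vG] \, = \, \lvert \det(\alpha R)\rvert
  \, = \, \lvert\alpha\rvert^{d}\, \lvert\det R\rvert \, = \, \lvert\alpha\rvert^{d},
\]
where $\lvert \det R\rvert = 1$ because $R \in \OG(d, \RR)$. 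Hence $\lvert\alpha\rvert^{d} \in \NN$, and since $\alpha^d = \pm\,\lvert\alpha\rvert^{d}$, we conclude $\alpha^d \in \ZZ$.

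For the second assertion I would exploit commensurateness. If $\alpha \in \scal_{\vG}(R)$, then $\alpha R \vG \sim \vG$ (note $\alpha \neq 0$ automatically, since $\{0\}$ is not commensurate with $\vG$), so Lemma~\ref{csl-lem:comm-lat} supplies a positive integer $m$ with $m\alpha R \vG \subseteq \vG$; that is, $m\alpha \in \Scal_{\vG}(R)$. Applying the first part to $m\alpha$ gives $(m\alpha)^d = m^d \alpha^d \in \ZZ$, whence $\alpha^d = (m\alpha)^d / m^d \in \QQ$.

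I do not expect a genuine obstacle here: the entire content is the covolume--determinant identity, which is standard linear algebra, and the passage to $\scal$ is bookkeeping via commensurateness. The only point requiring a little care is the sign when $d$ is odd and $\alpha$ is negative: one must observe that $\lvert\alpha\rvert^{d}\in\ZZ$ still forces $\alpha^d\in\ZZ$ and not merely $\lvert\alpha\rvert^{d}\in\ZZ$. This is precisely why admitting negative scaling factors in the definitions of $\Scal_{\vG}(R)$ and $\scal_{\vG}(R)$ causes no difficulty.
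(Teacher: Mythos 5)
Your proof is correct and takes essentially the same route as the paper: the index--determinant identity $[\vG:\alpha R\vG\ts] = \lvert\det(\alpha R)\rvert = \lvert\alpha\rvert^{d}$ yields $\alpha^d\in\ZZ$ on $\Scal^{}_\vG(R)$, and clearing denominators via commensurateness (an integer $m$ with $m\alpha R\vG\subseteq\vG$) yields $\alpha^d\in\QQ$ on $\scal^{}_\vG(R)$. If anything, you are slightly more careful than the paper, which glosses over the case $\alpha=0$ and the sign distinction between $\alpha^d$ and $\lvert\alpha\rvert^{d}$ for odd $d$.
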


As a consequence, for any fixed lattice $\vG$, $\Scal^{}_\vG(R)$ is a
discrete and closed set, or in other words, a locally finite
set. Hence, there exists a smallest positive element in
$\Scal^{}_\vG(R)$. This deserves a name.

\begin{definition}
  For any isometry $R\in\OS(\vG)$, the smallest positive element in
  $\Scal^{}_\vG(R)$ is called the
  \emph{denominator}\index{denominator} of $R$, written as
  $\den^{}_\vG(R)$.
\end{definition}

Clearly, one has $\bigl(\den^{}_\vG(R)\bigr)^d\in\NN$. Moreover,
$\den^{}_\vG(R)=1$ is equivalent to $R \vG =\vG$, that is,
$\den^{}_\vG(R)=1$ if and only if $R$ is a symmetry operation of
the lattice $\vG$. In particular, $\den^{}_\vG(\one )=1$.

As $\Scal^{}_\vG(R)$ is a $\ZZ\ts$-module, each integer multiple of
$\den^{}_\vG(R)$ is again an element of $\Scal^{}_\vG(R)$. On the
other hand, each $\alpha\in \Scal^{}_\vG(R)$ must be a multiple of
$\den^{}_\vG(R)$, since otherwise we could find a scaling factor
$\alpha$ with $0 < \alpha < \den^{}_\vG(R)$. This leads to the
following result.

\begin{lemma}\label{csl-lem:scal-den}
  Let\/ $\vG\subset \RR^{d}$ be a lattice. For any isometry\/
  $R\in\OS(\vG)$, we have the relations\/
  $\Scal^{}_\vG(R)=\den^{}_\vG(R)\, \ZZ$ and\/
  $\scal^{}_\vG(R)=\den^{}_\vG(R) \, \QQ^{\times}$.
\end{lemma}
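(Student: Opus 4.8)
The plan is to establish the two identities $\Scal^{}_\vG(R)=\den^{}_\vG(R)\,\ZZ$ and $\scal^{}_\vG(R)=\den^{}_\vG(R)\,\QQ^{\times}$ by exploiting the module structure already noted in the preceding discussion. First I would recall that, for fixed $R$, the set $\Scal^{}_\vG(R)$ is a $\ZZ\ts$-module, which is the key structural fact making the argument work. Writing $\delta:=\den^{}_\vG(R)$ for the smallest positive element (which exists by Lemma~\ref{csl-lem:scal-d} and the ensuing local finiteness remark), the inclusion $\delta\ts\ZZ\subseteq\Scal^{}_\vG(R)$ is immediate, since every integer multiple of an element of a $\ZZ\ts$-module lies in that module.

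The reverse inclusion $\Scal^{}_\vG(R)\subseteq\delta\ts\ZZ$ is the crux. Here I would argue by a minimality (Euclidean division) argument: given any $\alpha\in\Scal^{}_\vG(R)$, write $\alpha=k\delta+r$ with $k\in\ZZ$ and $0\le r<\delta$. Since both $\alpha$ and $k\delta$ lie in the $\ZZ\ts$-module $\Scal^{}_\vG(R)$, so does their difference $r=\alpha-k\delta$. But $r$ is non-negative and strictly smaller than $\delta$; by the minimality of $\delta$ as the smallest \emph{positive} element, this forces $r=0$, whence $\alpha=k\delta\in\delta\ts\ZZ$. This proves the first identity. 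I do not expect any real obstacle here, as the whole argument is the standard characterisation of a discrete rank-one $\ZZ\ts$-module as $\delta\ts\ZZ$; the only point requiring the earlier results is the existence of a smallest positive element, which rests on the discreteness guaranteed by Lemma~\ref{csl-lem:scal-d}.

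For the second identity, I would pass from commensurability to containment. By Lemma~\ref{csl-lem:comm-lat}\eqref{csl-lem:comm-i4-lat}, if $\alpha\in\scal^{}_\vG(R)$, i.e.\ $\alpha R\vG\sim\vG$, then there is a positive integer $m$ with $m\alpha R\vG\subseteq\vG$, so $m\alpha\in\Scal^{}_\vG(R)=\delta\ts\ZZ$; hence $\alpha\in\delta\ts\QQ$, and since $\alpha\neq 0$ (as $\alpha R\vG$ has full rank) we get $\alpha\in\delta\ts\QQ^{\times}$. Conversely, any $\alpha=\delta\ts(a/b)$ with $a/b\in\QQ^{\times}$ satisfies $b\alpha=a\delta\in\Scal^{}_\vG(R)$, so $b\alpha R\vG\subseteq\vG$, giving $b\alpha R\vG\subseteq\vG$ and hence $\alpha R\vG\sim\vG$ via Lemma~\ref{csl-lem:comm-lat} again (the integer $b$ witnessing commensurability). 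This yields $\scal^{}_\vG(R)=\delta\ts\QQ^{\times}$ and completes the proof.
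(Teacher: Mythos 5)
Your proof is correct and follows essentially the same route as the paper: the $\ZZ\ts$-module structure of $\Scal^{}_\vG(R)$ together with minimality of $\den^{}_\vG(R)$ gives the first identity (the paper compresses your Euclidean-division step into the remark that otherwise one could find a scaling factor $\alpha$ with $0<\alpha<\den^{}_\vG(R)$), and Lemma~\ref{csl-lem:comm-lat} yields $\scal^{}_\vG(R)\subseteq\den^{}_\vG(R)\,\QQ^{\times}$ exactly as in the paper. The only cosmetic difference is the reverse inclusion, where the paper invokes transitivity of commensurateness ($\den^{}_\vG(R)\ts R\ts\vG\sim\vG$ and $q\ts\vG\sim\vG$ for $q\in\QQ^{\times}$), while you clear denominators and apply Lemma~\ref{csl-lem:comm-lat} directly; both arguments rest on the same facts.
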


\begin{proof}
  It remains to prove the statement about $\scal^{}_\vG(R)$. By
  definition, $\alpha \in \scal^{}_\vG(R)$ means
  $\alpha R \vG \sim \vG$.  By Lemma~\ref{csl-lem:comm-lat}, there
  exists an $m\in\NN$ such that $m \alpha R \vG \subseteq \vG$, whence
  $m \alpha \in \Scal^{}_\vG(R)=\den^{}_\vG(R)\, \ZZ$ and thus also
  $\scal^{}_\vG(R) \subseteq \den^{}_\vG(R) \, \QQ^{\times}$. On the
  other hand, $\den^{}_\vG(R) R \vG \sim \vG$ and $q\vG \sim \vG$ for
  all $q\in\QQ$ imply that $\alpha R \vG \sim \vG$ for all
  $\alpha \in \den^{}_\vG(R) \, \QQ^{\times}$, which shows that
  $\scal^{}_\vG(R) \supseteq \den^{}_\vG(R) \, \QQ^{\times}$ as well.
\end{proof}

More generally, we have $\scal^{}_\vG(R)=\alpha \ts \QQ^{\times}$ for any
$\alpha \in \scal^{}_\vG(R)$. Note that $\scal^{}_\vG(\one )= \QQ^{\times}$
and $\Scal^{}_\vG(\one )= \ZZ$.

Although we are ultimately more interested in the sets
$\Scal^{}_\vG(R)$, it is worthwhile to discuss $\scal^{}_\vG(R)$ as
these sets are easier to handle.  In particular, we have a natural
group structure on $\{\scal^{}_\vG(R) \mid R\in\OS(\vG)\}$, with the
product of two sets $A$ and $B$ defined in the obvious way as
$AB:=\{\alpha\beta \mid \alpha\in A, \beta\in B\}$, and the inverse of
$A$ given by $A^{-1}=\{ \alpha^{-1} \mid \alpha\in A\}$. The latter is
well defined as $0\not\in A$ whenever $A=\scal^{}_\vG(R)$.

\begin{lemma}\label{csl-lem:scal-group} 
  Let\/ $\vG\subset \RR^{d}$ be a lattice.  For any\/ $R,S\in
  \OS(\vG)$, we have
\[
  \scal^{}_\vG(R)\, \scal^{}_\vG(S)\, =\, \scal^{}_\vG(RS)
  \quad\text{and}\quad
  \scal^{}_\vG(R^{-1})\, =\, \bigl(\scal^{}_\vG(R)\bigr)^{-1}.
\]
\end{lemma}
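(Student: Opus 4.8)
The plan is to prove the two identities separately, translating each statement about sets of scaling factors back into containments of rotated lattices, and then invoking the already-established structure theorem, Lemma~\ref{csl-lem:scal-den}, which tells us that each $\scal^{}_\vG(R)$ is a single coset $\den^{}_\vG(R)\,\QQ^{\times}$. The key observation throughout is that membership $\alpha\in\scal^{}_\vG(R)$ means exactly $\alpha R\vG\sim\vG$, and commensurateness is an equivalence relation (stated after Lemma~\ref{csl-lem:comm-lat}), so it behaves transitively.

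For the product formula $\scal^{}_\vG(R)\,\scal^{}_\vG(S)=\scal^{}_\vG(RS)$, I would first prove the inclusion ``$\subseteq$''. Take $\alpha\in\scal^{}_\vG(R)$ and $\beta\in\scal^{}_\vG(S)$, so $\alpha R\vG\sim\vG$ and $\beta S\vG\sim\vG$. Applying the isometry $\alpha R$ to the second relation, and using that $\alpha R$ preserves commensurateness, I get $\alpha\beta RS\vG=\alpha R(\beta S\vG)\sim\alpha R\vG\sim\vG$, whence $\alpha\beta\in\scal^{}_\vG(RS)$. For the reverse inclusion ``$\supseteq$'', the cleanest route is to use Lemma~\ref{csl-lem:scal-den}: the sets $\scal^{}_\vG(R)$, $\scal^{}_\vG(S)$, $\scal^{}_\vG(RS)$ are all cosets of $\QQ^{\times}$, so their product $\scal^{}_\vG(R)\,\scal^{}_\vG(S)$ is again a single $\QQ^{\times}$-coset; since by the forward inclusion it is contained in the $\QQ^{\times}$-coset $\scal^{}_\vG(RS)$, and two cosets are either disjoint or equal, equality follows. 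This coset argument is what makes the reverse inclusion painless and is the reason it is worth invoking the structure lemma rather than arguing membership directly.

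The inverse formula $\scal^{}_\vG(R^{-1})=\bigl(\scal^{}_\vG(R)\bigr)^{-1}$ follows in the same spirit. If $\alpha\in\scal^{}_\vG(R)$, then $\alpha R\vG\sim\vG$; applying the isometry $\alpha^{-1}R^{-1}$ to both sides gives $\vG\sim\alpha^{-1}R^{-1}\vG$, so $\alpha^{-1}\in\scal^{}_\vG(R^{-1})$, yielding $\bigl(\scal^{}_\vG(R)\bigr)^{-1}\subseteq\scal^{}_\vG(R^{-1})$. The opposite inclusion is obtained by applying this to $R^{-1}$ in place of $R$ (note $R^{-1}\in\OS(\vG)$ by Fact~\ref{csl-fact:OS-group}), or again by the coset argument. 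One should also check at the outset that $RS$ and $R^{-1}$ indeed lie in $\OS(\vG)$, which is immediate from Fact~\ref{csl-fact:OS-group}, so that all three $\scal$-sets are nonempty and the identities are not vacuous.

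I do not expect a genuine obstacle here; the content is entirely the bookkeeping of how $\sim$ transforms under an isometry. The one point that needs a word of care is the reverse inclusion in the product formula, where a naive element-chase would require factoring a given $\gamma\in\scal^{}_\vG(RS)$ as $\alpha\beta$ with $\alpha\in\scal^{}_\vG(R)$, $\beta\in\scal^{}_\vG(S)$; this is exactly where the coset structure from Lemma~\ref{csl-lem:scal-den} does the work and avoids an explicit factorisation. The main subtlety is therefore not a hard step but the temptation to prove too much by hand rather than leaning on the already-derived fact that each $\scal^{}_\vG(R)$ is a full $\QQ^{\times}$-coset.
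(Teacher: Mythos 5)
Your proof is correct and follows essentially the same route as the paper, which likewise rests on the two facts you use: that commensurateness is an equivalence relation (for the easy inclusions) and that each $\scal^{}_\vG(R)$ is a full $\QQ^{\times}$-coset by Lemma~\ref{csl-lem:scal-den} (which makes the reverse inclusions automatic, via the induced map into $\RR_{+}/\QQ_{+}$). You have simply spelled out in detail what the paper states very tersely.
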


\begin{proof}
  The group structure is a consequence of the fact that
  commensurateness is an equivalence relation. Alternatively, this
  property also follows from Lemma~\ref{csl-lem:scal-den}, which
  suggests the definition of a natural mapping from the set
  $\{\scal^{}_\vG(R) \mid R\in\OS(\vG)\}$ into the multiplicative
  group $\RR_{+}/\QQ_{+}$, which becomes a group homomorphism with the
  multiplication as defined in this lemma.
\end{proof}

Actually, it is the homomorphism mentioned in the previous proof that
will establish a general connection between SSLs and CSLs, as we shall
discuss in Section~\ref{csl-sec:sslcsl}.

Note that $\{\scal^{}_\vG(R) \mid R\in\OS(\vG)\}$ is an Abelian group,
although $\OS(\vG)$ is not Abelian in general.  In particular, one has
$\scal^{}_\vG(RS)=\scal^{}_\vG(SR)$ even when $RS\ne SR$.
\smallskip

The situation is more involved for the sets $\Scal^{}_\vG(R)$. They do
not form a group, nor even a semigroup, as
$\Scal^{}_\vG(R)\,\Scal^{}_\vG(S)\subseteq \Scal^{}_\vG(RS)$ is
usually a proper inclusion. Nevertheless, we can still extract some
information on the denominator from this inclusion;
compare~\cite{csl-pzsslcsl1,csl-habil}. As
$\den^{}_\vG(R)\ts\den^{}_\vG(S)$ must be in $\scal^{}_\vG(RS)$, it is
an integer multiple of $\den^{}_\vG(RS)$, that is,
\begin{equation}
  \frac{\den^{}_\vG(R)\ts\den^{}_\vG(S)}{\den^{}_\vG(RS)}\,
  \in\,\NN\ts .
\end{equation}
An immediate consequence is that
$\den^{}_\vG(R)\ts\den^{}_\vG(R^{-1})$ is an integer. As
$[\vG: \den^{}_\vG(R) R\vG\ts ]= \bigl(\den^{}_\vG(R)\bigr)^d$ for an
isometry $R\in\OS(\vG)$, we also have
$\bigl(\den^{}_\vG(R)\bigr)^d \vG \subseteq \den^{}_\vG(R) R\vG$, from
which we infer
$\bigl(\den^{}_\vG(R)\bigr)^{d-1} R^{-1} \vG \subseteq \vG$. This
proves
\begin{equation}\label{csl-eq:den-Rinv}
  \frac{\bigl(\den^{}_\vG(R)\bigr)^{d-1}}{\den^{}_\vG(R^{-1})}\,
  \in\,\NN\ts .
\end{equation}

\begin{remark}\label{csl-rem:bound-den}
  Formula \eqref{csl-eq:den-Rinv} implies 
  $\den^{}_\vG(R^{-1})\leq \bigl(\den^{}_\vG(R)\bigr)^{d-1}$.  In
  fact, this upper bound is sharp. As an example, we consider the
  $\ZZ\ts$-span of the vectors $\xi^{i-1}e^{}_i$, where $\xi$ is the
  (positive) $d$-th root of a positive integer and
  $\{e^{}_{1\vphantom{d}},\ldots,e^{}_d\}$ is an orthonormal basis of
  $\RR^d$.  Let $R$ be the rotation that maps $e^{}_i$ onto
  $e^{}_{i+1}$ for $1\leq i \leq d-1$ and $e^{}_d$ onto
  $e^{}_1$. Then, $\den_\vG(R)=\xi$ and $\den_\vG(R^{-1})=\xi^{d-1}$.
  \exend
\end{remark}

The example in Remark~\ref{csl-rem:bound-den} also shows that
$\den^{}_{\vG}(R)$ and $\den^{}_{\vG}(R^{-1})$ generally differ if
$d\geq 3$.  However, in two dimensions, they always agree.

\begin{corollary}
  For any planar lattice\/ $\vG$, one has\/
  $\den^{}_\vG(R^{-1})=\den^{}_\vG(R)$.
\end{corollary}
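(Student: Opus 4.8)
The plan is to specialise the integrality relation \eqref{csl-eq:den-Rinv} to $d=2$ and then exploit the symmetry between $R$ and $R^{-1}$. First I would observe that for $d=2$, Eq.~\eqref{csl-eq:den-Rinv} reads
\[
  \frac{\den^{}_\vG(R)}{\den^{}_\vG(R^{-1})}\,\in\,\NN\ts ,
\]
so in particular $\den^{}_\vG(R^{-1})$ divides $\den^{}_\vG(R)$ as positive reals (meaning the quotient is a positive integer). The crucial point is that $R\in\OS(\vG)$ holds if and only if $R^{-1}\in\OS(\vG)$, since $\OS(\vG)$ is a group by Fact~\ref{csl-fact:OS-group}. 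Hence the same relation may be applied with $R$ replaced by $R^{-1}$.

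Next I would apply \eqref{csl-eq:den-Rinv} to $R^{-1}$ in place of $R$, using $(R^{-1})^{-1}=R$, which gives
\[
  \frac{\den^{}_\vG(R^{-1})}{\den^{}_\vG(R)}\,\in\,\NN\ts .
\]
Combining the two divisibility statements, $\den^{}_\vG(R)/\den^{}_\vG(R^{-1})$ and its reciprocal are both positive integers. The only way for a positive rational and its reciprocal to both be integers is for that rational to equal $1$. Since both denominators are positive reals (indeed $\bigl(\den^{}_\vG(R)\bigr)^2\in\NN$ by Lemma~\ref{csl-lem:scal-d}), I conclude $\den^{}_\vG(R^{-1})=\den^{}_\vG(R)$, as claimed.

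I do not anticipate a genuine obstacle here: the corollary is essentially the $d=2$ shadow of the already-proved bound in Remark~\ref{csl-rem:bound-den}, where the exponent $d-1$ collapses to $1$ and destroys the asymmetry visible in the example of that remark. The only point demanding a little care is the logical step that a positive rational $t$ with both $t\in\NN$ and $t^{-1}\in\NN$ must equal $1$; this is immediate but should be stated, since it is exactly what forces equality rather than mere divisibility. One should also make explicit that $R^{-1}$ is a legitimate argument, i.e.\ that $R^{-1}\in\OS(\vG)$, which is precisely the group property from Fact~\ref{csl-fact:OS-group}.
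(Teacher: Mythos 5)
Your proof is correct and follows exactly the paper's own argument: specialise Eq.~\eqref{csl-eq:den-Rinv} to $d=2$, apply it once more with $R$ and $R^{-1}$ interchanged (the paper's ``by symmetry''), and conclude that a positive quantity which together with its reciprocal is a positive integer must equal $1$. The extra care you take in justifying $R^{-1}\in\OS(\vG)$ via Fact~\ref{csl-fact:OS-group} is a point the paper leaves implicit, but the route is the same.
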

\begin{proof}
From Eq.~\eqref{csl-eq:den-Rinv}, we infer
\[
\frac{\den^{}_\vG(R)}{\den^{}_\vG(R^{-1})}\, \in\, \NN
\quad \text{as well as} \quad
\frac{\den^{}_\vG(R^{-1})}{\den^{}_\vG(R)}\, \in\, \NN\ts ,
\]
the latter by symmetry. Together, 
they imply  $\den^{}_\vG(R^{-1})=\den^{}_\vG(R)$.
\end{proof}

It is quite useful to understand the relationship between commensurate
lattices in more detail. Using the fact that commensurateness is an
equivalence relation, we can prove the following result.

\begin{lemma}
  If\/ $\vG$ and\/ $\vG'$ are two commensurate lattices in\/
  $\RR^{d}$, one has\/ $\OS(\vG)=\OS(\vG')$ as well as\/
  $\ts\scal_\vG(R)=\scal_{\vG'}(R)$.
\end{lemma}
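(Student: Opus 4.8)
The plan is to prove the assertion about $\scal$ first, since the statement about $\OS$ then comes for free. Indeed, by Fact~\ref{csl-lem:scal-nontriv}, membership $R\in\OS(\vG)$ is equivalent to $\scal^{}_\vG(R)\neq\varnothing$; hence, once we know that $\scal^{}_\vG(R)=\scal^{}_{\vG'}(R)$ for every $R\in\OG(d,\RR)$, the two sets are simultaneously empty or non-empty, and $\OS(\vG)=\OS(\vG')$ follows at once. So the entire content of the lemma reduces to the equality of the scaling sets.

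The key observation is that invertible linear maps preserve commensurateness. Concretely, for any $R\in\OG(d,\RR)$ and any $\alpha\neq 0$, the map $\alpha R$ is a linear bijection of $\RR^d$; it commutes with intersections, so $\alpha R(\vG\cap\vG')=\alpha R\vG\cap\alpha R\vG'$, and it carries the subgroup $\vG\cap\vG'$ isomorphically onto its image while leaving all subgroup indices unchanged. Consequently, if $\vG\sim\vG'$, then $\alpha R\vG\sim\alpha R\vG'$ as well. This is the only geometric input needed, and it is precisely where the equivalence-relation structure of $\sim$ comes into play.

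With this in hand, I would argue as follows. Suppose $\alpha\in\scal^{}_\vG(R)$, that is, $\alpha R\vG\sim\vG$. Chaining the three relations
\[
  \alpha R\vG' \,\sim\, \alpha R\vG \,\sim\, \vG \,\sim\, \vG' ,
\]
where the first holds by the observation above, the second is the assumption $\alpha\in\scal^{}_\vG(R)$, and the third is the hypothesis that $\vG$ and $\vG'$ are commensurate, transitivity of $\sim$ yields $\alpha R\vG'\sim\vG'$, i.e.\ $\alpha\in\scal^{}_{\vG'}(R)$. This gives the inclusion $\scal^{}_\vG(R)\subseteq\scal^{}_{\vG'}(R)$, and exchanging the roles of $\vG$ and $\vG'$ (using that $\vG'\sim\vG$ by symmetry) yields the reverse inclusion, hence equality.

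I expect no substantial obstacle here: the whole argument rests on the fact, recorded immediately after Lemma~\ref{csl-lem:comm-lat}, that commensurateness is an equivalence relation, together with the elementary remark that linear bijections commute with intersections and preserve subgroup indices. The only point requiring a little care is the verification of this last remark for the map $\alpha R$, but that is routine, so the proof should be short and clean.
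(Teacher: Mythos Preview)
Your argument is correct and genuinely different from the paper's. The paper works in the opposite order: it first establishes $\OS(\vG)=\OS(\vG')$ by a direct sandwich argument (choosing $m,n\in\NN$ with $m\vG\subseteq\vG'$ and $n\vG'\subseteq\vG$, and deducing $mn\,\alpha R\vG'\subseteq\vG'$ from $\alpha R\vG\subseteq\vG$), and only then obtains $\scal_\vG(R)=\scal_{\vG'}(R)$ by invoking Lemma~\ref{csl-lem:scal-den}, i.e.\ the structure $\scal_\vG(R)=\den_\vG(R)\,\QQ^\times$, after observing that the denominators differ by a rational factor.

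Your route is cleaner in that it uses only the equivalence-relation property of commensurateness together with its invariance under linear bijections, avoiding the denominator machinery altogether; in particular, it transports verbatim to the module setting of Section~\ref{csl-sec:ssm}, where a single generating denominator need not exist. The paper's approach, on the other hand, passes through $\Scal$ and yields the explicit rational relation between $\den_\vG(R)$ and $\den_{\vG'}(R)$ as a by-product, which dovetails with the quantitative statements that follow (Proposition~\ref{csl-prop:comp-Scal}).
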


\begin{proof}
  By Fact~\ref{csl-lem:scal-nontriv}, $R\in \OS(\vG)$ if and only if
  there exists an $\alpha\ne 0$ such that $\alpha R \vG \subseteq \vG$.
  Commensurateness guarantees that there are $m,n\in \NN$ such that
  $m\vG \subseteq \vG'$ and $n\vG' \subseteq \vG$. Thus,
  $\alpha R \vG \subseteq \vG$ implies
  \begin{equation}\label{csl-eq:os-comm}
    mn \alpha R \vG' \,\subseteq\, m\alpha R \vG \,\subseteq\, 
   m\vG \,\subseteq\, \vG',
  \end{equation}
  which gives $\OS(\vG)\subseteq\OS(\vG')$. By symmetry, we conclude
  $\OS(\vG)=\OS(\vG')$. 
  Moreover, Eq.~\eqref{csl-eq:os-comm} shows that $\den_{\vG}(R)$ and
  $\den_{\vG'}(R)$ differ only by a factor $q\in\QQ$, whence one has
  $\scal_\vG(R)=\scal_{\vG'}(R)$ by Lemma~\ref{csl-lem:scal-den}.
\end{proof}

Of course, we cannot expect the sets $\Scal_\vG(R)$ and
$\Scal_{\vG'}(R)$ to be equal. However, as we can sandwich $\vG'$
between appropriately scaled copies of $\vG$, we can derive lower and
upper bounds as follows.

\begin{proposition}\label{csl-prop:comp-Scal}
  Let\/ $\vG^{\prime}$ be a sublattice of\/ $\vG$ of index\/ $m$. Then,
\[
 m\ts\Scal_\vG(R)\,\subseteq\,\Scal_{\vG'}(R)\,\subseteq\,
 \badfrac{1}{m}\ts\Scal_\vG(R)\ts .
\]
Moreover, one has \index{denominator}
\[
   \frac{m\ts\den^{}_\vG(R)}{\den^{}_{\vG'}(R)} \,\in\, \NN
   \quad \text{and} \quad
   \frac{m\ts\den^{}_{\vG'}(R)}{\den^{}_\vG(R)} \,\in\, \NN\ts .
\]
\end{proposition}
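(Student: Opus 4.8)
The plan is to exploit the sandwiching of $\vG'$ between two rescaled copies of $\vG$, which is exactly what the index hypothesis provides. Since $\vG'\subseteq\vG$ has index $m$, basic lattice theory gives the two inclusions
\[
   m\ts\vG \,\subseteq\, \vG' \,\subseteq\, \vG,
\]
where the left inclusion holds because $\vG/\vG'$ is a finite abelian group whose order $m$ annihilates every coset, so that $m\gamma\in\vG'$ for all $\gamma\in\vG$. These two inclusions are the only geometric input needed; everything else is a formal manipulation of the defining condition $\alpha R\vG'\subseteq\vG'$.

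First I would prove the upper inclusion $\Scal_{\vG'}(R)\subseteq\frac{1}{m}\Scal_\vG(R)$. Take $\alpha\in\Scal_{\vG'}(R)$, so $\alpha R\vG'\subseteq\vG'$. Applying $m\vG\subseteq\vG'\subseteq\vG$ on both ends yields
\[
   \alpha R\,(m\vG)\,\subseteq\,\alpha R\vG'\,\subseteq\,\vG'\,\subseteq\,\vG,
\]
so $m\alpha\in\Scal_\vG(R)$, i.e. $\alpha\in\frac{1}{m}\Scal_\vG(R)$. For the lower inclusion $m\ts\Scal_\vG(R)\subseteq\Scal_{\vG'}(R)$, take $\alpha\in\Scal_\vG(R)$, so $\alpha R\vG\subseteq\vG$. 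Then, using $\vG'\subseteq\vG$ and $m\vG\subseteq\vG'$,
\[
   m\alpha R\vG'\,\subseteq\,m\alpha R\vG\,\subseteq\,m\vG\,\subseteq\,\vG',
\]
which shows $m\alpha\in\Scal_{\vG'}(R)$. This establishes both inclusions in the displayed chain.

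For the denominator statements, I would specialise to the smallest positive elements. Since $\den_\vG(R)$ is the smallest positive element of $\Scal_\vG(R)$ by definition, the lower inclusion gives $m\ts\den_\vG(R)\in\Scal_{\vG'}(R)$, and by Lemma~\ref{csl-lem:scal-den} every element of $\Scal_{\vG'}(R)=\den_{\vG'}(R)\ts\ZZ$ is an integer multiple of $\den_{\vG'}(R)$; hence $\frac{m\ts\den_\vG(R)}{\den_{\vG'}(R)}\in\NN$. Symmetrically, the upper inclusion gives $m\ts\den_{\vG'}(R)\in\Scal_\vG(R)=\den_\vG(R)\ts\ZZ$, yielding $\frac{m\ts\den_{\vG'}(R)}{\den_\vG(R)}\in\NN$. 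The only point requiring a little care is that both ratios are genuinely \emph{positive} integers rather than merely rationals, but this is immediate since $m$, $\den_\vG(R)$ and $\den_{\vG'}(R)$ are all positive. I do not anticipate any real obstacle here: the entire argument is a direct application of the inclusions $m\vG\subseteq\vG'\subseteq\vG$ together with Lemma~\ref{csl-lem:scal-den}, and the mildest subtlety is simply keeping track of which factor of $m$ accompanies $\alpha$ on each side.
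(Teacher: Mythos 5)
Your proof is correct and follows essentially the same route as the paper: both arguments rest on the sandwich $m\vG \subseteq \vG' \subseteq \vG$ (the paper writes the first inclusion as $\vG \subseteq \frac{1}{m}\vG'$) and then deduce the denominator divisibilities from Lemma~\ref{csl-lem:scal-den}. The only cosmetic difference is that you multiply through by $m$ where the paper rescales by $\frac{1}{m}$, which is the same manipulation.
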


\begin{proof}
  If $\alpha\in\Scal_\vG(R)$, then
\[
    \alpha R \vG' \,\subseteq\, 
    \alpha R \vG \subseteq \vG 
    \,\subseteq\, \tfrac{1}{m} \vG'
\]
  shows that $m\alpha \in \Scal_{\vG'}(R)$. Similarly, 
  $\alpha\in\Scal_{\vG'}(R)$ implies
\[
    \alpha R\ts  m\vG \, \subseteq \, 
    \alpha R \vG' \subseteq \vG' 
    \, \subseteq \, \vG \ts,
\]
  which proves $\Scal_{\vG'}(R)\,\subseteq\,  \frac{1}{m}\ts\Scal_\vG(R)$.
  The statement about the denominators now follows from the explicit
  expressions for $\Scal_\vG(R)$ from Lemma~\ref{csl-lem:scal-den},
  or by choosing $\alpha$ to be the denominator in the equations above.
\end{proof}

Let us add that, more generally, one can show that
\[
   m^{}_{1} m^{}_{2} \Scal_{\vG} (R) \, \subseteq \,
   \Scal_{\vG'} (R) \, \subseteq \,
   \badfrac{1}{m^{}_{1} m^{}_{2}} \, \Scal^{}_{\vG} (R)
\]
whenever $m^{}_{1} \vG \subseteq \vG'$ and
$m^{}_{2} \vG' \subseteq \vG$.

Let us conclude these general considerations with a remark on the dual
lattice, defined as $\vG^{*}=\{x\in\RR^{d}\mid \ip{x}{y} \in\ZZ
\text{ for all } y\in\vG\}$; compare \cite[Sec.~3.1]{csl-TAO}.

\begin{lemma}
  If\/ $\vG^*$ is the dual\index{lattice!dual} lattice of\/ $\vG$, one
  has\/ $\OS(\vG)=\OS(\vG^{*})$ together with
\[
    \Scal_{\vG^{*}}(R)\, =\, \Scal_\vG(R^{-1})\ts .
\]
In particular, $\den^{}_{\vG^{*}}(R)=\den^{}_\vG(R^{-1})$.
\end{lemma}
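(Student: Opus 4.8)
The plan is to reduce everything to the single set identity $\Scal_{\vG^*}(R)=\Scal_\vG(R^{-1})$, from which the statements about $\OS$ and the denominators both follow at once. The facts about duals I would isolate first are the transformation rule $(A\vG)^*=(A^T)^{-1}\vG^*$ for any invertible linear map $A$ (so that $(\alpha\vG)^*=\tfrac1\alpha\vG^*$ and, for $R\in\OG(d,\RR)$, $(R\vG)^*=R\vG^*$ since $R^T=R^{-1}$), together with order reversal (if $\vG_1\subseteq\vG_2$, then $\vG_2^*\subseteq\vG_1^*$) and reflexivity $(\vG^*)^*=\vG$. All three are standard properties of the dual and I would simply quote them.

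First I would prove $\Scal_{\vG^*}(R)\subseteq\Scal_\vG(R^{-1})$. Starting from $\alpha R\vG^*\subseteq\vG^*$ and dualising, order reversal gives $\vG=(\vG^*)^*\subseteq(\alpha R\vG^*)^*$. Applying the transformation rule with $A=\alpha R$ and using $R^T=R^{-1}$ yields $(\alpha R\vG^*)^*=\tfrac1\alpha R\vG$, so $\vG\subseteq\tfrac1\alpha R\vG$, which is equivalent to $\alpha R^{-1}\vG\subseteq\vG$, i.e.\ $\alpha\in\Scal_\vG(R^{-1})$. The reverse inclusion follows by the same computation with the roles of $\vG$ and $\vG^*$ interchanged (again using $(\vG^*)^*=\vG$), so the two sets coincide.

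With the set identity in hand, the remaining assertions are immediate. Since $\OS(\vG)$ is a group by Fact~\ref{csl-fact:OS-group}, one has $R\in\OS(\vG)$ if and only if $R^{-1}\in\OS(\vG)$; combined with the criterion $\Scal_\vG(\cdot)\ne\{0\}\Leftrightarrow\cdot\in\OS(\vG)$ from Fact~\ref{csl-lem:scal-nontriv}, this gives the chain $R\in\OS(\vG^*)\Leftrightarrow\Scal_{\vG^*}(R)\ne\{0\}\Leftrightarrow\Scal_\vG(R^{-1})\ne\{0\}\Leftrightarrow R^{-1}\in\OS(\vG)\Leftrightarrow R\in\OS(\vG)$, whence $\OS(\vG)=\OS(\vG^*)$. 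Finally, $\den^{}_{\vG^*}(R)$ is by definition the smallest positive element of $\Scal_{\vG^*}(R)=\Scal_\vG(R^{-1})$, which is precisely $\den^{}_\vG(R^{-1})$.

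The computations are routine; the only point that needs care — and where I expect a reader could slip — is the interaction of the scalar and the orthogonal factor under dualisation, namely tracking the inversion $\alpha\mapsto\tfrac1\alpha$ while exploiting $R^T=R^{-1}$ so that $R$ itself (rather than its transpose-inverse) reappears. That is the mechanism that converts $R$ into $R^{-1}$ upon passing to the dual, and it is the crux of the whole statement.
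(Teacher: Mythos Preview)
Your proof is correct and follows essentially the same approach as the paper. The paper works directly with the inner-product definition of the dual, using $\langle Rx\mid y\rangle=\langle x\mid R^{-1}y\rangle$ to show $\alpha R\vG^*\subseteq\vG^*\Leftrightarrow\alpha R^{-1}\vG\subseteq\vG$ in one step, whereas you package the same computation into the standard dual-lattice identities $(A\vG)^*=(A^T)^{-1}\vG^*$ and order reversal; the underlying mechanism---that $R^T=R^{-1}$ is what turns $R$ into $R^{-1}$ upon dualising---is identical.
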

\begin{proof}
  As $\OS(\vG)$ is a group, $R\in\OS(\vG)$ if and only if
  $R^{-1}\in\OS(\vG)$. The latter holds if and only if there is an
  $\alpha\in\RR_{+}$ such that $\alpha R^{-1} \vG \subseteq \vG $.  By
  the definition of the dual lattice, this is equivalent to $\alpha
  \langle x| R^{-1}y\rangle \in\ZZ$ for all $x\in\vG^{*}$ and
  $y\in\vG$.

  Now, $\alpha \langle Rx| y\rangle = \alpha \langle x|
  R^{-1}y\rangle$ shows that $\alpha R \vG^{*}\subseteq \vG^{*}$ holds
  if and only if $\alpha R^{-1} \vG \subseteq \vG $, which proves
  $\OS(\vG)=\OS(\vG^{*})$.  On the other hand, this equation shows
  that $\alpha\in \Scal_{\vG^{*}}(R)$ if and only if $\alpha\in
  \Scal_\vG(R^{-1})$, which completes the proof.
\end{proof}

\subsection{Two dimensions}\label{csl-sec:two-dim}

Let us consider some concrete examples. We start in two dimensions,
where we can make use of the field of complex numbers to characterise
$\SOS(\vG)$ completely. Here, any orientation-preserving similarity
transformation can be represented by complex multiplication, and it
turns out that the semigroup of similarity transformations then forms
a ring, which we call the \emph{multiplier
  ring}\index{multiplier~ring}. The latter is denoted by
$\mul(\vG)$. Actually, there are only two cases. Either
$\SOS(\vG)=\{\pm 1\}$, or equivalently $\mul(\vG)=\ZZ$, in which case
we call $\vG$ \emph{generic}, or $\mul(\vG)=\cO$ is an
order\footnote{{\ts}Note that the symbol $\cO$ occurs with two different
  meanings in this chapter, namely for asymptotic estimates and for
  orders (in the algebraic sense introduced earlier). Since the
  meaning will always be clear from the context, we stick to this
  widely used notation.}  in an imaginary quadratic number
field;\index{number~field} compare~\cite{csl-BSZ-sim} and
\cite{csl-Hardy,csl-Borevic-Sh,csl-Cox} for general
background.\vspace{1mm}

For a more precise formulation, we employ the \emph{similarity class}
of a given lattice $\vG$, denoted by $\Sim(\vG)$, which consists of all
lattices $\vG' \sim \vG$.

\begin{theorem}[{\cite[Prop.~2.3 and 
Thm.~2.6]{csl-BSZ-sim}}]\label{csl-theo:nongenmul}
  If\/ $\vG\subset\RR^{2}$ is a non-generic lattice, its multiplier
  ring\/ $\mul(\vG)$ is an order in an imaginary quadratic
  field. Explicitly, if\/ $\vG \in \Sim \bigl( \langle 1, \tau
  \rangle^{}_\ZZ \bigr)$ with\/ $\tau \in \CC \setminus \RR$ is a
  non-generic lattice, the number\/ $\tau$ is algebraic of degree\/
  $2$ over\/ $\QQ$, and one has
  \[
    \mul ( \vG) \, = \, \langle 1, s \tau \rangle^{}_{\ZZ}
  \]
  for some non-zero integer\/ $s$.

  Moreover, if\/ $K$ is the field of fractions of\/ $\cO:=\mul(\vG)$,
  one has\/ $\mul (\vG) = \cO = \mul (\cO)$. In particular,
\[
\begin{split}
   \SOS (\vG) \, & = \, \SOS (\cO) \, = \, \SOS (\cO_{K}) \\[1mm]
   & = \, \Bigl\{ \myfrac{w}{\lvert w \rvert} \;\big|\; 0\ne w\in \cO \Bigr\}
    \, = \, \Bigl\{ \myfrac{w}{\lvert w \rvert} \;\big|\; 
     0\ne w\in \cO_{K} \Bigr\} ,
\end{split}
\]
  where\/ $\cO_{K}$ is the maximal order\index{maximal~order} of\/ $K$
  and thus contains\/ $\cO$. \qed
\end{theorem}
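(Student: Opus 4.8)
The plan is to work in the complex model of a planar lattice. Since every orientation-preserving similarity of the plane is complex multiplication by some $w \in \CC^{\times}$, I identify $\RR^2$ with $\CC$ and write $\vG = \langle 1, \tau\rangle^{}_{\ZZ}$ after rescaling and rotating, which is harmless because $\SOS$ and the multiplier ring are invariant under similarity (this is the content of Lemma~\ref{csl-lem:OS-sim}). The key object is the multiplier ring $\mul(\vG) = \{w \in \CC \mid w\vG \subseteq \vG\}$, whose nonzero elements are exactly the orientation-preserving similarity isometries up to a positive real scale. The first step is to establish that $\mul(\vG)$ really is a ring: closure under addition is immediate, and closure under multiplication follows because $w\vG \subseteq \vG$ and $w'\vG \subseteq \vG$ give $ww'\vG \subseteq w\vG \subseteq \vG$. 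I would then show $\mul(\vG)$ contains $\ZZ$ and is a finitely generated $\ZZ\ts$-module (being a subgroup of the lattice $\vG$ after dividing by a fixed nonzero element), hence a $\ZZ\ts$-order in its field of fractions $K$ once we know $K$ is a quadratic field.

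The second step pins down $K$. Because $\vG$ is non-generic, there is some $w \in \mul(\vG)$ with $w \notin \RR$; writing the action of $w$ on the basis $\{1,\tau\}$ as an integer matrix $M_w$ (since $w\vG \subseteq \vG$), the characteristic polynomial of $M_w$ is a monic quadratic over $\ZZ$ with $w$ as a root, so $w$ is an algebraic integer of degree $2$ and $\tau$ likewise lies in the quadratic field $K = \QQ(w)$. As $w \notin \RR$, the field $K$ is imaginary quadratic, and $\mul(\vG) \subseteq \cO_K$, the ring of integers of $K$. This gives $\mul(\vG) = \langle 1, s\tau\rangle^{}_{\ZZ}$ for a suitable nonzero integer $s$ by comparing the $\ZZ\ts$-module structure of $\mul(\vG)$ against $\vG$, and shows $\tau$ is algebraic of degree $2$ as claimed.

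The third and central step is the chain of equalities for $\SOS$. The identity $\SOS(\vG) = \{w/\lvert w\rvert \mid 0 \ne w \in \cO\}$ with $\cO = \mul(\vG)$ is essentially the definition unwound: every orientation-preserving similarity isometry is $w/\lvert w\rvert$ for some $w$ realising a scaled inclusion, and conversely. The substantive claim is that we may replace $\cO$ by the maximal order $\cO_K$, i.e.\ $\SOS(\cO) = \SOS(\cO_K)$. I would prove the nontrivial inclusion by noting that $\cO$ and $\cO_K$ are commensurate as lattices in $\CC$ (both are full-rank $\ZZ\ts$-modules in $K$, and $\cO \subseteq \cO_K$ has finite index), so by the lemma asserting $\OS(\vG)=\OS(\vG')$ for commensurate lattices, their $\SOS$-groups coincide. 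The equality $\mul(\cO) = \cO$ then follows because the ring of multipliers of an order equals that order: an element $w$ with $w\cO \subseteq \cO$ is integral over $\ZZ$ (Cayley--Hamilton on the multiplication matrix) and lies in $K$, and must already belong to $\cO$ since $\cO$ is closed under multiplication by its own elements while $\cO_K$-elements outside $\cO$ would enlarge it.

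\emph{The main obstacle} I anticipate is the clean passage $\SOS(\cO) = \SOS(\cO_K)$ together with $\mul(\cO)=\mul(\cO_K)=\cO$: the point is that passing to the maximal order does not enlarge the group of similarity \emph{directions} even though $\cO_K$ may properly contain $\cO$. The resolution is precisely the commensurability argument above — scaling factors can differ between commensurate lattices, but the set of admissible isometry directions $w/\lvert w\rvert$ is unchanged — and then verifying that $\mul$ is stable under this enlargement, so that $\cO$ is recovered as its own multiplier ring rather than jumping up to $\cO_K$.
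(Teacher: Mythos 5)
Your proposal is correct; note that the chapter itself gives no proof of this theorem (it is imported verbatim from the cited reference), so the only comparison available is with the framework the chapter builds, and your argument uses exactly that framework: similarity invariance of $\OS$ and of the multiplier ring (Lemma~\ref{csl-lem:OS-sim}), the containment $\mul(\vG)\subseteq\vG$ coming from $1\in\vG$, integrality and degree $2$ via the characteristic polynomial of the integer multiplication matrix, and the lemma that commensurate lattices have equal $\OS$-groups to pass between $\vG$, $\cO$ and $\cO_K$. The only rough spot is your justification of $\mul(\cO)=\cO$ (and, implicitly, $\mul(\cO_K)=\cO_K$), which is simpler than the argument you sketch: $w\ts\cO\subseteq\cO$ forces $w=w\cdot 1\in\cO$, and ring closure gives the reverse inclusion.
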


Note that the group $\SOS(\vG)$ is the same for all lattices in
$\Sim(\vG)$, which follows via Lemma~\ref{csl-lem:OS-sim} from the
fact that the group $\SO(2)$ is Abelian.  Actually, it is the same for
all lattices whose multiplier ring has the same field of fractions,
although the corresponding multiplier rings usually
differ.\index{multiplier~ring}

\begin{example}\label{csl-ex:ssl-sq}
  For the square lattice, which we write as $\ZZ^{2}=\ZZ[\ii]$, we have
  $\mul\bigl(\ZZ[\ii]\bigr)=\ZZ[\ii]$.  This implies
  \[
   \SOS \bigl(\ZZ[\ii]\bigr) \, 
    = \, \Bigl\{ \myfrac{z}{\lvert z \rvert} \;\big|\; 
   0\ne z\in \ZZ[\ii] \Bigr\} 
   \, \simeq\,  C_8 \times \ZZ^{(\aleph_0)} ,
  \]
  where $\ZZ^{(\aleph_0)}$ denotes the countably infinite sum of
  infinite cyclic groups (in contrast to the infinite product).  Here,
  the group $C_8$ is generated by $\frac{1+\ii}{\sqrt{2}}$ and contains
  all units of $\ZZ[\ii]$, whereas a full set of generators of
  $\ZZ^{(\aleph_0)}$ is the set
  $\bigl\{\frac{\pi_{\nts p}}{\sqrt{p}} \mid p\equiv 1 \bmod{4}
  \bigr\}$,
  where $\pi_{\nts p}$ is a Gaussian
  prime\index{Gaussian~integer}\index{prime!Gaussian} such that
  $\pi_{\nts p} \ts\overline{\pi_{\nts p}} = p$. Note that for any $p$
  only one prime of the pair $\pi_{\nts p}$, $\overline{\pi_{\nts p}}$
  is needed, as we have
  $\bigl(\frac{\pi_{\nts p}}{\sqrt{p}}\bigr)^{-1}=
  \frac{\overline{\pi_{\nts p}}}{\sqrt{p}}$.  \exend
\end{example}

The situation is particularly nice if the multiplier ring is a
PID.\index{principal~ideal~domain} In this case, all ideals are
similar sublattices and the situation is completely analogous to that
of the square lattice example in Section~\ref{csl-sec:square}, so we
can write down the generating function explicitly.  This happens for a
finite number of cases only; compare Table~\ref{csl-tab:max}.

\begin{lemma}[{\cite[Thm.~7.30]{csl-Cox}}]\label{csl-lem:CNone}
  There are precisely nine imaginary quadratic
  fields\index{number~field} with class number\/ $1$, meaning
  their maximal orders being PIDs. These are the fields\/
  $K=\QQ(\omega^{}_{0})$ for
  \[
   \omega^{}_{0} \, \in \, \big\{ \tfrac{1+\ii\sqrt{3}}{2}, \ii,
   \tfrac{1+\ii\sqrt{7}}{2}, \ii \sqrt{2}, \tfrac{1+\ii\sqrt{11}}{2},
   \tfrac{1+\ii\sqrt{19}}{2}, \tfrac{1+\ii\sqrt{43}}{2},
   \tfrac{1+\ii\sqrt{67}}{2}, \tfrac{1+\ii\sqrt{163}}{2} \big\} ,
  \]
  which are fields of discriminant\index{discriminant}
\[  
  d_{K}\in  \{-3,-4,-7,-8,-11,-19,-43,-67,-163\} \ts .
\]  
  Here, the maximal order of\/ $K$ is\/ $\cO_K =
  \ZZ[\omega^{}_{0}]$, while one has\/ 
  $\QQ(\omega^{}_{0}) = \QQ(\sqrt{d_{K}}\,)$. \qed
\end{lemma}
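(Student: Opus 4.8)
The plan is to prove the two halves of the statement separately: first the comparatively elementary verification that each of the nine listed fields does have class number $1$, and then the deep converse that no other imaginary quadratic field does.

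For the forward direction, I would use that $\cO_K$ is a Dedekind domain, so that being a PID is equivalent to having trivial ideal class group, and recall that the class group of an imaginary quadratic field $K$ is generated by the classes of prime ideals $\mathfrak{p}$ with norm $N(\mathfrak{p})\le M_K$, where $M_K=\frac{2}{\pi}\sqrt{\lvert d_K\rvert}$ is the Minkowski bound. For each of the nine discriminants, $M_K$ is small, so only finitely many rational primes $p$ need to be examined. For each such $p$ one checks whether $p$ is inert (in which case $p\ts\cO_K$ is already principal) or splits or ramifies; in the latter cases one verifies directly that the prime ideal above $p$ is generated by a single element, equivalently that $p$ is represented by the principal binary quadratic form of discriminant $d_K$. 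As every generator of the class group then turns out to be trivial, each of these nine fields is a PID.

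Along the way I would settle the bookkeeping relating the list of $\omega^{}_0$ to the list of discriminants. Writing $K=\QQ(\sqrt{m}\,)$ with $m<0$ squarefree, one has $\cO_K=\ZZ[\sqrt{m}\,]$ and $d_K=4m$ when $m\equiv 2,3\pmod 4$, while $\cO_K=\ZZ\bigl[\frac{1+\sqrt{m}}{2}\bigr]$ and $d_K=m$ when $m\equiv 1\pmod 4$. Since $\sqrt{m}=\ii\sqrt{\lvert m\rvert}$, this matches $\omega^{}_0=\ii,\ii\sqrt{2}$ with $d_K=-4,-8$ (from $m=-1,-2$) and the seven values $\omega^{}_0=\frac{1+\ii\sqrt{\lvert m\rvert}}{2}$ with $d_K=m\in\{-3,-7,-11,-19,-43,-67,-163\}$, and gives $\cO_K=\ZZ[\omega^{}_0]$ and $\QQ(\omega^{}_0)=\QQ(\sqrt{d_K}\,)$ in each case.

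The hard part---and it is genuinely hard---is the converse: that these nine exhaust all imaginary quadratic fields of class number $1$. This is the Gauss class number one problem, whose full resolution requires substantial machinery and admits no elementary argument. I would invoke one of the two classical routes. The first, due to Heegner and made rigorous by Stark, studies special values of modular functions (for instance, the Weber functions) at the relevant CM points: the assumption of class number $1$ forces these values to be rational integers subject to a Diophantine constraint with only finitely many solutions, which one checks to yield precisely the nine discriminants. The second, due to Baker, uses effective lower bounds for linear forms in logarithms of algebraic numbers to bound $\lvert d_K\rvert$ explicitly, after which a finite computation finishes the argument. Either route can be cited wholesale, as in \cite{csl-Cox}; reproducing it in full is well beyond the scope of the present discussion, and it is precisely this step that constitutes the main obstacle.
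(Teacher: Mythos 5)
Your proposal is correct and is in essence what the paper does: the paper states this lemma without proof, citing \cite[Thm.~7.30]{csl-Cox}, since (as you rightly observe) the converse is the Baker--Heegner--Stark resolution of the class number one problem and admits no self-contained elementary argument. Your supplementary sketch of the easy direction (Minkowski-bound verification that the nine listed fields are PIDs) and the bookkeeping between the $\omega^{}_{0}$, the discriminants, and the maximal orders $\ZZ[\omega^{}_{0}]$ are both accurate, so the proposal is sound and consistent with the paper's treatment.
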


\begin{table}
  \caption{Norm forms for the nine maximal orders $\cO_{K}$ of class
    number~$1$ in imaginary quadratic number fields,
    labelled with the
    field discriminant $d_{K}$; see Lemma~\ref{csl-lem:CNone} and
    Remark~\ref{csl-rem:CNone} for details.}
\label{csl-tab:max}
\renewcommand{\arraystretch}{1.25}
\begin{tabular}{|c|c||c|c||c|c|}
\hline
$d_{K}$ &  norm form & 
$d_{K}$ &  norm form & 
$d_{K}$ & norm form  \\ \hline
$-3$   &  $x^2 + xy + y^2$ & 
$-8$   &  $x^2 + 2 y^2$ &
$-43$  &  $x^2 + xy + 11 y^2$ 
\\
$-4$   &  $x^2 + y^2$ & 
$-11$  &  $x^2 + xy + 3 y^2$ &
$-67$  &  $x^2 + xy + 17 y^2$
 \\
$-7$   &  $x^2 + xy + 2 y^2$ &
$-19$ &  $x^2 + xy +  5 y^2$ &
$-163$ & $x^2 + xy + 41 y^2$\\
\hline
\end{tabular}
\end{table}

\begin{remark}\label{csl-rem:CNone}
  In the nine cases of Lemma~\ref{csl-lem:CNone}, the possible indices
  of the similar sublattices of $\cO_K$ are precisely those positive
  integers that can be represented by the corresponding norm forms
  listed in Table~\ref{csl-tab:max}. As a consequence, the Dirichlet
  series generating function for the number of SSLs of a given
  index\index{index} is the zeta\index{zeta~function!Dedekind}
  function of $\cO_K$, which is the Dedekind zeta function
  $\zeta^{}_{K}$ of the quadratic field $K$.\exend
\end{remark}

Let us recall some properties of the Dedekind zeta function
$\zeta^{}_{K}$. The latter is known \cite{csl-Zagier} to factorise as
\begin{equation} \label{csl-quadratic-zeta}
   \zeta^{}_{K} (s) \, = \, \zeta(s) \, L(s,\chi) \ts ,
\end{equation}
where $L(s,\chi)$ is the \mbox{$L\ts$-series} of the
non-trivial character $\chi = \chi^{}_{d_{K}}$ of the quadratic field
$K$.  The latter is a totally multiplicative arithmetic function and
thus given by $\chi^{}_{d_{K}} (1) =1$ together with its values at the
rational primes (that is, primes in 
$\ZZ\subset\QQ$),\index{Dirichlet~character}\vspace*{-2mm}
\[
    \chi^{}_{d_{K}} (p)\, =\, \begin{cases}
    0 \ts , & p \mid d_{K}\ts ,  \\[1mm]
    \bigl( \frac{d_{K_{\vphantom{a}}}}{p}\bigr),
      & 2\ne p \nmid d_{K} \ts ,\\[1mm]
    \bigl( \frac{d_{K_{\vphantom{a}}}}{2}\bigr), 
      & p=2\nmid d_{K}\ts .
    \end{cases}
\]
Here, $\bigl(\frac{m}{p}\bigr)$ and
$\bigl(\frac{m}{2}\bigr)$ denote the
Legendre\index{Legendre~symbol} and the
Kronecker\index{Kronecker~symbol} symbol, respectively, the latter 
defined as\vspace*{-1mm}
\[ 
    \left(\myfrac{m}{2}\right)\, = \, \begin{cases}
    1 \ts , & m \equiv \pm 1 \; (8) \ts , \\
    -1\ts , & m \equiv \pm 3 \; (8) \ts , \\
    0\ts , & m \equiv 0 \; (2)\ts .
    \end{cases} %\vspace*{1mm}
\]
This permits a direct calculation of the zeta function via its
Euler\index{Euler~product} product, as the character $\chi(p)$ takes
only the values $0$, $-1$, or $1$, depending on whether the rational
prime $p$ ramifies, is inert, or
splits\index{prime!inert}\index{prime!ramified}\index{prime!splitting}
in the extension from $\QQ$ to $K$. The general formula reads
\begin{equation}\label{csl-quadratic-zeta-two}
\begin{split}
      \zeta^{}_{K} (s) \, &  = \,\prod_{p\in\PP}  
      \myfrac{1}{(1-p^{-s})(1-\chi(p)\ts p^{-s})} \\[1mm]
      & = \!\!
      \prod_{\substack{p\in\PP \\ \chi(p)=0}} \! \myfrac{1}{1-p^{-s}} \!
      \prod_{\substack{p\in\PP \\ \chi(p)=-1}} \! \!\myfrac{1}{1-p^{-2s}} 
      \prod_{\substack{p\in\PP \\ \chi(p)=1}} \! \myfrac{1}{(1-p^{-s})^2} \ts ,
\end{split}
\end{equation}
where $\PP$ denotes the set of rational primes.

The result on the generating functions now reads as follows.

\begin{theorem}[{\cite[Prop.~5.2]{csl-BSZ-sim}}]\label{csl-theo:genfun}
  Let\/ $K$ be any of the nine imaginary quadratic number
  fields\index{number~field} of Lemma\/~$\ref{csl-lem:CNone}$, with\/
  $p^{}_{\sf ram}$ its ramified prime,\index{prime!ramified} which
  is the unique rational prime that divides\/ $d_{K}$. The Dirichlet
  series\index{Dirichlet~series} generating function for the number of
  SSLs of\/ $\cO_{K}$ of a given index is\/
  $D^{}_{\cO_{K}} (s) = \zeta^{}_{K} (s)$ with the Dedekind
  zeta\index{zeta~function!Dedekind} function of\/ $K$ according to
  Eq.~\eqref{csl-quadratic-zeta-two}.

Moreover, the generating function for the primitive
SSLs of\/ $\cO_{K}$ is
\[
     D^{\sf pr}_{\cO_{K}} (s) 
     \, =\,  \frac{D^{}_{\cO_{K}} (s) }{\zeta(2s)}
     \, = \, (1+ p_{\sf ram}^{-s})
     \prod_{p\; \mathrm{splits}} 
     \frac{1+p^{-s}}{1-p^{-s}} \ts ,
\]
where the product runs over all rational primes\/ $p$ that split in
the extension to\/ $K$.\index{prime!splitting} The same generating
function also applies to any other planar lattice\/
$\vG \in \Sim( \cO_K ) $.  \qed
\end{theorem}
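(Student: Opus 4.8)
The plan is to exploit that $\cO_K$ is a ring with $1$ which, by Lemma~\ref{csl-lem:CNone}, is a PID, and to identify it with a lattice in $\CC\cong\RR^2$. The first and decisive step is to show that the SSLs of $\cO_K$ are exactly its non-zero ideals. An orientation-preserving similarity of the plane is a map $z\mapsto w\ts z$ with $w\in\CC^{\times}$, and its image $w\ts\cO_K$ is contained in $\cO_K$ if and only if $w=w\cdot 1\in\cO_K$; hence the orientation-preserving SSLs are precisely the principal ideals $w\ts\cO_K$, and since $\cO_K$ is a PID these exhaust all non-zero ideals. An orientation-reversing similarity sends $\cO_K$ to $w\ts\overline{\cO_K}=w\ts\cO_K$, because complex conjugation is the non-trivial Galois automorphism of $K$ and thus fixes $\cO_K=\ZZ[\omega^{}_0]$ setwise, so it produces nothing new. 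As $[\cO_K:w\ts\cO_K]=\lvert w\rvert^2=N\bigl(w\ts\cO_K\bigr)$, counting SSLs by index reproduces the sum $\sum_{\mathfrak a}N(\mathfrak a)^{-s}$ over all non-zero ideals, which is $\zeta^{}_K(s)$. This gives $D^{}_{\cO_K}(s)=\zeta^{}_K(s)$.

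For the primitive SSLs, I would observe that every SSL $\vG'=w\ts\cO_K$ has a unique decomposition $\vG'=n\ts\vG''$, where $n\in\NN$ is the largest rational integer dividing $w$ in $\cO_K$ and $\vG''$ is a primitive SSL. Because $d=2$, passing from $\vG''$ to $n\ts\vG''$ multiplies the index by $n^2$, so the generating functions obey $D^{}_{\cO_K}(s)=D^{\sf pr}_{\cO_K}(s)\ts\zeta(2s)$ and therefore $D^{\sf pr}_{\cO_K}(s)=\zeta^{}_K(s)/\zeta(2s)$. The product formula then drops out by dividing the local Euler factors of $\zeta^{}_K$ in Eq.~\eqref{csl-quadratic-zeta-two} by $(1-p^{-2s})^{-1}$: each inert prime ($\chi(p)=-1$) contributes the trivial factor $1$ and disappears, the ramified prime ($\chi(p)=0$, $p=p^{}_{\sf ram}$) gives $(1-p^{-2s})/(1-p^{-s})=1+p^{-s}$, and each split prime ($\chi(p)=1$) gives $(1-p^{-2s})/(1-p^{-s})^2=(1+p^{-s})/(1-p^{-s})$.

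To transfer both statements to an arbitrary $\vG\in\Sim(\cO_K)$, write $\vG=f(\cO_K)$ for a similarity transformation $f=\alpha R$. The map $\Lambda\mapsto f(\Lambda)$ is a bijection from the sublattices of $\cO_K$ onto those of $\vG$; it preserves the index (as $f$ scales every covolume by $\lvert\det f\rvert=\alpha^2$), it preserves the property of being similar to the parent (similarities form a group), and it preserves primitivity (since $\tfrac1n\Lambda\subseteq\cO_K$ is equivalent to $\tfrac1n f(\Lambda)\subseteq\vG$). Consequently, $\vG$ carries the same index-generating functions for SSLs and for primitive SSLs as $\cO_K$.

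The step I expect to be the crux is the identification of SSLs with ideals in the first paragraph: it must be checked on the nose, and it rests simultaneously on $\cO_K$ being a ring with $1$ (so that $w\ts\cO_K\subseteq\cO_K\Leftrightarrow w\in\cO_K$), on its being a PID (so that every ideal is realised as such an image), and on its closure under conjugation (so that orientation-reversing similarities are redundant). Once this dictionary and the uniqueness of the primitive/integer splitting are secured, the remainder is routine Euler-product bookkeeping, already prepared by Eq.~\eqref{csl-quadratic-zeta-two} and Theorem~\ref{csl-theo:nongenmul}.
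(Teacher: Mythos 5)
Your proof is correct, and it follows essentially the same route that the paper itself sketches (the paper states this theorem with a reference to \cite{csl-BSZ-sim} rather than a proof, but its surrounding discussion and Remark~\ref{csl-rem:CNone} outline exactly this argument): since $\cO_{K}$ is a PID and its own multiplier ring, the SSLs are precisely the non-zero ideals, counted by $\zeta^{}_{K}(s)$; factoring out the rational-integer content of a generator yields $D^{}_{\cO_{K}}(s)=\zeta(2s)\ts D^{\sf pr}_{\cO_{K}}(s)$, and the stated product follows by dividing Euler factors. Your closing step, transporting both counts to any $\vG\in\Sim(\cO_{K})$ by conjugating with the similarity, is likewise the intended argument, so nothing is missing.
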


In addition to the PIDs mentioned above, there are four additional
(non-maximal) orders with class number~$1$,\index{class~number} which
we have summarised in Table~\ref{csl-tab:non-max}.  Their ideals are
closely related to the ideals of their corresponding maximal
orders. As a consequence, the generating functions $D_{\cO}^{} (s)$
and $D_{\cO}^{\mathsf{pr}} (s)$ possess an Euler product. Their Euler
factors are the same as those of the corresponding maximal order,
except for the Euler factors corresponding to the primes that divide
the \emph{conductor}\index{conductor} $f:=[\cO_K:\cO]$. These special
Euler factors can be calculated explicitly.

\begin{theorem}[{\cite[Sec.~5.2]{csl-BSZ-sim}}]\label{csl-theo:non-max}
  Let\/ $\cO$ be one of the four non-maximal orders of class number\/
  $1$ in imaginary quadratic number fields\index{number~field} as
  given in Table~$\ref{csl-tab:non-max}$.  The sublattice counting
  functions are multiplicative, which implies that their generating
  functions\/ $D_{\cO}^{} (s)$ and\/ $D_{\cO}^{\mathsf{pr}} (s)$ have
  an Euler product expansion.\index{Euler~product} In particular,
  \begin{align*}
    D_{\ZZ[\ii\sqrt{3}\,]}^{\mathsf{pr}} (s) \, & = \,
      \Bigl( 1+ \myfrac{2}{4^s} \Bigr) \Bigl( 1 + \myfrac{1}{3^s} \Bigr)
      \prod_{p\ts\equiv 1\, (3)} \frac{1+p^{-s}}{1-p^{-s}} \ts ,\\[1mm]
    D_{\ZZ[2\ts\ii]}^{\mathsf{pr}} (s) \, & = \,
      \Bigl( 1+ \myfrac{1}{4^s} + \myfrac{2}{8^s} \Bigr) 
      \prod_{p\ts\equiv 1\, (4)} \frac{1+p^{-s}}{1-p^{-s}}\ts , \\[1mm]
    \qquad D_{\ZZ[\frac{1}{2} (1 + \ii \ts 3 \sqrt{3}\,)]}^{\mathsf{pr}} (s) \, & = \,
      \Bigl( 1+ \myfrac{2}{9^s} + \myfrac{3}{27^s} \Bigr) 
      \prod_{p\ts\equiv 1\, (3)} \frac{1+p^{-s}}{1-p^{-s}}\ts , \\[1mm]
    D_{\ZZ[\ii\sqrt{7}\,]}^{\mathsf{pr}} (s) \, & = \,
      \Bigl( 1- \myfrac{2}{2^s} + \myfrac{2}{4^s} \Bigr) 
      \Bigl( 1 + \myfrac{1}{7^s} \Bigr)\!
      \prod_{p\ts\equiv 1,2,4 \, (7)} \frac{1+p^{-s}}{1-p^{-s}} \ts .\qquad
  \end{align*}
Again, the possible indices of the SSLs are precisely those positive integers
that can be represented by the corresponding norm forms, which we have listed
in Table~$\ref{csl-tab:non-max}$. \qed
\end{theorem}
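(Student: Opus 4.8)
The plan is to reduce the enumeration of SSLs of $\cO$ to a counting problem for principal ideals, to use the class number~$1$ hypothesis to identify principal with invertible ideals, and then to compute the resulting Euler factors, the only non-standard ones being those at the primes dividing the conductor $f=[\cO_K^{}:\cO]$.

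First I would recall from Theorem~\ref{csl-theo:nongenmul} that, since $\mul(\vG)=\cO$, the SSLs of $\cO$ are exactly the principal ideals $z\cO$ with $0\ne z\in\cO$, and that the index of $z\cO$ in $\cO$ equals the norm $\lvert z\rvert^2=z\ts\overline{z}$. Hence the number $a^{}_\cO(m)$ of SSLs of index $m$ is $\tfrac1w\,\#\{z\in\cO\mid \lvert z\rvert^2=m\}$, where $w=\lvert\cO^{\times}\rvert=2$ in all four cases; equivalently, $a^{}_\cO(m)$ counts the distinct principal ideals of $\cO$ of norm $m$. Writing $\cO=\ZZ[\omega]$, we have $\lvert x+y\omega\rvert^2=Q(x,y)$ with $Q$ the norm form of Table~\ref{csl-tab:non-max}, so the final assertion$\,$---$\,$that the admissible indices are precisely the integers represented by $Q$$\,$---$\,$is immediate.

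Next I would prove multiplicativity and derive the Euler product. A principal ideal is invertible, and as $\cO$ has class number~$1$ every invertible ideal is principal; thus $a^{}_\cO(m)$ equals the number of invertible $\cO$-ideals of norm $m$. Since the invertible ideals form a free abelian monoid on the invertible prime ideals, an invertible ideal of norm $m^{}_1m^{}_2$ with $\gcd(m^{}_1,m^{}_2)=1$ factors uniquely into its $m^{}_1$- and $m^{}_2$-parts, each invertible and (again by class number~$1$) principal, while conversely the product of coprime principal ideals is principal. This yields $a^{}_\cO(m^{}_1m^{}_2)=a^{}_\cO(m^{}_1)\ts a^{}_\cO(m^{}_2)$, hence the multiplicativity claimed in the theorem and a factorisation of $D^{}_{\cO}(s)=\sum_m a^{}_\cO(m)\ts m^{-s}$ into local factors. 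For a rational prime $p\nmid f$ the localisations of $\cO$ and $\cO_K^{}$ agree, so the invertible $p$-power ideals$\,$---$\,$and, by class number~$1$ for both rings, the principal ones$\,$---$\,$coincide with those of $\cO_K^{}$; the Euler factor at $p$ is therefore the split, inert or ramified factor appearing in Theorem~\ref{csl-theo:genfun}. Finally, writing every SSL uniquely as $n^2$ times a primitive one gives $a^{}_\cO(m)=\sum_{n^2\mid m}a^{\mathsf{pr}}_\cO(m/n^2)$ and hence $D^{}_{\cO}(s)=\zeta(2s)\ts D^{\mathsf{pr}}_{\cO}(s)$, which turns the split and (away from $f$) ramified factors into $\tfrac{1+p^{-s}}{1-p^{-s}}$ and $1+p^{-s}$, and kills the inert ones.

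The main obstacle is the explicit evaluation of the Euler factor at the conductor prime, namely $p=2$ for $\ZZ[\ii\sqrt3]$, $\ZZ[2\ii]$ and $\ZZ[\ii\sqrt7]$, and $p=3$ for $\ZZ[\tfrac12(1+3\ii\sqrt3)]$; here $\cO$ fails to be maximal at $p$, non-invertible ideals intervene, and the factor cannot be read off from the splitting type. I would compute it directly, determining $a^{}_\cO(p^{k})=\tfrac1w\,\#\{(x,y)\mid Q(x,y)=p^{k}\}$ for all $k$ and then dividing out the square part, organised by whether $p$ is inert, ramified or split in $\cO_K^{}$. For the inert prime $p=2$ of $\ZZ[\ii\sqrt3]$ this produces the local counts $1,0,3,0,3,\dots$, whose primitive part is $1+2\cdot4^{-s}$; the ramified conductor primes give $1+4^{-s}+2\cdot8^{-s}$ for $\ZZ[2\ii]$ and $1+2\cdot9^{-s}+3\cdot27^{-s}$ for $\ZZ[\tfrac12(1+3\ii\sqrt3)]$. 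The split case requires extra care: for $\ZZ[\ii\sqrt7]$ the conductor prime $2$ is itself a split prime (indeed $2\equiv2\bmod 7$), so it already contributes the default factor $\tfrac{1+2^{-s}}{1-2^{-s}}$ to the product, and the special factor $1-2\cdot2^{-s}+2\cdot4^{-s}$ is only the multiplicative \emph{correction} to it$\,$---$\,$which explains its negative middle coefficient, the genuine primitive local factor $\tfrac{1-2^{-s}+2\cdot8^{-s}}{1-2^{-s}}$ being non-negative. Assembling these conductor-prime factors with the conductor-free factors of the previous paragraph yields the four displayed formulas for $D^{\mathsf{pr}}_{\cO}(s)$.
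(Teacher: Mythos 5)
The paper itself does not prove this theorem; it states it with a \verb|\qed| and cites \cite[Sec.~5.2]{csl-BSZ-sim}, so your proposal has to be judged on its own terms. Its architecture is the natural one and matches the cited treatment in spirit: SSLs of $\cO$ are exactly the nonzero principal ideals $z\ts\cO$ with index $\lvert z\rvert^2$, so the admissible indices are the values of the norm form; away from the conductor the count agrees with that of $\cO_K$ via the conductor-coprime correspondence; the conductor-prime factors are computed by hand; and dividing by $\zeta(2s)$ passes to primitive SSLs. Your explicit local computations are correct (I checked the counts $1,0,3,0,3,\dots$ for $\ZZ[\ii\sqrt{3}\,]$, the factors $1+4^{-s}+2\cdot 8^{-s}$ and $1+2\cdot 9^{-s}+3\cdot 27^{-s}$, and your reading of the $\ZZ[\ii\sqrt{7}\,]$ case), and your observation that $1-2\cdot 2^{-s}+2\cdot 4^{-s}$ is only a \emph{correction} to the split factor at $p=2$ already contained in the product over $p\equiv 1,2,4 \; (7)$ is a genuinely useful clarification of the displayed formula.

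There is, however, one genuine flaw: the justification of multiplicativity. You assert that ``the invertible ideals form a free abelian monoid on the invertible prime ideals.'' This is false for every non-maximal order, i.e.\ precisely for the four orders of this theorem. A prime ideal of $\cO$ is invertible if and only if it does not contain the conductor, so the submonoid generated by invertible primes consists only of ideals coprime to $f$; the invertible ideals whose norm is divisible by the conductor prime$\,$---$\,$exactly the ones producing your special factors$\,$---$\,$are not products of invertible primes at all. Worse, unique factorisation into irreducibles fails in the monoid of invertible (equivalently, principal) ideals: in $\ZZ[\ii\sqrt{7}\,]$ one has $(2)^3=(8)=(1+\ii\sqrt{7}\,)(1-\ii\sqrt{7}\,)$, where $(2)$ and $(1\pm\ii\sqrt{7}\,)$ are irreducible there (the norm form $x^2+7y^2$ represents neither $2$ nor $4$ except by $\pm 2$). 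So the step ``an invertible ideal of norm $m^{}_1 m^{}_2$ factors uniquely into its $m^{}_1$- and $m^{}_2$-parts'' cannot be obtained from your premise; it is exactly the maximal-order intuition whose failure at the conductor is the whole point of the theorem. The conclusion of that step is nevertheless true and can be repaired locally: for an invertible ideal $\mathfrak{a}$ of index $m^{}_1 m^{}_2$ with coprime $m^{}_i$, set $\mathfrak{a}^{}_i:=\mathfrak{a}+m^{}_i\ts\cO$; the Chinese remainder theorem applied to $\cO/\mathfrak{a}$ gives $[\cO:\mathfrak{a}^{}_i]=m^{}_i$ and $\mathfrak{a}=\mathfrak{a}^{}_1\cap\mathfrak{a}^{}_2=\mathfrak{a}^{}_1\mathfrak{a}^{}_2$, and since invertible means locally principal and $\mathfrak{a}^{}_i$ coincides with $\mathfrak{a}$ or with $\cO$ at every prime, each $\mathfrak{a}^{}_i$ is invertible, hence principal by class number $1$; the assignment $\mathfrak{a}\leftrightarrow(\mathfrak{a}^{}_1,\mathfrak{a}^{}_2)$ is then the required bijection. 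With this substitution, the rest of your argument goes through.
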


\begin{table}
  \caption{Basic data for the four non-maximal orders of class number~$1$
    in imaginary quadratic number fields,
    labelled with their discriminant $D$;
    see Theorem~\ref{csl-theo:non-max} for details.} 
\label{csl-tab:non-max}
\renewcommand{\arraystretch}{1.25}
\begin{tabular}{|c|c|c|c|c|c|}
\hline
$D$ & $K$ & $\cO$ & norm form & $p\ts | D$ & conductor \\ \hline
$-12$ & $\QQ(\ii\sqrt{3}\,)$ & $\ZZ[\ii\sqrt{3}\,]$ & 
$x^2 + 3 y^2$ & $2,3$ & $2$ \\
$-16$ & $\QQ(\ii)$ & $\ZZ[2\ts\ii]$ & $x^2 + 4 y^2$ & $2$ & $2$ \\
$-27$ & $\QQ(\ii\sqrt{3}\,)$ & $\ZZ[\tfrac{1}{2}
                (1 + \ii \ts 3 \sqrt{3}\,)]$ & 
$x^2 + xy +  7 y^2$ & $3$ & $3$ \\
$-28$ & $\QQ(\ii\sqrt{7}\,)$ & $\ZZ[\ii\sqrt{7}\,]$ & 
$x^2 + 7 y^2$ & $2,7$ & $2$\\
\hline
\end{tabular}
\end{table}

The situation is more involved for class numbers greater than $1$,
where the existence of non-principal ideals complicates the treatment.
In general, the counting functions are no longer multiplicative,
because a product of non-principal ideals may be principal. As a
consequence, not much is known in these cases. However, there is still
one situation that allows further treatment, namely when the
discriminant\index{discriminant} $D$ is one of Euler's
\emph{convenient numbers}, in which case the ideal class group is an
Abelian $2$-group.  The latter implies that we have a natural binary
grading on the ideals, depending on whether they are principal or
not. If the order under investigation is still principal, one can
derive the generating function from the zeta function;
compare~\cite{csl-BSZ-sim}.

\begin{example}
Let us consider $\cO=\ZZ [\ii\sqrt{6}\,]$, where\index{Dirichlet~series}
\[
   D^{\textsf{pr}}_{\ZZ [\ii\sqrt{6}\,]} (s) \, = \prod_{p\equiv 1,7 \, (24)}
   \frac{1+p^{-s}}{1-p^{-s}} \;
   \sum_{m=1}^{\infty} \frac{ b(m)} {m^s}
\]
with $b(1)=1$ and $b(m)=0$ if $p|m$ for some $p\equiv 1,7,13,17,19,23
\bmod 24$. If $m$ is an integer of the form $m=2^{\alpha}\ts 3^{\beta}
\prod_{p\equiv 5,11 \; (24)} p^{\ell_p}$ with $\alpha, \beta\in
\{0,1\}$ and $\ell_p \in \NN_0$, with only finitely many of them $\ne
0$, we have
\[
   b(m) \, = \, \bigl(1 + (-1)^{\alpha + \beta + \sum \ell_p} 
    \bigr)^{\mathrm{card} \{ p > 3 \ts\mid\ts \ell_p \ne\ts 0 \}}.
\]
Obviously, $D^{\textsf{pr}}_{\ZZ [\ii\sqrt{6}\,]} (s)$ possesses no Euler 
product representation.
\exend
\end{example}

\begin{example}
As further cases, let us mention the generating functions for the primitive
SSLs of $\cO=\ZZ [3\ts\ii]$,
\begin{align}
  D^{\sf pr}_{\ZZ[3\ts\ii]} (s) \, & = \!\!
     \sum_{\substack{m\geqslant 1 \\ m\equiv 1\, (3)}} \!\!
     (1+9^s)\, \frac{a^{\sf pr}_{\minisquare} (m)}{(9\ts m)^s} \; + \!\!
     \sum_{\substack{m\geqslant 1 \\ m\equiv 2\, (3)}} \!\!
     \frac{2\ts\ts a^{\sf pr}_{\minisquare} (m)}{(9\ts m)^s}\ts , \\
\intertext{and of $\cO=\ZZ [5\ts\ii]$,}
  D^{\sf pr}_{\ZZ[5\ts\ii]} (s) \, & = \!\!
     \sum_{\substack{m\geqslant 1 \\ m\equiv \pm 1\, (5)}} \!\!
     (1+25^s)\, \frac{a^{\sf pr}_{\minisquare} (m)}{(25\ts m)^s} \; + \!\!\!
     \sum_{\substack{m\geqslant 1 \\ m\equiv 0,\pm 2\, (5)}} \!\!\!
     \frac{2\ts\ts a^{\sf pr}_{\minisquare} (m)}{(25\ts m)^s}\ts ,
\end{align}
where $a^{\sf pr}_{\minisquare} (m)$ is the\index{Dirichlet~series}
number of primitive SSLs of index $m$ of the square lattice; compare
Section~\ref{csl-sec:square}.  \exend
\end{example}

Let us stay in two dimensions a little longer and discuss some
$\ZZ\ts$-modules with $N$-fold rotational symmetry. As this works the
same way as for lattices, we include their discussion here, but see
Section~\ref{csl-sec:ssm} for the general theory behind it.  In
particular, we consider the ring $\ZZ[\xi_n]$ of cyclotomic integers,
where $\xi_n$ is a primitive $n$th root of unity. If
\begin{equation}\label{csl-pid-cyclo}
\begin{split}  
       &n \, \in \, \{3, 4, 5, 7, 8, 9, 11, 12, 13, 15, 16, 17, 
                      19, 20, 21, 24,\\
        & \qquad\quad  25, 27, 28, 32, 33, 35, 36, 40, 44, 45, 
                      48, 60, 84\}\ts  ,
\end{split}    
\end{equation}
the ring $\ZZ[\xi_n]$ is a PID,\index{principal~ideal~domain} which
means that the similar submodules are precisely the ideals of
$\ZZ[\xi_n]$; compare~\cite{csl-Wash,csl-BG2,csl-TAO}. Using the
terminology from above, this implies that $\ZZ[\xi_n]$ is its own
multiplier ring.  Note that $n=3$ and $n=4$ correspond to the
hexagonal and the square lattice, respectively; compare
\cite[Ex.~2.15]{csl-TAO}.  More generally, $\ZZ[\xi_n]$ is a
$\ZZ\ts$-module of rank $d=\phi(n)$, which is larger than $2$ in the
remaining cases; see~\cite[Rem.~3.7 and Ex.~2.16]{csl-TAO}.  Here,
$\phi$ denotes Euler's totient function. In particular, $\ZZ[\xi_n]$
has $N$-fold rotational symmetry, with $N=\lcm(n,2)$.

As mentioned above, the similar submodules are precisely the
non-trivial ideals of $\ZZ[\xi_n]$, which means that the generating
function for the similar submodules of $\ZZ[\xi_n]$ is given
by~\cite{csl-BG2}\index{Dirichlet~series}
\begin{equation}
   \Phi^{}_{\ZZ[\xi_n]}(s) \, =\,  \zeta^{}_{\ts\QQ(\zeta_n)}(s) \, :=\,
   \sum_{\mathfrak{a}} \myfrac{1}{\norm(\mathfrak{a})^s}\ts ,
\end{equation}
where the sum runs over all non-trivial ideals $\mathfrak{a}$ of
$\ZZ[\xi_n]$ and 
\[
  \norm(\mathfrak{a})\, :=\, [\ZZ[\xi_n]: \mathfrak{a}]
\]
denotes the norm of $\mathfrak{a}$. As $\ZZ[\xi_n]$ is a PID for all
$n$ from Eq.~\eqref{csl-pid-cyclo}, the counting function for the
ideals of fixed index\index{index} is multiplicative. This, in turn,
means that $\Phi^{}_{\ZZ[\xi_n]}(s)$ has an Euler product
expansion~\cite{csl-BG2}\index{Euler~product}
\begin{equation}
  \Phi^{}_{\ZZ[\xi_n]}(s)\, =\, \prod_{p \in \PP} E_n(p^{-s})\ts .
\end{equation}
The Euler factors $E_n(p^{-s})$ are of the form
\begin{equation}
  E_n(p^{-s}) \, = \, \myfrac{1}{(1-p^{-\ell s})^m}
  \, = \sum_{j=1}^\infty \binom{j+m-1}{m-1}\myfrac{1}{(p^s)^{\ell j}}
\end{equation}
where $m$ and $\ell$ are certain integers that depend on $p$ and $n$.
If $p$ is coprime to $n$, then $\ell$ is the residue class degree of
$p$, which is the smallest integer $\ell$ such that
$p^\ell \equiv 1 \bmod{n}$, compare \cite[Thm.~2.13]{csl-Wash} and
\cite{csl-BG2}. The integer $m$ is determined by $m\ts
\ell=\phi(n)$.
If $p$ divides $n$ ($p$ is a ramified prime\index{prime!ramified} in
this case), we write $n= r\ts p^t$, where $p^t$ is the maximal power
dividing $n$, so that $r$ is the $p\ts$-free part of $n$. The integers
$\ell$ and $m$ are now calculated by replacing $n$ by $r$ in the
equations above, where $\ell$ is the smallest integer such that
$p^\ell \equiv 1 \bmod{r}$ and $m\ell=\phi(r)$; compare the remarks
after \cite[Thm.~2.13]{csl-Wash} as well as \cite{csl-BG2}.  Explicit
values for $m$ and $\ell$, for all cases of Eq.~\eqref{csl-pid-cyclo},
can be found in~\cite[Tables 1 and 2]{csl-BG2}.

\begin{example}
  Let us take a closer look at $\ZZ[\xi^{}_{5}]=\ZZ[\xi^{}_{10}]$. The
  only ramified prime is $5=\varepsilon (1-\xi^{}_{5})^4$, where
  $\varepsilon= -\xi_5^2/\tau^2$ is a unit in $\ZZ[\xi^{}_{5}]$. In
  terms of ideals, this means $(5)= (1-\xi^{}_{5})^4$. This gives
  $\ell=m=1$ for $p=5$.  In addition, we get $\ell=1$, $m=4$ for
  $p\equiv 1 \bmod 5$, $\ell=2$, $m=2$ for $p\equiv -1 \bmod 5$, and
  $\ell=4$, $m=1$ for $p\equiv \pm 2 \bmod 5$. Thus, we obtain the
  generating function\index{Dirichlet~series}
\[
\begin{split}
  \Phi_{\ZZ[\xi_5]}^{}(s)\, & = \, \myfrac{1}{1-5^{s}}
  \prod_{p\equiv 1\, (5)} \myfrac{1}{(1-p^{s})^4}
  \prod_{p\equiv -1\, (5)} \myfrac{1}{(1-p^{2s})^2}
  \prod_{p\equiv \pm 2\, (5)} \myfrac{1}{1-p^{4s}}  \\[1mm]
  & =\,  1 + \myfrac{1}{5^s} + \myfrac{4}{11^s} + \myfrac{1}{16^s} + 
     \myfrac{1}{25^s} + \myfrac{4}{31^s} + \myfrac{4}{41^s} + 
    \myfrac{4}{55^s} + \myfrac{4}{61^s}  \\[1mm]
     & \qquad + \myfrac{4}{71^s} + \myfrac{1}{80^s} + \myfrac{1}{81^s} 
         + \myfrac{4}{101^s} + \myfrac{10}{121^s} + \myfrac{1}{125^s}
          + \myfrac{4}{131^s} 
          + \ldots
\end{split}
\]
for this case.
\exend
\end{example}

\subsection{Higher dimensions}
\label{csl-sec:ssl-higher}

Let us continue with lattices in higher dimensions. We concentrate on
\emph{rational} lattices\footnote{\label{csl-foot:rat-lat}{\ts}More
  generally, one calls a lattice $\vG$
  rational\index{lattice!rational} if there exists an $\alpha>0$ such
  that $\langle x\ts |\ts y\rangle\in\QQ$ for all $x,y\in\alpha\vG$.
  In this section, we only use the more restrictive definition.}
here, that is, on lattices $\vG$ for which the inner products satisfy
$\langle x\ts |\ts y\rangle\in\QQ$ for all $x,y\in\vG$. For all
scaling factors $\alpha\in\Scal_\vG(R)$, one then has
$\alpha^2\in\QQ$. By an application of Lemma~\ref{csl-lem:scal-d}, we
may conclude that $\alpha^2\in\ZZ$. Moreover, one obtains the stronger
condition $\alpha\in\ZZ$ in odd dimensions, again by
Lemma~\ref{csl-lem:scal-d}. This gives the following result.

\begin{fact}
  For a rational lattice\index{lattice!rational}\/ 
  $\vG\subset\RR^{d}$ with\/ $d$ odd, the
  possible indices of SSLs are exactly the integers of the form\/
  $n^d$ with\/ $n\in\NN$. \qed
\end{fact}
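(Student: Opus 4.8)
The plan is to prove the two inclusions separately, leaning on the scaling-factor analysis carried out immediately above. First I would record the index formula: a similar sublattice of $\vG$ has the form $\vG'=\alpha R\vG$ with $\alpha>0$ and $R\in\OG(d,\RR)$ an isometry satisfying $\alpha R\vG\subseteq\vG$, so that $\alpha\in\Scal_\vG(R)$, and its index is $[\vG:\vG']=\lvert\det(\alpha R)\rvert=\alpha^d$ because $\lvert\det R\rvert=1$. Consequently, the set of possible SSL indices is exactly $\{\alpha^d\}$ as $\alpha$ ranges over the positive elements of the sets $\Scal_\vG(R)$, and the task reduces to showing that these positive scaling factors are precisely the natural numbers.

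For the inclusion of the indices into $\{n^d\mid n\in\NN\}$, I would take any $\alpha\in\Scal_\vG(R)$ with $\alpha>0$ and show $\alpha\in\NN$, just as in the discussion preceding the statement. Rationality of $\vG$ gives $\alpha^2\in\QQ$, while Lemma~\ref{csl-lem:scal-d} gives $\alpha^d\in\ZZ$. Since $d$ is odd, the identity $\alpha=\alpha^d/(\alpha^2)^{(d-1)/2}$ exhibits $\alpha$ as a quotient of rationals, whence $\alpha\in\QQ$; then $\alpha\in\QQ$ together with $\alpha^d\in\ZZ$ forces $\alpha\in\ZZ$ by the usual rational-root argument (writing $\alpha=p/q$ in lowest terms, $q^d\mid p^d$ forces $q=1$). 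As $\alpha>0$, we obtain $\alpha\in\NN$ and hence $\alpha^d=n^d$.

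The reverse inclusion is immediate from the trivial SSLs. For each $n\in\NN$, the scalar map $n\ts\one$ is a similarity transformation of $\vG$, so $n\vG\subseteq\vG$ is a similar sublattice of index $[\vG:n\vG]=n^d$, the last equality holding because $\vG$ has full rank $d$. Thus every value $n^d$ is realised, which completes the equality of the two sets.

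I do not anticipate a genuine obstacle: the substantive point, namely upgrading $\alpha^2\in\QQ$ to $\alpha\in\ZZ$ using the oddness of $d$, is already in place from the paragraph above, and what remains is essentially bookkeeping---recording the index formula $\alpha^d$ and exhibiting the sublattices $n\vG$. The only subtlety worth flagging is to work with the \emph{positive} scaling factor throughout, so that one genuinely lands in $\NN$ rather than merely in $\ZZ$; since $\Scal_\vG(R)$ contains negative elements by construction, this sign convention should be made explicit.
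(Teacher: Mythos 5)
Your proposal is correct and follows essentially the same route as the paper, which deduces the statement from the scaling-factor discussion immediately preceding it: rationality gives $\alpha^2\in\QQ$, Lemma~\ref{csl-lem:scal-d} gives $\alpha^d\in\ZZ$, and oddness of $d$ upgrades this to $\alpha\in\ZZ$, with the index formula $[\vG:\alpha R\vG]=\alpha^d$ and the trivial sublattices $n\vG$ supplying the realisation of every value $n^d$. You merely spell out the routine steps (the rational-root argument and the reverse inclusion) that the paper leaves implicit.
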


Thus, the question for the possible indices is answered in this case,
and we may proceed with lattices in even dimension, say $d=2k$.  As
$\alpha^2\in\ZZ$, the possible indices of SSLs are all of the form
$c^{k}$ with $c\in\NN$. For an important class of $2k$-dimensional
lattices, an answer was given by Conway, Rains and Sloane
in~\cite{csl-consloa99}.  Let $\ZZ_{p}$ denote the \mbox{$p\ts$-adic}
integers \cite[Ex.~2.10]{csl-TAO} and define the Hilbert
symbol $(a,b)_p$ as
\[
(a,b)_p \, = \,
\begin{cases}
  1,&\mbox{ if } z^2=ax^2+by^2\mbox{ has a non-zero solution in } 
   \ZZ_p, \\   -1,&\mbox{ otherwise.}
\end{cases}
\]
Their result can now be formulated as follows.\footnote{{\ts\ts}The
  authors formulate their results on sublattices in terms of the norm
  $c=\alpha^2$ of a similarity $\sigma=\alpha R$. We prefer to employ
  the index $n=[\vG:\alpha R \vG]=\alpha^d=c^{d/2}$ instead. The use
  of the norm $c$ is natural for rational lattices, as it is always an
  integer in these cases. However, it is less meaningful for general
  lattices, where the natural quantity is the index $n$. To keep our
  notation consistent, we stick to the formulation in terms of the
  index here, which explains the additional exponent $\frac{d}{2}$ in
  our formulation.}

\begin{theorem}[{\cite[Thm.~1]{csl-consloa99}}]\label{csl-theo:conway-sloane}
  Let\/ $\vG\subset\RR^{2k}$ be a rational lattice\index{lattice!rational}. 
  An SSL of index\/
  $c^{k}$ can only exist if the condition
\[
    \left( c, (-1)^k \det(\vG) \right)_p \, = \, 1
\] 
  is satisfied for all primes\/ $p$ that divide\/ $2\ts
  c\det(\vG)$. If\/ $\vG$ is unigeneric and\/ $(r)$-maximal for some\/
  $r\in\QQ$, then this condition is also sufficient.  \qed
\end{theorem}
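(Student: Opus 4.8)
The plan is to translate the existence of an SSL of index $c^{k}$ into the rational equivalence of a quadratic form with a rescaled copy of itself, and then to extract the Hilbert-symbol condition from the behaviour of the Hasse invariant under scaling, using Hasse--Minkowski to pass between global and local data.

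First I would reformulate the problem in terms of Gram matrices. Let $B$ be a basis matrix for $\vG$ and $G=B^{T}B$ the associated Gram matrix, so that $\det(\vG)=\det(G)$ and $Q(x)=x^{T}Gx$ is the form of $\vG$. A similarity $\alpha R$ with $R\in\OG(2k,\RR)$ maps $\vG$ into itself exactly when $\alpha RB=BM$ for some integer matrix $M$; then $[\vG:\alpha R\vG]=\lvert\det M\rvert=\alpha^{2k}=c^{k}$ with $c=\alpha^{2}$, and comparing Gram matrices gives $M^{T}GM=cG$. Thus an SSL of index $c^{k}$ exists precisely when there is an integral matrix realising the equivalence $Q\cong cQ$.

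For necessity I would argue locally. The relation $M^{T}GM=cG$ makes $Q$ and $cQ$ equivalent over $\QQ$, hence over every completion. The two forms share the rank $2k$, and since $\det(cG)=c^{2k}\det(G)$ differs from $\det(G)$ only by the square $c^{2k}$, they also share the same discriminant. The only remaining $p$-adic invariant is the Hasse--Witt invariant $S_{p}$, and expanding $(ca_{i},ca_{j})_{p}$ by bilinearity of the Hilbert symbol for a diagonalisation $Q\cong\langle a_{1},\dots,a_{2k}\rangle$ yields
\[
  S_{p}(cQ)\,=\,S_{p}(Q)\,(c,(-1)^{k})_{p}\,(c,\det(\vG))_{p}
  \,=\,S_{p}(Q)\,\bigl(c,(-1)^{k}\det(\vG)\bigr)_{p},
\]
where the factor $(c,(-1)^{k})_{p}$ comes from reducing $\binom{2k}{2}$ modulo $2$ together with $(c,c)_{p}=(c,-1)_{p}$, while $(c,\det(\vG))_{p}$ appears with the odd exponent $2k-1$. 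Equivalence over $\QQ_{p}$ forces $S_{p}(cQ)=S_{p}(Q)$, that is $(c,(-1)^{k}\det(\vG))_{p}=1$ for every $p$. At the archimedean place this is automatic since $c>0$ is a square in $\RR$, and for odd $p\nmid c\det(\vG)$ both entries are $p$-adic units, so the symbol is trivial; hence only the primes dividing $2c\det(\vG)$ impose a genuine constraint, which is the asserted necessary condition.

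For sufficiency I would run this backwards. If $(c,(-1)^{k}\det(\vG))_{p}=1$ at all $p\mid 2c\det(\vG)$, then rank, discriminant and Hasse invariant of $Q$ and $cQ$ agree at every place, so Hasse--Minkowski produces a rational similarity of norm $c$, i.e.\ $Q\cong cQ$ over $\QQ$. The delicate point---where the extra hypotheses enter---is upgrading this rational similarity to an integral one of index \emph{exactly} $c^{k}$, rather than one that merely sends $\vG$ to a commensurate but non-contained copy. Here I would work prime by prime: $(r)$-maximality fixes the local structure of $\vG$ at each relevant prime so that the $p$-adic similarity can be chosen to carry the local lattice into itself with the correct local index, and the unigeneric hypothesis (a single class in the genus) ensures that these compatible local choices glue into a genuine global sublattice similar to $\vG$. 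I expect this gluing step to be the main obstacle, since the Hilbert-symbol computation underlying necessity is essentially formal, whereas sufficiency needs the genus-theoretic input to control denominators and realise the index on the nose.
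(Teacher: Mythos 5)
The paper itself does not prove this theorem: it is quoted from Conway, Rains and Sloane \cite{csl-consloa99} and stated with a reference only, so your attempt can only be measured against their original argument. Your necessity half is correct and is essentially that argument. An integral basis matrix $M$ of the SSL satisfies $M^{T}GM=cG$, so $Q$ and $cQ$ are equivalent over $\QQ$ and hence over every completion; rank and signature agree trivially (as $c>0$), the discriminant agrees because $c^{2k}$ is a square, and your Hasse-invariant bookkeeping (the parity $\binom{2k}{2}\equiv k \bmod 2$, the identity $(c,c)_p=(c,-1)_p$, the odd exponent $2k-1$ on $(c,\det\vG)_p$) correctly isolates $\bigl(c,(-1)^{k}\det\vG\bigr)_{p}=1$ as the local condition, which is automatic at the real place and at primes not dividing $2\ts c\det\vG$.

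The sufficiency half, however, has a genuine gap. Hasse--Minkowski does produce a rational map $\sigma$ with $Q(\sigma x)=cQ(x)$, but your plan of choosing local similarities prime by prime and then ``gluing'' them via unigenericity does not go through as described: there is no mechanism that assembles independently chosen maps over the various $\ZZ_{p}$ into a single global map (that would require something like strong approximation), and unigenericity by itself does not supply one. The missing idea---which is what the two hypotheses are really for---is Eichler's local uniqueness of maximal lattices: on a quadratic space over $\QQ_{p}$, all $(r)$-maximal lattices are isometric, so the $(r)$-maximal lattices on a fixed rational quadratic space form a single genus. Granting this, the argument is purely global and short: for $c\in\NN$ the lattice $\sigma(\vG)$ carries form values in $c\ts r\ts\ZZ\subseteq r\ts\ZZ$, hence is contained in some $(r)$-maximal lattice $M$ on $(\QQ^{2k},Q)$; by local uniqueness, $M$ lies in the genus of $\vG$, and unigenericity upgrades this to a genuine isometry $u$ with $u(M)=\vG$. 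Then $u\sigma$ is a similarity of norm $c$ mapping $\vG$ into itself. Finally, your worry about realising the index ``on the nose'' is a non-issue: any similarity of norm $c$ in dimension $2k$ has determinant $\pm c^{k}$, so the index of $u\sigma(\vG)$ in $\vG$ is automatically $c^{k}$.
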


Here, $(r)$-\emph{maximal} means that $\vG$ is maximal with respect to
the property that $\langle x \ts |\ts x \rangle\in r \ZZ$ for all
$x\in \vG$. It is \emph{unigeneric}\index{lattice!unigeneric} if it is
unique in its genus. Recall that the genus of a rational quadratic
form\index{quadratic~form} is the set of quadratic forms that are
$\RR$-equivalent and $\ZZ_p$-equivalent for any prime $p$;
compare~\cite{csl-Cassels-Q}. In other words, a rational quadratic
form $Q$ is unigeneric if and only if any other quadratic form $Q'$
that is $\ZZ_p$-equivalent to $Q$ for any prime $p$ as well as
$\RR$-equivalent to $Q$ then also is $\ZZ\ts$-equivalent to $Q$. The
correspondence between lattices and quadratic forms then transfers
these notions to lattices.

\begin{example}
  Theorem~\ref{csl-theo:conway-sloane} can now be applied to several
  lattices \cite{csl-consloa99}, which are all unigeneric and $(1)$-
  or $(2)$-maximal.
\begin{enumerate}\itemsep=2pt
\item The root lattice $A_4$ has SSLs of index $c^2$ for
  $c=\nr(z)=zz'$ only, where $z\in\ZZ[\tau]$ with
  $\tau=\bigl(1+\sqrt{5}\, \bigr)/2$ and $z'$ is the algebraic
  conjugate of $z$.  Consequently, rational primes $p \equiv \pm
  2\bmod 5$ appear to even powers in $c$.
\item The hypercubic lattice $\ZZ^6$ has SSLs of index $c^3$ for
  $c=\nr(z)=|z|^2$ only, where $z\in\ZZ[ \ii ]$. Here, rational primes
  $p \equiv 3\bmod 4$ appear to even powers in $c$.
\item The exceptional root lattice $E_6$ has SSLs of index $c^3$ for
  $c=\nr(z)=|z|^2$ only, where $z\in\ZZ[(1+\ii \sqrt{3})/2]$. Rational 
  primes $p \equiv 2\bmod 3$ appear to even powers in
  $c$. \exend
 \end{enumerate}
\end{example}

Further details for the root lattice $A_4$ will be discussed below.
Another interesting consequence of
Theorem~\ref{csl-theo:conway-sloane} is the following result, where
the notation for the lattices is taken from \cite[Ch.~4]{csl-Conway}.

\begin{corollary}[{\cite[Thm.~3]{csl-consloa99}}]
   The lattices\/ $\ZZ^{4m}$, $D_{4m}^{}$ and\/ $D_{4m}^+$ possess SSLs of
    index\/ $c^{2m}$ for all\/ $c\in\NN$. Similarly, the lattices\/ $E_8$,
   $K_{12}$, the Barnes--Wall lattice\/ $BW_{16}$ and the Leech 
   lattice\/ $\Lambda_{24}$ possess SSLs of index\/ $c^4$, $c^6$, $c^8$
   and\/ $c^{12}$, respectively, for all\/ $c\in\NN$.   \qed
\end{corollary}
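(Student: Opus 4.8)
The plan is to apply Theorem~\ref{csl-theo:conway-sloane} to each lattice on the list and to check, in every case, that its necessary condition is met for all $c\in\NN$, after which the sufficiency part of that theorem delivers the claim. The first observation is that all of the lattices mentioned live in a dimension divisible by $4$: the families $\ZZ^{4m}$, $D_{4m}^{}$ and $D_{4m}^+$ by construction, and $E_8$, $K_{12}$, $BW_{16}$, $\Lambda_{24}$ in dimensions $8,12,16,24$. Writing the dimension as $2k$, this forces $k$ to be even throughout, so that $(-1)^k=1$ and the quantity in the Hilbert symbol collapses to $(-1)^k\det(\vG)=\det(\vG)$. The stated indices $c^{2m}$, $c^4$, $c^6$, $c^8$, $c^{12}$ are then precisely the indices $c^k$ of the theorem.

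Next I would record the determinant of each lattice and note that it is always a perfect square. Indeed, $\det(\ZZ^{4m})=\det(E_8)=\det(\Lambda_{24})=1$ since these lattices are unimodular, and $\det(D_{4m}^+)=\det(D_{4m})/4=4/4=1$ because $D_{4m}^+$ contains $D_{4m}$ with index $2$, while $\det(D_{4m})=4=2^2$, $\det(K_{12})=3^6$ and $\det(BW_{16})=2^8$. In every case $\det(\vG)$ is thus the square of a rational number, hence a square in each $\QQ_p$. Since the Hilbert symbol satisfies $(c,b)_p=1$ whenever $b$ is a $p$-adic square (one may take the solution $(x,y,z)=(0,1,\sqrt{b}\,)$ of $z^2=cx^2+by^2$), we obtain $(c,(-1)^k\det(\vG))_p=(c,\det(\vG))_p=1$ for all $c\in\NN$ and all primes $p$. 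The necessary condition of Theorem~\ref{csl-theo:conway-sloane} is therefore automatically satisfied, so no index $c^k$ is excluded on these grounds.

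It remains to secure the sufficiency direction, which requires each lattice to be unigeneric and $(r)$-maximal for some $r\in\QQ$. I expect this to be the real content of the argument, since it is not a formal consequence of the determinant data but rests on the genus classification of these particular lattices. For the cases at hand one invokes the facts, recorded in~\cite{csl-consloa99} with the lattice data of~\cite[Ch.~4]{csl-Conway}, that each of $\ZZ^{4m}$, $D_{4m}^{}$, $D_{4m}^+$, $E_8$, $K_{12}$, $BW_{16}$ and $\Lambda_{24}$ is unique in its genus and is $(1)$- or $(2)$-maximal. Granting this, the second half of Theorem~\ref{csl-theo:conway-sloane} upgrades the (now trivially verified) necessary condition into a sufficient one, producing an SSL of index $c^k$ for every $c\in\NN$ and completing the proof. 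The only delicate point is the unigenericity of the higher-dimensional examples such as $K_{12}$, $BW_{16}$ and $\Lambda_{24}$, for which one relies on the mass-formula computations and genus tables of Conway and Sloane rather than on any elementary estimate.
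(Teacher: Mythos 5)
Your verification of the necessary condition is correct and cleanly done: all seven lattices sit in dimensions divisible by $4$, their determinants ($1$, $4$, $1$, $1$, $3^6$, $2^8$, $1$) are perfect squares, hence squares in every $\QQ_p$, so the Hilbert symbols $(c,(-1)^k\det(\vG))_p$ equal $+1$ automatically and Theorem~\ref{csl-theo:conway-sloane} obstructs nothing. But this only removes obstructions; existence rests entirely on the sufficiency half of that theorem, and there your argument fails. The sufficiency direction requires $\vG$ to be unigeneric and $(r)$-maximal, and these hypotheses are \emph{false} for most lattices on the list: the Leech lattice $\Lambda_{24}$ lies in the genus of even unimodular $24$-dimensional lattices, which contains the $24$ Niemeier classes, so it is as far from unigeneric as possible; $\ZZ^{4m}$ stops being unigeneric once $4m\geq 12$ (for instance $E_8\perp\ZZ^{4m-8}$ and $D_{4m}^{+}$ lie in the same genus as $\ZZ^{4m}$ without being isometric to it); and $D_8$ is not even $(2)$-maximal, since it is properly contained in the even integral lattice $D_8^{+}=E_8$. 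So the step you call ``the only delicate point'' is not delicate but wrong --- the genus tables of \cite{csl-Conway} refute it rather than supply it --- and the proposed route through Theorem~\ref{csl-theo:conway-sloane} cannot yield the corollary, except for the few members of the list that really are unigeneric and maximal (essentially $\ZZ^4$, $\ZZ^8$, $D_4$, $D_4^{+}$ and $E_8$).

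This is also why the paper does not argue this way: the corollary is stated as a quotation of \cite[Thm.~3]{csl-consloa99} with no proof, and the preceding Example is precisely the place where the sufficiency criterion does apply ($A_4$, $\ZZ^6$, $E_6$ are unigeneric and maximal). The proof of the cited theorem is constructive and bypasses genus theory entirely: each lattice in the list carries a module structure over a suitable order --- $\ZZ^{4m}$, $D_{4m}$, $D_{4m}^{+}$ over the Lipschitz/Hurwitz quaternions acting diagonally on $\HH^m$, and related quaternionic structures for $E_8$, $K_{12}$, $BW_{16}$ and $\Lambda_{24}$ --- in which multiplication by an element of norm $c$ is a similarity mapping the lattice into itself with index $c^{\dim/2}$. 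Since the relevant norm forms represent every positive integer (Lagrange's four-square theorem in the Lipschitz/Hurwitz case), every $c\in\NN$ is realised; alternatively, because the set of realised values of $c$ is closed under multiplication (compose similarities), it suffices to realise primes. Any correct write-up must either reproduce such a construction or cite it; the Hilbert-symbol computation alone, however satisfying, proves only that the statement is not ruled out.
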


\subsection{The root lattice \mbox{$A_4$}}\label{csl-sec:sim-a4}

For the lattice $A_4$, we can go further and count the SSLs of a given
index explicitly. Usually, $A_4$ is embedded in $\RR^5$ as a lattice
plane, but this is inconvenient for our purposes and we prefer to look
at it in $\RR^4$, since we want to exploit a useful parametrisation
by quaternions.

Consider the lattice $L\subset\RR^{4}$ that is spanned by the four
vectors
\begin{equation}\label{csl-basis-a4}
(1,0,0,0),\, \tfrac{1}{2}(-1,1,1,1),\, (0,-1,0,0),\,
\tfrac{1}{2}(0,1,\tau-1,-\tau),
\end{equation}
with $\tau=\bigl(1+\sqrt{5}\, \bigr)/2$ as before. Then, $L$ is similar to
$A_{4}$, with the scale reduced by a factor $\sqrt{2}\ts$; compare
\cite[Ex.~3.3]{csl-TAO} or \cite{csl-BHM}. This way, we have
$L\subset\II$, where $\II$\index{icosian~ring} denotes the icosian ring;
see~\cite[Ex.~2.19]{csl-TAO} and references therein.
 
Let us begin by recalling some properties of $L$. Both $L$ and $\II$
are invariant under quaternionic conjugation, so $L=\ts\overline{\nts
  L\nts}\ts$ and $\II=\overline{\II}$, but neither of them is
invariant under algebraic conjugation $\tau \mapsto \tau'$.  Combining
the algebraic conjugation with a permutation of the last two
(quaternionic) components yields another involution, $x\mapsto
\widetilde{x}:=(x'_0, x'_1, x'_3, x'_2)$, which is an involution of
the second kind in the terminology of \cite{csl-Rost} and was called
the \emph{twist map} in~\cite{csl-BHM,csl-BGHZ08}. Note that
$L=\widetilde L$ is invariant under the twist map, which, in addition,
is an anti-automorphism of~$\II$. In other words, the twist map has
the following properties.

\begin{fact}[{\cite[Lemma~1]{csl-BHM}}]
For any\/ $x,y\in\II$ and\/ $\alpha\in \QQ(\tau)$, one has
\begin{enumerate}\itemsep=2pt
  \item $\widetilde{x+y}=\widetilde{x} + \widetilde{y}$ and\/
    $\widetilde{\alpha x} = \alpha' \ts \widetilde{x}$;
  \item $\widetilde{xy}=\widetilde{y}\ts\ts \widetilde{x}$ and\/
    $\widetilde{\widetilde{x}}=x$;
  \item $\widetilde{\overline{x}}=\overline{\widetilde{x}}$ and, for\/
    $x\neq 0$, $(\nts\widetilde{x})^{-1}=\widetilde{x^{-1}}$.\qed
\end{enumerate}
\end{fact}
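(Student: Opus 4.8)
The plan is to factor the twist map into two more elementary maps, each easy to analyse, and then read off all three statements formally. Write $K=\QQ(\tau)$ and regard $\II$ inside the quaternion algebra $\HH(K)$, whose elements are $x=x^{}_0+x^{}_1\ii+x^{}_2\jj+x^{}_3\kk$ with $x^{}_i\in K$; since every identity will be established throughout $\HH(K)\supset\II$, it will in particular hold for $x,y\in\II$. I would introduce the algebraic conjugation $\sigma$, acting coefficientwise by $\tau\mapsto\tau'$, and the coordinate swap $\rho$ that interchanges the $\jj$- and $\kk$-components while fixing the $1$- and $\ii$-components, so that $\widetilde{x}=\sigma(\rho(x))=\rho(\sigma(x))$. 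These two maps commute, because $\sigma$ only touches coefficients and $\rho$ only permutes basis vectors, and both are $\QQ$-linear involutions; in particular $\widetilde{1}=1$.

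First I would check that $\sigma$ is a ring automorphism of $\HH(K)$: the structure constants arising from Hamilton's relations $\ii^2=\jj^2=\kk^2=\ii\jj\kk=-1$ lie in $\QQ$ and are therefore fixed by $\tau\mapsto\tau'$, so $\sigma(xy)=\sigma(x)\ts\sigma(y)$. Next, and this is the crux, I would show that $\rho$ is an \emph{anti}-automorphism. The point is that $\rho$ acts on the basis by $1\mapsto1$, $\ii\mapsto\ii$, $\jj\mapsto\kk$, $\kk\mapsto\jj$, and a short check against Hamilton's relations reveals that this reverses the order of multiplication rather than preserving it: for instance $\rho(\ii\jj)=\rho(\kk)=\jj=\kk\ii=\rho(\jj)\ts\rho(\ii)$, whereas $\rho(\ii)\ts\rho(\jj)=\ii\kk=-\jj$. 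Verifying the analogous identities on the remaining basis products then establishes $\rho(xy)=\rho(y)\ts\rho(x)$ for all $x,y$ by bilinearity. Equivalently, one may exhibit $\rho$ at once as $x\mapsto u\ts\overline{x}\ts u^{-1}$ with $u=(\jj-\kk)/\sqrt{2}$, displaying it as a composition of the canonical anti-involution with an inner automorphism.

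With these two facts in hand, I expect the three assertions to follow without further computation. Additivity of $\widetilde{\,\cdot\,}$ is inherited from that of $\sigma$ and $\rho$, while $\widetilde{\alpha x}=\alpha'\ts\widetilde{x}$ reflects that scalar multiplication by $\alpha\in K$ is unaffected by $\rho$ and sent to $\alpha'$ by $\sigma$. For (ii), composing an automorphism with an anti-automorphism yields an anti-automorphism, giving $\widetilde{xy}=\widetilde{y}\,\widetilde{x}$, and $\sigma^2=\rho^2=\mathrm{id}$ together with commutativity gives $\widetilde{\widetilde{x}}=x$. For (iii), I would note that both $\sigma$ and $\rho$ commute with quaternionic conjugation---$\sigma$ because it acts coefficientwise while $\overline{\,\cdot\,}$ merely negates the imaginary coefficients, and $\rho$ because negating and swapping commute---so that $\widetilde{\overline{x}}=\overline{\widetilde{x}}$; applying (ii) to $x\ts x^{-1}=x^{-1}x=1$ together with $\widetilde{1}=1$ then yields $\widetilde{x}\,\widetilde{x^{-1}}=\widetilde{x^{-1}}\,\widetilde{x}=1$, that is, $(\widetilde{x})^{-1}=\widetilde{x^{-1}}$ for $x\neq0$. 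The only genuine obstacle is the order-reversal in the middle step; everything else is formal bookkeeping.
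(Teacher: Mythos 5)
Your argument is correct and complete. One caveat on the comparison: the paper does not actually prove this Fact---it is imported verbatim from \cite[Lemma~1]{csl-BHM} with only a reference and a closing \textit{qed}---so there is no in-paper proof for your approach to diverge from, and your write-up would serve as a self-contained justification. On the merits, the factorisation $\widetilde{x}=\sigma(\rho(x))=\rho(\sigma(x))$, with $K=\QQ(\tau)$, $\sigma$ the coefficientwise Galois conjugation and $\rho$ the $\jj\leftrightarrow\kk$ swap, is exactly the right structural way to see the statement: $\sigma$ is a ring automorphism of $\HH(K)$ because the structure constants of the quaternion algebra are rational, and $\rho$ is an anti-automorphism, which your sample computation $\rho(\ii\jj)=\jj=\kk\ii=\rho(\jj)\ts\rho(\ii)$ correctly illustrates and which the remaining basis products do confirm; alternatively, your closed form $\rho(x)=u\ts\overline{x}\ts u^{-1}$ with $u=(\jj-\kk)/\sqrt{2}$ settles the order reversal at once, being the canonical anti-involution composed with an inner automorphism (one checks $u\ts\overline{\ii}\ts u^{-1}=\ii$, $u\ts\overline{\jj}\ts u^{-1}=\kk$, $u\ts\overline{\kk}\ts u^{-1}=\jj$). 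Given these two facts, parts (i)--(iii) follow purely formally as you say, including $(\nts\widetilde{x})^{-1}=\widetilde{x^{-1}}$ from part (ii) applied to $x\ts x^{-1}=x^{-1}x=1$ together with $\widetilde{1}=1$; and since every identity is established in all of $\HH(K)$, it holds in particular for $x,y\in\II$, which is all the statement asserts (stability of $\II$ under the twist map, which the surrounding text also uses, is a separate claim not covered here and not required).
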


The twist map is the key to our analysis as it gives us a convenient
parametrisation of the similarity
rotations\index{rotation!similarity}$\,$---$\,$and later also the
coincidence rotations. Furthermore, it provides us with the following
characterisation \cite[Prop.~1]{csl-BHM} of the lattice $L$ as a
subset of $\II$,
\begin{equation}\label{csl-eq:char-L}
  L \, = \, \{x\in \II\mid x=\widetilde{x}\ts\}\ts .
\end{equation}
By Cayley's\index{rotation!Cayley~parametrisation}
parametrisation~\eqref{csl-eq:4dimrot}, we know that any rotation in
$\RR^{4}$ can be written as $R(p,q)x= \frac{1}{|p \ts q|}\ts p\ts
x\bar{q}$. Using the properties of the twist map and the
characterisation of $L$ from above, we immediately see that
$qL\ts\widetilde{q}\subseteq L$ is a similar sublattice of $L$ for any
$q\in\II$. In fact, any SSL of $L$ is of the form $\alpha \ts q
L\ts\widetilde{q}\subseteq L$, with $q\in\II$ and
$\alpha\in\QQ(\tau)$; see~\cite[Cor.~1]{csl-BHM}.

In order to classify the SSLs, it is convenient to introduce a
suitable primitivity notion on $\II$.  A quaternion $q\in\II$ is
called \emph{$\II$-primitive} (or primitive for short) if $\alpha q
\in \II$ with $\alpha\in \QQ(\tau)$ implies
$\alpha\in\ZZ[\tau]$. Equivalently, $q\in\II$ is $\II$-primitive if
the $\II$-content of $q$,
\[
  \cont^{}_{\II}(q) \, :=\, 
  \lcm\bigl\{\alpha\in\ZZ[\tau]\setminus\{0\} \mid
  q\in\alpha\II\bigr\} , 
\]
is a unit in $\ZZ[\tau]$. Note that the notion of an $\lcm$ makes
sense because $\ZZ[\tau]$ is a Euclidean domain. Of course,
$\cont^{}_{\II}(q)$ is defined only up to a unit in $\ZZ[\tau]$.  We
can now fully characterise the SSLs as follows.

\begin{lemma}[{\cite[Cor.~2]{csl-BHM}}]\label{csl-lem:a4-pr-ssl}
  The primitive SSLs of\/ $L$ are precisely the sublattices of the
  form\/ $qL\ts\widetilde{q}$, where\/ $q\in\II$ is\/ $\II$-primitive.
  Consequently, the SSLs of\/ $L$ are precisely the sublattices of
  the form\/ $n \ts qL\ts\widetilde{q}$ with\/ $n\in\NN$ and\/ $q\in\II$
  primitive. \qed
\end{lemma}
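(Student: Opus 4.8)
The plan is to reduce the whole statement to the single claim $(\star)$: \emph{if $q\in\II$ is $\II$-primitive, then $qL\widetilde{q}$ is a primitive SSL.} First I would record the routine ingredients. For $q\in\II$ and $x\in L$, the anti-automorphism law $\widetilde{abc}=\widetilde{c}\,\widetilde{b}\,\widetilde{a}$ together with $\widetilde{\widetilde{q}}=q$ and $\widetilde{x}=x$ gives $\widetilde{qx\widetilde{q}}=q\,\widetilde{x}\,\widetilde{q}=qx\widetilde{q}$, so $qx\widetilde{q}$ is twist-fixed and hence lies in $L$ by~\eqref{csl-eq:char-L}; thus $qL\widetilde{q}\subseteq L$. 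As $x\mapsto qx\widetilde{q}$ is a similarity of real ratio $\lvert q\rvert\,\lvert\widetilde{q}\rvert$, this sublattice is an SSL. Writing $q=\gamma q_{0}$ with $\gamma=\cont_{\II}(q)\in\ZZ[\tau]$ and $q_{0}$ $\II$-primitive, the relation $\widetilde{\gamma q_{0}}=\gamma'\widetilde{q_{0}}$ and the centrality of $\gamma\gamma'\in\ZZ$ yield $qL\widetilde{q}=\gamma\gamma'\,q_{0}L\widetilde{q_{0}}=\lvert\gamma\gamma'\rvert\,q_{0}L\widetilde{q_{0}}$. Hence every lattice $\alpha\,qL\widetilde{q}$ furnished by~\cite[Cor.~1]{csl-BHM} is a $\QQ(\tau)$-multiple of some $q_{0}L\widetilde{q_{0}}$ with $q_{0}$ primitive.

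Granting $(\star)$, both assertions follow from Lemma~\ref{csl-lem:scal-den}. Set $\Lambda_{0}=q_{0}L\widetilde{q_{0}}=\rho_{0}R_{0}L$, where $R_{0}\in\OS(L)$ is the rotation part and $\rho_{0}=\lvert q_{0}\rvert\,\lvert\widetilde{q_{0}}\rvert$. Since $\Scal^{}_{L}(R_{0})=\den^{}_{L}(R_{0})\,\ZZ$, the sublattices $\beta R_{0}L\subseteq L$ are exactly the positive integer multiples of $\den^{}_{L}(R_{0})R_{0}L$, so the unique primitive SSL realised by $R_{0}$ is $\den^{}_{L}(R_{0})R_{0}L$. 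By $(\star)$ the lattice $\Lambda_{0}$ is primitive, which forces $\rho_{0}=\den^{}_{L}(R_{0})$ and thus $\Scal^{}_{L}(R_{0})=\rho_{0}\ZZ$. An arbitrary SSL is, by the previous paragraph, of the form $\beta\Lambda_{0}=\beta\rho_{0}R_{0}L$ for some $\beta\in\QQ(\tau)$; replacing $\beta$ by $\lvert\beta\rvert$ (the lattice is unchanged) and using $\beta\rho_{0}\in\Scal^{}_{L}(R_{0})=\rho_{0}\ZZ$, I obtain $\beta=n\in\NN$. Therefore every SSL equals $n\,q_{0}L\widetilde{q_{0}}$ with $q_{0}$ primitive, and it is primitive exactly when $n=1$; this is precisely the assertion of the lemma.

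It remains to prove $(\star)$, which I expect to be the main obstacle. Suppose $qL\widetilde{q}\subseteq nL$ with $n>1$; for any rational prime $p\mid n$ one then has $qL\widetilde{q}\subseteq pL\subseteq p\II$, and I would derive $q\in p\II$, contradicting $\II$-primitivity. From $qx\widetilde{q}\in p\II$ for all $x\in L$, and from $\sqrt{5}=2\tau-1$ being central with $\sqrt{5}\,\II\subseteq\II$, one gets $qx\widetilde{q}\in p\II$ for every $x\in M:=L+\sqrt{5}\,L$. Reducing modulo $p$ and using that $\II$ is a maximal order, for each prime $p\nmid[\II:M]$ the image of $M$ equals the separable algebra $\II/p\II$ over $\ZZ[\tau]/(p)$; then $\overline{q}\,(\II/p\II)\,\widetilde{\overline{q}}=0$ forces $\overline{q}=0$, i.e.\ $q\in p\II$. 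The hard part will be the finitely many exceptional primes dividing $[\II:M]$, namely $p=2$ and the ramified prime $p=5$: there $M$ fails to fill $\II$ locally and $\II/p\II$ is no longer separable, so this argument collapses and a separate, explicitly computational study of divisibility in $\II$ is needed, as carried out in~\cite{csl-BHM}.
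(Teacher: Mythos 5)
A preliminary remark: the paper offers no proof of this lemma at all — it is quoted verbatim from \cite[Cor.~2]{csl-BHM} and closed with a qed-symbol — so your proposal has to stand on its own merits. Your reduction to the single claim $(\star)$ is correct and well executed: the content computation $qL\widetilde{q}=\lvert\nr(\cont_{\II}(q))\rvert\,q_{0}L\widetilde{q_{0}}$, and the argument via $\Scal_{L}(R_{0})=\den_{L}(R_{0})\,\ZZ$ from Lemma~\ref{csl-lem:scal-den}, correctly show that every SSL furnished by \cite[Cor.~1]{csl-BHM} (which the paper also invokes without proof) is of the form $n\,q_{0}L\widetilde{q_{0}}$ with $n\in\NN$ and $q_{0}$ primitive, and is a primitive SSL exactly when $n=1$. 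Granting $(\star)$, the lemma follows.

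The genuine gap is exactly where you place it, and it cannot be outsourced: your proof of $(\star)$ covers only the rational primes $p$ with $p\nmid[\II:M]$, and for the remaining primes $p=2$ and $p=5$ you appeal to the "explicitly computational study of divisibility in $\II$ \ldots as carried out in \cite{csl-BHM}". But \cite[Cor.~2]{csl-BHM} \emph{is} the statement being proved, so deferring its hard cases to that same source makes the argument circular rather than incomplete-but-fixable. Those cases are substantive: your identification of the exceptional primes is itself only asserted (it does hold, since $2\sqrt{5}\,\II\subseteq L+\sqrt{5}\,L$ follows from splitting $x$ into its twist-symmetric and twist-antisymmetric parts, but that needs saying); at $p=2,5$ the image of $M$ in $\II/p\II$ is a proper subgroup, so the semisimplicity argument yields nothing; and your very first step already weakened the hypothesis $qL\widetilde{q}\subseteq pL$ to $qL\widetilde{q}\subseteq p\II$, discarding information that is likely needed precisely at those primes. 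There is also a smaller, repairable error at the split primes $p\equiv\pm1\bmod 5$: there $\II/p\II$ is a product of two matrix algebras over $\ZZ[\tau]/(p)$ which the induced twist map interchanges, so $\overline{q}\,(\II/p\II)\,\widetilde{\overline{q}}=0$ does \emph{not} force $\overline{q}=0$, only that $q\in\pi\II$ for one of the two primes $\pi$ of $\ZZ[\tau]$ above $p$. Since $\II$-primitivity forbids division by \emph{any} non-unit of $\ZZ[\tau]$, this weaker conclusion still gives the contradiction you want, but the step as written is false and should be restated accordingly.
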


As we also want to determine the number of distinct SSLs of a given
index, we need to ensure that we do not count the same SSL twice. In
general, different quaternions may generate the same SSL, so we need a
criterion to determine whether two SSLs $qL\ts\widetilde{q}$ and
$pL\ts\widetilde{p}$ are equal. One first observes that
$L=qL\ts\widetilde{q}$ holds for an $\II$-primitive quaternion $q$ if
and only if $q\in\II^{\times}$, where $\II^{\times}$ is the unit group
in $\II$; see~\cite[Ex.~2.19]{csl-TAO} for an explicit description
and~\cite{csl-Moody94,csl-Patera} for further background.  From here,
one can infer the following result.

\begin{fact}[{\cite[Lemma~5]{csl-BHM}}]\label{csl-fact:a4-eq-pr-ssl}
  For\/ $\II$-primitive quaternions\/ $p,q\in\II$, one has\/
  $pL\widetilde{p}=qL\widetilde{q}$ if and only if\/
  $p\ts\II=q\ts\II$.\qed
\end{fact}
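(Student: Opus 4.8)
The plan is to prove both implications of the stated equivalence, using as the central ingredient the already-established fact that $L=q L\widetilde{q}$ holds for an $\II$-primitive quaternion $q$ if and only if $q\in\II^{\times}$. The ``if'' direction is the easy one: if $p\,\II=q\,\II$, then $p=q u$ for some unit $u\in\II^{\times}$. I would then compute $pL\widetilde{p}=(qu)L\,\widetilde{(qu)}$ and use the anti-multiplicativity of the twist map (Fact \cite{csl-BHM}, part (2): $\widetilde{qu}=\widetilde{u}\,\widetilde{q}$) to rewrite this as $q\,(u L\widetilde{u})\,\widetilde{q}$. Since $u$ is a unit, it is in particular $\II$-primitive, so the cited characterisation gives $uL\widetilde{u}=L$, and hence $pL\widetilde{p}=qL\widetilde{q}$.

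For the ``only if'' direction, suppose $pL\widetilde{p}=qL\widetilde{q}$ with $p,q$ both $\II$-primitive. I would like to isolate $q^{-1}p$ and show it is a unit. The natural move is to apply $q^{-1}(\,\cdot\,)\widetilde{q}^{\,-1}$ to both sides; using part (3) of the twist-map Fact, namely $(\widetilde{q})^{-1}=\widetilde{q^{-1}}$, the right-hand side $qL\widetilde{q}$ is carried back to $L$, while the left-hand side becomes $(q^{-1}p)\,L\,\widetilde{(q^{-1}p)}$, again via anti-multiplicativity of the twist map. Writing $r:=q^{-1}p$, we thus obtain $rL\widetilde{r}=L$. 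If I can argue that $r$ is $\II$-primitive, then the cited characterisation immediately yields $r\in\II^{\times}$, i.e.\ $p=qr$ with $r$ a unit, which is precisely the statement $p\,\II=q\,\II$.

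The main obstacle, and the step deserving the most care, is establishing that $r=q^{-1}p$ is $\II$-primitive (and, before that, that it even lies in $\II$). A priori $q^{-1}p$ is only a quaternion in $\HH(\QQ(\sqrt5))$, so one must use the equation $rL\widetilde{r}=L$ together with $\II$-primitivity of $p$ and $q$ to control its content. The cleanest route is to track $\II$-contents: since $L=rL\widetilde{r}$, scaling $r$ by any $\alpha\in\QQ(\tau)$ rescales the index $[\,L:rL\widetilde{r}\,]$ by $\nr(\alpha)^{2}$, so $rL\widetilde{r}=L$ forces $\cont_\II(r)$ to be a unit in $\ZZ[\tau]$ up to the ambiguity already present in the primitivity of $p$ and $q$; one then checks $r\in\II$ by clearing this unit content. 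I would expect this to be exactly where the original proof invokes the index/norm bookkeeping already set up for $L\subset\II$, so the argument reduces to a content computation rather than anything genuinely new.

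In summary, the proof is a short two-way implication whose substance lies entirely in (i) the anti-automorphism property of the twist map, used to move units and inverses across the sandwiched product $qL\widetilde{q}$, and (ii) the prior characterisation $qL\widetilde{q}=L\iff q\in\II^{\times}$; the only real work is verifying the $\II$-primitivity of the candidate unit $q^{-1}p$.
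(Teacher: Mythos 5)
Your two-way reduction is the right frame --- the paper itself gives no proof of this Fact (it cites \cite[Lemma~5]{csl-BHM}), and its lead-in sentence points to exactly the reduction you use --- and your ``if'' direction is complete: $p=qu$ with $u\in\II^{\times}$, $\widetilde{qu}=\widetilde{u}\ts\widetilde{q}$, and $uL\widetilde{u}=L$ together give $pL\widetilde{p}=qL\widetilde{q}$. The passage from $pL\widetilde{p}=qL\widetilde{q}$ to $rL\widetilde{r}=L$ with $r=q^{-1}p$ is also correct.

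The genuine gap is the step you yourself flagged, and your proposed fix does not close it. Write $r=\alpha s$ with $\alpha\in\QQ(\tau)^{\times}$ and $s\in\II$ $\ts\II$-primitive (possible, since $|q|^{2}r=\bar{q}\ts p\in\II$). Then $rL\widetilde{r}=\nr(\alpha)\, sL\widetilde{s}$, and under $r\mapsto\alpha r$ the index rescales by $\lvert\nr(\alpha)\rvert^{4}$, not $\nr(\alpha)^{2}$ (the lattice has rank $4$ and is scaled by the rational number $\nr(\alpha)$). More importantly, the constraint this produces, $\lvert\nr(\alpha)\rvert^{4}\,\nr(|s|^{2})^{2}=1$, does \emph{not} force the content of $r$ to be a unit: it is satisfied by $\nr(|s|^{2})=m^{2}$ and $\lvert\nr(\alpha)\rvert=1/|m|$ for every integer $m\geq 1$, so pure index bookkeeping cannot exclude $sL\widetilde{s}=mL$ with $|m|>1$. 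What excludes it is Lemma~\ref{csl-lem:a4-pr-ssl}: from Eq.~\eqref{csl-eq:char-L} and the anti-automorphism property one gets $sL\widetilde{s}\subseteq L$, hence $m:=\nr(\alpha)^{-1}$ lies in $\ZZ$ and $sL\widetilde{s}=mL$; but $sL\widetilde{s}$ is a \emph{primitive} SSL by that lemma, whereas $mL$ is imprimitive for $|m|>1$. Thus $m=\pm 1$, so $sL\widetilde{s}=L$ and $s\in\II^{\times}$ by the unit characterisation. Even then you are not quite done: from $p=\alpha\, qs$ you still need $\alpha\in\ZZ[\tau]^{\times}$, which follows by applying $\II$-primitivity twice --- to $p=\alpha(qs)\in\II$ with $qs$ primitive (giving $\alpha\in\ZZ[\tau]$) and to $qs=\alpha^{-1}p\in\II$ with $p$ primitive (giving $\alpha^{-1}\in\ZZ[\tau]$). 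With the appeal to Lemma~\ref{csl-lem:a4-pr-ssl} replacing the content computation, and with this final unit bookkeeping, your argument closes.
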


This fact reduces the problem of counting SSLs of $L$ to the problem
of counting primitive right ideals of $\II$. Here, we call a right
ideal $q\II$ primitive if $q$ is $\II$-primitive.

The index of a primitive SSL can be determined by an explicit
calculation.  We mention that $|\widetilde{q}\ts|^2=(|q|^2)'$ holds for
any $q\in\II$.  Recall from \cite[Ex.~2.14]{csl-TAO} that the
\emph{norm} of an element $\alpha\in\QQ(\tau)$ is defined as
\[
  \nr(\alpha) \, =\, \alpha\ts \alpha'\ts .
\]
The index of a primitive SSL $qL\ts\widetilde{q}$ then satisfies $[L:
qL\ts\widetilde{q}\, ]=\nr(|q|^4)$. As $q\ts\II$ has index $\nr(|q|^4)$ in
$\II$ as well, we get the following result.

\begin{lemma}[{\cite[Prop.~4]{csl-BHM}}]\label{csl-lem:a4-biject}
  There is a bijective correspondence between the primitive right
  ideals of\/ $\II$ and the primitive SSLs of $\ts L$, given by\/
  $q\ts\II \leftrightarrow qL\ts\widetilde{q}$. Moreover, one has
  \[
    \bigl[\ts\II: q\II\ts\bigr] \, =\, \nr\bigl(|q|^4\bigr) 
    \, =\, \bigl[L: qL\ts\widetilde{q}\,\bigr],
  \]
  which means that the bijection preserves the index.\qed
\end{lemma}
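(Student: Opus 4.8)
The plan is to assemble the bijection directly from the two immediately preceding results and then to pin down the index by a pair of determinant computations, one carried out on the real span of $L$ and one on the full rank-$8$ $\ZZ\ts$-module $\II$.

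For the bijection, I would reason as follows. Lemma~\ref{csl-lem:a4-pr-ssl} tells us that every primitive SSL of $L$ has the form $qL\ts\widetilde{q}$ for some $\II$-primitive $q$, so the assignment $q\ts\II\mapsto qL\ts\widetilde{q}$ is surjective onto the primitive SSLs. Fact~\ref{csl-fact:a4-eq-pr-ssl} states that, for $\II$-primitive $p,q$, one has $pL\ts\widetilde{p}=qL\ts\widetilde{q}$ if and only if $p\ts\II=q\ts\II$. The ``only if'' direction shows that the inverse assignment $qL\ts\widetilde{q}\mapsto q\ts\II$ is well defined, i.e.\ independent of the $\II$-primitive generator chosen, while the ``if'' direction shows that the forward map is injective on primitive right ideals. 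Hence $q\ts\II\leftrightarrow qL\ts\widetilde{q}$ is a genuine bijection, and the only remaining work is the index claim.

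For the index of the SSL, I would show that $\sigma\colon x\mapsto q\ts x\ts\widetilde{q}$ is a similarity of the ambient $\RR^4$ whose image is exactly $qL\ts\widetilde{q}$. Writing all quantities in the real embedding, multiplicativity of the quaternion norm gives $|q\ts x\ts\widetilde{q}\ts|=|q|\,|x|\,|\widetilde{q}\ts|$, so $\sigma$ is conformal with linear scaling factor $|q|\,|\widetilde{q}\ts|$. Using $|\widetilde{q}\ts|^2=(|q|^2)'$, this factor squares to $|q|^2(|q|^2)'=\nr(|q|^2)$, a positive rational. As a similarity of $\RR^4$ has determinant equal to the fourth power of its scaling factor, we obtain $[L:qL\ts\widetilde{q}\ts]=|\det\sigma|=\nr(|q|^2)^2=\nr(|q|^4)$, the last step by multiplicativity of the field norm.

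For the index of the right ideal, the relevant map is left multiplication $\lambda_q\colon x\mapsto q\ts x$ on $\II$, with $[\ts\II:q\ts\II\ts]=|\det\lambda_q|$ computed on $\II\otimes_\ZZ\RR$. Since $\II$ is a $\ZZ[\tau]$-order in $\HH(\QQ(\sqrt{5}\,))$, this tensor product splits, via the two real embeddings of $\QQ(\sqrt{5}\,)$, as $\HH(\RR)\oplus\HH(\RR)$, and $\lambda_q$ acts by left multiplication by the two real images of $q$ on the two summands. On each copy of $\HH(\RR)\simeq\RR^4$, left multiplication by a quaternion $a$ has determinant $|a|^4$, so the two summands contribute $(|q|^2)^2$ and $((|q|^2)')^2$, respectively; their product is again $\bigl(|q|^2(|q|^2)'\bigr)^2=\nr(|q|^2)^2=\nr(|q|^4)$, matching the SSL index exactly and thus proving index preservation. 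I expect the main obstacle to be bookkeeping: keeping the two real embeddings and the algebraic conjugation $\tau\mapsto\tau'$ straight when identifying $|\widetilde{q}\ts|^2$ with $(|q|^2)'$ and when splitting $\II\otimes\RR$, together with correctly invoking that left multiplication by a quaternion in dimension $4$ has determinant equal to the square of its reduced norm.
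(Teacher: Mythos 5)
Your proof is correct and follows essentially the same route as the paper: the bijection is assembled from Lemma~\ref{csl-lem:a4-pr-ssl} (surjectivity) and Fact~\ref{csl-fact:a4-eq-pr-ssl} (injectivity and well-definedness) exactly as in the text preceding the lemma, and your two determinant computations carry out, via the same key identity $|\widetilde{q}\ts|^2=(|q|^2)'$, precisely the ``explicit calculation'' of the indices that the paper states without detail and delegates to the citation. Both computations are sound, so nothing is missing.
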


As a consequence, all possible indices are squares of integers of the
form $k^2+k\ell-\ell^2=\nr(k+\ell \tau)$. In fact, all these indices
are realised \cite{csl-BHM,csl-consloa99}. As the number of right
ideals of $\II$ of a given index is well known, we can deduce the
numbers $b^{}_{\! A_4}(m)$ and $b^{\mathsf{pr}}_{\! A_4}(m)$ of SSLs
and primitive SSLs of index $m$, respectively.  This can efficiently
be done by employing the corresponding Dirichlet series generating
functions. To do so, we first recall the Dirichlet
character\index{Dirichlet~character}
\[
  \chi_{5}^{}(n)=
    \begin{cases}
      0, & \mbox{ if } n\equiv 0 \; (5), \\
      1, & \mbox{ if } n\equiv \pm 1 \; (5), \\
      -1, & \mbox{ if } n\equiv \pm 2 \; (5).
    \end{cases}
\]
Its corresponding \mbox{$L\ts$-series},
$L(s,\chi_5^{})=\sum_{n=1}^\infty\chi_5^{}(n)n^{-s}$, defines (via
analytic continuation) an entire function on the complex plane. The
Dedekind zeta\index{zeta~function!Dedekind} function of $K=\QQ(\tau)$
is given by $\zeta_K^{}(s)=\zeta(s)L(s,\chi_5^{})$, which is a
meromorphic function.  Likewise, the
zeta\index{zeta~function!icosian~ring} function $\zeta_{\ts\II}^{}$ of
the icosian ring \cite{csl-Vigneras,csl-BM}, which counts the right
(or left) ideals of $\II$, is meromorphic in the entire complex plane
and reads
\begin{equation}\label{csl-eq:zetaI}
  \zeta_{\ts\II}^{}(s)\, =\, \zeta_K^{}(2s)\,\zeta_K^{}(2s-1)\ts .
\end{equation}
As the Dirichlet series\index{Dirichlet~series} of the two-sided
ideals is given by $\zeta_K^{}(4s)$, one obtains the zeta function
$\zeta_{\ts\II}^{\mathsf{pr}}$ of the primitive ideals \cite{csl-BM}
as
\begin{equation}
\label{csl-eq:zetaIpr}
  \zeta_{\ts\II}^{\mathsf{pr}}(s)
    \, =\, \frac{\zeta_K^{}(2s)\,\zeta_K^{}(2s-1)}{\zeta_K^{}(4s)}\ts .
\end{equation}
This leads to the following result.

\begin{theorem}[{\cite[Thm.~1]{csl-BHM}}]\label{csl-thm:ssl-a4}
  The Dirichlet series\index{Dirichlet~series} generating functions
  for the numbers\/ $b^{}_{\! A_4}(n)$ and\/ $b^{\mathsf{pr}}_{\!
    A_4}(n)$ of SSLs and primitive SSLs of the root lattice\/ $A_4$ of
  a given index are
  \begin{align*}
    \Phi_{\! A_4}^{}(s)&\, = \sum_{n\in\NN} \frac{b^{}_{\! A_4}(n)}{n^s}
      \, = \, \zeta(4s)\,\frac{\zeta_{\ts\II}^{}(s)}{\zeta_K^{}(4s)}
      \, = \, \frac{\zeta_K^{}(2s)\,\zeta_K^{}(2s-1)}{L(4s,\chi_5^{})} \\ 
  \intertext{and}
   \qquad\qquad\qquad\quad\; \; \,
   \Phi_{\! A_4}^{\mathsf{pr}}(s)&\, = \sum_{n\in\NN} 
      \frac{b^{\mathsf{pr}}_{\! A_4}(n)}{n^s}
      \, = \, \zeta_{\ts \II}^{\mathsf{pr}}(s) 
      \, = \, \frac{\zeta_K^{}(2s)\,\zeta_K^{}(2s-1)}{\zeta_K^{}(4s)}\ts .
      \qquad\qquad\qquad\quad \; \qed
  \end{align*}
\end{theorem}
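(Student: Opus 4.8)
The plan is to reduce both generating functions to the enumeration of (primitive) right ideals of the icosian ring $\II$, for which the relevant zeta functions are already supplied by Eqs.~\eqref{csl-eq:zetaI} and~\eqref{csl-eq:zetaIpr}, and then to assemble the two Dirichlet series from the index-preserving bijection of Lemma~\ref{csl-lem:a4-biject}.

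First I would dispose of the primitive case, which is essentially immediate. By Lemma~\ref{csl-lem:a4-biject}, the correspondence $q\II\leftrightarrow qL\widetilde q$ is a bijection between the primitive right ideals of $\II$ and the primitive SSLs of $L$ that preserves the index. Hence $b^{\mathsf{pr}}_{\! A_4}(n)$ equals the number of primitive right ideals of $\II$ of index $n$, so $\Phi^{\mathsf{pr}}_{\! A_4}(s)=\zeta_{\ts\II}^{\mathsf{pr}}(s)$, and substituting Eq.~\eqref{csl-eq:zetaIpr} yields the asserted $\zeta_K(2s)\zeta_K(2s-1)/\zeta_K(4s)$.

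Next I would pass to general SSLs by separating each one into a primitive part and a rational-integer scaling factor. By Lemma~\ref{csl-lem:a4-pr-ssl}, every SSL of $L$ has the form $n\,qL\widetilde q$ with $n\in\NN$ and $q$ being $\II$-primitive, so it is an integer multiple of a primitive SSL. The point to establish is that this decomposition is unique: for a given SSL $S$, the set of $k\in\NN$ with $\tfrac1k S\subseteq L$ coincides with the set of $k$ with $S\subseteq kL$, and since $k_1L\cap k_2L=\lcm(k_1,k_2)L$, this set is closed under least common multiples and divisors; being bounded (as $S$ has finite index), it consists precisely of the divisors of a largest element $n$. Then $M:=\tfrac1n S$ is the unique primitive SSL with $S=nM$, and uniqueness on the level of generators follows from Fact~\ref{csl-fact:a4-eq-pr-ssl}. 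Since scaling a rank-$4$ lattice of index $m$ by $n$ gives index $n^4m$, grouping the SSLs by their primitive part yields the convolution identity
\[
  \Phi^{}_{\! A_4}(s)\,=\,\Bigl(\sum_{n\in\NN}n^{-4s}\Bigr)\,\Phi^{\mathsf{pr}}_{\! A_4}(s)\,=\,\zeta(4s)\,\Phi^{\mathsf{pr}}_{\! A_4}(s).
\]

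Finally I would simplify. Writing $\Phi^{\mathsf{pr}}_{\! A_4}(s)=\zeta_{\ts\II}^{\mathsf{pr}}(s)=\zeta_{\ts\II}(s)/\zeta_K(4s)$ and inserting $\zeta_{\ts\II}(s)=\zeta_K(2s)\zeta_K(2s-1)$ from Eq.~\eqref{csl-eq:zetaI}, the identity above becomes $\Phi^{}_{\! A_4}(s)=\zeta(4s)\,\zeta_{\ts\II}(s)/\zeta_K(4s)$, which is the stated intermediate form; the factorisation $\zeta_K(4s)=\zeta(4s)L(4s,\chi_5)$ then cancels the $\zeta(4s)$ factors and leaves $\zeta_K(2s)\zeta_K(2s-1)/L(4s,\chi_5)$. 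The main obstacle is the uniqueness of the primitive/integer decomposition, and in particular keeping straight that the scaling relevant for SSLs is by rational integers (producing the factor $\zeta(4s)$), whereas the primitivity encoded in Eq.~\eqref{csl-eq:zetaIpr} is with respect to $\QQ(\tau)$-scaling and is already accounted for by the divisor $\zeta_K(4s)$; everything else is bookkeeping with the imported zeta functions.
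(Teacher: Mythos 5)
Your proof is correct and follows essentially the same route the paper sets up: the index-preserving bijection of Lemma~\ref{csl-lem:a4-biject} gives $\Phi^{\mathsf{pr}}_{\! A_4}(s)=\zeta_{\ts\II}^{\mathsf{pr}}(s)$, and the unique factorisation of an arbitrary SSL as a rational-integer multiple of a primitive one supplies the factor $\zeta(4s)$, after which the identity $\zeta_K^{}(4s)=\zeta(4s)\ts L(4s,\chi_5^{})$ finishes the computation. The paper itself quotes the theorem from \cite{csl-BHM} without writing out a proof, and your argument for the uniqueness of the decomposition $S=nM$ (via the divisor structure of $\{k\in\NN \mid S\subseteq kL\}$) correctly fills in the one step the surrounding text leaves implicit, including the needed distinction between $\ZZ\ts$-primitivity of SSLs and $\ZZ[\tau]$-primitivity of right ideals.
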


Both generating functions from Theorem~\ref{csl-thm:ssl-a4} possess
Euler\index{Euler~product} products, which read
\begin{align}
      \Phi_{\! A_4}^{}(s) & \, = \, \myfrac{1}{(1-5^{-2s})(1-5^{1-2s})}
        \prod_{p\equiv \pm 1 (5)} \frac{1+p^{-2s}}{(1-p^{-2s})(1-p^{1-2s})^2}
      \nonumber \\[-1ex] & \\[-2ex]
      & \qquad \times  \prod_{p\equiv \pm 2 (5)} 
        \frac{1+p^{-4s}}{(1-p^{-4s})(1-p^{2-4s})}\nonumber\\
  \intertext{and}
  \quad \Phi_{\! A_4}^{\mathsf{pr}}(s) & \, = \, \myfrac{1+5^{-2s}}{1-5^{1-2s}}
        \prod_{p\equiv \pm 1 (5)} \frac{(1+p^{-2s})^2}{(1-p^{1-2s})^2}
        \prod_{p\equiv \pm 2 (5)} \frac{1+p^{-4s}}{1-p^{2-4s}}\ts .
\end{align}
From these identities, we can obtain explicit expressions for
$b^{}_{\!  A_4}(n)$ and $b^{\mathsf{pr}}_{\! A_4}(n)$, which are
multiplicative arithmetic functions. Thus, they are determined by
their values at prime powers.  As $b^{}_{\!
  A_4}(p^{2r+1})=b^{\mathsf{pr}}_{\!  A_4}(p^{2r+1})=0$, we only need
to state their values for primes at even powers $\geq 2$. The result
is \cite{csl-BHM}
\begin{align*}
  b^{}_{\! A_4}(p^{2r})&=
    \begin{cases}
      \frac{5^{r+1}-1}{4}, & \mbox{ if } p=5,\\[1mm]
      \frac{(r+1)(p^2-1)p^r-2(p^{r+1}-1) }{(p-1)^2}, 
        & \mbox{ if } p\equiv \pm 1\,(5) , \\[1mm]
      \frac{p^{r+2}+p^r-2}{p^2-1} ,
        & \mbox{ if } p\equiv \pm 2\,(5) \mbox{ and  $r$ even,} \\[1mm]
      0, & \mbox{ if } p\equiv \pm 2\,(5) \mbox{ and  $r$ odd,}
    \end{cases}\\
    \intertext{and}
  b^{\mathsf{pr}}_{\! A_4}(p^{2r})&=
    \begin{cases}
      6 \cdot 5^{r-1}, & \mbox{ if } p=5,\\[1mm]
      (r+1)p^r + 2r p^{r-1} + (r-1) p^{r-2},
        & \mbox{ if } p\equiv \pm 1\,(5), \\[1mm]
      p^r + p^{r-2},
        & \mbox{ if } p\equiv \pm 2\,(5)  \mbox{ and  $r$ even,} \\[1mm]
      0, & \mbox{ if } p\equiv \pm 2\,(5) \mbox{ and  $r$ odd.}
    \end{cases}
\end{align*}
It follows from these formulas that all possible indices are not only
realised for some SSL, but even realised for some \emph{primitive}
SSL. In fact, it will turn out that the majority of SSLs of a given
index are primitive. This can be illustrated by comparing the first
few terms of $\Phi_{\! A_4}^{}$ and $\Phi_{\! A_4}^{\mathsf{pr}}$,
\begin{align*}
  \Phi_{\! A_4}^{}(s)&\, = \, 1 +\myfrac{6}{4^{2s}} +
      \myfrac{6}{5^{2s}}+\myfrac{11}{9^{2s}} 
      +\myfrac{24}{11^{2s}} +\myfrac{26}{16^{2s}} +\myfrac{40}{19^{2s}} 
      +\myfrac{36}{20^{2s}} +\myfrac{31}{25^{2s}} %+\myfrac{60}{29^{2s}} 
      %+\myfrac{64}{31^{2s}} +\myfrac{66}{36^{2s}} 
      + \cdots,\\[1mm]
  \Phi_{\! A_4}^{\mathsf{pr}}(s)&\, =\,  
    1 +\myfrac{5}{4^{2s}} +\myfrac{6}{5^{2s}}+\myfrac{10}{9^{2s}} 
      +\myfrac{24}{11^{2s}} +\myfrac{20}{16^{2s}} +\myfrac{40}{19^{2s}} 
      +\myfrac{30}{20^{2s}} +\myfrac{30}{25^{2s}}% +\myfrac{60}{29^{2s}} 
      %+\myfrac{64}{31^{2s}} +\myfrac{50}{36^{2s}} 
      + \cdots.
\end{align*}
The explicit form of the generating functions $\Phi_{\! A_4}^{}(s)$
and $\Phi_{\! A_4}^{\mathsf{pr}}$ allows us to calculate the
asymptotic behaviour of $b^{}_{\! A_4}(n)$ and $b^{\mathsf{pr}}_{\!
  A_4}(n)$. The result reads as follows.

\begin{corollary}[{\cite[Sec.~4]{csl-BHM}}]
  The asymptotic growth\index{asymptotic~behaviour}
  of the summatory function of\/ 
  $b^{}_{\! A_4}(n)$ is 
  \[
    \sum_{m\leq x} b^{}_{\! A_4}(m) \, \sim \, \frac{\rho}{2}\ts\ts x,\quad 
    \mbox{as } x \to \infty\ts , 
  \]
  where\/ $\rho$ is given by
  \[
    \rho \, = \, \frac{\zeta_K(2) L(1,\chi_5^{})}{L(4,\chi_5^{})}
      \, = \, \myfrac{1}{2}\sqrt{5} \ts\log(\tau) 
      \, \approx\,  0.538\ts 011 \ts .%0.53801118.
  \]
  The asymptotic growth\index{asymptotic~behaviour}
  for\/ $b^{\mathsf{pr}}_{\! A_4}(n)$ is also
  linear, now with
  \[
    \qquad\qquad\qquad\quad\qquad
    \rho^{\mathsf{pr}} \, = \, 
    \frac{\zeta_K(2) L(1,\chi_5^{})}{\zeta(4) L(4,\chi_5^{})}
     \, = \, \myfrac{45}{\pi^4}\sqrt{5} \ts\log(\tau) 
   \,\approx\, 0.497\ts 089\ts .%0.49708919.
   \qquad\qquad\qquad\quad
  \qed
  \]
\end{corollary}
\begin{proof}[Sketch of proof]
  We apply again Theorem~\ref{csl-thm:meanvalues}, this time to the
  generating functions given in Theorem~\ref{csl-thm:ssl-a4}.  The
  fact that both Dirichlet series are meromorphic functions, which are
  analytic in the half-plane $\{\Real(s)>1\}$ and have the proper
  behaviour on the line $\{\Real(s)=1\}$, implies the linear growth.
  The explicit calculations for the sum
  $\sum_{m\leq x} b^{}_{\! A_4}(m)$ are
  similar\footnote{\label{csl-foot-ssla4}{\ts}Note that a different
    definition for the counting function was applied
    in~\cite{csl-BHM}. There, the function $f(m)=b^{}_{\! A_4}(m^2)$
    was discussed, which makes sense as $b^{}_{\! A_4}(n)$ is non-zero
    only for squares. Correspondingly, the asymptotics for $f(m)$ are
    given by $\sum_{m\leq x} f(m) \, \sim \, \frac{\rho}{2}\ts\ts x^2$
    as $x \to \infty$.} to those from \cite[Sec.~4, p.~1402]{csl-BHM}.
  The case of the primitive SSLs is analogous, and just gives an
  additional factor $\frac{1}{\zeta(4)}$.
\end{proof}

\subsection{Hypercubic lattices in $\RR^{4}$}
\label{csl-sec:ssl-hycub}

There are, up to similarity, two hypercubic lattices in $4$
dimensions, namely the primitive hypercubic lattice $\ZZ^4$ and the
centred hypercubic lattice $D_4^{}$; compare~\cite{csl-Conway}
and~\cite[Ex.~3.2]{csl-TAO}. The latter is similar to its dual lattice
$D_4^{*}$, which we identify with the Hurwitz ring\index{Hurwitz~ring}
$\JJ$.

Recall that any rotation in $4$ dimensions can be parametrised by a
pair of quaternions; compare Section~\ref{csl-sec:quat}. It turns out
that any similarity rotation\index{rotation!similarity} of $\vG \in \{
D_4^{*}, \ZZ^4 \}$ can be parametrised by a pair $(p,q)$ of Hurwitz
quaternions. Moreover, any SSL of $\vG$ is of the form $p\vG \bar{q}$,
where we can choose $p$ to be odd and primitive; compare~\cite[Rem.~1
  and Lemma~2]{csl-BM}.  With this convention, in the case of
$\vG=D_4^{*}=\JJ$, $p$ and $q$ are unique up to multiplication by a
unit of $\JJ$ from the right~\cite[Prop.~3]{csl-BM}. Hence, counting
SSLs of $D_4^{*}$ is equivalent to counting right ideals of~$\JJ$.

The situation is slightly more complicated for $\ZZ^4$, as its
symmetry is lower. As a consequence, there may be three distinct (but,
of course, congruent) SSLs of $\ZZ^4$ that correspond to a single SSL
of $\JJ$. This only happens if the index of the SSL is even. We thus
obtain the following result for the generating functions of the SSLs,
where we make use of the zeta\index{zeta~function!Hurwitz~ring}
function of~$\JJ$, which reads \cite{csl-BM,csl-Vigneras,csl-Reiner}
\begin{equation}\label{csl-zetaHurwitz}
   \zeta^{}_{\JJ}(s) \, = \sum_{I\subseteq\JJ}\myfrac{1}{[\JJ:I]^s}
    \, = \, (1-2^{1-2s}) \, \zeta(2s) \, \zeta(2s-1) \ts .
\end{equation}

\begin{theorem}[{\cite[Thm.~2]{csl-BM}}]\label{csl-thm:hyper}
  The possible indices of similar sublattices of hypercubic lattices
  in\/ $\RR^4$ are precisely the squares of rational integers. The
  number of distinct SSLs of a given index is a multiplicative
  arithmetic function. For the case of\/ $\JJ=D_4^{*}$, the
  corresponding Dirichlet series\index{Dirichlet~series} generating
  function\/ $\Phi_{\JJ}^{}$ reads
  \[
    \Phi_{\JJ}^{}(s) 
    \, = \, \frac{\bigl(\zeta^{}_{\JJ}(s)\bigr)^{2}}{(1+4^{-s})\,\zeta(4s)}
      \, =\,  \frac{\bigl(1-2^{1-2s}\bigr)^2}{1+4^{-s}} 
      \frac{\bigl(\zeta(2s)\,\zeta(2s-1)\bigr)^2}{\zeta(4s)}\ts .
  \]
  The same function also applies to the lattice\/ $D_4^{}$, while we
  obtain
  \[
    \Phi_{\ZZ^4}^{}(s)
    \, = \, \left( 1 + \myfrac{2}{4^s} \right) \Phi_{\JJ}^{}(s)
  \]
for the primitive hypercubic lattice\/ $\ZZ^4$. \qed
\end{theorem}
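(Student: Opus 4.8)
The plan is to reduce everything to the ideal theory of the Hurwitz ring $\JJ$, using the parametrisation $p\ts\vG\ts\bar q$ of the SSLs recalled above. First I would pin down the index: since left multiplication by $p$ and right multiplication by $\bar q$ scale all lengths in $\RR^4\cong\HH$ by $|p|$ and $|q|$, respectively, the map $x\mapsto p\ts x\ts\bar q$ has determinant $|p|^4|q|^4$, so that
\[
  [\ts\JJ : p\ts\JJ\ts\bar q\ts] \, = \, |p|^4|q|^4 \, = \, \nr(p)^2\ts\nr(q)^2
  \, = \, \bigl(\nr(p)\ts\nr(q)\bigr)^2 .
\]
This is always a perfect square, and every square $m^2$ occurs (take $p=1$ and any $q$ with $\nr(q)=m$, which exists by Lagrange's four-square theorem), which settles the claim on the possible indices; the same length-scaling argument applies verbatim to $\ZZ^4$ and $D_4^{}$.

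For the generating function of $\JJ=D_4^{*}$, I would exploit the uniqueness of the pair $(p,q)$ up to right unit factors \cite[Prop.~3]{csl-BM}. With $p$ normalised to be odd and primitive, this sets up a bijection between SSLs of $\JJ$ and pairs $(p\ts\JJ,\ts q\ts\JJ)$ consisting of an odd primitive right ideal and an arbitrary right ideal, the SSL determining each of $p\ts\JJ$ and $q\ts\JJ$; moreover its index factors as the product $[\JJ:p\JJ]\ts[\JJ:q\JJ]$ of the two ideal indices. Hence the counting function is a Dirichlet convolution and $\Phi_{\JJ}^{}(s)=\zeta_{\JJ}^{\mathsf{op}}(s)\,\zeta_{\JJ}^{}(s)$, where $\zeta_{\JJ}^{\mathsf{op}}$ counts odd primitive right ideals and $\zeta_{\JJ}^{}$ is the full right-ideal zeta function of \eqref{csl-zetaHurwitz}. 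It remains to express $\zeta_{\JJ}^{\mathsf{op}}$ through $\zeta_{\JJ}^{}$. Here I would use two structural facts about the (principal, maximal) order $\JJ$: every right ideal is uniquely an integer multiple of a primitive one, whence $\zeta_{\JJ}^{}(s)=\zeta(4s)\,\zeta_{\JJ}^{\mathsf{pr}}(s)$ (the index picking up a factor $c^4$); and, since the unique prime of norm $2$ satisfies $(1+\ii)^2\sim 2$, a primitive right ideal is either odd or carries exactly one factor $(1+\ii)$, giving $\zeta_{\JJ}^{\mathsf{pr}}(s)=(1+4^{-s})\,\zeta_{\JJ}^{\mathsf{op}}(s)$ (the extra factor raising the index by $\nr(1+\ii)^2=4$). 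Combining the three relations yields $\Phi_{\JJ}^{}(s)=\zeta_{\JJ}^{}(s)^2/\bigl((1+4^{-s})\,\zeta(4s)\bigr)$, and substituting \eqref{csl-zetaHurwitz} gives the closed form; multiplicativity of the counting function is immediate from the resulting Euler product. Since $D_4^{}\similar D_4^{*}=\JJ$ and the number of SSLs of a given index is a similarity invariant, $\Phi_{D_4^{}}^{}=\Phi_{\JJ}^{}$.

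For the primitive hypercubic lattice $\ZZ^4$, whose symmetry group is a proper subgroup of that of $\JJ$, I would invoke the $3$-to-$1$ correspondence noted in the text: each SSL of $\JJ$ of even index is hit by exactly three mutually congruent SSLs of $\ZZ^4$ of the same index, while odd-index SSLs are in bijection. This gives $\Phi_{\ZZ^4}^{}(s)=\Phi_{\JJ}^{}(s)+2\,\Phi_{\JJ,\mathrm{even}}^{}(s)$, where the second summand restricts to even indices. The final ingredient is the identity $\Phi_{\JJ,\mathrm{even}}^{}(s)=4^{-s}\,\Phi_{\JJ}^{}(s)$, which I would read off the $2$-part of the Euler product of $\Phi_{\JJ}^{}$: the local factor at $2$ equals $1/(1-4^{-s})$, so deleting the index-$1$ term and shifting by $4^{-s}$ recovers the whole series (using that an even square index is automatically divisible by $4$). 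This produces the stated factor $1+2\cdot 4^{-s}$.

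The main obstacle is the bookkeeping at the prime $2$, and in particular the $\ZZ^4$ case: one has to justify why precisely three congruent SSLs of $\ZZ^4$ collapse to a single SSL of $\JJ$ exactly when the index is even. This stems from the index-$2$ inclusion $\ZZ^4\subset\JJ$ together with the reduced symmetry of $\ZZ^4$, and is the point where the quaternionic arithmetic at $2$ (the behaviour of $1+\ii$) genuinely enters; away from $2$ everything is governed cleanly by the factorisation into an odd primitive ideal and an arbitrary ideal.
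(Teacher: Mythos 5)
Your proposal is correct and follows essentially the same route as the paper: the theorem is stated there as a citation to \cite{csl-BM}, and the surrounding text of Section~\ref{csl-sec:ssl-hycub} sketches precisely your argument — SSLs parametrised as $p\ts\vG\ts\bar{q}$ with $p$ odd and primitive, uniqueness up to right units reducing the count for $\JJ$ to right-ideal counting, and the three-to-one correspondence at even indices for $\ZZ^4$. Your Dirichlet-series bookkeeping (the convolution $\Phi^{}_{\JJ}=\zeta^{\mathsf{op}}_{\JJ}\ts\zeta^{}_{\JJ}$, the relations $\zeta^{}_{\JJ}=\zeta(4s)\ts\zeta^{\mathsf{pr}}_{\JJ}$ and $\zeta^{\mathsf{pr}}_{\JJ}=(1+4^{-s})\ts\zeta^{\mathsf{op}}_{\JJ}$, and the even-part identity $\Phi^{}_{\JJ,\mathrm{even}}=4^{-s}\Phi^{}_{\JJ}$) is verified to reproduce the stated formulas, and the only inputs you do not re-derive (uniqueness of the pair up to right units, and the three-to-one collapse for even index) are exactly the facts the paper itself imports from \cite{csl-BM}, so your treatment stands on the same footing as the paper's.
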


From the generating functions of Theorem~\ref{csl-thm:hyper}, we can
extract the corresponding counting functions $b_{\JJ}^{}(m)$ and
$b_{\ZZ^4}^{}(m)$. We formulate them in terms of the
function\index{counting~function!hypercubic~lattice}
\begin{equation}\label{csl-eq:gnr}
  g(n,r) \, =\,  (r+1)\, n^r + 
  2 \,\frac{1-(r+1)\ts n^r + r\ts n^{r+1} }{(n-1)^2} 
\end{equation}
for integers $r \geq 0$ and $n>1$.

\begin{corollary}[{\cite[Cor.~1]{csl-BM}}]
  The arithmetic\index{function!arithmetic} functions\/
  $b_{\JJ}^{}(m)$ and\/ $b_{\ZZ^4}^{}(m)$ are
  multiplicative.\index{multiplicative~function} They are non-zero if
  and only if\/ $m$ is a square, and are then determined by
  \[
    b_{\JJ}^{}(p^{2r})  \, =\, 
    \begin{cases}
      1, & \mbox{if } p=2, \\
      g(p,r), & \mbox{if $p$ is an odd prime,}
    \end{cases}
  \]
  for all $r \geq 0$,  and by
  $b_{\ZZ^4}^{}(m) = \left(2+ (-1)^m \right) b_{\JJ}^{}(m)$.
  \qed
\end{corollary}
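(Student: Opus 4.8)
The plan is to read the arithmetic functions directly off the Euler products of the two generating functions in Theorem~\ref{csl-thm:hyper}. Since $\zeta(2s)$, $\zeta(2s-1)$ and $\zeta(4s)$ each carry an Euler product and the prefactors $(1-2^{1-2s})^{2}$ and $(1+4^{-s})^{-1}$ only affect the prime $p=2$, the function $\Phi_{\JJ}^{}$ factors as a product over primes, $\Phi_{\JJ}^{}(s)=\prod_{p}E^{}_{p}(p^{-s})$; this already yields the multiplicativity of $b_{\JJ}^{}$, and likewise for $b_{\ZZ^4}^{}$ after multiplying by $(1+2\cdot 4^{-s})$. Moreover, each local factor $E^{}_{p}$ is a power series in $p^{-2s}$, because $\zeta(2s-1)$ contributes $p^{1-2s}=p\cdot p^{-2s}$ and $\zeta(4s)$ contributes $p^{-4s}=(p^{-2s})^{2}$. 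Hence only even powers of each prime occur, so $b_{\JJ}^{}(m)$ and $b_{\ZZ^4}^{}(m)$ vanish unless $m$ is a perfect square, and it remains to determine $b_{\JJ}^{}(p^{2r})$.

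For $p=2$ I would first collect all contributions and simplify:
\[
  E^{}_{2}(2^{-s})
  \,=\, \frac{(1-2^{1-2s})^{2}}{1+2^{-2s}}\,
  \frac{(1-2^{-2s})^{-2}(1-2^{1-2s})^{-2}}{(1-2^{-4s})^{-1}}
  \,=\, \frac{1-2^{-4s}}{(1+2^{-2s})(1-2^{-2s})^{2}}
  \,=\, \frac{1}{1-2^{-2s}},
\]
where the cancellation of $(1-2^{1-2s})^{\pm2}$ and the identity $1-2^{-4s}=(1-2^{-2s})(1+2^{-2s})$ are used. Reading off the coefficient of $2^{-2rs}$ gives $b_{\JJ}^{}(2^{2r})=1$ for all $r\geq 0$, as claimed.

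For an odd prime $p$ the two special prefactors are absent, so the local factor is
\[
  E^{}_{p}(p^{-s})
  \,=\, \frac{1-p^{-4s}}{(1-p^{-2s})^{2}(1-p^{1-2s})^{2}}
  \,=\, \frac{1+p^{-2s}}{(1-p^{-2s})(1-p^{1-2s})^{2}}.
\]
Writing $x=p^{-2s}$, so that $p^{1-2s}=px$, the quantity $b_{\JJ}^{}(p^{2r})$ is the coefficient of $x^{r}$ in $\frac{1+x}{(1-x)(1-px)^{2}}$. Expanding $(1-px)^{-2}=\sum_{k\geq0}(k+1)p^{k}x^{k}$ and convolving with $(1-x)^{-1}$ produces the partial sum $S^{}_{r}:=\sum_{k=0}^{r}(k+1)p^{k}$, and the factor $(1+x)$ then gives $b_{\JJ}^{}(p^{2r})=S^{}_{r}+S^{}_{r-1}$ (with $S^{}_{-1}=0$). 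The remaining task is to evaluate $S^{}_{r}=\bigl(1-(r+2)p^{r+1}+(r+1)p^{r+2}\bigr)/(p-1)^{2}$ and to simplify $S^{}_{r}+S^{}_{r-1}$ into the closed form $g(p,r)$ of Eq.~\eqref{csl-eq:gnr}. I expect this algebraic matching of $S^{}_{r}+S^{}_{r-1}$ with $g(p,r)$ to be the only genuinely laborious step, though it is entirely mechanical.

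Finally, for $\ZZ^4$, multiplying $\Phi_{\JJ}^{}$ by $(1+2\cdot 4^{-s})=(1+2\cdot 2^{-2s})$ alters only the local factor at $p=2$, turning $(1-2^{-2s})^{-1}$ into $(1+2x)/(1-x)$ with $x=2^{-2s}$. Its coefficient of $x^{r}$ is $1$ for $r=0$ and $3$ for $r\geq1$, whereas all odd-prime factors are unchanged. By multiplicativity this means $b_{\ZZ^4}^{}(m)=b_{\JJ}^{}(m)$ when $m$ is odd and $b_{\ZZ^4}^{}(m)=3\,b_{\JJ}^{}(m)$ when $m$ is even, which is precisely the asserted identity $b_{\ZZ^4}^{}(m)=\bigl(2+(-1)^{m}\bigr)b_{\JJ}^{}(m)$.
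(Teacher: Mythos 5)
Your proposal is correct and follows exactly the route the paper intends: the corollary is presented as a direct coefficient extraction from the Euler products of the generating functions in Theorem~\ref{csl-thm:hyper} (the paper itself refers to~\cite{csl-BM} rather than spelling out the computation), and your local-factor calculations at $p=2$ and at odd $p$ are accurate. In fact, your final matching step is even easier than you anticipate: since $S_r - S_{r-1} = (r+1)p^r$ by the very definition of the partial sums, one has $S_r + S_{r-1} = (r+1)p^r + 2S_{r-1}$, which is literally the expression $g(p,r)$ of Eq.~\eqref{csl-eq:gnr}, so no closed-form evaluation of $S_r$ is required at all.
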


The first few terms of $\Phi_{\JJ}^{}(s)$ read
\begin{equation}
\begin{split}
  \Phi_{\JJ}^{}(s)  \, =\,
  1 & + \myfrac{1}{4^s} + \myfrac{8}{9^s} + \myfrac{1}{16^s} + \myfrac{12}{25^s}
   + \myfrac{8}{36^s} + \myfrac{16}{49^s} + \myfrac{1}{64^s} + \myfrac{41}{81^s}
   + \myfrac{12}{100^s} \\
   \nonumber
  & 
   + \myfrac{24}{121^s} + \myfrac{8}{144^s} + \myfrac{28}{169^s} 
   + \myfrac{16}{196^s} + \myfrac{96}{225^s} + \myfrac{1}{256^s} 
   + \myfrac{36}{289^s} 
   % + \myfrac{41}{324^s} + \myfrac{40}{361^s} + \myfrac{12}{400^s} 
   % + \myfrac{}{^s}
   + \cdots ,
\end{split}
\end{equation}
which corresponds to sequence A045771 in \cite{csl-OEIS}.

\begin{corollary}[{\cite[Cor.~2]{csl-BM}}]
  The asymptotic growth\index{asymptotic~behaviour} of the summatory
  arithmetic function\/ $\sum_{m\leq x} b_{\vG}^{}(m)$ is given
  by{\ts}\footnote{\label{csl-foot-ssld4}Note that in~\cite{csl-BM} the
    asymptotics of the counting function $f^{}_{\vG}(m)=b^{}_{\vG}(m^2)$
    instead of $b^{}_{\vG}(m)$ are discussed; compare
    Footnote~\ref{csl-foot-ssla4}.  Correspondingly, the asymptotics
    for $f^{}_{\!\vG}(m)$ are given by $\sum_{m\leq x} f^{}_{\!\vG}(m) \, \sim
    \, 2 \ts C^{}_{\vG}\, x^2 \log(x)$ as $x \to \infty$.}
  \[
    \sum_{m\leq x} b_{\vG}^{}(m) \, \sim\,  C^{}_{\vG}\, x \log(x)
  \]
  as\/ $x\to\infty$, where the constant\/ $C^{}_{\vG}$ is given by
  \[
  \quad\qquad\qquad\qquad\quad\;\;
   C^{}_{\vG} \,=\, \Res_{s=1}\bigl((s-1)\ts\Phi_{\vG}^{}(s)\bigr) \, =\,
    \begin{cases}
      \frac{1}{8}, & \mbox{for } \vG = \JJ,\\
      \frac{3}{16}, & \mbox{for } \vG = \ZZ^4.  
      \;\,\qquad\qquad\qquad\quad \qed
    \end{cases} 
  \]
\end{corollary}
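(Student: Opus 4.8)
The plan is to apply Theorem~\ref{csl-thm:meanvalues} directly to the explicit Dirichlet series $\Phi_{\vG}^{}(s)$ furnished by Theorem~\ref{csl-thm:hyper}. The coefficients $b_{\vG}^{}(m)$ are non-negative, being counts of sublattices, so the sign hypothesis is met. First I would locate the abscissa of convergence and the rightmost singularity. Among the factors of
\[
  \Phi_{\JJ}^{}(s) \, = \, \frac{\bigl(1-2^{1-2s}\bigr)^2}{1+4^{-s}}
  \,\frac{\bigl(\zeta(2s)\,\zeta(2s-1)\bigr)^2}{\zeta(4s)}\ts ,
\]
the elementary pieces $(1-2^{1-2s})^2$ and $(1+4^{-s})^{-1}$ are entire and, on the relevant region, zero-free; $\zeta(2s)$ is holomorphic and non-zero for $\Real(s)>\tfrac12$; and $1/\zeta(4s)$ is holomorphic wherever $\zeta(4s)\neq 0$. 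The only source of a singularity for $\Real(s)\geq 1$ is therefore the factor $\bigl(\zeta(2s-1)\bigr)^2$, which converges for $\Real(s)>1$ and carries a pole at $s=1$. This identifies $\alpha=1$ in the notation of Theorem~\ref{csl-thm:meanvalues}.

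Next I would determine the order and leading behaviour of the pole at $s=1$. Writing $\zeta(w)=\tfrac{1}{w-1}+\gamma+\cO(w-1)$ with $w=2s-1$ gives $\zeta(2s-1)=\tfrac{1}{2(s-1)}+\cO(1)$, so $\bigl(\zeta(2s-1)\bigr)^2$ has a double pole at $s=1$. Hence $\Phi_{\vG}^{}(s)=h_{\vG}^{}(s)/(s-1)^2$ with $h_{\vG}^{}$ holomorphic at $s=1$, which is precisely the singularity type of the theorem with $n=1$. The residue reformulation is then immediate: since $(s-1)\Phi_{\vG}^{}(s)=h_{\vG}^{}(s)/(s-1)$ has a simple pole with residue $h_{\vG}^{}(1)$, one has $C_{\vG}^{}=\Res_{s=1}\bigl((s-1)\Phi_{\vG}^{}(s)\bigr)=h_{\vG}^{}(1)$, and the theorem yields $\sum_{m\leq x}b_{\vG}^{}(m)\sim h_{\vG}^{}(1)\,x\log(x)$. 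The numerical constants follow by evaluating the holomorphic factors at $s=1$: using $\zeta(2)=\pi^2/6$ and $\zeta(4)=\pi^4/90$ together with $(s-1)^2\bigl(\zeta(2s-1)\bigr)^2\to\tfrac14$ gives $h_{\JJ}^{}(1)=\tfrac18$, and the extra factor $1+2\cdot 4^{-s}$ evaluated at $s=1$ supplies the ratio $3/2$, whence $h_{\ZZ^4}^{}(1)=\tfrac{3}{16}$.

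The one hypothesis of Theorem~\ref{csl-thm:meanvalues} that requires genuine verification, and which I regard as the main point, is that $\Phi_{\vG}^{}$ continues holomorphically to the whole line $\{\Real(s)=1\}$ apart from the single point $s=1$. This reduces to the classical analytic facts about $\zeta$: its only pole is the simple one at argument $1$, so $\zeta(2s)$ and $\zeta(2s-1)$ are holomorphic on $\Real(s)=1$ except for the pole of the latter at $s=1$, and $\zeta$ has no zeros for $\Real(\cdot)>1$, so that $1/\zeta(4s)$ is holomorphic on the line because there $\Real(4s)=4$. It remains only to note that $1+4^{-s}$ does not vanish on $\Real(s)=1$, its zeros lying on the imaginary axis. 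With these checks in place, Theorem~\ref{csl-thm:meanvalues} applies verbatim and the corollary follows; the remaining computations are the routine residue evaluations indicated above.
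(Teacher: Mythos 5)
Your proof is correct and takes exactly the route the paper intends: the chapter states this corollary without proof (citing \cite[Cor.~2]{csl-BM}), but the residue formula $C^{}_{\vG}=\Res_{s=1}\bigl((s-1)\ts\Phi_{\vG}^{}(s)\bigr)$ built into the statement, and the parallel arguments given elsewhere in the paper (e.g.\ the sketch of proof for the $A_4$ asymptotics), are precisely your application of Theorem~\ref{csl-thm:meanvalues} to the explicit factorisation of $\Phi_{\JJ}^{}(s)$, with the double pole coming from $\zeta(2s-1)^2$ at $s=1$. Your verification of holomorphy on the line $\{\Real(s)=1\}$ away from $s=1$ and the residue computations $h^{}_{\JJ}(1)=\tfrac{1}{4}\cdot\tfrac{4}{5}\cdot\tfrac{\pi^4}{36}\cdot\tfrac{1}{4}\cdot\tfrac{90}{\pi^4}=\tfrac{1}{8}$ and $h^{}_{\ZZ^4}(1)=\tfrac{3}{2}\cdot\tfrac{1}{8}=\tfrac{3}{16}$ are both accurate.
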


Finally, let us comment on the primitive SSLs. A pair $(p,q)$ of
Hurwitz quaternions generates a primitive SSL of $\JJ$ if and only if
both $p$ and $q$ are $\JJ$-primitive and at least one of them is
odd.

In this case, the denominator\index{denominator} of the corresponding
rotation is given by
\begin{equation}\label{csl-denD4}
  \den^{}_{\JJ}(R(p,q)) \, =\,  |p \ts q|.
\end{equation}
For $\ZZ^4$, a pair of $\JJ$-primitive quaternions does not
necessarily generate an SSL of $\ZZ^4$. This only works if $p \ts q\in
\ZZ^4$. Consequently, primitive SSLs are either of the form
$p\ts\ZZ^4\bar{q}$ or $2p\ts\ZZ^4\bar{q}$, depending on whether $p \ts
q \in\ZZ^4$ or not. Correspondingly, the denominator for $\ZZ^4$ reads
\begin{equation}\label{csl-denZ4}
  \den^{}_{\ZZ^4}(R(p,q)) \, = \,
  \begin{cases}
    |pq|, & \mbox{if $p \ts q\in \ZZ^4$,}\\
    2\ts |pq|, & \mbox{if $p \ts q\not\in \ZZ^4$.}
  \end{cases}
\end{equation}
As a consequence, we have
$\Phi^{\textsf{pr}}_{\vG}(s)=\Phi^{}_{\vG}(s)/\zeta(4s)$ for
$\vG\in\{\JJ,\ZZ^{4}\}$. Finally, this yields the asymptotic behaviour
\[
    \sum_{m\le x} b^{\textsf{pr}}_{\vG}(m) \, \sim \,
    C^{\textsf{pr}}_{\vG} \, x\ts \log(x)\quad\text{with}\quad
    C^{\mathsf{pr}}_{\vG} \, = \, \begin{cases}
      \frac{45}{4\ts\pi^{4}}, & \text{for $\vG=\JJ$},\\[1mm]
      \frac{135}{8\ts\pi^4}, & \text{for $\vG=\ZZ^{4}$}.
   \end{cases}
\]

\section{Similar submodules}
\label{csl-sec:ssm}

Here, we are interested in $\ZZ\ts$-modules as generalisations of
lattices. As such, they are mainly considered as \emph{geometric} (as
opposed to algebraic) objects. Let us thus begin with a definition of
the geometric setting.

\begin{definition}
  A $\ZZ\ts$-module $M$ of rank $n$ is called (properly)
  \emph{embedded}\index{module!embedded} in $\RR^d$ when $M\subset
  \RR^d$ and when there is a $\ZZ\ts$-basis $\{ b_1, \ldots , b_n\}$
  of $M$ whose $\RR$-span is $\RR^d$.
\end{definition}

In particular, this requires that $n\geq d$, where $n$ is the
\emph{rank} of $M$ and $d$ may be called its \emph{embedding
  dimension}. A lattice is an embedded module with $n=d$. An important
class of embedded modules is given by what we call
$\cS$-lattices.

\begin{definition}\label{csl-def:S-lat}
Let $\cS\subset \RR$ be a ring with identity that is also a finitely
generated, free \mbox{$\ZZ\ts$-module}. Then, we call an embedded
$\ZZ\ts$-module $M\subset\RR^d$ an
\emph{$\cS$-lattice}\index{S-lattice@$\cS$-lattice} if there exist
$d$ linearly independent vectors $b_i\in\RR^d$ such that $M$ is the
$\cS$-span of $\{ b_1, \ldots, b_d \}$, so $M=\langle b_1, \ldots,
b_d \rangle_\cS$.
\end{definition}

We call a $\ZZ\ts$-module $M'\subseteq M$ a (full)
\emph{submodule}\index{submodule} of $M$ if $M'$ and $M$ have the same
rank.\footnote{{\ts}More generally, one calls any $\ZZ\ts$-module
  $M'\subseteq M$ a submodule of $M$ regardless of its rank, but we do
  not need this more general notion in our context.} This implies that
$M'$ and $M$ also have the same embedding dimension, wherefore the index
$[M:M']$ is finite.

Just as for lattices, we define the more general notion of
commensurate modules.

\begin{definition}\label{csl-def:commM}
  Two (properly embedded)\index{module!embedded} $\ZZ\ts$-modules
  $M_1,M_2\subset \RR^d$ are called
  \emph{commensurate}\index{module!commensurate}, which is denoted by
  $M_1\sim M_2$, if their intersection $M_1\cap M_2$ has finite
  index\index{index} in both modules, $M_1$ and $M_2$.
\end{definition}

In our terminology, this means that $M_1$ and $M_2$ are commensurate
if and only if $M_1\cap M_2$ is a submodule of both $M_1$ and $M_2$ in
our above sense.  This implies that $M_1$ and $M_2$ can only be
commensurate if they have the same rank. Once we know that two
embedded modules in $\RR^d$ have the same rank, the situation becomes
easier as we can characterise commensurateness in several ways
\cite{csl-habil}, which we recall here.
\begin{lemma}\label{csl-lem:comm}
  Let\/ $M_1,M_2\subseteq \RR^d$ be two properly embedded
  $\ZZ\ts$-modules of rank\/ $k$. Then, the following statements are
  equivalent.
\begin{enumerate}\itemsep=3pt
\item $M_1$ and\/ $M_2$ are commensurate.
\item $M_1\cap M_2$ has finite index in both\/ $M_1$ and\/ $M_2$.
\item $M_1\cap M_2$ has finite index in\/ $M_1$ or in\/ $M_2$.
\item\label{csl-lem:comm-i4} There exist\/ $($positive$\ts )$
  integers\/ $m_1$ and\/ $m_2$ such that\/ $m_1M_1\subseteq M_2$ and\/
  $m_2M_2\subseteq M_1$.
\item There exists an integer\/ $m$ such that\/ $m M_1\subseteq M_2$
  or\/ $m M_2\subseteq M_1$.
\item $M_1\cap M_2$ has rank\/ $k$.  \qed
\end{enumerate}
\end{lemma}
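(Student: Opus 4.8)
The plan is to establish all the equivalences by a cycle of implications, using throughout the standing hypothesis that $M_1$ and $M_2$ share the rank $k$. Statements (1) and (2) are literally the definition of commensurateness (Definition~\ref{csl-def:commM}), so they require no argument, while (2)$\Rightarrow$(3) and (4)$\Rightarrow$(5) are trivial weakenings. The genuine content lies in upgrading the one-sided conditions (3) and (5) to the two-sided condition (2), and it is precisely here that the common rank $k$ enters.

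The key fact I would isolate first is the following elementary statement about free $\ZZ\ts$-modules sitting in $\RR^d$: if $N\subseteq M$ with $M$ of rank $k$, then $N$ is free of rank at most $k$, and $[M:N]$ is finite if and only if $N$ also has rank $k$. The forward direction holds because finite index means every element of $M$ has a nonzero integer multiple in $N$, so $N$ and $M$ have the same $\RR$-linear span and hence the same rank; the reverse direction is exactly the observation recorded just after the definition of a submodule (provable via Smith normal form). With this fact in hand, (3)$\Rightarrow$(2) runs as follows. Assuming, say, that $M_1\cap M_2$ has finite index in $M_1$, the fact forces $M_1\cap M_2$ to have rank $k$; since $M_1\cap M_2\subseteq M_2$ and both now have rank $k$, the same fact yields that $[M_2:M_1\cap M_2]$ is finite as well, which is (2). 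The other case is symmetric.

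To close the cycle, (2)$\Rightarrow$(4) uses Lagrange's theorem: setting $m_i:=[M_i:M_1\cap M_2]$, which is finite by (2), multiplication by $m_i$ annihilates the finite quotient $M_i/(M_1\cap M_2)$, so $m_1 M_1\subseteq M_1\cap M_2\subseteq M_2$ and $m_2 M_2\subseteq M_1\cap M_2\subseteq M_1$. Next, (5)$\Rightarrow$(6): assuming $m M_1\subseteq M_2$ with $m\neq 0$ (the nonzero condition being implicit, exactly as for the lattice case in Lemma~\ref{csl-lem:comm-lat}), one has $m M_1\subseteq M_1\cap M_2$; since $m M_1$ has rank $k$, so does $M_1\cap M_2$, which is (6). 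The loop is completed by (6)$\Rightarrow$(2), which is once more the key fact applied to the two inclusions $M_1\cap M_2\subseteq M_1$ and $M_1\cap M_2\subseteq M_2$.

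The main obstacle is exactly the bootstrapping from a one-sided to a two-sided statement in (3)$\Rightarrow$(2) and in (5)$\Rightarrow$(6): without the hypothesis that both modules share the rank $k$, finite index of $M_1\cap M_2$ in $M_1$ would tell us nothing about its index in $M_2$. The rank fact is what converts the analytic-looking condition ``has finite index'' into the symmetric, purely algebraic condition ``has rank $k$'', which can then be transferred freely between the two modules. Everything else in the argument is routine bookkeeping.
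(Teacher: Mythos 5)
The paper states this lemma without proof (it carries a \qed and is attributed to the habilitation thesis \cite{csl-habil}), so there is no argument of the paper to compare against; your proposal has to be judged on its own. Its architecture is sound and complete: isolating the fact that for a submodule $N\subseteq M$ of a free $\ZZ\ts$-module $M$ of rank $k$ one has $[M:N]<\infty$ if and only if $N$ has rank $k$, and then running (1)$\,\Leftrightarrow\,$(2)$\,\Leftrightarrow\,$(3) together with the cycle (2)$\,\Rightarrow\,$(4)$\,\Rightarrow\,$(5)$\,\Rightarrow\,$(6)$\,\Rightarrow\,$(2), does establish all six equivalences. You also correctly identify that the equal-rank hypothesis is exactly what powers (3)$\,\Rightarrow\,$(2) and (5)$\,\Rightarrow\,$(6), and your remark that $m\neq 0$ must be read into statement (5) (as it is written explicitly in the lattice version, Lemma~\ref{csl-lem:comm-lat}) is right --- with $m=0$ that statement is vacuous.

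There is, however, one step whose stated justification fails, and it fails for precisely the reason this module lemma is more delicate than its lattice counterpart. You argue the forward direction of your key fact (``finite index implies rank $N=k$'') by saying that every element of $M$ has a nonzero integer multiple in $N$, hence $N$ and $M$ have the same $\RR$-linear span, hence the same rank. For embedded modules the last inference is false: the rank is not determined by the $\RR$-span once $k>d$. For example, $N=\ZZ+\xi^{}_{5}\ZZ$ and $M=\ZZ[\xi^{}_{5}]$ both have $\RR$-span equal to $\RR^{2}$, yet their ranks are $2$ and $4$ (and the index is infinite). What the torsion property of $M/N$ actually yields is equality of the $\QQ$-spans inside $M\otimes_{\ZZ}\QQ$, i.e.\ $\dim_{\QQ}(N\otimes\QQ)=\dim_{\QQ}(M\otimes\QQ)$, which is the rank statement; alternatively, and more simply, use the Lagrange argument you already employ for (2)$\,\Rightarrow\,$(4): if $[M:N]=m<\infty$, then $mM\subseteq N\subseteq M$, and $mM\cong M$ has rank $k$, forcing $N$ to have rank $k$. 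With that one repair --- which stays entirely within your own toolkit --- the proof is correct.
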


To continue, two properly embedded modules $M_{1}$ and $M_{2}$ are
called \emph{similar}\index{module!similar}, $M_{1} \similar M_{2}$,
if there exists a similarity transformation between them. Clearly,
similarity of modules is an equivalence relation.

\begin{definition}
  A similarity transformation that maps a module \mbox{$M\subset
    \RR^d$} onto a submodule of $M$ is called a \emph{similarity
  transformation of $M$}\index{similarity~transformation}.
  A submodule $M'\subseteq M$ is called
  a \emph{similar submodule}\index{similar submodule} (SSM) of 
  $M$ if $M' \similar M$.
\end{definition}

We proceed as before and consider coincidence isometries and scaling
factors separately. We first define
\begin{equation}
    \OS(M)\, :=\, \{R\in \OG(d,\RR)\mid \exists \ts 
    \alpha\in\RR_{+} \text{ such that } 
    \alpha R M \subseteq M \}
\end{equation}
whose elements are called \emph{similarity
  isometries}\index{isometry!similarity} of $M$. Similarly, we use
\begin{equation}
   \SOS(M)\, :=\, \OS(M) \cap \ts \SO (d,\RR)
\end{equation}
to denote the set of similarity rotations\index{rotation!similarity}. 
The following results are immediate generalisations of the corresponding
results for lattices in Fact~\ref{csl-fact:OS-group} and
Lemma~\ref{csl-lem:OS-sim}.

\begin{fact}
  $\OS(M)$ and\/ $\ts\SOS(M)$ are subgroups of\/
  $\OG(d,\RR)$. Further, if\/ $M$ and\/ $M'=\alpha R M$ are similar
  modules which are both embedded in\/ $\RR^{d}$, we have
\[
   \hspace{5.4cm}   \OS(M')\, =\, R\ts \OS(M) \ts\ts R^{-1}. 
   \hspace{5cm} \qed
\]
\end{fact}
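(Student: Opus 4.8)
The plan is to mirror the lattice arguments of Fact~\ref{csl-fact:OS-group} and Lemma~\ref{csl-lem:OS-sim}, taking care that nothing uses discreteness of $M$ (an embedded module may well be dense in $\RR^d$), so that only the structure of $M$ as a finitely generated free $\ZZ\ts$-module is invoked.

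First I would verify that $\OS(M)$ is a subgroup. The identity $\one$ lies in $\OS(M)$ with scaling factor $1$. For closure under composition, if $R,S\in\OS(M)$ with $\alpha R M\subseteq M$ and $\beta S M\subseteq M$ for suitable $\alpha,\beta\in\RR_+$, then
\[
  \alpha\beta\,(RS)M \,=\, \alpha R(\beta S M)\,\subseteq\,\alpha R M\,\subseteq\,M,
\]
so $RS\in\OS(M)$; note that this step is insensitive to whether $\alpha,\beta$ are rational. The only slightly delicate point is closure under inversion. Here I would exploit that $\alpha R M$ is a \emph{full} submodule of $M$: since $\alpha R$ is an invertible linear map, it preserves the rank, so $[M:\alpha R M]=:n$ is finite. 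Because the quotient group $M/\alpha R M$ then has order $n$, every element is annihilated by $n$, giving $nM\subseteq\alpha R M$. Applying $R^{-1}$ and rescaling yields $\frac{n}{\alpha}R^{-1}M\subseteq M$, so $R^{-1}\in\OS(M)$ with scaling factor $n/\alpha>0$. This shows that $\OS(M)$ is a subgroup, whence $\SOS(M)=\OS(M)\cap\SO(d,\RR)$ is a subgroup as the intersection of two subgroups of $\OG(d,\RR)$.

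For the conjugation identity, with $M'=\alpha R M$, I would run the chain of equivalences used for lattices. For $S\in\OG(d,\RR)$ and $\beta\in\RR_+$, the containment $\beta S M'\subseteq M'$ reads $\beta\alpha\,SR M\subseteq\alpha R M$; cancelling the common factor $\alpha$ and applying $R^{-1}$ on the left shows this to be equivalent to $\beta\,(R^{-1}SR)M\subseteq M$. Hence $S\in\OS(M')$ if and only if $R^{-1}SR\in\OS(M)$, that is, if and only if $S\in R\,\OS(M)\,R^{-1}$, which gives $\OS(M')=R\,\OS(M)\,R^{-1}$.

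The main thing to watch is that the argument stays purely algebraic: the finiteness of $[M:\alpha R M]$ and the relation $nM\subseteq\alpha R M$ are facts about equal-rank submodules of finitely generated free $\ZZ\ts$-modules and make no reference to discreteness, so the proof covers genuinely dense embedded modules (such as the $\cS$-lattices) exactly as it covers lattices. There is no real obstacle here; the one nontrivial observation is the inversion step via $nM\subseteq\alpha R M$, and everything else is a direct transcription of the lattice case.
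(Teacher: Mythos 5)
Your proof is correct and takes essentially the route the paper intends: the paper states this Fact without proof, as an immediate generalisation of Fact~\ref{csl-fact:OS-group} and Lemma~\ref{csl-lem:OS-sim}, and your conjugation argument is precisely the equivalence chain used to prove the latter in the lattice case. The only step requiring real care, closure under inversion, you handle correctly via the finite index $[M:\alpha R M]$ and the resulting relation $nM\subseteq\alpha R M$, which indeed uses nothing beyond the fact that a full-rank submodule of a finitely generated free $\ZZ\ts$-module has finite index, so the argument covers dense embedded modules as well as lattices.
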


Next, we consider the scaling factors. We first define
\begin{equation}
\begin{split}
  \Scal^{}_M(R)& \, :=\, \{\alpha\in\RR \mid \alpha R M\subseteq M \} 
\quad \text{ and}\\[1mm]
  \scal^{}_M(R)&\, :=\, \{\alpha\in\RR \mid \alpha R M\sim M \} \ts .
\end{split}
\end{equation}
Again, we have allowed negative values for the scaling factors here to
ensure that $\Scal^{}_M(R)$ is a $\ZZ\ts$-module.  This creates no
problem because $-M=M$.  However, the situation is more complicated
than in the case of lattices, as there are significantly fewer
restrictions on the scaling factors here.

Note that $\Scal^{}_M(R)$ is non-empty for all $R$ as $0 \in
\Scal^{}_M(R)$, but it is non-trivial only if $R\in\OS(M)$, as we have
the following generalisation of Fact~\ref{csl-lem:scal-nontriv}.

\begin{fact}[{\cite[p.~14]{csl-habil}}]\label{csl-lem:scal-nontrivM}
  Let\/ $M\subset\RR^{d}$ be an embedded\/ $\ZZ\ts$-module and consider\/
  $R\in\OG(d,\RR)$. Then, the following properties are equivalent.
   \begin{enumerate}\itemsep=2pt
     \item $\Scal^{}_M(R)\ne \{ 0 \}$;
     \item $\scal^{}_M(R)\ne \varnothing$;
     \item $R\in \OS(M)$. \qed
   \end{enumerate}
\end{fact}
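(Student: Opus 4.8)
The plan is to establish the three-way equivalence by a cyclic chain of implications $(1)\Rightarrow(2)\Rightarrow(3)\Rightarrow(1)$, mirroring the argument for lattices in Fact~\ref{csl-lem:scal-nontriv} but replacing the lattice commensurateness criterion by its module analogue, Lemma~\ref{csl-lem:comm}. The two ingredients that do all the work are: first, that a nonzero scaling factor $\alpha$ turns $\alpha R$ into an invertible linear map, so that $\alpha R M$ is automatically a full-rank submodule; and second, the identity $-M = M$, which lets me move freely between a scaling factor and its negative when I need to land in $\RR_{+}$.

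The easy directions are $(3)\Rightarrow(1)$ and $(1)\Rightarrow(2)$. For $(3)\Rightarrow(1)$, the defining property of $R\in\OS(M)$ provides some $\alpha\in\RR_{+}$ with $\alpha R M \subseteq M$; since $\alpha>0$ this $\alpha$ is a nonzero element of $\Scal^{}_M(R)$, so $\Scal^{}_M(R)\ne\{0\}$. For $(1)\Rightarrow(2)$, if $\alpha\ne 0$ satisfies $\alpha R M \subseteq M$, then $\alpha R$ is invertible, so $\alpha R M$ has the same rank as $M$; because $\alpha R M \cap M = \alpha R M$ is then a full-rank submodule of $M$, Lemma~\ref{csl-lem:comm} gives $\alpha R M \sim M$, i.e.\ $\alpha\in\scal^{}_M(R)$, and hence $\scal^{}_M(R)\ne\varnothing$.

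The implication $(2)\Rightarrow(3)$ is the one requiring a little care. Suppose $\alpha\in\scal^{}_M(R)$, so $\alpha R M \sim M$. First I note $\alpha\ne 0$: if $\alpha=0$ then $\alpha R M=\{0\}$ could not be commensurate with the positive-rank module $M$. By the equivalence of commensurateness with property~\eqref{csl-lem:comm-i4} in Lemma~\ref{csl-lem:comm}, there is a positive integer $m$ with $m\,\alpha R M \subseteq M$. The scalar $m\alpha$ is nonzero but possibly negative; here I invoke $-M = M$, which yields $|m\alpha|\,R M = (m\alpha)\,R(\pm M)\subseteq M$ with $|m\alpha|>0$. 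Thus there exists a positive scaling factor witnessing $R\in\OS(M)$, completing the cycle.

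The main obstacle is not depth but bookkeeping, and it is concentrated in this last step: one must simultaneously clear the integer denominator coming from commensurateness (justified by Lemma~\ref{csl-lem:comm}) and repair the sign (justified by $-M=M$) in order to produce a factor in $\RR_{+}$ rather than merely in $\RR\setminus\{0\}$. As the remark preceding the statement already observes, allowing negative scaling factors is exactly what makes $\Scal^{}_M(R)$ a $\ZZ\ts$-module, and $-M=M$ is what makes this harmless; once these two facts are in hand, the equivalence is purely formal and no module-specific difficulty beyond the rank considerations in Lemma~\ref{csl-lem:comm} arises.
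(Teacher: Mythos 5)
Your proof is correct: the cycle $(3)\Rightarrow(1)\Rightarrow(2)\Rightarrow(3)$ goes through, with Lemma~\ref{csl-lem:comm} (full rank of $\alpha R M\subseteq M$ for the first implication, clearing the integer denominator via property~(\ref{csl-lem:comm-i4}) for the second) and $-M=M$ handling the sign. The paper itself gives no proof here — it states the result as a Fact, citing the habilitation thesis, and treats it as immediate from the definitions just as in the lattice case (Fact~\ref{csl-lem:scal-nontriv}) — so your argument is precisely the elementary verification the paper leaves implicit.
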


As a first consequence, we mention a result on the possible values of
$\Scal^{}_M(\one)$. Recall that $[x]$ denotes the largest integer
$n\leq x$.

\begin{theorem}[{\cite[Thm.~2.1.6 and Cor.~2.1.7]{csl-habil}}]
  \label{csl-theo:ScalM1}
  Let\/ $M \subset \RR^d$ be an\index{module!embedded} embedded\/
  $\ZZ\ts$-module of rank $k$.  Then, $\Scal^{}_M(\one)$ is a ring
  with unit all elements of which are algebraic integers. Moreover,
  $\Scal^{}_M(\one)$ is a finitely generated, free\/ $\ZZ\ts$-module,
  whose rank is a divisor of\/ $k$ and is at most\/
  $\bigl[\frac{k}{d}\bigr]$.

  Furthermore, $\scal^{}_M(\one)\cup \{0\}$ is the field of fractions
  of\/ $\Scal^{}_M(\one)$.  \qed
\end{theorem}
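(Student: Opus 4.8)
The plan is to abbreviate $S:=\Scal^{}_M(\one)$ and to treat the three assertions in turn, exploiting throughout that a real number which preserves $M$ acts on it as a $\ZZ\ts$-linear endomorphism.

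First I would settle the ring structure and integrality. Since $-M=M$, we already know $S$ is a $\ZZ\ts$-module; it contains $\ZZ$ (because $nM\subseteq M$ for every $n\in\ZZ$), and hence the unit $1$, and it is closed under multiplication as $\alpha\beta M=\alpha(\beta M)\subseteq\alpha M\subseteq M$ for $\alpha,\beta\in S$. As a subring of $\RR$ it is thus a commutative integral domain. Fixing a $\ZZ\ts$-basis $b^{}_1,\dots,b^{}_k$ of $M$, multiplication by $\alpha\in S$ is encoded by integers via $\alpha b^{}_i=\sum_j A_{ij}b^{}_j$, giving an injective ring homomorphism $S\hookrightarrow\ZZ^{k\times k}$, $\alpha\mapsto A_\alpha$ (injective because $\alpha$ is recovered from $\alpha b^{}_1$). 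Its image is a sub-$\ZZ\ts$-module of the finitely generated free module $\ZZ^{k\times k}$, so $S$ is a finitely generated $\ZZ\ts$-module and, being torsion-free, free. By the Cayley--Hamilton theorem $A_\alpha$ is annihilated by its monic integer characteristic polynomial $\chi_\alpha$, and since $A_\alpha$ acts as multiplication by $\alpha$ we obtain $\chi_\alpha(\alpha)\ts b^{}_1=0$, whence $\chi_\alpha(\alpha)=0$; thus every $\alpha\in S$ is an algebraic integer.

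Next I would identify the field of fractions, writing $K$ for the field of fractions of $S$, realised inside $\RR$. Because commensurateness is an equivalence relation and scaling preserves indices, $\scal^{}_M(\one)$ is a multiplicative subgroup of $\RR^{\times}$. If $\alpha\in\scal^{}_M(\one)$ is non-zero, then part~(\ref{csl-lem:comm-i4}) of Lemma~\ref{csl-lem:comm} supplies $m^{}_1,m^{}_2\in\NN$ with $m^{}_1\alpha M\subseteq M$ and $m^{}_2M\subseteq\alpha M$; the first gives $m^{}_1\alpha\in S$ and the second $m^{}_2\alpha^{-1}\in S$, so $\alpha=(m^{}_1\alpha)/m^{}_1\in K$. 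Conversely every non-zero $\gamma\in S$ satisfies $\gamma M\subseteq M$ of full rank $k$, hence $\gamma M\sim M$ by Lemma~\ref{csl-lem:comm}, so that $S\setminus\{0\}\subseteq\scal^{}_M(\one)$; since the latter is a group, every ratio of non-zero elements of $S$ lies in it, and thus $K\setminus\{0\}\subseteq\scal^{}_M(\one)$. Together these inclusions give $\scal^{}_M(\one)\cup\{0\}=K$.

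Finally I would pin down the rank. The $\QQ\ts$-subalgebra $\QQ\ts S\subseteq\RR$ generated by $S$ is a finite-dimensional commutative domain over $\QQ$, hence a field; containing $S$ and contained in $K$, it equals $K$. A $\ZZ\ts$-basis of $S$ is $\QQ\ts$-linearly independent in $\RR$ (any rational relation clears denominators to an integer one), so it is a $\QQ\ts$-basis of $K$, whence $\operatorname{rank}_{\ZZ}S=[K:\QQ]$. Let $V\subseteq\RR^d$ be the $\QQ\ts$-span of $M$, of dimension $k$. Real multiplication makes $V$ a $K$-vector space: $\alpha V\subseteq V$ for $\alpha\in S$, and for non-zero $\gamma\in S$ the inclusion $\gamma V\subseteq V$ between $\QQ\ts$-spaces of equal dimension forces $\gamma V=V$, so all of $K$ acts. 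From $k=\dim_\QQ V=[K:\QQ]\cdot\dim_KV$ we read off that $[K:\QQ]$ divides $k$. I expect the main obstacle to be the bound $[K:\QQ]\leq[k/d]$, since it couples this algebraic $K$-structure to the geometric embedding; the key point is that any $K$-basis $v^{}_1,\dots,v^{}_e$ of $V$ already $\RR\ts$-spans $\RR^d$, because $Kv^{}_i\subseteq\RR v^{}_i$ forces the $\RR\ts$-span of $V=\sum_iKv^{}_i$ to lie in $\Span_\RR\{v^{}_1,\dots,v^{}_e\}$, while $V$ spans $\RR^d$ over $\RR$ as $M$ does. Hence $e=\dim_KV\geq d$, and with $k=[K:\QQ]\ts e$ this yields $[K:\QQ]\leq k/d$; being an integer, $[K:\QQ]\leq[k/d]$.
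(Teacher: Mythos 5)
Your proof is correct, and there is in fact nothing in the paper to compare it against: the paper states Theorem~\ref{csl-theo:ScalM1} without proof, citing \cite{csl-habil} (Thm.~2.1.6 and Cor.~2.1.7), so your argument stands as a self-contained substitute. All three steps check out. (i) Representing each $\alpha\in\Scal^{}_M(\one)$ by the integer matrix $A_\alpha$ of its action on a $\ZZ\ts$-basis of $M$ gives an embedding of $\Scal^{}_M(\one)$ into $\ZZ^{k\times k}$ as a $\ZZ\ts$-module, whence finite generation and freeness over the PID $\ZZ$, and Cayley--Hamilton applied to $A_\alpha$ yields integrality; the only point to be careful about is that with your convention $A_{\alpha\beta}=A_\beta A_\alpha$, but since the image is commutative this is immaterial. (ii) The identification of $\scal^{}_M(\one)\cup\{0\}$ with the fraction field $K$ uses exactly the right two ingredients: part~(\ref{csl-lem:comm-i4}) of Lemma~\ref{csl-lem:comm} for the inclusion $\scal^{}_M(\one)\subseteq K$, and the fact that any non-zero $\gamma\in\Scal^{}_M(\one)$ satisfies $\gamma M\sim M$ (via the rank criterion of the same lemma) together with the group property of $\scal^{}_M(\one)$ for the reverse inclusion. (iii) Your rank argument is the key structural insight and is sound: the $\QQ\ts$-span $V$ of $M$ becomes a $K$-vector space because $\gamma V\subseteq V$ forces $\gamma V=V$ for $\gamma\neq 0$ by equality of finite $\QQ\ts$-dimensions, giving $k=[K:\QQ]\cdot\dim_K V$ and hence divisibility; and the observation that a $K$-basis of $V$ already spans $\RR^d$ over $\RR$ (since $Kv_i\subseteq\RR v_i$ while $M\subseteq V$ spans $\RR^d$) gives $\dim_K V\geq d$, so $[K:\QQ]\leq k/d$ and, being an integer, $[K:\QQ]\leq\bigl[\frac{k}{d}\bigr]$. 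This last step is precisely where the geometric embedding enters, and you have isolated it correctly.
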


For $\cS$-lattices, we can immediately determine $\Scal^{}_M(\one)$
and $\scal^{}_M(\one)$.

\begin{fact}\label{csl-fact:scal-S-lat}
  If\/ $M$ is an\/ $\cS$-lattice, then\/ $\Scal^{}_M(\one)=\cS$ and\/ 
  $\scal^{}_M(\one)\cup \{0\}$ is the field of fractions of\/ $\cS$.
\end{fact}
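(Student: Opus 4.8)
The plan is to reduce everything to the single identity $\Scal^{}_M(\one)=\cS$, after which the statement about $\scal^{}_M(\one)$ follows for free from Theorem~\ref{csl-theo:ScalM1}. So the bulk of the work is a two-sided inclusion for $\Scal^{}_M(\one)$, and the geometric hypothesis that the generating vectors $b_1,\dots,b_d$ are $\RR$-linearly independent will be the decisive ingredient.

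For the inclusion $\cS\subseteq\Scal^{}_M(\one)$, I would argue directly from the $\cS$-module structure. Writing a general element of $M=\langle b_1,\dots,b_d\rangle_{\cS}$ as $\sum_{i} s_i b_i$ with $s_i\in\cS$, multiplication by any $s\in\cS$ yields $\sum_i (s\ts s_i)\ts b_i$. Since $\cS$ is a ring, each coefficient $s\ts s_i$ again lies in $\cS$, so $sM\subseteq M$ and hence $s\in\Scal^{}_M(\one)$.

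For the reverse inclusion $\Scal^{}_M(\one)\subseteq\cS$, take $\alpha$ with $\alpha M\subseteq M$. Applying this to the generating vector $b_1$ gives $\alpha b_1\in M$, so there are $s_1,\dots,s_d\in\cS$ with $\alpha b_1=\sum_i s_i b_i$. Here is the key step: because $b_1,\dots,b_d$ are linearly independent over $\RR$ (part of Definition~\ref{csl-def:S-lat}), the representation of $\alpha b_1$ in this $\RR$-basis is unique, and comparing it with the trivial representation $\alpha b_1 = \alpha\ts b_1 + 0\cdot b_2 + \dots + 0\cdot b_d$ forces $s_1=\alpha$ and $s_2=\dots=s_d=0$. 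In particular $\alpha=s_1\in\cS$. Together with the first inclusion, this yields $\Scal^{}_M(\one)=\cS$.

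Finally, Theorem~\ref{csl-theo:ScalM1} asserts that $\scal^{}_M(\one)\cup\{0\}$ is precisely the field of fractions of $\Scal^{}_M(\one)$; substituting $\Scal^{}_M(\one)=\cS$ gives the claim. I do not expect a serious obstacle here: the only point that requires care is that the argument for $\Scal^{}_M(\one)\subseteq\cS$ genuinely uses the $\RR$-linear independence of the $b_i$ (and not merely that they generate $M$ over $\cS$), since it is this independence that makes the $\cS$-coordinates of a vector unique and forces the off-diagonal coefficients to vanish. If one preferred a self-contained argument for the second statement, one could instead combine $\Scal^{}_M(\one)=\cS$ with commensurateness via Lemma~\ref{csl-lem:comm}, using that $\cS$ is an order in its field of fractions so that every nonzero element of $\cS$ divides a nonzero rational integer; but invoking Theorem~\ref{csl-theo:ScalM1} is shorter.
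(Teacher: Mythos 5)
Your proposal is correct and follows essentially the same route as the paper: the inclusion $\cS\subseteq\Scal^{}_M(\one)$ from the ring structure, the reverse inclusion by applying $\alpha$ to $b_1$ and using the $\RR$-linear independence of the $b_i$ (the paper phrases this as $\Scal^{}_M(\one)\ts b_1\subseteq M\cap\RR\ts b_1=\cS\ts b_1$, which is your coefficient-comparison argument in compact form), and then Theorem~\ref{csl-theo:ScalM1} for the statement about $\scal^{}_M(\one)$. Your closing remark correctly identifies the decisive ingredient, namely that linear independence, not mere generation, forces the off-diagonal coefficients to vanish.
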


\begin{proof}
  Since $\cS$ is a ring and $M$ is the $\cS$-span of $d$ linearly
  independent vectors $b_i \in \RR^d$, we have $\cS\subseteq
  \Scal^{}_M(\one)$. On the other hand, the linear independence of the
  $b_i$ guarantees $\Scal^{}_M(\one)\ts b_1 \subseteq M\cap \RR\ts b_1
  = \cS\ts b_1$, whence we have the reverse inclusion
  $\Scal^{}_M(\one)\subseteq \cS$. The second part now follows
  immediately from Theorem~\ref{csl-theo:ScalM1}; compare
  also~\cite[Remark~3.11]{csl-svenja2}.
\end{proof}

For general similarity isometries $R$, we have the following result.

\begin{theorem}[{\cite[Thm.~2.1.9]{csl-habil}}]
  Let\/ $M\subset\RR^{d}$ be an embedded\/ $\ZZ\ts$-module.  
  Then, for any isometry\/ \mbox{$R\in\OS(M)$}, $\Scal^{}_M(R)$ 
  is a finitely generated, free\/
  $\ZZ\ts$-module. Moreover, one has\/ $\beta\Scal^{}_M(R)\subseteq
  \Scal^{}_M(R)$ for any\/ $\beta\in \Scal^{}_M(\one)$, and\/
  $\Scal^{}_M(R)$ is thus also a finitely generated\/
  $\Scal^{}_M(\one)$-module.  \qed
\end{theorem}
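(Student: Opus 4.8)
The plan is to prove the three assertions in turn, the first being the substantial one. First I would record that $\Scal^{}_M(R)$ is an additive subgroup of $\RR$: if $\alpha,\alpha'\in\Scal^{}_M(R)$ and $x\in M$, then $(\alpha+\alpha')Rx=\alpha Rx+\alpha'Rx\in M$ since $M$ is closed under addition, and $-M=M$ gives closure under negation. As a subgroup of $\RR$ it is automatically torsion-free, so it will suffice to prove that $\Scal^{}_M(R)$ is \emph{finitely generated}; its freeness then follows at once from the structure theorem for finitely generated modules over the PID $\ZZ$. Thus the whole difficulty is concentrated in the finite-generation step.

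The crux, then, is to show finite generation over $\ZZ$, and the idea I would use is to embed $\Scal^{}_M(R)$ into the module $M$ itself. Recall that $M$, being a properly embedded $\ZZ\ts$-module of rank $k$, is free of rank $k$ over $\ZZ$. Fix any nonzero $v\in M$ and consider the evaluation map $\psi\colon\Scal^{}_M(R)\to M$, $\alpha\mapsto\alpha Rv$. It is well defined, because $\alpha\in\Scal^{}_M(R)$ gives $\alpha RM\subseteq M$ and $v\in M$, so $\alpha Rv\in M$; it is clearly $\ZZ\ts$-linear; and it is injective, since $R$ is invertible and $v\ne 0$ force $Rv\ne 0$, whence $\alpha Rv=0$ implies $\alpha=0$. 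Consequently $\Scal^{}_M(R)$ is isomorphic to the submodule $\psi\bigl(\Scal^{}_M(R)\bigr)\subseteq M$. A submodule of the finitely generated free $\ZZ\ts$-module $M$ is again finitely generated and free, of rank at most $k$, and therefore so is $\Scal^{}_M(R)$. This establishes the first assertion, and even yields the rank bound $\le k$ as a by-product.

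The remaining two assertions are then immediate. For $\beta\Scal^{}_M(R)\subseteq\Scal^{}_M(R)$, take $\beta\in\Scal^{}_M(\one)$, so $\beta M\subseteq M$, and $\alpha\in\Scal^{}_M(R)$; since scalar multiplication commutes with the linear map $R$, one has $(\beta\alpha)RM=\beta(\alpha RM)\subseteq\beta M\subseteq M$, so $\beta\alpha\in\Scal^{}_M(R)$. For the final claim, Theorem~\ref{csl-theo:ScalM1} tells us that $\Scal^{}_M(\one)$ is a ring with unit, hence $\ZZ\subseteq\Scal^{}_M(\one)$; combining the first two assertions, $\Scal^{}_M(R)$ is a module over $\Scal^{}_M(\one)$, and any finite $\ZZ\ts$-generating set also generates it over the larger ring, so it is finitely generated as a $\Scal^{}_M(\one)$-module. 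The main obstacle is really just to spot the evaluation embedding in the second paragraph; once that is in place, everything else is formal. I would finally remark that the hypothesis $R\in\OS(M)$ is not used essentially: were $R\notin\OS(M)$, then $\Scal^{}_M(R)=\{0\}$ by Fact~\ref{csl-lem:scal-nontrivM}, which is still trivially finitely generated and free of rank $0$, so the hypothesis only serves to make the module non-trivial.
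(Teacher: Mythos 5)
Your proof is correct in all three parts: the evaluation embedding $\psi\colon\alpha\mapsto\alpha R v$ is well defined, $\ZZ\ts$-linear and injective, so $\Scal^{}_M(R)$ inherits finite generation and freeness (with rank at most the rank of $M$) from a submodule of $M$; the computation $(\beta\alpha)RM=\beta(\alpha RM)\subseteq\beta M\subseteq M$ settles the second claim; and the third follows formally since $\ZZ\subseteq\Scal^{}_M(\one)$ by Theorem~\ref{csl-theo:ScalM1}, so any finite $\ZZ\ts$-generating set generates over the larger ring. Note that the paper itself states this theorem without proof, citing the habilitation thesis, so there is no in-text argument to compare against; but your key idea is exactly the device the paper does use in its proof of Fact~\ref{csl-fact:scal-S-lat}, where multiplying by a fixed basis vector gives $\Scal^{}_M(\one)\ts b_1\subseteq M\cap\RR\ts b_1$, so your route is entirely in the spirit of the source. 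Your closing observation that the hypothesis $R\in\OS(M)$ only excludes the trivial case $\Scal^{}_M(R)=\{0\}$ (via Fact~\ref{csl-lem:scal-nontrivM}) is also correct.
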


Observe that $\Scal^{}_M(R)$ is generally not a free
$\Scal^{}_M(\one)$-module, unless $\Scal^{}_M(\one)$ is a PID; see
\cite[p.~15]{csl-habil} for an example.

For lattices, Lemma~\ref{csl-lem:scal-d} asserted that
$\alpha^d\in\ZZ$ for all $\alpha\in\Scal^{}_{\vG}(R)$. The
corresponding result for embedded modules reads as follows.

\begin{theorem}[{\cite[Thm.~2.1.10]{csl-habil}}]\label{thm:alphaScalM}
  As before, let\/ $M\subset\RR^{d}$ be an embedded\/ $\ZZ\ts$-module
  of finite rank.  Then, any\/ $\alpha\in \Scal^{}_M(R)$ is an
  algebraic integer.  If\/ $M$ has rank\/ $k=1$, one always has\/
  $\Scal^{}_{M}(R)=\ZZ$, so\/ $\alpha$ is a rational integer in this
  case.  If\/ $k\geq 2$, the degree of\/ $\alpha$ is at most\/
  $k(k-1)$.  \qed
\end{theorem}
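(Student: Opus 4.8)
The plan is to linearise the condition $\alpha R M\subseteq M$ and then read off the arithmetic of $\alpha$ from an integer matrix. First I fix a $\ZZ\ts$-basis $\{b^{}_1,\ldots,b^{}_k\}$ of $M$ and collect the vectors $b^{}_i\in\RR^d$ as the columns of a matrix $B\in\RR^{d\times k}$, which has rank $d$ because the $b^{}_i$ span $\RR^d$. Since $\alpha\in\Scal^{}_M(R)$ means $\alpha R\ts b^{}_i\in M$ for every $i$, there is a matrix $A\in\ZZ^{k\times k}$ with $\alpha R\ts b^{}_i=\sum_j A^{}_{ji}\ts b^{}_j$, that is, $\alpha R\ts B=B\ts A$. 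Reading $B$ as a surjective linear map $\RR^k\to\RR^d$, this identity shows that $B$ intertwines $A$ with $\alpha R$ and that $\ker B$ is $A\ts$-invariant (if $Bv=0$ then $B(Av)=\alpha R\ts Bv=0$). Consequently, $\alpha R$ is the map induced by $A$ on the quotient $\RR^k/\ker B\cong\RR^d$, so the characteristic polynomial $\chi^{}_{\alpha R}$ divides $\chi^{}_A$, which is monic of degree $k$ with integer coefficients.

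The eigenvalues of $\alpha R$ are the numbers $\alpha\lambda$, where $\lambda$ runs through the eigenvalues of $R$; by the divisibility just established, each such $\alpha\lambda$ is a root of $\chi^{}_A$ and hence an algebraic integer. Since $R$ is orthogonal, all its eigenvalues satisfy $\lvert\lambda\rvert=1$. If $R$ has a real eigenvalue, it equals $\pm1$, so $\pm\alpha$ is a root of $\chi^{}_A$ and $\alpha$ is an algebraic integer of degree at most $k$. Otherwise every eigenvalue of $\alpha R$ is non-real, and for any root $\mu=\alpha\lambda$ of $\chi^{}_A$ its complex conjugate $\overline{\mu}=\alpha\overline{\lambda}$ is a second, distinct root of $\chi^{}_A$ (as $\chi^{}_A$ is real). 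Then $\alpha^2=\mu\ts\overline{\mu}$ is a product of two distinct roots of $\chi^{}_A$, whence it is a root of the monic integer polynomial $\prod_{i<j}(t-\rho^{}_i\rho^{}_j)$ formed from all roots $\rho^{}_1,\ldots,\rho^{}_k$ of $\chi^{}_A$. In particular $\alpha^2$ is an algebraic integer, and then so is $\alpha$, since it satisfies $t^2-\alpha^2$, a monic polynomial over the (integrally closed) ring of algebraic integers. This establishes the first assertion for all $k$.

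For the degree bound with $k\geq2$, I keep the same case distinction. In the first case $\alpha$ is a root of $\chi^{}_A$, so $\deg\alpha\leq k\leq k(k-1)$. In the second case the polynomial $\prod_{i<j}(t-\rho^{}_i\rho^{}_j)$ has degree $\binom{k}{2}=k(k-1)/2$ and integer coefficients, so $\deg(\alpha^2)\leq k(k-1)/2$; writing $m$ for the minimal polynomial of $\alpha^2$, the number $\alpha$ is a root of $m(t^2)$, which gives $\deg\alpha\leq2\deg(\alpha^2)\leq k(k-1)$. The main obstacle is precisely this halving: the naive estimate $\alpha^2\in\QQ(\mu,\overline{\mu})$ only bounds $\deg(\alpha^2)$ by $k(k-1)$ and would cost an extra factor of $2$ upon taking the square root. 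Observing that $\alpha^2$ is a \emph{symmetric} datum of an unordered pair of roots, and hence a root of the degree-$\binom{k}{2}$ pair-product polynomial, is what recovers the sharp exponent $k(k-1)$.

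Finally, the rank-one case is immediate: if $k=1$, then $M=\ZZ\ts b^{}_1$ can only span $\RR^d$ when $d=1$, so $R\in\OG(1,\RR)=\{\pm1\}$ and $\alpha R\ts M\subseteq M$ reduces to $\alpha\ts b^{}_1\in\ZZ\ts b^{}_1$, i.e.\ $\alpha\in\ZZ$. Hence $\Scal^{}_M(R)=\ZZ$, as claimed.
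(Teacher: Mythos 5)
Your proof is correct. Since the paper states this theorem without proof\,---\,it is quoted from the habilitation thesis \cite{csl-habil} with a bare reference\,---\,there is no in-text argument to compare against, so your write-up has to stand on its own, and it does. All the key steps check out: the linearisation $\alpha R\ts B=BA$ with $A\in\ZZ^{k\times k}$, the $A$-invariance of $\ker B$ giving $\chi^{}_{\alpha R}\mid\chi^{}_{A}$, the dichotomy on whether $R$ has a real eigenvalue, and the identification $\alpha^{2}=\mu\ts\overline{\mu}$ using $\lvert\lambda\rvert=1$ (this is the one place where orthogonality of $R$ enters, and you use it correctly). Your observation that $\alpha^{2}$ is a root of the pair polynomial $\prod_{i<j}(t-\rho^{}_{i}\ts\rho^{}_{j})$ of degree $\binom{k}{2}$, rather than merely an element of $\QQ(\mu,\overline{\mu})$, is precisely what saves the factor of $2$ and yields the stated bound $k(k-1)$ instead of $2k(k-1)$; the rank-one case is also handled correctly, since a rank-one embedded module forces $d=1$ and $R=\pm 1$.

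Two small points of polish, neither of which is a genuine gap. First, set aside $\alpha=0$ at the outset (it is trivially a rational integer): as written, your claim that $\overline{\mu}$ is a root of $\chi^{}_{A}$ \emph{distinct} from $\mu$ fails in that degenerate case. Second, the assertion that $\prod_{i<j}(t-\rho^{}_{i}\ts\rho^{}_{j})$ is a monic \emph{integer} polynomial deserves one sentence of justification: its coefficients are symmetric polynomials in $\rho^{}_{1},\ldots,\rho^{}_{k}$ with integer coefficients, hence integer polynomials in the coefficients of $\chi^{}_{A}$; equivalently, it is the characteristic polynomial of the exterior square $\Lambda^{2}A$, which is itself an integer matrix of size $\binom{k}{2}$. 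With these two remarks inserted, the argument is complete.
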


The set $\{\scal_M(R) \mid R\in \OS(M)\}$ has again a group structure,
under the multiplication defined by 
\[
    \scal^{}_M(R)\ts\scal^{}_M(S)\, :=\,
    \{ \alpha\beta \mid \alpha \in \scal^{}_M(R),\beta \in
    \scal^{}_M(S)\}\ts .
\]

We have the following generalisation of Lemma~\ref{csl-lem:scal-group}.

\begin{theorem}[{\cite[Lemmas~2.1.11 and~2.1.12 and Thm.~2.1.12]{csl-habil}}]
\label{csl-theo:scalgroupM}
  Let\/ $M\subset\RR^{d}$ be an embedded\/ $\ZZ\ts$-module. Then, one
  has the following properties.\index{module!embedded}
  \begin{enumerate}\itemsep=2pt
  \item For any\/ $R,S\in \OS(M)$, we have the product relation\/
    \[
    \scal^{}_M(R)\ts\scal^{}_M(S)=\scal^{}_M(RS)
    \]
    together with\/ $\scal^{}_M(R^{-1})\ts\scal^{}_M(R)=\scal^{}_M(\one)$.
  \item $\{ \scal^{}_M(R) \mid R\in \OS(M) \}$ is an Abelian group.
    Its neutral element is\/ $\scal^{}_M(\one)$, and the inverse
    of\/ $\scal^{}_M(R)$ is\/ $\scal^{}_M(R^{-1})$.
  \item $\{ \scal^{}_M(R) \mid R\in \OS(M) \}$ is isomorphic to a
    multiplicative subgroup of the group\/
    $\RR_{+}/(\scal^{}_M(\one)\cap\RR_{+})$.
  \item There exists a natural homomorphism
    \[
    \phi\!:\, \OS(M) \longrightarrow \{ \scal^{}_M(R) \mid R\in \OS(M) \}
    \]
    via\/ $R \mapsto \scal^{}_M(R)$. \qed
\end{enumerate}
\end{theorem}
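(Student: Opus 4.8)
The plan is to reduce everything to one geometric observation and then run formal group theory on top of it. The observation is that commensurateness is preserved by invertible linear maps: if $T\in\GL(d,\RR)$ and $N_1\sim N_2$, then $TN_1\sim TN_2$, because $T$ restricts to a bijection $N_1\cap N_2\to TN_1\cap TN_2=T(N_1\cap N_2)$ that preserves the relevant indices. Together with the fact (Lemma~\ref{csl-lem:comm}) that $\sim$ is an equivalence relation, this drives the product relation in (i). First I would record that $0\notin\scal^{}_M(R)$ whenever $R\in\OS(M)$: since $M$ has positive rank, $0\cdot RM=\{0\}$ cannot be commensurate with $M$. Hence every scaling factor below is invertible, and each $\alpha R$ with $\alpha\in\scal^{}_M(R)$ lies in $\GL(d,\RR)$. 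For the forward inclusion $\scal^{}_M(R)\ts\scal^{}_M(S)\subseteq\scal^{}_M(RS)$, take $\alpha\in\scal^{}_M(R)$ and $\beta\in\scal^{}_M(S)$, so $\alpha RM\sim M$ and $\beta SM\sim M$. Applying the invertible map $\alpha R$ to the second relation gives $\alpha\beta RSM\sim\alpha RM$, and transitivity with $\alpha RM\sim M$ yields $\alpha\beta RSM\sim M$, i.e. $\alpha\beta\in\scal^{}_M(RS)$.

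Next I would establish the inverse relation, observing that $\beta\in\scal^{}_M(R)$ forces $\beta^{-1}\in\scal^{}_M(R^{-1})$: applying $(\beta R)^{-1}=\beta^{-1}R^{-1}$ to $\beta RM\sim M$ gives $M\sim\beta^{-1}R^{-1}M$. Combining this with the forward inclusion, any $\delta\in\scal^{}_M(\one)$ can be written $\delta=(\delta\beta^{-1})\ts\beta$ with $\beta\in\scal^{}_M(R)$, $\beta^{-1}\in\scal^{}_M(R^{-1})$ and $\delta\beta^{-1}\in\scal^{}_M(\one)\ts\scal^{}_M(R^{-1})\subseteq\scal^{}_M(R^{-1})$, proving $\scal^{}_M(\one)\subseteq\scal^{}_M(R^{-1})\ts\scal^{}_M(R)$; the reverse is the forward inclusion with $S=R^{-1}$. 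The same trick supplies the missing inclusion $\scal^{}_M(RS)\subseteq\scal^{}_M(R)\ts\scal^{}_M(S)$: for $\gamma\in\scal^{}_M(RS)$ and any $\alpha\in\scal^{}_M(R)$ one has $\alpha^{-1}\gamma\in\scal^{}_M(R^{-1})\ts\scal^{}_M(RS)\subseteq\scal^{}_M(S)$, so $\gamma=\alpha(\alpha^{-1}\gamma)$ lies in the product. This settles (i), and (ii) follows formally: closure and associativity are clear, $\scal^{}_M(\one)$ is neutral, $\scal^{}_M(R^{-1})$ is the inverse, and commutativity is automatic since multiplication of subsets of $\RR$ is commutative, whence $\scal^{}_M(RS)=\scal^{}_M(SR)$ even when $RS\neq SR$.

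For (iii) the key point is that each $\scal^{}_M(R)$ is a single coset of $\scal^{}_M(\one)$ in $\RR^{\times}$. By Theorem~\ref{csl-theo:ScalM1}, $\scal^{}_M(\one)\cup\{0\}$ is a field, so $\scal^{}_M(\one)$ is a multiplicative group, and $-1\in\scal^{}_M(\one)$ because $-M=M$. If $\alpha,\alpha'\in\scal^{}_M(R)$, then $\alpha RM\sim\alpha'RM$, and applying $(\alpha'R)^{-1}$ gives $(\alpha/\alpha')M\sim M$, so $\alpha/\alpha'\in\scal^{}_M(\one)$; conversely $\scal^{}_M(\one)\ts\scal^{}_M(R)=\scal^{}_M(R)$. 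Thus $\scal^{}_M(R)=\alpha\ts\scal^{}_M(\one)$ for any $\alpha$ in it. Since $-1\in\scal^{}_M(\one)$, every such coset has a positive representative, and $\scal^{}_M(R)\mapsto\alpha\ts(\scal^{}_M(\one)\cap\RR_+)$ with $\alpha\in\scal^{}_M(R)\cap\RR_+$ is a well-defined injective homomorphism into $\RR_+/(\scal^{}_M(\one)\cap\RR_+)$, proving (iii). Finally, (iv) is immediate: $\phi\colon R\mapsto\scal^{}_M(R)$ is surjective by definition of the target set and is a homomorphism by the product relation in (i).

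The only genuinely non-formal ingredient is the geometric lemma that invertible linear maps preserve commensurateness, together with the non-vanishing $0\notin\scal^{}_M(R)$; once these are in hand, all four assertions reduce to bookkeeping with the equivalence relation $\sim$ and with cosets of the multiplicative group $\scal^{}_M(\one)$. The subtlest point to state carefully is that $\scal^{}_M(\one)$ really is a group (equivalently, that $\scal^{}_M(\one)\cup\{0\}$ is a field), which is precisely the content of the previously established Theorem~\ref{csl-theo:ScalM1} and is what legitimizes both the coset description and the reduction to positive representatives in (iii).
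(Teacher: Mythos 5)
Your proof is correct, and all four parts check out: the transport lemma (invertible linear maps preserve commensurateness and indices), the non-vanishing $0\notin\scal^{}_M(R)$, the coset description $\scal^{}_M(R)=\alpha\,\scal^{}_M(\one)$, and the reduction to positive representatives via $-1\in\scal^{}_M(\one)$ are exactly the ingredients needed, with the only external input (that $\scal^{}_M(\one)\cup\{0\}$ is a field) correctly attributed to Theorem~\ref{csl-theo:ScalM1}. The paper itself states this theorem without proof, deferring to the habilitation thesis, but its proof of the lattice analogue (Lemma~\ref{csl-lem:scal-group}) rests on precisely the idea you use$\,$---$\,$the group structure as a consequence of commensurateness being an equivalence relation, plus the embedding into $\RR_{+}$ modulo the positive scalars fixing the module$\,$---$\,$so your argument is essentially the intended one, carried out in full detail for modules, where the denominator shortcut of Lemma~\ref{csl-lem:scal-den} is unavailable.
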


In fact, this theorem will be the key to establish the connection
between CSMs and SSMs in Section~\ref{csl-sec:sslcsl}.

As $\Scal^{}_M(R)$ need not be a PID, we cannot characterise it by a
denominator as in Section~\ref{csl-sec:ssl}. This makes it more
difficult to establish a connection between the sets $\Scal^{}_M(R)$
for related modules. Nevertheless, there are some results.

\begin{lemma}[{\cite[Lemmas~2.2.1 and~2.2.2]{csl-habil}}]
  If\/ $M$ and\/ $N$ are commensurate modules, one has\/
  $\OS(M)=\OS(N)$ and\/ $\ts\scal_N(R) =\scal_M(R)$ for any\/ $R \in
  \OS(M)=\OS(N)$.\qed
\end{lemma}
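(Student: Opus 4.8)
The plan is to reproduce the structure of the lattice proof given just before Proposition~\ref{csl-prop:comp-Scal}, but to replace the denominator argument (which is unavailable here, since $\Scal_M(R)$ need not be a PID) by one that relies only on commensurateness being an equivalence relation. I treat the two assertions separately: the equality of the $\OS$-groups carries over almost verbatim, while the equality of the scaling sets requires a new, but purely formal, transitivity argument.

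First I would settle $\OS(M)=\OS(N)$. By Fact~\ref{csl-lem:scal-nontrivM}, $R\in\OS(M)$ holds if and only if there is some $\alpha\neq 0$ with $\alpha R M\subseteq M$. Since $M$ and $N$ are commensurate, Lemma~\ref{csl-lem:comm}(\ref{csl-lem:comm-i4}) supplies positive integers $m,n$ with $mM\subseteq N$ and $nN\subseteq M$. Then the chain
\[
  mn\,\alpha R N \,\subseteq\, m\,\alpha R M \,\subseteq\, mM \,\subseteq\, N
\]
shows $mn\alpha\in\Scal_N(R)\setminus\{0\}$, so $R\in\OS(N)$ by Fact~\ref{csl-lem:scal-nontrivM} again. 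This gives $\OS(M)\subseteq\OS(N)$, and the reverse inclusion follows by exchanging the roles of $M$ and $N$.

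For $\scal_M(R)=\scal_N(R)$ I would use that commensurateness is an equivalence relation, which follows from Lemma~\ref{csl-lem:comm} by applying property~(\ref{csl-lem:comm-i4}) repeatedly, exactly as in the lattice case. The key observation is that rotating and scaling preserves commensurateness: with $mM\subseteq N$ and $nN\subseteq M$ as above, one has $m(\alpha R M)=\alpha R(mM)\subseteq \alpha R N$ and $n(\alpha R N)=\alpha R(nN)\subseteq \alpha R M$, so $\alpha R M\sim \alpha R N$ for every $\alpha$. Now if $\alpha\in\scal_M(R)$, i.e.\ $\alpha R M\sim M$, then the chain $\alpha R N\sim \alpha R M\sim M\sim N$ yields $\alpha R N\sim N$ by transitivity, that is, $\alpha\in\scal_N(R)$; the reverse inclusion is symmetric.

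The only slightly delicate point$\,$---$\,$and the step I would isolate and verify most carefully$\,$---$\,$is the preservation statement $\alpha R M\sim \alpha R N$, as it is precisely what allows transitivity to do all the work. Once it is established, no metric, denominator, or PID information about $\Scal_M(R)$ is needed, and both inclusions drop out at once from the equivalence-relation property. I would therefore state the preservation observation first, then combine it with transitivity to conclude.
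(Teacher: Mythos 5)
Your proof is correct, and it is worth recording where it coincides with and where it departs from the argument the paper actually gives (the module lemma itself is only cited from \cite{csl-habil}; the paper proves the lattice analogue). Your first half, the inclusion $\OS(M)\subseteq\OS(N)$ via the chain $mn\ts\alpha R N\subseteq m\alpha R M\subseteq mM\subseteq N$ together with Fact~\ref{csl-lem:scal-nontrivM}, is exactly the paper's argument, cf.\ Eq.~\eqref{csl-eq:os-comm}. For the equality of the $\scal$ sets, however, the paper's lattice proof is denominator-based: it reads off from Eq.~\eqref{csl-eq:os-comm} that $\den_{\vG}(R)$ and $\den_{\vG'}(R)$ differ only by a rational factor and then invokes $\scal_{\vG}(R)=\den_{\vG}(R)\ts\QQ^{\times}$ (Lemma~\ref{csl-lem:scal-den}). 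That tool does not exist for embedded modules, where $\Scal_M(R)$ is merely a finitely generated $\ZZ\ts$-module and need not have a single generator; your replacement$\,$---$\,$observing that the invertible map $\alpha R$ preserves commensurateness, so that $\alpha R N\sim\alpha R M\sim M\sim N$ and transitivity (Lemma~\ref{csl-lem:comm}, property~(\ref{csl-lem:comm-i4}) applied repeatedly) closes the loop$\,$---$\,$is precisely the step that makes the statement survive the generalisation, and you were right to single it out as the point needing verification. The trade-off between the two routes: yours is more elementary and more general (no denominator or rank-one input, and it would reprove the lattice case verbatim), while the paper's route, where it applies, yields the additional quantitative fact that denominators of commensurate lattices agree up to rational factors.
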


For $\Scal^{}_M(R)$, a weaker result applies.

\begin{theorem}[{\cite[Thm.~2.2.3]{csl-habil}}]\label{theo:comp-ScalM}
  Let\/ $N$ be a submodule of\/ $M$ of index\/ $m$. Then, one has
  $m\Scal_M(R)\subseteq\Scal_N(R)\subseteq \frac{1}{m}\Scal_M(R)$.
  \qed
\end{theorem}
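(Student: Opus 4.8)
The plan is to mimic the proof of Proposition~\ref{csl-prop:comp-Scal} for lattices, since the only lattice-specific ingredient used there was the sandwiching of the sublattice between suitably scaled copies of its parent, and this step carries over verbatim to embedded modules. First I would record the elementary fact that, because $N$ is a submodule of $M$ of index $m$, the quotient group $M/N$ is finite of order $m$, so that $m\,(M/N)=0$ and hence $mM\subseteq N$. Together with the trivial inclusion $N\subseteq M$, this yields the chain
\[
   mM\,\subseteq\, N\,\subseteq\, M,
\]
which is precisely the module analogue of the relation $m\vG\subseteq\vG'\subseteq\vG$ exploited in the lattice case.

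With this chain in hand, both inclusions follow by a direct containment chase. For the first inclusion, I would take $\alpha\in\Scal^{}_M(R)$, so that $\alpha R M\subseteq M$; using $N\subseteq M$ and then $mM\subseteq N$, the chain
\[
   m\alpha R N\,\subseteq\, m\alpha R M\,\subseteq\, mM\,\subseteq\, N
\]
shows $m\alpha\in\Scal^{}_N(R)$, whence $m\Scal^{}_M(R)\subseteq\Scal^{}_N(R)$. For the second inclusion, I would take $\beta\in\Scal^{}_N(R)$, so that $\beta R N\subseteq N$; applying $R$ to $mM\subseteq N$ gives
\[
   m\beta R M\,=\,\beta R(mM)\,\subseteq\,\beta R N\,\subseteq\, N\,\subseteq\, M,
\]
so $m\beta\in\Scal^{}_M(R)$, that is, $\beta\in\frac{1}{m}\Scal^{}_M(R)$, proving $\Scal^{}_N(R)\subseteq\frac{1}{m}\Scal^{}_M(R)$.

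I do not expect a genuine obstacle here, as the argument is purely set-theoretic and uses only closure of the sets $\Scal^{}_M(R)$ and $\Scal^{}_N(R)$ under the displayed containments. The one point worth flagging is that, in contrast to the lattice setting, $\Scal^{}_M(R)$ need not be generated by a single denominator (it is merely a finitely generated free $\ZZ\ts$-module), so I would deliberately phrase the proof through inclusions of modules rather than through a smallest positive element; this is exactly why the result is stated as the weaker two-sided containment rather than as an equality of denominators.
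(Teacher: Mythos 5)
Your proof is correct and takes essentially the same route the paper does for the lattice analogue, Proposition~\ref{csl-prop:comp-Scal}: the sandwich $mM\subseteq N\subseteq M$ (obtained from finiteness of the index) followed by the two containment chases, which is exactly the argument the paper indicates carries over to the module setting. Your closing remark about avoiding denominators is also the right observation, since $\Scal^{}_N(R)$ need not be of the form $\den\cdot\ZZ$ for modules, which is precisely why the module statement omits the denominator divisibility claims of the lattice version.
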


Above, we have already considered some examples of planar modules in
Section~\ref{csl-sec:two-dim}. We conclude our discussion of SSMs with
an important example in $\RR^{4}$.

\subsection{The icosian ring}
\label{csl-sec:ssm-I}

We already met the icosian ring $\II$ in connection with the lattice
$A_4$, where it was used as a tool to determine the SSLs of $A_4$. But
it is also interesting to classify the SSMs of $\II$ itself.

Actually, the way to determine the SSMs is completely analogous to the
case of $\JJ$ in the previous section, which is related to the fact
that both $\JJ$ and $\II$ are maximal orders in their corresponding
quaternion algebras; compare~\cite{csl-Reiner}.  Although $\II$ is not
a lattice but a $\ZZ\ts$-module in $\RR^4$, all steps can be easily
generalised for $\II$, as the latter can be viewed as a
$\ZZ[\tau]$-module of rank $4$ (or a $\ZZ[\tau]$-lattice in our above
terminology) that is properly embedded in $\RR^{4}$. Moreover, any
quaternion in $\II$ has a norm which lies in $\ZZ[\tau]$.  Thus, the
zeta function of the number field\index{number~field} $K=\QQ(\tau)$
comes into play again, and we can express the generating function of
the SSMs in terms of $\zeta_{\II}^{}(s)$, which we know from
Eq.~\eqref{csl-eq:zetaI}.\index{zeta~function!icosian~ring}

\begin{theorem}[{\cite[Thm.~3]{csl-BM}}]
  The possible indices of similar submodules of the icosian ring are
  precisely the squares of rational integers that can be represented
  by the quadratic form\/ $x^2+xy-y^2$.\index{quadratic~form} The
  number of SSMs of a given index is a multiplicative arithmetic
  function, whose Dirichlet series\index{Dirichlet~series} generating
  function\/ $\Phi_{\II}^{}$ reads
  \[
    \Phi_{\II}^{}(s) 
    \, =\, \frac{\bigl(\zeta^{}_{\II}(s)\bigr)^2}{\zeta^{}_K(4s)}
    \, = \, \frac{\bigl(\zeta^{}_K(2s)\,\zeta^{}_K(2s-1)\bigr)^2}
    {\zeta^{}_K(4s)}
  \]
  with\/ $K=\ts\QQ(\tau)$. \qed
\end{theorem}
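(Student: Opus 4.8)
The plan is to mirror the treatment of the Hurwitz ring $\JJ=D_4^{*}$ in Theorem~\ref{csl-thm:hyper}, replacing $\ZZ$, $\QQ$ and $\zeta$ throughout by $\ZZ[\tau]$, $K=\QQ(\tau)$ and $\zeta^{}_K$, and exploiting that $\II$ is a maximal order in $\HH(K)$ in which all one-sided ideals are principal. First I would set up the parametrisation. By Cayley's parametrisation~\eqref{csl-eq:4dimrot}, every rotation is $R(p,q)x=px\bar q/|pq|$; since $\II=\overline{\II}$ is a maximal order, $p\II\bar q$ is a submodule of $\II$ for all $p,q\in\II$, and it is similar to $\II$ because $x\mapsto px\bar q$ is a scaled rotation. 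As in the $\JJ$ case, one then shows conversely that \emph{every} SSM of $\II$ arises this way (the residual $\ZZ[\tau]$-scaling being absorbed by central $p$, since $\Scal^{}_\II(\one)=\ZZ[\tau]$ by Fact~\ref{csl-fact:scal-S-lat}), so that classifying SSMs reduces to understanding the submodules $p\II\bar q$.

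Second, I would compute the index. The map $x\mapsto px\bar q$ is $K$-linear on $\HH(K)\cong K^4$ with $K$-determinant $\nr(p)^2\nr(q)^2=(|p|^2|q|^2)^2\in\ZZ[\tau]$, whence
\[
  [\II:p\II\bar q]\,=\,N_{K/\QQ}\bigl((|p|^2|q|^2)^2\bigr)
  \,=\,\bigl(N_{K/\QQ}(|p|^2)\,N_{K/\QQ}(|q|^2)\bigr)^2 .
\]
Since $N_{K/\QQ}(a+b\tau)=a^2+ab-b^2$ and this form is multiplicative, every index is the square of a value of $x^2+xy-y^2$; that all such values actually occur (already for $q=\e$) follows from the realisability of all totally positive reduced norms by icosians, exactly as for $A_4$ in Lemma~\ref{csl-lem:a4-biject} (cf.~\cite{csl-BHM,csl-consloa99}). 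This establishes the claim on the possible indices.

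Third, the counting. As $\II$ is a maximal order with principal one-sided ideals, $p\II$ runs through all right ideals and $\II\bar q=\overline{q\II}$ through all left ideals; conjugation is an index-preserving bijection between left and right ideals, so pairs are enumerated by $\bigl(\zeta^{}_\II(s)\bigr)^2$, and the product of the two ideal indices is precisely the index of $p\II\bar q$. The assignment $(p\II,\II\bar q)\mapsto p\II\bar q$ is surjective onto SSMs, and I would determine its fibres from the elementary observation that $p\II\bar q=p'\II\bar{q'}$ holds iff $a\II\bar b=\II$ with $a=p^{-1}p'$, $b=q^{-1}q'$, which forces $a\II=\II\bar b^{-1}=:\mathfrak T$ to be a two-sided fractional ideal and yields $(p'\II,\II\bar{q'})=\bigl((p\II)\mathfrak T,\,\mathfrak T^{-1}(\II\bar q)\bigr)$. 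Thus the fibres are the free orbits of the group of two-sided ideals, and translating this into Dirichlet series gives the convolution identity $\bigl(\zeta^{}_\II(s)\bigr)^2=\Phi^{}_\II(s)\,\zeta^{}_K(4s)$, i.e.\ $\Phi^{}_\II(s)=\bigl(\zeta^{}_\II(s)\bigr)^2/\zeta^{}_K(4s)$, with multiplicativity of the counting function inherited from the Euler product; inserting~\eqref{csl-eq:zetaI} produces the stated closed form.

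I expect the delicate point to be pinning the overcounting factor down to \emph{exactly} $\zeta^{}_K(4s)$, that is, identifying the group of two-sided ideals and checking that no finite prime behaves anomalously. This is where the icosian case is genuinely cleaner than $\JJ$: for $\JJ$ the prime $2$ ramifies in Hamilton's algebra $\HH(\QQ)$, which is what produces the extra factor $(1+4^{-s})$ and the even/odd dichotomy in Theorem~\ref{csl-thm:hyper}, whereas $\HH(K)$ is unramified at every finite prime of $K$ (being ramified only at the two infinite places). Consequently the two-sided ideals of $\II$ are just the $\mathfrak p\ts\II$ for primes $\mathfrak p$ of $K$, each of index $N_{K/\QQ}(\mathfrak p)^4$, whose generating function is exactly $\zeta^{}_K(4s)$ with no local correction. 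Verifying this absence of ramification$\,$---$\,$for instance by noting that $2$ is inert in $K$ and that the local quaternion division algebra splits over the unramified quadratic extension$\,$---$\,$is the one structural input that guarantees the clean formula, and reducing the fibre count to a prime-by-prime (Euler factor) check is the part that must be carried out with care.
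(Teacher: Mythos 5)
Your proposal is correct and is essentially the approach the paper itself indicates: transfer the Hurwitz-ring argument of Theorem~\ref{csl-thm:hyper} to $\II$ viewed as a $\ZZ[\tau]$-lattice of rank $4$, parametrise SSMs by pairs of one-sided ideals $(p\ts\II,\II\ts\bar q)$ with multiplicative index $\nr(|p|^4)\ts\nr(|q|^4)$, and divide out the two-sided ideals, whose Dirichlet series is exactly $\zeta^{}_K(4s)$ because $\HH(\QQ(\sqrt{5}\,))$ is unramified at all finite places---which you correctly identify as what replaces the factor $(1+4^{-s})\ts\zeta(4s)$ of the $\JJ$ case. The only organisational difference is bookkeeping: the treatment the paper follows (see the discussion around Theorem~\ref{csl-thm:hyper} and \cite{csl-BM}) removes the overcounting by normalising each SSM to a pair with a primitive component, unique up to units, so that $\Phi^{}_{\II}(s)=\zeta_{\ts\II}^{\mathsf{pr}}(s)\,\zeta^{}_{\ts\II}(s)$, whereas you quotient the set of all pairs by the (free) action of the two-sided ideal group; the two schemes are equivalent and both yield $\bigl(\zeta^{}_{\ts\II}(s)\bigr)^2/\zeta^{}_K(4s)$.
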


This theorem allows us to infer the corresponding counting function
$b_{\ts\II}^{}(m)$.\index{function!arithmetic} Using the function
$g(n,r)$ defined previously in Eq.~\eqref{csl-eq:gnr}, we obtain the
following explicit result.

\begin{corollary}[{\cite[Cor.~3]{csl-BM}}]
  The arithmetic function\/ $b_{\ts\II}^{}(m)$ is
  multiplicative\index{multiplicative~function} and vanishes unless\/
  $m$ is a square. It is completely determined by specifying\/
  $b_{\ts\II}^{}(p^{2r})$ for all rational primes\/ $p$ and all\/ $r\geq
  0$. With the function\/ $g$ of Eq.~\eqref{csl-eq:gnr}, one
  has\index{counting~function!icosian~ring}
  \[
  \qquad\qquad\quad
   b_{\ts\II}^{}(p^{2r}) \, =\,
    \begin{cases}
      g(5,r), & \mbox{if $p=5$,}\\
      0, &  \mbox{if $p\equiv\pm 2\, (5)$ and $r$ is odd,}\\
      g(p^2,\frac{r}{2}), & \mbox{if $p\equiv\pm 2\, (5)$ 
         and $r$ is even,}\\
      \sum_{\ell=0}^{r} g(p,\ell) g(p,r-\ell),&
      \mbox{if $p\equiv\pm 1\, (5)$.}\qquad\qquad\qquad\quad
      \qquad\qquad \qed
    \end{cases}
  \]
\end{corollary}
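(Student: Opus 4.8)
The plan is to exploit that $b_{\ts\II}^{}$ is multiplicative and supported on squares, so the whole statement reduces to reading off a single local coefficient: $b_{\ts\II}^{}(p^{2r})$ is the coefficient of $p^{-2rs}$ in the Euler factor of $\Phi_{\II}^{}(s)$ at the rational prime $p$. I would therefore introduce the local variable $x = p^{-2s}$, so that $b_{\ts\II}^{}(p^{2r})$ is exactly the coefficient of $x^r$ in the local factor. The one auxiliary identity I need first is a clean description of the function $g$ from Eq.~\eqref{csl-eq:gnr}: expanding $\frac{1}{(1-nx)^2}=\sum_{k\ge 0}(k+1)\ts n^k x^k$ and $\frac{1+x}{1-x}=1+2\sum_{j\ge 1}x^j$ and taking the Cauchy product, one checks that $g(n,r)$ is precisely the coefficient of $x^r$ in
\[
   G_n(x) \, := \, \frac{1+x}{(1-x)(1-nx)^2}\ts .
\]
With this identification in hand, every case of the corollary will amount to recognising the relevant local factor as $G_n$, a square of some $G_n$, or $G_{p^2}$ in the variable $x^2$.

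Next I would pass through the three splitting types of $p$ in $K=\QQ(\tau)$ using the factorisation $\zeta^{}_K(s)=\zeta(s)\ts L(s,\chi^{}_5)$ and the Euler product \eqref{csl-quadratic-zeta-two}, i.e.\ the local factor $\bigl((1-p^{-s})(1-\chi^{}_5(p)\ts p^{-s})\bigr)^{-1}$, where $\chi^{}_5(5)=0$, $\chi^{}_5(p)=1$ for $p\equiv\pm 1\,(5)$ and $\chi^{}_5(p)=-1$ for $p\equiv\pm 2\,(5)$. The key bookkeeping is that the three shifts $2s$, $2s-1$, $4s$ occurring in $\Phi_{\II}^{}(s)=\bigl(\zeta^{}_K(2s)\,\zeta^{}_K(2s-1)\bigr)^2/\zeta^{}_K(4s)$ translate into substituting $p^{-s}\mapsto x$, $p^{-s}\mapsto p\ts x$ and $p^{-s}\mapsto x^2$, respectively (in the inert case the local factor is already a function of $p^{-2s}$, so these become $x$, $p^2x^2$ and $x^4$ instead, which is the subtle point). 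Carrying out the elementary cancellations — in each case the numerator supplied by $1/\zeta^{}_K(4s)$ cancels one denominator factor — yields the local factor of $\Phi_{\II}^{}$ as
\[
   \frac{1+x}{(1-x)(1-5x)^2}\ts ,\quad
   \left(\frac{1+x}{(1-x)(1-px)^2}\right)^{\!2}\ts ,\quad
   \frac{1+x^2}{(1-x^2)(1-p^2x^2)^2}
\]
for $p=5$, for $p\equiv\pm 1\,(5)$, and for $p\equiv\pm 2\,(5)$, respectively.

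Finally I would read off the coefficients. The ramified factor is $G_5(x)$, giving $b_{\ts\II}^{}(5^{2r})=g(5,r)$. The split factor is $G_p(x)^2$, so by the Cauchy product its $x^r$-coefficient is $\sum_{\ell=0}^{r} g(p,\ell)\ts g(p,r-\ell)$, as claimed. For the inert case I substitute $y=x^2$ to see the factor equals $G_{p^2}(y)$; hence the coefficient of $x^{2k}$ is $g(p^2,k)$ while all odd powers of $x$ vanish, which is exactly $g(p^2,\tfrac{r}{2})$ for $r$ even and $0$ for $r$ odd. This completes the case analysis and matches the four displayed values.

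The main obstacle is not conceptual but the substitution bookkeeping in the middle paragraph: one must keep straight that $\zeta^{}_K(2s-1)$ contributes $p\ts x$ for split/ramified primes but $p^2x^2$ for inert primes, since the inert Euler factor depends on $p^{-2s}$ rather than $p^{-s}$. Getting this right is precisely what produces the $G_{p^2}(x^2)$ structure responsible for the vanishing at odd $r$; every other step is routine partial-fraction expansion justified by the description of $g$ established at the outset.
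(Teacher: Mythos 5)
Your proposal is correct and follows essentially the route the paper intends: the corollary is precisely the coefficient extraction, prime by prime according to the splitting type in $\QQ(\tau)$, from the Euler product of $\Phi_{\II}^{}(s)=\bigl(\zeta^{}_K(2s)\,\zeta^{}_K(2s-1)\bigr)^2/\zeta^{}_K(4s)$ given in the preceding theorem (the paper itself states the corollary without proof, citing the source), and your identification of $g(n,r)$ as the coefficient of $x^r$ in $(1+x)/\bigl((1-x)(1-nx)^2\bigr)$ as well as your three local factors check out exactly. One slip: in your parenthetical on the inert case the list should read ``$x^2$, $p^2x^2$ and $x^4$'' rather than ``$x$, $p^2x^2$ and $x^4$'', but this is only a typo, since the displayed inert factor $(1+x^2)/\bigl((1-x^2)(1-p^2x^2)^2\bigr)$ is the correct one and the conclusion stands.
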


The first few terms of $\Phi_{\II}^{}(s)$ read
\begin{align*}
  \Phi_{\II}^{}(s) & \, =\, 
  1 + \myfrac{10}{4^{2s}} + \myfrac{12}{5^{2s}} + \myfrac{20}{9^{2s}} 
   + \myfrac{48}{11^{2s}}
   + \myfrac{66}{16^{2s}} + \myfrac{80}{19^{2s}} 
   + \myfrac{120}{20^{2s}} + \myfrac{97}{25^{2s}} \\[1mm]
  & \qquad \, + \myfrac{120}{29^{2s}}
   + \myfrac{128}{31^{2s}} + \myfrac{200}{36^{2s}} 
   + \myfrac{168}{41^{2s}} + \myfrac{480}{44^{2s}} 
   + \myfrac{240}{45^{2s}} + \cdots
\end{align*}
Along the same lines as before, we can evaluate the asymptotic behaviour.

\begin{corollary}[{\cite[Cor.~4]{csl-BM}}]
  The asymptotic growth\index{asymptotic~behaviour} of the summatory
  arithmetic function\/ $\sum_{m\leq x} b_{\ts\II}^{}(m)$ is given
  by{$\,$}\footnote{{\ts}Compare Footnotes~\ref{csl-foot-ssla4} and
    \ref{csl-foot-ssld4} on pages \pageref{csl-foot-ssla4} and
    \pageref{csl-foot-ssld4}, respectively.}
  \[
    \sum_{m\leq x} b_{\ts\II}^{}(m) \,\sim\, 
    \frac{3 \log(\tau)^2}{5\sqrt{5}}\, x \log(x)
    \,\approx\,  0.062\ts 135\, x \log(x)\vspace{-1pt}
  \]
as\/ $x\to\infty$. \qed
\end{corollary}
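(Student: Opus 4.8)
The plan is to apply the Tauberian result of Theorem~\ref{csl-thm:meanvalues} directly to the Dirichlet series $\Phi^{}_{\II}(s) = \bigl(\zeta^{}_K(2s)\ts\zeta^{}_K(2s-1)\bigr)^2/\zeta^{}_K(4s)$ with $K = \QQ(\tau)$, reading off the location and order of its rightmost singularity together with the associated residue constant. Note that $b^{}_{\ts\II}(m)$ is supported on perfect squares, but applying Theorem~\ref{csl-thm:meanvalues} to $\Phi^{}_{\II}$ as it stands returns the asymptotics of $\sum_{m\le x} b^{}_{\ts\II}(m)$ directly, without the reparametrisation $f(m)=b^{}_{\ts\II}(m^2)$ used in~\cite{csl-BM}.

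First I would locate the dominant pole. Since $\zeta^{}_K$ is holomorphic and non-vanishing for $\Real(w) > 1$ and has a single simple pole at $w=1$ on the line $\Real(w)=1$, the three factors behave as follows near $s=1$: $\zeta^{}_K(2s)$ is analytic (its argument has real part $2$), $\zeta^{}_K(4s)$ in the denominator is analytic and non-zero (its argument has real part $4$, and zeros of $\zeta^{}_K$ would force $\Real(s)<\tfrac14$), while $\zeta^{}_K(2s-1)$ has a simple pole exactly at $s=1$. Hence $\Phi^{}_{\II}$ has a pole of order $2$ at $s=1$; this is its rightmost singularity, since all other candidate poles (such as the one from $\zeta^{}_K(2s)$ at $s=\tfrac12$) lie strictly to the left, and the series converges for $\Real(s)>1$. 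Thus the hypotheses of Theorem~\ref{csl-thm:meanvalues} hold with $\alpha=1$ and $n=1$.

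Next I would compute $h(1) = \lim_{s\to 1}(s-1)^2\ts\Phi^{}_{\II}(s)$. Writing the Laurent expansion $\zeta^{}_K(2s-1) = \kappa^{}_K/\bigl(2(s-1)\bigr)+O(1)$, where $\kappa^{}_K = L(1,\chi_5^{})$ is the residue of $\zeta^{}_K$ at its pole, the remaining factors contribute their values at $s=1$, giving $h(1) = \bigl(\zeta^{}_K(2)\bigr)^2\ts\kappa_K^2/\bigl(4\ts\zeta^{}_K(4)\bigr)$. Two standard inputs then finish the evaluation: Dirichlet's class number formula for $K=\QQ(\sqrt5)$ (class number $1$, fundamental unit $\tau$, discriminant $5$) yields $\kappa^{}_K = L(1,\chi_5^{}) = 2\log(\tau)/\sqrt5$, and the functional equation for $\zeta^{}_K$ combined with the rational special values $\zeta^{}_K(-1)=\tfrac1{30}$ and $\zeta^{}_K(-3)=\tfrac1{60}$ (Siegel) gives the clean ratio $\bigl(\zeta^{}_K(2)\bigr)^2/\zeta^{}_K(4) = 3/\sqrt5$. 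Substituting, $h(1) = \tfrac{3}{\sqrt5}\cdot\tfrac14\cdot\tfrac{4\log(\tau)^2}{5} = 3\log(\tau)^2/(5\sqrt5)$, whence Theorem~\ref{csl-thm:meanvalues} delivers $\sum_{m\le x} b^{}_{\ts\II}(m) \sim \bigl(h(1)/(\alpha\ts n!)\bigr)x\log(x) = \tfrac{3\log(\tau)^2}{5\sqrt5}\ts x\log(x)$, as claimed.

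The only genuinely delicate step is the evaluation of the constant, and there the main obstacle is the special-value ratio $\bigl(\zeta^{}_K(2)\bigr)^2/\zeta^{}_K(4)$; everything else is mechanical. As a consistency check I would rerun the identical scheme on $\Phi^{}_{\JJ}$ (with $\QQ$ in place of $K$ and the extra Euler factor at $2$): there $\zeta(2s-1)^2$ produces the same order-$2$ pole at $s=1$, and the computation must reproduce the known Hurwitz constant $\tfrac18$ using $\zeta(2)=\pi^2/6$ and $\zeta(4)=\pi^4/90$, which validates both the residue bookkeeping and the normalisation of Theorem~\ref{csl-thm:meanvalues}.
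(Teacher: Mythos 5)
Your proposal is correct and takes essentially the same route as the paper: the corollary is stated there by citation to \cite[Cor.~4]{csl-BM}, but the method you use$\,$---$\,$applying Theorem~\ref{csl-thm:meanvalues} to $\Phi_{\II}^{}(s)$ at its rightmost singularity, the order-two pole at $s=1$ produced by $\bigl(\zeta^{}_K(2s-1)\bigr)^2$---is precisely the one employed for the neighbouring asymptotic results on $A_4$ and the hypercubic lattices. Your constant evaluation also checks out, since $L(1,\chi_5^{})=2\log(\tau)/\sqrt{5}$ and $\bigl(\zeta^{}_K(2)\bigr)^2/\zeta^{}_K(4)=3/\sqrt{5}$ indeed give $h(1)=3\log(\tau)^2/(5\sqrt{5}\ts)$, matching the stated growth constant.
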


Let us now turn our attention to the related problem of coincidence
site lattices. It is less common in the mathematical literature, due
to its origin in crystallography.\index{crystallography} As we shall
see, it is technically more involved and thus less developed from a
structural point of view. Nevertheless, its consideration is
completely natural and intrinsically connected with the SSL problem,
as we shall see later on.

\section{Coincidence site lattices and modules}
\label{csl-sec:csl}

\subsection{Basic facts}
\label{csl-sec:csl-bas}

Let us return to the CSLs, which we have introduced in
Definition~\ref{csl-def:csl}. To parallel our approach to the SSLs, we
introduce the set
\begin{equation}
  \OC(\vG)\, :=\, \{R\in \OG(d,\RR)\mid \vG \sim R \vG \ts  \} ,
\end{equation}
where $\vG\subset \RR^d$ is a (given) lattice.  Likewise, we use the
notation
\begin{equation}
  \SOC(\vG)\, :=\, \{R\in \OC(\vG)\mid \det (R) = 1 \}
\end{equation}
for the set of all orientation-preserving coincidence isometries,
which are also known as coincidence
rotations\index{rotation!coincidence}.  Let us mention that the groups
$\OC(\vG)$ and $\SOC(\vG)$ can be interpreted as \emph{commensurator
  groups}\index{commensurator} of the lattice $\vG$;
compare~\cite{csl-BLP96}.

\begin{fact}[{\cite[Thm.~2.1]{csl-Baake-rev}}]
  The sets\/ $\OC(\vG)$ and\/ $\SOC(\vG)$ are subgroups of\/
  $\OG(d,\RR)$. \qed
\end{fact}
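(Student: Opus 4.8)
The plan is to show that $\OC(\vG) = \{R \in \OG(d,\RR) \mid \vG \sim R\vG\}$ is a subgroup of $\OG(d,\RR)$, and then observe that $\SOC(\vG) = \OC(\vG) \cap \SO(d,\RR)$ inherits this, being the intersection of two subgroups. The whole argument rests on the single structural fact, already established in the excerpt immediately after Lemma~\ref{csl-lem:comm-lat}, that commensurateness is an \emph{equivalence relation} on lattices. First I would record that $\OC(\vG)$ is non-empty, since $\one \in \OC(\vG)$: indeed $\vG \sim \one\vG = \vG$ by reflexivity. To conclude it is a subgroup, I then need closure under products and inverses.

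\textbf{Inverses.} For $R \in \OC(\vG)$ we have $\vG \sim R\vG$. Applying the orthogonal map $R^{-1}$ to both lattices preserves commensurateness: if $\vG \cap R\vG$ has finite index in both $\vG$ and $R\vG$, then $R^{-1}\vG \cap \vG = R^{-1}(\vG \cap R\vG)$ has finite index in both $R^{-1}\vG$ and $\vG$, because an invertible linear map carries a finite-index sublattice to a finite-index sublattice of the same index. Hence $R^{-1}\vG \sim \vG$, i.e.\ $\vG \sim R^{-1}\vG$ by symmetry, so $R^{-1} \in \OC(\vG)$. Equivalently, one can invoke symmetry of the equivalence relation directly: $\vG \sim R\vG$ gives $R\vG \sim \vG$, and multiplying through by $R^{-1}$ yields $\vG \sim R^{-1}\vG$.

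\textbf{Products.} Given $R, S \in \OC(\vG)$, so $\vG \sim R\vG$ and $\vG \sim S\vG$, I would first use $\vG \sim S\vG$ and apply the orthogonal map $R$ to obtain $R\vG \sim RS\vG$ (again because an isometry preserves commensurateness of lattices). Combining $\vG \sim R\vG$ with $R\vG \sim RS\vG$ through \emph{transitivity} of the equivalence relation yields $\vG \sim RS\vG$, whence $RS \in \OC(\vG)$. Together with the inverse property and $\one \in \OC(\vG)$, this establishes that $\OC(\vG)$ is a subgroup of $\OG(d,\RR)$.

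\textbf{The orientation-preserving part and the main obstacle.} Finally, $\SOC(\vG) = \OC(\vG) \cap \SO(d,\RR)$ is the intersection of two subgroups of $\OG(d,\RR)$ and is therefore itself a subgroup. Honestly, there is no real obstacle here: the only point requiring a moment's care is the auxiliary lemma that an isometry $T \in \OG(d,\RR)$ preserves commensurateness, i.e.\ $\vG_1 \sim \vG_2 \Rightarrow T\vG_1 \sim T\vG_2$, with indices preserved under $T$. This is immediate from $T(\vG_1 \cap \vG_2) = T\vG_1 \cap T\vG_2$ together with the fact that a bijective linear map sends a sublattice of index $n$ to a sublattice of index $n$. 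Once this is in hand, the equivalence-relation structure does all the work, and the proof reduces to the three verifications above.
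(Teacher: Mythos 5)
Your proof is correct. Note that the paper does not prove this fact at all: it is stated with a reference to \cite{csl-Baake-rev} and a \qed, in line with the authors' stated policy of citing results that are readily available in the literature. Your argument — identity via reflexivity, inverses and products via the observation that an invertible linear map preserves commensurateness (and indices), combined with the symmetry and transitivity of $\sim$ (which the paper establishes as a consequence of Lemma~\ref{csl-lem:comm-lat}), and finally $\SOC(\vG)=\OC(\vG)\cap\SO(d,\RR)$ as an intersection of subgroups — is precisely the standard argument given in the cited reference, so there is nothing to correct.
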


Note that $\OC(\vG)$ contains the symmetry group $\OG(\vG)$ of $\vG$
as a subgroup.  Indeed, $\OG(\vG)$ is precisely the group of all
coincidence isometries of index $\Sig^{}_\vG(R)=[\vG: \vG(R)]=1$;
compare Definition~\ref{csl-def:csl}.

One certainly expects connections between lattices that are closely related.
Here, one has the following elementary result.

\begin{lemma}[{\cite[Cor.~2.1 and
  Lemma~2.6]{csl-Baake-rev}}]\label{csl-lem:comm-csl} Commensurate
  lattices have the same\/ $\OC$-groups. In particular, all
  sublattices of a lattice\/ $\vG$ have the same group of coincidence
  isometries. \qed
\end{lemma}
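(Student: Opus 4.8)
The plan is to prove that commensurate lattices share the same $\OC$-group by reducing the claim to the equivalence-relation property of commensurateness, which is already established in the excerpt. Recall that, by definition, $R \in \OC(\vG)$ means $\vG \sim R\vG$. So the entire statement is really a compatibility assertion between the relation $\sim$ and the operation $\vG \mapsto R\vG$, and the cleanest route is to exploit transitivity of $\sim$ together with the fact that applying a fixed isometry $R$ preserves commensurateness.

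First I would record the key auxiliary observation: for any $R \in \OG(d,\RR)$ and any two lattices $\vG, \vG'$, one has $\vG \sim \vG'$ if and only if $R\vG \sim R\vG'$. This is immediate, since $R$ is a bijective linear map, so it sends $\vG \cap \vG'$ onto $R\vG \cap R\vG'$ and preserves the index $[\vG : \vG \cap \vG'] = [R\vG : R\vG \cap R\vG']$; thus finite index is preserved in both directions. With this in hand, suppose $\vG$ and $\vG'$ are commensurate, i.e. $\vG \sim \vG'$, and let $R \in \OC(\vG)$, so $\vG \sim R\vG$. Applying the auxiliary observation to $\vG \sim \vG'$ gives $R\vG \sim R\vG'$. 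Now I chain the commensurateness relations: $\vG' \sim \vG \sim R\vG \sim R\vG'$, where the first link is the hypothesis (using symmetry of $\sim$), the second is $R \in \OC(\vG)$, and the third is what we just derived. Since $\sim$ is an equivalence relation (transitivity), we conclude $\vG' \sim R\vG'$, which is exactly $R \in \OC(\vG')$. This shows $\OC(\vG) \subseteq \OC(\vG')$, and by symmetry in the roles of $\vG$ and $\vG'$ we obtain equality $\OC(\vG) = \OC(\vG')$.

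For the orientation-preserving version, I would simply note that $\SOC(\vG) = \OC(\vG) \cap \SO(d,\RR)$ by definition, so the equality $\OC(\vG) = \OC(\vG')$ immediately yields $\SOC(\vG) = \SOC(\vG')$. The special case about sublattices is then an instant corollary: a sublattice $\vG' \subseteq \vG$ of finite index is commensurate with $\vG$ (their intersection is $\vG'$ itself, which has finite index in $\vG$ and index $1$ in $\vG'$), so the general statement applies and all sublattices of $\vG$ share its group of coincidence isometries.

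I do not anticipate a genuine obstacle here; the only point requiring care is to make explicit the auxiliary step that a fixed isometry preserves commensurateness, since the statement's content rests entirely on combining that fact with transitivity. One could alternatively phrase everything through Lemma~\ref{csl-lem:comm-lat}, replacing $\sim$ by the existence of integers $m_1, m_2$ with $m_1\vG_1 \subseteq \vG_2$ and $m_2\vG_2 \subseteq \vG_1$ and tracking these inclusions through multiplication by $R$; but the equivalence-relation formulation is shorter and less error-prone, so that is the approach I would present.
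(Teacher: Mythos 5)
Your proof is correct. The paper itself states this lemma without proof, deferring to \cite{csl-Baake-rev}, and your argument supplies exactly the standard reasoning behind that citation: the auxiliary fact that a fixed isometry preserves commensurateness (since $R$ is a linear bijection, it carries $\vG \cap \vG'$ onto $R\vG \cap R\vG'$ and preserves the indices), combined with the transitivity of $\sim$ established in Lemma~\ref{csl-lem:comm-lat}, gives $\OC(\vG)=\OC(\vG')$, and the sublattice case follows as you say because a full-rank sublattice is automatically commensurate with its parent.
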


We have seen earlier in Lemma~\ref{csl-lem:OS-sim} that similar
lattices have conjugate $\OS$-groups.  A corresponding result for
coincidence isometries exists as well.

\begin{lemma}[{\cite[Lemma~2.5]{csl-Baake-rev}}]\label{csl-lem:similar-csl}
  Similar lattices have conjugate\/ $\OC$-groups. In particular, for
  any $0\ne \alpha\in\RR$ and any $R\in\OG(d,\RR)$, one has
\[
  \OC(\alpha R \vG) \,=\, R\, \OC(\vG)\, R^{-1},
\]
together with\/ $\Sig^{}_{\alpha R\vG}(S)=
\Sig^{}_\vG(R^{-1}SR)$. \qed
\end{lemma}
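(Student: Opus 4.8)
The plan is to reduce both assertions to two elementary invariance principles and then chase the definitions. First, commensurateness is preserved by every invertible linear map: if $\vL^{}_1\sim\vL^{}_2$ and $T\in\GL(d,\RR)$, then $T\vL^{}_1\cap T\vL^{}_2=T(\vL^{}_1\cap\vL^{}_2)$, so this intersection still has finite index in both $T\vL^{}_1$ and $T\vL^{}_2$, whence $T\vL^{}_1\sim T\vL^{}_2$ by Lemma~\ref{csl-lem:comm-lat}. Second, the sublattice index is invariant in the same way: for $\vL^{}_2\subseteq\vL^{}_1$ and $T\in\GL(d,\RR)$, the map $T$ induces a group isomorphism $\vL^{}_1/\vL^{}_2\cong T\vL^{}_1/T\vL^{}_2$, so $[\vL^{}_1:\vL^{}_2]=[T\vL^{}_1:T\vL^{}_2]$. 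Both principles apply, in particular, to the scalings $x\mapsto\alpha x$ with $\alpha\ne 0$ and to the orthogonal maps $R$ and $R^{-1}$.

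For the conjugation formula, I would carry the membership condition through these invariances. By definition, $S\in\OC(\alpha R\vG)$ means $\alpha R\vG\sim S\,\alpha R\vG=\alpha\,SR\vG$; dividing out the scaling by $\alpha$ this is equivalent to $R\vG\sim SR\vG$, and applying $R^{-1}$ it becomes $\vG\sim R^{-1}SR\vG$, that is, $R^{-1}SR\in\OC(\vG)$. Since $\OG(d,\RR)$ is a group, $R^{-1}SR$ lies in $\OG(d,\RR)$ and ranges over it as $S$ does. The chain of equivalences therefore reads $S\in\OC(\alpha R\vG)\iff R^{-1}SR\in\OC(\vG)\iff S\in R\,\OC(\vG)\,R^{-1}$, which is exactly $\OC(\alpha R\vG)=R\,\OC(\vG)\,R^{-1}$. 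As every lattice similar to $\vG$ has the form $\alpha R\vG$, this also establishes the opening claim that similar lattices have conjugate $\OC$-groups.

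For the index formula, I would compute the relevant CSL directly. With $\vG'=\alpha R\vG$ one has $\vG'\cap S\vG'=\alpha R\vG\cap\alpha SR\vG=\alpha\,(R\vG\cap SR\vG)$, so invariance of the index under the scaling $x\mapsto\alpha x$ and then under $R^{-1}$ yields
\[
 \Sig^{}_{\alpha R\vG}(S)=[R\vG:R\vG\cap SR\vG]=[\vG:\vG\cap R^{-1}SR\vG]=\Sig^{}_\vG(R^{-1}SR),
\]
where the middle equality uses $R^{-1}(R\vG\cap SR\vG)=\vG\cap R^{-1}SR\vG$.

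There is no deep obstacle here; the argument is pure bookkeeping once the invariances are recorded. The only points needing care are moving the scalar $\alpha$ past the linear map $S$ (legitimate, as $\alpha$ is central) and observing at the outset that commensurateness and sublattice index are invariant under arbitrary linear bijections, not merely under orthogonal ones$\,$---$\,$this is what allows the scaling and the conjugating rotation to be stripped off cleanly.
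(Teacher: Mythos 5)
Your proof is correct. The paper itself states this lemma without proof, citing \cite[Lemma~2.5]{csl-Baake-rev}, so there is no in-paper argument to compare against; your route---stripping off the central scaling $\alpha$, then conjugating by $R$, after recording that commensurateness and sublattice indices are invariant under arbitrary linear bijections---is the standard one, and it matches in spirit the proofs the paper does supply for the parallel statements about similarity isometries and scaling sets (Lemmas~\ref{csl-lem:OS-sim} and~\ref{csl-lem:scal-sim}).
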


Unsurprisingly, there is also a close connection between a lattice and
its dual lattice; compare~\cite{csl-Baake-rev}.

\begin{lemma}\label{csl-lem:csl-dual}
  Let\/ $\vG^{*}$ be the dual lattice of a lattice\/ $\vG\subseteq \RR^d$.
  Then, $\OC(\vG^{*})=\OC(\vG)$ and\/
  $\Sig^{}_{\vG^{*}}(R)=\Sig^{}_\vG(R)$ for all\/ $R\in \OC(\vG)$.
\end{lemma}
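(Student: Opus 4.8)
The plan is to transport both defining relations through the duality functor $\vG\mapsto\vG^{*}$, using only elementary properties of dual lattices together with the commensurateness criterion of Lemma~\ref{csl-lem:comm-lat}. First I would assemble the toolkit. For any full-rank lattice one has the biduality $(\vG^{*})^{*}=\vG$, and for an orthogonal map $R$ the identity $\ip{Rx}{Ry}=\ip{x}{y}$ immediately gives $(R\vG)^{*}=R\vG^{*}$, while rescaling yields $(m\vG)^{*}=\tfrac{1}{m}\vG^{*}$ for $m\in\NN$. Duality is inclusion-reversing, and for full-rank sublattices $A\subseteq B$ it preserves the index, $[B:A]=[A^{*}:B^{*}]$, which follows from the fact that dualisation inverts the covolume. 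Finally, $(\vG_{1}+\vG_{2})^{*}=\vG_{1}^{*}\cap\vG_{2}^{*}$ holds directly from the definition, since $\ip{x}{a+b}\in\ZZ$ for all $a\in\vG_1$, $b\in\vG_2$ is equivalent to $x\in\vG_1^{*}\cap\vG_2^{*}$.

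To prove $\OC(\vG^{*})=\OC(\vG)$, I would take $R\in\OC(\vG)$, so that $\vG\sim R\vG$. By item~(\ref{csl-lem:comm-i4-lat}) of Lemma~\ref{csl-lem:comm-lat}, there exist $m_{1},m_{2}\in\NN$ with $m_{1}\vG\subseteq R\vG$ and $m_{2}R\vG\subseteq\vG$. Dualising the first inclusion gives $R\vG^{*}=(R\vG)^{*}\subseteq(m_{1}\vG)^{*}=\tfrac{1}{m_{1}}\vG^{*}$, that is, $m_{1}R\vG^{*}\subseteq\vG^{*}$; the second likewise becomes $m_{2}\vG^{*}\subseteq R\vG^{*}$. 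Again by item~(\ref{csl-lem:comm-i4-lat}), this shows $\vG^{*}\sim R\vG^{*}$, i.e.\ $R\in\OC(\vG^{*})$, so $\OC(\vG)\subseteq\OC(\vG^{*})$. The reverse inclusion follows from $(\vG^{*})^{*}=\vG$ by symmetry.

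For the coincidence index I would start from $\Sig^{}_{\vG^{*}}(R)=[\vG^{*}:\vG^{*}\cap R\vG^{*}]$ and rewrite the intersection as $\vG^{*}\cap R\vG^{*}=\vG^{*}\cap(R\vG)^{*}=(\vG+R\vG)^{*}$. Since $\vG\subseteq\vG+R\vG$, index preservation under duality gives $[\vG^{*}:(\vG+R\vG)^{*}]=[\vG+R\vG:\vG]$. The isomorphism $(\vG+R\vG)/\vG\cong R\vG/(\vG\cap R\vG)$ for the commensurate lattices $\vG$ and $R\vG$ yields $[\vG+R\vG:\vG]=[R\vG:\vG\cap R\vG]$, and because $R$ is an isometry, $\vG$ and $R\vG$ have the same covolume, whence $[R\vG:\vG\cap R\vG]=[\vG:\vG\cap R\vG]=\Sig^{}_{\vG}(R)$. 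Chaining these equalities proves the index statement.

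The individual steps are routine; the only points needing care are the hypotheses that make them legitimate. Commensurateness of $\vG$ and $R\vG$, guaranteed by $R\in\OC(\vG)$, is precisely what ensures that $\vG\cap R\vG$ and $\vG+R\vG$ are genuine full-rank lattices, so that the index statements and the duality identities make sense and biduality applies to them. The one place where orthogonality is used in an essential way, rather than mere invertibility, is the final covolume step, which forces the two a priori distinct indices $[\vG:\vG\cap R\vG]$ and $[R\vG:\vG\cap R\vG]$ to coincide; keeping track of the direction of the isomorphism theorem there is the main bookkeeping hazard.
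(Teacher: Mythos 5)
Your proposal is correct and follows essentially the same route as the paper's proof: both establish $\OC(\vG^{*})=\OC(\vG)$ via $(R\vG)^{*}=R\vG^{*}$ and the fact that commensurateness passes to duals, and both compute the index through the chain $\Sig^{}_{\vG^{*}}(R)=[\vG^{*}:(\vG+R\vG)^{*}]=[\vG+R\vG:\vG\ts]=\Sig^{}_{\vG}(R)$, the last step resting on the second isomorphism theorem and the equality of covolumes of $\vG$ and $R\vG$. You merely spell out the intermediate facts (the dualised commensurateness criterion and the index preservation under duality) that the paper cites implicitly.
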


\begin{proof}
  As two lattices are commensurate if and only if their duals are
  commensurate, we have $\vG^{*} \sim R\vG^{*}$ if and only if $\vG
  \sim R\vG$, where one needs the relation $(R\vG)^{*} = R\vG^{*}$. 
  By definition, this implies $\OC(\vG^{*})=\OC(\vG)$. Now,
  \[
   [\vG^{*}:\vG^{*}(R)] \,=\, [\vG^{*}:(\vG + R\vG)^{*}]
   \,=\, [\vG + R \vG : \vG\ts ] \,=\, [\vG: \vG(R)]\ts ,
  \]
  which proves the claim.
\end{proof}

An interesting observation is that the coincidence indices of a
coincidence isometry and its inverse are the same. This fact can be
proved by geometric arguments~\cite{csl-Baake-rev} involving the dual
lattice, which we will repeat here.

\begin{lemma}\label{csl-lem:ind-inv}
  Let\/ $\vG\subseteq \RR^d$ be a lattice. For any\/ $R\in \OC(\vG)$,
  one has
\[
  \Sig^{}_\vG(R)\, =\, \Sig^{}_\vG(R^{-1})\ts .
\]
\end{lemma}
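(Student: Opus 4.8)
The plan is to prove $\Sig^{}_\vG(R)=\Sig^{}_\vG(R^{-1})$ by passing to the dual lattice and combining two lemmas already established in the excerpt. The key observation is that $\Sig^{}_\vG(R^{-1})$ can be rewritten using the dual lattice so as to match $\Sig^{}_\vG(R)$ directly. I would lean on Lemma~\ref{csl-lem:csl-dual}, which tells us that $\Sig^{}_{\vG^{*}}(R)=\Sig^{}_\vG(R)$, and on Lemma~\ref{csl-lem:similar-csl}, which relates coincidence indices under the action of an isometry on the lattice.

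First I would express the coincidence index of $R^{-1}$ geometrically. By definition, $\Sig^{}_\vG(R^{-1})=[\vG:\vG\cap R^{-1}\vG]$. Applying the isometry $R$ to both lattices inside the intersection preserves the index, since $R$ is a bijection that maps $\vG\cap R^{-1}\vG$ onto $R\vG\cap\vG$. Hence
\[
   \Sig^{}_\vG(R^{-1}) \,=\, [\vG:\vG\cap R^{-1}\vG]
   \,=\, [R\vG:R\vG\cap\vG] \,=\, [R\vG:\vG(R)]\ts .
\]
This reduces the claim to showing that $[\vG:\vG(R)]=[R\vG:\vG(R)]$, i.e.\ that $\vG$ and $R\vG$ contain the common sublattice $\vG(R)=\vG\cap R\vG$ with the \emph{same} index.

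Next I would establish this equality of indices via the sum lattice. The natural tool is the standard index identity relating intersection and sum: for commensurate lattices, $[\vG:\vG\cap R\vG]=[\vG+R\vG:R\vG]$. This is exactly the kind of relation already used in the proof of Lemma~\ref{csl-lem:csl-dual}. By symmetry, one also has $[R\vG:\vG\cap R\vG]=[\vG+R\vG:\vG]$, and since $\det(R)=\pm 1$ gives $\vG$ and $R\vG$ equal covolume, the two lattices $\vG$ and $R\vG$ have the same index in the sum lattice $\vG+R\vG$. Combining these yields $[\vG:\vG(R)]=[R\vG:\vG(R)]$, which together with the displayed reduction completes the argument.

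The main obstacle, and the step I would be most careful with, is justifying the covolume/index comparison cleanly: one must argue that $\vG$ and $R\vG$ sit with equal index inside $\vG+R\vG$. The clean way is the volume argument, $[\vG+R\vG:\vG]=\mathrm{vol}(\vG)/\mathrm{vol}(\vG+R\vG)$ and likewise for $R\vG$, using that $\mathrm{vol}(R\vG)=|\det R|\,\mathrm{vol}(\vG)=\mathrm{vol}(\vG)$ because $R$ is an isometry. Alternatively, and perhaps more in the spirit of the excerpt, one could avoid covolumes entirely by invoking the dual-lattice route: apply Lemma~\ref{csl-lem:csl-dual} to get $\Sig^{}_{\vG^{*}}(R)=\Sig^{}_\vG(R)$, use $(R\vG)^{*}=R\vG^{*}$ together with Lemma~\ref{csl-lem:similar-csl}, and chase the identity $[\vG^{*}:(\vG+R\vG)^{*}]=[\vG+R\vG:\vG]$ already exploited there. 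Either way the arithmetic is routine once the index-duality bookkeeping is set up; the only thing demanding attention is keeping the direction of each index ($[\vG:\cdot]$ versus $[\cdot:\vG]$) consistent throughout.
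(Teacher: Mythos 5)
Your proposal is correct and rests on exactly the two ingredients the paper uses: applying the isometry $R$ to both members of an index preserves it (giving $\Sig^{}_\vG(R^{-1}) = [R\vG : \vG(R)]$), and isometries preserve covolume, so the common sublattice $\vG(R)$ has the same index in $\vG$ and in $R\vG$. The only difference is cosmetic: the paper applies the volume argument directly to the intersection, namely $[\vG:\vG(R)] = \mathrm{vol}\bigl(\vG(R)\bigr)/\mathrm{vol}(\vG) = \mathrm{vol}\bigl(\vG(R)\bigr)/\mathrm{vol}(R\vG) = [R\vG:\vG(R)]$, so your detour through the sum lattice $\vG + R\vG$ and the second isomorphism theorem, while valid, is not needed.
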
 

\begin{proof}
The key is the fact that $[\vG: \vG(R)]$ can be interpreted
geometrically: It is the ratio of the volumes of fundamental cells of
$\vG(R)$ and $\vG$, which is independent of the particular choice of
the latter. As isometries preserve the volume, we have
\[
\begin{split}
  \Sig^{}_\vG(R) \, & =\, [\vG: \vG(R)]\, =\, [R\vG: \vG(R)]
  \, =\, [R\vG: \vG \cap R \vG\ts ]\\
  &=\, [\vG: R^{-1} \vG \cap \vG\ts ] \, = \, \Sig^{}_\vG(R^{-1})\ts ,
\end{split}
\]
which completes the argument.
\end{proof}

As $\OC(\vG)$ is a group, it is natural to ask whether there is a
connection between the indices $\Sig^{}_{\vG}(R_{1})$,
$\Sig^{}_{\vG}(R_{2})$ and $\Sig^{}_{\vG}(R_{1}R_{2})$ for
$R_{1},R_{2}\in\OC(\vG)$.  Although no general formula exists which
expresses one of them in terms of the other two, we have the following
results.

\begin{theorem}[{\cite{csl-pzcsl7},
\cite[Lemma~3.4.3 and Thm.~3.4.4]{csl-habil}}] For any lattice\/
  $\vG\subset \RR^d$ and for any\/ $R_{1},R_{2} \in \OC(\vG)$, one has
  the following relations.
  \begin{enumerate}\itemsep=3pt
    \item $\Sig^{}_{\vG}(R_{1}R_{2})$ divides\/ 
      $\Sig^{}_{\vG}(R_{1})\ts\Sig^{}_{\vG}(R_{2})$.
    \item
      $\Sig^{}_{\vG}(R_{1}R_{2})=\Sig^{}_{\vG}(R_{1})\ts\Sig^{}_{\vG}(R_{2})$
      whenever\/ $\Sig^{}_{\vG}(R_{1})$ and\/ $\Sig^{}_{\vG}(R_{2})$
      are coprime.\qed
  \end{enumerate}
\end{theorem}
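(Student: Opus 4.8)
The statement concerns the coincidence index $\Sig_\vG(R) = [\vG : \vG \cap R\vG]$ and how it behaves under composition of coincidence rotations. The natural strategy is to reduce everything to an index computation and exploit the multiplicative (divisibility) structure of lattice indices. First I would establish the key sublattice inclusion
\[
  \vG(R_1 R_2) \,=\, \vG \cap R_1 R_2 \vG \;\supseteq\; \vG(R_1) \cap R_1 \vG(R_2),
\]
or rather work with the chain $\vG \supseteq \vG(R_1) \cap R_1\bigl(\vG \cap R_2\vG\bigr)$. The cleanest route uses $R_1$ as an isometry that moves one CSL onto another: since $R_1 \in \OC(\vG)$, the lattice $R_1\vG$ is commensurate with $\vG$, and by Lemma~\ref{csl-lem:comm-csl} all these commensurate lattices share the same $\OC$-group. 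I would then consider the tower
\[
  \vG \;\supseteq\; \vG \cap R_1\vG \;\supseteq\; \vG \cap R_1\vG \cap R_1 R_2 \vG,
\]
and argue that the bottom lattice is contained in $\vG(R_1 R_2)$, while its index in $\vG$ is at most $\Sig_\vG(R_1)\,\Sig_\vG(R_2)$.

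For the divisibility claim (i), the main computation is to bound the index of the triple intersection. Writing $\vG(R_1) = \vG \cap R_1\vG$, the second step of the tower passes from $\vG(R_1)$ to $\vG(R_1) \cap R_1 R_2 \vG = \vG(R_1) \cap R_1(R_2\vG)$. Since $\vG(R_1) \subseteq R_1\vG$, applying $R_1^{-1}$ gives an isomorphism of this step onto $R_1^{-1}\vG(R_1) \cap R_2\vG \subseteq \vG \cap R_2\vG = \vG(R_2)$, so the index of the second step divides $[\,R_1^{-1}\vG(R_1) : R_1^{-1}\vG(R_1)\cap R_2\vG\,]$, which in turn divides $\Sig_\vG(R_2) = [\vG : \vG(R_2)]$ because $R_1^{-1}\vG(R_1)$ is a sublattice of $\vG$. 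Multiplying the two step-indices, the triple intersection has index dividing $\Sig_\vG(R_1)\,\Sig_\vG(R_2)$ in $\vG$. Finally, since the triple intersection is contained in $\vG(R_1 R_2)$ and both are full sublattices of $\vG$, the index $\Sig_\vG(R_1 R_2)$ divides the index of the triple intersection, hence divides $\Sig_\vG(R_1)\,\Sig_\vG(R_2)$. This gives (i).

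For the coprimality statement (ii), once divisibility is in hand I would combine it with two further divisibilities obtained by symmetry. Applying (i) to the pair $(R_1 R_2, R_2^{-1})$ yields that $\Sig_\vG(R_1)$ divides $\Sig_\vG(R_1 R_2)\,\Sig_\vG(R_2^{-1})$, and by Lemma~\ref{csl-lem:ind-inv} we have $\Sig_\vG(R_2^{-1}) = \Sig_\vG(R_2)$, so $\Sig_\vG(R_1)$ divides $\Sig_\vG(R_1 R_2)\,\Sig_\vG(R_2)$. If $\gcd\bigl(\Sig_\vG(R_1), \Sig_\vG(R_2)\bigr) = 1$, this forces $\Sig_\vG(R_1) \mid \Sig_\vG(R_1 R_2)$; symmetrically $\Sig_\vG(R_2) \mid \Sig_\vG(R_1 R_2)$, and by coprimality the product $\Sig_\vG(R_1)\,\Sig_\vG(R_2)$ divides $\Sig_\vG(R_1 R_2)$. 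Together with (i) this yields equality.

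The main obstacle I anticipate is the bookkeeping in step (i): correctly identifying which lattice the middle index maps onto under $R_1^{-1}$ and verifying that every lattice in the tower really has full rank $d$ (so that all indices are finite, using Lemma~\ref{csl-lem:comm-lat}). One must be careful that $R_1^{-1}\vG(R_1)$ is genuinely a sublattice of $\vG$ and that the index of the second step divides, rather than merely is bounded by, $\Sig_\vG(R_2)$ — this divisibility is exactly the multiplicativity of indices in a tower of lattices, so I would invoke the standard fact $[\vG : \vG''] = [\vG : \vG'][\vG' : \vG'']$ for nested full-rank sublattices throughout.
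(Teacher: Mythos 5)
The paper contains no proof of this theorem to compare against: it is quoted from \cite{csl-pzcsl7} and \cite{csl-habil} and stated as a cited result. Judged on its own terms, your argument is essentially correct, and it fits naturally into the paper's framework: your triple intersection $\vG \cap R_1\vG \cap R_1R_2\vG$ is precisely the multiple CSL $\vG(R_1, R_1R_2)$, so your part (i) amounts to the $m=2$ case of the paper's Lemma~\ref{csl-lem:sig1n}, and your part (ii) proceeds exactly as one would want, by applying (i) to the pairs $(R_1R_2, R_2^{-1})$ and $(R_1^{-1}, R_1R_2)$, invoking Lemma~\ref{csl-lem:ind-inv} for $\Sig^{}_{\vG}(R_2^{-1})=\Sig^{}_{\vG}(R_2)$, and finishing with elementary coprimality. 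All lattices in your tower have full rank because $\OC(\vG)$ is a group, so $R_1\vG$ and $R_1R_2\vG$ are commensurate with $\vG$ and all indices are finite.

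One justification does need repair. You claim that $[L : L\cap R_2\vG]$ divides $[\vG : \vG\cap R_2\vG]$ for the sublattice $L = R_1^{-1}\vG(R_1) = \vG \cap R_1^{-1}\vG \subseteq \vG$, and you attribute this to ``the multiplicativity of indices in a tower of lattices.'' Tower multiplicativity alone does not deliver this: comparing the two towers through $L\cap R_2\vG$ only gives
\[
[\vG : L]\,[L : L\cap R_2\vG] \,=\, [\vG : \vG\cap R_2\vG]\,[\vG\cap R_2\vG : L\cap R_2\vG]\ts ,
\]
from which the divisibility you need cannot be isolated. The correct tool is the second isomorphism theorem, the same one the paper invokes for Lemma~\ref{csl-lem:sig12}: since $L\subseteq \vG$, the map $x + (L\cap R_2\vG) \mapsto x + (\vG\cap R_2\vG)$ is a well-defined injection of $L/(L\cap R_2\vG)$ into the finite group $\vG/(\vG\cap R_2\vG)$, and Lagrange's theorem then gives $[L : L\cap R_2\vG] \mid [\vG : \vG\cap R_2\vG] = \Sig^{}_{\vG}(R_2)$. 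With that one-line substitution your proof is complete.
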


\begin{remark}\label{csl-rem:sym-rel}
  In particular, one has $\Sig^{}_{\vG}(R S) = \Sig^{}_{\vG}(R)$ if
  $\Sig^{}_{\vG}(S)=1$, or in other words, if $S\in\OG(\vG)$, which
  means that $S$ is a symmetry operation of $\vG$. Actually, if
  $S\in\OG(\vG)$, one even has $\vG(RS)=\vG(R)$. This motivates us to
  call two coincidence isometries $R$ and $R'$ \emph{symmetry
    related}\index{symmetry~related} if there exists an $S\in
  \OG(\vG)$ such that $R'=RS$. Thus, symmetry-related coincidence
  isometries generate the same CSL, but the converse is not true in
  general; see Example~\ref{csl-ex: Zsqrt3} below for an instance of
  two coincidence isometries that are not symmetry related but
  generate the same CSL.  \exend
\end{remark}

\begin{remark}\label{csl-rem:spec}
  One of the quantities we are after is the set of possible coincidence
  indices. In line with \cite{csl-BLP96}, we call this set,
  \[
    \sigma(\vG)\,=\,\Sig(\OC(\vG))\,=\,
    \{\Sig^{}_\vG(R) \mid R\in \OC(\vG)\}\ts,
  \]
  the \emph{coincidence spectrum}\index{coincidence~spectrum!ordinary}
  of $\vG$.  Sometimes, we call it the ordinary or simple coincidence
  spectrum to distinguish it from the multiple coincidence spectrum,
  which we define later; compare Section~\ref{csl-sec:mcsl}. Likewise,
  $\Sig(\SOC(\vG))=\{\Sig^{}_\vG(R) \mid R\in \SOC(\vG)\}$ is the
  subset of indices of the coincidence rotations. Clearly, we have
  $\Sig(\SOC(\vG))\subseteq \Sig(\OC(\vG))$ in general, but in many
  cases we have $\Sig(\SOC(\vG))= \Sig(\OC(\vG))$. By
  Remark~\ref{csl-rem:sym-rel}, $\Sig(\SOC(\vG))= \Sig(\OC(\vG))$
  whenever an orientation-reversing isometry exists in $\OG(\vG)$, but
  this is only a sufficient condition and by no means a necessary one.
  \exend
\end{remark}

It is not uncommon that one needs to relate the coincidence structure
of a lattice to that of various sublattices. Let us consider some
consequences on the coincidence indices.

\begin{lemma}\label{csl-lem:Sig-sublat}
  Let\/ $\vL$ be a sublattice of\/ $\vG\subseteq \RR^d$ of index\/
  $m$.  Then, $\Sig^{}_{\nts\vL}(R)$ divides\/ $m\Sig^{}_\vG(R)$ and\/
  $\Sig^{}_\vG(R)$ divides\/ $m\Sig^{}_{\nts\vL}(R)$.
\end{lemma}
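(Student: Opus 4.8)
The plan is to reduce everything to the multiplicativity of the subgroup index in towers, together with one slightly less obvious intersection estimate. Throughout I take $R\in\OC(\vG)$, which by Lemma~\ref{csl-lem:comm-csl} equals $\OC(\vL)$, so that all four indices in sight are finite. Write $\vG(R)=\vG\cap R\vG$ and $\vL(R)=\vL\cap R\vL$ for the two CSLs; since $\vL\subseteq\vG$ one immediately gets the inclusion $\vL(R)=\vL\cap R\vL\subseteq\vG\cap R\vG=\vG(R)$, so $\vL(R)$ is a sublattice of $\vG(R)$.

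The second divisibility is the easy one. Factoring the index $[\vG:\vL(R)]$ along the two towers $\vL(R)\subseteq\vL\subseteq\vG$ and $\vL(R)\subseteq\vG(R)\subseteq\vG$ gives
\[
 m\,\Sig^{}_{\nts\vL}(R)\,=\,[\vG:\vL]\,[\vL:\vL(R)]\,=\,[\vG:\vL(R)]\,=\,[\vG:\vG(R)]\,[\vG(R):\vL(R)]\,=\,\Sig^{}_\vG(R)\,[\vG(R):\vL(R)],
\]
from which $\Sig^{}_\vG(R)\mid m\,\Sig^{}_{\nts\vL}(R)$ is immediate, as $[\vG(R):\vL(R)]$ is a positive integer. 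This same identity shows that the first divisibility is equivalent to the purely lattice-theoretic claim that $[\vG(R):\vL(R)]$ divides $m^{2}$.

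Establishing $[\vG(R):\vL(R)]\mid m^{2}$ is the heart of the matter, and the step I expect to be the main obstacle, precisely because the naive bound is too weak: from $m\vG\subseteq\vL\subseteq\vG$ one obtains $m\,\vG(R)=m\vG\cap R(m\vG)\subseteq\vL\cap R\vL=\vL(R)$, whence $[\vG(R):\vL(R)]$ divides $[\vG(R):m\,\vG(R)]=m^{d}$, which falls far short of $m^{2}$ once $d>2$. To sharpen this, I would sandwich $\vL(R)$ between two intermediate groups. Set $A=\vL\cap R\vG$ and $B=\vG\cap R\vL$, both subgroups of $\vG(R)$. Mapping $\vG(R)$ into $\vG/\vL$ (respectively $R\vG/R\vL$) shows that $A$ (respectively $B$) is exactly the kernel, so $[\vG(R):A]$ and $[\vG(R):B]$ each divide $[\vG:\vL]=m$. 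One checks directly that $A\cap B=\vL\cap R\vL=\vL(R)$, and then the second isomorphism theorem gives $[A:A\cap B]=[A+B:B]\mid[\vG(R):B]\mid m$; combining, $[\vG(R):\vL(R)]=[\vG(R):A]\,[A:A\cap B]$ divides $m^{2}$.

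With $[\vG(R):\vL(R)]\mid m^{2}$ in hand, the first divisibility follows: from the displayed identity, $m\,\Sig^{}_\vG(R)/\Sig^{}_{\nts\vL}(R)=m^{2}/[\vG(R):\vL(R)]\in\NN$, so $\Sig^{}_{\nts\vL}(R)\mid m\,\Sig^{}_\vG(R)$. The only routine points left to verify are the kernel computations (which use $\vL\subseteq\vG$ and $R\vL\subseteq R\vG$) and the elementary fact that scaling by $m$ commutes with intersection and with $R$; neither presents any difficulty.
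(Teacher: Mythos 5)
Your proof is correct. The second divisibility ($\Sig^{}_{\vG}(R)\mid m\ts\Sig^{}_{\vL}(R)$) is obtained exactly as in the paper, by factoring $[\vG:\vL(R)]$ along the two towers $\vL(R)\subseteq\vL\subseteq\vG$ and $\vL(R)\subseteq\vG(R)\subseteq\vG$. For the first divisibility, however, you take a genuinely different route. The paper dualises: since $\vL\subseteq\vG$ implies $\vG^{*}\subseteq\vL^{*}$ with the same index $m$, and since $\Sig^{}_{\vG^{*}}(R)=\Sig^{}_{\vG}(R)$ by Lemma~\ref{csl-lem:csl-dual}, the first claim follows by applying the already proven second claim to the dual pair. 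You instead establish the purely group-theoretic bound $[\vG(R):\vL(R)]\mid m^{2}$, sandwiching $\vL(R)=A\cap B$ between the subgroups $A=\vL\cap R\vG$ and $B=\vG\cap R\vL$ of $\vG(R)$, identifying these as kernels of the natural maps $\vG(R)\to\vG/\vL$ and $\vG(R)\to R\vG/R\vL$ (so that $[\vG(R):A]$ and $[\vG(R):B]$ divide $m$), and using the second isomorphism theorem to get $[A:A\cap B]=[A+B:B]\mid[\vG(R):B]\mid m$; all of these steps check out, and your reduction of the first claim to this bound via the displayed identity is also correct. What your approach buys is worth noting: it nowhere uses the dual lattice, only indices of subgroups of abelian groups, so it transfers verbatim to embedded $\ZZ\ts$-modules. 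This is precisely the issue the paper flags for Theorem~\ref{csl-theo:Sig-submod}, where it remarks that the first claim ``requires a different approach, as we generally lack the notion of a dual module'' and defers to the literature; your argument is essentially that missing algebraic proof. Conversely, the paper's duality argument is shorter and recycles Lemma~\ref{csl-lem:csl-dual}, but it is intrinsically confined to lattices.
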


\begin{proof}
  As $\vL(R)\subseteq \vG(R) \subseteq \vG$, the coincidence index
  $\Sig^{}_\vG(R)$ divides
  \[ 
    [\vG: \vL(R)] \, = \, [\vG: \vL] \, [\vL: \vL(R)] \, = \,
   m  \Sig^{}_{\nts\vL}(R) \ts ,
  \]
  which proves the second claim.

  The first claim can be proved by applying
  Lemma~\ref{csl-lem:csl-dual}.  It is well known that $\vL\subseteq
  \vG$ implies $\vG^{*} \subseteq \vL^{*}$. Since
  $\Sig^{}_\vG(R)=\Sig^{}_{\vG^{*}}(R)$ by
  Lemma~\ref{csl-lem:csl-dual}, for any lattice $\vG$, the result now
  follows immediately from the first part of the proof.
\end{proof}

Lemma~\ref{csl-lem:Sig-sublat} provides us with some useful bounds on the
coincidence indices of a sublattice. In certain cases, we can even get
sharper bounds \cite{csl-LZ9,csl-habil}. As an example, we mention the
following result, which is a special case of
\cite[Thm.~3.1.10]{csl-habil} or \cite[Thm.~2.2]{csl-LZ9} (with $u=1$
in the notation used there).

\begin{lemma}\label{csl-lem:Sig-sublat-c}
  Let\/ $\vL$ be a sublattice of\/ $\vG$ of index\/ $m$, and let\/ $R\in
  \OC(\vG)$ be such that\/ $\vL \ts \cap R(t + \vL)=\varnothing$ for all\/
  $t\in \vG\setminus \vL$. Then, $\Sig^{}_{\nts\vL}(R)$ divides\/
  $\Sig^{}_\vG(R)$.  \qed
\end{lemma}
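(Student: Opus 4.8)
The statement to prove is Lemma~\ref{csl-lem:Sig-sublat-c}: if $\vL \subseteq \vG$ has index $m$ and $R \in \OC(\vG)$ satisfies the disjointness condition $\vL \cap R(t+\vL) = \varnothing$ for all coset representatives $t \in \vG \setminus \vL$, then $\Sig^{}_{\nts\vL}(R)$ divides $\Sig^{}_\vG(R)$. From Lemma~\ref{csl-lem:Sig-sublat} we already know $\Sig^{}_{\nts\vL}(R)$ divides $m\,\Sig^{}_\vG(R)$, so the content of this lemma is that the hypothesis lets us strip off the factor $m$. Let me think about what the hypothesis is really saying geometrically.

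The plan is to reduce the divisibility statement to a single index computation and then to extract the needed divisibility directly from the disjointness hypothesis. First I would record the chain $\vL(R) \subseteq \vG(R) \subseteq \vG$, where $\vL(R) := \vL \cap R\vL$. Note that $R \in \OC(\vL)$, because $\vL$ is commensurate with $\vG$ and commensurate lattices share their $\OC$-group by Lemma~\ref{csl-lem:comm-csl}; hence $\vL(R)$ is a genuine (full-rank) lattice with $\Sig^{}_{\nts\vL}(R) = [\vL : \vL(R)]$. Computing $[\vG : \vL(R)]$ along the two chains $\vL(R) \subseteq \vG(R) \subseteq \vG$ and $\vL(R) \subseteq \vL \subseteq \vG$ gives
\[
   \Sig^{}_\vG(R)\,[\vG(R) : \vL(R)] \,=\, [\vG : \vL(R)] \,=\, m\,\Sig^{}_{\nts\vL}(R)\ts ,
\]
so that $\Sig^{}_\vG(R)/\Sig^{}_{\nts\vL}(R) = m/[\vG(R):\vL(R)]$. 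Thus the claim $\Sig^{}_{\nts\vL}(R) \mid \Sig^{}_\vG(R)$ is equivalent to showing that $[\vG(R):\vL(R)]$ divides $m$.

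The key step is to produce an injection of $\vG(R)/\vL(R)$ into $\vG/\vL$, the latter having order $m$. I would consider the homomorphism $\phi\colon \vG(R) \hookrightarrow \vG \twoheadrightarrow \vG/\vL$ given by restricting the canonical projection. Its kernel is $\vG(R) \cap \vL = (\vG \cap R\vG)\cap \vL = \vL \cap R\vG$, where the last equality uses $\vL \subseteq \vG$. It therefore suffices to identify this kernel with $\vL(R) = \vL \cap R\vL$. The inclusion $\vL(R) \subseteq \vL \cap R\vG$ is immediate from $R\vL \subseteq R\vG$, so everything hinges on the reverse inclusion, which is precisely where the hypothesis enters.

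For that reverse inclusion I would take $x \in \vL \cap R\vG$ and write $x = R\gamma$ with $\gamma \in \vG$. If $\gamma \notin \vL$, then $\gamma \in \vG\setminus\vL$ and $x = R\gamma \in R(\gamma + \vL)$, so $x$ would lie in $\vL \cap R(\gamma + \vL)$, contradicting the assumption that this set is empty. Hence $\gamma \in \vL$ and $x \in R\vL$, which proves $\vL \cap R\vG \subseteq \vL \cap R\vL = \vL(R)$. Consequently $\ker\phi = \vL(R)$, so $\phi$ induces an embedding $\vG(R)/\vL(R) \hookrightarrow \vG/\vL$, and Lagrange's theorem yields $[\vG(R):\vL(R)] \mid m$. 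Combined with the index identity above, this gives $\Sig^{}_{\nts\vL}(R) \mid \Sig^{}_\vG(R)$. The main obstacle is purely conceptual, namely the translation of the geometric disjointness condition into the algebraic inclusion $\vL \cap R\vG \subseteq R\vL$; once the kernel of $\phi$ is pinned down, the remainder is the elementary index bookkeeping already used in Lemma~\ref{csl-lem:Sig-sublat}.
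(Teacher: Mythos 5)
Your proof is correct. Note that the paper itself gives no proof of this lemma$\,$---$\,$it states the result with a reference to \cite[Thm.~3.1.10]{csl-habil} and \cite[Thm.~2.2]{csl-LZ9}$\,$---$\,$so your argument fills in a proof that the text delegates to the literature. Your route is the natural one and almost certainly the one used there: the disjointness hypothesis is exactly what is needed to upgrade the trivial inclusion $\vL(R) \subseteq \vL \cap R\ts\vG$ to the equality $\vL \cap R\ts\vG = \vL \cap R\ts\vL$, after which the embedding $\vG(R)/\vL(R) \cong \bigl(\vG(R)+\vL\bigr)/\vL \hookrightarrow \vG/\vL$ (your map $\phi$ is the second isomorphism theorem in disguise) gives $[\vG(R):\vL(R)] \mid m$, and the two-chain index computation $\Sig^{}_\vG(R)\,[\vG(R):\vL(R)] = m\,\Sig^{}_{\nts\vL}(R)$ converts this into the claimed divisibility. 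All the finiteness requirements are in order, since $R\in\OC(\vG)=\OC(\vL)$ by Lemma~\ref{csl-lem:comm-csl}, so every index appearing in your bookkeeping is finite.
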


Note that this result is the basis of the concept of \emph{colour
  coincidences}; compare~\cite{csl-LZ9,csl-Loquias10,csl-LZ15}.

\begin{remark}
  Lemma~\ref{csl-lem:Sig-sublat-c} is only useful in practice if it is
  reasonably easy to check the condition $\vL \cap R(t +
  \vL)=\varnothing$ for all $t\in \vG\setminus \vL$.  This is possible
  if the points of $\vL$ and $\vG\setminus \vL$ lie on different
  shells, that is, if the sets $\bigl\{|x| \,\big\vert\, x \in
  \vL\bigr\}$ and $\bigl\{|x| \,\big\vert\, x \in \vG\setminus
  \vL\bigr\}$ are disjoint.  This way, one can show that the three
  classes of cubic lattices have the same coincidence indices, as we
  shall see later in Section~\ref{csl-sec:cubic}. \exend
\end{remark}

\begin{remark}
  The shelling structure\index{shelling} of lattices is a well-studied
  problem.  It leads to $\Theta$-series, which are nicely summarised
  in~\cite[Sec.~2.2.3]{csl-Conway}. The problem has also been
  investigated for embedded $\ZZ\ts$-modules such as rings of
  cyclotomic integers in the plane~\cite{csl-BG03}, for the icosian
  ring in $4$-space~\cite{csl-Moody94}, or for $\ZZ\ts$-modules in
  $3$-space with icosahedral symmetry~\cite{csl-Weiss}.  Also,
  \mbox{Penrose{\ts}-type} tilings have been considered, where the
  notion of an \emph{averaged shelling} was introduced~
  \cite{csl-BG00}.  In the latter case, an interpretation of the
  results in a wider setting is still missing.  \exend
\end{remark}

\subsection{Multiple coincidences}
\label{csl-sec:mcsl}

We can generalise our considerations on CSLs by looking at
intersections of more than two commensurate lattices.  The analogous
step for modules will briefly be discussed in
Section~\ref{csl-sec:csl-csm}.  This problem is interesting for
various reasons.  On the one hand, these intersections naturally occur
in the discussion of the counting functions for CSLs; see
Section~\ref{csl-sec:count-csl} and~\cite{csl-pzcsl7}. On the other
hand, they are important in crystallography in connection with
multiple junctions~\cite{csl-gerts1,csl-gerts2,csl-gerts3}.  Another
interesting application arises in the theory of lattice quantisers
where one usually deals with rather complex lattices. There, one hopes
to simplify the problem by representing a complex lattice as the
intersection of simpler lattices~\cite{csl-Slo02a,csl-Slo02b}.

In fact, intersections of more than two isometric commensurate copies
of a lattice have already been discussed
in~\cite{csl-BG,csl-pzmcsl1,csl-BZ07,csl-habil}. Let us first recall the
corresponding definitions.

\begin{definition}
  Let $\vG\subseteq \RR^d$ be a lattice and assume $R_i\in\OC(\vG)$, with
  $i\in\{1,\ldots m\}$.  The lattice
\[
 \vG(R^{}_1,\ldots,R^{}_m)\, := \, 
  \vG\cap R^{}_1 \vG\cap\ldots\cap R^{}_m \vG \, =\, 
 \vG(R^{}_1)\cap\ldots\cap \vG(R^{}_m)
\]
is then called a \emph{multiple} CSL (MCSL) of order
$m$.\index{coincidence~site~lattice!multiple} Its index\index{index}
in $\vG$ is denoted by $\Sig(R^{}_1,\ldots,R^{}_m)$.
\end{definition}

In order to distinguish CSLs of the type $\vG(R) = \vG \cap R \vG$
from multiple CSLs, we will occasionally use the term 
simple\index{coincidence~site~lattice!simple} or
ordinary\index{coincidence~site~lattice!ordinary} CSL for $\vG(R)$.

Note that $\Sig(R^{}_1,\ldots,R^{}_m)$ is finite since
$\vG(R^{}_1,\ldots,R^{}_m)$ is a finite intersection of mutually
commensurate lattices~\cite{csl-Baake-rev}. In particular, an
immediate consequence of the second isomorphism theorem for groups is the
following result.

\begin{lemma}[{\cite[Lemma~3.3.1]{csl-habil}}]\label{csl-lem:sig12}
  For\/ $R^{}_{1},R^{}_{2}\in\OC(\vG)$, one has
\[
  \Sig(R^{}_1,R^{}_2)\, =\, 
  \frac{\Sig(R^{}_1)\, \Sig(R^{}_2)}{\Sig_+(R^{}_1,R^{}_2)}\ts ,
\]
where\/ $\Sig_+(R^{}_1,R^{}_2)$ is the index of the direct sum\/
$\vG_+(R^{}_1,R^{}_2)= \vG(R^{}_1)+ \vG(R^{}_2)$ in the 
original lattice\/ $\vG$.\qed
\end{lemma}

More generally, one has the following relation.

\begin{lemma}[{\cite[Lemma~3.3.2]{csl-habil}}]\label{csl-lem:sig1n}
  For any\/ $R_i\in\OC(\vG)$,
\[
  \Sig(R^{}_1,\ldots,R^{}_m)\, =\, 
  \frac{\Sig(R^{}_1,\ldots,R^{}_{m-1})\, \Sig(R^{}_m)}
  {\Sig_+(R^{}_1,\ldots,R^{}_{m-1};R^{}_m)}\ts ,
\]
where\/ $\Sig_+(R^{}_1,\ldots,R^{}_{m-1};R^{}_m)$ is the index of
\[
   \vG_+(R^{}_1,\ldots,R^{}_{m-1};R^{}_m)\, =\, 
   \vG(R^{}_1,\ldots,R^{}_{m-1})+ \vG(R^{}_m)
\]
in\/ $\vG$. In particular, $\Sig(R^{}_1,\ldots,R^{}_m)$ divides\/
$\Sig(R^{}_1)\cdot\ldots\cdot\Sig(R^{}_m)$.\qed
\end{lemma}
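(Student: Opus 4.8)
The plan is to reduce the statement to a single application of the second isomorphism theorem for abelian groups, exactly as in the proof of Lemma~\ref{csl-lem:sig12}, but now with a composite sublattice playing the role of one of the two factors. Concretely, I would set $A := \vG(R^{}_1,\ldots,R^{}_{m-1})$ and $B := \vG(R^{}_m)$, both regarded as subgroups of the abelian group $\vG$. By construction, one has $A \cap B = \vG(R^{}_1,\ldots,R^{}_m)$ and $A + B = \vG_+(R^{}_1,\ldots,R^{}_{m-1};R^{}_m)$, so the relevant indices are precisely $[\vG:A]=\Sig(R^{}_1,\ldots,R^{}_{m-1})$, $[\vG:B]=\Sig(R^{}_m)$, $[\vG:A\cap B]=\Sig(R^{}_1,\ldots,R^{}_m)$ and $[\vG:A+B]=\Sig_+(R^{}_1,\ldots,R^{}_{m-1};R^{}_m)$.

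Before invoking the isomorphism theorem, I would verify that all of these indices are finite, which is where the commensurateness hypothesis enters. Since each $R^{}_i \in \OC(\vG)$, every $\vG(R^{}_i)$ is a full-rank sublattice of $\vG$; a finite intersection of full-rank sublattices is again of full rank (commensurateness being an equivalence relation, as noted after Lemma~\ref{csl-lem:comm-lat}), so $A$ is a full-rank sublattice of $\vG$. Consequently $A$, $B$, $A\cap B$ and $A+B \supseteq A$ all have finite index in $\vG$, and the second isomorphism theorem applies. It yields $(A+B)/B \cong A/(A\cap B)$, hence $[A+B:B]=[A:A\cap B]$; combining this with the multiplicativity of the index along the chains $\vG \supseteq A \supseteq A\cap B$ and $\vG \supseteq A+B \supseteq B$ gives
\[
  [\vG : A \cap B] \, = \, \frac{[\vG:A]\,[\vG:B]}{[\vG:A+B]}\, .
\]
Rewriting this identity in the $\Sig$ notation is exactly the asserted formula.

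For the divisibility statement, I would argue by induction on $m$, the case $m=1$ being trivial. For the inductive step, the displayed identity shows that $\Sig(R^{}_1,\ldots,R^{}_m)\cdot \Sig_+(R^{}_1,\ldots,R^{}_{m-1};R^{}_m) = \Sig(R^{}_1,\ldots,R^{}_{m-1})\,\Sig(R^{}_m)$, so $\Sig(R^{}_1,\ldots,R^{}_m)$ divides $\Sig(R^{}_1,\ldots,R^{}_{m-1})\,\Sig(R^{}_m)$; applying the induction hypothesis $\Sig(R^{}_1,\ldots,R^{}_{m-1}) \mid \Sig(R^{}_1)\cdots\Sig(R^{}_{m-1})$ then yields $\Sig(R^{}_1,\ldots,R^{}_m) \mid \Sig(R^{}_1)\cdots\Sig(R^{}_m)$.

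I expect no serious obstacle here: the entire content is the reduction to the two-lattice case together with a routine finiteness check. If there is any subtle point at all, it lies only in confirming that the intermediate lattice $A=\vG(R^{}_1,\ldots,R^{}_{m-1})$ is genuinely of full rank, so that all four indices are finite and the isomorphism theorem may legitimately be applied to finite quantities$\,$---$\,$and this is precisely what commensurateness guarantees.
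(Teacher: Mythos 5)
Your proposal is correct and takes essentially the paper's own route: the identity is the second isomorphism theorem applied to $A=\vG(R^{}_1,\ldots,R^{}_{m-1})$ and $B=\vG(R^{}_m)$, which is exactly the reduction to the two-sublattice argument used for Lemma~\ref{csl-lem:sig12}, with finiteness of all four indices guaranteed because the $\vG(R^{}_i)$ are mutually commensurate full-rank sublattices (commensurateness being an equivalence relation). The divisibility statement follows by the same induction on $m$ that the paper's formulation implies, so there is nothing to add.
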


This result allows us to infer some basic properties of the
coincidence spectrum.  Recall from Remark~\ref{csl-rem:spec} that the
simple coincidence spectrum was defined as
$\sigma(\vG)=\{\Sig^{}_\vG(R) \mid R\in \OC(\vG)\}$. Likewise, we
introduce the \emph{multiple coincidence
  spectrum}\index{coincidence~spectrum!multiple} as the set
\[
  \sigma_\infty(\vG)
  =\{\Sig(R^{}_1,\ldots,R^{}_m)\mid R^{}_i\in \OC(\vG), m\in\NN\}. 
\]
Clearly, we have
\begin{equation}\label{csl-eq:dieletzte}
  \sigma(\vG) \,\subseteq\, \sigma_\infty(\vG) \,\subseteq\, 
  \widehat{\sigma}(\vG)\ts ,
\end{equation}
where $\widehat{\sigma}(\vG)$ is the set of all positive integers that
divide an integer from the (multiplicative) semigroup generated by
$\sigma(\vG)$. We shall come back to this relation and possible
consequences at the end of Section~\ref{csl-sec:mcsl-cub-3}.

\subsection{MCSLs and monotiles}
\label{csl-sec:csl-mono}

In \cite[Sec.~5.7.7]{csl-TAO}, the SCD monotile due
to Schmitt, Conway and Danzer for $\RR^3$ is
discussed.  This convex tile, together with translated and rotated
copies (but no reflected copies), allows to form periodic
two-dimensional layers $L$, which can only be stacked vertically by
rotating the layers by a fixed irrational rotation $R$.  In
particular, any tiling $\cT$ of $\RR^d$ obtained this way must have
the form
\begin{equation}\label{csl-monostacks}
  \cT \, = \bigcup_{m\in\ZZ} (mc + R^m L)\ts , 
\end{equation}
where $c$ is a suitable vector orthogonal to the plane of the layer
$L$; compare \cite[Eq.~(5.7)]{csl-TAO} and~\cite{csl-Dan95,csl-BF05}.
As $R^n\ne \one$ for any $n\in\ZZ\setminus\{ 0\}$, any resulting
tiling of $\RR^{d}$ is aperiodic.\index{aperiodic}

Let us analyse this construction in some more detail, in terms of
MCSLs. Let $L$ be one fixed layer of an SCD tiling.  If $\vG$ is the
group of translations that leaves $L$ invariant, then the stack of
$n+1$ layers $\cT = \bigcup_{m=0}^n (mc + R^m L)$ is invariant under
the MCSL $\vG_n:=\vG \cap R\vG \cap \ldots \cap R^n\vG$, with
$\vG^{}_{0}=\vG$.  As $\bigcap_{n\in\Nnull} \vG_n = \{ 0\}$, the tiling
is aperiodic; compare~\cite[Lemma~5.8 and Rem.~5.12]{csl-TAO}.

If we pursue these ideas further, we see that we can construct
monotiles in all odd dimensions \mbox{$2m+1\geq 3$}.  Let us start
with a lattice $\vG\in\RR^d$ for $d=2m$ and assume that $\vG$ has a
coincidence rotation\index{rotation!coincidence} $R$ such that
$R^n\ne \one$ for any $n\in\ZZ\setminus\{ 0\}$. We choose a unit cell
$U$ (possibly convex or a parallelohedron, with suitable markers) of
the CSL $\vG\cap R\vG$ such that no lattice point of $\vG$ or $R\vG$
is on the boundary. We can always choose $U$ in such a way that it
tiles $\RR^d$ only periodically, with $\vG$ as the corresponding
lattice of periods.  We define a prototile $T$ in $\RR^{d+1}$ as
$U\times [0,1]$ and add markings on the bottom and the top of $T$ as
follows. On the bottom, we mark each lattice point of $\vG$ that is
contained in $U$ (to avoid any complication, we choose some mark
without any symmetry) and on top we mark the lattice points of $R\vG$
(with the same marks just rotated by $R$). This guarantees that we can
stack these layers of tiles vertically only by rotating them by
$R$. Hence, the only tilings we can get are tilings of the
form~\eqref{csl-monostacks}, with $L$ replaced by $\vG$.

As $R^n\ne \one$ for any $n\in\ZZ\setminus\{ 0\}$, the tiling is not
periodic in the remaining (transversal) direction. To exclude any
periodicity in a direction parallel to the layers, we need
$\bigcap_{n\in\Nnull} \vG_n = \{ 0\}$. Such an $R$ exists for the
square lattice. In fact, each coincidence rotation $R$ that is not a
symmetry of the square lattice\index{square~lattice!CSL} has this
property.  Likewise, $\ZZ^{2m}$ has infinitely many coincidence
rotations $R$ that satisfy $\bigcap_{n\in\Nnull} \vG_n = \{ 0\}$. In
particular, we may choose $R$ as the direct product of two-dimensional
coincidence rotations, each of which fails to be a symmetry of the
square lattice.

However, note that, although all these tilings are aperiodic, they are
not \emph{strongly aperiodic},\index{strongly~aperiodic} as there is
still a skew rotation\index{rotation!skew} symmetry left, which means
that the symmetry group contains a subgroup isomorphic to $\ZZ$;
compare~\cite[Def.~5.22]{csl-TAO}. In this sense, also the original
SCD tiling is aperiodic, but not strongly aperiodic. To the best of
our knowlegde, no strongly aperiodic monotile in $3$-space is known.

With this restriction, the above construction establishes the
existence of monotiles in odd dimensions. For even dimensions, the
analogous construction fails, as the corresponding lattice then has
odd dimension and any coincidence rotation of it leaves at least one
lattice direction invariant. Whether monotiles exist in even
dimensions is still an open problem. Only in dimension $d=2$, a
monotile for the Euclidean plane (with next-to-nearest neighbour local
rules) was discovered by Joan Taylor; see \cite[Sec.~5.7.6]{csl-TAO}
and references therein for a more detailed account of the tiling, its
properties (due to Socolar and Taylor) and predecessors (due to
Penrose).

\subsection{Counting functions}
\label{csl-sec:count-csl}

As sketched in Section~\ref{csl-sec:square}, we are interested in
several enumeration problems. In particular, for a given index, we are
after the number of coincidence isometries and the number of CSLs. For
a fixed lattice $\vG$, we shall denote the number of CSLs of a given
index $n$ by $c^{}_{\vG}(n)$. As the same CSL can be generated by
several coincidence isometries, it is not useful to deal with the
total number of coincidence isometries directly, but it is more
convenient to use a properly normalised counting function instead.

If $S$ is a symmetry operation of $\vG$, we have $\vG(RS)=\vG(R)$
for any coincidence isometry $R$. This means that the number of
coincidence isometries with a given index is a multiple of
$\card(\OG(\vG))$, where $\OG(\vG)$ is the symmetry group of
$\vG$. Thus, we prefer to deal with the function
$c^{\mathsf{iso}}_{\vG}(n)$, which counts the coincidence isometries
modulo the symmetry group. Then, the number of coincidence isometries
of a given index $n$ is given by
$\card(\OG(\vG))\ts\ts c^{\mathsf{iso}}_{\vG}(n)$. Likewise, we define
$c^{\mathsf{rot}}_{\vG}(n)$ for all coincidence rotations, now counted
modulo $\SO(\vG) = \OG (\vG) \cap \SO (d)$. This guarantees
$c^{\mathsf{iso}}_{\vG}(1)=c^{\mathsf{rot}}_{\vG}(1)=c^{}_{\vG}(1)$.

Let us mention that
$c^{\mathsf{rot}}_{\vG}(n)=c^{\mathsf{iso}}_{\vG}(n)$ holds whenever
there exists an orientation-reversing symmetry operation. In
particular, $c^{\mathsf{rot}}_{\vG}(n)=c^{\mathsf{iso}}_{\vG}(n)$
holds for every lattice $\vG$ in odd dimensions.

Recall from Remark~\ref{csl-rem:sym-rel} that two coincidence
isometries $R$ and $R'$ are called \emph{symmetry
  related}\index{symmetry~related}, if there exists a symmetry
operation $S\in \OG(\vG)$ such that $R'=RS$. As symmetry-related
coincidence isometries generate the same CSL, it follows that
$c^{\mathsf{iso}}_{\vG}(n)$ is an upper bound for
$c^{}_{\vG}(n)$. However, these two numbers differ in general, as
non-symmetry-related coincidence isometries may still generate the
same CSL.

\begin{example}\label{csl-ex: Zsqrt3}
  As an example for differing counting functions for lattices versus
  isometries, we consider the rectangular lattice
  $\vG=\ZZ[\ii\sqrt{3}]\subset\RR^2$, which is a sublattice of the
  hexagonal lattice $\vL=\ZZ[\omega]$ with
  $\omega=\frac{1+\ii\sqrt{3}}{2}$. Then, one has the
  inclusions $2\vL\subset\vG\subset\vL$ with indices
  $[\vL:\vG]=[\vG:2\vL]=2$. As $\omega^{k}$ with $k\not\equiv 0 \bmod 3$
  is a symmetry operation for $\vL$ but not for $\vG$, we infer
  $\Sig^{}_{\vG}(\omega^{k}) > 1 = \Sig_{\vL}(\omega^{k})$ for
  $k\in\{1,2\}$.  It follows from Lemma~\ref{csl-lem:Sig-sublat} that
  one in fact has $\Sig^{}_{\vG}(\omega^{k})=2=[\vG:2\vL]$. Together
  with $\vG(\omega^{k})\supseteq 2 \vL$, this gives $\vG(\omega^{k}) =
  2 \vL$ for $k\in\{1,2\}$. As $\omega$ and $\omega^2$ fail to be
  symmetry related, this implies
  $c^{\mathsf{iso}}_{\vG}(2)=c^{\mathsf{rot}}_{\vG}(2)>c^{}_{\vG}(2)$. In
  fact, a more detailed analysis yields $c^{\mathsf{rot}}_{\vG}(2)=2 >
  1= c^{}_{\vG}(2)$.

  This example can easily be generalised as follows. Whenever one has
  a lattice $\vL\subset\vG$ such that the index $[\vG:\vL]=p$ is a
  prime and such that $\OG(\vG) \subset \OG(\vL)$ with
  $[\OG(\vL):\OG(\vG)]\geq 3$, one can infer
  $c^{\mathsf{iso}}_{\vG}(p)>c^{}_{\vG}(p)$ by analogous
  arguments. Moreover, if $p$ is not in the coincidence spectrum of
  $\vL$, one can even show that
  \[
  c^{\mathsf{iso}}_{\vG}(p)\, =\, [\OG(\vL):\OG(\vG)] -1
  \, >\,  1 \, =\,  c^{}_{\vG}(p)\ts .
  \]
  This follows from $\Sig^{}_{\vG}(R)=p$ together with the observation
  that $\vG(R)=\vL$ for any isometry
  \mbox{$R\in\OG(\vL)\setminus\OG(\vG)$}. \exend
\end{example}

In several important examples, all these counting functions are
multiplicative, which suggests the use of generating functions of
Dirichlet series type to determine their asymptotic growth rate, as we
have done in several examples so far. In general, however, the
counting functions fail to be multiplicative, though we have the
following weaker result.

\begin{theorem}[\cite{csl-pzcsl7,csl-habil}]\label{csl-theo:cisosup}
  The arithmetic function\/ $c^{\mathsf{iso}}_\vG(n)$,
  $c^{\mathsf{rot}}_\vG(n)$ and\/ $c^{}_{\vG}(n)$ are
  supermultiplicative, that is, $c^{\mathsf{iso}}_\vG(mn)\geq
  c^{\mathsf{iso}}_\vG(m)\, c^{\mathsf{iso}}_\vG(n)$ holds for coprime
  integers\/ $m$ and\/ $n$, and likewise for the other
  functions.\qed
\end{theorem}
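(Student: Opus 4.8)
The plan is to realise each supermultiplicativity inequality by constructing an explicit injection from (index-$m$ objects) $\times$ (index-$n$ objects) into (index-$mn$ objects), using the coprimality of $m$ and $n$ to split indices cleanly via the divisibility theorem stated above, namely that $\Sig^{}_\vG(R_1R_2)$ divides $\Sig^{}_\vG(R_1)\ts\Sig^{}_\vG(R_2)$, with equality when the two indices are coprime. I treat $c^{\mathsf{iso}}_\vG$ and $c^{\mathsf{rot}}_\vG$ together first, since they concern isometries, and then $c^{}_\vG$ separately, as the latter concerns CSLs and needs an extra argument.

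For $c^{\mathsf{iso}}_\vG$, note that by Remark~\ref{csl-rem:sym-rel} the index is constant on right cosets $R\ts\OG(\vG)$, and $c^{\mathsf{iso}}_\vG(k)$ counts these cosets. I would fix representatives $R_1^{(1)},\ldots,R_1^{(a)}$, with $a=c^{\mathsf{iso}}_\vG(m)$, of the index-$m$ cosets and $R_2^{(1)},\ldots,R_2^{(b)}$, with $b=c^{\mathsf{iso}}_\vG(n)$, of the index-$n$ cosets, and consider the products $R_1^{(i)}R_2^{(j)}$, each of which has index $mn$ by the equality part of the divisibility theorem. It then suffices to show these $ab$ products lie in pairwise distinct right cosets. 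Suppose $R_1^{(i)}R_2^{(j)}=R_1^{(i')}R_2^{(j')}S$ with $S\in\OG(\vG)$, and set $T:=(R_1^{(i')})^{-1}R_1^{(i)}=R_2^{(j')}S\ts(R_2^{(j)})^{-1}$. I compute $\Sig^{}_\vG(T)$ in two ways: from the first expression it divides $\Sig^{}_\vG(R_1^{(i')})\ts\Sig^{}_\vG(R_1^{(i)})=m^2$ (using $\Sig^{}_\vG(R^{-1})=\Sig^{}_\vG(R)$ from Lemma~\ref{csl-lem:ind-inv}), while from the second it divides $\Sig^{}_\vG(R_2^{(j')}S)\ts\Sig^{}_\vG(R_2^{(j)})=n^2$ (using $\Sig^{}_\vG(R_2^{(j')}S)=\Sig^{}_\vG(R_2^{(j')})=n$ from Remark~\ref{csl-rem:sym-rel}). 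As $\gcd(m^2,n^2)=1$, this forces $\Sig^{}_\vG(T)=1$, i.e.\ $T\in\OG(\vG)$. Hence $R_1^{(i)}$ and $R_1^{(i')}$ share a right coset, giving $i=i'$; cancelling the common factor then yields $R_2^{(j)}=R_2^{(j')}S$, so $j=j'$. The same argument, restricted to $\SO(d,\RR)$ and with $\OG(\vG)$ replaced by $\SO(\vG)$, handles $c^{\mathsf{rot}}_\vG$ verbatim, since a product of rotations is a rotation and all cited results apply to rotations.

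For $c^{}_\vG$ I would instead map a pair of CSLs $(\Lambda_1,\Lambda_2)$ of indices $m$ and $n$ to their intersection $\Lambda_1\cap\Lambda_2$. Writing $\Lambda_i=\vG(R_i)$, coprimality gives $[\vG:\Lambda_1+\Lambda_2]\mid\gcd(m,n)=1$, so $\Lambda_1+\Lambda_2=\vG$ and, by Lemma~\ref{csl-lem:sig12}, $[\vG:\Lambda_1\cap\Lambda_2]=mn$. Injectivity is then pure lattice theory: since $\Lambda_1+\Lambda_2=\vG$, the second isomorphism theorem gives $\vG/(\Lambda_1\cap\Lambda_2)\cong\vG/\Lambda_1\oplus\vG/\Lambda_2$, and because $\lvert\vG/\Lambda_1\rvert=m$ and $\lvert\vG/\Lambda_2\rvert=n$ are coprime, this is exactly the decomposition of the finite abelian group $\vG/(\Lambda_1\cap\Lambda_2)$ into its parts supported on the primes dividing $m$ and on those dividing $n$. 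That decomposition is canonical, so $\Lambda_1$ and $\Lambda_2$ are recovered from $\Lambda_1\cap\Lambda_2$ as the preimages of these two primary parts, and the map is injective.

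The hard part will be the final step for $c^{}_\vG$: one must verify that $\Lambda_1\cap\Lambda_2$, which a priori is only the multiple CSL $\vG(R_1,R_2)$, is again a \emph{simple} CSL, so that it is genuinely counted by $c^{}_\vG(mn)$. In general an MCSL need not be a simple CSL$\,$---$\,$compare the index-$25$ entry for $\ZZ^2$ in Table~\ref{csl-tab:sq}, where a third MCSL appears$\,$---$\,$but those exceptions arise from intersecting CSLs whose indices share a prime, so I expect coprimality to be precisely what excludes them. Concretely, I would produce a single coincidence isometry $R$ with $\vG(R)=\Lambda_1\cap\Lambda_2$ by a Chinese-remainder-type gluing of $R_1$ and $R_2$ across the disjoint prime supports of $m$ and $n$, reducing the claim to showing $\vG(R)=\vG(R_1)\cap\vG(R_2)$ for this $R$; this is where the detailed local analysis of~\cite{csl-pzcsl7,csl-habil} enters.
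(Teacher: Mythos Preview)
The paper does not supply a proof of this theorem; it is stated with a terminal \qed and attributed to \cite{csl-pzcsl7,csl-habil}. So there is no in-paper argument to compare against, and your outline stands on its own.

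Your treatment of $c^{\mathsf{iso}}_\vG$ and $c^{\mathsf{rot}}_\vG$ is complete and correct: the injection via products of coset representatives, together with the two-sided divisibility bound $\Sig^{}_\vG(T)\mid\gcd(m^2,n^2)=1$ forcing $T\in\OG(\vG)$, is exactly the right mechanism, and the ingredients you invoke (the divisibility/equality theorem just above and Lemma~\ref{csl-lem:ind-inv}) are precisely what is needed.

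For $c^{}_\vG$, your map $(\Lambda_1,\Lambda_2)\mapsto\Lambda_1\cap\Lambda_2$ and the injectivity via the primary decomposition of $\vG/(\Lambda_1\cap\Lambda_2)$ are both sound. The step you flag as hard --- that $\vG(R_1)\cap\vG(R_2)$ is again a \emph{simple} CSL when $\Sig(R_1)$ and $\Sig(R_2)$ are coprime --- is in fact asserted (also without proof) in the paper itself, in the paragraph following Theorem~\ref{csl-thm:mc}: ``one can show that $\vG(R^{}_1)\cap\ldots\cap\vG(R^{}_n)$ is always a simple CSL if the indices are coprime''. So you may simply invoke that remark rather than sketching a Chinese-remainder construction of a single $R$; the paper, like you, defers the underlying argument to \cite{csl-pzcsl7,csl-habil}. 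Your identification of this as the crux is accurate.
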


Given the close relationship of similar sublattices and coincidence
site lattices, which we will analyse below, one might be tempted to
assume that the counting functions $b^{\mathsf{pr}}_\vG(n)$ and
$b^{}_\vG(n)$ for similar sublattices are multiplicative if and only
if the corresponding counting functions $c^{}_\vG(n)$ and
$c^{\mathsf{iso}}_\vG(n)$ are multiplicative. However, this is not
true.  In fact, SSLs seem to be more prone to violation of
multiplicativity than CSLs. For instance, for $\vG=\ZZ\times 5\ZZ$,
multiplicativity is violated for $b^{\mathsf{pr}}_\vG(n)$ and
$b^{}_\vG(n)$, compare~\cite{csl-BSZ-sim}, while $c^{}_\vG(n)$ and
$c^{\mathsf{iso}}_\vG(n)$ are still multiplicative~\cite{csl-JonasF}.

We expect that the connection between $c^{\mathsf{iso}}_\vG(n)$ and
$c^{}_\vG(n)$ must be closer, and in fact one has the following
result.

\begin{theorem}[\cite{csl-pzcsl7,csl-habil}]\label{csl-theo:csl-mult}
  If the arithmetic function\/ $c_\vG^{\mathsf{iso}}(n)$ is multiplicative,
  then so is the function\/ $c^{}_\vG(n)$. \qed
\end{theorem}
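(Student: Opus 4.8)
The plan is to combine the supermultiplicativity of $c^{}_\vG$ from Theorem~\ref{csl-theo:cisosup} with a counting argument driven by the hypothesis on $c^{\mathsf{iso}}_\vG$. Since $c^{}_\vG(mn)\ge c^{}_\vG(m)\,c^{}_\vG(n)$ already holds for coprime $m,n$, it suffices to establish the reverse inequality. First I would recall the mechanism behind the supermultiplicativity of $c^{}_\vG$: for coprime $m,n$ and CSLs $\Lambda_1,\Lambda_2$ of index $m,n$, the intersection $\Lambda_1\cap\Lambda_2$ is again a CSL of index $mn$, since its index is divisible by both $m$ and $n$ and divides $\Sig(R_1)\,\Sig(R_2)=mn$ by Lemma~\ref{csl-lem:sig12} (so $\Sig_+(R_1,R_2)=1$). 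Moreover the assignment $(\Lambda_1,\Lambda_2)\mapsto\Lambda_1\cap\Lambda_2$ is injective: from $\Lambda:=\Lambda_1\cap\Lambda_2$ one recovers $\Lambda_1$ as the preimage in $\vG$ of the unique subgroup of order $n$ of the coprimely decomposed finite group $\vG/\Lambda\cong A_m\times A_n$, and similarly for $\Lambda_2$. Writing $T$ for the set of all CSLs of index $mn$ and $S\subseteq T$ for the image of this map, we thus have $|S|=c^{}_\vG(m)\,c^{}_\vG(n)$, and the claim is exactly that $S=T$.

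To force $S=T$ I would bring in the generating isometries. For a CSL $\Lambda$, let $N(\Lambda)$ denote the number of coincidence isometries modulo the symmetry group $\OG(\vG)$ that generate $\Lambda$; then $N(\Lambda)\ge 1$, and grouping the coincidence isometries of a fixed index by the CSL they generate yields
\[
  c^{\mathsf{iso}}_\vG(k)\,=\!\!\sum_{[\vG:\Lambda]=k}\!\! N(\Lambda),
\]
the sum running over CSLs $\Lambda$ of index $k$. The heart of the argument is the following fibrewise supermultiplicativity: for coprime $m,n$ and CSLs $\Lambda_1,\Lambda_2$ of indices $m,n$, one has
\[
  N(\Lambda_1\cap\Lambda_2)\,\ge\, N(\Lambda_1)\,N(\Lambda_2).
\]
I would prove this by constructing an injection from pairs of generating isometries of $\Lambda_1$ and $\Lambda_2$ into generating isometries of $\Lambda_1\cap\Lambda_2$ (modulo $\OG(\vG)$), using the coprime product formula $\Sig(R_1R_2)=\Sig(R_1)\,\Sig(R_2)$ together with the coincidence-diamond structure $\Lambda\subseteq\vG,\,R\vG\subseteq\vG+R\vG$, which splits compatibly along the $m$- and $n$-primary parts.

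Granting this, the conclusion follows from a single chain of (in)equalities: using first the multiplicativity of $c^{\mathsf{iso}}_\vG$ and the identity above, then the fibrewise inequality, then the bijection onto $S$, and finally $S\subseteq T$, I obtain
\[
  c^{\mathsf{iso}}_\vG(mn)
  \,=\, \Bigl(\sum_{\Lambda_1} N(\Lambda_1)\Bigr)\Bigl(\sum_{\Lambda_2} N(\Lambda_2)\Bigr)
  \,\le\, \sum_{(\Lambda_1,\Lambda_2)} N(\Lambda_1\cap\Lambda_2)
  \,=\, \sum_{\Lambda\in S} N(\Lambda)
  \,\le\, \sum_{\Lambda\in T} N(\Lambda)
  \,=\, c^{\mathsf{iso}}_\vG(mn).
\]
Hence every inequality is an equality; in particular $\sum_{\Lambda\in S}N(\Lambda)=\sum_{\Lambda\in T}N(\Lambda)$, and since $N\ge 1$ on all of $T$ this forces $T\setminus S=\varnothing$. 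Therefore $c^{}_\vG(mn)=|T|=|S|=c^{}_\vG(m)\,c^{}_\vG(n)$, as desired.

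I expect the main obstacle to be the fibrewise inequality $N(\Lambda_1\cap\Lambda_2)\ge N(\Lambda_1)\,N(\Lambda_2)$. The subtlety is that the naive product map $(R_1,R_2)\mapsto R_1R_2$ need \emph{not} send generators of $\Lambda_1,\Lambda_2$ to a generator of $\Lambda_1\cap\Lambda_2$ (in general $\vG(R_1R_2)\ne\vG(R_1)\cap\vG(R_2)$), and $\OG(\vG)$ need not be normal in $\OC(\vG)$, so well-definedness modulo symmetry must be checked with care. The plan is therefore to realise the required injection not through products but through the coprime primary splitting of the coincidence data; this is precisely the point at which the combinatorial input furnished by the multiplicativity of $c^{\mathsf{iso}}_\vG$ is converted into the geometric statement that every index-$mn$ CSL decomposes as an intersection of an index-$m$ and an index-$n$ CSL.
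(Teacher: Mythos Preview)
The paper itself gives no proof here (the theorem is stated with a \qed and a citation to \cite{csl-pzcsl7,csl-habil}), so there is no ``paper's proof'' to match; what matters is whether your argument closes. It does not: the fibrewise inequality $N(\Lambda_1\cap\Lambda_2)\ge N(\Lambda_1)\,N(\Lambda_2)$ is the entire content, and you do not prove it. Your candidate injection $(R_1,R_2)\mapsto R_1R_2$ fails exactly as you fear: already for the cubic lattices one has $\vG(R(q_1))\cap\vG(R(q_2))=\Imag\bigl(\lcrm(q_1,q_2)\,\JJ\bigr)$, and for coprime odd norms the lcrm is \emph{not} $q_1q_2$ in general (since $q_2^{-1}q_1q_2\notin\JJ$ typically), so $R_1R_2$ need not generate $\Lambda_1\cap\Lambda_2$. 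Without a concrete replacement map that is both well defined modulo $\OG(\vG)$ on each side and provably injective, the chain of inequalities cannot start.

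Your final paragraph then proposes to establish the injection via ``coprime primary splitting'', i.e.\ to show that every index-$mn$ CSL decomposes as an intersection of an index-$m$ and an index-$n$ CSL. But that statement is exactly $T=S$, which is what the counting chain was meant to prove; invoking it to rescue the fibrewise inequality is circular, and if you can prove it directly the whole $N(\Lambda)$ apparatus becomes superfluous. The route actually taken in the cited references, and foreshadowed by the paper's Theorems~\ref{csl-thm:mc} and~\ref{csl-thm:mcrot}, is of this direct kind: multiplicativity of $c^{\mathsf{iso}}_\vG$ yields a $\pi$-decomposition $R=R_1R_2$ with $\Sig(R_1)=m$, $\Sig(R_2)=n$ for every $R$ of index $mn$, and one then shows \emph{geometrically} that $\vG(R)$ is the intersection of an index-$m$ and an index-$n$ CSL (equivalently, that the canonical sublattices $(\tfrac{1}{n}\vG(R))\cap\vG$ and $(\tfrac{1}{m}\vG(R))\cap\vG$ are themselves CSLs). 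That last implication --- from a factorisation of $R$ to a decomposition of $\vG(R)$ --- is the step you are missing, and it cannot be bypassed by counting alone.
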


It is presently unknown whether the converse holds or not. As the
counting functions $c^{}_\vG(n)$ and $c_\vG^{\mathsf{iso}}(n)$ are
generally not multiplicative, it is desirable to have some criteria
when they are.  For $c^{}_\vG(n)$, we have the following result.

\begin{theorem}[\cite{csl-pzcsl7,csl-habil}]\label{csl-thm:mc}
  For a lattice $\vG\subset \RR^d$,
  the following statements are equivalent.
  \begin{enumerate}\itemsep=2pt
  \item \label{mc1} The arithmetic function\/ $c^{}_{\vG}(m)$ is
    multiplicative.\index{multiplicative~function}
  \item \label{mc2} Every simple CSL\/ $ \vG(R)$ has a representation of
    the form\/ 
    \[
      \vG(R)\, = \, \vG(R^{}_1)\cap\ldots\cap \vG(R^{}_n)
    \]
    with all indices\/ $\Sig^{}_{\vG}(R^{}_i)$ being powers of distinct primes.
  \item \label{mc3} Every MCSL\/ $\vG(R^{}_1,\ldots,R^{}_n)$ of order\/ $n$
    has a representation of the form\/ 
\[ 
   \vG(R^{}_1,\ldots,R^{}_n) \, = \, \vG_1\cap\ts\ldots\ts\cap \vG_k
\]
    where the\/ $\vG_i$ are MCSLs of order
    at most\/ $n$ whose indices\/ $\Sig_i$ are powers of
    distinct primes.\qed
  \end{enumerate}
\end{theorem}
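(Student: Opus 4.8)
The plan is to prove $(1)\Leftrightarrow(2)$ and $(2)\Leftrightarrow(3)$, organised around one device: for coprime indices, intersection of CSLs realises a product decomposition, and $c^{}_{\vG}$ is multiplicative exactly when every CSL arises this way. I begin with a purely group-theoretic observation. Fix a simple CSL $\Lambda=\vG(R)$ of index $N=[\vG:\Lambda]=\prod_i p_i^{a_i}$ and consider the finite abelian group $\vG/\Lambda$ of order $N$. Its primary decomposition shows that, for each $i$, there is a \emph{unique} intermediate lattice $M_i$ with $\Lambda\subseteq M_i\subseteq\vG$ and $[\vG:M_i]=p_i^{a_i}$, namely the preimage of $\bigoplus_{j\neq i}(\vG/\Lambda)^{}_{p_j}$, and one has $\bigcap_i M_i=\Lambda$. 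Hence any decomposition of $\Lambda$ as an intersection of CSLs of prime-power index is forced and unique: the pieces must be these $M_i$ (their distinct primes and exponents are pinned down by coprimality). Thus condition (2) is equivalent to the statement that, for every simple CSL $\Lambda$, each canonically defined $M_i$ is itself a CSL.

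Next I record the key technical input, which is the mechanism behind the supermultiplicativity of Theorem~\ref{csl-theo:cisosup}. If $\Lambda_1=\vG(R_1)$ and $\Lambda_2=\vG(R_2)$ have coprime indices $m$ and $n$, then $\Lambda_1\cap\Lambda_2=\vG(R_1,R_2)$ has index $mn$ and $\vG(R_1)+\vG(R_2)=\vG$: indeed, Lemma~\ref{csl-lem:sig12} gives $\Sig(R_1,R_2)\,\Sig_+(R_1,R_2)=mn$, while $\Sig(R_1,R_2)$ is a common multiple of $m$ and $n$, so coprimality forces $\Sig(R_1,R_2)=mn$ and $\Sig_+(R_1,R_2)=1$. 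The essential point, taken from the construction underlying Theorem~\ref{csl-theo:cisosup}, is that this intersection is again a \emph{simple} CSL and that $(\Lambda_1,\Lambda_2)\mapsto\Lambda_1\cap\Lambda_2$ is injective. Iterating over the distinct primes of $N$ yields an injection $\iota^{}_N$ from $\prod_i\{\text{CSLs of index }p_i^{a_i}\}$ into $\{\text{CSLs of index }N\}$, with $\iota^{}_N(M_1,\dots,M_k)=\bigcap_i M_i$; by the uniqueness above it is inverted on its image by $\Lambda\mapsto(M_1,\dots,M_k)$.

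The equivalence $(1)\Leftrightarrow(2)$ is then a counting argument. The domain of $\iota^{}_N$ has cardinality $\prod_i c^{}_{\vG}(p_i^{a_i})$ and its codomain has cardinality $c^{}_{\vG}(N)$. Since $\iota^{}_N$ is always injective, $c^{}_{\vG}$ is multiplicative if and only if these cardinalities agree for all $N$, if and only if every $\iota^{}_N$ is bijective, if and only if every index-$N$ CSL lies in the image, i.e.\ decomposes as $\bigcap_i M_i$ with the $M_i$ CSLs of prime-power index. That last statement is precisely (2). For the modular equivalence, $(3)\Rightarrow(2)$ is the case of order-one MCSLs $\vG(R)$. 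For $(2)\Rightarrow(3)$, take any MCSL $\vG(R_1,\dots,R_n)=\bigcap_j\vG(R_j)$, use (2) to write each $\vG(R_j)=\bigcap_i\vG(R_{j,i})$ with $\Sig^{}_{\vG}(R_{j,i})$ a power of a prime $p_{j,i}$ (distinct primes for fixed $j$), and regroup by prime: for each prime $p$ set $\vG_p=\bigcap_{p_{j,i}=p}\vG(R_{j,i})$. By Lemma~\ref{csl-lem:sig1n} the index of $\vG_p$ divides the product of the contributing prime-power indices, hence is a power of $p$; since for each fixed $j$ at most one $i$ contributes, $\vG_p$ has order at most $n$; and $\bigcap_p\vG_p=\vG(R_1,\dots,R_n)$, which is exactly the representation demanded by (3).

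The bookkeeping with $\vG/\Lambda$ and the regrouping are routine once the coprime-index index formula of Lemma~\ref{csl-lem:sig12} is in hand. I expect the main obstacle to be the technical heart inherited from Theorem~\ref{csl-theo:cisosup}: proving that the intersection of two coprime-index CSLs is a \emph{simple} CSL rather than merely an MCSL, and that the intersection map is injective. This is what lets me compare $\iota^{}_N$ against the counting function $c^{}_{\vG}$ in the first place, and it is precisely the step where the arithmetic of coprime indices turning intersections into products closes the argument.
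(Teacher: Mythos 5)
Your proposal is correct and follows essentially the same route as the paper's own sketch: the canonical prime-power pieces (your $M_i$, the paper's $\bigl(\frac{1}{m_i}\vG(R)\bigr)\cap\vG$) are unique, intersections of CSLs with coprime indices are again simple CSLs of the product index, and an injectivity/counting argument converts this into multiplicativity, with the MCSL equivalence obtained by regrouping prime-power pieces. Like the paper, you treat the statement that a coprime-index intersection is a \emph{simple} CSL (the mechanism behind Theorem~\ref{csl-theo:cisosup}) as the technical input rather than proving it, which is precisely the point where the paper also defers to the cited references.
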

Let us mention that the representation $\vG(R)=
\vG(R^{}_1)\cap\ts\ldots\ts\cap \vG(R^{}_n)$, if it exists, is unique up to
the order of the $\vG(R^{}_i)$. In fact, if $\Sigma(R)=p^{r_1}_{1}
\cdots p^{r_n}_{n}$ is the prime factorisation of $\Sigma(R)$ and
$m_{i}:=\Sigma(R)\, p^{-r_i}_{i}$, then $\vG(R^{}_i)$ can be
calculated via
\[
  \vG(R^{}_i) \, =\, \Bigl(\myfrac{1}{m_{i}} \vG(R)\Bigr) \cap \vG.
\]
Note that the right-hand side is always a sublattice of $\vG$ of index
$p^{r_i}_{i}$. The key in proving Theorem~\ref{csl-thm:mc} is to show
that it is actually a CSL if $c^{}_{\vG}(m)$ is multiplicative. On the
other hand, one can show that $\vG(R^{}_1)\cap\ldots\cap \vG(R^{}_n)$
is always a simple CSL if the indices are coprime, which allows one
to count all CSLs that have such a representation.  Analogous results
hold for MCSLs; compare~\cite{csl-pzcsl7,csl-habil}.

A similar criterion exists for $c_\vG^{\mathsf{iso}}(n)$. In order to
formulate it, we need some terminology.  We call a bijection
$\pi=\{p^{}_1,p^{}_2\ldots\}$ from the positive integers onto the
prime numbers an ordering of the prime numbers. We call a
decomposition of a coincidence isometry $R=R^{}_1\cdots R^{}_n$ a
$\pi$-decomposition of $R$ if, for any $i$, $\Sig^{}_{\vG}(R^{}_i)$ is a
power of $p^{}_{i}$ (we allow $\Sig^{}_{\vG}(R^{}_i)=p^{0}_{i}=1$).  It is
clear that any $\pi$-decomposition is unique up to point group
elements.
 
\begin{theorem}[\cite{csl-pzcsl7,csl-habil}]\label{csl-thm:mcrot}
  The following statements are equivalent.
\begin{enumerate}\itemsep=2pt
\item \label{mcrot1} The arithmetic function\/
  $c^{\mathsf{iso}}_\vG(m)$ is multiplicative.\index{multiplicative~function}
\item \label{mcrot2} There exists an ordering\/ $\pi$ of the prime
  numbers such that any coincidence isometry\/ $R$ has a $($unique\/$)$
  $\pi$-decomposition.
\item \label{mcrot3} For any ordering\/ $\pi$ of the prime numbers,
  there exists a\/ $\pi$-decom\-position of every coincidence
  isometry\/ $R$.  \qed
\end{enumerate}
\end{theorem}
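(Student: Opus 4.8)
The plan is to establish the cycle $(\ref{mcrot1})\Rightarrow(\ref{mcrot3})\Rightarrow(\ref{mcrot2})\Rightarrow(\ref{mcrot1})$, all three resting on one structural lemma about binary coprime decompositions. Throughout, write $P:=\OG(\vG)$ for the (finite) point group and set $Y_N:=\{R\in\OC(\vG)\mid\Sig^{}_\vG(R)=N\}$, so that each index-$N$ coset $RP$ lies inside $Y_N$ and $\lvert Y_N\rvert=\lvert P\rvert\,c^{\mathsf{iso}}_\vG(N)$. The crux is the following fibre lemma for coprime $m,n$: the multiplication map $\Phi\colon Y_m\times Y_n\to Y_{mn}$, $(R_1,R_2)\mapsto R_1R_2$, is well defined (its image lands in $Y_{mn}$ because coprimality gives $\Sig^{}_\vG(R_1R_2)=\Sig^{}_\vG(R_1)\Sig^{}_\vG(R_2)=mn$), and every nonempty fibre has exactly $\lvert P\rvert$ elements. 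Indeed, if $R_1R_2=R_1'R_2'$ with the prescribed indices, then $T:=R_1^{-1}R_1'=R_2R_2'^{-1}$ has an index dividing both $m^2$ and $n^2$, by the divisibility relation $\Sig^{}_\vG(AB)\mid\Sig^{}_\vG(A)\,\Sig^{}_\vG(B)$ together with $\Sig^{}_\vG(R^{-1})=\Sig^{}_\vG(R)$ from Lemma~\ref{csl-lem:ind-inv}; coprimality then forces $\Sig^{}_\vG(T)=1$, i.e.\ $T\in P$, so the fibre over $R_1R_2$ is precisely $\{(R_1T,T^{-1}R_2)\mid T\in P\}$.

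The counting consequence is immediate. Since the image of $\Phi$ is a union of right $P$-cosets (if $R=R_1R_2$ then $RS=R_1(R_2S)$ for $S\in P$) and all fibres have size $\lvert P\rvert$, this image has cardinality $\lvert Y_m\rvert\,\lvert Y_n\rvert/\lvert P\rvert=\lvert P\rvert\,c^{\mathsf{iso}}_\vG(m)\,c^{\mathsf{iso}}_\vG(n)$. Comparing with $\lvert Y_{mn}\rvert=\lvert P\rvert\,c^{\mathsf{iso}}_\vG(mn)$ recovers the supermultiplicative bound of Theorem~\ref{csl-theo:cisosup}, and shows that equality $c^{\mathsf{iso}}_\vG(mn)=c^{\mathsf{iso}}_\vG(m)\,c^{\mathsf{iso}}_\vG(n)$ holds if and only if $\Phi$ is surjective, that is, if and only if every isometry of index $mn$ admits a coprime decomposition $R=R_1R_2$ with $\Sig^{}_\vG(R_1)=m$ and $\Sig^{}_\vG(R_2)=n$. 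This equivalence drives the entire argument.

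For $(\ref{mcrot1})\Rightarrow(\ref{mcrot3})$, multiplicativity yields equality for every coprime split, hence surjectivity of every such $\Phi$. Fixing an arbitrary ordering $\pi$ and an $R$ with $\Sig^{}_\vG(R)=N=\prod_i p_i^{a_i}$, I peel off prime powers in $\pi$-order: applying surjectivity to the split $N=p_1^{a_1}\cdot(N/p_1^{a_1})$ gives $R=R_1S_1$ with $\Sig^{}_\vG(R_1)=p_1^{a_1}$, and recursion on $S_1$ produces $R=R_1R_2\cdots R_k$ with $\Sig^{}_\vG(R_i)=p_i^{a_i}$, a $\pi$-decomposition (padding with point-group elements for the primes not dividing $N$). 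Since $\pi$ was arbitrary, $(\ref{mcrot3})$ follows. The step $(\ref{mcrot3})\Rightarrow(\ref{mcrot2})$ is then nearly free: existence for all orderings certainly gives existence for one, and uniqueness up to $P$ is obtained by iterating the fibre lemma, the telescoping identity showing that any two $\pi$-decompositions $R_1\cdots R_k=R_1'\cdots R_k'$ are related by elements $T_1,\dots,T_{k-1}\in P$ via $R_1'=R_1T_1$, $R_i'=T_{i-1}^{-1}R_iT_i$, and $R_k'=T_{k-1}^{-1}R_k$.

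Finally, for $(\ref{mcrot2})\Rightarrow(\ref{mcrot1})$, I fix the ordering $\pi$ supplied by $(\ref{mcrot2})$ and, for $N=\prod_{i=1}^k p_i^{a_i}$ with all $a_i>0$, consider the $k$-fold multiplication map $\Phi_k\colon\prod_i Y_{p_i^{a_i}}\to Y_N$. Existence of $\pi$-decompositions makes $\Phi_k$ surjective, while the telescoping uniqueness of the previous paragraph shows that every fibre has exactly $\lvert P\rvert^{\,k-1}$ elements. Comparing cardinalities gives $\bigl(\prod_i c^{\mathsf{iso}}_\vG(p_i^{a_i})\bigr)\lvert P\rvert^{\,k}=\lvert Y_N\rvert\,\lvert P\rvert^{\,k-1}=c^{\mathsf{iso}}_\vG(N)\,\lvert P\rvert^{\,k}$, so that $c^{\mathsf{iso}}_\vG(N)=\prod_i c^{\mathsf{iso}}_\vG(p_i^{a_i})$, which is full multiplicativity and in particular gives $(\ref{mcrot1})$. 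I expect the main obstacle to be the fibre lemma, and specifically sharpening it into the statement that equality in the supermultiplicative bound is \emph{equivalent} to coprime decomposability; once that equivalence is secured, the peeling-off iteration and the cardinality counts are routine.
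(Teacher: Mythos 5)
Your proof is correct, but note that there is nothing in the paper to compare it against: Theorem~\ref{csl-thm:mcrot} is stated with a \qed and attributed to \cite{csl-pzcsl7,csl-habil}, so the paper contains no proof of its own. Judged on its own terms, your argument is a valid, self-contained derivation that uses only facts the paper does state explicitly, namely $\Sig^{}_{\vG}(R_1R_2)\mid\Sig^{}_{\vG}(R_1)\ts\Sig^{}_{\vG}(R_2)$, equality under coprimality, and $\Sig^{}_{\vG}(R)=\Sig^{}_{\vG}(R^{-1})$ from Lemma~\ref{csl-lem:ind-inv}. Your fibre lemma is the right structural core: in one stroke it recovers the supermultiplicativity of Theorem~\ref{csl-theo:cisosup}, converts equality in that bound into coprime decomposability, and proves the ``uniqueness up to point group elements'' of $\pi$-decompositions that the paper asserts without argument just before the theorem (which is also the only sensible reading of the parenthetical ``unique'' in statement~(\ref{mcrot2}); literal uniqueness is false).

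Three points deserve to be made explicit, though none is a genuine gap. First, every cardinality comparison presupposes that the sets $Y_N$ are finite; this is implicit in the paper's treatment of $c^{\mathsf{iso}}_{\vG}$ as an arithmetic function, and follows because there are finitely many sublattices of index $N$, finitely many lattices $M\supseteq\vG(R)$ of bounded index over $\vG(R)$, and the isometries with $R\ts\vG=M$ form a single coset of $P=\OG(\vG)$; state this once. Second, in $(\ref{mcrot2})\Rightarrow(\ref{mcrot1})$ a $\pi$-decomposition furnished by~(\ref{mcrot2}) will in general have index-one factors interspersed among the genuine ones, and the primes dividing $N$ need not occupy the initial positions of the ordering; surjectivity of your $\Phi_k$ therefore needs the routine absorption step of merging each point-group factor into its right neighbour (and any trailing tail into the last genuine factor), which preserves all indices by Remark~\ref{csl-rem:sym-rel}. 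Third, in $(\ref{mcrot1})\Rightarrow(\ref{mcrot3})$ the padding for primes not dividing $N$ should be by the identity (inserting arbitrary point-group elements would change the product unless compensated by adjacent factors). With these clarifications, the cycle $(\ref{mcrot1})\Rightarrow(\ref{mcrot3})\Rightarrow(\ref{mcrot2})\Rightarrow(\ref{mcrot1})$ goes through exactly as you describe.
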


\subsection{Generalisations to $\ZZ\ts$-modules}
\label{csl-sec:csl-csm}

The considerations on CSLs can be generalised to embedded
$\ZZ\ts$-modules.\index{module!embedded} As most of the definitions
and results depend only on the algebraic properties, their
generalisation is straightforward. However, some of our previous
proofs involved the use of the dual lattice, which has no immediate
counterpart for $\ZZ\ts$-modules. In these cases, some care and new
approaches are needed.

We recall from Definition~\ref{csl-def:commM} that two embedded
$\ZZ\ts$-modules $M_1$ and $M_2$ are called commensurate, $M_1 \sim
M_2$, if their intersection $M_1 \cap M_2$ has finite index in both
$M_1$ and $M_2$. The notion of a coincidence site lattice can now
easily be transferred to the case of modules as follows.

\begin{definition}
  Let $M \subset \RR^d$ be a properly embedded $\ZZ\ts$-module of
  finite rank, and consider $R\in \OG(d,\RR)$. If $M \sim R M$, then
  $M(R):=M \cap R M$ is called a \emph{coincidence site
    module}\index{coincidence~site~module} (CSM). Then, $R$
  is called a \emph{coincidence
    isometry}\index{coincidence~isometry}. The corresponding index
  $\Sig^{}_{\! M}(R):=[M: M(R)]$ is called its \emph{coincidence
    index}\index{coincidence~index}.
\end{definition}

Again, we are interested in the sets
\begin{align}
  \OC(M)& \, :=\, \{R\in \OG(d,\RR)\mid M \sim R M \ts \}\\
\intertext{and}
  \SOC(M)&\, :=\, \{R\in \OC(M)\mid \det(R) = 1 \}.
\end{align}
As expected, these sets are indeed groups.

\begin{theorem}
  If\/ $M \subset \RR^d$ is a properly embedded\/ $\ZZ\ts$-module, the set
  of all coincidence isometries, $\OC(M)$, forms a subgroup of\/
  $\OG(d,\RR)$. Likewise, the group\/ $\SOC(M)$ is a subgroup of\/
  $\SO(d,\RR)$. \qed
\end{theorem}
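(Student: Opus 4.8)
The plan is to show that $\OC(M)$ is closed under composition and inversion, and contains the identity; the subgroup property for $\SOC(M)$ then follows immediately, since $\SOC(M) = \OC(M) \cap \SO(d,\RR)$ is the intersection of two subgroups of $\OG(d,\RR)$. The identity belongs to $\OC(M)$ because $M \sim M$ trivially (indeed $M \cap \one M = M$ has index $1$ in itself). So the heart of the matter is closure under products and inverses, and the essential tool is that commensurateness of modules is an equivalence relation, together with the characterisations from Lemma~\ref{csl-lem:comm}.

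First I would treat inversion. Suppose $R \in \OC(M)$, so that $M \sim RM$. Applying the isometry $R^{-1}$ to both modules preserves their intersection index, since an invertible linear map sends a finite-index submodule to a finite-index submodule with the same index; hence $R^{-1} M \sim R^{-1}(RM) = M$, which is exactly the statement $R^{-1} \in \OC(M)$. Concretely, using Lemma~\ref{csl-lem:comm}\eqref{csl-lem:comm-i4}, there are positive integers $m_1, m_2$ with $m_1 M \subseteq RM$ and $m_2 RM \subseteq M$; applying $R^{-1}$ gives $m_1 R^{-1} M \subseteq M$ and $m_2 M \subseteq R^{-1} M$, which again by Lemma~\ref{csl-lem:comm} yields $M \sim R^{-1} M$.

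For closure under composition, let $R, S \in \OC(M)$, so $M \sim RM$ and $M \sim SM$. I would first note that $RM \sim RSM$: applying $R$ to the relation $M \sim SM$ preserves the finite-index property, exactly as in the inversion argument. Thus I have the chain $M \sim RM \sim RSM$, and since commensurateness of modules is an equivalence relation (in particular transitive), I conclude $M \sim RSM$, i.e. $RS \in \OC(M)$. The transitivity of $\sim$ for modules is the module analogue of the remark following Lemma~\ref{csl-lem:comm-lat} for lattices, and can be obtained by applying Lemma~\ref{csl-lem:comm}\eqref{csl-lem:comm-i4} twice and multiplying the resulting integers.

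I expect no serious obstacle here, as the argument is purely formal once one has the equivalence-relation property of commensurateness in hand. The only point requiring a little care is the invariance of the index under an applied isometry (more generally, under any invertible linear map), which is what lets me translate the relation $M \sim SM$ into $RM \sim RSM$; this rests on the fact that a linear isomorphism $\RR^d \to \RR^d$ maps a rank-$k$ submodule of index $n$ to a rank-$k$ submodule of the same index $n$. With that observation, both closure properties reduce to transitivity of $\sim$, and the proof is complete.
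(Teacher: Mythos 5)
Your proof is correct, and it is essentially the intended argument: the paper itself states this theorem without proof (marking it as a straightforward generalisation of the lattice case, Fact~\ref{csl-fact:OS-group} and the cited result for $\OC(\vG)$), and the standard route it relies on is exactly yours, namely that commensurateness is an equivalence relation together with the observation that an invertible linear map preserves intersections and indices of submodules, so that $M \sim SM$ transports to $RM \sim RSM$. Your handling of the one delicate point — index invariance under the applied isometry, justified via Lemma~\ref{csl-lem:comm}(\ref{csl-lem:comm-i4}) — is sound, and the reduction of $\SOC(M)$ to an intersection of subgroups is likewise correct.
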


Lemmas~\ref{csl-lem:comm-csl} and~\ref{csl-lem:similar-csl}
immediately generalise as follows.

\begin{lemma}[{\cite[Lemmas~3.1.2 and 3.1.3]{csl-habil}}]
   The\/ $\OC$-groups are equal for commensurate modules.
   Moreover, similar modules have conjugate\/ $\OC$-groups. In
   particular, one has\/ $\OC(\alpha R M) = R\, \OC(M)\, R^{-1}$ and\/
   $\Sig^{}_{\alpha RM}(S) = \Sig^{}_{\nts M}(R^{-1} S R)$. \qed
\end{lemma}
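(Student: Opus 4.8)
The plan is to reduce everything to two elementary stability properties of commensurateness: it is preserved both by any invertible linear map (in particular by any isometry) and by any non-zero scaling, and in each case the relevant index is preserved as well. The reason is that for an invertible linear $T$ one has $T(M_1\cap M_2)=TM_1\cap TM_2$ together with $[TM_1:T(M_1\cap M_2)]=[M_1:M_1\cap M_2]$, because $x\mapsto Tx$ is a $\ZZ\ts$-module isomorphism. Combined with the transitivity of $\sim$ (which I would read off from characterisation~(\ref{csl-lem:comm-i4}) of Lemma~\ref{csl-lem:comm}, multiplying the relevant integers), these observations turn the lemma into bookkeeping.

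First I would prove equality of the $\OC$-groups for commensurate modules. Assuming $M\sim N$ and $R\in\OC(M)$ (so $M\sim RM$), I would apply the linear map $R$ to $M\sim N$ to get $RM\sim RN$, and then chain $N\sim M\sim RM\sim RN$ and invoke transitivity to conclude $N\sim RN$, i.e.\ $R\in\OC(N)$. This gives $\OC(M)\subseteq\OC(N)$, and equality follows by symmetry.

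Next, for the conjugacy statement I would write $M'=\alpha RM$, take $S\in\OC(M')$, and unfold $M'\sim SM'$ as $\alpha RM\sim S\alpha RM$. Since $\alpha$ is a scalar, $S\alpha RM=\alpha SRM$, so $\alpha RM\sim\alpha SRM$; scaling by $\alpha^{-1}$ and then applying $R^{-1}$ (both of which preserve $\sim$) yields $M\sim R^{-1}SR\,M$, i.e.\ $R^{-1}SR\in\OC(M)$. Hence $\OC(M')\subseteq R\,\OC(M)\,R^{-1}$, and the reverse inclusion is the same computation run backwards (or the first part applied to $M=\alpha^{-1}R^{-1}M'$). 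The index identity would then fall out of the identical chain, now tracking indices rather than mere commensurateness:
\begin{align*}
  \Sig^{}_{\alpha RM}(S)
    &= [\alpha RM:\alpha RM\cap\alpha SRM]
     = [\alpha RM:\alpha(RM\cap SRM)]\\
    &= [RM:RM\cap SRM]
     = [M:M\cap R^{-1}SR\,M]
     = \Sig^{}_{M}(R^{-1}SR).
\end{align*}

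I do not expect a genuine obstacle here; the one point I would be careful about is the temptation to write $\alpha M\sim M$, which is false for irrational $\alpha$. The scalar $\alpha$ must instead be carried through the argument and removed only via the index-preserving bijection $x\mapsto\alpha^{-1}x$, exploiting that $\alpha$ commutes with the isometry $S$. Structurally this mirrors the lattice proofs of Lemmas~\ref{csl-lem:comm-csl} and~\ref{csl-lem:similar-csl}; the only real difference is that, with no dual module at hand, the dual-lattice shortcut used there is unavailable$\,$---$\,$but none of the steps above need it.
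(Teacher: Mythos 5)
Your proof is correct, and it is the natural argument here: the paper itself states this lemma without proof, deferring to \cite[Lemmas~3.1.2 and 3.1.3]{csl-habil}, and the cited proofs proceed by exactly this kind of bookkeeping$\,$---$\,$invertible linear maps and non-zero scalings preserve commensurateness and indices, plus transitivity of $\sim$ via Lemma~\ref{csl-lem:comm}(\ref{csl-lem:comm-i4}). Your closing caution (that $\alpha M\sim M$ fails for general $\alpha$, so the scalar must be carried along and removed by the index-preserving bijection) and the observation that no dual module is needed for this particular statement are both exactly right.
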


Obviously, there is no analogue of Lemma~\ref{csl-lem:csl-dual}. Thus,
it is not evident whether an analogue of Lemma~\ref{csl-lem:ind-inv}
exists. Fortunately, it does, but its proof requires some results on
irreducible polynomials over the ring $\ZZ$; compare~\cite{csl-habil}.

\begin{theorem}[{\cite[Thm.~3.1.6]{csl-habil}}]\label{csl-theo:Sigma-Rinv}
  Let\/ $M \subseteq \RR^d$ be an embedded\/ $\ZZ\ts$-module of finite
  rank.  For any\/ $R\in \OC(M)$, we have\/ $\Sig^{}_{\nts M}(R) =
  \Sig^{}_{\nts M}(R^{-1})$.\qed
\end{theorem}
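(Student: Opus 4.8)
The plan is to reduce the equality of the two indices to a single determinant computation on the rational span, and then to pin that determinant down using the orthogonality of $R$ together with the arithmetic of integer polynomials. Write $W := \QQ\ts M$ for the $\QQ$-span of $M$ inside $\RR^d$, a $\QQ$-vector space of dimension $k=\operatorname{rank} M$. Since $R\in\OC(M)$ gives $M\sim RM$, Lemma~\ref{csl-lem:comm} yields $\QQ\ts M=\QQ\ts(RM)$, and as $R$ is $\RR$-linear this gives $R(W)=\QQ\ts(RM)=W$; thus $R$ restricts to a $\QQ$-linear automorphism $\rho:=R|_W$ of $W$ with $\rho(M)=RM$. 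The isomorphism $R\colon M\to RM$ carries $M\cap R^{-1}M$ onto $RM\cap M$, so applying the isomorphism theorems to the commensurate modules $M$ and $RM$ gives
\[
   \Sig^{}_{\! M}(R) = [M : M\cap RM] = [M+RM : RM], \qquad
   \Sig^{}_{\! M}(R^{-1}) = [M : M\cap R^{-1}M] = [RM : M\cap RM].
\]
Hence $\Sig^{}_{\! M}(R)/\Sig^{}_{\! M}(R^{-1})$ is the Steinitz index $[M:RM]$ of the commensurate full lattices $M,RM\subset W$, which equals $\lvert\det\rho\rvert$. It therefore remains to prove $\lvert\det\rho\rvert=1$.

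First I would record the arithmetic of $\rho$. Commensurateness provides $N\in\NN$ with $N\cdot RM\subseteq M$, so in a fixed $\ZZ$-basis of $M$ the map $N\rho$ is an integer matrix; applying the same to $R^{-1}\in\OC(M)$ produces $N'$ with $N'\rho^{-1}$ integral. Thus $\rho\in\GL_k(\QQ)$, its characteristic polynomial $\chi^{}_\rho$ lies in $\QQ[x]$, and every eigenvalue $\mu$ of $\rho$ satisfies that $N\mu$ and $N'\mu^{-1}$ are algebraic integers. Next I link $\rho$ to the orthogonal matrix $R$: because $W$ spans $\RR^d$ over $\RR$ and $\chi^{}_\rho(\rho)=0$ on $W$, the $\RR$-linear map $\chi^{}_\rho(R)$ vanishes on a spanning set and hence on all of $\RR^d$; so the minimal polynomial of $R$ over $\QQ$ divides $\chi^{}_\rho$. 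All its roots are eigenvalues of the orthogonal matrix $R$ and therefore lie on the unit circle, and any irreducible factor of $\chi^{}_\rho$ meeting the spectrum of $R$ is the minimal polynomial of that eigenvalue, hence has all of its roots on the unit circle.

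To organise the determinant I would use the real, positive semidefinite Gram matrix $G$ of the chosen $\ZZ$-basis with respect to the Euclidean form on $\RR^d$. Orthogonality of $R$ translates into $P^{\mathsf T}GP=G$, where $P$ is the rational matrix of $\rho$; one checks that $P$ preserves the radical $K=\ker G$, acts on the quotient $W\otimes\RR/K\cong\RR^d$ as the orthogonal map $R$ (determinant $\pm1$), and hence $\lvert\det\rho\rvert=\lvert\det(\rho|_K)\rvert$. When $k=d$ the form $G$ is nondegenerate, $K=0$, and the claim follows at once; this recovers the lattice case of Lemma~\ref{csl-lem:ind-inv} without any appeal to duality. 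The genuine work is the case $k>d$, where $G$ is degenerate and the orthogonal structure says nothing on $K$.

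The main obstacle is exactly this contribution of $K$, that is, the eigenvalues of $\rho$ not seen by $R$. These are the roots of the irreducible factors of $\chi^{}_\rho$ coprime to the minimal polynomial of $R$, and for them the real geometry gives no unit-circle constraint; only the arithmetic is available, namely that each such eigenvalue is an $S$-unit with $S$ dividing $NN'$. Here I would invoke the results on polynomials over $\ZZ$ from \cite{csl-habil} to show that these factors are self-reciprocal, so that their roots occur in pairs $\mu,\mu^{-1}$ and each factor has constant term of absolute value $1$. Combined with the unit-circle factors coming from $R$, the constant term of $\chi^{}_\rho$ then has absolute value $1$, giving $\lvert\det\rho\rvert=1$ and hence $\Sig^{}_{\! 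M}(R)=\Sig^{}_{\! M}(R^{-1})$. Establishing the self-reciprocity of these $K$-factors from integrality alone is the delicate point: the finite places are controlled by $N$ and $N'$, but the archimedean behaviour on $K$ is not, and this is precisely what separates the module case from the (volume-preserving) lattice case.
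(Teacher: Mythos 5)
Your reduction is correct and well executed: $\Sig^{}_{M}(R)/\Sig^{}_{M}(R^{-1})=\lvert\det\rho\rvert$ for $\rho=R|^{}_{W}$ on $W=\QQ M$, so everything hinges on $\lvert\det\rho\rvert=1$, and the lattice case ($k=d$) indeed falls out of the Gram identity $P^{\mathsf T}GP=G$. The genuine gap is the step you yourself flag at the end: the irreducible factors of $\chi^{}_{\rho}$ that (you assume) avoid the spectrum of $R$. For these you plan to extract self-reciprocity from the integrality data alone ($N\rho$ and $N'\rho^{-1}$ integral), and this cannot succeed: multiplication by $\sqrt{2}$ on $\ZZ[\sqrt{2}\,]\subset\RR$ satisfies exactly those constraints (with $N=1$, $N'=2$, all eigenvalues algebraic integers), yet has determinant of modulus $2$. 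Integrality controls only the finite places, as you concede, so the proof as written does not close, and it cannot be closed along that route. (For comparison: the chapter states the theorem without proof, citing \cite{csl-habil} and remarking that the proof needs results on irreducible polynomials over $\ZZ$; that ingredient is precisely the reciprocity lemma, but it must be made applicable to \emph{every} factor.)

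The missing idea is that the factors you are worried about do not exist: every irreducible factor of $\chi^{}_{\rho}$ has a root among the eigenvalues of $R$, hence on the unit circle. Your spanning argument gives more than $\chi^{}_{\rho}(R)=0$: for $h\in\QQ[x]$, $h(\rho)=0$ implies $h(R)=0$ (because $W$ spans $\RR^{d}$ over $\RR$), and $h(R)=0$ implies $h(\rho)=0$ by restriction, so the minimal polynomial $m^{}_{\rho}$ of $\rho$ over $\QQ$ \emph{equals} the minimal rational polynomial of $R$. Now write $m^{}_{\rho}=f\ts h$ with $f$ irreducible; if no eigenvalue $\lambda_{j}$ of $R$ were a root of $f$, then $\det f(R)=\prod_{j}f(\lambda_{j})\neq 0$, so $f(R)$ would be invertible and $0=f(R)\ts h(R)$ would force $h(R)=0$, contradicting minimality. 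Since $\chi^{}_{\rho}$ and $m^{}_{\rho}$ have the same irreducible factors, every irreducible factor $f$ of $\chi^{}_{\rho}$ has a root $\lambda$ with $\lvert\lambda\rvert=1$; then either $f(x)=x\mp 1$, or $\bar{\lambda}=\lambda^{-1}$ is a common root of $f$ and its monic reciprocal, whence $f$ is self-reciprocal. In all cases $f(0)=\pm 1$, so $\lvert\det\rho\rvert=\lvert\chi^{}_{\rho}(0)\rvert=1$. This is where orthogonality and the embedding genuinely enter: a nonzero $\rho$-invariant rational subspace can never lie inside the kernel $K$, because $W$ itself sits inside $\RR^{d}$ --- a fact your passage to $W\otimes\RR$ and the Gram radical obscures. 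One further correction: your claim that a factor meeting the spectrum of $R$ has \emph{all} its roots on the unit circle is false in general (Salem polynomials are counterexamples), so even that part of your argument should be run through self-reciprocity, which is all the determinant computation needs.
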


Again, it is interesting to compare the coincidence indices of modules
with those of their submodules.

\begin{theorem}[{\cite[Thm.~3.1.9]{csl-habil}}]\label{csl-theo:Sig-submod}
  Let\/ $N$ be a submodule of\/ $M$ of index\/ $m$.  Then,
  $\Sig^{}_{\nts M}(R)$ divides\/ $m\Sig^{}_{\nts N}(R)$ and\/ 
  $\Sig^{}_{\nts N}(R)$ divides\/ $m\Sig^{}_{\nts M}(R)$.\qed
\end{theorem}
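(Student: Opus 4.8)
The plan is to establish the two divisibilities separately, each by threading a chain of full-rank submodules and exploiting multiplicativity of the index, $[A:C]=[A:B]\,[B:C]$ for $C\subseteq B\subseteq A$. Before any computation I would record two preliminary facts that make everything well defined. First, since $R\in\OC(M)$ and $N$ is a submodule of $M$, all the modules appearing below---namely $N$, $M$, $RN$, $RM$ and their pairwise sums and intersections---are commensurate with $M$ and hence of rank $k$, so by Lemma~\ref{csl-lem:comm} every index I write is a finite positive integer. Second, because $R$ is a linear isometry and thus a group isomorphism, it preserves indices, giving $[RM:RN]=[M:N]=m$.

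For the first claim, $\Sig^{}_{\nts M}(R)\mid m\,\Sig^{}_{\nts N}(R)$, I would argue with intersections. From $N\subseteq M$ and $RN\subseteq RM$ one gets $N(R)=N\cap RN\subseteq M\cap RM=M(R)\subseteq M$, so there is a chain $N(R)\subseteq M(R)\subseteq M$ and also $N(R)\subseteq N\subseteq M$. Multiplicativity of the index along these two chains yields
\[
  m\,\Sig^{}_{\nts N}(R)\,=\,[M:N]\,[N:N(R)]\,=\,[M:N(R)]\,=\,\Sig^{}_{\nts M}(R)\,[M(R):N(R)],
\]
and since $[M(R):N(R)]$ is an integer, $\Sig^{}_{\nts M}(R)$ divides $m\,\Sig^{}_{\nts N}(R)$.

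For the second claim, $\Sig^{}_{\nts N}(R)\mid m\,\Sig^{}_{\nts M}(R)$, the inclusions point the wrong way for a direct intersection argument, and---unlike the lattice case, where Lemma~\ref{csl-lem:csl-dual} let one flip the inclusion by passing to duals---no dual module is available here. This is the main obstacle, and I would sidestep it by working with sums and the second isomorphism theorem $(A+B)/B\cong A/(A\cap B)$. Taking $A=M,\,B=RM$ gives $[M+RM:RM]=\Sig^{}_{\nts M}(R)$, and $A=N,\,B=RN$ gives $[N+RN:RN]=\Sig^{}_{\nts N}(R)$. I then evaluate the single index $[M+RM:RN]$ along the two chains $RN\subseteq RM\subseteq M+RM$ and $RN\subseteq N+RN\subseteq M+RM$: the first gives $[RM:RN]\,[M+RM:RM]=m\,\Sig^{}_{\nts M}(R)$, the second gives $[N+RN:RN]\,[M+RM:N+RN]=\Sig^{}_{\nts N}(R)\,[M+RM:N+RN]$. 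Hence
\[
  m\,\Sig^{}_{\nts M}(R)\,=\,[M+RM:RN]\,=\,\Sig^{}_{\nts N}(R)\,[M+RM:N+RN],
\]
and as $[M+RM:N+RN]$ is an integer, $\Sig^{}_{\nts N}(R)$ divides $m\,\Sig^{}_{\nts M}(R)$, which completes the proof.

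I expect the only genuinely delicate point to be the very first one: checking that all the sums and intersections stay full rank, so that the second isomorphism theorem is being applied to honest finite-index subgroups and the divisors $[M(R):N(R)]$ and $[M+RM:N+RN]$ are bona fide positive integers. This is exactly what commensurability supplies, and once it is in place the remainder is routine bookkeeping with index multiplicativity. It is worth noting that this approach also reproves the lattice statement of Lemma~\ref{csl-lem:Sig-sublat} without invoking duality at all, which is precisely why it transfers to the module setting.
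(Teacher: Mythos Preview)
Your proof is correct. The paper itself does not give a proof of this statement; it marks the theorem with \qed and only comments afterwards that the easy direction (your first claim, $\Sig^{}_{\nts M}(R)\mid m\,\Sig^{}_{\nts N}(R)$) generalises immediately from the lattice case, while the other direction ``requires a different approach, as we generally lack the notion of a dual module'', referring to \cite{csl-habil} and \cite{csl-LZ9} for an algebraic argument.

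Your first claim is verbatim the chain argument the paper uses for lattices in Lemma~\ref{csl-lem:Sig-sublat}. For the second claim, your use of sums $N+RN\subseteq M+RM$ together with the second isomorphism theorem is precisely the kind of purely group-theoretic substitute for duality that the paper calls for: it replaces the dual-lattice identity $[\vG^{*}:(\vG+R\vG)^{*}]=[\vG+R\vG:\vG]$ by the isomorphism-theorem identity $[M+RM:RM]=\Sig^{}_{\nts M}(R)$, which makes sense for any commensurate embedded $\ZZ$-modules. Your preliminary remark that commensurability keeps all sums and intersections at full rank is exactly the point that needs checking, and you handle it correctly. So your argument is complete and matches the spirit of what the paper outlines; whether it is literally the proof in \cite{csl-habil} cannot be decided from the paper, but there is nothing to improve here.
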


Whereas the second statement of Lemma~\ref{csl-lem:Sig-sublat} can be
generalised immediately, the first claim of
Theorem~\ref{csl-theo:Sig-submod} requires a different approach, as we
generally lack the notion of a dual module.  The proof is algebraic in
nature and can be found in~\cite{csl-habil}; compare also
\cite{csl-LZ9}, where a similar approach for lattices is described.

\subsection{Similar versus coincidence submodules}
\label{csl-sec:sslcsl}

After we have dealt with similar sublattices and coincidence site
lattices and their generalisations, let us return to the connections
between them.  It is clear that there are substantial connections, as
became obvious from the groups we defined along the way. In line with
Section~\ref{csl-sec:square}, let us illustrate this in more detail
with the square lattice, the latter once again identified with
$\ZZ[\ii]$, the ring of Gaussian integers.\index{Gaussian~integer}

\begin{example}
  We know from Theorem~\ref{csl-theo:nongenmul} and
  Example~\ref{csl-ex:ssl-sq} that $\SOS(\ZZ[\ii])$ is given by
  \[
   \SOS (\ZZ[\ii]) \, 
    = \, \Bigl\{ \myfrac{z}{\lvert z \rvert} \;\big|\; 
   0\ne z\in \ZZ[\ii] \Bigr\} 
   \, \simeq\,  C_8 \times \ZZ^{(\aleph_0)}.
  \]
  In comparison, we have
  \[
   \SOC (\ZZ[\ii]) \, 
    = \, \Bigl\{ \myfrac{z}{\bar{z}} \;\big|\; 
   0\ne z\in \ZZ[\ii] \Bigr\} 
   \, = \, \Bigl\{ \myfrac{z^2}{\lvert z \rvert^2} \;\big|\; 
   0\ne z\in \ZZ[\ii] \Bigr\} 
   \, \simeq\,  C_4 \times \ZZ^{(\aleph_0)} \ts ,
  \]
  where $C_4$ is the groups of units of $\ZZ[\ii]$, while a full set
  of generators of $\ZZ^{(\aleph_0)}$ is provided by
  $\{\frac{\ts\ts \pi_{\nts p}\ts\ts}{\overline{\pi_{\nts p}}} =
  \frac{\pi_{\nts p}^2}{\lvert\pi_{\nts p}\rvert^2}\mid p\equiv 1
  \bmod{4} \}$,
  where, for each $p$ of this kind, $\pi_{\nts p}$ is one of the
  Gaussian primes\index{prime!Gaussian} with
  $\pi_{\nts p} \overline{\pi_{\nts p}} = p$.  Comparing these with
  the set of generators for $\SOS(\ZZ[\ii])$ in
  Example~\ref{csl-ex:ssl-sq}, one sees that all generators of
  $\SOC(\ZZ[\ii])$ are squares of generators of $\SOS(\ZZ[\ii])$, and
  we infer that
  \[
   \SOS (\ZZ[\ii])/ \SOC (\ZZ[\ii]) \,\simeq\, C_2^{(\aleph_0)}\ts ,
  \]
  which means that the factor group $\SOS (\ZZ[\ii])/ \SOC (\ZZ[\ii])$
  is an infinite Abelian $2$-group; compare~\cite{csl-svenja1}.  \exend
\end{example}

Let us now see how this observation can be put on a more general
basis. We will formulate the main results immediately for modules;
compare~\cite{csl-pzsslcsl1,csl-habil}.  For the special cases of
lattices, we refer to~\cite{csl-svenja1}.  The corresponding results
for a special class of modules, namely the
$\cS$-lattices\index{S-lattice@$\cS$-lattice} from
Definition~\ref{csl-def:S-lat}, can be found in~\cite{csl-svenja2}.

\begin{lemma}[{\cite[Lemma~3.2.1]{csl-habil}}]\label{csl-lem:OC-scal}
  Let\/ $M\subseteq \RR^d$ be a finitely generated free\/
  $\ZZ\ts$-module. Then,
\begin{enumerate}\itemsep=2pt
\item $R\in \OC(M)$ if and only if\/ $1 \in \scal_{\nts M}(R)$.
\item $R\in \OG(M)$ if and only if\/ $1 \in \Scal_{\nts M}(R)$.\qed
\end{enumerate}
\end{lemma}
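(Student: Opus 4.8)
The plan is to read both equivalences directly off the definitions and to isolate the single nontrivial implication, which is the reverse direction of part~(2).

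For part~(1), I would simply unwind the definition of $\scal^{}_M(R)$. By construction $\scal^{}_M(R)=\{\alpha\in\RR\mid \alpha R M\sim M\}$, so the condition $1\in\scal^{}_M(R)$ says exactly that $1\cdot RM=RM$ is commensurate with $M$. By the definition of $\OC(M)$ this is precisely $R\in\OC(M)$, and there is nothing more to prove. The forward implication of part~(2) is equally immediate: if $R\in\OG(M)$, then $RM=M$, hence $RM\subseteq M$, which by the definition of $\Scal^{}_M(R)$ gives $1\in\Scal^{}_M(R)$.

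The content therefore lies in the reverse implication of part~(2): I must show that $1\in\Scal^{}_M(R)$, i.e.\ $RM\subseteq M$, already forces $RM=M$. Since $R$ is injective, its restriction $\phi:=R|_M$ is an injective endomorphism of the free $\ZZ\ts$-module $M$; choosing a $\ZZ\ts$-basis, $\phi$ is represented by an integer matrix $A$ with $[M:RM]=\lvert\det A\rvert\in\NN$, and $RM=M$ is equivalent to $\lvert\det A\rvert=1$. In the lattice case $(\operatorname{rank}M=d)$ this is instant: $A$ is then just the matrix of $R$ in a basis, so $\lvert\det A\rvert=\lvert\det R\rvert=1$ because $R\in\OG(d,\RR)$. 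Equivalently, one may argue by volumes, observing that $R$ preserves covolumes while the chain $M\supseteq RM\supseteq R^{2}M\supseteq\cdots$ yields $[M:R^{n}M]=\lvert\det A\rvert^{\ts n}$, forcing $\lvert\det A\rvert=1$.

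The hard part is the general module case, where $M$ need not be discrete in $\RR^{d}$ (its rank $k$ may exceed $d$), so the covolume argument is unavailable and $A$ carries eigenvalues beyond those of $R$. Here I would pass to characteristic polynomials. Setting $W=\ker\bigl(M\otimes\RR\to\RR^{d}\bigr)$, the subspace $W$ is $A$-invariant and $A$ induces $R$ on the quotient $(M\otimes\RR)/W\cong\RR^{d}$, so $\chi^{}_A=\chi^{}_{A|_W}\cdot\chi^{}_R$, and all roots coming from $\chi^{}_R$ lie on the unit circle since $R\in\OG(d,\RR)$. As the roots of $\chi^{}_A\in\ZZ[x]$ are algebraic integers, I would then invoke a Kronecker-type theorem$\,$---$\,$an algebraic integer all of whose conjugates lie on the unit circle is a root of unity$\,$---$\,$to push the remaining roots onto the unit circle as well, giving $\lvert\det A\rvert=\prod\lvert\text{roots}\rvert=1$. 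Making this last step fully rigorous is exactly where the results on irreducible polynomials over $\ZZ$ cited for Theorem~\ref{csl-theo:Sigma-Rinv} enter, and I expect it to be the main obstacle; the reduction to $\lvert\det A\rvert=1$ and the two formal implications above are routine.
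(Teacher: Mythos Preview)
Your handling of part~(1) and of the forward direction of part~(2) is correct and is exactly how one unwinds the definitions. The lattice case of the reverse direction of~(2) is also fine.

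The gap is in the general module case of the reverse direction of~(2). Your Kronecker-type step does not go through as sketched: Kronecker's theorem requires \emph{all} Galois conjugates of an algebraic integer to lie on the closed unit disc before you may conclude it is a root of unity. You know the roots of $\chi^{}_R$ lie on the unit circle, but the roots of $\chi^{}_A\in\ZZ[x]$ come in complete Galois orbits, and such an orbit may well contain both roots of $\chi^{}_R$ and roots of $\chi^{}_{A|_W}$; you have no a~priori bound on the latter. Salem polynomials show that the implication ``one conjugate on the unit circle $\Rightarrow$ all conjugates on the unit circle'' is simply false for irreducible integer polynomials, so ``push the remaining roots onto the unit circle'' is not justified. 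You correctly flag this as the crux.

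That said, you are circling the right fact but missing a shortcut. The paper states the lemma without proof (citing \cite{csl-habil}), but it places it \emph{after} Theorem~\ref{csl-theo:Sigma-Rinv}, and the reverse direction of~(2) follows from that theorem in two lines. If $RM\subseteq M$, then $M\subseteq R^{-1}M$, so $M(R^{-1})=M\cap R^{-1}M=M$ and $\Sig^{}_{\nts M}(R^{-1})=1$; by Theorem~\ref{csl-theo:Sigma-Rinv} one gets $\Sig^{}_{\nts M}(R)=1$, i.e.\ $[M:RM]=[M:M(R)]=1$, hence $RM=M$ and $R\in\OG(M)$. So rather than re-entering the irreducible-polynomial machinery that underlies Theorem~\ref{csl-theo:Sigma-Rinv}, you can invoke that theorem as a black box and be done.
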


Here, $\OG(M)$ is the point symmetry group of $M$.  An immediate
consequence for lattices is the following result.

\begin{corollary}\label{csl-lem:OC-den}
If\/ $\vG \subset \RR^d$ is a lattice, one has\/ $R\in \OC(\vG)$ if
and only if\/ $R\in \OS(\vG)$ together with\/ $\den^{}_\vG(R)\in
\NN$.\qed
\end{corollary}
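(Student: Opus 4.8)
The plan is to translate both sides of the claimed equivalence into statements about the scaling set $\scal^{}_\vG(R)$, and then read off the result from the structure theorem for that set. The starting point is that, for a lattice, $R\in\OC(\vG)$ means precisely $\vG\sim R\vG$, which by the very definition of $\scal^{}_\vG(R)$ is the same as $1\in\scal^{}_\vG(R)$; this is just the lattice case of Lemma~\ref{csl-lem:OC-scal}(1). Hence the whole corollary reduces to showing that $1\in\scal^{}_\vG(R)$ holds if and only if $R\in\OS(\vG)$ together with $\den^{}_\vG(R)\in\NN$, and I would organise the proof around this reformulation.

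For the forward direction I would assume $R\in\OC(\vG)$, that is, $1\in\scal^{}_\vG(R)$. Then $\scal^{}_\vG(R)\neq\varnothing$, so $R\in\OS(\vG)$ by Fact~\ref{csl-lem:scal-nontriv}. Lemma~\ref{csl-lem:scal-den} now applies and gives $\scal^{}_\vG(R)=\den^{}_\vG(R)\,\QQ^{\times}$; the condition $1\in\den^{}_\vG(R)\,\QQ^{\times}$ then forces $\den^{}_\vG(R)\in\QQ_{+}$. To upgrade this rationality to integrality I would invoke the fact, noted right after the definition of the denominator, that $\bigl(\den^{}_\vG(R)\bigr)^{d}\in\NN$: a positive rational $a/b$ in lowest terms with $(a/b)^{d}\in\ZZ$ must have $b=1$, since $\gcd(a^{d},b^{d})=1$. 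This yields $\den^{}_\vG(R)\in\NN$.

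The converse should be immediate. Assuming $R\in\OS(\vG)$ with $\den^{}_\vG(R)\in\NN$, I would note that $\tfrac{1}{\den^{}_\vG(R)}\in\QQ^{\times}$, so that $1=\den^{}_\vG(R)\cdot\tfrac{1}{\den^{}_\vG(R)}$ lies in $\den^{}_\vG(R)\,\QQ^{\times}=\scal^{}_\vG(R)$ by Lemma~\ref{csl-lem:scal-den}. Thus $1\in\scal^{}_\vG(R)$, i.e.\ $\vG\sim R\vG$, which is exactly $R\in\OC(\vG)$.

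The only genuinely non-formal step is the promotion of $\den^{}_\vG(R)$ from a positive rational to a positive integer, and I expect this to be the main (though still elementary) obstacle; everything else is unwinding definitions and quoting Fact~\ref{csl-lem:scal-nontriv} and Lemma~\ref{csl-lem:scal-den}. In particular, the rationality of the denominator drops out automatically from $1\in\scal^{}_\vG(R)=\den^{}_\vG(R)\,\QQ^{\times}$, and the integer-$d$-th-power property supplied by the definition then closes the gap.
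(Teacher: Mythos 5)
Your proposal is correct and is essentially the paper's own (implicit) argument: the paper states this corollary as an immediate consequence of Lemma~\ref{csl-lem:OC-scal}, and your proof supplies exactly the intended details, translating $R\in\OC(\vG)$ into $1\in\scal^{}_\vG(R)$ and then invoking Fact~\ref{csl-lem:scal-nontriv} and Lemma~\ref{csl-lem:scal-den}. The promotion of $\den^{}_\vG(R)$ from a positive rational to a positive integer via $\bigl(\den^{}_\vG(R)\bigr)^{d}\in\NN$ is indeed the one non-formal step, and you handle it correctly.
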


It is often helpful to know some connections between the coincidence
indices and the corresponding denominators;
compare~\cite{csl-pzsslcsl1}.

\begin{lemma}\label{csl-theo:denG-Sig1}
  Let\/ $\vG$ be a lattice in\/ $\RR^d$. For any\/ $R\in \OC(\vG)$,
  one has
\begin{enumerate}\itemsep=2pt
\item \label{csl-enu:denG-Sig1}
     $\lcm\left(\den^{}_\vG(R),\den^{}_\vG(R^{-1})\right)$ 
       divides\/ $\Sig^{}_\vG(R)$;
\item\label{csl-enu:denG-Sig2}
     $\Sig^{}_\vG(R)$ divides\/ 
     $\gcd\left(\den^{}_\vG(R),\den^{}_\vG(R^{-1})\right)^d$.
  \item \label{csl-enu:denG-Sig3}
    $\Sig^{}_\vG(R)^2$ divides\/
    $\lcm\left(\den^{}_\vG(R),\den^{}_\vG(R^{-1})\right)^d$.
\end{enumerate}
\end{lemma}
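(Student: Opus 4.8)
The plan is to prove the three divisibilities by combining the elementary index bookkeeping already developed for CSLs with the elementary divisor theorem for the commensurate pair $\vG$ and $R\vG$. Throughout I write $n=\den^{}_\vG(R)$, $n'=\den^{}_\vG(R^{-1})$ and $\Sigma=\Sig^{}_\vG(R)$; by Corollary~\ref{csl-lem:OC-den} both $n$ and $n'$ are positive integers, and by Lemma~\ref{csl-lem:scal-den} one has $\Scal^{}_\vG(R)=n\ZZ$ and $\Scal^{}_\vG(R^{-1})=n'\ZZ$. For assertion~(\ref{csl-enu:denG-Sig1}) I would first use that $\vG/\vG(R)$ is finite of order $\Sigma$, so $\Sigma$ annihilates it, giving $\Sigma\vG\subseteq\vG(R)\subseteq R\vG$; applying $R^{-1}$ yields $\Sigma R^{-1}\vG\subseteq\vG$, hence $\Sigma\in\Scal^{}_\vG(R^{-1})=n'\ZZ$, that is $n'\mid\Sigma$. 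Running the same argument for $R^{-1}$ (and using $\Sig^{}_\vG(R^{-1})=\Sigma$ from Lemma~\ref{csl-lem:ind-inv}) gives $n\mid\Sigma$, so $\lcm(n,n')\mid\Sigma$. For assertion~(\ref{csl-enu:denG-Sig2}), the relation $n'\vG\subseteq R\vG$ (equivalent to $n'R^{-1}\vG\subseteq\vG$) together with $n'\vG\subseteq\vG$ gives $n'\vG\subseteq\vG(R)$, so $\Sigma=[\vG:\vG(R)]$ divides $[\vG:n'\vG]=(n')^d$; symmetrically $n\vG\subseteq\vG(R^{-1})$ gives $\Sigma\mid n^d$, whence $\Sigma\mid\gcd(n^d,(n')^d)=\gcd(n,n')^d$.

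The genuinely new input is needed for assertion~(\ref{csl-enu:denG-Sig3}). Here I would invoke the elementary divisor theorem for the two commensurate full lattices $\vG$ and $R\vG$, which span the same $\QQ$-vector space: there is a basis $v_1,\dots,v_d$ of $\vG$ and positive rationals $r_i=a_i/b_i$, written in lowest terms so that $\gcd(a_i,b_i)=1$, such that $r_1v_1,\dots,r_dv_d$ is a basis of $R\vG$. A coordinatewise computation then gives $\vG(R)=\vG\cap R\vG=\bigoplus_i\ZZ\ts a_iv_i$, so that $\Sigma=\prod_i a_i$; since isometries preserve covolume one also has $[R\vG:\vG(R)]=\prod_i b_i=\Sigma$, i.e. $\prod_i a_i=\prod_i b_i=\Sigma$. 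Moreover, $nR\vG\subseteq\vG$ holds for a positive integer $n$ exactly when $b_i\mid n$ for all $i$, so the smallest such integer is $\lcm_i(b_i)$, and symmetrically the smallest positive integer $n'$ with $n'\vG\subseteq R\vG$ is $\lcm_i(a_i)$. Because $R\in\OC(\vG)$ forces both denominators to be integers, these formulas indeed compute $n=\den^{}_\vG(R)=\lcm_i(b_i)$ and $n'=\den^{}_\vG(R^{-1})=\lcm_i(a_i)$.

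With this description, assertion~(\ref{csl-enu:denG-Sig3}) reduces to a prime-by-prime estimate. Fixing a rational prime $\ell$ and writing $s=v_\ell(\Sigma)=\sum_i v_\ell(a_i)=\sum_i v_\ell(b_i)$, I must show $2s\le d\,v_\ell(\lcm(n,n'))=d\,\max\bigl(\max_i v_\ell(a_i),\max_i v_\ell(b_i)\bigr)$. The \emph{decisive} point is the coprimality $\gcd(a_i,b_i)=1$: it makes the index sets $\mathcal{A}=\{i:\ell\mid a_i\}$ and $\mathcal{B}=\{i:\ell\mid b_i\}$ disjoint, so $|\mathcal{A}|+|\mathcal{B}|\le d$. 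Writing $A=\max_i v_\ell(a_i)$ and $B=\max_i v_\ell(b_i)$, one has $s=\sum_{i\in\mathcal{A}}v_\ell(a_i)\le|\mathcal{A}|\,A$ and $s=\sum_{i\in\mathcal{B}}v_\ell(b_i)\le|\mathcal{B}|\,B$, and therefore $2s\le|\mathcal{A}|A+|\mathcal{B}|B\le(|\mathcal{A}|+|\mathcal{B}|)\max(A,B)\le d\,\max(A,B)$, as required. I expect this last step to be the main obstacle: the cruder per-prime bounds underlying~(\ref{csl-enu:denG-Sig1}) and~(\ref{csl-enu:denG-Sig2}) only give $2s\le d(A+B)$, which is too weak, so it is the simultaneous diagonalisation of the pair $(\vG,R\vG)$ together with the coprimality of its elementary divisors that supplies the missing factor of two and makes~(\ref{csl-enu:denG-Sig3}) go through.
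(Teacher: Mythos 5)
Your proposal is correct, and it splits naturally: for assertions~(1) and~(2) you argue essentially as the paper does, while for assertion~(3) you take a genuinely different route. For~(1), both you and the paper use $\Sig^{}_\vG(R)\ts\vG \subseteq \vG(R) \subseteq R\vG$ to get $\den^{}_\vG(R^{-1}) \mid \Sig^{}_\vG(R)$ and then invoke $\Sig^{}_\vG(R)=\Sig^{}_\vG(R^{-1})$ for the symmetric half; for~(2) the paper scales $R\vG$ (via $\den^{}_\vG(R)\ts R\vG \subseteq \vG(R)$) where you scale $\vG$ (via $\den^{}_\vG(R^{-1})\ts\vG \subseteq \vG(R)$), a trivial variation. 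For~(3), the paper's proof is three lines of index bookkeeping built on the \emph{sum} lattice: with $a=\lcm\left(\den^{}_\vG(R),\den^{}_\vG(R^{-1})\right)$, both $a\vG$ and $aR\vG$ lie in $\vG$ and in $R\vG$, so $a(\vG+R\vG)\subseteq \vG(R)$, and the covolume identity $[\vG(R):a(\vG+R\vG)]=a^d/\Sig^{}_\vG(R)^2$ (using $\Sig^{}_\vG(R)=[\vG+R\vG:\vG\ts]$) forces $\Sig^{}_\vG(R)^2 \mid a^d$ by integrality of that index. You instead diagonalise the pair $(\vG,R\vG)$ simultaneously by the elementary divisor theorem and run a prime-by-prime valuation count, the decisive input being that $\gcd(a_i,b_i)=1$ makes the index sets $\{i: \ell\mid a_i\}$ and $\{i: \ell\mid b_i\}$ disjoint. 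Your route is longer and needs the elementary divisor theorem for commensurate lattices, but it buys explicit formulas that the paper's argument never exhibits, namely $\Sig^{}_\vG(R)=\prod_i a_i=\prod_i b_i$, $\den^{}_\vG(R)=\lcm_i(b_i)$ and $\den^{}_\vG(R^{-1})=\lcm_i(a_i)$, from which all three divisibilities can be read off uniformly; moreover, in these coordinates the paper's sum lattice is simply $\vG+R\vG=\bigoplus_i \ZZ\ts\tfrac{1}{b_i}v_i$, so its covolume identity becomes transparent, and the two arguments are cleanly reconciled.
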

\begin{proof}
  For~\eqref{csl-enu:denG-Sig1}, recall that $\vG(R)$ has index
  $\Sig(R)$ in $\vG$, thus
  \[
    \Sig(R) \vG \,\subseteq\, \vG(R) \,\subseteq\, R \vG, 
  \]
  or, equivalently, $\Sig(R) R^{-1} \vG \subseteq \vG$. Consequently,
  $\Sig(R)$ is a multiple of $\den(R^{-1})$. By symmetry, $\den(R)$ is
  a divisor of $\Sig(R^{-1})=\Sig(R)$ as well, and
  claim~\eqref{csl-enu:denG-Sig1} follows.

  For~\eqref{csl-enu:denG-Sig2}, we exploit that $\den(R)$ is an
  integer for $R\in\OC(\vG)$. Consequently, $\den(R) R \vG$ is a
  sublattice of both $\vG$ and $R \vG$, wherefore one has $\den(R) R
  \vG \subseteq \vG(R)$.  Comparing the indices of $\den(R) R \vG$ and
  $\vG(R)$ in $\vG$ shows that $\Sig(R)$ divides $\den(R)^d$. Using
  $\Sig(R^{-1})=\Sig(R)$ as above yields~\eqref{csl-enu:denG-Sig2}.
  
  Finally, let $a:=\lcm\left(\den(R),\den(R^{-1})\right)$. Then, $a \vG$ and
  $a R \vG$ are both sublattices of $\vG$ and of $R \vG$, hence
  $a (\vG + R \vG)$ is a sublattice of $\vG \cap R \vG$ with index
  \[
    [R \cap R \vG: a (\vG + R \vG)]\, =\, \myfrac{a^d}{\Sig(R)^2}\ts ,
  \]
  as
  $\Sig(R)=[\vG: \vG(R)]=[\vG + R \vG:\vG\ts ]$. Hence $\Sig(R)^2$
  divides $a^d$.
\end{proof}

The situation becomes particularly simple for planar lattices, where
we get the following result by recalling
$\den^{}_\vG(R)=\den^{}_\vG(R^{-1})$.

\begin{corollary}[{\cite[Cor.~2.6]{csl-pzsslcsl1}}]\label{csl-cor:den-Sig2D}
  Let $\vG$ be a lattice in $\RR^2$. Then, for any $R\in\OC(\vG)$, one
  has $\Sig^{}_\vG(R)\, =\, \den^{}_\vG(R)\ts$.\qed
 \end{corollary}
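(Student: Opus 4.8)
The plan is to read off the equality directly from the divisibility chain already established in Lemma~\ref{csl-theo:denG-Sig1}, specialised to $d=2$, together with the planar identity $\den^{}_\vG(R)=\den^{}_\vG(R^{-1})$. First I would set $\den:=\den^{}_\vG(R)$ and recall that for a planar lattice one has $\den^{}_\vG(R^{-1})=\den^{}_\vG(R)=\den$, as proved in the Corollary above. Consequently both $\lcm\bigl(\den^{}_\vG(R),\den^{}_\vG(R^{-1})\bigr)$ and $\gcd\bigl(\den^{}_\vG(R),\den^{}_\vG(R^{-1})\bigr)$ collapse to $\den$, which is what makes the two-dimensional case sharp.

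Next I would invoke part~\ref{csl-enu:denG-Sig1} of Lemma~\ref{csl-theo:denG-Sig1}: it asserts that $\lcm\bigl(\den^{}_\vG(R),\den^{}_\vG(R^{-1})\bigr)$ divides $\Sig^{}_\vG(R)$, which under the planar collapse reads $\den \mid \Sig^{}_\vG(R)$. This gives one of the two required divisibilities.

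For the reverse direction I would use part~\ref{csl-enu:denG-Sig3} rather than part~\ref{csl-enu:denG-Sig2}. With $d=2$, part~\ref{csl-enu:denG-Sig3} states that $\Sig^{}_\vG(R)^2$ divides $\lcm\bigl(\den^{}_\vG(R),\den^{}_\vG(R^{-1})\bigr)^2=\den^2$; since $\Sig^{}_\vG(R)$ and $\den$ are positive integers, this yields $\Sig^{}_\vG(R)\mid\den$. Combining $\den\mid\Sig^{}_\vG(R)$ with $\Sig^{}_\vG(R)\mid\den$ forces $\Sig^{}_\vG(R)=\den=\den^{}_\vG(R)$, as claimed.

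There is no genuine obstacle here, since all the work has already been done in the preceding lemma and in the planar-denominator corollary; the proof is essentially immediate. The one point worth flagging is the choice of ingredient for the reverse bound: part~\ref{csl-enu:denG-Sig2} alone would give only $\Sig^{}_\vG(R)\mid\den^2$, which is too weak to conclude equality, so the sharp divisibility $\Sig^{}_\vG(R)\mid\den$ really has to come from the $\Sig^2\mid\lcm^{\ts d}$ estimate of part~\ref{csl-enu:denG-Sig3} specialised to $d=2$.
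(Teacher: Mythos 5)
Your proposal is correct and is essentially the paper's own argument: the text introduces the corollary by saying it follows from Lemma~\ref{csl-theo:denG-Sig1} "by recalling $\den^{}_\vG(R)=\den^{}_\vG(R^{-1})$", i.e.\ part~\eqref{csl-enu:denG-Sig1} gives $\den^{}_\vG(R)\mid\Sig^{}_\vG(R)$ and part~\eqref{csl-enu:denG-Sig3} with $d=2$ gives $\Sig^{}_\vG(R)^2\mid\den^{}_\vG(R)^2$, hence equality. Your additional remark that part~\eqref{csl-enu:denG-Sig2} alone would only yield $\Sig^{}_\vG(R)\mid\den^{}_\vG(R)^2$ correctly identifies why the $\Sig^2$-estimate is the essential ingredient.
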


Our main result follows from Theorem~\ref{csl-theo:scalgroupM}.

\begin{theorem}[{\cite[Thm.~3.2.2]{csl-habil}}]
Let\/ $M\subset\RR^{d}$ be an embedded\/ $\ZZ\ts$-module of finite
rank. Then, the kernel of the homomorphism
\[
\begin{split}
    \phi \! : \, \OS(M) & \,\longrightarrow\, 
    \RR_+/ (\scal^{}_{\nts M}(\one)\cap\RR_+)\ts , \\
    R & \,\longmapsto\, \scal^{}_{\nts M}(R) \cap \RR_+ \ts ,
\end{split}    
\]
   is the group\/ $\OC(M)$. Thus, $\OC(M)$ is a normal subgroup of\/
   $\OS(M)$, and\/ $\OS(M)/\OC(M)$ is Abelian.\qed
\end{theorem}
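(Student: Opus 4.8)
The plan is to reduce everything to the group-theoretic machinery already assembled in Theorem~\ref{csl-theo:scalgroupM}, together with the characterisation of $\OC(M)$ provided by Lemma~\ref{csl-lem:OC-scal}. The map $\phi$ is nothing but the composite of the natural homomorphism $R \mapsto \scal^{}_{M}(R)$ of Theorem~\ref{csl-theo:scalgroupM}(4) with the isomorphism of Theorem~\ref{csl-theo:scalgroupM}(3), which identifies $\{\scal^{}_{M}(R)\mid R\in\OS(M)\}$ with a subgroup of $\RR_+/(\scal^{}_{M}(\one)\cap\RR_+)$ via $\scal^{}_{M}(R)\mapsto \scal^{}_{M}(R)\cap\RR_+$. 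Hence the fact that $\phi$ is a group homomorphism comes essentially for free, once its values are correctly interpreted in the quotient.

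First I would record the two structural observations that make $\phi$ well defined. Since commensurateness is an equivalence relation, $\scal^{}_{M}(\one)$ is a subgroup of $(\RR^\times,\cdot)$, being the multiplicative group of the field $\scal^{}_{M}(\one)\cup\{0\}$ of Theorem~\ref{csl-theo:ScalM1}; and for each $R\in\OS(M)$ the set $\scal^{}_{M}(R)$ is a single coset $\alpha_0\,\scal^{}_{M}(\one)$. Indeed, fixing $\alpha_0\in\scal^{}_{M}(R)$, any $\gamma\in\scal^{}_{M}(R)$ satisfies $\gamma R M\sim M\sim\alpha_0 RM$, so $(\gamma/\alpha_0)(\alpha_0 RM)\sim\alpha_0 RM$, whence $\gamma/\alpha_0\in\scal^{}_{\alpha_0 RM}(\one)=\scal^{}_{M}(\one)$ (equal scaling sets for commensurate modules), and the converse inclusion is analogous. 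Because $-1\in\scal^{}_{M}(\one)$, intersecting with $\RR_+$ turns this into a coset $\lvert\alpha_0\rvert\,(\scal^{}_{M}(\one)\cap\RR_+)$ of the positive subgroup, so $\phi(R):=\scal^{}_{M}(R)\cap\RR_+$ is a genuine element of $\RR_+/(\scal^{}_{M}(\one)\cap\RR_+)$.

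The heart of the argument is the computation of the kernel. An element $R\in\OS(M)$ lies in $\ker\phi$ exactly when the coset $\scal^{}_{M}(R)\cap\RR_+$ is the trivial coset $\scal^{}_{M}(\one)\cap\RR_+$; since a coset of a subgroup coincides with the subgroup itself precisely when it contains the neutral element $1$, this holds if and only if $1\in\scal^{}_{M}(R)$. By Lemma~\ref{csl-lem:OC-scal}(1), the condition $1\in\scal^{}_{M}(R)$ is equivalent to $R\in\OC(M)$. Moreover $\OC(M)\subseteq\OS(M)$ holds automatically: if $R\in\OC(M)$, then $M\sim RM$, and Lemma~\ref{csl-lem:comm} yields a positive integer $m$ with $m\,RM\subseteq M$, so $R\in\OS(M)$. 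Therefore $\ker\phi=\OC(M)$.

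With the kernel identified, the remaining assertions are formal: the kernel of a homomorphism is a normal subgroup, so $\OC(M)\trianglelefteq\OS(M)$, and the induced injection $\OS(M)/\OC(M)\hookrightarrow\RR_+/(\scal^{}_{M}(\one)\cap\RR_+)$ exhibits the quotient as a subgroup of an Abelian group, hence Abelian. I do not anticipate a serious obstacle; the only point demanding genuine care is the bookkeeping in the second paragraph, namely verifying that $\scal^{}_{M}(R)$ is a single coset and that passing to positive parts is compatible with the quotient structure. Everything else is a direct appeal to the cited results.
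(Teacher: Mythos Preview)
Your proposal is correct and follows precisely the route the paper indicates: the statement is presented there without a written-out proof, merely with the remark that it ``follows from Theorem~\ref{csl-theo:scalgroupM}'', and you have supplied exactly those details, invoking Theorem~\ref{csl-theo:scalgroupM} for the homomorphism structure and Lemma~\ref{csl-lem:OC-scal}(1) for the kernel identification. There is nothing to add.
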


This result was first proved for lattices in~\cite{csl-svenja1} and
later generalised to $\cS$-lattices in~\cite{csl-svenja2}.

If $M\subseteq \RR^d$ is a lattice or an $\cS$-lattice, all elements of
$\OS(M)/\OC(M)$ have finite order. In particular, their order is a
divisor of $d$; see~\cite{csl-svenja1,csl-svenja2}.

\begin{theorem}\label{csl-theo:order-d}
  Let\/ $M\subseteq \RR^d$ be a lattice or an\/
  $\cS$-lattice\index{S-lattice@$\cS$-lattice}.  Then, the factor
  group given by\/ $\OS(M)/\nts \nts \OC(M)$ is the direct sum of cyclic 
  groups of prime power order that divide\/ $d$.\qed
\end{theorem}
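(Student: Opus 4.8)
The plan is to combine the homomorphism from the preceding theorem with the structure theory of Abelian groups of bounded exponent. Write $G:=\OS(M)/\OC(M)$. By the theorem just above, $\OC(M)$ is precisely the kernel of the homomorphism $\phi\colon\OS(M)\to\RR_+/(\scal_M(\one)\cap\RR_+)$, $R\mapsto\scal_M(R)\cap\RR_+$, so $G$ embeds into the Abelian group $\RR_+/(\scal_M(\one)\cap\RR_+)$ and is in particular Abelian. It then suffices to prove two things: (i) every element of $G$ has order dividing $d$, i.e.\ $G$ has exponent dividing $d$; and (ii) an Abelian group of bounded exponent decomposes as claimed. Given (i), part (ii) is Prüfer's first theorem, namely that an Abelian group of bounded exponent is a direct sum of cyclic groups, which one refines by the primary decomposition into a direct sum of cyclic groups of prime power order. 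Each such summand then has order dividing the exponent of $G$, hence dividing $d$, and for a prime power $p^{k}\mid d$ this is exactly the asserted constraint.

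The crux is step (i). Fix $R\in\OS(M)$ and choose $\alpha\in\scal_M(R)\cap\RR_+$. Since $\phi(R)=\alpha\,(\scal_M(\one)\cap\RR_+)$, we have $\phi(R)^{d}=\alpha^{d}\,(\scal_M(\one)\cap\RR_+)$, so $\phi(R)$ has order dividing $d$ as soon as $\alpha^{d}\in\scal_M(\one)\cap\RR_+$. For a lattice this is immediate from Lemma~\ref{csl-lem:scal-d}, which gives $\alpha^{d}\in\ZZ$, whence $\alpha^{d}\in\scal_\vG(\one)\cap\RR_+$ because $\alpha>0$. For a general $\cS$-lattice I would argue by a determinant over the field of fractions $K$ of $\cS$, which is a subfield of $\RR$ since $\cS$ is a domain. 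Writing $M=\langle b_1,\dots,b_d\rangle_\cS$ with the $b_i$ linearly independent, the $K$-span $KM$ is the $d$-dimensional $K$-vector space with basis $b_1,\dots,b_d$. Because $\alpha R M\sim M$ forces $\alpha R M$ and $M$ to have the same $K$-span, the map $\alpha R$ restricts to a $K$-linear automorphism of $KM$; it is $K$-linear precisely because $K\subseteq\RR$ acts by real scalars. Its matrix in the basis $(b_i)$ has entries in $K$ and is numerically identical to the matrix of the $\RR$-linear map $\alpha R$ on $\RR^{d}$ in the same basis, so $\pm\alpha^{d}=\det(\alpha R)=\det_K(\alpha R|_{KM})\in K$. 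As $\alpha>0$, this yields $\alpha^{d}\in K\cap\RR_+=\scal_M(\one)\cap\RR_+$, as needed. This determinant argument is the step I expect to require the most care: one must verify that commensurateness really pins down a common $K$-span and that the $K$-linear and $\RR$-linear determinants coincide.

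With (i) in hand, $G$ is an Abelian group of exponent dividing $d$, and Prüfer's theorem together with the prime-power refinement finishes the proof. It is worth noting that $G$ may well be infinite$\,$---$\,$for the square lattice one has $\SOS(\ZZ[\ii])/\SOC(\ZZ[\ii])\cong C_2^{(\aleph_0)}\,$---$\,$so one genuinely needs Prüfer's decomposition for groups of bounded exponent rather than the classification of finitely generated Abelian groups, and this is the only non-elementary external input. The unified determinant argument has the pleasant feature of covering the lattice case ($\cS=\ZZ$, $K=\QQ$) and the $\cS$-lattice case at once, so in principle one could dispense with the separate appeal to Lemma~\ref{csl-lem:scal-d}.
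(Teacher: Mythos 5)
Your proposal is correct and follows essentially the same route as the paper: the quotient is Abelian because $\OC(M)$ is the kernel of $\phi$, every element of the quotient has order dividing $d$ (this exponent bound is precisely what the paper cites from \cite{csl-svenja1,csl-svenja2}, and your determinant argument over the field of fractions of $\cS$ is the standard way to establish it), and Pr\"ufer's theorem for Abelian groups of bounded exponent together with primary decomposition finishes the proof, where you rightly note that finite generation cannot be assumed. The only slip is cosmetic: for your chosen $\alpha\in\scal^{}_M(R)\cap\RR_{+}$, Lemma~\ref{csl-lem:scal-d} gives $\alpha^d\in\QQ$ rather than $\alpha^d\in\ZZ$ (integrality is stated for $\Scal^{}_M(R)$), but this does not affect the argument, since $\QQ\cap\RR_{+}=\scal^{}_\vG(\one)\cap\RR_{+}$ in the lattice case.
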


The close relationship between SSLs and CSLs is also reflected in the
following condition for two CSLs to be equal.

\begin{lemma}[{\cite[Lemma 3.4.2]{csl-habil}}]
\label{csl-lem:equal-csl}
  Let\/ $\vG \subset \RR^d$ be a lattice.  Assume that\/ $R^{}_1,
  R^{}_2\in \OC(\vG)$ generate the same CSL, so\/
  $\vG(R^{}_1)=\vG(R^{}_2)$.  Then, one has\/
  $\Sig(R^{}_1)=\Sig(R^{}_2)$ together with\/ 
  $\den(R_1^{-1})=\den(R_2^{-1})$.
\end{lemma}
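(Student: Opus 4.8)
The first assertion is immediate: since $\Sig^{}_\vG(R^{}_i) = [\vG : \vG(R^{}_i)]$ and the two CSLs coincide by hypothesis, the two indices are equal. The content of the lemma therefore lies entirely in the statement about the denominators, and the plan is to show that $\den^{}_\vG(R^{-1})$ can be read off from the CSL $\vG(R)$ alone, without reference to the particular isometry $R$ generating it.

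Concretely, I would prove the intrinsic formula
\[
  \den^{}_\vG(R^{-1}) \, = \, \min\{\, n\in\NN \mid n\vG \subseteq \vG(R)\,\}\ts ,
\]
that is, $\den^{}_\vG(R^{-1})$ is the exponent of the finite abelian group $\vG/\vG(R)$. Since the right-hand side manifestly depends only on $\vG$ and on the CSL $\vG(R)$, the equality $\vG(R^{}_1)=\vG(R^{}_2)$ will then force $\den^{}_\vG(R_1^{-1})=\den^{}_\vG(R_2^{-1})$, which completes the argument.

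To establish the characterisation I would argue by two inclusions. First, applying $R$ to both sides turns the condition $\alpha R^{-1}\vG \subseteq \vG$ into the equivalent $\alpha\vG \subseteq R\vG$; in particular $\den^{}_\vG(R^{-1})\vG \subseteq R\vG$. Because $\OC(\vG)$ is a group, $R\in\OC(\vG)$ gives $R^{-1}\in\OC(\vG)$, whence $\den^{}_\vG(R^{-1})\in\NN$ by Corollary~\ref{csl-lem:OC-den}; being a positive integer, it also satisfies $\den^{}_\vG(R^{-1})\vG \subseteq \vG$. Intersecting the two inclusions yields $\den^{}_\vG(R^{-1})\vG \subseteq \vG \cap R\vG = \vG(R)$, so $\den^{}_\vG(R^{-1})$ lies in the set on the right. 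Conversely, if $n\in\NN$ satisfies $n\vG\subseteq\vG(R)\subseteq R\vG$, then $nR^{-1}\vG\subseteq\vG$, so $n\in\Scal^{}_\vG(R^{-1})=\den^{}_\vG(R^{-1})\,\ZZ$ by Lemma~\ref{csl-lem:scal-den}; thus $n$ is a positive multiple of $\den^{}_\vG(R^{-1})$, and in particular $n\geq\den^{}_\vG(R^{-1})$. Together these two directions show that $\den^{}_\vG(R^{-1})$ is precisely the least such $n$.

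The only delicate point$\,$---$\,$and the step I would treat most carefully$\,$---$\,$is the passage that pins the minimum down exactly rather than merely up to a divisibility relation. This rests on two facts from the earlier theory: that $\den^{}_\vG(R^{-1})$ is an honest positive integer (so that $\den^{}_\vG(R^{-1})\vG$ is automatically a sublattice of $\vG$ and may be intersected with $R\vG$), and that $\Scal^{}_\vG(R^{-1})$ is the full cyclic module $\den^{}_\vG(R^{-1})\ZZ$, which guarantees that every admissible $n$ is a genuine multiple of the denominator. With both inclusions in hand the characterisation is exact, and its intrinsic nature$\,$---$\,$referring only to $\vG$ and $\vG(R)$$\,$---$\,$immediately yields $\den^{}_\vG(R_1^{-1})=\den^{}_\vG(R_2^{-1})$.
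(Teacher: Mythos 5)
Your proof is correct. It rests on the same two facts as the paper's own argument$\,$---$\,$that $\den^{}_\vG(R^{-1})\in\NN$ for $R\in\OC(\vG)$ (via the group property of $\OC(\vG)$ and Corollary~\ref{csl-lem:OC-den}), and that $\Scal^{}_\vG(R^{-1})=\den^{}_\vG(R^{-1})\,\ZZ$ (Lemma~\ref{csl-lem:scal-den})$\,$---$\,$but it organises them differently. The paper argues cross-wise: from the chain $\den(R_1^{-1})\vG \subseteq \vG(R^{}_1) = \vG(R^{}_2) \subseteq R^{}_2\vG$ it reads off $\den(R_1^{-1})\ts R_2^{-1}\vG\subseteq\vG$, concludes that $\den(R_2^{-1})$ divides $\den(R_1^{-1})$, and finishes by symmetry; no minimum is ever characterised. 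You instead isolate the intrinsic statement that $\den^{}_\vG(R^{-1})$ equals the exponent of the finite group $\vG/\vG(R)$, i.e.\ the least $n\in\NN$ with $n\vG\subseteq\vG(R)$, and obtain the lemma as an immediate corollary. Your packaging proves slightly more: the denominator of $R^{-1}$ is an invariant of the CSL itself, independent of which isometry generates it, a reusable fact that the paper never states explicitly (and which is pertinent wherever one must decide when two non-symmetry-related isometries produce the same CSL, as in Sections~\ref{csl-sec:cub4} and~\ref{csl-sec:icosianall}). The paper's packaging is more economical, needing only a mutual divisibility between two positive integers. Both arguments are complete; in particular, you correctly identified and justified the one delicate step, namely that pinning down the minimum exactly requires the full equality $\Scal^{}_\vG(R^{-1})=\den^{}_\vG(R^{-1})\,\ZZ$ rather than a mere containment.
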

\begin{proof}
  The statement about $\Sig$ is trivial. For the denominator, observe that
\[
  \den(R_1^{-1})\vG\, \subseteq\, 
  \vG(R^{}_1)\, =\, \vG(R^{}_2)\,\subseteq\, R^{}_2\vG.
\]
  Consequently,
\[
  \den(R_1^{-1})R_2^{-1}\vG\,\subseteq\,\vG,
\]
  which shows that $\den(R_1^{-1})$ is a multiple of $\den(R_2^{-1})$.
  Then, by symmetry, $\den(R_2^{-1})$ is a multiple of
  $\den(R_1^{-1})$ as well, and the claim follows.
\end{proof}
This result is particularly useful in the following examples, when we
have to characterise those coincidence isometries that generate the
same CSL.  Let us start our series of illustrations with some examples
in the plane.

\section{(M)CSMs of planar modules with 
$N$-fold symmetry}
\label{csl-sec:csl-nfold}

We can generalise the results of the square lattice to all rings
$\ZZ[\xi_n]$ of cyclotomic integers which are PIDs;
compare~\cite{csl-Pleasants,csl-BG}. Thus, let $n$ be one of the
numbers given in Eq.~\eqref{csl-pid-cyclo}.  We have seen in
Section~\ref{csl-sec:two-dim} that the similar submodules are then
exactly the non-trivial ideals of $\ZZ[\xi_n]$, and that the
similarity rotations are given by $\frac{v}{\lvert v \rvert}$ with
$v\in \ZZ[\xi_n]$.

As any of these $29$ modules is also a ring, we have $\mathrm{MR}
(\ZZ[\xi_n]) =\ZZ[\xi_n]$.  This implies that the coincidence
rotations\index{rotation!coincidence} are precisely given by
\mbox{$\ee^{\ii\varphi}=\frac{v}{\lvert v \rvert}$} for which $\lvert
v \rvert^2= v\bar{v}$ is a square in $\ZZ[\xi_n]$. In other words,
using the unique prime factorisation up to units in $\ZZ[\xi_{n}]$,
the coincidence rotations are precisely the rotations of the form
$\varepsilon \frac{w}{\bw}$ with $0\ne w\in\ZZ[\xi_{n}]$, where
$\varepsilon$ is a unit in $\ZZ[\xi_n]$. Here, we may assume that
$\frac{w}{\bw}$ is a reduced fraction, which means that $w$ and $\bw$
are coprime.  Under this assumption, one finds
\begin{equation}
  \ZZ[\xi_n]\cap \varepsilon \myfrac{w}{\bw}\ZZ[\xi_n]\, = \, 
  w\ts\ZZ[\xi_n]\ts .
\end{equation}

To find the possible values of $w$, we mention that a prime
$\omega\in \ZZ[\xi_n]$ can be a factor of $w$ only if
$\frac{\omega}{\bom}$ is not a unit in $\ZZ[\xi_n]$. Thus, we only
have to consider the so-called \emph{complex splitting
  primes}.\index{prime!splitting} To expand on this, consider the
prime factorisation of a rational prime $p$ over the real subring
$\scO_n=\ZZ[\xi_{n}+\bar{\xi}_{n}]$, which is the ring of integers of
the maximal real subfield $\QQ(\xi_n + \bar{\xi}_n)$ of
$\QQ(\xi_n)$. Let $\pi$ be a prime in $\scO_n$. Now, the complex
splitting primes are those primes $\pi$ that split as
$\pi=\omega_\pi \bom_\pi$ over $\ZZ[\xi_n]$, with $\omega_\pi$ and
$\bom_\pi$ being non-associated primes in $\ZZ[\xi_n]$, which means
that $\frac{\omega_\pi}{\bom_\pi}$ is not a unit.  Thus, the possible
values of $w$ are of the form
\begin{equation}
  w \, = \, \varepsilon \prod_{\pi} \omega_\pi^{t_\pi^+} 
   \bom_\pi^{t_\pi^-},
\end{equation}
where $\varepsilon$ is a unit, $t_\pi^+ t_\pi^- = 0$, and the product
runs over all primes $\pi\in\scO_n$ that divide $w \ts\bw$. In other
words, any coincidence rotation\index{rotation!coincidence} in
$\SOC(\ZZ[\xi_n])$ can be written as a finite product
\begin{equation}
  \ee^{\ii\varphi} \, = \, \varepsilon'\ts \myfrac{w}{\bw} \, = \, \varepsilon'
  \prod_{\pi} \left(\frac{\omega_\pi}{\bom_\pi} \right)^{\! t_\pi},
\end{equation}
with $t^{}_\pi = t_\pi^+ - t_\pi^-$, where $\pi$ runs over the complex
splitting primes of $\scO_{n}$ and where $\varepsilon'$ is again a
unit.

Any complex splitting prime $\pi\in\scO_n$ lies over a unique rational
prime $p$, which is the norm of $\pi$ in $\scO_{n}$. Then, one also
calls $p$ a complex splitting prime of the field
extension\index{number~field} $\QQ(\xi_{n})/\QQ$. The set of all such
rational primes is abbreviated as $\cC_{n}$ and thus consists of all
rational primes that split in the final step from
$\QQ(\xi_{n}+\bar{\xi}_{n})$ to $\QQ(\xi_{n})$. To expand on the
structure of the primes and their splitting, we recall that the index
\mbox{$\bigl[\ZZ[\xi_n]: \omega_\pi \ZZ[\xi_n]\bigr]=p^{\ell_p}$}
depends only on $p$, where $\ell_p$ is an integer which we will
specify below. As a result, the CSM
$\ZZ[\xi_n]\cap \varepsilon \frac{w}{\bw}\ZZ[\xi_n]= w\ts\ZZ[\xi_n]$
has index
\begin{equation}
   \Sig^{}_{\ZZ[\xi_n]} \bigl( \varepsilon \ts \tfrac{w}{\bw}\bigr) 
   \, = \prod_{\pi} p^{\ell_p \lvert t_\pi \rvert}
\end{equation}
with $t_{\pi}$ as introduced above.  Thus, the possible coincidence
indices are products of the so-called \emph{basic
  indices}\index{index!basic} $p^{\ell_p}$, and the coincidence
spectrum is the (multiplicative) monoid generated by these basic
indices. In other words,
\begin{equation}
  \sigma \bigl(\ZZ[\xi_n]\bigr) \, =\,  
    \Bigl\{ \prod_{p\in\cC_{n}} p^{\ell_p t_p} \,\big\vert\, t_p\in\NN,
        \ts\text{only finitely many $t_p\ne 0$} \Bigr\},
\end{equation}
where $\cC_n$ is the set of complex splitting primes as introduced
above.\index{prime!splitting}

As $\ZZ[\xi_n]$ is a PID for the list of $n$ we consider here, the
counting function $c_n(m)=c^{}_{\ZZ[\xi_n]}(m)$ is
multiplicative,\index{multiplicative~function} wherefore it suffices
to determine it for $m=p^{\ell_p}$.  This is now a purely
combinatorial task, and one finally arrives at the following result.

\begin{theorem}[{\cite[Thm.~3]{csl-Pleasants} and \cite[Thm.~1]{csl-BG}}]
  \label{csl-thm:psi-xin}
  Let\/ $n$ be one of the\/ $29$ numbers from
  Eq.~\eqref{csl-pid-cyclo}.  Then, the generating function for the
  number\/ $c_n(m)=c^{}_{\ZZ[\xi_n]}(k)$ of CSMs of\/ $\ZZ[\xi_n]$ of
  index\/ $k$ is given by\index{Dirichlet~series}
  \[
    \Psi^{}_{\ZZ[\xi_n]}(s) \, =  \, \sum_{k=1}^\infty \frac{c_n(k)}{k^s}
       \, =  \, \frac{\zeta^{}_{\KK_n}(s)}{\zeta^{}_{\LL_n}(2s)} 
      \begin{cases}
        (1 + p^{-s})^{-1}, & \text{ if } n=p^r, \\
        1, & \text{ otherwise,}
      \end{cases}
  \]
  where\/ $\zeta^{}_{\KK_n}(s)$ and\/ $\zeta^{}_{\LL_n}(2s)$ are the
  Dedekind zeta\index{zeta~function!Dedekind} functions of the number
  field\/ $\KK_n = \QQ(\xi_n)$ and its maximal real subfield\/
  $\LL_n = \QQ(\xi_n + \bar{\xi}_n)$, respectively.  If\/ $\cC_n$
  denotes the set of complex splitting primes\index{prime!splitting}
  for the field extension\/ $\KK_n/\QQ$, then\/
  $\Psi^{}_{\ZZ[\xi_n]}(s)$ has the Euler product
  expansion\index{Euler~product}
  \[
    \Psi^{}_{\ZZ[\xi_n]}(s) \, = \prod_{p\in\cC_n} \left(
    \frac{1 + p^{-\ell_p s}}{1 - p^{-\ell_p s}}
    \right)^{\frac{m_p}{2}},
  \]
  with certain integers\/ $\ell_p$ and\/ $m_p$ as follows.  If\/ $p
  \nmid n$, one has $m_p=\frac{\phi(n)}{\ell_p}$ where\/ $\ell_p$ is
  the smallest positive integer such that\/ $p^{\ell_p}\equiv 1
  \bmod{n}$.  If\/ $p | n$ together with\/ $n=p^t r$, where\/ $r$
  and\/ $p$ are coprime, one has\/ $m_p=\frac{\phi(r)}{\ell_p}$
  where\/ $\ell_p$ is the smallest positive integer such that\/
  $p^{\ell_p}\equiv 1 \bmod{r}$. \qed
\end{theorem}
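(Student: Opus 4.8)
The plan is to establish the Euler product first, by computing the local contribution of each rational prime $p$ to the counting function $c_n$, and then to match this product with the zeta quotient by comparing Euler factors prime by prime. Since $\ZZ[\xi_n]$ is a PID for the admissible $n$, the function $c_n$ is multiplicative (as noted just before the theorem), so it suffices to work locally. Recall the structure already derived: every CSM equals $w\ts\ZZ[\xi_n]$ with $w=\varepsilon\prod_{\pi}\omega_\pi^{t_\pi^+}\bom_\pi^{t_\pi^-}$, the product running over the complex splitting primes $\pi$ of $\scO_n$, subject to $t_\pi^+ t_\pi^-=0$, and two such data define the same ideal exactly when they agree up to a unit.

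First I would treat a single complex splitting prime $\pi$ of $\scO_n$ over $p$. The CSMs supported at $\pi$ are generated by $\omega_\pi^{\ts t}$ or by $\bom_\pi^{\ts t}$ with $t\geq 0$, each of index $p^{\ell_p t}$, so the associated local Dirichlet factor is
\[
  1 + 2\sum_{t\geq 1} p^{-\ell_p t s} \, = \, \frac{1+p^{-\ell_p s}}{1-p^{-\ell_p s}}\ts .
\]
Next I would count the primes $\pi$ over a fixed $p$. For $p\nmid n$, the prime $p$ has residue degree $\ell_p$ in $\KK_n=\QQ(\xi_n)$ and splits into $m_p=\phi(n)/\ell_p$ primes; complex conjugation pairs these, and a prime of $\scO_n$ over $p$ splits in $\KK_n$ precisely when $p\in\cC_n$, in which case there are exactly $m_p/2$ such $\pi$ (while for $p\notin\cC_n$ each prime of $\scO_n$ over $p$ is inert in $\KK_n$ and yields no non-trivial CSM). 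Multiplying the single-prime factor over these $m_p/2$ primes gives the Euler factor $\bigl((1+p^{-\ell_p s})/(1-p^{-\ell_p s})\bigr)^{m_p/2}$, and the product over $p\in\cC_n$ produces the stated Euler product.

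To identify this with the zeta quotient I would compare Euler factors at each $p$. For $p\nmid n$ the factor of $\zeta_{\KK_n}(s)$ is $(1-p^{-\ell_p s})^{-m_p}$, while the factor of $\zeta_{\LL_n}(2s)$ is $(1-p^{-2\ell_p s})^{-m_p/2}$ when $p\in\cC_n$ (residue degree $\ell_p$ and $m_p/2$ primes in $\LL_n$) and $(1-p^{-\ell_p s})^{-m_p}$ when $p\notin\cC_n$ (residue degree $\ell_p/2$ and $m_p$ primes). Using the identity $1-p^{-2\ell_p s}=(1-p^{-\ell_p s})(1+p^{-\ell_p s})$, the quotient equals $\bigl((1+p^{-\ell_p s})/(1-p^{-\ell_p s})\bigr)^{m_p/2}$ in the first case and $1$ in the second, so the two expressions agree at every unramified prime.

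The main obstacle is the analysis at the ramified primes $p\mid n$, which is where the case distinction in the statement originates. Writing $n=p^t r$ with $(p,r)=1$, the prime $p$ factors in $\KK_n$ into $m_p=\phi(r)/\ell_p$ primes of norm $p^{\ell_p}$, with $\ell_p$ the order of $p$ modulo $r$. The decisive point is whether complex conjugation lies in the inertia group $I_p\subset\mathrm{Gal}(\KK_n/\QQ)=(\ZZ/n)^{\times}$, equivalently whether $-1\equiv 1\bmod r$. For $n=p^a$ a prime power we have $r=1$, hence $-1\in I_p$, and the prime of $\LL_n$ above $p$ ramifies in $\KK_n/\LL_n$; here $\zeta_{\KK_n}(s)$ contributes $(1-p^{-s})^{-1}$ while $\zeta_{\LL_n}(2s)$ contributes $(1-p^{-2s})^{-1}$, so their quotient is the spurious factor $1+p^{-s}$. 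Since the ramified prime is never complex splitting — in the totally ramified case $(p)=(1-\xi_n)^{\phi(n)}$ one has $\overline{(1-\xi_n)}=-\xi_n^{\ts c}(1-\xi_n)$ for a suitable $c$, so $\overline{(1-\xi_n)}/(1-\xi_n)$ is a unit — this factor carries no genuine coincidence rotation and must be removed by the correction $(1+p^{-s})^{-1}$. For $n$ not a prime power one has $r>1$ (the admissible list excludes the redundant $n\equiv 2\bmod 4$), so $-1\notin I_p$, complex conjugation acts as a non-trivial Frobenius on the residue field, and the prime of $\LL_n$ above $p$ is inert in $\KK_n/\LL_n$; then the Euler factors of $\zeta_{\KK_n}(s)$ and $\zeta_{\LL_n}(2s)$ coincide, their quotient is $1$, and no correction is needed. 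Verifying these ramified factorisations in detail — in particular that $\LL_n$ inherits exactly the ramification forcing the cancellation — is the delicate step, which I would carry out using the known inertia and decomposition groups of $p$ in cyclotomic fields together with the fact that complex conjugation generates $\mathrm{Gal}(\KK_n/\LL_n)$.
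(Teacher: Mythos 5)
Your overall strategy---multiplicativity from the PID property, the local factor $(1+p^{-\ell_p s})/(1-p^{-\ell_p s})$ at each complex splitting prime, and a prime-by-prime comparison with the Euler factors of $\zeta^{}_{\KK_n}(s)$ and $\zeta^{}_{\LL_n}(2s)$---is the natural one given the classification of CSMs set up before the theorem, and your treatment of the unramified primes is correct. The gap lies in the ramified case, exactly the step you yourself flag as delicate, and as stated your conclusion there is false. From $r>1$ you infer that complex conjugation $c$, not lying in the inertia group $I_p$, ``acts as a non-trivial Frobenius'', so that the primes of $\LL_n$ above $p$ are inert in $\KK_n/\LL_n$. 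But $c\notin I_p$ only rules out ramification; inertness requires in addition that $c$ lie in the decomposition group $D_p=(\ZZ/p^t\ZZ)^\times\times\langle\ts p\bmod r\ts\rangle$, i.e.\ that $-1\in\langle\ts p\bmod r\ts\rangle$ in $(\ZZ/r\ZZ)^\times$. If $-1\notin\langle\ts p\bmod r\ts\rangle$, the primes of $\LL_n$ above $p$ \emph{split} in $\KK_n/\LL_n$, so $p$ is a ramified complex splitting prime, $p\in\cC_n$. This genuinely occurs for the admissible $n$: take $n=21$, $p=7$, $r=3$. Then $7\equiv 1\bmod 3$, so $\langle\ts 7\bmod 3\ts\rangle=\{1\}\not\ni -1$; above $7$ one has $e=6$, $f=\ell_7=1$, $g=m_7=2$, and the two conjugate primes of $\ZZ[\xi_{21}]$ above $7$ are non-associated. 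Hence the prime $\omega$ above $7$ generates a CSM $\omega\ts\ZZ[\xi_{21}]$ of index $7$, and the Euler factor of $\Psi^{}_{\ZZ[\xi_{21}]}$ at $7$ is $(1+7^{-s})/(1-7^{-s})$. Your argument would instead predict that no CSM has index divisible by $7$, i.e.\ it undercounts $c^{}_{21}$. Other instances in the list: $n=33$, $p=3$ and $n=84$, $p=2$.

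The theorem remains consistent at such primes, which also shows how to repair the proof: for $n=21$, $p=7$, the factor of $\zeta^{}_{\KK_{21}}(s)$ is $(1-7^{-s})^{-2}$, while that of $\zeta^{}_{\LL_{21}}(2s)$ is $(1-7^{-2s})^{-1}$ (a single prime of $\LL_{21}$ above $7$, of residue degree $1$), so the quotient is $(1+7^{-s})/(1-7^{-s})$, precisely the Euler factor above and not $1$. The correct trichotomy at $p\mid n$, $n=p^t r$, is: (i) $r=1$, i.e.\ $n$ a prime power (the list excludes $n\equiv 2\bmod 4$, so $r=2$ cannot occur): here $c\in I_p$, the prime of $\LL_n$ ramifies in $\KK_n/\LL_n$, the quotient is the spurious $1+p^{-s}$, removed by the correction factor; (ii) $r\geq 3$ and $-1\in\langle\ts p\bmod r\ts\rangle$: inert, quotient $1$, and $p\notin\cC_n$; (iii) $r\geq 3$ and $-1\notin\langle\ts p\bmod r\ts\rangle$: split, $p\in\cC_n$, and the same computation as in your unramified split case, with $\phi(n)$ replaced by $\phi(r)$ (so $m_p=\phi(r)/\ell_p$ primes of residue degree $\ell_p$ in $\KK_n$, and $m_p/2$ primes of residue degree $\ell_p$ in $\LL_n$), shows that the quotient equals the Euler factor. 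Correspondingly, the first half of your proof must admit these ramified primes into $\cC_n$ when assembling the Euler product; the local-factor computation there goes through unchanged, since $[\ZZ[\xi_n]:\omega_\pi\ZZ[\xi_n]]=p^{\ell_p}$ also holds for ramified complex splitting primes. This is exactly why the theorem defines $\ell_p$ and $m_p$ separately for $p\mid n$.
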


For explicit values of $\ell_p$ and $m_p$, see~\cite[Tables~1
  and~2]{csl-BG}.  The first terms of $\Psi^{}_{\ZZ[\xi_n]}(s)$ for
all $n$ from Eq.~\eqref{csl-pid-cyclo} are listed
in~\cite[Table~4]{csl-BG}.

The explicit expression of $\Psi^{}_{\ZZ[\xi_n]}(s)$ in terms of zeta
functions allows us to determine the asymptotic behaviour of $c_n(k)$.
Here, $\Psi^{}_{\ZZ[\xi_n]}(s)$ is a meromorphic function that is
analytic in the half-plane $\{ \Real(s) >1 \}$ and has a simple pole
at $s=1$, which results in linear growth for the summatory function of
$c_n(k)$. In particular, using Theorem~\ref{csl-thm:meanvalues}, we
get the following result.

\begin{corollary}[{\cite[Cor.~1]{csl-BG}}]\label{csl-cor:asym-xin}
  The asymptotic behaviour\index{asymptotic~behaviour} of the number\/
  $c_n(k)$ of CSMs of\/ $\ZZ[\xi_n]$ of index\/ $k$ is given by
  \[
    \sum_{k \leq x} c_n(k)\, \sim \, \gamma^{}_n\, x
  \]
 as\/ $x\to\infty$, where\/ $\gamma^{}_n$ is the residue of\/
 $\Psi^{}_{\ZZ[\xi_n]}(s)$ at\/ $s=1$, which is given by
  \[
    \gamma^{}_n\, = \, \frac{\alpha^{}_n}{\zeta^{}_{\LL_n}(2)} 
         \begin{cases}
           p/(p+1), & \text{ if } n=p^r,\\
           1, & \text{ otherwise,}
         \end{cases}
  \]
  with\/ $\alpha^{}_n:= \Res_{s=1}\, \bigl(\zeta^{}_{\KK_n}(s)\bigr)$. \qed
\end{corollary}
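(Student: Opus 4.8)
The plan is to extract the analytic structure of $\Psi^{}_{\ZZ[\xi_n]}(s)$ from its closed form in Theorem~\ref{csl-thm:psi-xin} and feed it into the Tauberian result of Theorem~\ref{csl-thm:meanvalues}, taking $\alpha = 1$ and exponent $n = 0$ there. Since $c_n(k)$ counts CSMs, the Dirichlet coefficients are non-negative, so the standing hypothesis of Theorem~\ref{csl-thm:meanvalues} holds automatically. To pin down the abscissa of convergence, I would look at
\[
\Psi^{}_{\ZZ[\xi_n]}(s) \, = \, \frac{\zeta^{}_{\KK_n}(s)}{\zeta^{}_{\LL_n}(2s)}
\begin{cases}
(1+p^{-s})^{-1}, & n = p^r, \\
1, & \text{otherwise,}
\end{cases}
\]
and note that $\zeta^{}_{\KK_n}(s)$ is holomorphic for $\Real(s) > 1$ with a single simple pole at $s = 1$, that $\zeta^{}_{\LL_n}(2s)$ is an absolutely convergent, hence holomorphic and zero-free, Euler product for $\Real(s) > \tfrac12$, and that $(1+p^{-s})^{-1}$ is holomorphic and zero-free for $\Real(s) \geq 0$. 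Thus $\Psi^{}_{\ZZ[\xi_n]}$ is holomorphic for $\Real(s) > 1$, and since the coefficients are non-negative, Landau's theorem forces the abscissa of convergence to coincide with the real singularity at $s = 1$, giving $\alpha = 1$.

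Next I would check the behaviour on the critical line $\{\Real(s) = 1\}$. There one has $\Real(2s) = 2$, so $\zeta^{}_{\LL_n}(2s) \neq 0$ and $1/\zeta^{}_{\LL_n}(2s)$ is holomorphic near the line; similarly $1 + p^{-s} \neq 0$ because $|p^{-s}| = p^{-1} < 1$ on the line. The Dedekind zeta $\zeta^{}_{\KK_n}$ is meromorphic on $\CC$ with no pole other than the simple one at $s = 1$, and any zeros it may have on the line only produce zeros, not poles, of $\Psi^{}_{\ZZ[\xi_n]}$. Consequently $\Psi^{}_{\ZZ[\xi_n]}$ is holomorphic at every point of the line except $s = 1$, where it has a simple pole. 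Writing $h(s) := (s-1)\,\Psi^{}_{\ZZ[\xi_n]}(s)$, the singularity is exactly of the form $h(s)/(s-1)$ with $h$ holomorphic at $s = 1$, so the exponent in Theorem~\ref{csl-thm:meanvalues} is $n = 0$.

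Applying Theorem~\ref{csl-thm:meanvalues} then gives $\sum_{k \leq x} c_n(k) \sim h(1)\,x$, and it only remains to identify $h(1) = \Res_{s=1}\Psi^{}_{\ZZ[\xi_n]}(s) = \gamma^{}_n$. Since $\zeta^{}_{\KK_n}$ has residue $\alpha^{}_n$ at $s = 1$ and the remaining factors are holomorphic there, the residue factorises as $\gamma^{}_n = \alpha^{}_n\,\zeta^{}_{\LL_n}(2)^{-1}$ times the value of the extra factor at $s = 1$, the latter being $(1+p^{-1})^{-1} = p/(p+1)$ when $n = p^r$ and $1$ otherwise; this is precisely the asserted formula. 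The only step that draws on genuine input rather than bookkeeping is the non-vanishing of $\zeta^{}_{\LL_n}(2s)$ on the critical line, but this is painless here because $2s$ lands inside the region of absolute convergence, so the delicate zero-free-line estimates that normally accompany such arguments are not required.
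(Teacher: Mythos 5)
Your proposal is correct and follows essentially the same route as the paper: read off the analytic structure of $\Psi^{}_{\ZZ[\xi_n]}(s)$ from its zeta-function representation in Theorem~\ref{csl-thm:psi-xin} (holomorphic for $\Real(s)>1$, holomorphic on the line $\{\Real(s)=1\}$ apart from a simple pole at $s=1$), apply the Delange-type Tauberian result of Theorem~\ref{csl-thm:meanvalues} with $\alpha=1$ and $n=0$, and identify the residue as $\alpha^{}_n\,\zeta^{}_{\LL_n}(2)^{-1}$ times the value $p/(p+1)$ or $1$ of the extra factor at $s=1$. The additional bookkeeping you supply (Landau's theorem for the abscissa, and non-vanishing of $\zeta^{}_{\LL_n}(2s)$ and $1+p^{-s}$ on the line via absolute convergence) is exactly the content the paper leaves implicit, so nothing is missing.
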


Note that the constants $\alpha^{}_n$ and $\gamma^{}_n$ can be
calculated by expressing $\zeta^{}_{\KK_n}(s)$ and
$\zeta^{}_{\LL_n}(s)$ in terms of Riemann's
zeta\index{zeta~function!Riemann} function $\zeta(s)$ and certain
\mbox{$L\ts$-series}; compare~\cite[Sec.~4]{csl-BG}.  For some
examples including $n\in\{3, 4, 5, 7, 8, 12\}$, we refer
to~\cite[Sec.~4]{csl-Pleasants}, where the average
$\gamma^{}_n = \lim_{x\to\infty} \frac{1}{x} \sum_{k \leq x} c_n(k)$
has been evaluated explicitly.  Numerical values for $\alpha^{}_n$ and
$\gamma^{}_n$ are listed in~\cite[Table~3]{csl-BG}.

Let us continue with multiple coincidences. As any MCSM is an
intersection of simple CSMs, we see that
\begin{equation}\label{csl-eq:mcsm}
  \begin{split}
    \ZZ[\xi_n] \cap \varepsilon_1 \frac{w^{}_1}{\bw^{}_1}\ZZ[\xi_n] \cap
    \ldots \cap \varepsilon_k
     \frac{w^{}_k}{\bw^{}_k}\ZZ[\xi_n] \qquad \\[2mm]
    \mbox{ }\qquad =\,  w^{}_1 \ZZ[\xi_n] \cap \ldots \cap w^{}_k \ZZ[\xi_n]
    \, =\,  w \ts \ZZ[\xi_n]
  \end{split}
\end{equation}
with $w=\lcm(w^{}_1, \ldots, w^{}_k)$. Again, any MCSM is an ideal of
$\ZZ[\xi_n]$, but $w$ is more general now. Nevertheless, $w$ is still
of the form of a finite product,
\begin{equation}
  w\, = \, \varepsilon \prod_{\pi} \omega_\pi^{t_\pi^+} \bom_\pi^{t_\pi^-},
\end{equation}
but now without any further restriction on the non-negative integers
$t_\pi^+$ and $t_\pi^-$. This shows that the coincidence spectrum does
not change, so that
\begin{equation}\label{csl-eq:specstable}
  \sigma \bigl(\ZZ[\xi_n]\bigr)\, = \,
  \sigma_{\infty} \bigl(\ZZ[\xi_n]\bigr);
\end{equation}
compare~\cite[Cor.~2]{csl-BG}.

It follows from Eq.~\eqref{csl-eq:mcsm} that any MCSM can actually be
written as the intersection of only two simple CSMs. This allows one
to determine the number $c^{\infty}_n(k)$ of MCSMs of $\ZZ[\xi_n]$ of
index $k$. The result reads as follows.

\begin{theorem}[{\cite[Thm.~3]{csl-Pleasants} and \cite[Thm.~1]{csl-BG}}]
  \label{csl-thm:mcsm-nfold}
  Let\/ $n$ be one of the\/ $29$ numbers from
  Eq.~\eqref{csl-pid-cyclo}.  Then, the generating function for the
  number\/ $c^{\infty}_n(k)=c^{\infty}_{\ZZ[\xi_n]}(k)$ of CSMs of\/
  $\ZZ[\xi_n]$ of index\/ $k$ is given by\index{Dirichlet~series}
  \[
  \Psi^{\infty}_{\ZZ[\xi_n]}(s) \, =
      \sum_{k=1}^\infty \frac{c^{\infty}_n(k)}{k^s}
      \, = \prod_{p\in\cC_n} \Bigl(
    \myfrac{1}{1 - p^{-\ell_p s}} \Bigr)^{m_p},
  \]
  where\/ $\cC_n$ denotes the set of complex splitting
  primes\index{prime!splitting} for the field extension\/ $\KK_n/\QQ$
  and the integers\/ $\ell_p$ and\/ $m_p$ are those from
  Theorem~$\ref{csl-thm:psi-xin}$.  \qed
\end{theorem}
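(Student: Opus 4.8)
The plan is to count the distinct MCSMs directly from the parametrisation recorded in the discussion preceding the theorem, and then to reorganise the count into an Euler product. Recall that every MCSM is an ideal $w\ZZ[\xi_n]$ with
\[
w \, = \, \varepsilon \prod_{\pi} \omega_\pi^{t_\pi^+}\bom_\pi^{t_\pi^-},
\]
where now the non-negative integers $t_\pi^+$ and $t_\pi^-$ are \emph{unconstrained}, in contrast to the simple CSM case where $t_\pi^+ t_\pi^- = 0$. First I would observe that, since $\ZZ[\xi_n]$ is a PID with unique factorisation into primes up to units, the unit $\varepsilon$ is irrelevant to the ideal, and two such products generate the same ideal if and only if they carry the same family of exponents $(t_\pi^+,t_\pi^-)$. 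Consequently, the distinct MCSMs are in bijection with the finitely supported families $\{(t_\pi^+,t_\pi^-)\}_\pi$ indexed by the complex splitting primes $\pi\in\scO_n$.

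Next I would compute the index. Using $[\ZZ[\xi_n]:\omega_\pi\ZZ[\xi_n]]=[\ZZ[\xi_n]:\bom_\pi\ZZ[\xi_n]]=p^{\ell_p}$ for the prime $\pi$ lying over the rational prime $p$, the multiplicativity of the index over the coprime prime-power factors gives
\[
  [\ZZ[\xi_n]:w\ZZ[\xi_n]] \, = \, \prod_{\pi} p^{\ell_p\,(t_\pi^+ + t_\pi^-)}.
\]
This exhibits $c^{\infty}_n$ as a multiplicative arithmetic function whose Dirichlet series factors as an Euler product indexed by the splitting primes $\pi$. For a single such $\pi$, the ideals $\omega_\pi^{t^+}\bom_\pi^{t^-}\ZZ[\xi_n]$ contribute the local factor
\[
  \sum_{t^+\geq 0}\sum_{t^-\geq 0} p^{-\ell_p (t^+ + t^-) s}
  \, = \, \Bigl(\frac{1}{1-p^{-\ell_p s}}\Bigr)^{\!2},
\]
the square arising precisely because both $t^+$ and $t^-$ now range freely, whereas the constraint $t^+t^-=0$ in the simple case produced the factor $\frac{1+p^{-\ell_p s}}{1-p^{-\ell_p s}}$ of Theorem~\ref{csl-thm:psi-xin}.

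Finally I would collect the local factors belonging to the same rational prime. As in the proof of Theorem~\ref{csl-thm:psi-xin}, there are exactly $m_p/2$ complex splitting primes $\pi\in\scO_n$ lying over each $p\in\cC_n$: this is the very statement that $p$ splits in the final step $\LL_n\to\KK_n$, so the $m_p=\phi(n)/\ell_p$ primes of $\KK_n$ above $p$ come in $m_p/2$ conjugate pairs $\{\omega_\pi,\bom_\pi\}$. Raising the per-prime factor to the power $m_p/2$ and multiplying over $p\in\cC_n$ then yields
\[
  \Psi^{\infty}_{\ZZ[\xi_n]}(s) \, = \prod_{p\in\cC_n}
  \Bigl(\frac{1}{1-p^{-\ell_p s}}\Bigr)^{\!m_p},
\]
which is the claimed expression. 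The main point to verify carefully is the injectivity of the parametrisation together with the count $m_p/2$ of splitting primes over $p$; both rest on the unique factorisation in the PID $\ZZ[\xi_n]$ and on the splitting data already assembled for Theorem~\ref{csl-thm:psi-xin}, so no new number-theoretic input is needed beyond dropping the constraint $t_\pi^+t_\pi^-=0$.
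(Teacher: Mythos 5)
Your proposal is correct and follows essentially the same route as the paper, which (citing the original sources) counts exactly the ideals of $\ZZ[\xi_n]$ whose index factors only into primes of $\cC_n$; your per-prime local factor $\bigl(1-p^{-\ell_p s}\bigr)^{-2}$, raised to the power $m_p/2$ for the conjugate pairs $\{\omega_\pi,\bom_\pi\}$ over each $p\in\cC_n$, is precisely the computation behind that formula. The only step worth making explicit is surjectivity of your parametrisation: every unconstrained product $w=\prod_\pi \omega_\pi^{t_\pi^+}\bom_\pi^{t_\pi^-}$ is indeed attained as an MCSM, since $w=\lcm(w_1,w_2)$ with $w_1=\prod_\pi\omega_\pi^{t_\pi^+}$ and $w_2=\prod_\pi\bom_\pi^{t_\pi^-}$, each of which satisfies the constraint $t_\pi^+t_\pi^-=0$ and hence generates a simple CSM --- this is the reverse direction of the paper's observation that any MCSM is an intersection of at most two simple CSMs.
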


This nice generating function is due to the fact that we actually
count all ideals whose index $m$ factors into primes contained in
$\cC_n$. As $\Psi^{\infty}_{\ZZ[\xi_n]}(s)$ still has a simple pole at
$s=1$, using Theorem~\ref{csl-thm:meanvalues} once more, we get a
linear growth behaviour again. The determination of the residue is a
bit more complicated here, as $\Psi^{\infty}_{\ZZ[\xi_n]}(s)$ cannot
be represented via zeta functions in a simple way. Still, one has
the following result.

\begin{corollary}[{\cite[Cor.~1]{csl-BG}}]\label{csl-cor:asym-mcsl-xin}
    The summatory function\/ $\sum_{k \leq x} c^{\infty}_n(k)$ has the
  asymptotic behaviour\index{asymptotic~behaviour}
  \[
    \sum_{k \leq x} c^{\infty}_n(k)\, \sim \, \beta^{}_n \, x
  \]
  as\/ $x\to\infty$, with the growth constant\/ $\beta^{}_n =
  \Res_{s=1}\, \bigl(\Psi^{\infty}_{\ZZ[\xi_n]}(s)\bigr) = q^{}_n \gamma^{}_n$.
  Here, $\gamma^{}_n$ is defined as in
  Corollary\/~$\ref{csl-cor:asym-xin}$, and\/ $q^{}_n$ is given by
  \[
    \hspace{4cm}
    q^{}_n \, := \, \lim_{s\to 1} 
             \frac{\Psi^{\infty}_{\ZZ[\xi_n]}(s)}{\Psi^{}_{\ZZ[\xi_n]}(s)}
       \, = \prod_{\ell=1}^{\infty} 
                \bigl(\Psi^{}_{\ZZ[\xi_n]}(2^\ell)\bigr)^{2^{-\ell}}.
    \hspace{3.8cm}\qed
  \]
\end{corollary}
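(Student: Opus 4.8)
The plan is to deduce everything from the Euler product for $\Psi^{\infty}_{\ZZ[\xi_n]}(s)$ in Theorem~\ref{csl-thm:mcsm-nfold} by comparing it with the simple-coincidence generating function $\Psi^{}_{\ZZ[\xi_n]}(s)$ of Theorem~\ref{csl-thm:psi-xin}, whose behaviour near $s=1$ is already controlled through Corollary~\ref{csl-cor:asym-xin}. Writing $u=u_p(s):=p^{-\ell_p s}$ for $p\in\cC_n$, the two products are $\Psi^{\infty}=\prod_p (1-u)^{-m_p}$ and $\Psi=\prod_p\bigl((1+u)/(1-u)\bigr)^{m_p/2}$. Since $u_p(2s)=u_p(s)^2$, a factorwise computation using $\bigl((1+u)/(1-u)\bigr)^{m_p/2}(1-u^2)^{-m_p/2}=(1-u)^{-m_p}$ yields the key functional recursion
\[
  \Psi^{\infty}_{\ZZ[\xi_n]}(s) \, = \, \Psi^{}_{\ZZ[\xi_n]}(s)\,
  \bigl(\Psi^{\infty}_{\ZZ[\xi_n]}(2s)\bigr)^{1/2}.
\]

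First I would establish the analytic input. Because $m_p\le\phi(n)$ and $\ell_p\ge 1$, the product $q_n(s):=\bigl(\Psi^{\infty}(2s)\bigr)^{1/2}=\prod_p (1-p^{-2\ell_p s})^{-m_p/2}$ converges absolutely and defines a holomorphic, non-vanishing function for $\Real(s)>\tfrac12$ (each factor taken with its principal branch, which is unambiguous since $\lvert p^{-2\ell_p s}\rvert<1$ there); in particular $q_n$ is holomorphic and non-zero on the line $\{\Real(s)=1\}$. On the other hand, the zeta representation of Theorem~\ref{csl-thm:psi-xin} shows that $\Psi^{}_{\ZZ[\xi_n]}(s)$ is holomorphic on $\{\Real(s)=1\}$ apart from a simple pole at $s=1$: the numerator $\zeta^{}_{\KK_n}(s)$ has its only pole there, while $\zeta^{}_{\LL_n}(2s)$ and the possible factor $(1+p^{-s})$ are holomorphic and non-vanishing on that line (as $\Real(2s)=2>1$). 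Hence $\Psi^{\infty}=q_n\,\Psi$ is holomorphic on $\{\Real(s)=1\}$ except for a simple pole at $s=1$, with residue $\Res_{s=1}\Psi^{\infty}=q_n(1)\,\Res_{s=1}\Psi=q_n(1)\,\gamma_n$, where $\gamma_n$ is as in Corollary~\ref{csl-cor:asym-xin}. Setting $q_n:=q_n(1)$ and $\beta_n:=q_n\gamma_n$ already identifies the residue.

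The asymptotic statement then follows by applying Theorem~\ref{csl-thm:meanvalues} to $F=\Psi^{\infty}_{\ZZ[\xi_n]}$ with $\alpha=1$ and a simple pole (so the non-negative integer occurring there is $0$): the coefficients $c^{\infty}_n(k)$ are non-negative, the Dirichlet series converges for $\Real(s)>1$ (by comparison, $\sum_p m_p\, p^{-\ell_p\Real(s)}\le\phi(n)\sum_p p^{-\Real(s)}<\infty$ for $\Real(s)>1$), and the singularity at $s=1$ is of the form $h(s)/(s-1)$ with $h(s)=(s-1)\Psi^{\infty}(s)$ holomorphic at $s=1$ and $h(1)=\beta_n$. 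The theorem gives $\sum_{k\le x}c^{\infty}_n(k)\sim\beta_n x$.

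It remains to turn the constant $q_n=q_n(1)$ into the stated product. Iterating the recursion at $2s,4s,\dots$ gives, for every $N$,
\[
  q_n(s)\, = \,\bigl(\Psi^{\infty}(2s)\bigr)^{1/2}
  \, = \,\Bigl(\prod_{\ell=1}^{N}\Psi(2^{\ell}s)^{2^{-\ell}}\Bigr)\,
  \bigl(\Psi^{\infty}(2^{N+1}s)\bigr)^{2^{-(N+1)}}.
\]
The delicate point is the tail: I would argue that $\Psi^{\infty}(2^{N+1}s)$ stays bounded (indeed tends to $1$, as all its Euler factors do) while the exponent $2^{-(N+1)}\to 0$, so the remainder factor tends to $1$; since moreover $\Psi(2^{\ell}s)\to 1$ geometrically, the infinite product converges. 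Evaluating at $s=1$ yields $q_n=\prod_{\ell=1}^{\infty}\bigl(\Psi^{}_{\ZZ[\xi_n]}(2^{\ell})\bigr)^{2^{-\ell}}$, as claimed. The main conceptual obstacle is spotting the self-similar recursion above (equivalently, the telescoping identity $(1-y)^{-1}=\prod_{j\ge0}(1+y^{2^{j}})$) that expresses $q_n$ through values of $\Psi$ alone; once it is in hand, the residue computation and the convergence estimates are routine.
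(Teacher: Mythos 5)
Your proof is correct and follows essentially the same route as the paper: the factorwise recursion $\Psi^{\infty}_{\ZZ[\xi_n]}(s)=\Psi^{}_{\ZZ[\xi_n]}(s)\,\bigl(\Psi^{\infty}_{\ZZ[\xi_n]}(2s)\bigr)^{1/2}$, iterated, is exactly the representation the paper invokes (via \cite[Prop.~2]{csl-BG}) to identify the residue $\beta^{}_n=q^{}_n\gamma^{}_n$ and the product formula for $q^{}_n$, after which Delange's theorem (Theorem~\ref{csl-thm:meanvalues}) yields the linear growth. You merely make explicit the Euler-product verification, the check of the hypotheses of Theorem~\ref{csl-thm:meanvalues}, and the tail estimate that the paper leaves to the cited reference.
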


The last formula in Corollary~\ref{csl-cor:asym-mcsl-xin} is a
consequence of the representation
\begin{equation}
  \begin{split}
  \Psi^{\infty}_{\ZZ[\xi_n]}(s)\, 
    & = \ts\bigl(\Psi^{\infty}_{\ZZ[\xi_n]}
      (2^{L+1}s)\bigr)^{2^{-(L+1)}}
      \prod_{\ell=0}^{L} 
        \bigl(\Psi^{}_{\ZZ[\xi_n]}
        (2^{\ell}s)\bigr)^{2^{-\ell}} \\
    & = \,\Psi^{}_{\ZZ[\xi_n]}(s)  \prod_{\ell=1}^{\infty} 
        \bigl(\Psi^{}_{\ZZ[\xi_n]}
        (2^{\ell}s)\bigr)^{2^{-\ell}},
  \end{split}
\end{equation}
which holds for any integer $L\geq 0$;
compare~\cite[Prop.~2]{csl-BG}. As the infinite product converges
rapidly, $q_n$, and thus $\beta^{}_n$, can be calculated numerically
in an efficient way; see~\cite[Table~3]{csl-BG} for a list of values
of $\beta^{}_n$.

\begin{example}\label{csl-ex:mcsl-square}
  Let us once more consider the square lattice for
  illustration. Theorem~\ref{csl-thm:mcsm-nfold} implies that the
  generating function for its MCSLs\index{square~lattice!multiple~CSL}
  reads\index{Dirichlet~series}
  \[
  \begin{split}
    \Psi^{\infty}_{\minisquare}(s) \; & = \,
    \sum_{m=1}^{\infty} \frac{c_{\minisquare}^{\infty}(k)}{k^{s}}
    \; = \; \left( 1 + 2^{-s}\right)^{-1}
    \frac{\zeta^{}_K(s)}{\zeta(2s)}\\[1mm]
     &  = 
    \prod_{p\equiv1 \ts\ts (4)} \myfrac{1}{(1-p^{-s})^2}
     \; = \; \Psi^{}_{\minisquare}(s)
    \prod_{p\equiv1 \ts\ts (4)} \myfrac{1}{1-p^{-2s}}
    \ts ,
  \end{split}
  \]
  where we have employed the notation
  $c_{\minisquare}^{\infty}(k)=c_{4}^{\infty}(k)$ for the number of
  MCSLs. The latter is a multiplicative function, whose values for
  (positive) prime powers are given
  by\index{counting~function!square~lattice}
  \[
    c_{\minisquare}^{\infty}(p^r)=
    \begin{cases}
      r+1, & \text{if } p \equiv 1 \bmod{4}, \\
      0, & \text{otherwise}.
    \end{cases}
  \]
  The first terms of the expansion read
  \[
    \Psi^{\infty}_{\minisquare}(s) \, = 
    1 + \myfrac{2}{5^s} + \myfrac{2}{13^s} + \myfrac{2}{17^s} + \myfrac{3}{25^s}
    + \myfrac{2}{29^s} + \myfrac{2}{37^s} + \myfrac{2}{41^s} + \myfrac{2}{53^s}
    + \ldots,
  \]
  and a comparison with $\Psi^{}_{\minisquare}(s)$ from
  Eq.~\eqref{csl-eq:csl-square} yields
  \[
    \Psi^{\infty}_{\minisquare}(s)-\Psi^{}_{\minisquare}(s)
    \, = \, \myfrac{1}{25^s} + \myfrac{2}{125^s} + \myfrac{1}{169^s} +
    \myfrac{1}{289^s} + 
    \myfrac{2}{325^s} + \myfrac{2}{425^s} + \myfrac{3}{625^s} + \ldots;
  \]
  compare~\cite[Table~5]{csl-BG}. Note that no additional MCSLs exist
  for square-free indices. The first terms of
  $\Psi^{\infty}_{\minisquare}(s)-\Psi^{}_{\minisquare}(s)$ indicate
  that most MCSLs actually are simple CSLs, which is confirmed by the
  asymptotic growth rates of the summatory functions,
  \[
    \gamma^{}_{\minisquare}:=\gamma^{}_4=\frac{1}{\pi}\approx 0.318{\ts}310 
    \ \text{ and } \
    \beta^{}_{\minisquare}:=\beta^{}_4\approx 0.336{\ts}193\ts ,
    \]
  of the simple and multiple CSLs, respectively;
  compare~\cite[Table~3]{csl-BG}.

  Furthermore, note that the simple CSLs\index{square~lattice!CSL} are
  all primitive SSLs, whereas the additional
  MCSLs\index{square~lattice!multiple~CSL} are all non-primitive
  SSLs. In fact, an SSL is an MCSL if and only if its index factors
  into primes $p\equiv 1 \bmod{4}$ only.
  
  The possible coincidence indices are precisely the positive odd
  integers that are products of primes $p\equiv 1 \bmod{4}$ only. In
  other words, the coincidence
  spectra\index{coincidence~spectrum!square~lattice} of the square
  lattice are given by
  \[
    \sigma(\ZZ^2)\, =\,\sigma^{}_{\infty}(\ZZ^2)\, =\,
    \{ \text{all finite products of primes $\equiv 1 \bmod{4}$}\}
  \]
  and thus agree in this case.  \exend
\end{example}

Let us now turn our attention to some important examples in three and
four dimensions, where quaternions will play a fundamental role;
compare Section~\ref{csl-sec:quat}.  On the one hand, following
Cayley,\index{rotation!Cayley~parametrisation} rotations in three
and four dimensions can be parametrised conveniently by quaternions,
which allows us to exploit the algebraic structure of certain rings of
quaternions, including the rings $\JJ$, $\LL$ and $\II$. On the other
hand, these rings are either four-dimensional lattices themselves,
like $\JJ$ and $\LL$, or they are related to lattices.  For instance,
the lattice $A_4$ is related to the icosian ring $\II$; see
Section~\ref{csl-sec:a4}. Likewise, the projections of $\JJ$ and $\LL$
onto the three-dimensional imaginary subspace yield the body-centred
and primitive cubic lattices, respectively. Moreover, $\II$ is a
$\ZZ[\tau]$-lattice of rank $4$ in the sense of
Definition~\ref{csl-def:S-lat}.

\section{The cubic lattices}
\label{csl-sec:cubic}

The three-dimensional cubic lattices are among the most important
lattices in crystallography,\index{crystallography} and the study of
their coincidences is a classic
problem~\cite{csl-Ranga66,csl-Grimmer74,csl-GBW,csl-Grimmer84}.
Later, these lattices have been revisited in a more mathematical
context \cite{csl-Baake-rev,csl-Z2}.  Here, the key tool is the ring
$\JJ$ of Hurwitz
quaternions,\index{quaternion!Hurwitz}\index{Hurwitz~ring} since it
turns out that any coincidence rotation\index{rotation!coincidence} of
a three-dimensional cubic lattice can be parametrised by a Hurwitz
quaternion; compare~\mbox{\cite[Sec.~2.5.4]{csl-TAO}} as well
as~\cite{csl-BM} and references therein for some general background.

Let us first define our setting. We use the conventions
of~\cite[Ex.~3.2]{csl-TAO} and define
\begin{equation}\label{csl-cubiclat}
  \Gpc\, := \,\ZZ^3, \quad \Gbcc \, := \, \ZZ^3 \cup (u+\ZZ^3)\ts , 
   \quad \Gfcc \, :=\, \Gbccstar\ts ,
\end{equation}
with $u=\frac{1}{2}(1,1,1)$. Here, the index $\mathsf{pc}$ indicates
that this lattice is a primitive cubic lattice, and likewise
$\mathsf{bcc}$ and $\mathsf{fcc}$ denote the body-centred and the
face-centred cubic lattices, respectively.

Traditionally, one starts with the primitive cubic lattice, partly due
to the fact that this lattice allows the easiest treatment with
elementary methods. We will deviate from this tradition here, as the
body-centred lattice allows for the nicest description of its
coincidence site lattices.

\begin{fact}
One has\/ $\Gbcc\simeq\Imag (\JJ)$ and\/ $\Gpc\simeq\Imag (\LL)$.\qed
\end{fact}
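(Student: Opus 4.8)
The plan is to compute the image sets $\Imag(\LL)$ and $\Imag(\JJ)$ directly from the definitions of the two quaternion orders in Section~\ref{csl-sec:quat}, and to observe that, under the identification of the imaginary space $\Imag(\HH)$ with $\RR^3$, each coincides with the stated cubic lattice. Recall that this identification sends $(0,q_1,q_2,q_3)\mapsto(q_1,q_2,q_3)$; since it is the restriction of the orthogonal projection that discards the real coordinate and the inner product on $\HH$ is the Euclidean one, it is an isometry of $\Imag(\HH)$ onto $\RR^3$. Hence any equality of point sets obtained this way upgrades to a congruence of embedded lattices, which is what the symbol $\simeq$ encodes here.

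First I would treat $\LL$. Writing an arbitrary $q=(q_0,q_1,q_2,q_3)\in\LL$ with all $q_i\in\ZZ$, one has $\Imag(q)=(0,q_1,q_2,q_3)$; since the real component $q_0$ is an independent integer that is simply dropped by $\Imag$, the image is $\Imag(\LL)=\{(0,a,b,c)\mid a,b,c\in\ZZ\}$, which maps onto $\ZZ^3=\Gpc$.

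Next I would treat $\JJ$ via its decomposition $\JJ=\LL\cup\bigl((\tfrac{1}{2},\tfrac{1}{2},\tfrac{1}{2},\tfrac{1}{2})+\LL\bigr)$. The imaginary parts of the Lipschitz part again give $\ZZ^3$, while for the half-integer coset every component lies in $\tfrac{1}{2}+\ZZ$, so $\Imag(q)=(0,q_1,q_2,q_3)$ with $q_1,q_2,q_3\in\tfrac{1}{2}+\ZZ$; as the real part is once more free and discarded, the image of this coset is exactly $u+\ZZ^3$ with $u=\tfrac{1}{2}(1,1,1)$. Taking the union yields $\Imag(\JJ)=\ZZ^3\cup(u+\ZZ^3)=\Gbcc$. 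To close the argument cleanly I would also record that $\Gbcc$ is indeed a lattice: it is a subgroup because $2u=(1,1,1)\in\ZZ^3$, so the sum of two elements of the coset $u+\ZZ^3$ returns to $\ZZ^3$, and it is visibly discrete and co-compact.

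There is no genuine obstacle here; the statement unwinds directly from the definitions. The only points that merit a word of care are that $\Imag$ forgets the real coordinate, so the congruence conditions on $q_0$ never enter the image, and that the identification of $\Imag(\HH)$ with $\RR^3$ is an isometry, which is precisely what turns the set-theoretic equalities into the asserted congruences $\Gpc\simeq\Imag(\LL)$ and $\Gbcc\simeq\Imag(\JJ)$.
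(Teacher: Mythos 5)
Your proof is correct, and it is precisely the routine verification the paper has in mind: the Fact is stated without proof (it follows immediately from the definitions of $\LL$, $\JJ$, $\Gpc$ and $\Gbcc$), and your direct computation of the images under the projection $\Imag$, including the observation that the half-integer coset of $\JJ$ projects onto $u+\ZZ^3$, is exactly that unwinding. Your added remarks — that $\Imag$ discards the free real coordinate, that the identification of $\Imag(\HH)$ with $\RR^3$ is an isometry, and that $\ZZ^3\cup(u+\ZZ^3)$ is indeed a lattice — are the only points requiring any care, and you handle them correctly.
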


Recall that $\JJ$ is a maximal order\index{maximal~order} and a
principal ideal ring, whereas $\LL$ is neither. This indicates that
$\Gbcc$ is easier to deal with, because we can exploit the arithmetic
properties of $\JJ$ while relying on its ideal structure.

The first step in determining the CSLs of $\vG$ is the determination
of $\OC(\vG)$. Since the point reflection $I \! : \, x \mapsto -x$ is
a symmetry operation of all three-dimensional lattices, it is actually
sufficient to determine $\SOC(\vG)$. We get the following well-known
result; compare~\cite{csl-Baake-rev, csl-BLP96, csl-habil}.

\begin{theorem}\label{csl-theo:cub-equalSig}
  Let\/ $\Gpc, \Gbcc, \Gfcc \subset \RR^3$ be the primitive, the 
  body-centred, and the face-centred cubic lattice of
  Eq.~\eqref{csl-cubiclat}, respectively. Then, one has\/
  $\OC(\Gpc) = \OC(\Gbcc) = \OC(\Gfcc) = \Orth(3,\QQ)$ together with
\[
    \Sig^{}_{\mathsf{pc}}(R) \, = \,
    \Sig^{}_{\mathsf{bcc}}(R) \, = \,
    \Sig^{}_{\mathsf{fcc}}(R)
\]
for all\/ $R\in \Orth(3,\QQ)$.  
\end{theorem}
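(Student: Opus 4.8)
The plan is to separate the claim into the equality of the three $\OC$-groups and the equality of the three coincidence-index functions, treating the latter by a shelling argument based on the disjointness of the relevant norm sets.

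I would begin by making the inclusions among the three lattices explicit. One has $\Gpc=\ZZ^3\subset\Gbcc$ with index $2$, and a short computation identifies the dual as $\Gfcc=\Gbccstar=\{x\in\ZZ^3\mid x_1+x_2+x_3\in 2\ZZ\}\subset\Gpc$, again of index $2$. In particular the three lattices are mutually commensurate. By Lemma~\ref{csl-lem:comm-csl}, commensurate lattices have equal $\OC$-groups, so $\OC(\Gpc)=\OC(\Gbcc)=\OC(\Gfcc)$, and it remains to compute $\OC(\ZZ^3)$. If $R\in\Orth(3,\QQ)$, then $R\ZZ^3$ is a rational lattice, hence commensurate with $\ZZ^3$ by Lemma~\ref{csl-lem:comm-lat}, so $R\in\OC(\ZZ^3)$; conversely, $R\in\OC(\ZZ^3)$ forces $mR\ZZ^3\subseteq\ZZ^3$ for some $m\in\NN$, so that $mR$ has integer entries and $R\in\Orth(3,\QQ)$. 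This yields $\OC(\ZZ^3)=\Orth(3,\QQ)$ and settles the first assertion.

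For the indices, I would first use duality: since $\Gfcc=\Gbccstar$, Lemma~\ref{csl-lem:csl-dual} gives $\Sig^{}_{\mathsf{fcc}}(R)=\Sig^{}_{\mathsf{bcc}}(R)$ for all $R$, so it suffices to prove $\Sig^{}_{\mathsf{pc}}(R)=\Sig^{}_{\mathsf{bcc}}(R)$. I would obtain this as two divisibilities from Lemma~\ref{csl-lem:Sig-sublat-c}, whose hypothesis I check by a norm computation. In $\Gbcc=\ZZ^3\cup(u+\ZZ^3)$ with $u=\tfrac{1}{2}(1,1,1)$, a vector of $u+\ZZ^3$ has squared length equal to $\tfrac{1}{4}$ times a sum of three odd squares; as an odd square is $\equiv 1\pmod 8$, this sum is $\equiv 3\pmod 8$ and the squared length lies in $\tfrac{1}{4}(3+8\ZZ)$, hence is never an integer, whereas every vector of $\ZZ^3$ has integer squared length. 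Because the index is $2$, for $t\in\Gbcc\setminus\Gpc$ one has $t+\Gpc=\Gbcc\setminus\Gpc$, and since $R$ is an isometry the set $R(t+\Gpc)$ again consists of vectors of non-integer squared length and is therefore disjoint from $\Gpc$. Lemma~\ref{csl-lem:Sig-sublat-c} then gives $\Sig^{}_{\mathsf{pc}}(R)\mid\Sig^{}_{\mathsf{bcc}}(R)$. For the opposite divisibility I would apply the same scheme to $\Gfcc\subset\Gpc$: here $\sum x_i^2\equiv\sum x_i\pmod 2$, so $\Gfcc$ lies on even shells and $\Gpc\setminus\Gfcc$ on odd shells, and the index-$2$ coset together with isometry-invariance of shells yields $\Sig^{}_{\mathsf{fcc}}(R)\mid\Sig^{}_{\mathsf{pc}}(R)$, which by the duality identity reads $\Sig^{}_{\mathsf{bcc}}(R)\mid\Sig^{}_{\mathsf{pc}}(R)$. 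The two divisibilities give $\Sig^{}_{\mathsf{pc}}(R)=\Sig^{}_{\mathsf{bcc}}(R)$, and together with $\Sig^{}_{\mathsf{fcc}}=\Sig^{}_{\mathsf{bcc}}$ all three functions coincide.

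The main obstacle is to produce genuine equalities rather than the weaker relations $\Sig^{}_{\mathsf{pc}}(R)\mid 2\,\Sig^{}_{\mathsf{bcc}}(R)$ and $\Sig^{}_{\mathsf{bcc}}(R)\mid 2\,\Sig^{}_{\mathsf{pc}}(R)$ that the crude bound of Lemma~\ref{csl-lem:Sig-sublat} would give; the spurious factor $2$ must be eliminated. This is precisely what the shelling disjointness accomplishes, so the crux of the argument is the elementary congruence bookkeeping for squared lengths (modulo $2$ for the fcc--pc pair, modulo $8$ for the half-integer coset of bcc), combined with the fact that an isometry maps shells to shells and hence preserves the shell-disjointness of a coset required in Lemma~\ref{csl-lem:Sig-sublat-c}.
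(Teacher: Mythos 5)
Your proof is correct and follows essentially the same route as the paper: equality of the $\OC$-groups via mutual commensurability, the divisibility $\Sig^{}_{\mathsf{pc}}(R)\mid\Sig^{}_{\mathsf{bcc}}(R)$ via the shell-disjointness hypothesis of Lemma~\ref{csl-lem:Sig-sublat-c} applied to $\Gpc\subset\Gbcc$, and the reverse divisibility via the parity of $|x|^2$ on $\Gfcc=\Gbccstar\subset\Gpc$ combined with the duality identity of Lemma~\ref{csl-lem:csl-dual}. You merely fill in details the paper leaves implicit, namely the explicit computation $\OC(\ZZ^3)=\Orth(3,\QQ)$ and the congruence bookkeeping for the half-integer coset.
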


\begin{proof}
  The equality of the three $\OC$-groups is a consequence of the fact
  that the three cubic lattices are mutually commensurate.  The
  explicit form of the $\OC$-group is most easily seen for the lattice
  $\Gpc=\ZZ^{3}$, since the standard basis of $\RR^3$ is also a
  lattice basis of $\ZZ^3$.

  Note that $\Gpc \subset \Gbcc$ is a sublattice of index $2$. One
  easily verifies that $|x|^2$ is an integer for all $x\in \Gpc$ and
  that $4\ts |x|^2\equiv 3 \bmod 4$ for all $x\in \Gbcc \setminus \Gpc$.
  Hence, an application of Lemma~\ref{csl-lem:Sig-sublat-c} shows that
  $\Sig^{}_{\mathsf{pc}}(R)$ divides $\Sig^{}_{\mathsf{bcc}}(R)$. The
  reverse divisibility property can be obtained by considering the
  dual lattice $\Gbccstar=\Gfcc$.  In particular, $|x|^2$ is even for
  all $x\in \Gbccstar$ and odd for all $x\in \Gpc \setminus
  \Gbccstar$.
\end{proof}

Note that this result was already proved by Grimmer, Bollmann and
Warrington~\cite{csl-GBW}. Actually, they used a similar method
in their proof, and Lemma~\ref{csl-lem:Sig-sublat-c} is a natural
generalisation of their approach.

\begin{remark}
  Let us note that $\OC(\vG) = \OS(\vG) = \OG(3,\QQ)$ holds for all
  cubic lattices of Eq.~\eqref{csl-cubiclat}.  We have determined
  $\OC(\vG)$ explicitly above, but we could have argued more
  abstractly by using the connection of $\OS(\vG)$ and $\OC(\vG)$ as
  laid out in Section~\ref{csl-sec:sslcsl}.  It follows from
  Theorem~\ref{csl-theo:order-d} that all elements of
  $\OS(\vG)/\OC(\vG)$ have an order that divides $3$. On the other
  hand, the cubic lattices are rational lattices, which implies that
  all elements of $\OS(\vG)/\OC(\vG)$ have an order at most $2$. Thus,
  we indeed have $\OC(\vG)=\OS(\vG)$. Moreover, as $\vG$ is
  commensurate to $\ZZ^3$, the elements of $\OC(\vG)$ are exactly the
  rational orthogonal matrices, $\OG(3,\QQ)$.  \exend
\end{remark}

As any rotation in $\OG(3,\QQ)$ can be parametrised by a rational
quaternion, we can parametrise the coincidence
rotations\index{rotation!coincidence} by primitive Lipschitz or
Hurwitz quaternions.  Contrary to the traditional approach in
crystallography, we opt for primitive Hurwitz quaternions here;
compare~\cite{csl-Baake-rev}.  In particular, via
Eq.~\eqref{csl-eq:Rpar}, one finds
\begin{align}
  \SOC(\Gbcc)\, =\,\{R(q) \mid q\in\JJ\} \,=\, 
  \{R(q) \mid q\in\JJ \mbox{ is primitive}\}.
\end{align}

The first step in determining the coincidence index is the calculation
of the denominator $\den^{}_\vG(R(q))$. From Eq.~\eqref{csl-eq:Rpar},
we see that $\den^{}_\vG(R(q))$ must be a divisor of $|q|^2$. Taking
into account that the greatest common divisor of all matrix entries of
$R(q)$ is a power of $2$, we get the following result.

\begin{corollary}\label{csl-coro:cub-den}
  For any cubic lattice\/ $\vG$ in the setting of
  Eq.~\eqref{csl-cubiclat}, we have\/
  $\den^{}_\vG(R(q))=\frac{|q|^2}{2^\ell}$, where\/ $q$ is a primitive
  Hurwitz\index{quaternion!Hurwitz} quaternion and\/ $\ell$ is the
  maximal exponent such that\/ $2^\ell \big\vert |q|^2$.  \qed
\end{corollary}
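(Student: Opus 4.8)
The plan is to compute the denominator first for the primitive cubic lattice $\Gpc=\ZZ^3$, where \eqref{csl-eq:Rpar} gives direct control, and then to deduce the other two cases. Writing $R(q)=\frac{1}{|q|^2}M(q)$ for the $3\times3$ numerator matrix $M(q)$ of \eqref{csl-eq:Rpar}, and using $R(\alpha q)=R(q)$ together with $|q|^2\in\NN$, one first checks that $M(q)$ has integer entries for every primitive Hurwitz quaternion $q$: this is clear for $q\in\LL$, while for a half-integer $q$ one notes $M(q)=\frac14 M(2q)$ and that $M(2q)$ is divisible by $4$ because $2q$ has four odd components. Since $\ZZ^3$ carries the standard basis, $\alpha R(q)\ZZ^3\subseteq\ZZ^3$ holds if and only if $\frac{\alpha}{|q|^2}M(q)$ is integral, so with $g:=\gcd$ of the nine entries of $M(q)$ one obtains $\den^{}_{\Gpc}(R(q))=|q|^2/g$. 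Everything for $\Gpc$ thus reduces to the identity $g=2^{\ell}$.

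To see that $g$ is a power of $2$, suppose an odd prime $p$ divides all entries of $M(q)$. Taking sums and differences of the off-diagonal entries in \eqref{csl-eq:Rpar} forces all six products $\kappa\lambda,\kappa\mu,\ldots,\mu\nu$ to vanish modulo $p$, so at most one coordinate of $q$ is nonzero mod $p$; substituting this into any diagonal entry then kills that coordinate too, whence $p\mid\gcd(\kappa,\lambda,\mu,\nu)$, contradicting the primitivity of $q$ in $\JJ$. For the exact power of $2$, observe that every off-diagonal entry of $M(q)$ carries an explicit factor $2$, while each diagonal entry has the same parity as the number of odd coordinates of $q$. Primitivity in $\JJ$ rules out both the all-even pattern (excluded since then $\frac12$ of no coordinate is forced, i.e. $\gcd=1$ already fails) and the all-odd integer pattern (there $\frac12 q\in\JJ$). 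Hence, if $|q|^2$ is odd then $\ell=0$ and $g$, being an odd power of $2$, equals $1$ (in particular every half-integer $q$ falls here, as then $|q|^2$ is odd); if $|q|^2$ is even then $q$ is a Lipschitz quaternion with exactly two odd coordinates, so $\ell=1$ and every entry of $M(q)$ is even, giving $2\mid g$. In the latter case the integrality of the denominator—which holds because $R(q)\in\SOC(\vG)$ and Corollary~\ref{csl-lem:OC-den} applies—yields $g\mid|q|^2$, so $g=2$. In all cases $g=2^{\ell}$, which settles $\Gpc$.

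For the face-centred lattice $\Gfcc=\Gbccstar$, I would invoke the dual-lattice relation $\den^{}_{\vG^{*}}(R)=\den^{}_{\vG}(R^{-1})$ together with $R(q)^{-1}=R(\bar q)$; since $\bar q$ is again primitive Hurwitz with $|\bar q|^2=|q|^2$ and the same number of odd coordinates, the face-centred case follows from the body-centred one. For $\Gbcc$, set $M':=M(q)/2^{\ell}$, a primitive integral matrix with $\frac{|q|^2}{2^{\ell}}R(q)=M'$. As $M'\ZZ^3\subseteq\ZZ^3\subseteq\Gbcc$, the only extra requirement is $M'u\in\Gbcc$ for the coset generator $u=\frac12(1,1,1)$. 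Here \eqref{csl-eq:Rpar} gives the pleasant identity $M(q)\,u=\Imag(q\,\omega\,\bar q)$ with the Hurwitz unit $\omega=\frac12(1+\ii+\jj+\kk)$, so $M(q)u$ is the imaginary part of an element of $\JJ$ and hence lies in $\Gbcc$.

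The main obstacle is the upper bound $M'u\in\Gbcc$, i.e. that dividing by $2^{\ell}$ preserves membership: this amounts to $q\,\omega\,\bar q\in 2^{\ell}\JJ$, which I expect to prove either by a direct reduction modulo $2$ in the two-odd-coordinate case or, more conceptually, by localising at $2$, where $\JJ\otimes\ZZ_2$ is the maximal order of the $2$-adic quaternion division algebra, $1+\ii$ is a uniformiser with $(1+\ii)^2=2\ii$, and $v_2(|q|^2)=\ell$ forces $q\omega\bar q=2^{\ell}\ii^{\ell}\cdot(\text{unit})$. Minimality for $\Gbcc$ is then cheap: writing $\den^{}_{\Gbcc}(R(q))=(|q|^2/2^{\ell})/k$ with $k\in\NN$, the requirement $\frac1k M'\ZZ^3\subseteq\Gbcc$ forces $k=1$, since an odd $k>1$ or any even $k\geq4$ would make a proper divisor of $k$ divide every entry of the primitive matrix $M'$, while $k=2$ is excluded because some column of $M'$ has mixed parity—it is neither all-even (by primitivity) nor all-odd, as its squared length $(|q|^2/2^{\ell})^2\equiv1\pmod8$ whereas a sum of three odd squares is $\equiv3\pmod8$. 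Hence $\den^{}_{\Gbcc}(R(q))=|q|^2/2^{\ell}$, completing the argument for all three lattices.
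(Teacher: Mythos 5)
Your skeleton for $\Gpc=\ZZ^3$ is exactly the paper's (terse) justification---$\den_{\Gpc}(R(q))=|q|^2/g$ with $g$ the gcd of the entries of $M(q)$, and $g$ a power of $2$---and your treatment of $\Gbcc$ and $\Gfcc$ supplies details the paper leaves implicit. But two steps are genuinely defective as written. First, the parity facts you use to settle the odd-norm case are \emph{false} for half-integer Hurwitz quaternions, which you explicitly route through that case. For $q=\omega:=\frac12(1+\ii+\jj+\kk)$ one has $|q|^2=1$ and $M(q)=R(q)$ equal to a permutation matrix: its off-diagonal entries include $1$, so they do not ``carry an explicit factor $2$'' (the products $\kappa\nu,\lambda\mu,\dots$ are quarter-integers there), and its diagonal entries are $0$, i.e.\ even, so the oddness of $g$ cannot be read off the diagonal. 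The conclusion $g=1$ is nevertheless true, and the repair costs nothing: you already know that $g$ is a power of $2$ (your odd-prime argument) and that $\den_{\Gpc}(R(q))=|q|^2/g\in\NN$ by Corollary~\ref{csl-lem:OC-den}, so $g\mid|q|^2$; when $|q|^2$ is odd this forces $g=1$ with no parity analysis at all. Your parity argument is only needed, and is correct, for the lower bound $2\mid g$ when $q$ is Lipschitz with exactly two odd coordinates.

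Second, the divisibility $q\ts\omega\ts\bar q\in2^{\ell}\JJ$, which you yourself single out as ``the main obstacle'' for the upper bound $M'u\in\Gbcc$, is not proved---``I expect to prove'' is a plan, not an argument---so as submitted the $\Gbcc$ case has a hole. The claim is true and has a two-line proof inside your framework: the case $\ell=0$ is vacuous, and for $\ell=1$ the evenness of $|q|^2$ gives $q\in(1+\ii)\JJ=\JJ(1+\ii)$ (the unique two-sided ideal of norm $2$, used in the same way in Section~\ref{csl-sec:cubic} of the paper), say $q=r(1+\ii)$ with $r\in\JJ$; then
\[
  q\ts\omega\ts\bar q \, = \, r\ts\bigl((1+\ii)\ts\omega\ts(1-\ii)\bigr)\ts\bar r
  \, = \, r\ts(1+\ii-\jj+\kk)\ts\bar r \, \in \, 2\ts\JJ\ts ,
\]
since $1+\ii-\jj+\kk=2\cdot\tfrac12(1,1,-1,1)\in2\ts\JJ$. (Your $2$-adic valuation sketch also closes this gap, but it must actually be carried out.) With these two repairs your proof is complete, and the explicit verification for $\Gbcc$ via $M(q)u=\Imag(q\ts\omega\ts\bar q)$, the column-parity minimality argument, and the dual-lattice reduction for $\Gfcc$ go beyond what the paper records for this corollary.
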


Note that $\ell$ is either $0$ or $1$, depending on whether $|q|^2$ is
odd or even. If one chooses to use primitive Lipschitz quaternions,
one gets $\ell\in\{0,1,2\}$ instead. Furthermore, note that the
denominators for any similarity rotation $R$ and its inverse are the
same, $\den^{}_\vG(R^{-1})=\den^{}_\vG(R)$, as $R^{-1}(q)=R(\bar{q})$.

\begin{proposition}\label{csl-theo:cub-Sig}
  For any cubic lattice\/ $\vG\subset\RR^{3}$
  as in Eq.~\eqref{csl-cubiclat}, we have
  \[
   \Sig^{}_\vG(R(q)) \, = \, \den^{}_\vG(R(q)) \, = \, 
   \frac{|q|^2}{2^\ell}\ts , 
  \]
where\/ $q$ is a primitive Hurwitz quaternion and\/ $\ell$ is the
maximal exponent such that\/ $2^\ell \big\vert |q|^2$.
\end{proposition}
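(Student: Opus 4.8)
The plan is to reduce everything to the body-centred lattice and then compute $\Sig^{}_\vG(R(q))$ directly by localisation at the rational primes, exploiting the arithmetic of $\JJ$. By Theorem~\ref{csl-theo:cub-equalSig}, the coincidence index agrees for all three cubic lattices, so I may work with $\Gbcc\simeq\Imag(\JJ)$, where every coincidence rotation is $R(q)\!:\,x\mapsto q\ts x\ts\bar{q}/|q|^2$ for a primitive Hurwitz quaternion $q$. The second equality $\den^{}_\vG(R(q))=|q|^2/2^\ell$ is precisely Corollary~\ref{csl-coro:cub-den}, so only $\Sig^{}_\vG(R(q))=\den^{}_\vG(R(q))$ has to be shown. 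Moreover, since $R(q)^{-1}=R(\bar{q})$ and $|\bar{q}|^2=|q|^2$, one has $\den^{}_\vG(R^{-1})=\den^{}_\vG(R)$, so Lemma~\ref{csl-theo:denG-Sig1}(1) already yields the divisibility $\den^{}_\vG(R(q))\mid\Sig^{}_\vG(R(q))$ for free. The remaining content is therefore the reverse divisibility, i.e.\ that $\Sig^{}_\vG(R(q))$ divides the odd part $|q|^2/2^\ell$ of $|q|^2$.

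To get a handle on the index, first I would rewrite the CSL explicitly. Writing $n:=|q|^2$, the membership $y\in R(q)\ts\Imag(\JJ)$ is equivalent to $R(\bar{q})\ts y=\bar{q}\ts y\ts q/n\in\Imag(\JJ)$, whence
\[
  \vG(R(q))\,=\,\{\,y\in\Imag(\JJ)\mid \bar{q}\ts y\ts q\in n\ts\JJ\,\}\ts ,
\]
and $\Sig^{}_\vG(R(q))$ is the index of this sublattice in $\Imag(\JJ)$. As this index is multiplicative over the rational primes, I would compute it in each completion $\JJ\otimes\ZZ_p$ separately, where the splitting type of $\JJ$ is decisive. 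For an odd prime $p$ one is in the split case $\JJ\otimes\ZZ_p\simeq M_2(\ZZ_p)$, under which the reduced norm becomes the determinant and quaternionic conjugation becomes the adjugate; writing $Q$ for the matrix of $q$, the element $\bar{q}\ts y\ts q$ corresponds to $n\ts Q^{-1}Y\ts Q$, so $p$-adically the condition reads $Q^{-1}Y\ts Q\in M_2(\ZZ_p)$. Primitivity of $q$ gives $Q$ the $p$-adic Smith normal form $\mathrm{diag}(1,p^{a})$ with $a=v_p(n)$, and a short computation with trace-zero $Y$ shows that this condition has index exactly $p^{a}$. Hence the $p$-part of $\Sig^{}_\vG(R(q))$ equals the $p$-part of $|q|^2$ for every odd $p$, which accounts for the full odd part of the claimed value.

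The hard part will be the ramified prime $p=2$, where $\JJ\otimes\ZZ_2$ is not a matrix ring but the maximal order of the $2$-adic quaternion division algebra. The key observation is that this maximal order is unique and equals the set of all integral elements (those of non-negative reduced-norm valuation); consequently conjugation $y\mapsto q^{-1}y\ts q$ preserves it and preserves traces, so, using $\bar{q}\ts y\ts q=n\ts q^{-1}y\ts q$, the membership $\bar{q}\ts y\ts q\in n\ts\JJ$ holds $2$-adically for every $y\in\Imag(\JJ)$. Thus the $2$-part of the index is trivial, i.e.\ $\Sig^{}_\vG(R(q))$ is odd, which is exactly where the correction factor $2^\ell$ is absorbed and where the argument genuinely differs from the odd primes. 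Combining the two local computations gives $\Sig^{}_\vG(R(q))=\prod_{p\ne 2}p^{\ts v_p(|q|^2)}=|q|^2/2^\ell=\den^{}_\vG(R(q))$, completing the proof.
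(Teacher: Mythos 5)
Your overall strategy (reduce to $\Gbcc$ via Theorem~\ref{csl-theo:cub-equalSig}, get $\den\mid\Sig$ from Lemma~\ref{csl-theo:denG-Sig1}, then compute the index prime by prime) is viable and genuinely different from the paper's argument, and your split-case computation at odd primes is correct. But there is a real error in your reformulation of the CSL, and it propagates into a false claim at $p=2$. You write $\vG(R(q))=\{\ts y\in\Imag(\JJ)\mid \bar{q}\ts y\ts q\in n\ts\JJ\ts\}$. Since $\bar{q}\ts y\ts q$ is purely imaginary, the condition $\bar{q}\ts y\ts q\in n\ts\JJ$ forces it into $n\ts(\JJ\cap\Imag(\HH))$, and $\JJ\cap\Imag(\HH)$ is the lattice of trace-zero \emph{integer} quaternions, i.e.\ $\Gpc$ — not $\Imag(\JJ)=\Gbcc$, which is the \emph{projection} of $\JJ$ and contains elements such as $y=(0,\tfrac12,\tfrac12,\tfrac12)\notin\JJ$. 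So your set is $\Gbcc\cap R(q)\ts\Gpc$, which is in general a proper sublattice of the CSL: for $q=1$ it equals $\Gpc$, of index $2$ in $\Gbcc$. Accordingly, your key $2$-adic assertion ("the membership $\bar{q}\ts y\ts q\in n\ts\JJ$ holds $2$-adically for every $y\in\Imag(\JJ)$") is false: with $q=1$ and $y=(0,\tfrac12,\tfrac12,\tfrac12)$ one has $\bar{q}\ts y\ts q=y\notin\JJ\otimes\ZZ_2$, since $|y|^2=\tfrac34$ has negative $2$-adic valuation. The reason the "conjugation preserves the unique maximal order" argument cannot be applied as you state it is precisely that $\Imag(\JJ)\not\subseteq\JJ\otimes\ZZ_2$.

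The gap is fixable, but it requires replacing the condition by the correct one, $\bar{q}\ts y\ts q\in n\ts\Imag(\JJ)$, and then arguing that conjugation preserves the lattice $\Imag(\JJ)\otimes\ZZ_2=\Imag(\JJ\otimes\ZZ_2)$: inner automorphisms commute with the map $\Imag$, because $\overline{q^{-1}x\ts q}=q^{-1}\bar{x}\ts q$, so $q^{-1}\Imag(\JJ\otimes\ZZ_2)\ts q=\Imag(q^{-1}(\JJ\otimes\ZZ_2)\ts q)=\Imag(\JJ\otimes\ZZ_2)$, and the $2$-part of the index is indeed trivial; the odd-prime computation is unaffected, since the two conditions differ only at $2$. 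For comparison, the paper avoids localisation entirely: it shows globally that $\Imag(q\JJ)\subseteq\Gbcc(R(q))$ (from $R(q)\Imag(xq)=\Imag(qx)$) and computes $[\Imag(\JJ):\Imag(q\JJ)]=|q|^4/|q|^2=|q|^2$, so $\Sig$ divides $|q|^2$; oddness of $\Sig$ comes for free from the sandwich $\den\mid\Sig\mid\den^2$ with $\den$ odd. That route is shorter and sidesteps the $\Gpc$ versus $\Gbcc$ subtlety that trips up your $2$-adic step.
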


\begin{proof}
  From Theorem~\ref{csl-theo:denG-Sig1}, we know that the index
  $\Sig^{}_\vG(R(q))$ is a multiple of $\den^{}_\vG(R(q)) =
  \frac{|q|^2}{2^\ell}$ and a divisor of $\den^{}_\vG(R(q))^2$. As the
  latter is odd, so is $\Sig^{}_\vG(R(q))$, and it is thus sufficient
  to show that $\Sig^{}_\vG(R(q))$ divides $|q|^2$.

  By Theorem~\ref{csl-theo:cub-equalSig}, the coincidence indices are
  the same for all cubic lattices. Hence, it suffices to prove that
  $\Sig^{}_{\mathsf{bcc}}(R(q))$ divides $|q|^2$. We observe
  $R(q)\Imag (xq) = \Imag (qx)$, which implies that $R(q)\Imag (\JJ
  q)=\Imag (q\JJ)$, from which we infer that $\Imag (q\JJ) \subseteq
  \Gbcc(R(q))$. Consequently, $\Sig_{\mathsf{bcc}}(R(q))$ divides
  the index \mbox{$[\Imag (\JJ) : \Imag (q\JJ)]$}.

  In order to determine the latter, we note that $[\JJ: q\JJ\ts
  ]=|q|^4$ for any $q\in\JJ$. Moreover, one has
\[
  \bigl[\bigl(\JJ \cap \Real(\HH)\bigr):
  \bigl((q\JJ) \cap \Real(\HH)\bigr)\bigr] \,=\, |q|^2,
\] 
  where $\Real(\HH)$ is to be understood as the real axis.  Hence
  $\mbox{$[\Imag (\JJ) : \Imag (q\JJ)]$}=\frac{[\JJ: q\JJ\ts ]}
  {[\Real (\JJ): \Real (q\JJ)]}=|q|^2$, and
  $\Sig_{\mathsf{bcc}}(R(q))$ thus divides $|q|^2$.
\end{proof}

If $\den^{}_\vG(R(q))$ is square-free, there also exists a 
simple alternative proof. Since we have
$\den^{}_\vG(R)=\den^{}_\vG(R^{-1})$ for the
cubic lattices, Theorem~\ref{csl-theo:denG-Sig1} tells us that
$\Sig^{}_\vG(R)^2$ divides $\den^{}_\vG(R)^3$, and if $\den^{}_\vG(R)$
is square-free, we may infer that $\Sig^{}_\vG(R)=\den^{}_\vG(R)$.

\begin{remark}\label{csl-rem:spec-cub}
  It follows from Proposition~\ref{csl-theo:cub-Sig} that the
  coincidence indices are odd positive integers. Moreover, Lagrange's
  four-square theorem \cite{csl-Grosswald} tells us that any positive
  integer is a sum of four squares. Hence, for any odd $n$, there
  exists a Hurwitz quaternion $q$ such that $n=\lvert q\rvert^2$. This
  implies that any odd positive integer is realised as a coincidence
  index, or in other words, the coincidence spectrum
  \index{coincidence~spectrum!cubic~lattices} of any cubic lattice is
  precisely the set of positive odd integers, so $\sigma(\Gbcc) =
  \sigma(\Gpc) = \sigma(\Gfcc)=2\ts\Nnull+1$.  \exend
\end{remark}

\begin{proposition}\label{csl-theo:bcc-csl}
  If\/ $q$ is a primitive Hurwitz quaternion with\/ $|q|^2$ odd, one
  has the relation\/ $\Gbcc(R(q)) = \Imag (q\JJ)$.
\end{proposition}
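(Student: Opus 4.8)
The plan is to prove the asserted equality by combining an inclusion that was already established inside the proof of Proposition~\ref{csl-theo:cub-Sig} with a comparison of indices. Recall that in that proof we verified the pointwise identity $R(q)\Imag (xq) = \Imag (qx)$, which upon letting $x$ range over $\JJ$ gives $R(q)\Imag (\JJ q)=\Imag (q\JJ)$ and hence the inclusion $\Imag (q\JJ)\subseteq \Gbcc(R(q))$. Since moreover $\Imag (q\JJ)\subseteq \Imag(\JJ)=\Gbcc$, we obtain the chain
\[
   \Imag (q\JJ)\,\subseteq\,\Gbcc(R(q))\,\subseteq\,\Gbcc \, =\,\Imag (\JJ),
\]
and the whole argument reduces to showing that the outer and inner modules here have the same index in $\Gbcc$.

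For the left-hand index, the same computation from the proof of Proposition~\ref{csl-theo:cub-Sig} yields $[\Imag (\JJ):\Imag (q\JJ)]=|q|^2$, obtained from $[\JJ:q\JJ]=|q|^4$ together with the corresponding index $|q|^2$ on the real axis. For the right-hand index, I would invoke Proposition~\ref{csl-theo:cub-Sig} directly: as $|q|^2$ is odd we have $\ell=0$, so
\[
   [\Gbcc:\Gbcc(R(q))]\,=\,\Sig^{}_{\mathsf{bcc}}(R(q))\,=\,\den^{}_\vG(R(q))\,=\,|q|^2 .
\]
Thus both $\Imag (q\JJ)$ and $\Gbcc(R(q))$ are subgroups of $\Gbcc$ of index exactly $|q|^2$.

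With the inclusion $\Imag (q\JJ)\subseteq \Gbcc(R(q))$ in hand, the multiplicativity of the group index gives
\[
   |q|^2 \, =\, [\Gbcc:\Imag (q\JJ)] \, =\,
   [\Gbcc:\Gbcc(R(q))]\,[\Gbcc(R(q)):\Imag (q\JJ)]
   \, =\, |q|^2\,[\Gbcc(R(q)):\Imag (q\JJ)],
\]
so that $[\Gbcc(R(q)):\Imag (q\JJ)]=1$ and therefore $\Gbcc(R(q))=\Imag (q\JJ)$, as claimed. The only point demanding care is the bookkeeping of the factor of $2$: the clean equality of indices relies essentially on $|q|^2$ being odd, since this forces $\ell=0$ in Proposition~\ref{csl-theo:cub-Sig} and makes $\Sig^{}_{\mathsf{bcc}}(R(q))$ agree with $|q|^2$ rather than with $|q|^2/2$. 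Beyond that observation there is no genuine obstacle, as the two index computations were already carried out in establishing Proposition~\ref{csl-theo:cub-Sig}.
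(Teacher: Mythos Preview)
Your proof is correct and follows essentially the same approach as the paper: establish the inclusion $\Imag(q\JJ)\subseteq\Gbcc(R(q))$ from the proof of Proposition~\ref{csl-theo:cub-Sig}, then compare indices using $\Sig_{\mathsf{bcc}}(R(q))=|q|^2=[\Imag(\JJ):\Imag(q\JJ)]$ when $|q|^2$ is odd. The paper's version is simply more terse, omitting the explicit multiplicativity-of-index step.
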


\begin{proof}
We have seen $\Imag (q\JJ) \subseteq \Gbcc(R(q))$ in the proof of
Proposition~\ref{csl-theo:cub-Sig}.  If $|q|^2$ is odd, then both
sublattices have the same index,
\[
\Sig_{\mathsf{bcc}}(R(q))\, =\, |q|^2\, =\,
    [\Imag (\JJ) : \Imag (q\JJ)]\ts ,
\]
and hence $\Imag (q\JJ) = \Gbcc(R(q))$.
\end{proof}

If $|q|^2$ is even, $q$ can be written as $q=rs$ with $r,s\in\JJ$,
where $|r|^2$ is odd and $|s|^2=2^\ell$. As $R(s)$ is a symmetry
operation of $\Gbcc$, we see that $\Gbcc(R(q)) = \Gbcc(R(r)) = \Imag
(r\JJ)$.

An analogous result exists for the primitive cubic lattice $\ZZ^3$ and
can be stated as follows; compare~\cite[Thm.~3.5.5]{csl-habil}.

\begin{proposition}\label{csl-theo:pc-csl}
  If\/ $q$ is a primitive Lipschitz quaternion with\/ $|q|^2$ odd, 
  one has the relation\/ $\Gpc(R(q)) = \Imag  (q\ts\LL) $.
\end{proposition}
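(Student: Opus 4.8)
The plan is to mirror the body-centred argument of Proposition~\ref{csl-theo:bcc-csl}, replacing the Hurwitz ring $\JJ$ by the Lipschitz ring $\LL$ throughout, and then to match two indices. First I would prove the inclusion $\Imag(q\ts\LL)\subseteq\Gpc(R(q))$, and afterwards show that $\Imag(q\ts\LL)$ has index $|q|^2$ in $\Gpc=\Imag(\LL)$; since $\Gpc(R(q))$ has the same index, equality will follow.

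For the inclusion I would reuse the identity $R(q)\Imag(xq)=\Imag(qx)$, valid for all $x\in\HH$, which was recorded in the proof of Proposition~\ref{csl-theo:cub-Sig}. Letting $x$ range over $\LL$ gives $R(q)\Imag(\LL\ts q)=\Imag(q\ts\LL)$. As $\LL$ is a ring and $q\in\LL$, we have $\LL\ts q\subseteq\LL$ and $q\ts\LL\subseteq\LL$, so $\Imag(q\ts\LL)\subseteq\Imag(\LL)=\Gpc$ and, from $\Imag(\LL\ts q)\subseteq\Imag(\LL)$, also $\Imag(q\ts\LL)=R(q)\Imag(\LL\ts q)\subseteq R(q)\ts\Gpc$. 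Hence $\Imag(q\ts\LL)\subseteq\Gpc\cap R(q)\ts\Gpc=\Gpc(R(q))$.

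The key step is then the index computation $[\Imag(\LL):\Imag(q\ts\LL)]=|q|^2$. I would obtain it by factoring the index of $q\ts\LL$ in $\LL$ through the orthogonal splitting $\HH=\Real(\HH)\oplus\Imag(\HH)$: applying $\Imag$ to the inclusion $q\ts\LL\subseteq\LL$ yields two compatible short exact sequences whose kernels are the real-axis parts, so $[\LL:q\ts\LL]=[\LL\cap\Real(\HH):q\ts\LL\cap\Real(\HH)]\cdot[\Imag(\LL):\Imag(q\ts\LL)]$. Here $[\LL:q\ts\LL]=|q|^4$, since left multiplication by $q$ scales $\RR^4$-volume by $|q|^4$. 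For the real-axis factor, a product $q\ts x$ with $x\in\LL$ is real exactly when $x\in\frac{\bar q}{|q|^2}\RR$, and $\frac{\bar q}{|q|^2}\ts t\in\LL$ for $t\in\ZZ$ holds if and only if $|q|^2\mid t$, because the components of the primitive quaternion $\bar q$ have greatest common divisor $1$. Thus $q\ts\LL\cap\Real(\HH)=|q|^2\ZZ$ inside $\LL\cap\Real(\HH)=\ZZ$, the real-axis factor equals $|q|^2$, and therefore $[\Imag(\LL):\Imag(q\ts\LL)]=|q|^4/|q|^2=|q|^2$.

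Finally I would identify this with the coincidence index. A primitive Lipschitz quaternion with $|q|^2$ odd is automatically primitive as a Hurwitz quaternion (the extra Hurwitz option $\tfrac12 q\in\JJ$ would require all components of $q$ to be odd, forcing $|q|^2\equiv 0\bmod 4$, against the hypothesis), so Proposition~\ref{csl-theo:cub-Sig} applies and gives $\Sig^{}_{\mathsf{pc}}(R(q))=|q|^2$, that is $[\Gpc:\Gpc(R(q))]=|q|^2$. Combined with $\Imag(q\ts\LL)\subseteq\Gpc(R(q))\subseteq\Gpc$ and the equal indices, this forces $\Gpc(R(q))=\Imag(q\ts\LL)$. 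I expect the main obstacle to be the index computation of the third paragraph: unlike $\JJ$, the ring $\LL$ is neither maximal nor a principal ideal ring, so the ideal-theoretic shortcuts available for $\Gbcc$ cannot be used, and it is precisely the primitivity of $q$ that makes the real-axis factor come out as $|q|^2$ rather than a proper divisor.
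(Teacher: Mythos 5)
Your proof is correct, but it takes a genuinely different route from the paper's. The paper deduces the proposition from the body-centred case by pure index-$2$ bookkeeping: starting from $\Gbcc(R(q))=\Imag(q\JJ)$ (Proposition~\ref{csl-theo:bcc-csl}), it notes that $\Gpc(R(q))\subseteq\Gbcc(R(q))\cap\Imag(\LL)$ and that both sides have index $2$ in $\Gbcc(R(q))$ (possible only because the coincidence index is odd), so they coincide; the same sandwich applied to $\Imag(q\ts\LL)\subseteq\Imag(q\JJ)\cap\Imag(\LL)$ then finishes the proof without ever computing an index inside $\LL$. You instead re-run the direct bcc-style argument with $\LL$ in place of $\JJ$: the inclusion via the identity $R(q)\Imag(xq)=\Imag(qx)$, the index computation $[\Imag(\LL):\Imag(q\ts\LL)]=|q|^4/|q|^2=|q|^2$ via the splitting of $[\LL:q\ts\LL]$ into its real-axis and imaginary parts, and the identification with $\Sig^{}_{\mathsf{pc}}(R(q))=|q|^2$ from Proposition~\ref{csl-theo:cub-Sig}. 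The two supporting observations you single out are exactly the points that need care, and you settle both correctly: primitivity of $q$ in $\LL$ is what forces $q\ts\LL\cap\Real(\HH)=|q|^2\ZZ$ (the gcd argument), and a primitive Lipschitz quaternion of odd norm is automatically $\JJ$-primitive, since the only extra option $\frac{1}{2}q\in\JJ$ would require all components odd and hence $|q|^2\equiv 0 \bmod 4$; this is what legitimises the appeal to Proposition~\ref{csl-theo:cub-Sig}. What the paper's route buys is brevity and complete avoidance of arithmetic in $\LL$, which is neither maximal nor a principal ideal ring; what your route buys is independence from Proposition~\ref{csl-theo:bcc-csl} itself (you need only the coincidence index, not the explicit form of the bcc CSL) and an explicit demonstration that the index computation in $\LL$ requires no ideal theory at all, only the volume argument, the exact-sequence splitting, and primitivity.
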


\begin{proof}
  From Proposition~\ref{csl-theo:bcc-csl}, we infer that
  \[
     \Gbcc(R(q)) \cap \Imag (\LL) \, = \, 
     \Imag (q\JJ) \cap \Imag (\LL) \ts .
  \]
  As $\Gpc(R(q)) \nts\subseteq\nts \Gbcc(R(q)) \cap \Imag (\LL)$, and both
  $\Gpc(R(q))$ and $\Gbcc(R(q)) \cap \Imag (\LL)$ have index $2$ in
  $\Gbcc(R(q))$, we also infer $\Gpc(R(q)) = \Gbcc(R(q)) \cap \Imag
  (\LL)$. A similar argument applied to $\Imag (q\LL) \subseteq \Imag
  (q\JJ) \cap \Imag (\LL)$ shows that one has $\Imag (q\LL) = \Imag (q\JJ)
  \cap  \Imag (\LL)$, which completes the proof.
\end{proof}

Again, in analogy to the situation for $\Gbcc$, we can find a
quaternion $r\in\LL$ such that $\Gpc(R(q)) = \Imag (r\LL)$ if $|q|^2$
is even.

Let us return to the CSLs of
$\Gbcc$. Proposition~\ref{csl-theo:bcc-csl} shows that any CSL of
$\Gbcc$ is the projection $\Imag (q\JJ)$ of an ideal $q\JJ$ of
$\JJ$. On the other hand, whenever $q$ is an odd primitive quaternion,
$\Imag (q\JJ)$ is a CSL of $\Gbcc$.  If we can show that there is a
bijection between the set of ideals $\{ q\JJ \mid q \text{ is
  primitive and odd}\}$ and the set of CSLs, then we can easily count
the CSLs of a given index, as the number of ideals of a fixed index is
well known~\cite{csl-Vigneras}.  The first step into this direction is
the following result.

\begin{lemma}
  Let\/ $q,r\in\JJ$ such that\/ $|q|^2$ and\/ $|r|^2$ are odd. Then,
  one has\/ $\Imag (q\JJ) \subseteq \Imag (r\JJ) $ if and only if\/ $
  q\JJ \subseteq r\JJ $.
\end{lemma}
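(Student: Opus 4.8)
The forward implication is immediate: the imaginary-part projection $\Imag$ is additive (indeed $\RR$-linear) and hence monotone on subsets, so $q\JJ\subseteq r\JJ$ gives $\Imag(q\JJ)\subseteq\Imag(r\JJ)$ at once. All the content is in the converse. The naive strategy would be to lift the inclusion elementwise: given $qx\in q\JJ$, choose $y\in\JJ$ with $\Imag(ry)=\Imag(qx)$, so that $qx-ry$ lies in $\JJ\cap\Real(\HH)=\ZZ$ and equals some integer $c$; one would then like $c\in r\JJ\cap\Real(\HH)=|r|^2\ZZ$. The trouble is that changing $y$ only shifts $c$ by multiples of $|r|^2$, leaving a genuine residue $c\bmod|r|^2$ that I see no direct way to annihilate. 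This congruence ambiguity is the main obstacle, and the plan is to sidestep it altogether by working with ideals rather than individual elements.

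The device is the greatest common left divisor. I would set $g:=\gcld(q,r)$, so that $g\JJ=q\JJ+r\JJ$; in particular $r\JJ\subseteq g\JJ$ and $g$ left-divides both $q$ and $r$. Passing to the sum ideal interacts cleanly with the projection, since additivity of $\Imag$ gives
\[
  \Imag(g\JJ)\, =\, \Imag(q\JJ)+\Imag(r\JJ)\, =\, \Imag(r\JJ),
\]
where the last step is exactly the hypothesis $\Imag(q\JJ)\subseteq\Imag(r\JJ)$. Thus the nested right ideals $r\JJ\subseteq g\JJ$ have literally the same imaginary projection, and the problem reduces to showing that this forces $g\JJ=r\JJ$.

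To turn the equality of projections into an equality of norms, I would invoke the index computations already performed in the proof of Proposition~\ref{csl-theo:cub-Sig}, namely $[\JJ:w\JJ]=|w|^4$ and $[(\JJ\cap\Real(\HH)):(w\JJ\cap\Real(\HH))]=|w|^2$ for $w\in\JJ$. Applying multiplicativity of the index along the short exact sequence $0\to w\JJ\cap\Real(\HH)\to w\JJ\to\Imag(w\JJ)\to 0$ (and the analogous one for $\JJ$ itself) yields
\[
  [\Imag(\JJ):\Imag(w\JJ)]\, =\, \frac{[\JJ:w\JJ]}{[(\JJ\cap\Real(\HH)):(w\JJ\cap\Real(\HH))]}\, =\, |w|^2 .
\]
Taking $w=g$ and $w=r$ and using $\Imag(g\JJ)=\Imag(r\JJ)$ then gives $|g|^2=|r|^2$. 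Finally, from $r\JJ\subseteq g\JJ$ together with $[\JJ:r\JJ]=|r|^4=|g|^4=[\JJ:g\JJ]$, the two right ideals have equal finite index in $\JJ$ and therefore coincide, so $g\JJ=r\JJ$. Hence $q\JJ\subseteq q\JJ+r\JJ=g\JJ=r\JJ$, as required.

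In the full write-up I expect the only steps needing care to be the index-multiplicativity identity (a routine application of the snake lemma to the two exact sequences above) and the bookkeeping that $\gcld(q,r)$ genuinely left-divides $r$, both of which are standard given the principal-ideal-ring structure of $\JJ$ recalled in Section~\ref{csl-sec:quat}. The oddness of $|q|^2$ and $|r|^2$ enters only to keep us in the regime where $\Imag(w\JJ)$ is the honest CSL of Proposition~\ref{csl-theo:bcc-csl}; the argument above does not otherwise rely on it.
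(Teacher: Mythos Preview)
Your argument is correct and is essentially identical to the paper's: both pass to the greatest common left divisor $g=\gcld(q,r)$ (the paper calls it $s$), use additivity of $\Imag$ to get $\Imag(g\JJ)=\Imag(r\JJ)$, invoke the index formula $[\Imag(\JJ):\Imag(w\JJ)]=|w|^2$ to deduce $|g|^2=|r|^2$, and then conclude $g\JJ=r\JJ$ from the inclusion $r\JJ\subseteq g\JJ$ with equal index. Your write-up is more detailed (and the exploratory paragraph on the naive elementwise approach is not needed), but the substance matches the paper line for line.
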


\begin{proof}
  Only the `only if' part is non-trivial. $\Imag (q\JJ) \subseteq
  \Imag (r\JJ)$ implies that $|r|^2$ divides $|q|^2$. Now,
\[
   \Imag (r\JJ) \, = \, \Imag (r\JJ) + \Imag (q\JJ) 
   \, = \, \Imag (r\JJ + q\JJ) \, = \, \Imag (s\JJ) \ts , 
\]
which shows that $|r|^2=|s|^2$, where $s$ is the greatest common left
divisor of $r$ and $q$.  Hence $s^{-1}r\in \JJ$, but as $|s^{-1}r|=1$,
it must be a unit.  Thus $q \JJ \subseteq s \JJ = r \JJ$.
\end{proof}

From this, we infer the following result; compare~\cite{csl-BLP96} for
a similar result in a more general context.

\begin{corollary}
  Let\/ $q,r\in\JJ$ such that\/ $|q|^2$ and\/ $|r|^2$ are odd. Then,
  one has\/ $\Imag (q\JJ) = \Imag (r\JJ) $ if and only if\/ $ q\JJ =
  r\JJ $.  \qed
\end{corollary}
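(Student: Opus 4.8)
The plan is to obtain the corollary as an immediate consequence of the preceding Lemma, by rephrasing the two set equalities as pairs of mutual inclusions. First I would observe that, for any subsets, $\Imag (q\JJ) = \Imag (r\JJ)$ holds precisely when both $\Imag (q\JJ) \subseteq \Imag (r\JJ)$ and $\Imag (r\JJ) \subseteq \Imag (q\JJ)$ are satisfied; likewise, $q\JJ = r\JJ$ is equivalent to the conjunction of $q\JJ \subseteq r\JJ$ and $r\JJ \subseteq q\JJ$. Thus the claimed biconditional for equalities reduces to biconditionals for inclusions, which is exactly what the Lemma supplies.

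The key point is that the hypothesis is symmetric in $q$ and $r$: both $|q|^2$ and $|r|^2$ are odd. Hence the Lemma may be applied in both orderings. Applying it to the pair $(q,r)$ gives $\Imag (q\JJ) \subseteq \Imag (r\JJ)$ if and only if $q\JJ \subseteq r\JJ$, and applying it to the pair $(r,q)$ gives $\Imag (r\JJ) \subseteq \Imag (q\JJ)$ if and only if $r\JJ \subseteq q\JJ$. Combining these two equivalences, the simultaneous validity of the two inclusions on the projected (imaginary) side is equivalent to the simultaneous validity of the two inclusions on the ideal side, which is precisely the assertion $\Imag (q\JJ) = \Imag (r\JJ) \iff q\JJ = r\JJ$.

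There is essentially no obstacle here: the entire content sits in the Lemma, whose nontrivial direction already exploits that $\JJ$ is a principal right ideal ring, so that $r\JJ + q\JJ = s\JJ$ with $s = \gcld(r,q)$, together with the norm bookkeeping $|r|^2 = |s|^2$ that forces $s^{-1}r$ to be a unit. The corollary itself requires only the formal manipulation of inclusions described above, so I would keep the proof to a single line once the Lemma is in place. If anything deserves a remark, it is merely the verification that the oddness hypothesis is preserved when swapping the roles of $q$ and $r$, which is automatic.
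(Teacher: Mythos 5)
Your proposal is correct and matches the paper's own (implicit) argument exactly: the paper derives the corollary immediately from the preceding lemma by applying it to both inclusions, which is precisely your symmetric two-way application. Nothing is missing; the oddness hypothesis is indeed symmetric, so the reduction of set equality to mutual inclusions settles the claim.
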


In other words, putting the previous steps together, we have proved
the following result.

\begin{lemma}\label{csl-lem:bcc-bij}
  The mapping\/ $q \JJ \mapsto \Gbcc(R(q))$, which maps the set of
  left ideals generated by primitive quaternions with\/ $|q|^2$ odd
  onto the set of CSLs of\/ $\Gbcc$, is a bijection.  \qed
\end{lemma}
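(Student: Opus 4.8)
The plan is to establish the claimed bijection by assembling the three ingredients developed immediately above into a well-defined, surjective, and injective map. First I would verify that the map $q\JJ \mapsto \Gbcc(R(q))$ is well defined on the stated domain. By Proposition~\ref{csl-theo:bcc-csl}, for a primitive Hurwitz quaternion $q$ with $|q|^2$ odd, one has $\Gbcc(R(q)) = \Imag(q\JJ)$, so the target CSL depends only on the ideal $\Imag(q\JJ)$, hence only on $q\JJ$. To make this fully rigorous, I must note that two primitive odd quaternions generating the \emph{same} left ideal $q\JJ=r\JJ$ produce the same image $\Imag(q\JJ)=\Imag(r\JJ)$, and conversely — but this is precisely the content of the Corollary just stated, so well-definedness is immediate once that Corollary is in hand.

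Next I would address surjectivity and injectivity together, since both follow directly from the structural results already assembled. For surjectivity, recall that every CSL of $\Gbcc$ arises as $\Gbcc(R)$ for some coincidence rotation $R$, and by the parametrisation $\SOC(\Gbcc) = \{R(q) \mid q\in\JJ \text{ primitive}\}$ every such $R$ equals $R(q)$ for a primitive $q$. When $|q|^2$ is odd, Proposition~\ref{csl-theo:bcc-csl} gives $\Gbcc(R(q)) = \Imag(q\JJ)$ directly. When $|q|^2$ is even, the remark following Proposition~\ref{csl-theo:bcc-csl} shows $\Gbcc(R(q)) = \Gbcc(R(r)) = \Imag(r\JJ)$ for a primitive $r\in\JJ$ with $|r|^2$ odd, obtained by splitting off the even part of $q$. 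Thus every CSL of $\Gbcc$ is the image of some odd primitive left ideal, which establishes surjectivity. For injectivity, suppose $\Gbcc(R(q)) = \Gbcc(R(r))$ for primitive $q,r$ with $|q|^2,|r|^2$ odd. By Proposition~\ref{csl-theo:bcc-csl}, this reads $\Imag(q\JJ) = \Imag(r\JJ)$, and the Corollary above then yields $q\JJ = r\JJ$, so the two ideals coincide.

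The one point requiring genuine care — and the step I would single out as the main obstacle — is the reduction from even to odd norm in the surjectivity argument, since a clean bijection demands that \emph{every} CSL, including those generated by even-norm quaternions, be captured by an odd-norm generator. The mechanism is that any $q$ with $|q|^2$ even factors as $q=rs$ with $|r|^2$ odd and $|s|^2 = 2^\ell$, where $R(s)$ is a \emph{symmetry operation} of $\Gbcc$; by Remark~\ref{csl-rem:sym-rel} this forces $\Gbcc(R(rs)) = \Gbcc(R(r))$. I would want to confirm that such a factorisation always exists with $r$ primitive and $|r|^2$ odd — this rests on the fact that $\JJ$ is a principal ideal ring in which quaternions of norm $2$ play the distinguished role described in Section~\ref{csl-sec:quat}, allowing the even part to be peeled off as a product of norm-$2$ factors, each of which is (after normalisation) a double-cover representative of an octahedral symmetry. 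Once this factorisation and the symmetry-invariance are secured, the map restricted to odd primitive ideals is forced to be surjective, and the proof closes.

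In summary, the argument is essentially a bookkeeping assembly: well-definedness from the Corollary, surjectivity from the parametrisation of $\SOC(\Gbcc)$ together with the even-to-odd reduction via norm-$2$ symmetry factors, and injectivity again from the Corollary. No new computation is needed beyond verifying the norm-$2$ factorisation claim, which is where I would concentrate the technical effort.
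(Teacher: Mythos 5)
Your proposal is correct and takes essentially the same route as the paper, which proves this lemma precisely by ``putting the previous steps together'': Proposition~\ref{csl-theo:bcc-csl} for the odd case, the factorisation $q=rs$ with $|s|^2=2^{\ell}$ and $R(s)$ a symmetry of $\Gbcc$ for the even-to-odd reduction, and the Corollary ($\Imag(q\JJ)=\Imag(r\JJ)$ iff $q\JJ=r\JJ$) for well-definedness and injectivity. Your flagged concern about the existence of the factorisation is handled in the paper exactly as you suggest, via the distinguished role of the norm-$2$ quaternion $1+\ii$ in the principal ideal ring $\JJ$.
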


An analogous result can be proved for the other cubic lattices as
well. 

\begin{theorem}\label{csl-thm:csls-ideals}
  The mapping\/ $q \JJ \mapsto \vG_{a}(R(q))=\Imag (q\JJ)\cap
  \vG_{a}$, with fixed type
  $a\in\{\mathsf{pc},\mathsf{bcc},\mathsf{fcc}\}$, defines a bijection
  between the set of left ideals generated by primitive quaternions
  with\/ $|q|^2$ odd and the set of CSLs of\/ $\vG_{a}$. 
\end{theorem}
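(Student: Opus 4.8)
The plan is to reduce the general statement for all three cubic lattices to the already-established case of $\Gbcc$ in Lemma~\ref{csl-lem:bcc-bij}, using the divisibility and intersection results we have collected for the cubic lattices. First I would observe that Theorem~\ref{csl-theo:cub-equalSig} guarantees $\OC(\Gpc)=\OC(\Gbcc)=\OC(\Gfcc)$ with equal coincidence indices, so the set of coincidence rotations, and hence the domain $\{q\JJ \mid q \text{ primitive},\ |q|^2 \text{ odd}\}$, is the same in all three cases; this is why a single parametrisation by primitive Hurwitz quaternions suffices. By Proposition~\ref{csl-theo:cub-Sig}, each such $q$ yields a CSL $\vG_a(R(q))$ of index $|q|^2$, and by Remark~\ref{csl-rem:spec-cub} every odd index is realised, so the map is well defined and surjective onto the CSLs of $\vG_a$.

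The heart of the argument is injectivity, and here I would treat the three types uniformly. For $a=\mathsf{bcc}$ the claim is exactly Lemma~\ref{csl-lem:bcc-bij}. For $a=\mathsf{pc}$ I would invoke Proposition~\ref{csl-theo:pc-csl}, which identifies $\Gpc(R(q))=\Imag(q\LL)=\Imag(q\JJ)\cap\Imag(\LL)$, and similarly handle $\mathsf{fcc}$ via its description as the dual $\Gbccstar$ together with Lemma~\ref{csl-lem:csl-dual}. The key point is that in each case the CSL $\vG_a(R(q))$ is obtained from the $\Gbcc$-CSL $\Imag(q\JJ)$ by intersecting with a \emph{fixed} lattice (either $\Gbcc$ itself, or $\Imag(\LL)$, or by passing to duals), an operation that does not depend on $q$. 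Therefore, if $\vG_a(R(q))=\vG_a(R(r))$ for two primitive odd quaternions $q,r$, I would argue that this forces $\Imag(q\JJ)=\Imag(r\JJ)$, and then Lemma~\ref{csl-lem:bcc-bij} (equivalently the Corollary immediately preceding it) gives $q\JJ=r\JJ$.

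The step I expect to be the main obstacle is showing that equality of the \emph{intersected} lattices $\vG_a(R(q))=\vG_a(R(r))$ propagates back to equality of the ambient $\Gbcc$-CSLs $\Imag(q\JJ)=\Imag(r\JJ)$. For $\mathsf{pc}$ this follows because $\Imag(q\LL)$ has a fixed index $2$ in $\Imag(q\JJ)$ (as used in the proof of Proposition~\ref{csl-theo:pc-csl}), so $\Imag(q\JJ)$ is recovered from $\Gpc(R(q))$ as the unique overlattice of index $2$ lying in $\Imag(\JJ)$; concretely one can write $\Imag(q\JJ) = \Gpc(R(q)) + u'\,\ZZ$ for a suitable coset representative, or simply note that both $\Imag(q\JJ)$ and $\Imag(r\JJ)$ are determined by the common index and the inclusion $\Gpc(R(q))\subseteq\Imag(q\JJ)$. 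For $\mathsf{fcc}$ the cleanest route is duality: by Lemma~\ref{csl-lem:csl-dual} the CSLs of $\Gfcc=\Gbccstar$ are the duals of the CSLs of $\Gbcc$, and dualising is a bijection on lattices, so injectivity for $\Gfcc$ is immediate from that for $\Gbcc$.

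Once injectivity is in hand for each type, I would assemble the pieces: the map $q\JJ \mapsto \vG_a(R(q))$ is well defined (Proposition~\ref{csl-theo:cub-Sig}), surjective (Propositions~\ref{csl-theo:bcc-csl} and~\ref{csl-theo:pc-csl} together with the decomposition $q=rs$ handling even norms, and Remark~\ref{csl-rem:spec-cub} for the spectrum), and injective by the reduction above. Since all three constructions share the same domain and the index is preserved, this establishes the desired bijection between primitive odd left ideals of $\JJ$ and the CSLs of $\vG_a$ for each fixed type $a\in\{\mathsf{pc},\mathsf{bcc},\mathsf{fcc}\}$, completing the proof.
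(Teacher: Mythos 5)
Your overall strategy---reducing everything to the bcc case of Lemma~\ref{csl-lem:bcc-bij}---is the same as the paper's, and your treatment of $\Gpc$ is close to sound. But the $\Gfcc$ case contains a genuine error. You claim that, by Lemma~\ref{csl-lem:csl-dual}, ``the CSLs of $\Gfcc=\Gbccstar$ are the duals of the CSLs of $\Gbcc$'', so that injectivity for $\Gfcc$ is immediate by dualising. That is not what duality gives: since $\vG^{*}\cap R\ts\vG^{*}=(\vG+R\ts\vG)^{*}$ (this identity is exactly what the proof of Lemma~\ref{csl-lem:csl-dual} rests on), the CSL of the dual lattice is the dual of the \emph{sum} $\Gbcc+R\ts\Gbcc$, not the dual of the CSL $\Gbcc\cap R\ts\Gbcc$. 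To run your argument you would need injectivity of $q\JJ\mapsto\Gbcc+R(q)\ts\Gbcc$, which is a different statement and is established nowhere. For the same reason your proposal never produces, for fcc, the identity $\Gfcc(R(q))=\Imag(q\JJ)\cap\Gfcc$ on which the theorem's map is built: you obtain it for bcc and pc from Propositions~\ref{csl-theo:bcc-csl} and~\ref{csl-theo:pc-csl}, but the duality route does not deliver it.

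The paper avoids duality and argues uniformly for all three types, which also repairs a soft spot in your pc argument. Since $[\Gbcc:\vG_{a}]$ is a power of $2$ (namely $1$, $2$ or $4$) while every coincidence index is odd, the inclusions $\vG_{a}(R(q))\subseteq\vG_{a}$ and $\vG_{a}(R(q))\subseteq\Gbcc(R(q))=\Imag(q\JJ)$ force $\vG_{a}(R(q))=\Imag(q\JJ)\cap\vG_{a}$ by comparing indices. Injectivity then follows because, writing $M=\Imag(q\JJ)\cap\vG_{a}$, the quotient $\Imag(q\JJ)/M$ is a $2$-group of odd index in the finite abelian group $\Gbcc/M$, hence is its unique Sylow $2$-subgroup; so $M$ determines $\Imag(q\JJ)$, and Lemma~\ref{csl-lem:bcc-bij} finishes the proof. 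Note that this uniqueness statement is precisely what your pc argument needs as well: an abstract lattice $M$ has seven distinct overlattices of index $2$, so ``the unique overlattice of index $2$'' only makes sense once one uses both the containment in $\Gbcc$ and the oddness of $[\Gbcc:\Imag(q\JJ)]$, which you gesture at (``determined by the common index and the inclusion'') but do not actually prove.
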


\begin{proof}
  From $\vG^{}_{a}(R(q)) \subseteq \vG^{}_{a}$ and $\vG^{}_{a}(R(q))
  \subseteq \Gbcc(R(q)) = \Imag (q\JJ)$, we see that we must have
  $\vG^{}_{a}(R(q))
  \subseteq \Imag (q\JJ) \cap \vG^{}_{a}$. As $[\Gbcc:\vG_{a}]$ is a
  power of 2 and the coincidence indices are always odd, index
  considerations show that we even have $\vG^{}_{a}(R(q)) = \Imag
  (q\JJ) \cap \vG^{}_{a}$. Now, the theorem is a consequence of the
  bijection in Lemma~\ref{csl-lem:bcc-bij}, where index considerations
  confirm that $\Imag (q\JJ)=\Imag (q'\JJ)$ holds if and only if
  $\Imag (q\JJ) \cap \vG^{}_{a} = \Imag (q'\JJ) \cap \vG^{}_{a}$.
\end{proof}

So far, we get the following result for the arithmetic
functions\index{function!arithmetic} that count the number of CSLs and
coincidence isometries for a given index, where we use
$c^{}_{\mathsf{bcc}}(n):=c^{}_{\Gbcc}(n)$ for simplicity.

\begin{corollary}
  For the cubic lattices according to Eq.~\eqref{csl-cubiclat}, one
  has\index{counting~function!cubic~lattice}
  \[
  c^{\mathsf{iso}}_{\mathsf{bcc}}(n)=c^{}_{\mathsf{bcc}}(n)
   \, =\, c^{\mathsf{iso}}_{\mathsf{pc}}(n)\, =\, c^{}_{\mathsf{pc}}(n)
   \, =\, c^{\mathsf{iso}}_{\mathsf{fcc}}(n)\, =\,
   c^{}_{\mathsf{fcc}}(n)\ts .\smallskip
   \]
\end{corollary}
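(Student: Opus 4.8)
The plan is to reduce the six-fold equality to two assertions: first, that the three CSL-counts $c^{}_{\mathsf{pc}}$, $c^{}_{\mathsf{bcc}}$, $c^{}_{\mathsf{fcc}}$ coincide; and second, that $c^{\mathsf{iso}}_{a}(n)=c^{}_{a}(n)$ for each individual type $a\in\{\mathsf{pc},\mathsf{bcc},\mathsf{fcc}\}$. Both assertions rest on the bijection of Theorem~\ref{csl-thm:csls-ideals}, which identifies the CSLs of $\vG_a$ with the left ideals $q\JJ$ generated by primitive Hurwitz quaternions $q$ with $|q|^2$ odd, together with the index-invariance $\Sig^{}_{\mathsf{pc}}(R)=\Sig^{}_{\mathsf{bcc}}(R)=\Sig^{}_{\mathsf{fcc}}(R)$ from Theorem~\ref{csl-theo:cub-equalSig}.

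For the first assertion, I would argue as follows. By Theorem~\ref{csl-thm:csls-ideals}, for each type $a$ the CSL $\vG_a(R(q))$ corresponds to the odd primitive left ideal $q\JJ$, and its index in $\vG_a$ equals $\Sig^{}_a(R(q))$. Since this index is the same for all three types by Theorem~\ref{csl-theo:cub-equalSig}, the number of odd primitive left ideals $q\JJ$ with $\Sig(R(q))=n$ does not depend on $a$. As this number equals $c^{}_a(n)$, we obtain $c^{}_{\mathsf{pc}}(n)=c^{}_{\mathsf{bcc}}(n)=c^{}_{\mathsf{fcc}}(n)$.

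For the second assertion, fix a type $a$. Because we are in odd dimension, the point reflection $I$ is an orientation-reversing symmetry of $\vG_a$, so $c^{\mathsf{iso}}_a(n)=c^{\mathsf{rot}}_a(n)$, and it suffices to prove $c^{\mathsf{rot}}_a(n)=c^{}_a(n)$. I would show that $R\mapsto \vG_a(R)$ induces a bijection between coincidence rotations modulo the rotational symmetry group $\SO(\vG_a)$ and the CSLs of $\vG_a$. It is well defined and surjective by Remark~\ref{csl-rem:sym-rel} and the definition of a CSL. For injectivity, first note that every coincidence rotation is symmetry related to one of the form $R(r)$ with $r$ odd and primitive, since for even $|q|^2$ one factors $q=rs$ with $|r|^2$ odd and $R(s)$ a symmetry. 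Any two rotations generating the same CSL may thus be written as $R(q),R(q')$ with $q,q'$ odd primitive; by Theorem~\ref{csl-thm:csls-ideals} they then satisfy $q\JJ=q'\JJ$, hence $q'=qv$ for a unit $v\in\JJ^{\times}$. Using the homomorphism property $R(q)R(v)=R(qv)$ of Cayley's parametrisation, together with the fact that $R(v)$ has denominator $1$ and is therefore a symmetry rotation, we conclude $R(q')=R(q)R(v)$, so both rotations lie in the same coset. This gives $c^{\mathsf{rot}}_a(n)=c^{}_a(n)$, and combined with the first assertion completes the argument.

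The main obstacle is precisely this injectivity step, i.e.\ ruling out that non-symmetry-related coincidence rotations produce the same CSL. The care needed lies in the even/odd dichotomy for $|q|^2$: the rotational symmetry group of $\vG_a$ is the octahedral group of order $24$, whose double cover of order $48$ consists of the unit quaternions together with the norm-$2$ quaternions, whereas the ideal correspondence only sees right multiplication by the units $\JJ^{\times}$. One must verify that passing to odd primitive representatives absorbs exactly the norm-$2$ part of the symmetry, so that the residual ambiguity is precisely right multiplication by $\JJ^{\times}$, matching the non-uniqueness of a generator of the left ideal $q\JJ$.
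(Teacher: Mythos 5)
Your proposal is correct and takes essentially the same route as the paper: both arguments rest on the index-preserving ideal bijection of Theorem~\ref{csl-thm:csls-ideals}, the equality of coincidence indices from Theorem~\ref{csl-theo:cub-equalSig}, the odd-dimension identity $c^{\mathsf{iso}}=c^{\mathsf{rot}}$, and the reduction to odd primitive quaternions, with right multiplication by units $\JJ^{\times}$ matching the symmetry rotations exactly as you describe. The only (immaterial) difference is organisational: the paper deduces the cross-type equality of $c^{\mathsf{iso}}$ directly from the index invariance and then verifies $c^{}=c^{\mathsf{iso}}$ for $\Gbcc$ alone, whereas you establish $c^{}_a=c^{\mathsf{iso}}_a$ for each type separately.
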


\begin{proof}
  It follows from Theorem~\ref{csl-thm:csls-ideals} that the number
  of CSLs of any cubic lattice for a given index is given by the number
  of left ideals generated by primitive $q$ with $|q|^2$ odd, hence
  $c^{}_{\mathsf{bcc}}(n)=c^{}_{\mathsf{pc}}(n)=c^{}_{\mathsf{fcc}}(n)$.
  As the coincidence indices of a given coincidence isometry are the same for
  all cubic lattices, we also have
  $c^{\mathsf{iso}}_{\mathsf{bcc}}(n)=c^{\mathsf{iso}}_{\mathsf{pc}}(n)
  =c^{\mathsf{iso}}_{\mathsf{fcc}}(n)$.\smallskip
  
  It remains to show
  $c^{}_{\mathsf{bcc}}(n)=c^{\mathsf{iso}}_{\mathsf{bcc}}(n)$. By
  Section~\ref{csl-sec:count-csl},
  $c^{\mathsf{iso}}(n)=c^{\mathsf{rot}}(n)$ holds for any lattice in
  odd dimensions and for any $n\in\NN$. It thus suffices to show that
  $c^{}_{\mathsf{bcc}}(n)=c^{\mathsf{rot}}_{\mathsf{bcc}}(n)$.  Recall
  that any coincidence rotation\index{rotation!coincidence} can be
  parametrised either by an odd primitive quaternion $q$ or by a
  primitive quaternion $q (1+\ii)$, where $q$ is again odd. As the
  rotation $R(1+\ii)$ is a symmetry operation of all three cubic
  lattices, $q$ and $q (1+\ii)$ generate the same CSL.  As all
  symmetry rotations are generated by quaternions $u$ or $(1+\ii)u$,
  where $u$ is a unit, Theorem~\ref{csl-thm:csls-ideals} implies
  $c^{}_{\mathsf{bcc}}(n)=c^{\mathsf{rot}}_{\mathsf{bcc}}(n)
  =c^{\mathsf{iso}}_{\mathsf{bcc}}(n)$.
\end{proof}

Actually, we can calculate $c^{}_{\vG}(n)$ explicitly. We first note
that $c^{}_{\vG}(n)$ is multiplicative, as $\JJ$ is a principal ideal
ring and thus has an essentially unique prime factorisation. Let us
recall that uniqueness is a bit subtle here, since $\JJ$ is not
Abelian, and the prime factorisation depends on the ordering of the
factors in general. But, if we fix an ordering (by requiring that the
norm of the prime factors should increase monotonically, say), the
prime factors are unique up to units.  Thus, $c^{}_{\vG}(n)$ is
determined by its values for prime powers, and, in particular, we
have\index{counting~function!cubic~lattice}
\begin{equation}\label{csl-eq:c-cub}
    c^{}_{\mathsf{bcc}}(p^r) \, = \, (p+1) \ts\ts p^{r-1} 
\end{equation}
if $p$ is an odd prime, as $c^{}_{\mathsf{bcc}}(p^r)$ is the number of
primitive ideals of norm~$p^{2r}$; see~\cite[Ch.~~10]{csl-Hurwitz}.
Furthermore, note that $24\, c^{}_{\mathsf{bcc}}(p^r)$ is the number
of primitive quaternions of norm $p^r$ and $8\ts\ts
c^{}_{\mathsf{bcc}}(p^r)$ is the number of primitive representations
of $p^r$ as a sum of four squares, which follows easily from the total
number of representations; compare~\cite{csl-Grosswald,csl-Hardy}.
Thus, $8\ts\ts c^{}_{\mathsf{bcc}}(m)$ is the number of primitive
representations of $m$ as a sum of four squares, if $m$ is
odd,\footnote{{\ts\ts}This is part of Jacobi's four-square theorem
  \cite{csl-Grosswald}, which states that the number of ways to
  represent $m$ as the sum of four squares is $8$ times the sum of its
  divisors (if $m$ is odd) and $24$ times the sum of its odd divisors
  (if $m$ is even).}  and $\ts c^{}_{\mathsf{bcc}}(m)=0$ for $m$ even.
Hence, we obtain an explicit expression for the generating function;
see also~\cite{csl-Baake-rev} and~\cite[Sec.~2]{csl-BLP96}.

\begin{theorem}[{\cite[Props.~3.2 
and 3.3]{csl-Baake-rev}}]\label{csl-theo:CSL-cub} For any cubic
  lattice\/ $\vG\subset\RR^3$ in the setting of
  Eq.~\eqref{csl-cubiclat}, we have
   \[
   \OC(\vG) \, = \, \OS(\vG) \, = \,
   \OC(\Gbcc) \, = \, \OG(3,\QQ) \ts . 
   \]
   In particular, if\/ $q$ is a primitive Hurwitz quaternion and\/
   $\ell$ is the maximal exponent such that\/ $2^\ell \big\vert
   |q|^2$, then the coincidence index\index{index} is given by
   \[
     \Sig^{}_\vG(R(q)) \, = \, \Sig^{}_{\mathsf {bcc}}(R(q)) \, = \, 
     \den^{}_\vG(R(q)) \, = \, \frac{|q|^2}{2^\ell}\ts . 
   \]
   Moreover, we have\/ $\Psi^{}_{\!\vG}(s) =
   \Psi^{\mathsf{iso}}_{\!\vG}(s) =\Psi^{}_{\!\mathsf{bcc}}(s)$, which
   is given by the equation
{\allowdisplaybreaks[4]
\begin{align*}
     \Psi^{}_{\mathsf {bcc}}(s) \, &=
     \sum_{m=1}^{\infty}\frac{c^{}_{\vG}(m)}{m^s} \, = \prod_{p\neq
       2}{\frac{1+p^{-s}}{1-p^{1-s}}}\\[1mm] &= \, \frac{1}{1+2^{-s}}
     \, \frac{\zeta_{\JJ}(s/2)}{\zeta(2s)} \, = \,
     \frac{1-2^{1-s}}{1+2^{-s}} \,
     \frac{\zeta(s)\,\zeta(s-1)}{\zeta(2s)}\\[3mm] &= \,
     1+\tfrac{4}{3^s}+\tfrac{6}{5^s}+\tfrac{8}{7^s}+\tfrac{12}{9^s}
     +\tfrac{12}{11^s}+\tfrac{14}{13^s}+
     \tfrac{24}{15^s}+\tfrac{18}{17^s}+\tfrac{20}{19^s}+
     \tfrac{32}{21^s}+%\tfrac{24}{23^s}+
     \cdots ,
\end{align*}}
   where all positive odd integers appear in the denominator. \qed
\end{theorem}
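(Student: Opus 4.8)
The plan is to assemble the statement from the machinery already in place, the only genuinely new computation being the passage to the closed-form Dirichlet series. First I would dispose of the group identities. The chain $\OC(\Gpc)=\OC(\Gbcc)=\OC(\Gfcc)=\OG(3,\QQ)$ together with the equality of all three coincidence indices is exactly Theorem~\ref{csl-theo:cub-equalSig}, so it remains only to add $\OS(\vG)=\OC(\vG)$. For this I would invoke Theorem~\ref{csl-theo:order-d}: the factor group $\OS(\vG)/\OC(\vG)$ is a direct sum of cyclic groups of prime-power order dividing $d=3$, so every element has order $1$ or $3$; on the other hand, the cubic lattices are rational, whence $\alpha^2\in\ZZ$ for every $\alpha\in\Scal_\vG(R)$ and each element of $\OS(\vG)/\OC(\vG)$ has order at most $2$. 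The two constraints force the factor group to be trivial, giving $\OS(\vG)=\OC(\vG)$. The index formula is then immediate: Corollary~\ref{csl-coro:cub-den} gives $\den^{}_\vG(R(q))=|q|^2/2^\ell$, Proposition~\ref{csl-theo:cub-Sig} identifies $\Sig^{}_\vG(R(q))$ with this denominator, and Theorem~\ref{csl-theo:cub-equalSig} equates it with $\Sig^{}_{\mathsf{bcc}}(R(q))$ across all three lattice types.

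The core of the argument is the generating function. The preceding corollary gives $c^{}_{\vG}(n)=c^{\mathsf{iso}}_{\vG}(n)=c^{}_{\mathsf{bcc}}(n)$ for every cubic type, so that $\Psi^{}_{\!\vG}=\Psi^{\mathsf{iso}}_{\!\vG}=\Psi^{}_{\mathsf{bcc}}$, and it therefore suffices to evaluate the Dirichlet series of $c^{}_{\mathsf{bcc}}$. Since $\JJ$ is a principal ideal ring, $c^{}_{\mathsf{bcc}}$ is multiplicative and vanishes on even integers, so its Dirichlet series factors as an Euler product over odd primes. Using the prime-power values $c^{}_{\mathsf{bcc}}(p^r)=(p+1)\ts p^{r-1}$ from Eq.~\eqref{csl-eq:c-cub}, I would compute the local factor at an odd prime $p$ by summing a geometric series,
\[
1+\frac{p+1}{p}\sum_{r\geq 1}p^{r(1-s)}
 \,=\,1+\frac{(p+1)\ts p^{-s}}{1-p^{1-s}}
 \,=\,\frac{1+p^{-s}}{1-p^{1-s}}\ts ,
\]
which yields $\Psi^{}_{\mathsf{bcc}}(s)=\prod_{p\neq 2}\frac{1+p^{-s}}{1-p^{1-s}}$. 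To reach the zeta-function form, I would note the elementary Euler-product identity $\frac{\zeta(s)\zeta(s-1)}{\zeta(2s)}=\prod_{p}\frac{1+p^{-s}}{1-p^{1-s}}$ (clearing $1-p^{-2s}=(1-p^{-s})(1+p^{-s})$ prime by prime) and split off the factor at $p=2$, obtaining $\prod_{p\neq 2}\frac{1+p^{-s}}{1-p^{1-s}}=\frac{1-2^{1-s}}{1+2^{-s}}\frac{\zeta(s)\zeta(s-1)}{\zeta(2s)}$. Finally, substituting $\zeta_{\JJ}(s/2)=(1-2^{1-s})\zeta(s)\zeta(s-1)$ from Eq.~\eqref{csl-zetaHurwitz} recovers the stated form $\frac{1}{1+2^{-s}}\frac{\zeta_{\JJ}(s/2)}{\zeta(2s)}$. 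The first Dirichlet coefficients are read off directly from $c^{}_{\mathsf{bcc}}(p^r)=(p+1)\ts p^{r-1}$ and multiplicativity, while the assertion that every odd integer occurs in the denominator is precisely the spectrum computation of Remark~\ref{csl-rem:spec-cub}.

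I do not anticipate a genuine obstacle here: every substantial ingredient$\,$---$\,$the determination of $\OC(\vG)$, the index-equals-denominator identity, and the prime-power count $c^{}_{\mathsf{bcc}}(p^r)$$\,$---$\,$has been established beforehand, so what remains is the bookkeeping of the Euler product. The only point demanding care is the consistent treatment of the exceptional prime $p=2$: because $c^{}_{\mathsf{bcc}}$ is supported on odd indices, the $p=2$ Euler factor must be removed by hand, and it reappears exactly in the correction factors $(1-2^{1-s})$ and $(1+2^{-s})^{-1}$ that mediate between the bare product and $\zeta_{\JJ}$.
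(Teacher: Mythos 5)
Your proposal is correct and follows essentially the same route as the paper: the paper also obtains the group identities from Theorem~\ref{csl-theo:cub-equalSig} together with the order-divides-$3$ versus order-at-most-$2$ argument (its Remark following that theorem), takes the index formula from Corollary~\ref{csl-coro:cub-den} and Proposition~\ref{csl-theo:cub-Sig}, and gets the Dirichlet series from multiplicativity of $c^{}_{\mathsf{bcc}}$ (via the principal ideal property of $\JJ$), the prime-power count $c^{}_{\mathsf{bcc}}(p^r)=(p+1)\ts p^{r-1}$ of Eq.~\eqref{csl-eq:c-cub}, and the same Euler-product bookkeeping with the exceptional prime $p=2$ split off. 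Your explicit geometric-series computation of the local factors and the substitution $\zeta^{}_{\JJ}(s/2)=(1-2^{1-s})\,\zeta(s)\,\zeta(s-1)$ are exactly the routine steps the paper leaves implicit, and both are carried out correctly.
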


Here, we have made use of the zeta function $\zeta^{}_{\JJ}$ of the
Hurwitz ring from
Eq.~\eqref{csl-zetaHurwitz}\index{zeta~function!Hurwitz~ring}, which
counts the non-trivial left ideals of $\JJ$. We observe that
$\zeta^{}_{\JJ}(s)$ and $\Psi^{}_{\mathsf{bcc}}(s)$ differ by the
factors $\frac{1}{1+2^{-s}}$ and $\frac{1}{\zeta(2s)}$. Note that the
term $(1+2^{-s})\,\zeta(2s)$ is the generating function for the
two-sided ideals of $\JJ$. But as the two-sided ideals only generate
the trivial CSL $\vG(R)=\vG$, they do not contribute to
$\Psi^{}_{\mathsf{bcc}}(s)$, hence their contribution to
$\zeta^{}_{\JJ}(s)$ has to be factored out to obtain the generating
function $\Psi^{}_{\mathsf{bcc}}(s)$.

It follows from the properties of Riemann's
zeta\index{zeta~function!Riemann} function that
$\Psi^{}_{\mathsf{bcc}}(s)$ is a meromorphic function of $s$. In
particular, $\Psi^{}_{\mathsf{bcc}}$ is analytic in the half-plane
$\{\Real(s)\geq 2\}$, and its rightmost pole is located at
$s=2$. Using Delange's theorem\index{Delange~theorem}
(Theorem~\ref{csl-thm:meanvalues}), we find the asymptotic growth
behaviour\index{asymptotic~behaviour} (compare~\cite{csl-Baake-rev}
and~\cite[Sec.~2]{csl-BLP96})
\begin{equation}
   \sum_{n\leq x} c^{}_{\mathsf{bcc}}(n) \, = \, 
   \myfrac{3x^2}{\pi^2} + \scO(x^2)\, ,
   \quad \text{ as } x \to \infty \ts.
\end{equation}

In contrast to the CSLs of the square and the triangular lattice in
the plane, the CSLs of the cubic lattice generally fail to be similar
sublattices, and usually have lower symmetries; see~\cite{csl-Z2} for
details.

\begin{remark} It is an interesting question what kind of grain
  boundaries are compatible with CSLs of cubic lattices, as the
  geometric situation in \mbox{$3$-space} is certainly richer than in
  the plane. Now, a large number of CSLs for cubic lattices can be
  written as $\vG(R)$, where $R$ is actually a rotation through $\pi$
  around an axis in a lattice direction $v$. These are precisely the
  rotations parametrised by a quaternion $q=(0,v)$;
  compare~\cite{csl-Grimmer73}.

  The lattice planes perpendicular to $v$ through a point $nv$ with
  $n\in\ZZ$ are invariant under a rotation about $v$ through
  $\pi$. Any of these can act as a defect-free (or perfect) grain
  boundary between two crystal halves, and the entire configuration
  appears as a stacking fault; see Figure~\ref{csl-fig:stack} for an
  illustration of a stacking sequence that corresponds to a CSL with
  index $\Sig=3$ and hexagonal symmetry. Note that the order of the
  layers is reversed in the rotated half.

  In contrast to cubic lattices, a rotation $R$ through $\pi$ about a
  lattice vector $v$ is not necessarily a coincidence rotation for a
  general lattice. However, if $R$ is a coincidence rotation, the
  corresponding lattice planes orthogonal to $v$ are invariant under
  $R$, and analogous stacking faults may occur.\index{stackings}

  Apart from their obvious relevance to the twinning structure of
  cubic crystals, coincidence isometries in the form of rotations
  through $\pi$ or simple reflections are useful generators for more
  complicated coincidence isometries in higher dimensions. In fact,
  this leads to one of the few approaches to higher dimensions known
  so far; see Section~\ref{csl-sec:higher} below for more.\exend
\end{remark}

\begin{figure}
\includegraphics[scale=0.65]{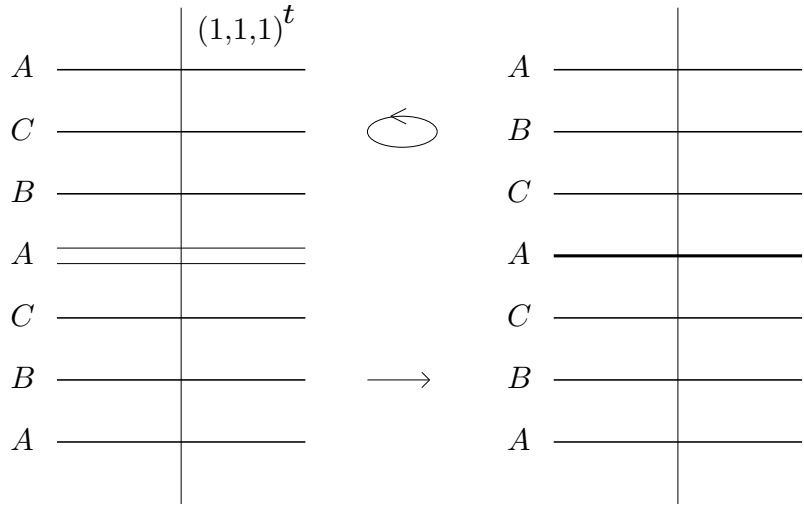}
\caption{Sketch of a stacking fault in a cubic crystal. The upper half
  is rotated through an angle $\pi$ about the $(1,1,1)^t$-axis. This
  keeps the $A$ layers fixed and interchanges layers of types $B$ and
  $C$.}
\label{csl-fig:stack}
\end{figure}

\begin{remark}
  The results for the cubic lattices can be generalised to certain
  embedded $\ZZ\ts$-modules of the form $\Imag(\cO)$, where $\cO$ is a
  maximal order\index{maximal~order} in a quaternion
  algebra~\cite{csl-BLP96}. The situation is quite convenient in the
  case of quaternion algebras $\HH(K)$ over a real algebraic number
  field $K$ such that both $K$ and $\HH(K)$ have class number $1$. In
  particular, apart from the Hurwitz ring\index{Hurwitz~ring} $\JJ$,
  this includes the icosian ring\index{icosian~ring} $\II\subset
  \HH(\QQ(\mbox{\small $\sqrt{5}$}\,))$ and the cubian ring $\KK
  \subset \HH(\QQ(\mbox{\small $\sqrt{2}$}\,))$;
  see~\cite{csl-BM,csl-BLP96} for details.
  
  The counterpart to the odd primitive quaternions are the so-called
  $\cO$-reduced quaternions. If $q$ is $\cO$-reduced, many of our
  results for the cubic lattices can be reformulated for $\cO\subset
  \HH(K)$. In particular, the coincidence index is given by
  $\Sig_{\cO}(R(q))= N(|q|^2)$, where $N$ is the norm in the number
  field $K$; compare~\cite[Prop.~5]{csl-BLP96}. This follows from the
  explicit expression for the CSMs
  \[
  \Imag(\cO)\cap\Imag(q\cO q^{-1})\, = \,
  \Imag(\cO \cap q\cO q^{-1}) \, = \, \Imag (q\cO);
  \]
  compare~\cite[Lemmas~4, 5~and 6]{csl-BLP96}. Moreover, there still
  is a bijection between the CSMs $\Imag (q\cO)$ and the left ideals
  $q\cO$; see~\cite[Thm.~1]{csl-BLP96}. This makes it possible to
  count the CSMs and to write down an explicit expression for the
  generating function~\cite[Thm.~2]{csl-BLP96}, namely
  \[
    \Psi^{\mathsf{iso}}_{\cO}(s)\, = \, \Psi^{}_{\cO}(s) \, = \,
    \frac{\zeta^{}_{\cO}(s/2)}{\zeta^{}_{\cO.\cO}(s/2)} \, = \,
    E(s)\ts\frac{\zeta^{}_{K}(s)\,\zeta^{}_{K}(s-1)}{\zeta^{}_{K}(2s)}.
  \]
  Here, $\zeta^{}_{\cO}(s)$ and $\zeta^{}_{\cO.\cO}(s)$ denote the
  zeta functions of the left and the two-sided ideals of $\cO$,
  respectively, whereas $\zeta^{}_{K}(s)$ is the zeta function of $K$
  and $E(s)$ is either $1$ or an additional analytic factor that takes
  care of the extra contributions from (finitely many) ramified
  primes.\index{prime!ramified} As a consequence, one gets the
  asymptotic behaviour~\cite[Cor.~1]{csl-BLP96}
  \[
  \sum_{n\leq x} c^{}_{\cO}(n) \, \sim \, 
   \rho^{}_{\cO} \myfrac{x^2}{2} \, , \quad \text{ as } x \to \infty \ts
  \]
  for some $\rho^{}_{\cO}\in\RR_{+}$.    \exend
\end{remark}

\section{The four-dimensional hypercubic lattices}
\label{csl-sec:cub4}

Let us continue with some examples in $4$-space, and let us start with
the hypercubic lattices. So far, in all our examples, the generating
functions for the number of coincidence rotations (modulo symmetries)
and the number of CSLs coincided, as two different coincidence
rotations\index{rotation!coincidence} generated the same CSL if and
only if they were symmetry related\index{symmetry~related}.  This is
no longer the case in the examples to come.

\subsection{The centred hypercubic lattice $D^{*}_{4}$}
\label{csl-sec:cub4-d4}

As we have already seen in Section~\ref{csl-sec:ssl-hycub}, any
similarity rotation can be parametrised by a pair of $\JJ$-primitive
Hurwitz quaternions,\index{quaternion!Hurwitz} where $\JJ=D^{*}_{4}$
as lattices in our setting.  In fact, it follows from
Corollary~\ref{csl-lem:OC-den} and Eq.~\eqref{csl-denD4} that
$R=R(p,q)$ is a coincidence rotation\index{rotation!coincidence} of
$\JJ$ if and only if $|pq|\in\NN$.  A pair $(p,q)\in \JJ\!\times\!
\JJ$ with $|pq|\in\NN$ is called
\emph{admissible}.\index{quaternion!admissible~pair} Thus, $R(p,q)$ is
a coincidence rotation\index{rotation!coincidence} of $\JJ$ if and
only if $R(p,q)$ can be parametrised by an admissible pair of
$\JJ$-primitive Hurwitz quaternions. As a consequence, we have the
following result.

\begin{fact} 
$\SOC(\JJ)=\SO(4,\QQ)$. \qed
\end{fact}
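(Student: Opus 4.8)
The plan is to prove the two inclusions $\SOC(\JJ)\subseteq\SO(4,\QQ)$ and $\SO(4,\QQ)\subseteq\SOC(\JJ)$ separately. For the first I would simply invoke the characterisation just obtained: any $R\in\SOC(\JJ)$ can be written as $R=R(p,q)$ for an admissible pair of $\JJ$-primitive Hurwitz quaternions, so $p,q\in\JJ$ and $|pq|\in\NN$. The entries of the matrix $M(p,q)$ displayed after Eq.~\eqref{csl-eq:4dimrot} are $\ZZ$-bilinear forms in the components of $p$ and $q$; since those components lie in $\tfrac12\ZZ$, the entries of $M(p,q)$ lie in $\tfrac14\ZZ$. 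Dividing by $|pq|\in\NN$ then shows that $R=\frac{1}{|pq|}M(p,q)$ has rational entries, and being orthogonal with determinant $1$ it lies in $\SO(4,\QQ)$.

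For the reverse inclusion I would decompose a given $R\in\SO(4,\QQ)$ into two pieces that are manifestly coincidence rotations. First I would set $w:=R\e$; since $R$ is rational and orthogonal, $w$ is a rational unit quaternion, so $w=w_0/N$ with $w_0\in\LL$ primitive and $N\in\NN$, forcing $|w_0|^2=N^2|w|^2=N^2$. Left multiplication $L_w\colon x\mapsto wx$ equals $R(w_0,1)$, and $(w_0,1)$ is an admissible primitive pair because $|w_0\cdot 1|=|w_0|=N\in\NN$; hence $L_w\in\SOC(\JJ)$. Next I would consider $R':=L_{\bar w}\circ R=R(\bar w,1)\circ R$, which is again rational and satisfies $R'\e=\bar w\,w=\e$. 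Thus $R'$ fixes the real axis and restricts to a rational rotation of $\Imag(\HH)\cong\RR^3$; by Cayley's three-dimensional parametrisation~\eqref{csl-eq:Rpar}, together with the fact recalled in Section~\ref{csl-sec:quat} that rotations in $\SO(3,\QQ)$ arise from integer quaternions, one has $R'=R(u,u)$ for some primitive $u\in\LL$. Since $|u\cdot u|=|u|^2\in\NN$, the pair $(u,u)$ is admissible and $R'\in\SOC(\JJ)$. As $L_{\bar w}L_w=\mathrm{id}$, we recover $R=L_w\circ R'$, and because $\SOC(\JJ)$ is a group, $R\in\SOC(\JJ)$.

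The one genuine obstacle is the admissibility condition $|pq|\in\NN$: a rational rotation does not automatically admit a Hurwitz parametrisation with integral $|pq|$, the invariant $|p|^2|q|^2\in\QQ^{\times}/(\QQ^{\times})^2$ being the obstruction (for instance $p=(1,1,0,0)$, $q=1$ gives the \emph{irrational} rotation $x\mapsto\frac{(1+\ii)}{\sqrt2}x$). The decomposition above is designed precisely to sidestep this: in the left-multiplication factor the relevant norm is the perfect square $|w_0|^2=N^2$, whereas in the conjugation factor the two quaternions coincide, so $|u|^2\cdot|u|^2$ is automatically a square. For completeness I would also record the combined parametrisation $R=R(w_0u,u)$, obtained from the homomorphism property $R(p_1,q_1)R(p_2,q_2)=R(p_1p_2,q_1q_2)$, which exhibits an explicit admissible pair with $|w_0u\cdot u|=N\,|u|^2\in\NN$. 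The only external input required is the classical rational Cayley parametrisation of $\SO(3,\QQ)$ by integer quaternions, already quoted earlier.
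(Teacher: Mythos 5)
Your proof is correct, but it does more work than the paper does at this point, and in one direction it supplies an argument the paper leaves implicit. The paper obtains the Fact as an immediate corollary of the preceding characterisation: by Corollary~\ref{csl-lem:OC-den} and Eq.~\eqref{csl-denD4}, the coincidence rotations of $\JJ$ are exactly the rotations $R(p,q)$ parametrised by admissible pairs, and the identification of this set with $\SO(4,\QQ)$ is then simply asserted. (The shortest route inside the paper's own toolkit would be commensurability: $\ZZ^4\subset\JJ$ has index $2$, so $\SOC(\JJ)=\SOC(\ZZ^4)$ by Lemma~\ref{csl-lem:comm-csl}, and $\SOC(\ZZ^4)=\SO(4,\QQ)$ follows by clearing denominators; this is also how the result is transferred to $\ZZ^4$ in Section~\ref{csl-sec:cub4-z4}, and how the analogous statement is handled for $\ZZ^3$ in Theorem~\ref{csl-theo:cub-equalSig}.) Your first inclusion coincides with the paper's implicit easy direction. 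Your second inclusion$\,$---$\,$the decomposition $R=L_w\circ R(u,u)$ with $w=R\ts\e$, reducing to the rational Cayley parametrisation of $\SO(3,\QQ)$$\,$---$\,$is a constructive proof of $\SO(4,\QQ)\subseteq\SOC(\JJ)$ that the paper nowhere spells out; it buys an explicit admissible pair $R=R(w_0u,u)$, at the price of invoking the three-dimensional fact as external input.

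Two small corrections. First, $w_0$ and $u$ are primitive as Lipschitz quaternions but need not be $\JJ$-primitive; this fails precisely when all components are odd, as in $(1,1,1,1)=2\cdot(\tfrac12,\tfrac12,\tfrac12,\tfrac12)$. To match the characterisation you quote verbatim, divide out the $\JJ$-content: admissibility survives, because the rescaled norm still has integral square (norms of Hurwitz quaternions are integers) and a positive rational with integral square is an integer. Second, your ``genuine obstacle'' is overstated: for a \emph{rational} rotation $R$, \emph{every} Hurwitz parametrisation $R=R(p,q)$ is automatically admissible. Indeed, $p\ts\bar q=|pq|\,R\ts\e$, so comparing any nonzero component of the rational unit vector $R\ts\e$ with the corresponding half-integral component of $p\ts\bar q$ gives $|pq|\in\QQ$, while $|pq|^2=|p|^2|q|^2\in\NN$, whence $|pq|\in\NN$. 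Your example $x\mapsto\frac{1+\ii}{\sqrt2}\ts x$ is an \emph{irrational} rotation, so it does not witness the claimed failure. The genuine issue$\,$---$\,$that Cayley's parametrisation a priori yields only real quaternions up to scaling, so one must first produce some rational (Hurwitz) parametrisation$\,$---$\,$is exactly what your decomposition handles.
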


However, it turns out that primitive quaternions are not an optimal
choice in this case, and we prefer a suitably scaled pair. To find
such a pair, note first that $|pq|^2$ is a square in $\NN$ for any
admissible pair, and so is $|pq|^2/\gcd(|p|^2,|q|^2)^2$. As the two
factors
\[
  \frac{|q|^2}{\gcd(|p|^2, |q|^2)} 
  \quad \text{and} \quad
  \frac{|p|^2}{\gcd(|p|^2, |q|^2)}
\]
are coprime, they must be squares as well.  Hence, we can define the
(coprime) integers
\begin{equation} 
  \alpha^{}_p\, :=\, \sqrt{\frac{|q|^2}{\gcd(|p|^2, |q|^2)}} 
  \quad\text{and}\quad
  \alpha^{}_q\, :=\, \sqrt{\frac{|p|^2}{\gcd(|p|^2, |q|^2)}}\ts .
\end{equation}
Of course, $(x,y)=(\alpha^{}_p \ts p,\alpha^{}_q \ts q)$ defines the
same rotation as $(p,q)$.  However, we can deal more easily with
$(x,y)$ since $|x|^2=|y|^2$. Moreover, the octuple $(x,y)=(\alpha^{}_p
\ts p,\alpha^{}_q \ts q)$ is primitive for primitive $p$ and $q$, in
the sense that $\frac{1}{n}(\alpha^{}_p \ts p,\alpha^{}_q \ts q)\in
\JJ\! \times\! \JJ$ if and only if $n \in \{\pm 1 \}$. This guarantees
that there exist quaternions $v,w\in\JJ$ such that
$\ip{x}{v}+\ip{y}{w}=1$. We shall call a pair of quaternions with
these two properties an \emph{extended admissible
  pair},\index{quaternion!admissible~pair} and denote it by
$(p^{}_{\alpha},q^{}_{\alpha})=(\alpha^{}_p \ts p,\alpha^{}_q q)$.

Clearly, scaling quaternions does not change the rotation $R(p,q)$.
On the other hand, there are a lot of rotations that yield the same
CSL, namely all rotations that only differ by a symmetry operation of $\JJ$.
Let us denote the corresponding group by
\[
\SO(\JJ) \, :=\, \{ R\in\SO(4,\RR) \mid R\JJ=\JJ\}\ts ,
\]
which is a group of order $24^2=576$.  Recall that we call two
coincidence rotations\index{rotation!coincidence} $R,R'$
symmetry related\index{symmetry~related} if there exists an
$S\in\SO(\JJ)$ such that $R'=RS$ holds.

Let us have a closer look at symmetry-related rotations.  It follows
from $R(p,q)\JJ = \frac{1}{|pq|}p\JJ\bar q$ that
$R(p,q)\JJ=R(p',q')\JJ$ if and only if
\[
\myfrac{1}{|p p'|} \ts \bar{p} p'\JJ \, = \, 
\myfrac{1}{|q q'|}  \ts\JJ\ts \bar{q} q'.
\]
This means that $(p,q)$ and $(p\ts r, q\ts r)$ are symmetry related if
and only if $r$ is a quaternion such that $r\JJ$ is a two-sided
ideal. Apart from scaling factors and units, the only non-trivial such
quaternion is $r=1+\ii$;
see~\cite{csl-Vigneras,csl-Koecher,csl-duVal,csl-Hurwitz}.  Thus,
$R(p,q)\JJ=R(p\ts r,q\ts r)\JJ$, and, as $r$ is the only prime
quaternion (up to units) of norm $|r|^2=2$, we can find, for any
rotation $R \in \SOC(\JJ)$, a pair of quaternions $(p,q)$ with $|p|^2$
and $|q|^2$ odd such that $R$ is symmetry related to $R(p,q)$. We can
thus confine our considerations to the latter rotations, and we will
call an extended admissible pair $(p,q)$ with $|p|^2$ and $|q|^2$ odd
an \emph{odd extended admissible} pair.\index{quaternion!admissible~pair}

In fact, we can express all CSLs in terms of odd extended admissible
pairs as follows.
\begin{lemma}
If\/ $(p,q)$ is an odd extended admissible pair, one has
\[
p\JJ+\JJ\bar q \, \subseteq \, \JJ\cap\frac{p\JJ\bar q}{|pq|}.
\]
\end{lemma}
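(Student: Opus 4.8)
The plan is to split the single inclusion into the two inclusions $p\JJ+\JJ\bar{q}\subseteq\JJ$ and $p\JJ+\JJ\bar{q}\subseteq\frac{1}{|pq|}\,p\JJ\bar{q}$, since the right-hand side of the claim is the intersection of these two lattices; establishing both then forces containment in the intersection. The first inclusion is immediate and I would dispose of it at once: because $\JJ$ is a ring that is closed under quaternionic conjugation, both $p\in\JJ$ and $\bar{q}\in\JJ$, so $p\JJ\subseteq\JJ$ and $\JJ\bar{q}\subseteq\JJ$, and as $\JJ$ is closed under addition we get $p\JJ+\JJ\bar{q}\subseteq\JJ$.

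For the second inclusion I would first record the numerical identity that makes everything work. Since $(p,q)$ is an extended admissible pair, one has $|p|^2=|q|^2=:N$, and therefore $|pq|=|p|\,|q|=N$; thus the scaled lattice on the right is simply $\frac{1}{N}\,p\JJ\bar{q}$. It then suffices to verify the two sub-inclusions $p\JJ\subseteq\frac{1}{N}\,p\JJ\bar{q}$ and $\JJ\bar{q}\subseteq\frac{1}{N}\,p\JJ\bar{q}$, after which additivity of the map $j\mapsto\frac{1}{N}\,p\ts j\ts\bar{q}$ gives the containment of the sum.

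The first sub-inclusion reduces, after cancelling the invertible left factor $p$, to $N\JJ\subseteq\JJ\bar{q}$. Here I would use that $N=|q|^2=q\bar{q}$ lies in the \emph{left} ideal $\JJ\bar{q}$, writing $N=q\cdot\bar{q}$ with $q\in\JJ$; since $N$ is a central rational integer, $N\JJ=\JJ N$, and a left ideal absorbs left multiplication by $\JJ$, whence $\JJ N\subseteq\JJ\bar{q}$. Symmetrically, the second sub-inclusion reduces, after cancelling the right factor $\bar{q}$, to $N\JJ\subseteq p\JJ$; now $N=|p|^2=p\bar{p}$ lies in the \emph{right} ideal $p\JJ$, and right multiplication by $\JJ$ yields $N\JJ\subseteq p\JJ$. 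Combining these completes the argument.

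I do not expect a genuine obstacle here: the only care required is the bookkeeping of which factor ($p$ on the left, $\bar{q}$ on the right) is cancelled at each stage, and the repeated use of the fact that $N$ is central so that a left ideal absorbs it via $q\bar{q}$ and a right ideal via $p\bar{p}$. It is worth flagging that this inclusion needs only $|p|^2=|q|^2$; the oddness and the primitivity of an odd extended admissible pair play no role in it, and would instead be required for the reverse inclusion — i.e.\ for upgrading this containment to the equality describing the CSL exactly — together with the subsequent index computation.
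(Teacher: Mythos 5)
Your proof is correct and takes essentially the same route as the paper's: both reduce the claim to the two sub-inclusions $p\JJ\subseteq\frac{1}{|pq|}\ts p\JJ\bar q$ and $\JJ\bar q\subseteq\frac{1}{|pq|}\ts p\JJ\bar q$, and both rest on $|p|^2=|q|^2=q\bar q=p\bar p$ being central together with the absorption properties $\JJ q\subseteq\JJ$ and $\bar p\ts\JJ\subseteq\JJ$ — the paper merely keeps the factor $p$ (resp.\ $\bar q$) in place instead of cancelling it as you do. Your closing observation that only $|p|^2=|q|^2$ is used here, with oddness and primitivity needed only for the reverse inclusion, matches the paper's subsequent development exactly.
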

\begin{proof}
Clearly, $p\JJ\subseteq\JJ$ and $\JJ\bar q\subseteq\JJ$, thus giving
$p\JJ+\JJ\bar q\subseteq \JJ$. On the other hand, since $|p|^2=|q|^2$,
one has
\begin{align}
p\JJ \, = \, \frac{p\JJ q\bar q}{|q|^2}
\, \subseteq \, \frac{p\JJ \bar q}{|q|^2} \, = \,
\frac{p\JJ \bar q}{|pq|},
\end{align}
and a similar argument for $\JJ\bar q$ yields
$p\JJ+\JJ\bar q\subseteq \frac{p\JJ\bar q}{|pq|}$.
\end{proof}

The first step for the converse inclusion is the following result,
where we return to the more general case of extended admissible pairs
for a moment.

\begin{lemma}
  If\/ $(p,q)$ is an extended admissible pair, one has
\[
  2\left(\JJ\cap\frac{p\JJ\bar q}{|pq|}\right)\, 
  \subseteq\; p\JJ+\JJ\bar q\ts .
\]
\end{lemma}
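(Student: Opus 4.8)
The plan is to prove the reverse-type inclusion $2\bigl(\JJ\cap\frac{p\JJ\bar q}{|pq|}\bigr)\subseteq p\JJ+\JJ\bar q$ by taking an arbitrary element of the left-hand side and exhibiting it as a sum of an element of $p\JJ$ and an element of $\JJ\bar q$. So let me think about what I have to work with.

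Let me unpack the setup. I have an extended admissible pair $(p,q)$, meaning $|p|^2=|q|^2$, the octuple $(p,q)$ is primitive, and — crucially — there exist $v,w\in\JJ$ with $\langle p\mid v\rangle+\langle q\mid w\rangle=1$... wait, let me re-read the definition. Actually the Bézout-type relation is stated for $(x,y)$, i.e. $\langle x\mid v\rangle+\langle y\mid w\rangle=1$, and the extended admissible pair is renamed $(p_\alpha,q_\alpha)$. So for the pair $(p,q)$ in the lemma (which, by the convention set up, I should read as already being of this extended form), I have $v,w\in\JJ$ with $\langle p\mid v\rangle+\langle q\mid w\rangle=1$. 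This inner-product relation is the essential arithmetic input; the factor $2$ on the left is exactly the classical "doubling" that appears when one tries to generate an intersection of two one-sided ideals inside a noncommutative order.

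**The key step** I would carry out: take $x\in\JJ$ with $x\in\frac{p\JJ\bar q}{|pq|}$, so $x=\frac{p\,z\,\bar q}{|pq|}$ for some $z\in\JJ$, and I want to show $2x\in p\JJ+\JJ\bar q$. The idea is to use the Bézout relation to split. Write $1=\langle p\mid v\rangle+\langle q\mid w\rangle$, and note $\langle p\mid v\rangle=\tfrac12(\bar p v+\bar v p)$ in terms of the quaternionic trace, so that $\bar p v+\bar v p+\bar q w+\bar w q=2$. The plan is to multiply $x$ on the left and right by these traces and rearrange, using $|p|^2=|q|^2=|pq|$ to cancel the normalizing denominator: for instance $p\bar p=|p|^2=|pq|$ lets me convert $\frac{p\bar p\,(\cdots)\bar q}{|pq|}$ into something lying literally in $p\JJ$ or in $\JJ\bar q$. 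The first main obstacle is bookkeeping the conjugations and left/right multiplications so that each resulting summand visibly has $p$ as a left factor or $\bar q$ as a right factor, while the denominator $|pq|$ is absorbed by an $|p|^2$ or $|q|^2$ at the right spot; the factor $2$ emerges because the trace expression $\bar p v+\bar v p$ produces two terms only one pairing of which lands cleanly in the desired ideal.

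**The hardest part** will be handling the half-integer (Hurwitz) components of $\JJ$ and confirming that the candidate summands genuinely lie in $\JJ$ rather than merely in $\frac1{|pq|}\JJ$; this is precisely where the oddness of $|p|^2,|q|^2$ and the $\JJ$-primitivity enter, guaranteeing that $\gcd$ considerations do not leave spurious denominators. I would therefore first assemble the trace identity $\bar p v+\bar v p+\bar q w+\bar w q=2$, then compute $2x=(\bar p v+\bar v p+\bar q w+\bar w q)\,x$ and distribute; regroup terms so that, using $p\bar p=q\bar q=|pq|$, each of the four resulting pieces is rewritten with an explicit leftmost factor $p$ or rightmost factor $\bar q$ and an intermediate Hurwitz quaternion; and finally verify membership in $\JJ$. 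I expect the verification of integrality of the intermediate factors to be the genuine content, with the algebraic rearrangement being essentially forced once the trace identity is in hand.
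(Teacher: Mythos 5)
Your plan is correct and is essentially the paper's own proof: write $x=\frac{p\ts z\bar q}{|pq|}$ with $z\in\JJ$, invoke the relation $\ip{p}{v}+\ip{q}{w}=1$ that comes with the extended admissible pair, expand the scalar $2=2\ip{p}{v}+2\ip{q}{w}$ via the trace identity, and absorb the denominator through $\bar p p=\bar q q=|p|^2=|q|^2=|pq|$, so that the four resulting pieces $p\bar v x$, $v z\bar q$, $p z\bar w$, $x w\bar q$ visibly lie in $p\JJ+\JJ\bar q$. The one bookkeeping point (which you yourself flagged) is to use the traces in the form $(p\bar v+v\bar p)\ts x$ on the left and $x\ts(q\bar w+w\bar q)$ on the right — legitimate, since each trace is a real scalar and so commutes with $x$ — after which every summand is automatically a product of Hurwitz quaternions, so the integrality verification you anticipated as the hardest part (and any appeal to oddness or primitivity beyond the B\'ezout relation itself) never actually arises.
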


\begin{proof}
  Let $x\in \JJ\cap\frac{p\JJ\bar q}{|pq|}$. Then, there exists a
  $y\in\JJ$ such that $x=\frac{p\ts y\bar q}{|pq|}$. Since $(p,q)$ is
  an extended admissible pair, there exist quaternions $v,w\in\JJ$
  such that $\ip{p}{v}+\ip{q}{w}=1$. Consequently,
\begin{align*}
  2\ts x& \, = \,
  2\bigl(\ip{p}{v}+\ip{q}{w}\bigr)x \, = \,
  2\ip{p}{v}x+2\ts x\ip{q}{w}\, = \,
  p\bar v x + v\bar p x + xq\bar w + xw\bar q\\
  & \, = \, p\bar v x + vy\bar q + py\bar w +
  xw\bar q  \, \in \,  p\JJ+\JJ\bar q \ts ,
\end{align*}
where we have made use of the identity $\ip{a}{b\ts}=\frac{1}{2}(a\bar
b + b\bar a)$.
\end{proof}

Trivially, since $|p|^2=|q|^2$, one has
\[
  |p|^2\left(\JJ\cap\frac{p\JJ\bar q}{|pq|}\right)\, =\; 
  |p|^2\JJ\cap p\JJ\bar q \, \subseteq \, p\JJ+\JJ\bar q\ts .
\]
If we restrict again to odd extended admissible pairs, we get
\[
  \JJ\cap\frac{p\JJ\bar q}{|pq|}
  \, =\, 2\left(\JJ\cap\frac{p\JJ\bar q}{|pq|}\right)
  +|p|^2\left(\JJ\cap\frac{p\JJ\bar q}{|pq|}\right)\subseteq \,
  p\JJ+\JJ\bar q\ts ,
\]
since $|p|^2$ is odd.  Hence, we have proved the following result.

\begin{theorem}\label{csl-theo:csl-cub4}
Let\/ $(p,q)$ be an odd extended admissible pair. Then,
\[
  \JJ\cap\frac{p\JJ\bar q}{|pq|} \, =\,  p\JJ+\JJ\bar q\ts ,
\]
so each CSL of the centred hypercubic lattice is of the form\/
$p\JJ+\JJ\bar q$ for a suitable odd extended admissible pair. \qed
\end{theorem}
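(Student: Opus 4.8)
The plan is to prove the asserted set equality by two inclusions, assembling the two preparatory lemmas established just above together with a coprimality argument, and then to identify the resulting set with a CSL. Throughout, I would write $M := \JJ\cap\frac{p\JJ\bar{q}}{|pq|}$ for brevity. The inclusion $p\JJ+\JJ\bar{q}\subseteq M$ is precisely the first of the two lemmas (valid for odd extended admissible pairs), so the whole task reduces to the reverse inclusion $M\subseteq p\JJ+\JJ\bar{q}$.

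For the reverse inclusion, I would combine two facts, each already available. The second lemma yields $2M\subseteq p\JJ+\JJ\bar{q}$ for any extended admissible pair. Independently, because $|p|^2=|q|^2$ forces $|pq|=|p|^2$, one has $|p|^2 M = |p|^2\JJ\cap p\JJ\bar{q}$, and since $|p|^2\JJ=p\bar{p}\JJ\subseteq p\JJ$ this gives $|p|^2 M\subseteq p\JJ+\JJ\bar{q}$ as well. Now the oddness of the pair enters decisively: $|p|^2$ is odd, so $\gcd(2,|p|^2)=1$ and hence $2\ZZ+|p|^2\ZZ=\ZZ$. Since $p\JJ+\JJ\bar{q}$ is a $\ZZ\ts$-module, the identity $M = 2M + |p|^2 M$ holds, and both summands lie in $p\JJ+\JJ\bar{q}$; therefore $M\subseteq p\JJ+\JJ\bar{q}$, which finishes the equality. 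The key step is this removal of the spurious factor $2$, and it is exactly why one restricts to \emph{odd} extended admissible pairs; without oddness one would only control $2M$, not $M$ itself. (The genuinely laborious work sits one level down, in the factor-$2$ lemma, whose proof rests on extended admissibility and the polarisation identity $\ip{a}{b}=\frac{1}{2}(a\bar{b}+b\bar{a})$; granting that lemma, the present theorem is a clean assembly.)

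Finally, to read off the geometric conclusion, I would recall that the Cayley parametrisation~\eqref{csl-eq:4dimrot} gives $R(p,q)\JJ = \frac{p\JJ\bar{q}}{|pq|}$, so that the CSL of the coincidence rotation $R(p,q)$ is exactly $\JJ\cap R(p,q)\JJ = M$. Since every coincidence rotation of $\JJ$ is parametrised by an admissible pair (using $\SOC(\JJ)=\SO(4,\QQ)$), and since rescaling produces an extended admissible pair while the $r=1+\ii$ reduction discussed above shows that every rotation is symmetry related to one coming from an odd extended admissible pair — with symmetry-related rotations sharing the same CSL — every CSL of the centred hypercubic lattice is of the stated form $p\JJ+\JJ\bar{q}$.
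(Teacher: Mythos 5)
Your proposal is correct and takes essentially the same route as the paper: writing $M=\JJ\cap\frac{p\JJ\bar q}{|pq|}$ as you do, the paper likewise combines the forward-inclusion lemma, the factor-$2$ lemma, the trivial inclusion $|p|^2M\subseteq p\JJ+\JJ\bar q$, and the oddness of $|p|^2$ to conclude $M=2M+|p|^2M\subseteq p\JJ+\JJ\bar q$ (your variant of the trivial step, via $|p|^2\JJ=p\bar p\ts\JJ\subseteq p\JJ$, is an immaterial difference). The identification of $M$ with the CSL of $R(p,q)$ through Cayley's parametrisation and the reduction to odd extended admissible pairs via the prime $1+\ii$, with symmetry-related rotations sharing the same CSL, also mirror the paper's preceding discussion.
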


This explicit expression of the CSLs of $\JJ$ in terms of a sum of
ideals of $\JJ$ is very useful, as it does not only help to calculate
their indices, but it also allows us to determine which coincidence
rotations yield the same CSL.

Let us first state the result for the index.

\begin{theorem}[{\cite[Theorem 4.1.6]{csl-habil}}]\label{csl-theo:index-D4}
  If\/ $(p,q)$ is an odd extended admissible pair, one has\/
  $\Sig(R(p,q))=|p|^2$.
\end{theorem}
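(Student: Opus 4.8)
The plan is to combine the explicit description of the CSL furnished by Theorem~\ref{csl-theo:csl-cub4} with a prime-by-prime computation of its index. By that theorem, $\JJ(R(p,q)) = p\JJ + \JJ\bar q$, so
\[
  \Sig(R(p,q)) \, = \, [\JJ : p\JJ + \JJ\bar q].
\]
Since $(p,q)$ is an extended admissible pair, one has $|p|^2 = |q|^2 =: n$ with $n$ odd, and $|pq| = n$. First I would record the two elementary indices $[\JJ : p\JJ] = |p|^4 = n^2$ and $[\JJ : \JJ\bar q] = |q|^4 = n^2$, exactly as in the proof of Proposition~\ref{csl-theo:cub-Sig}. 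The index of the \emph{sum} $p\JJ + \JJ\bar q$ does not follow from these by the second isomorphism theorem alone, so I would compute it by localising: writing $\JJ_\ell := \JJ\otimes\ZZ_\ell$ and $C_\ell := (p\JJ+\JJ\bar q)\otimes\ZZ_\ell$, the finite index $[\JJ : p\JJ+\JJ\bar q]$ is the product over all rational primes $\ell$ of the local indices $[\JJ_\ell : C_\ell]$.

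The localisation is convenient because $\JJ$ is a maximal order in $\HH(\QQ)$, which is ramified only at $2$ and $\infty$. At the prime $2$, the oddness of $|p|^2$ and $|q|^2$ makes $p$ and $q$ units of $\JJ_2$ (the maximal order of the ramified quaternion division algebra over $\QQ_2$), so $p\JJ_2 = \JJ_2 = \JJ_2\bar q$ and the local index is $1$. At every odd prime $\ell$ there is an isomorphism $\JJ_\ell \cong M_2(\ZZ_\ell)$ under which the reduced norm $\nr$ becomes the determinant; here $p\JJ$ becomes a right ideal $P\,M_2(\ZZ_\ell)$ and $\JJ\bar q$ a left ideal $M_2(\ZZ_\ell)\,\bar Q$, with $\det P$ and $\det \bar Q$ both of $\ell$-adic valuation $a := v_\ell(n)$. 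Using the Smith normal form I would write $P = U\,\mathrm{diag}(1,\ell^{a})\,V$ with $U,V\in\GL_2(\ZZ_\ell)$ whenever $P$ is primitive, and analogously for $\bar Q$.

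The heart of the matter is the matrix identity $[M_2(\ZZ_\ell) : P\,M_2(\ZZ_\ell) + M_2(\ZZ_\ell)\,\bar Q] = \ell^{a}$, and this is the main obstacle, for two reasons. First, one needs at least one of $P,\bar Q$ to be primitive: this is precisely where the primitivity of the octuple $(p,q)$ enters, since if $\ell$ divided all components of both $p$ and $q$ the pair would fail to be primitive; hence at each odd $\ell$ at least one local factor is a unit times $\mathrm{diag}(1,\ell^{a})$. Second, one must verify that the index of the \emph{mixed} sum of a right and a left ideal depends only on $a$, not on the relative position of the two ideals. This follows by absorbing the unit factors: left-multiplying the sum by $U^{-1}$ and right-multiplying by a suitable unit (both index-preserving automorphisms of $M_2(\ZZ_\ell)$) reduces it to $\mathrm{diag}(1,\ell^{a})\,M_2(\ZZ_\ell) + M_2(\ZZ_\ell)\,\mathrm{diag}(\ell^{c},\ell^{a-c})$, whose index one reads off as $\ell^{a}$ by inspecting the four matrix entries. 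Assembling the local contributions then gives $\Sig(R(p,q)) = \prod_{\ell}\ell^{\,v_\ell(n)} = n = |p|^2$.

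As a consistency check, a purely global manipulation is available but instructive in its incompleteness: left-multiplying $p\JJ\cap\JJ\bar q$ by $p^{-1}$ and identifying $\tfrac1n\bar p\JJ\bar q$ with $R(\bar p,q)\JJ$ yields, via the second isomorphism theorem and $\Sig(R)=\Sig(R^{-1})$, the relation $\Sig(R(p,q))\,\Sig(R(\bar p,q)) = n^2$. I would stress that this relation alone does not pin down $\Sig(R(p,q))$, since $(\bar p,q)$ is again an odd extended admissible pair and no elementary symmetry of $\JJ$ forces the two factors to coincide; it is exactly the local computation above that supplies the decisive input $\Sig(R(p,q)) = n$.
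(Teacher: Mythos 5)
Your proof is correct, and it takes a genuinely different route from the paper's. Both arguments start from the identity $\JJ\cap\frac{p\JJ\bar q}{|pq|}=p\JJ+\JJ\bar q$ of Theorem~\ref{csl-theo:csl-cub4}, but from there the paper stays global: it sandwiches $p\JJ\subseteq p\JJ+\JJ\bar q\subseteq\JJ$ to obtain the product relation $\Sig(R(\bar p,q))\ts\Sig(R(p,q))=|p|^4$ (exactly the relation you derive as your consistency check), and then pins down both factors by separately proving the divisibility $\Sig(R(p,q))\mid |p|^2$, which applies equally to the odd extended admissible pair $(\bar p,q)$ and so forces both factors to equal $|p|^2$; the technical core of that divisibility step is deferred to the habilitation thesis. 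You instead compute $[\JJ:p\JJ+\JJ\bar q]$ head-on by localisation: the ramified prime $2$ contributes trivially because oddness makes $p$ and $\bar q$ units of $\JJ_2$, while at each split odd prime $\ell$ the Smith normal form reduction turns the mixed sum into $D_1M_2(\ZZ_\ell)+M_2(\ZZ_\ell)D_2$ with $D_1,D_2$ diagonal, whose index is read off entrywise as $\ell^{\ts v_\ell(|p|^2)}$ precisely when at least one of the two local matrices is primitive$\,$---$\,$and your identification of octuple primitivity as the source of that hypothesis is accurate (if both were locally imprimitive, the entrywise minima would give a strictly larger index). In fact your worry about the relative position of the right ideal and the left ideal dissolves completely: since $PM_2(\ZZ_\ell)=U_1D_1M_2(\ZZ_\ell)$ and $M_2(\ZZ_\ell)\bar Q=M_2(\ZZ_\ell)D_2V_2$, applying $X\mapsto U_1^{-1}XV_2^{-1}$ alone already reduces the sum to diagonal representatives, so no case analysis on the interaction of the two ideals is needed. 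As for what each approach buys: the paper's argument uses only the elementary ideal arithmetic of $\JJ$ that pervades the chapter, at the cost of an opaque technical step; yours is self-contained, isolates exactly where oddness and primitivity enter, and proves the stronger local statement (the index of $p\JJ+\JJ\bar q$ prime by prime), at the cost of invoking the local structure of maximal orders, namely $\JJ_\ell\cong M_2(\ZZ_\ell)$ for odd $\ell$ and the unique maximal order of the division algebra $\HH(\QQ_2)$. Your closing remark that the product relation alone cannot determine $\Sig(R(p,q))$ is also correct, and it is exactly why the paper cannot stop after its first step.
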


\begin{proof}[Sketch of proof]
The idea of the proof is to exploit the equation
\[
  p\JJ\, \subseteq\,  p\JJ+\JJ\bar q
 \, =\, \JJ\cap\frac{p\JJ\bar q}{|pq|}\, \subseteq\, \JJ
\]
to show $\Sig(R(\bar p,q))\Sig(R(p,q))=|p|^4$. By proving that the index
$\Sig(R(p,q))$ divides $|p|^2$, one then infers $\Sig(R(p,q))=|p|^2$.
For the rather technical details, we refer to~\cite{csl-habil}.
\end{proof}

\begin{remark}\label{csl-rem:cub4-index2p}
  It may be useful to formulate the index\index{index} in terms of
  primitive admissible pairs. Let $p,q$ be primitive odd quaternions
  with associated extended pair $(p^{}_{\alpha},
  q^{}_{\alpha})=(\alpha^{}_p \ts p, \alpha^{}_q \ts q)$. Then,
  \begin{align*}%\label{csl-eq:cub4-index2p}
    \Sig(R(p,q)) & \, = \, \alpha_p^2 \, |p|^2 \, =\,  \alpha_q^2 \, |q|^2 
    \, = \, \alpha^{}_p \ts\alpha^{}_q \, |pq| \\[1mm]
    & \, = \, \lcm\bigl(|p|^2,|q|^2\bigr)
    \, = \, \alpha_p^2 \ts \alpha_q^2 \gcd\bigl(|p|^2,|q|^2\bigr). 
  \end{align*}
  Note that $|pq|$ is the denominator of $R(p,q)$. This shows that, in
  general, $\den(R)$ and $\Sig(R)$ do \emph{not} coincide for the lattice
  $D_4^{*}$, which is in contrast to the three-dimensional cubic
  lattices. In fact, $\den(R)=\Sig(R)$ holds if and only if
  $\alpha^{}_p=\alpha^{}_q=1$.  \exend
\end{remark}

\begin{remark}\label{csl-rem:cub4-spec}
  This explicit expression for the coincidence indices allows us to
  determine the coincidence spectrum. As in
  Remark~\ref{csl-rem:spec-cub}, we conclude that $|p|^2$ and $|q|^2$
  run through all odd positive integers, and the possible coincidence
  indices thus are exactly the odd positive integers. In other words,
  the coincidence spectrum of $D_4^{*}$ and $D_4^{}$, which we know to
  be similar lattices, is the set of all odd positive integers,
  \[
    \Sig\bigl(\SOC(D^{*}_{4})\bigr) \, = \, \Sig\bigl(\SOC(D^{}_{4})\bigr)
    \, = \, 2\ts\NN_{0} +1.
  \]
  This is exactly the same spectrum we have found for the
  three-dimensional cubic lattices; compare
  Remark~\ref{csl-rem:spec-cub}.  As $D_4^{*}$ has reflections among
  its symmetry operations, this is also the full spectrum
  $\Sig\bigl(\OC(D^{*}_{4})\bigr)=\Sig\bigl(\SOC(D^{*}_{4})\bigr)$ by
  Remark~\ref{csl-rem:spec}.  \exend
\end{remark}

Our next task is to enumerate the coincidence isometries of $D_4^{*}$.
Since the point group of $D_4^{*}$ contains $24^2=576$ rotations, the
number of coincidence rotations\index{rotation!coincidence} of a given
index $n$ can be written as $576 \, c^{\mathsf{rot}}_{D_4^{*}}(n)$.
As the point group contains also reflections, the number of
coincidence isometries\index{isometry!coincidence} is twice this
number, $1152\, c^{\mathsf{rot}}_{D_4^{*}}(n)$.

By Theorem~\ref{csl-theo:index-D4}, counting the number of coincidence
rotations is equivalent to counting the number of odd extended
admissible pairs. We first observe that
$c^{\mathsf{rot}}_{D_4^{*}}(n)$ is a multiplicative function, which
follows from the essentially unique prime factorisation in
$\JJ$. Indeed, if $(p,q)$ and $(r,s)$ are odd extended admissible
pairs with $|p|^2=m$ and $|r|^2=n$ for $m,n$ coprime, $(p\ts r,q\ts
s)$ is an odd extended admissible pair with $|pr|^2=mn$.  Conversely,
any odd extended admissible pair $(p,q)$ with $|p|^2=mn$ can be
decomposed into odd extended admissible pairs with index $m$ and $n$,
respectively. As this decomposition is unique up to units,
multiplicativity follows.

Thus, we only need to compute $c^{\mathsf{rot}}_{D_4^{*}}(n)$ for $n$
being a prime power. In the following, let $\pi$ denote a rational
prime (we choose $\pi$ here as we have used $p$ for quaternions
already).  As odd extended admissible pairs consist of odd quaternions
only, $c^{\mathsf{rot}}_{D_4^{*}}(2^r)=0$. Hence, $\pi$ is always odd
in what follows.  It is now a purely combinatorial task to determine
$c^{\mathsf{rot}}_{D_4^{*}}(\pi^r)$. The number of primitive
quaternions $p$ with norm $|p|^2=\pi^r$ is given by $24\ts f(\pi^r)$
with $f(\pi^r)=(\pi \nts +\nts 1) \ts \pi^{r-1}$ for $r\geq 1$; compare
Eq.~\eqref{csl-eq:c-cub}.  Any odd extended admissible pair $(p,q)$
with $|p|^2=\pi^r$ can be obtained from a primitive admissible pair
$(p^{}_1,q^{}_1)$ with $|p^{}_1|^2=\pi^{r'}\!\! ,$
$|q^{}_1|^2=\pi^{r''}\!\! ,$ $r=\max(r',r''),$ and $r'-r''$
even. Hence,
\begin{equation}\label{csl-eq:dim4cub-crot}
\begin{split}
  c^{\mathsf{rot}}_{D_4^{*}}(\pi^r)\,  & =\, f(\pi^r)^2 
  + 2\sum_{s=1}^{[r/2]} f(\pi^r) \,f(\pi^{r-2s})\\
    & =\, \myfrac{\pi\!+\!1}{\pi\!-\!1}\,\pi^{r-1}\,(\pi^{r+1}+\pi^{r-1}-2)\ts .
\end{split}
\end{equation}
Let us summarise this result in the following theorem, where we change
the notation and use $p$ to denote a rational
prime.

\begin{theorem}\label{csl-theo:dim4cub-d4rot}
  The number of coincidence rotations\index{rotation!coincidence} of\/
  $D_4^{*}$ of index\/ $n$ is given by\/
  $576 \, c^{\mathsf{rot}}_{D_4^{*}}(n)$, where\/
  $c^{\mathsf{rot}}_{D_4^{*}}(n)$ is a multiplicative arithmetic
  function. It is determined by\/ $c^{\mathsf{rot}}_{D_4^{*}}(2^r)=0$
  for\/ $r\geq 1$ together with
\[
    c^{\mathsf{rot}}_{D_4^{*}}(p^r)\, =\, 
    \frac{p\nts\nts +\!1}{p\nts\nts -\!1}\, p^{r-1}\, (p^{r+1}+p^{r-1}-2)
\]
if\/ $p$ is an odd prime and\/ $r\geq 1$.  \qed
\end{theorem}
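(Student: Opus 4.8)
The plan is to establish the stated formula for $c^{\mathsf{rot}}_{D_4^{*}}(p^r)$ by reducing the counting of coincidence rotations to a purely combinatorial enumeration of odd extended admissible pairs, then summing a geometric-type series. First I would invoke Theorem~\ref{csl-theo:index-D4}, which tells us that $\Sig(R(p,q))=|p|^2=|q|^2$ for an odd extended admissible pair, so that counting coincidence rotations of index $n$ modulo the symmetry group $\SO(\JJ)$ amounts to counting odd extended admissible pairs with $|p|^2=n$. The reduction modulo symmetries has already been arranged: every coincidence rotation is symmetry related to one parametrised by an odd pair (the only nontrivial two-sided ideal generator being $1+\ii$ of norm $2$), and scaling to the extended form is unique up to units. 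Thus the factor $576=24^2$ accounts for the order of $\SO(\JJ)$, and what remains is to count the underlying odd extended admissible pairs combinatorially.

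The multiplicativity of $c^{\mathsf{rot}}_{D_4^{*}}$ follows from the essentially unique prime factorisation in the principal ideal ring $\JJ$: given coprime odd norms $m,n$, any odd extended admissible pair of index $mn$ factors uniquely (up to units) into pairs of index $m$ and $n$, and conversely such factors multiply to a pair of index $mn$. This is the argument already sketched in the text preceding the theorem, so I would state it briefly and then restrict attention to prime powers $p^r$ with $p$ an odd prime. Since odd extended admissible pairs use only odd quaternions, the vanishing $c^{\mathsf{rot}}_{D_4^{*}}(2^r)=0$ for $r\geq1$ is immediate.

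For the prime-power count, I would use the known number $24\ts f(p^r)$ of primitive Hurwitz quaternions of norm $p^r$, where $f(p^r)=(p+1)\ts p^{r-1}$ from Eq.~\eqref{csl-eq:c-cub}. An odd extended admissible pair $(p^{}_\alpha,q^{}_\alpha)$ of index $p^r$ arises from a primitive admissible pair $(p^{}_1,q^{}_1)$ with $|p^{}_1|^2=p^{r'}$, $|q^{}_1|^2=p^{r''}$, where $r=\max(r',r'')$ and $r'-r''$ is even (so that the scaling factors $\alpha^{}_p,\alpha^{}_q$ are integers). Counting these: the diagonal case $r'=r''=r$ contributes $f(p^r)^2$, and each off-diagonal pair with $\{r',r''\}=\{r,r-2s\}$ for $1\leq s\leq[r/2]$ contributes $2\ts f(p^r)\ts f(p^{r-2s})$, the factor $2$ accounting for the two orderings. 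This gives
\[
  c^{\mathsf{rot}}_{D_4^{*}}(p^r)\,=\,f(p^r)^2
    +2\sum_{s=1}^{[r/2]} f(p^r)\ts f(p^{r-2s}),
\]
exactly as in Eq.~\eqref{csl-eq:dim4cub-crot}.

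The main obstacle, and the one place where genuine care is required, is the closed-form evaluation of this sum. Substituting $f(p^k)=(p+1)\ts p^{k-1}$ for $k\geq1$ (and treating $f(p^0)=1$ correctly, since the factor $p+1$ does not appear at exponent $0$) turns the sum into a geometric series in $p$, which after simplification must collapse to $\frac{p+1}{p-1}\ts p^{r-1}(p^{r+1}+p^{r-1}-2)$. I would verify this by summing $\sum_s p^{-2s}$ in closed form and carefully handling the boundary term at $k=0$, which differs from the generic formula; this boundary discrepancy is the subtle point that makes the algebra slightly delicate rather than routine. A consistency check at small $r$ (for instance $r=1$ and $r=2$) would confirm the final expression, completing the proof.
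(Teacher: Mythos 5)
Your proposal is correct and follows essentially the same route as the paper: reduction to counting odd extended admissible pairs via Theorem~\ref{csl-theo:index-D4}, multiplicativity from the essentially unique prime factorisation in $\JJ$, vanishing at powers of $2$, and the prime-power count $f(\pi^r)^2 + 2\sum_{s=1}^{[r/2]} f(\pi^r)\,f(\pi^{r-2s})$ of Eq.~\eqref{csl-eq:dim4cub-crot}, which collapses to the stated closed form. The boundary term you flag is handled cleanly, since for either parity of $r$ one gets $\sum_{s=1}^{[r/2]} f(\pi^{r-2s}) = \frac{\pi^{r-1}-1}{\pi-1}$, from which the closed form follows directly.
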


The multiplicativity of $c^{\mathsf{rot}}_{D_4^{*}}(n)$ guarantees
that the corresponding Dirichlet series\index{Dirichlet~series}
generating function can be written as an Euler
product,\index{Euler~product}
\begin{equation}\label{csl-4dim-frotpcub}
\begin{split}
  \Psi_{D_4^{*}}^{\mathsf{rot}}(s)
  & \, = \, \sum_{n=1}^\infty \frac{c^{\mathsf{rot}}_{D_4^{*}}(n)}{n^s}
  \, = \, \prod_{p\ne 2}\frac{(1+p^{-s})(1+p^{1-s})}{(1-p^{1-s})(1-p^{2-s})}\\[2mm]
  & \, = \,   \myfrac{1-2^{1-s}}{1+2^{-s}}\myfrac{1-2^{2-s}}{1+2^{1-s}}
    \frac{\zeta(s)\,\zeta(s-1)^2\,\zeta(s-2)}{\zeta(2s)\,\zeta(2s-2)}\ts ,
\end{split}
\end{equation}
where the first few terms read as follows,
\[
\Psi_{D_4^{*}}^{\mathsf{rot}}(s)
=\textstyle{1+\frac{16}{3^s}+\frac{36}{5^s}+\frac{64}{7^s}+\frac{168}{9^s}
+\frac{144}{11^s}+\frac{196}{13^s}+\frac{576}{15^s}+\frac{324}{17^s}
+\frac{400}{19^s}+\frac{1024}{21^s}+\cdots}
\]

It is remarkable that $\Psi_{D_4^{*}}^{\mathsf{rot}}(s)$ can be
expressed in terms of the cubic generating function
$\Psi^{}_{\!\mathsf{bcc}}(s)$ from Theorem~\ref{csl-theo:CSL-cub},
which follows immediately from its explicit expression in terms of
zeta functions from Eq.~\eqref{csl-4dim-frotpcub}. In particular, one
has
\begin{equation}
  \Psi_{D_4^{*}}^{\mathsf{rot}}(s)\, =\, 
  \Psi^{}_{\!\mathsf{bcc}}(s)\Psi^{}_{\!\mathsf{bcc}}(s-1)  .
\end{equation}
This explicit expression shows that $\Psi_{D_4^{*}}^{\mathsf{rot}}(s)$
is a meromorphic function in the complex plane.  Its rightmost pole is
at $s=3$, with residue $\frac{630}{\pi^6}\ts \zeta(3)$. Using
Theorem~\ref{csl-thm:meanvalues}, we obtain the asymptotic
behaviour\index{asymptotic~behaviour}
\[
  \sum_{n\leq x} c^{\mathsf{rot}}_{D_4^{*}}(n) \, \sim\,  
  \myfrac{210}{\pi^6}\ts \zeta(3)\, x^3 \, \approx\,
  0.262{\ts}570\, x^3
\]
as $x\to\infty$. 

Next, we want to calculate the number $c^{}_{D_4^{*}}(n)$ of distinct
CSLs of a given index $n$. In contrast to the three-dimensional cubic
lattices, where we have $c^{\mathsf{rot}}_{}(n)=c^{}_{}(n)$, it turns
out that $c^{}_{D_4^{*}}(n)$ and $c^{\mathsf{rot}}_{D_4^{*}}(n)$
generally differ.  Clearly, we have the upper bound
$c^{}_{D_4^{*}}(n)\leq c^{\mathsf{rot}}_{D_4^{*}}(n)$.  To calculate
$c^{}_{D_4^{*}}(n)$, we must determine which coincidence rotations
generate the same CSL.

One knows from Lemma~\ref{csl-lem:equal-csl} that two CSLs can only
agree if the corresponding coincidence indices are the same.  In
addition, the denominators of the inverses must be equal, but as
$\den(R)=\den(R^{-1})$, we infer that the denominators must be the
same as well. However, these conditions are not yet sufficient.  In
fact, we need additional conditions, which are a bit technical;
compare~\cite{csl-BZ08} and see~\cite[Thm.~4.1.12]{csl-habil} for a
proof.

\begin{theorem}\label{csl-theo:4dim-equalcslJ}
  Let\/ $(q^{}_1,p^{}_1)$ and\/ $(q^{}_2,p^{}_2)$ be two primitive
  admissible pairs of odd quaternions.  Then, the relation
  \[
    \JJ\cap \frac{p^{}_1 \JJ \bar{q}^{}_1}{|p^{}_1q^{}_1|}
    \, =\, \JJ\cap \frac{p^{}_2 \JJ \bar{q}^{}_2}{|p^{}_2q^{}_2|}
  \]
  holds if and only if the following conditions are satisfied\/
  $(\nts $up to units$\ts )$:
  \begin{enumerate}\itemsep=3pt
  \item $|p^{}_1q^{}_1|=|p^{}_2q^{}_2|$, 
  \item $\lcm\bigl(|p^{}_1|^2,|q^{}_1|^2\bigr)
    =\lcm\bigl( |p^{}_2|^2,|q^{}_2|^2\bigr)$,
  \item $\gcld\bigl( p^{}_1,|p^{}_1q^{}_1|\bigr)
    =\gcld\bigl( p^{}_2,|p^{}_2q^{}_2|\bigr)$, and 
  \item $\gcld\bigl( q^{}_1,|p^{}_1q^{}_1|\bigr)
    =\gcld\bigl( q^{}_2,|p^{}_2q^{}_2|\bigr)$. \qed
  \end{enumerate}
\end{theorem}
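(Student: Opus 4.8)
The plan is to characterise each CSL $\JJ\cap\frac{p\JJ\bar q}{|pq|}$ by a complete set of ideal-theoretic invariants extracted from the pair $(p,q)$, and then to show that the four listed conditions are precisely these invariants. By Theorem~\ref{csl-theo:csl-cub4}, the CSL equals the sum of ideals $p\JJ+\JJ\bar q$, so I would work with this description throughout. The natural invariants of such a sum are: the denominator $|pq|$ of the rotation (condition~(1)), the index $\Sig=\lcm(|p|^2,|q|^2)$ by Remark~\ref{csl-rem:cub4-index2p} (condition~(2)), and the ``one-sided divisibility data'' encoded in the greatest common left divisors $\gcld(p,|pq|)$ and $\gcld(q,|pq|)$ (conditions~(3) and~(4)). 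The proof splits into the routine direction (these invariants are forced equal when the CSLs agree) and the substantive converse (equal invariants force equal CSLs).

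For the forward direction, I would first invoke Lemma~\ref{csl-lem:equal-csl}: equal CSLs have equal $\Sig$ and equal $\den(R^{-1})$, and since $\den(R)=\den(R^{-1})=|pq|$ for $D_4^{*}$, this immediately yields~(1), and combined with Remark~\ref{csl-rem:cub4-index2p} also~(2). For conditions~(3) and~(4), the idea is that $\gcld(p,|pq|)$ governs how the left ideal $p\JJ$ sits inside $\JJ$ at each prime dividing $|pq|$, i.e.\ it records the ``left-handed'' part of the CSL that is visible from the $p\JJ$ summand. I would extract $\gcld(p,|pq|)\JJ$ from the CSL $p\JJ+\JJ\bar q$ by an intersection/quotient construction — for instance by intersecting with a suitable scaled copy of $\JJ$ that kills the $\JJ\bar q$ contribution — so that the gcld is genuinely an invariant of the lattice rather than of the chosen pair. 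The analogous argument with left/right interchanged, using $\gcrd$ on the conjugate side, gives~(4).

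For the converse, assume all four conditions hold; I would show $p^{}_1\JJ+\JJ\bar q^{}_1=p^{}_2\JJ+\JJ\bar q^{}_2$. Here I would work prime by prime, using that $\JJ$ is a principal ideal ring with essentially unique factorisation, so that it suffices to match the local pictures at each odd rational prime $\ell\mid|pq|$ (the even prime does not occur since the pairs are odd). At a fixed $\ell$, conditions~(1) and~(2) pin down the exponents $|p|^2,|q|^2$ and their lcm, while~(3) and~(4) pin down the actual left/right divisors $\gcld(p,|pq|)$ and $\gcld(q,|pq|)$ up to units. I would then argue that these data determine $p\JJ$ and $\JJ\bar q$, and hence their sum, up to a right unit that does not affect the ideal. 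The key algebraic input is the local structure of one-sided ideals of $\JJ$ over an odd prime, where the quaternion order is a matrix order $M_2(\ZZ_\ell)$ and one-sided ideals are classified by elementary-divisor type; the gcld invariants translate into the elementary divisors of the generators.

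The hard part will be the converse, and specifically verifying that conditions~(3) and~(4) really are sufficient to reconstruct the \emph{sum} $p\JJ+\JJ\bar q$ and not merely the individual summands. The subtlety is that $p\JJ$ and $\JJ\bar q$ interact: the sum can lose information about the individual factors, so one must check that no two genuinely different pairs satisfying~(1)--(4) can collapse to the same sum while a third pair with different gcld data gives a distinct sum. I expect this to require a careful local analysis at the primes dividing $\gcd(|p|^2,|q|^2)$, where both summands contribute nontrivially and the overlap $p\JJ\cap\JJ\bar q$ must be controlled. Since the statement is quoted from \cite[Thm.~4.1.12]{csl-habil} and flagged in the text as ``a bit technical,'' I would present the reduction to the local problem in detail and then refer to \cite{csl-habil,csl-BZ08} for the remaining prime-by-prime bookkeeping.
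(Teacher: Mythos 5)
The paper itself offers no proof of this theorem: it is quoted from \cite[Thm.~4.1.12]{csl-habil} (see also \cite{csl-BZ08}), with only the necessity of conditions (1) and (2) sketched in the surrounding text via Lemma~\ref{csl-lem:equal-csl}. Your proposal therefore has to stand on its own, and it does not, because the central claim of your converse is false. You assert that conditions (1)--(4) ``determine $p\JJ$ and $\JJ\bar q$, and hence their sum, up to a right unit''. If that were true, any two pairs generating the same CSL would agree up to units, so the corresponding rotations would be symmetry related, forcing $c^{}_{D_4^{*}}(n)=c^{\mathsf{rot}}_{D_4^{*}}(n)$; this contradicts Theorems~\ref{csl-theo:dim4cub-d4rot} and~\ref{csl-theo:dim4cub-d4csl}, where already at index $9$ there are $168$ rotation classes but only $152$ CSLs, and the whole point of the constant $\gamma<1$ in Eq.~\eqref{csl-eq:dim4cub-gamma} is that this collapse happens. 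A concrete counterexample: take $q^{}_1=q^{}_2=1$ and $p^{}_1,p^{}_2$ primitive odd with $|p^{}_i|^2=\ell^2$ for an odd prime $\ell$. The extended pair is $(p^{}_i,\ell)$, so the CSL equals $p^{}_i\JJ+\ell\ts\JJ=\gcld(p^{}_i,\ell)\ts\JJ$. There are $f(\ell^2)=\ell\ts(\ell+1)$ distinct ideals $p\JJ$ of norm $\ell^{2}$ but only $f(\ell)=\ell+1$ possible values of $\gcld(p,\ell)$, so many pairs satisfy (1)--(4) with $p^{}_1\JJ\neq p^{}_2\JJ$. Indeed, the paper's counting formula for $c^{}_{D_4^{*}}(\pi^r)$, in which $f(\pi^{r-s})$ replaces the factor $f(\pi^{r})$ of the rotation count, measures precisely this many-to-one behaviour of $p\JJ\mapsto\gcld(p,|pq|)$. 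So the sufficiency proof must show directly that the \emph{sum} $p\JJ+\JJ\bar q$ depends only on the four invariants; reconstructing the summands cannot work. Your own final paragraph flags exactly this danger, but the plan in your third paragraph rests on the claim that it does not occur, so the proposal is internally inconsistent at its crucial step.

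The necessity direction is also incomplete for conditions (3) and (4). Deducing (1) and (2) from Lemma~\ref{csl-lem:equal-csl} together with $\den(R)=\den(R^{-1})=|pq|$ and $\Sig\bigl(R(p,q)\bigr)=\lcm\bigl(|p|^2,|q|^2\bigr)$ is fine and matches the discussion preceding the theorem. But for (3) and (4) you offer only the hope of ``an intersection/quotient construction --- for instance by intersecting with a suitable scaled copy of $\JJ$ that kills the $\JJ\bar q$ contribution'', without specifying the construction or verifying that it recovers $\gcld(p,|pq|)\ts\JJ$ from the lattice $p\JJ+\JJ\bar q$. Showing that these gclds are invariants of the CSL rather than of the parametrising pair is the substance of that half of the theorem. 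Combined with deferring the ``prime-by-prime bookkeeping'' of the converse to \cite{csl-habil,csl-BZ08}, what remains is a restatement of the problem in local (elementary-divisor) language --- a sensible framework, and likely the one used in the cited proof, but not a proof.
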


Note that the first two conditions correspond to the aforementioned
condition that the coincidence indices and the denominator are the
same (recall from Remark~\ref{csl-rem:cub4-index2p} that $\Sig(R(p,q))
= \lcm(|p|^2,|q|^2)$ and \mbox{$\den(R(p,q)) = |pq|$}, if $(p,q)$ is a
primitive admissible pair of odd quaternions).

\begin{remark}\label{csl-rem:4dim-equalcslJ}
  One gets an equivalent set of conditions for the equality of two
  CSLs if one replaces conditions (1) and (2) in
  Theorem~\ref{csl-theo:4dim-equalcslJ} by $|p^{}_1|^2=|p^{}_2|^2$ and
  $|q^{}_1|^2=|q^{}_2|^2$.  It is obvious that the two conditions
  $|p^{}_1|^2=|p^{}_2|^2$ and $|q^{}_1|^2=|q^{}_2|^2$ imply that the
  denominators $|p^{}_1q^{}_1|=|p^{}_2q^{}_2|$ and the coincidence indices
  $\lcm(|p^{}_1|^2,|q^{}_1|^2)=\lcm(|p^{}_2|^2,|q^{}_2|^2)$ are the
  same. The reverse direction is more complicated, as the two
  conditions $|p^{}_1q^{}_1|=|p^{}_2q^{}_2|$ and
  $\lcm(|p^{}_1|^2,|q^{}_1|^2)=\lcm(|p^{}_2|^2,|q^{}_2|^2)$ alone only
  yield $\gcd(|p^{}_1|^2,|q^{}_1|^2)=\gcd(|p^{}_2|^2,|q^{}_2|^2)$, but
  not $|p^{}_1|^2=|p^{}_2|^2$ and $|q^{}_1|^2=|q^{}_2|^2$ directly. In
  fact, we need both of the other two conditions,
  $\gcld(p^{}_1,|p^{}_1q^{}_1|)=\gcld(p^{}_2,|p^{}_2q^{}_2|)$ and
  \mbox{$\gcld(q^{}_1,|p^{}_1q^{}_1|)=\gcld(q^{}_2,|p^{}_2q^{}_2|)$},
  to establish $|p^{}_1|^2=|p^{}_2|^2$ and $|q^{}_1|^2=|q^{}_2|^2$ as
  well; compare~\cite[Proof of Thm.~4.1.12 and
    Rem.~4.1.13]{csl-habil}. \exend
\end{remark}

We are now ready to count the number $c^{}_{D_4^{*}}(n)$ of CSLs.  It
follows from Theorem~\ref{csl-theo:csl-mult} that $c^{}_{D_4^{*}}(n)$
is multiplicative, since $c^{\mathsf{iso}}_{D_4^{*}}(n)$ is
multiplicative.  As there are no CSLs of even index,
$c^{}_{D_4^{*}}(n)$ is completely determined by
$c^{}_{D_4^{*}}(\pi^r)$ for odd rational primes $\pi$ and
$r\in\NN$. The latter can be calculated by counting the number of odd
primitive admissible pairs that satisfy the conditions in
Theorem~\ref{csl-theo:4dim-equalcslJ} or in
Remark~\ref{csl-rem:4dim-equalcslJ}. Thus,
\begin{equation}
  c^{}_{D_4^{*}}(\pi^r) \, = \, 
  f(\pi^r)^2 + 2\sum_{s=1}^{[r/2]} f(\pi^{r-s}) f(\pi^{r-2s})\ts ,
\end{equation}
where $f(\pi^r)=(\pi +1) \, \pi^{r-1}$ for $r\geq 1$ as above.  Note
that this expression is very similar to
Eq.~\eqref{csl-eq:dim4cub-crot}, the only difference being that one
factor $f(\pi^{r})$ is replaced by $f(\pi^{r-s})$, where the latter
counts the number of distinct $\gcld(p,|pq|)$ with $|p|^2=\pi^r$ and
$|q|^2=\pi^{r-2s}$.

Evaluating the sum yields the following result, where we again switch
to $p$ to denote a rational prime.

\begin{theorem}\label{csl-theo:dim4cub-d4csl}
  The number of distinct CSLs of\/ $D_4^{*}$ of index\/ $n$ is given
  by\/ $c^{}_{D_4^{*}}(n)$. Here, $c^{}_{D_4^{*}}(n)$ is a
  multiplicative arithmetic function, which is completely determined
  by\/ $c^{}_{D_4^{*}}(2^r)=0$ for\/ $r\geq 1$ together with
  \begin{align*}
    c^{}_{D_4^{*}}(p^r)&=
    \begin{cases}
      \frac{(p\ts\ts +1)^2}{p^3-1}\bigl(p^{2r+1}+p^{2r-2}-2\ts 
       p^{\frac{r-1}{2}}\bigr),
      &\mbox{if $r\geq 1$ is odd,}\\[2mm]
      \frac{(p\ts\ts +1)^2}{p^3-1}
      \bigl(p^{2r+1}+p^{2r-2}-2\ts \frac{p^2\nts +1}{p\ts\ts +1}\ts 
      p^{\frac{r-2}{2}}\bigr),
      &\mbox{if $r\geq 2$ is even,}
    \end{cases}
  \end{align*}
  for odd primes\/ $p$. Then,
  \begin{align*}
    \Psi_{D_4^{*}}(s)^{}&=\sum_{n=1}^\infty \frac{c^{}_{D_4^{*}}(n)}{n^s}\,
    =\prod_{p\ne 2}\frac{1+p^{-s}+2p^{1-s}+2p^{-2s}+p^{1-2s}+p^{1-3s}}%
    {(1-p^{2-s})(1-p^{1-2s})}\\[1mm]
    &=\textstyle{1+\frac{16}{3^s}+\frac{36}{5^s}+\frac{64}{7^s}+\frac{152}{9^s}
    +\frac{144}{11^s}+\frac{196}{13^s}+\frac{576}{15^s}+\frac{324}{17^s}
    +\frac{400}{19^s}+\frac{1024}{21^s}+\cdots}
  \end{align*}
is the corresponding Dirichlet series\index{Dirichlet~series}. \qed
\end{theorem}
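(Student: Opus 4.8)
The plan is to compute $c^{}_{D_4^{*}}(p^r)$ by counting equivalence classes of odd primitive admissible pairs under the criterion of Theorem~\ref{csl-theo:4dim-equalcslJ}, and then assemble the Dirichlet series. Since multiplicativity of $c^{}_{D_4^{*}}(n)$ follows from Theorem~\ref{csl-theo:csl-mult} once we know $c^{\mathsf{iso}}_{D_4^{*}}(n)$ is multiplicative (and $c^{\mathsf{iso}}_{D_4^{*}}=c^{\mathsf{rot}}_{D_4^{*}}$ because $D_4^{*}$ has reflections), and since $c^{}_{D_4^{*}}(2^r)=0$ as odd admissible pairs involve only odd quaternions, everything reduces to the prime-power count. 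By Theorem~\ref{csl-theo:dim4cub-d4csl}'s displayed formula, I only need to verify the combinatorial expression
\[
  c^{}_{D_4^{*}}(p^r) \, = \,
  f(p^r)^2 + 2\sum_{s=1}^{[r/2]} f(p^{r-s})\, f(p^{r-2s})
\]
with $f(p^r)=(p+1)\,p^{r-1}$, and then sum the geometric-type series.

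First I would fix an odd prime $p$ and an odd extended admissible pair $(p,q)$ of index $p^r$, so that $\lcm(|p|^2,|q|^2)=p^r$ by Remark~\ref{csl-rem:cub4-index2p}. Using Remark~\ref{csl-rem:4dim-equalcslJ}, two such pairs give the same CSL precisely when $|p^{}_1|^2=|p^{}_2|^2$, $|q^{}_1|^2=|q^{}_2|^2$, and the two greatest common left divisors $\gcld(p,|pq|)$ and $\gcld(q,|pq|)$ agree up to units. So I would parametrise by first choosing the norms $|p|^2=p^{r}$, $|q|^2=p^{r-2s}$ with $0\le s\le[r/2]$ (the larger exponent being $r$ since the index is the lcm and $r'-r''$ must be even), then counting the distinct values of the relevant gcld data. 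The key point is that the number of distinct left ideals generated by primitive odd quaternions of norm $p^r$ is $f(p^r)$, matching Eq.~\eqref{csl-eq:c-cub}; the factor $f(p^{r-s})$ replacing one copy of $f(p^r)$ reflects that $\gcld(p,|pq|)$ with $|p|^2=p^r$, $|q|^2=p^{r-2s}$ ranges over $f(p^{r-s})$ distinct values, while $\gcld(q,|pq|)$ contributes the factor $f(p^{r-2s})$. The diagonal term $s=0$ (i.e.\ $|p|^2=|q|^2=p^r$) gives $f(p^r)^2$, and the factor $2$ accounts for interchanging the roles of the larger and smaller norm.

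The main obstacle will be justifying that the count of distinct $\gcld(p,|pq|)$ is exactly $f(p^{r-s})$ rather than $f(p^r)$; this is the crux of why $c^{}_{D_4^{*}}$ differs from $c^{\mathsf{rot}}_{D_4^{*}}$, and it requires a careful analysis of how the left-ideal structure of $\JJ$ interacts with the admissibility scaling. I expect to lean on the essentially unique prime factorisation in $\JJ$ (up to units and ordering) together with the structure of the two-sided ideal $(1+\ii)\JJ$, exactly as in the proof of Theorem~\ref{csl-theo:index-D4}, and I would refer to~\cite{csl-BZ08} and~\cite[Proof of Thm.~4.1.12]{csl-habil} for the technical verification. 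Once the prime-power formula is established, the remaining work is routine: evaluating $\sum_{s=1}^{[r/2]} f(p^{r-s})f(p^{r-2s})$ as a finite geometric sum (splitting the $r$ odd and $r$ even cases to account for the $s=r/2$ boundary term) yields the closed forms stated, and converting $\sum_r c^{}_{D_4^{*}}(p^r)p^{-rs}$ into a rational function of $p^{-s}$ gives the Euler factor
\[
  \frac{1+p^{-s}+2p^{1-s}+2p^{-2s}+p^{1-2s}+p^{1-3s}}
       {(1-p^{2-s})(1-p^{1-2s})},
\]
after which the product over $p\ne 2$ completes the proof.
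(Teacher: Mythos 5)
Your proposal follows essentially the same route as the paper: multiplicativity via Theorem~\ref{csl-theo:csl-mult}, vanishing at even prime powers, the prime-power count $c^{}_{D_4^{*}}(\pi^r)=f(\pi^r)^2 + 2\sum_{s=1}^{[r/2]} f(\pi^{r-s})\ts f(\pi^{r-2s})$ obtained by counting odd primitive admissible pairs modulo the equivalence of Theorem~\ref{csl-theo:4dim-equalcslJ}/Remark~\ref{csl-rem:4dim-equalcslJ} (with exactly the paper's interpretation that $f(\pi^{r-s})$ counts the distinct values of $\gcld(p,|pq|)$), and then evaluation of the sum and assembly of the Euler product. The one technical crux you flag — that the $\gcld$ data ranges over precisely $f(\pi^{r-s})$ values — is also deferred by the paper to the same sources (\cite{csl-BZ08}, \cite[Thm.~4.1.12]{csl-habil}), so your treatment is consistent with the paper's own level of detail.
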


Unfortunately, unlike before, there is no nice representation of
$\Psi_{D_4^{*}}(s)$ as a product of zeta functions. Nevertheless, we
can use Theorem~\ref{csl-thm:meanvalues} to calculate the asymptotic
behaviour as follows.

Note that $\Psi_{D_4^{*}}^{}(s)$ is quite similar to
$\Psi_{D_4^{*}}^{\mathsf{rot}}(s)$; compare
Eq.~\eqref{csl-4dim-frotpcub}. In fact, differences between the
corresponding counting functions occur only for those integers that are
divisible by the square of an odd prime. Thus, the rightmost pole of
$\Psi_{D_4^{*}}^{}(s)$ is still at $s=3$, which is the same as for
$\Psi_{D_4^{*}}^{\mathsf{rot}}(s)$.  This implies the asymptotic
behaviour $\sum_{n\leq x} c^{}_{D_4^{*}}(n) \sim c\ts x^3$ as $x \to
\infty$ for some positive constant $c$.  To be more specific, we
consider the ratio
\begin{equation}
  \frac{\Psi_{D_4^{*}}^{}(s)}{\Psi_{D_4^{*}}^{\mathsf{rot}}(s)}
  \, = \prod_{p\ne 2} \left( 1
  -\frac{2\ts (p^2-1)\ts p^{-2s}}{(1+p^{-s})(1+p^{1-s})(1-p^{1-2s})}
  \right),
\end{equation}
where the right-hand side defines an analytic function in the open
half-plane \mbox{$\bigl\{\Real(s)>\frac{3}{2} \bigr\}$} with
\begin{align}\label{csl-eq:dim4cub-gamma}
  \gamma&\, := \,
          \lim_{s\to 3} \frac{\Psi_{D_4^{*}}^{}(s)}{\Psi_{D_4^{*}}^{\mathsf{rot}}(s)}
  \, =  \prod_{p\ne 2} \left( 1
  -\frac{2\ts (p^2-1)\ts p^{-6}}{(1+p^{-2})(1+p^{-3})(1-p^{-5})}
  \right) \\[1mm]
    \nonumber
    &\, \approx \, 0.976{\ts}966{\ts}019 \, < \, 1\ts .
\end{align}
Hence, $\sum_{n\leq x} c^{}_{D_4^{*}}(n)$ grows by a factor $\gamma$
slower than $\sum_{n\leq x} c^{\mathsf{rot}}_{D_4^{*}}(n)$. In
particular, we obtain \index{asymptotic~behaviour}
\[
  \sum_{n\leq x} c^{}_{D_4^{*}}(n) \,\sim\,  
  \myfrac{210}{\pi^6}\ts\ts\zeta(3)\ts\ts\gamma\, x^3
  \,\approx\, 0.256{\ts}522\, x^3  ,
\]
as $x\to\infty$.  This shows that $\sum_{n\leq x}
c^{\mathsf{rot}}_{D_4^{*}}(n)$ and $\sum_{n\leq x} c^{}_{D_4^{*}}(n)$
differ by less than $2.5\%$ asymptotically, which means that it is
quite rare that two coincidence rotations that are not symmetry
related\index{symmetry~related} generate the same CSL.

As we have enumerated the distinct CSLs, we might ask the question of
how many non-equivalent CSLs there are, where we call two CSLs $\vL_1$
and $\vL_2$ \emph{equivalent} if there is an $R\in\OG(\JJ)$ such that
$\vL_2=R\vL_1$. This question has not completely been answered yet,
but some partial results can be found in~\cite{csl-Z3}.

\subsection{The primitive hypercubic lattice $\ZZ^4$}
\label{csl-sec:cub4-z4}

Let us move on to the primitive hypercubic lattice, which we identify
with $\ZZ^4$ or, in terms of quaternions, with the ring of Lipschitz
quaternions $\LL$.\index{quaternion!Lipschitz} As $\ZZ^4$ and $D_4^{*}$
are commensurate, they have the same group of coincidence
rotations\index{rotation!coincidence}, which means
\[
  \SOC(\ZZ^4)\, =\, \SOC(D_4^{*})\, =\, \SO(4,\QQ)\ts .
\] 
Moreover, we have $D_4^{}\subset \ZZ^4 \subset D_4^{*}$, where $\ZZ^4$
is a sublattice of $D_4^{*}$ of index~$2$. Thus, by
Theorem~\ref{csl-theo:Sig-submod}, the coincidence indices of the two
lattices can differ at most by a factor of $2$.  This implies that we
have either $\Sig^{}_{\ZZ^4}(R) = \Sig^{}_{D_4^{*}}(R)$ or
$\Sig^{}_{\ZZ^4}(R) = 2 \Sig^{}_{D_4^{*}}(R)$ for a given coincidence
rotation $R$.  Actually, both cases do occur.

This becomes immediately clear if we recall that the primitive
hypercubic lattice $\ZZ^4$ has a smaller symmetry group than
$D_4^{*}$. In particular, $\SO(\ZZ^{4})$ contains only $192$
rotations, so that
\[
  [\SO(D_4^{*}):\SO(\ZZ^4)]\, =\,
  [\OG(D_4^{*}):\OG(\ZZ^4)]\, =\, 3\ts .  
\]
As a consequence, every class of
symmetry-related\index{symmetry~related} coincidence rotations of
$D_4^{*}$ splits into three classes of $\ZZ^4$.  In particular, all
rotations in \mbox{$\SO(D_4^{*})\setminus\SO(\ZZ^4)$} are coincidence
rotations\index{rotation!coincidence} for $\ZZ^4$ of index $2$, so we
have one class with coincidence index $1$ and two classes with index
$2$.

The same pattern also emerges for the other coincidence
rotations$\,$---$\,$and, more generally, for coincidence isometries as
well. In particular, every class of symmetry-related coincidence
rotations of $D_4^{*}$ splits into three classes, one of which has the
same coincidence index as before,
$\Sig^{}_{\ZZ^4}(R)=\Sig^{}_{D_4^{*}}(R)$, while the other two classes
have index $\Sig^{}_{\ZZ^4}(R)=2\Sig^{}_{D_4^{*}}(R)$.  To see this,
we recall from Theorem~\ref{csl-theo:denG-Sig1} that $\den_{\ZZ^4}(R)$
divides $\Sig^{}_{\ZZ^4}(R)$, while $\Sig^{}_{\ZZ^4}(R)$ divides
$\den_{\ZZ^4}(R)^4$. Consequently, $\Sig^{}_{\ZZ^4}(R)$ is even if
and only if $\den_{\ZZ^4}(R)$ is. In other words,
\begin{equation}
  \qquad \Sig^{}_{\ZZ^4}(R)\, =\, 
  \lcm\bigl( \Sig^{}_{D_4^{*}}(R), \den_{\ZZ^4}(R) \bigr);
\end{equation}
compare~\cite{csl-Baake-rev}. If $(p,q)$ is an odd primitive
admissible pair, we have
\begin{align}
  \den_{\ZZ^4}(R(p,q))\, &=\, 
  \begin{cases}
    |pq|, & \text{if } \ip{p}{q} \in \ZZ, \\
    2 \ts |pq|, & \text{if } \ip{p}{q} \notin \ZZ,
  \end{cases}
%\end{equation}
\intertext{while, if $(p,q)$ is an even primitive admissible pair, one gets}
%\begin{equation}
  \den_{\ZZ^4}(R(p,q))\, &=\, 
  \begin{cases}
    \frac{|pq|}{2}, & \text{if } \ip{p}{q} \text{ is even,} \\
    |pq|, & \text{if } \ip{p}{q} \text{ is odd.}
  \end{cases}
\end{align}
Checking for all possible combinations of units, we see that
every class of symmetry-related\index{symmetry~related} coincidence
rotations\index{rotation!coincidence} of $D_4^{*}$ indeed splits into three
classes, one of which has odd denominator and coincidence index
$\Sig^{}_{\ZZ^4}(R)=\Sig^{}_{D_4^{*}}(R)$, while the other two classes
have even denominator and coincidence index
$\Sig^{}_{\ZZ^4}(R)=2\Sig^{}_{D_4^{*}}(R)$.

\begin{remark}\label{csl-rem:spec-z4}
  These relations mean that the coincidence spectrum of $\ZZ^4$ is
  larger than the coincidence spectrum of $D_4^{*}$ and $D_4^{}$. In
  particular, we conclude from Remark~\ref{csl-rem:cub4-spec} that the
  coincidence spectrum of $\ZZ^4$ is the set
  \[
    \hspace{3.2cm}
    \Sig\bigl(\OC(\ZZ^4)\bigr)\, = \, 
    \Sig\bigl(\SOC(\ZZ^4)\bigr)\, = \,
    (2 \Nnull +1 ) \cup (4 \Nnull +2 ). 
    \hspace{3cm} \mbox{\exend}
  \]
\end{remark}

In order to also get an explicit expression for the CSLs, we consider
the following chain of inclusions
\begin{equation}\label{csl-eq:chain}
  D_4^{}\cap R D_4^{} \,\subseteq\, \ZZ^4 \cap R \ZZ^4
  \,\subseteq\, D_4^{*}\cap R D_4^{*} \cap \ZZ^4 \,\subset\,  
  D_4^{*}\cap R D_4^{*}
\end{equation}
for any $R\in\SOC(D_4^{*})$. As
$\Sig^{}_{D_4^{}}(R)=\Sig^{}_{D_4^{*}}(R)$ by
Lemma~\ref{csl-lem:csl-dual}, and also $[D_4^{*}:D_4^{}]=4$, we
conclude that $[(D_4^{*}\cap R D_4^{*}):(D_4^{}\cap R
  D_4^{})]=4$. Moreover, with $[D_4^{*}:\ZZ^4]=2$, this shows
$[(D_4^{*}\cap R D_4^{*} \cap \ZZ^4):(D_4^{}\cap R D_4^{})]=2$, as
$\Sig^{}_{D_4^{*}}(R)$ is always odd. Thus, we are left with two
possibilities, namely either with $\ZZ^4 \cap R \ZZ^4 = D_4^{*}\cap R
D_4^{*} \cap \ZZ^4 = \ZZ^4 \cap R D_4^{*}$, in which case 
$\Sig^{}_{\ZZ^4}(R)=\Sig^{}_{D_4^{*}}(R)$, or with $\ZZ^4 \cap R \ZZ^4 =
D_4^{}\cap R D_4^{}$, where we have
$\Sig^{}_{\ZZ^4}(R)=2\Sig^{}_{D_4^{*}}(R)$ instead.

Let us summarise these results as follows.

\begin{proposition}\label{csl-prop:dim4cub-cslz4}
  For any coincidence rotation\index{rotation!coincidence}\/
  $R\in\SOC(\ZZ^4)$, the coincidence index\index{index} is
  \[
  \Sig^{}_{\ZZ^4}(R) \,= \,\lcm
  \bigl( \Sig^{}_{D_4^{*}}(R), \den_{\ZZ^4}(R) \bigr),
  \]
  which is even if and only if\/ $\den_{\ZZ^4}(R)$ is even. Moreover,
    \[
    \ZZ^4 \cap R \ZZ^4 \, =\, 
    \begin{cases}
      (D_4^{*}\cap R D_4^{*}) \cap \ZZ^4 = \ZZ^4 \cap R D_4^{*} ,
        & \text{ if $\Sig^{}_{\ZZ^4}(R)$ is even,}\\
      D_4^{}\cap R D_4^{},
        & \text{ if $\Sig^{}_{\ZZ^4}(R)$ is odd,}
    \end{cases}
  \]
is the  corresponding CSL.\qed
\end{proposition}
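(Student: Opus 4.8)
The plan is to establish the two claims of Proposition~\ref{csl-prop:dim4cub-cslz4} by combining the coincidence-index formula with the explicit chain of inclusions in Eq.~\eqref{csl-eq:chain}. The index formula $\Sig^{}_{\ZZ^4}(R)=\lcm\bigl(\Sig^{}_{D_4^{*}}(R),\den_{\ZZ^4}(R)\bigr)$ has already been derived in the running text immediately preceding the statement, via Theorem~\ref{csl-theo:denG-Sig1} (which gives that $\den_{\ZZ^4}(R)$ divides $\Sig^{}_{\ZZ^4}(R)$ and that $\Sig^{}_{\ZZ^4}(R)$ divides $\den_{\ZZ^4}(R)^4$) together with $\Sig^{}_{D_4^{*}}(R)$ being odd. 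So I would simply invoke that derivation; the parity statement follows at once, since $\Sig^{}_{\ZZ^4}(R)$ is even exactly when $\den_{\ZZ^4}(R)$ is even (the $\lcm$ of an odd number and $\den_{\ZZ^4}(R)$).

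For the identification of the CSL itself, the key is the index bookkeeping along the chain~\eqref{csl-eq:chain}. First I would record the three index relations that the text has just assembled: $[(D_4^{*}\cap RD_4^{*}):(D_4^{}\cap RD_4^{})]=4$ from $\Sig^{}_{D_4^{}}(R)=\Sig^{}_{D_4^{*}}(R)$ (Lemma~\ref{csl-lem:csl-dual}) and $[D_4^{*}:D_4^{}]=4$; then $[D_4^{*}:\ZZ^4]=2$; and hence $[(D_4^{*}\cap RD_4^{*}\cap\ZZ^4):(D_4^{}\cap RD_4^{})]=2$. The intermediate lattice $\ZZ^4\cap R\ZZ^4$ sits between $D_4^{}\cap RD_4^{}$ and $D_4^{*}\cap RD_4^{*}\cap\ZZ^4$, and since the latter has index $2$ over the former, there are only two possibilities: either $\ZZ^4\cap R\ZZ^4$ coincides with the larger lattice $D_4^{*}\cap RD_4^{*}\cap\ZZ^4=\ZZ^4\cap RD_4^{*}$, or it coincides with the smaller lattice $D_4^{}\cap RD_4^{}$.

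It then remains to match each of these two cases with the parity of $\Sig^{}_{\ZZ^4}(R)$. In the first case, $\ZZ^4\cap R\ZZ^4=\ZZ^4\cap RD_4^{*}$ has the same index in $\ZZ^4$ as $D_4^{*}\cap RD_4^{*}$ has in $D_4^{*}$ (again using $[D_4^{*}:\ZZ^4]=2$ and comparing the relevant sublattice indices), so $\Sig^{}_{\ZZ^4}(R)=\Sig^{}_{D_4^{*}}(R)$, which is odd. In the second case, $\ZZ^4\cap R\ZZ^4=D_4^{}\cap RD_4^{}$ has index $2$ over the larger lattice, so $\Sig^{}_{\ZZ^4}(R)=2\Sig^{}_{D_4^{*}}(R)$, which is even. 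Matching these against the parity criterion $\Sig^{}_{\ZZ^4}(R)$ even $\iff\den_{\ZZ^4}(R)$ even yields exactly the case distinction stated in the proposition.

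The main obstacle, and the point I would treat most carefully, is verifying that the two algebraic alternatives genuinely correspond to the two index outcomes rather than being a formal dichotomy; concretely, one must confirm that $D_4^{*}\cap RD_4^{*}\cap\ZZ^4$ really equals $\ZZ^4\cap RD_4^{*}$ (so that $R D_4^{*}\cap\ZZ^4$ already lies inside $\ZZ^4\cap RD_4^{*}$ with the claimed index), and that no intermediate lattice is missed in the jump of index $2$. This is purely a matter of tracking the inclusion $D_4^{}\subset\ZZ^4\subset D_4^{*}$ through the rotation $R$ and using that all indices involved are powers of $2$ while $\Sig^{}_{D_4^{*}}(R)$ is odd; the coprimality of $2$ and the odd index is what forces the clean splitting. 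Everything else is routine index arithmetic, so I would keep the write-up brief and lean on the preceding discussion in the text.
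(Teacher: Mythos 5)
Your strategy is the paper's own: the index formula and the parity statement come from the discussion immediately preceding the proposition (Theorem~\ref{csl-theo:denG-Sig1}, giving $\den_{\ZZ^4}(R)\mid\Sig^{}_{\ZZ^4}(R)$ and $\Sig^{}_{\ZZ^4}(R)\mid\den_{\ZZ^4}(R)^4$, combined with the dichotomy $\Sig^{}_{\ZZ^4}(R)\in\{\Sig^{}_{D_4^{*}}(R),2\Sig^{}_{D_4^{*}}(R)\}$ from Theorem~\ref{csl-theo:Sig-submod} and the oddness of $\Sig^{}_{D_4^{*}}(R)$), while the CSL identification uses the chain~\eqref{csl-eq:chain} with the index bookkeeping $[(D_4^{*}\cap RD_4^{*}):(D_4^{}\cap RD_4^{})]=4$ and $[(D_4^{*}\cap RD_4^{*}\cap\ZZ^4):(D_4^{}\cap RD_4^{})]=2$, the latter forced by the oddness of $\Sig^{}_{D_4^{*}}(R)$. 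All of these computations are correct, including your two implications: if $\ZZ^4\cap R\ZZ^4=\ZZ^4\cap RD_4^{*}$, then $\Sig^{}_{\ZZ^4}(R)=\Sig^{}_{D_4^{*}}(R)$, which is odd; if $\ZZ^4\cap R\ZZ^4=D_4^{}\cap RD_4^{}$, then $\Sig^{}_{\ZZ^4}(R)=2\Sig^{}_{D_4^{*}}(R)$, which is even.

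But your closing claim, that this ``yields exactly the case distinction stated in the proposition'', is wrong, and it fails at precisely the point you said you would treat most carefully. Your derivation pairs the \emph{odd} index with $\ZZ^4\cap RD_4^{*}$ and the \emph{even} index with $D_4^{}\cap RD_4^{}$; the proposition as printed pairs them the other way round. These cannot both hold, and it is your pairing (which is also the pairing in the paper's own text above the proposition) that is correct: for $R=\one$ one has $\Sig^{}_{\ZZ^4}(\one)=1$, which is odd, while $\ZZ^4\cap R\ZZ^4=\ZZ^4=\ZZ^4\cap RD_4^{*}\neq D_4^{}=D_4^{}\cap RD_4^{}$; likewise, any $R\in\SO(D_4^{*})\setminus\SO(\ZZ^4)$ has $\Sig^{}_{\ZZ^4}(R)=2$, which is even, with CSL equal to $D_4^{}\cap RD_4^{}=D_4^{}$ rather than $\ZZ^4\cap RD_4^{*}=\ZZ^4$. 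In other words, the printed proposition has its two case labels interchanged (a slip in the paper, inconsistent with its own preceding derivation), and your argument in fact establishes the corrected statement while \emph{disproving} the statement as literally given. The proper conclusion would have been to state the pairing you derived and to flag that the ``even''/``odd'' labels in the proposition must be swapped, rather than asserting agreement.
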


This allows us to determine the number of coincidence
rotations\index{rotation!coincidence}, which is  $192\,
c_{\ZZ^4}^{\mathsf{rot}}(n)$, as the symmetry group $\SO(\ZZ^4)$ has
order $192$. By the above considerations, each class of
symmetry-related\index{symmetry~related} coincidence rotations splits
into three classes, one with coincidence index
$\Sig^{}_{\ZZ^4}(R)=\Sig^{}_{D_4^{*}}(R)$, and two with index
$\Sig^{}_{\ZZ^4}(R)=2\Sig^{}_{D_4^{*}}(R)$.  This
gives
\begin{align}\label{csl-eq:dim4cub-crot-z4}
  c_{\ZZ^4}^{\mathsf{rot}}(n) \, = \,
  \begin{cases}
    c_{D_4^{*}}^{\mathsf{rot}}(n), & \text{ if $n$ is odd,} \\
    2 \, c_{D_4^{*}}^{\mathsf{rot}}\bigl(\frac{n}{2}\bigr), 
    & \text{ if $n$ is even.}
  \end{cases}
\end{align}
As $c_{D_4^{*}}^{\mathsf{rot}}(n)$ is multiplicative, so is
$c_{\ZZ^4}^{\mathsf{rot}}(n)$, and the corresponding Dirichlet series
again admits an Euler product expansion.  In particular, we have the
following result; compare~\cite{csl-Baake-rev,csl-Z3}.

\begin{theorem}\label{csl-theo:dim4cub-z4rot}\index{Dirichlet~series}
  The generating function for the number\/ $c_{\ZZ^4}^{\mathsf{rot}}(n)$
  of coincidence rotations\index{rotation!coincidence} of\/ $\ZZ^4$ is
  given by
  \begin{align*}
    \Psi_{\ZZ^4}^{\mathsf{rot}}(s)
    &\, =\, \sum_{n=1}^\infty \frac{c_{\ZZ^4}^{\mathsf{rot}}(n)}{n^s}
    \, =\, (1+2^{1-s})\, \Psi_{D_4^{*}}^{\mathsf{rot}}(s)\\[1mm]
    & \, = \,   \frac{(1-2^{1-s})(1-2^{2-s})}{1+2^{-s}}
    \frac{\zeta(s)\,\zeta(s-1)^2\,\zeta(s-2)}{\zeta(2s)\,\zeta(2s-2)}\\[1mm]
    &\, =\, (1+2^{1-s})\prod_{p\ne 2}
    \frac{(1+p^{-s})(1+p^{1-s})}{(1-p^{1-s})(1-p^{2-s})}\\
    \intertext{with the first terms being given by}
    \Psi_{\ZZ^4}^{\mathsf{rot}}(s)
    &\, =\, 1+\myfrac{2}{2^s}+\myfrac{16}{3^s}+\myfrac{36}{5^s}+\myfrac{32}{6^s}
    +\myfrac{64}{7^s}+\myfrac{168}{9^s}+\myfrac{72}{10^s}
    +\myfrac{144}{11^s}+\myfrac{196}{13^s}\\[1mm]
    &\qquad\, +\myfrac{128}{14^s}
    +\myfrac{576}{15^s}+\myfrac{324}{17^s}+\myfrac{336}{18^s}+\myfrac{400}{19^s}
    +\myfrac{1024}{21^s}+\myfrac{288}{22^s}
    +\cdots%.
  \end{align*}
  It is a meromorphic function in the complex plane, whose rightmost
  pole is located at\/ $s=3$, with residue\/
  $\frac{1575}{2\pi^6}\ts\zeta(3)$.  Consequently, as $x \to\infty$,
  we have the asymptotic
  behaviour\index{asymptotic~behaviour}
  \[
    \sum_{n\leq x} c^{\mathsf{rot}}_{\ZZ^4}(n) \,\sim\, 
    \myfrac{525}{2\ts \pi^6}\ts\zeta(3)\, x^3
    \,\approx\, 0.328{\ts}212 \, x^3.
  \]
\end{theorem}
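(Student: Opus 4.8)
The plan is to build everything on the counting relation \eqref{csl-eq:dim4cub-crot-z4}, which expresses $c^{\mathsf{rot}}_{\ZZ^4}$ through $c^{\mathsf{rot}}_{D_4^{*}}$, together with the fact recorded in Theorem~\ref{csl-theo:dim4cub-d4rot} that $c^{\mathsf{rot}}_{D_4^{*}}$ is multiplicative with $c^{\mathsf{rot}}_{D_4^{*}}(2^r)=0$ for $r\geq 1$, so that $c^{\mathsf{rot}}_{D_4^{*}}(n)=0$ for every even $n$ and hence $\Psi^{\mathsf{rot}}_{D_4^{*}}(s)=\sum_{n\ \mathrm{odd}} c^{\mathsf{rot}}_{D_4^{*}}(n)\ts n^{-s}$. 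First I would split the Dirichlet series $\Psi^{\mathsf{rot}}_{\ZZ^4}(s)=\sum_{n\geq 1} c^{\mathsf{rot}}_{\ZZ^4}(n)\ts n^{-s}$ into its contributions from odd and even $n$. The odd part equals $\sum_{n\ \mathrm{odd}} c^{\mathsf{rot}}_{D_4^{*}}(n)\ts n^{-s}=\Psi^{\mathsf{rot}}_{D_4^{*}}(s)$, while writing $n=2m$ in the even part gives $\sum_{m\geq 1} 2\ts c^{\mathsf{rot}}_{D_4^{*}}(m)\ts (2m)^{-s}=2^{1-s}\ts\Psi^{\mathsf{rot}}_{D_4^{*}}(s)$. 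Adding the two yields the first displayed identity $\Psi^{\mathsf{rot}}_{\ZZ^4}(s)=(1+2^{1-s})\ts\Psi^{\mathsf{rot}}_{D_4^{*}}(s)$.

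The two closed forms then follow by substituting the expressions for $\Psi^{\mathsf{rot}}_{D_4^{*}}(s)$ from Eq.~\eqref{csl-4dim-frotpcub}. Multiplying the zeta-function representation by $(1+2^{1-s})$ cancels the factor $(1+2^{1-s})$ in its denominator and leaves $\frac{(1-2^{1-s})(1-2^{2-s})}{1+2^{-s}}\ts\frac{\zeta(s)\ts\zeta(s-1)^2\ts\zeta(s-2)}{\zeta(2s)\ts\zeta(2s-2)}$, which is the stated form; multiplying the Euler product by $(1+2^{1-s})$ reinstates the $p=2$ factor in front of the product over $p\neq 2$. The list of initial coefficients is obtained by expanding this Euler product term by term, which is routine.

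For the analytic statements I would read off meromorphy directly from the zeta-function form. The factors $\zeta(s)$ and $\zeta(s-1)^2$ contribute poles at $s=1$ and $s=2$, so the rightmost pole comes from $\zeta(s-2)$ and sits at $s=3$; it is simple, and one checks that nothing else lies on the line $\{\Real(s)=3\}$ (the Dirichlet-polynomial prefactor is entire, the zeros of $1+2^{-s}$ have $\Real(s)=0$, and the zeros of $\zeta(2s)\ts\zeta(2s-2)$ lie far to the left). Using $\Res_{s=3}\zeta(s-2)=1$, the residue is the value of the prefactor at $s=3$, namely $\frac{(1-\frac14)(1-\frac12)}{1+\frac18}=\frac13$, times $\frac{\zeta(3)\ts\zeta(2)^2}{\zeta(6)\ts\zeta(4)}$; inserting $\zeta(2)=\pi^2/6$, $\zeta(4)=\pi^4/90$ and $\zeta(6)=\pi^6/945$ collapses this to $\frac{1575}{2\pi^6}\ts\zeta(3)$, as claimed.

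Finally I would invoke Theorem~\ref{csl-thm:meanvalues}: the coefficients $c^{\mathsf{rot}}_{\ZZ^4}(n)$ are non-negative, the series converges for $\Real(s)>3$, and at the rightmost singularity one has $\Psi^{\mathsf{rot}}_{\ZZ^4}(s)=h(s)/(s-3)$ with $h$ holomorphic at $s=3$ and $h(3)=\frac{1575}{2\pi^6}\ts\zeta(3)$. With $\alpha=3$ and $n=0$ the theorem gives $\sum_{m\leq x} c^{\mathsf{rot}}_{\ZZ^4}(m)\sim \frac{h(3)}{3}\ts x^3=\frac{525}{2\pi^6}\ts\zeta(3)\ts x^3$. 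None of these steps is genuinely hard once Eq.~\eqref{csl-eq:dim4cub-crot-z4} and the $D_4^{*}$ generating function are granted; the only point demanding care is verifying that the line $\{\Real(s)=3\}$ is pole-free apart from $s=3$, which is what licenses the application of Delange's theorem and hence the asymptotics.
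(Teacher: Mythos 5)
Your proposal is correct and follows essentially the same route as the paper: both derive $\Psi_{\ZZ^4}^{\mathsf{rot}}(s)=(1+2^{1-s})\,\Psi_{D_4^{*}}^{\mathsf{rot}}(s)$ from Eq.~\eqref{csl-eq:dim4cub-crot-z4}, read off the closed forms from Eq.~\eqref{csl-4dim-frotpcub}, and apply Theorem~\ref{csl-thm:meanvalues} at the simple pole $s=3$. The only difference is one of detail: you recompute the residue directly from the zeta-function form (correctly obtaining $\frac{1}{3}\cdot\frac{\zeta(3)\ts\zeta(2)^2}{\zeta(6)\ts\zeta(4)}=\frac{1575}{2\pi^6}\ts\zeta(3)$), whereas the paper simply scales the known $D_4^{*}$ residue by $1+2^{-2}$ and notes that the poles of $\Psi_{D_4^{*}}^{\mathsf{rot}}$ on the line $\{\Real(s)=1\}$ are cancelled by the factor $1+2^{1-s}$.
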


\begin{proof}
  It follows from Eq.~\eqref{csl-eq:dim4cub-crot-z4} that
  $\Psi_{\ZZ^4}^{\mathsf{rot}}(s)$ is obtained from
  $\Psi_{D_4^{*}}^{\mathsf{rot}}(s)$ by adding a factor
  $1+2^{1-s}$. As the latter is analytic, the analytic structure of
  $\Psi_{\ZZ^4}^{\mathsf{rot}}(s)$ is the same as that of
  $\Psi_{D_4^{*}}^{\mathsf{rot}}(s)$ (see
  Theorem~\ref{csl-theo:dim4cub-d4rot} and the comments thereafter),
  except for poles located at $s=1+ \frac{(2n+1) \pi}{\log(2)} \ii$,
  which are cancelled by the factor $1+2^{1-s}$. An application of
  Theorem~\ref{csl-thm:meanvalues} finally yields the asymptotic
  behaviour.
\end{proof}

In a similar way, we can enumerate the CSLs. It follows from
Proposition~\ref{csl-prop:dim4cub-cslz4} that each CSL of $D_4^{*}$
corresponds to exactly one pair of CSLs of $\ZZ^4$, one of which has
odd index, while the other one has even index. Note that the explicit
expressions for the CSLs in Proposition~\ref{csl-prop:dim4cub-cslz4}
guarantee that two CSLs of $\ZZ^4$ are only equal if the corresponding
CSLs of $D_4^{*}$ are equal. This implies that the number of CSLs of
$\ZZ^4$ is given by
\begin{equation}\label{csl-eq:dim4cub-ccsl-z4}
  c_{\ZZ^4}^{}(n) \, =\,
  \begin{cases}
    c_{D_4^{*}}^{}(n), & \text{ if $n$ is odd,} \\
    c_{D_4^{*}}^{}\nts\bigl(\frac{n}{2}\bigr), & \text{ if $n$ is even.}
  \end{cases}
\end{equation}
This yields the following result.

\begin{theorem}\label{csl-theo:dim4cub-z4csl}
  The generating function for the number\/ $c_{\ZZ^4}^{}(n)$ of CSLs
  of\/ $\ZZ^4$ is given by\index{Dirichlet~series}
  \begin{align*}
    \Psi_{\ZZ^4}(s)&\, =\, (1+2^{-s})\ts\Psi_{D_4^{*}}(s)\\[1mm]
    &\, =\, (1+2^{-s})\prod_{p\ne 2}\frac{1+p^{-s}+2p^{1-s}+2p^{-2s}+p^{1-2s}+p^{1-3s}}
    {(1-p^{2-s})(1-p^{1-2s})}\\[1mm]
    &\, =\, 1+\myfrac{1}{2^s}+\myfrac{16}{3^s}+\myfrac{36}{5^s}+\myfrac{16}{6^s}
    +\myfrac{64}{7^s}+\myfrac{152}{9^s}+\myfrac{36}{10^s}
    +\myfrac{144}{11^s}+\myfrac{196}{13^s}\\[2mm]
    &\qquad\, +\myfrac{64}{14^s}
    +\myfrac{576}{15^s}+\myfrac{324}{17^s}
    +\myfrac{152}{18^s}+\myfrac{400}{19^s}
    +\myfrac{1024}{21^s}+\myfrac{144}{22^s}
    +\cdots.
  \end{align*}
  It is a meromorphic function in the half-plane\/
  $\bigl\{\Real(s)>\frac{3}{2}\bigr\}$, whose rightmost pole is
  located at\/ $s=3$, with residue\/
  $\frac{2835}{4\pi^6}\ts\ts\zeta(3)\ts\ts\gamma$, where $\gamma$ is the
  constant from Eq.~\eqref{csl-eq:dim4cub-gamma}.  Consequently,
  we have the asymptotic behaviour\index{asymptotic~behaviour}
  \[
    \sum_{n\leq x} c^{}_{\ZZ^4}(n) \,\sim\, 
    \myfrac{945}{4\ts\pi^6}\ts\zeta(3)\ts\ts\gamma\, x^3
    \,\approx\, 0.288{\ts}587 \, x^3,
  \]
  as\/ $x\to \infty$.  \qed
\end{theorem}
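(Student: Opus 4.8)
The plan is to derive the generating function directly from the arithmetic relation between $c^{}_{\ZZ^4}(n)$ and $c^{}_{D_4^{*}}(n)$ recorded in Eq.~\eqref{csl-eq:dim4cub-ccsl-z4}, which itself rests on the explicit description of the CSLs in Proposition~\ref{csl-prop:dim4cub-cslz4}. First I would split the Dirichlet series $\Psi_{\ZZ^4}(s)=\sum_{n\geq 1} c^{}_{\ZZ^4}(n)\, n^{-s}$ into contributions from odd and from even $n$. Since $c^{}_{D_4^{*}}$ is multiplicative with $c^{}_{D_4^{*}}(2^r)=0$ for $r\geq 1$ by Theorem~\ref{csl-theo:dim4cub-d4csl}, it vanishes on all even integers, so the odd part reproduces $\Psi_{D_4^{*}}(s)$ in full. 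For the even part I would substitute $n=2m$ and use $c^{}_{\ZZ^4}(2m)=c^{}_{D_4^{*}}(m)$ to obtain $2^{-s}\Psi_{D_4^{*}}(s)$. Adding the two pieces yields $\Psi_{\ZZ^4}(s)=(1+2^{-s})\,\Psi_{D_4^{*}}(s)$, whereupon the Euler product and the first few Dirichlet coefficients follow by inserting the explicit expression for $\Psi_{D_4^{*}}(s)$ from Theorem~\ref{csl-theo:dim4cub-d4csl}.

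For the analytic part, I would exploit that multiplication by $1+2^{-s}$ changes nothing essential. This factor is entire, so $\Psi_{\ZZ^4}(s)$ inherits from $\Psi_{D_4^{*}}(s)$ both its meromorphy in the half-plane $\{\Real(s)>\frac{3}{2}\}$ and the location of its rightmost pole at $s=3$; here one need only note that $1+2^{-3}=\frac{9}{8}\neq 0$, so the factor neither creates nor cancels the pole at $s=3$ (its zeros lie on the line $\Real(s)=1$, well to the left of the relevant region). Consequently $\Res_{s=3}\Psi_{\ZZ^4}(s)=(1+2^{-3})\,\Res_{s=3}\Psi_{D_4^{*}}(s)=\frac{9}{8}\,\Res_{s=3}\Psi_{D_4^{*}}(s)$.

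It then remains to identify $\Res_{s=3}\Psi_{D_4^{*}}(s)$. From the asymptotics stated after Theorem~\ref{csl-theo:dim4cub-d4csl}, namely $\sum_{n\leq x}c^{}_{D_4^{*}}(n)\sim\frac{210}{\pi^6}\zeta(3)\gamma\,x^3$ with $\gamma$ the constant of Eq.~\eqref{csl-eq:dim4cub-gamma}, together with Theorem~\ref{csl-thm:meanvalues} applied with $\alpha=3$ and a simple pole, I would read off $\Res_{s=3}\Psi_{D_4^{*}}(s)=3\cdot\frac{210}{\pi^6}\zeta(3)\gamma=\frac{630}{\pi^6}\zeta(3)\gamma$. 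Multiplying by $\frac{9}{8}$ gives $\Res_{s=3}\Psi_{\ZZ^4}(s)=\frac{2835}{4\pi^6}\zeta(3)\gamma$, as claimed. A final application of Theorem~\ref{csl-thm:meanvalues} to $\Psi_{\ZZ^4}(s)$ then produces the asymptotic growth $\sum_{n\leq x}c^{}_{\ZZ^4}(n)\sim\frac{1}{3}\,\Res_{s=3}\Psi_{\ZZ^4}(s)\,x^3=\frac{945}{4\pi^6}\zeta(3)\gamma\,x^3$.

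Since all the structural work was already carried out in establishing Eq.~\eqref{csl-eq:dim4cub-ccsl-z4}, there is no genuine obstacle in this argument; the only points requiring care are the bookkeeping in the odd/even splitting (which relies essentially on the vanishing of $c^{}_{D_4^{*}}$ on even integers) and the verification that the whole factor $1+2^{-s}$ neither disturbs the pole at $s=3$ nor alters the leading constant beyond the transparent factor $\frac{9}{8}$.
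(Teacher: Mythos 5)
Your proposal is correct and follows essentially the same route as the paper, which obtains the theorem directly from the relation $c^{}_{\ZZ^4}(n)=c^{}_{D_4^{*}}(n)$ for odd $n$ and $c^{}_{\ZZ^4}(n)=c^{}_{D_4^{*}}(n/2)$ for even $n$ (using that $c^{}_{D_4^{*}}$ vanishes on even integers) to get $\Psi_{\ZZ^4}(s)=(1+2^{-s})\,\Psi_{D_4^{*}}(s)$, and then transfers the pole at $s=3$ with residue multiplied by $1+2^{-3}=\frac{9}{8}$ before invoking Theorem~\ref{csl-thm:meanvalues}. The only (harmless) slip is your parenthetical claim that the zeros of $1+2^{-s}$ lie on the line $\{\Real(s)=1\}$: they in fact lie on $\{\Real(s)=0\}$ (it is the factor $1+2^{1-s}$ from the rotation counting that has zeros on $\{\Real(s)=1\}$), but either way they are outside the half-plane $\bigl\{\Real(s)>\frac{3}{2}\bigr\}$ and do not affect the argument.
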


Let us now turn our attention to the corresponding problem of embedded
modules, with special focus on the golden ratio.\index{golden~mean}

\section{More on the icosian ring}
\label{csl-sec:icosianall}

The icosian ring, which is a maximal order in the quaternion algebra
$\HH(\QQ(\mbox{\small $\sqrt{5}$}\,))$, is an interesting example of a
$\ZZ\ts$-module of rank $8$ that is embedded in $\RR^{4}$. At the same
time, it is a $\ZZ[\tau]$-module of rank $4$, and thus an interesting
object in our context in its own right. Beyond this, as we already saw
in the context of SSLs, it is a powerful tool for the description of
the root lattice $A_{4}$. Here, we analyse the coincidence structure,
first via the CSLs for $A_{4}$ and then via the CSMs for $\II$ itself.

\subsection{Coincidences of the root lattice $A_4$}
\label{csl-sec:a4}

Recall from Section~\ref{csl-sec:sim-a4} that $A_{4}$ can be
represented as
\[
   L\, = \,\{x\in \II\mid x=\widetilde{x}\ts\}\ts,
\]
which brings in the icosian ring, $\II$. As $\JJ$ and $\II$ share a
lot of properties, we expect the calculation of the CSLs to be
similar. Indeed, this is true, and we may thus skip various details;
see~\cite{csl-BGHZ08,csl-HZ10,csl-habil} for details.  However, recall
that we needed a pair of quaternions to characterise the CSLs of
$\JJ$.  Here, we only need a \emph{single} quaternion $q$, as the
coincidence rotations of $A_{4}$ can be parametrised by admissible
pairs of the form $(q,\widetilde{q}\ts)$. Consequently, we call a
quaternion $q\in\II$ \emph{admissible},\index{quaternion!admissible}
if $|q\widetilde{q}\ts|^2=\nr(|q|^2)$ is a square in $\NN$. In fact,
$x \mapsto \frac{1}{|q\widetilde{q}|} qx\widetilde{q}$ defines a
coincidence rotation\index{rotation!coincidence} of $A_{4}$ in the
above representation if and only if $q\in\II$ is admissible.

In the case of the hypercubic lattices in four dimensions, it was
useful to deal with an extended admissible pair of primitive
quaternions.  Here, we define the notion of an extended primitive
admissible quaternion as follows.  Let $q\in\II$ be primitive and
admissible.  Then, $\frac{|q\widetilde{q}|^2}{\gcd(|q|^2,
  |\widetilde{q}|^2)^2}$ is a square in $\ZZ[\tau]$. Here, $\gcd$
refers to the greatest common divisor in $\ZZ[\tau]$, which is well
defined up to a unit as $\ZZ[\tau]$ is a Euclidean domain.  Now,
$\frac{|q|^2}{\gcd(|q|^2, |\widetilde{q}|^2)}\in\ZZ[\tau]$ and
$\frac{|\widetilde{q}|^2}{\gcd(|q|^2, |\widetilde{q}|^2)}\in\ZZ[\tau]$
are relatively prime in $\ZZ[\tau]$.  Since their product is a square,
they must be squares (up to units) in $\ZZ[\tau]$, too (we have unique
prime factorisation). If the units have been chosen appropriately, we
may assume that $\frac{|q|^2}{\gcd(|q|^2,
  |\widetilde{q}|^2)}\in\ZZ[\tau]$ and
$\frac{|\widetilde{q}|^2}{\gcd(|q|^2, |\widetilde{q}|^2)}\in\ZZ[\tau]$
are squares in $\ZZ[\tau]$.  Hence, we may take the root (where we may
choose the positive one) and define
\begin{equation}
  \alpha^{}_q\, :=\, 
  \sqrt{\frac{|\widetilde{q}\ts|^2}{\gcd(|q|^2,
      |\widetilde{q}\ts|^2)}}\ts,\quad
  \alpha^{}_{\widetilde{q}}\, :=\, 
  \alpha'_q=\sqrt{\frac{|q|^2}{\gcd(|q|^2,
      |\widetilde{q}\ts|^2)}}\ts ,
\end{equation}
which are unique up to units. Note further that the last equality only
holds up to a unit.

\begin{definition}
  Let $q\in\II$ be a primitive admissible quaternion. Then,
  $q^{}_{\alpha}:=\alpha^{}_q \ts q$ is called an 
  \emph{extended admissible quaternion} (corresponding to $q$).
\end{definition}

Of course, this definition is unique only up to units in $\ZZ[\tau]$,
but this does not matter as units of $\ZZ[\tau]$ cancel out in the
definition of the coincidence rotations.  The key result in the
determination of the CSLs is the following characterisation.

\begin{theorem}[{\cite[Thms.~2 and~3]{csl-BGHZ08}}]\label{dim4a4-theo:csl-a4}
  Let\/ $q\in\II$ be a primitive admissible quaternion and\/ $q^{}_{\alpha}$
  its extension. Then,
  \[
  L\cap \frac{q L\widetilde{q}}{|q\widetilde{q}\ts|}\, =\, 
  L_{q^{}_{\alpha}}\, :=\, (q^{}_{\alpha}\II+
  \II \ts\ts\widetilde{q}^{}_{\alpha})\cap L\ts .
  \]
  Moreover, its coincidence index\/ $\Sig^{}_{A_4}(q)$ is given by
  \[
  \qquad\qquad\qquad\qquad\qquad\qquad\;
  \Sig^{}_{A_4}(q)\, =\, |q^{}_{\alpha}|^2
  \, =\, \lcm\bigl(|q|^2, |\widetilde{q}\ts|^2\bigr)\ts .
  \qquad\qquad\qquad\qquad\qquad\quad \qed
  \]
\end{theorem}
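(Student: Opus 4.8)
The plan is to transfer the known description of the CSLs of the Hurwitz ring (Theorem~\ref{csl-theo:csl-cub4}) and its index formula (Theorem~\ref{csl-theo:index-D4}) to the icosian setting via the twist map, exploiting that $A_4$ is represented by $L=\{x\in\II\mid\widetilde x=x\}$ from Eq.~\eqref{csl-eq:char-L}.

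First I would normalise. Since $\alpha^{}_q$ and $\alpha^{}_{\widetilde q}$ are positive scalars, they cancel in the rotation $R\colon x\mapsto qx\widetilde q/|q\widetilde q|$, so $R$ is unchanged upon replacing $q$ by $q^{}_\alpha$; and as $\nr(|q|^2)$ is a perfect square in $\NN$ by admissibility, $|q\widetilde q|\in\NN$, giving the balanced norms $|q^{}_\alpha|^2=|\widetilde q^{}_\alpha|^2=\lcm(|q|^2,|\widetilde q|^2)$ that play the role of the condition $|p|^2=|q|^2$ in the $\JJ$ case. Because $|q\widetilde q|$ is rational and the twist map is an anti-automorphism with $\widetilde{\widetilde x}=x$, one has $\widetilde{R(x)}=R(\widetilde x)$ for every $x\in\II$. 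I would use this to prove the reduction $L\cap RL=L\cap R\II$: if $y\in L\cap R\II$, then $y$ is twist-fixed and $y=R(x)$ with $x\in\II$, so $R(\widetilde x)=\widetilde{R(x)}=y=R(x)$ forces $\widetilde x=x$ by injectivity of $R$, whence $y\in RL$; the opposite inclusion is clear from $RL\subseteq R\II$ and $L\subseteq\II$.

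The core is then the $\II$-level identity
\[
  \II\cap\frac{q^{}_\alpha\,\II\,\widetilde q^{}_\alpha}{\lvert q^{}_\alpha\widetilde q^{}_\alpha\rvert}\,=\,q^{}_\alpha\II+\II\,\widetilde q^{}_\alpha\ts ,
\]
the exact analogue of Theorem~\ref{csl-theo:csl-cub4}. For ``$\supseteq$'', the balanced norms give $\widetilde q^{}_\alpha\,\overline{\widetilde q^{}_\alpha}=\lvert\widetilde q^{}_\alpha\rvert^2=\lvert q^{}_\alpha\widetilde q^{}_\alpha\rvert$, which shows $q^{}_\alpha\II\subseteq R\II$, and symmetrically $\II\,\widetilde q^{}_\alpha\subseteq R\II$. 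For ``$\subseteq$'', joint primitivity of the pair (inherited from the $\II$-primitivity of $q$) yields $v,w\in\II$ with $\ip{q^{}_\alpha}{v}+\ip{\,\overline{\widetilde q^{}_\alpha}}{w}=1$; inserting a general element $x=q^{}_\alpha y\widetilde q^{}_\alpha/\lvert q^{}_\alpha\widetilde q^{}_\alpha\rvert\in\II$ and expanding with $\ip{a}{b}=\tfrac12(a\overline b+b\overline a)$, the balanced-norm relations $\overline{q^{}_\alpha}x=y\widetilde q^{}_\alpha$ and $x\,\overline{\widetilde q^{}_\alpha}=q^{}_\alpha y$ collapse every term into $q^{}_\alpha\II+\II\,\widetilde q^{}_\alpha$, giving $2x\in q^{}_\alpha\II+\II\,\widetilde q^{}_\alpha$; moreover $\lvert q^{}_\alpha\rvert^2x=q^{}_\alpha(y\widetilde q^{}_\alpha)\in q^{}_\alpha\II$. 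Intersecting this identity with $L$ and combining with the reduction above yields $L\cap RL=(q^{}_\alpha\II+\II\,\widetilde q^{}_\alpha)\cap L=L_{q^{}_\alpha}$.

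The main obstacle is the final step of ``$\subseteq$'': from $2x$ and $\lvert q^{}_\alpha\rvert^2x$ lying in the sum one may only conclude $x$ itself when $2$ and $\lvert q^{}_\alpha\rvert^2$ are coprime, i.e. when $\lvert q^{}_\alpha\rvert^2$ is odd. Since $2$ is inert in $\ZZ[\tau]$, this forces the same preliminary reduction as in the Hurwitz case, namely splitting off the norm-$2$ part of $q$, which induces a symmetry of $L$ and hence leaves the CSL unchanged; making this precise is the delicate point, carried out in~\cite{csl-BGHZ08,csl-habil}. Finally, for the index I would follow Theorem~\ref{csl-theo:index-D4}: the chain $q^{}_\alpha\II\subseteq q^{}_\alpha\II+\II\,\widetilde q^{}_\alpha\subseteq\II$, the index preservation between $\II$ and $L$ from Lemma~\ref{csl-lem:a4-biject}, and the symmetry $\Sig^{}_{A_4}(R)=\Sig^{}_{A_4}(R^{-1})$ of Lemma~\ref{csl-lem:ind-inv} together give both $\Sig^{}_{A_4}(q)\mid\lvert q^{}_\alpha\rvert^2$ and a product relation forcing equality, so that $[L:L_{q^{}_\alpha}]=\lvert q^{}_\alpha\rvert^2=\lcm(\lvert q\rvert^2,\lvert\widetilde q\rvert^2)$.
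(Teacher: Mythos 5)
Your twist-map reduction is correct and nicely executed: since admissibility makes $|q\widetilde{q}\ts|$ rational, the twist map satisfies $\widetilde{R(x)}=R(\widetilde{x})$, and injectivity of $R$ then gives $L\cap RL=L\cap R\ts\II$, which is exactly the right bridge from the $L$-level statement to an $\II$-level identity. Note that the paper itself offers no proof of this theorem (it is quoted from \cite{csl-BGHZ08}); the only template in the text is the Hurwitz-ring argument of Theorems~\ref{csl-theo:csl-cub4} and~\ref{csl-theo:index-D4}, and your transfer of that argument (the two inclusions, the $2x$ computation via $\ip{a}{b}=\tfrac{1}{2}(a\bar{b}+b\ts\bar{a})$, with the conjugations placed correctly) does go through at all primes of $\ZZ[\tau]$ coprime to $2$.

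The genuine gap is your treatment of the prime $2$, and the repair you propose cannot work. In the Hurwitz case the reduction to odd quaternions rests on the fact that $\HH(\QQ)$ ramifies at $2$: the element $r=1+\ii$ generates a two-sided ideal of $\JJ$, so $R(1+\ii)$, resp.\ the pair $(r,r)$, is a point symmetry of the cubic lattices, resp.\ of $D_4^{*}$, and every coincidence rotation is symmetry related to one with odd parameters. For $A_4$ there is no such mechanism: $\HH(\QQ(\sqrt{5}\,))$ is ramified only at the two infinite places, so $\II$ has no two-sided ideal lying over $2$, and a norm-$2$ icosian is not a symmetry of $L$ but a genuine coincidence rotation of index $2$. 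For instance, $q=1+\ii$ is primitive, twist-fixed and admissible (with $\nr(|q|^2)=4$), and $\Sig^{}_{A_4}(q)=\lcm(2,2)=2$; one can check directly that $R(q)$ maps the basis vector $\tfrac{1}{2}(0,1,\tau-1,-\tau)$ of $L$ to $\tfrac{1}{2}(-1,0,\tau-1,-\tau)\notin\II$, so $R(q)$ is indeed no symmetry of $L$. Since composing with a symmetry multiplies the parametrising quaternion by a unit, and Cayley parametrisation is unique up to scalars, an even-norm primitive quaternion can never be traded for an odd one. Nor is the even case negligible: the theorem carries no oddness hypothesis, and the coincidence spectrum of $A_4$ is all of $\NN$ (Theorem~\ref{csl-theo:Psi-A4}, whose Dirichlet series begins $1+5/2^{s}+\cdots$, recording five rotations of index $2$; see also Eq.~\eqref{csl-eq:dim4a4-crot}). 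So your argument, as written, proves the identity only when $|q^{}_{\alpha}|^2$ is coprime to $2$, and the remaining case needs a genuinely different idea at the prime $2$. It is precisely the non-ramification of $2$ in $\HH(\QQ(\sqrt{5}\,))$ — in contrast to $\HH(\QQ)$ — that makes the unrestricted statement true, and this is what the cited sources exploit; it is not a symmetry reduction.

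A lesser point: for the index you invoke Lemma~\ref{csl-lem:a4-biject}, but that bijection concerns the SSLs $qL\widetilde{q}$ and the ideals $q\ts\II$, not the sum $q^{}_{\alpha}\II+\II\ts\widetilde{q}^{}_{\alpha}$. The $\II$-index of that sum is $\nr\bigl(\lcm(|q|^2,|\widetilde{q}\ts|^2)\bigr)$ (compare Theorem~\ref{csl-theo:csm-I}), whereas the asserted $A_4$-index is the $\lcm$ itself, i.e.\ essentially its square root; so the passage from the $\II$-level index to $[L:L_{q^{}_{\alpha}}]$ requires its own argument and does not follow from "index preservation".
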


This allows us to determine the multiplicative counting function
$c^{\mathsf{rot}}_{\! A_4}$, which is explicitly given by
\cite[Eq.~(5.29)]{csl-habil}
\begin{equation}\label{csl-eq:dim4a4-crot}
   c^{\mathsf{rot}}_{\! A_4}(p^r) \, = \,
     \begin{cases}
       6 \cdot 5^{2r-1}, & \mbox{if } p=5,\\[1mm]
       \frac{p\ts\ts +1}{p\ts\ts -1}\ts p^{r-1}(p^{r+1}+p^{r-1}-2),
         & \mbox{if } p\equiv \pm 1\; (5), \\[1mm]
       p^{2r} \! + p^{2r-2},
         & \mbox{if } p\equiv \pm 2 \; (5).
     \end{cases}
\end{equation}
The result now reads as follows.

\begin{theorem}[{\cite[Thm.~4]{csl-BGHZ08}}]\label{csl-theo:Psi-A4}
  Let\/ $120\, c^{\mathsf{rot}}_{\! A_4}(m)$ be the number of
  coincidence rotations\index{rotation!coincidence} of index\/ $m$ of
  the root lattice\/ $A_4$, as specified by
  Eq.~\eqref{csl-eq:dim4a4-crot}. Then, with\/ $K=\QQ(\mbox{\small
    $\sqrt{5}$}\,)$, the Dirichlet series\index{Dirichlet~series}
  generating function for\/ $c^{\mathsf{rot}}_{\! A_4}(m)$ reads
  \begin{align*}
  \Psi_{\! A_4}^{\mathsf{rot}}(s)
  &\, =\, \sum_{n\in\NN} \frac{c^{\mathsf{rot}}_{\! A_4}(n)}{n^s}
   \, =\, \frac{\zeta_K^{}(s-1)}{1+5^{-s}}
    \frac{\zeta(s)\ts\zeta(s-2)}{\zeta(2s)\ts\zeta(2s-2)}   \\[1mm]
  &\, =\, \myfrac{1+5^{1-s}}{1-5^{2-s}}
    \prod_{p\equiv\pm 1(5)}\frac{(1+p^{-s})(1+p^{1-s})}{(1-p^{1-s})(1-p^{2-s})}
    \prod_{p\equiv\pm 2(5)}\frac{1+p^{-s}}{1-p^{2-s}} \\[1mm]
  &\, =\, 1+\tfrac{5}{2^s}+\tfrac{10}{3^s}+\tfrac{20}{4^s}+\tfrac{30}{5^s}
    +\tfrac{50}{6^s}+\tfrac{50}{7^s}+\tfrac{80}{8^s}+\tfrac{90}{9^s}
    +\tfrac{150}{10^s}+\tfrac{144}{11^s}%+\tfrac{200}{12^s}+\tfrac{170}{13^s}
    +\cdots,
  \end{align*}
  and the coincidence spectrum is\/ $\NN$. \qed
\end{theorem}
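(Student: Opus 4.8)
The plan is to take the explicit prime-power values of $c^{\mathsf{rot}}_{\! A_4}$ from Eq.~\eqref{csl-eq:dim4a4-crot} as the starting point and to assemble the Dirichlet series from its local factors. First I would record that $c^{\mathsf{rot}}_{\! A_4}(n)$ is multiplicative: by Theorem~\ref{dim4a4-theo:csl-a4}, the index of the coincidence rotation attached to a primitive admissible $q\in\II$ equals $\lcm\bigl(|q|^2,|\widetilde{q}\ts|^2\bigr)$, and since $\II$ is a maximal order with essentially unique factorisation, a primitive admissible quaternion of index $mn$ with $\gcd(m,n)=1$ factors uniquely (up to units in $\ZZ[\tau]$) into primitive admissible quaternions of indices $m$ and $n$. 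This is the exact analogue of the argument used for $D_4^{*}$, and it reduces the computation of $\Psi_{\! A_4}^{\mathsf{rot}}(s)=\sum_n c^{\mathsf{rot}}_{\! A_4}(n)\ts n^{-s}$ to the Euler product $\prod_p E_p(p^{-s})$ with $E_p(p^{-s})=\sum_{r\geq 0} c^{\mathsf{rot}}_{\! A_4}(p^r)\ts p^{-rs}$.

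The second step is a direct summation of these local series, split according to the three cases in Eq.~\eqref{csl-eq:dim4a4-crot}. For $p\equiv\pm 2\,(5)$ one has $c^{\mathsf{rot}}_{\! A_4}(p^r)=p^{2r}+p^{2r-2}$, whose generating series sums to $\frac{1+p^{-s}}{1-p^{2-s}}$; for $p=5$ the value $6\cdot 5^{2r-1}$ gives $\frac{1+5^{1-s}}{1-5^{2-s}}$; and for $p\equiv\pm 1\,(5)$ the value $\frac{p+1}{p-1}\,p^{r-1}(p^{r+1}+p^{r-1}-2)$ sums to $\frac{(1+p^{-s})(1+p^{1-s})}{(1-p^{1-s})(1-p^{2-s})}$. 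Each of these is an elementary geometric-series computation (for the $p\equiv\pm 1$ case one expands $(1-p^{1-s})^{-1}(1-p^{2-s})^{-1}$ and checks the coefficient of $p^{-rs}$ against the closed form); collecting them produces exactly the Euler product displayed in the theorem.

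The third step is to recognise this Euler product as the stated quotient of zeta functions. Here I would use the splitting behaviour of rational primes in $K=\QQ(\mbox{\small $\sqrt{5}$}\,)$: the prime $5$ ramifies, primes $p\equiv\pm 1\,(5)$ split, and primes $p\equiv\pm 2\,(5)$ are inert, so that the local factors of $\zeta_K^{}(s-1)$ are $\frac{1}{1-5^{1-s}}$, $\frac{1}{(1-p^{1-s})^2}$ and $\frac{1}{1-p^{2-2s}}$, respectively. Matching the local factors of $\frac{\zeta_K^{}(s-1)}{1+5^{-s}}\frac{\zeta(s)\ts\zeta(s-2)}{\zeta(2s)\ts\zeta(2s-2)}$ against the three Euler factors computed above---using $1-p^{-2s}=(1-p^{-s})(1+p^{-s})$ and $1-p^{2-2s}=(1-p^{1-s})(1+p^{1-s})$ to cancel the contributions of $\zeta(2s)^{-1}$ and $\zeta(2s-2)^{-1}$---verifies the identity prime by prime. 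The one point requiring slight care is the ramified prime $p=5$, where the correction factor $(1+5^{-s})^{-1}$ is exactly what removes the spurious factor $1+5^{-s}$ left after the cancellations; this ramified bookkeeping is the only genuinely non-mechanical step, while the substantive geometric and arithmetic content is already packaged in Theorem~\ref{dim4a4-theo:csl-a4} and Eq.~\eqref{csl-eq:dim4a4-crot}.

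Finally, the coincidence spectrum is read off from positivity. Since $c^{\mathsf{rot}}_{\! A_4}(p^r)>0$ for every prime $p$ and every $r\geq 1$ by Eq.~\eqref{csl-eq:dim4a4-crot}, multiplicativity gives $c^{\mathsf{rot}}_{\! A_4}(n)>0$ for all $n\in\NN$; hence every positive integer occurs as a coincidence index, and the spectrum is all of $\NN$. The stated power-series expansion of $\Psi_{\! A_4}^{\mathsf{rot}}(s)$ is then obtained by expanding the Euler product, which is routine.
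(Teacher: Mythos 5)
Your proposal is correct and follows essentially the same route that the paper (and the cited source \cite{csl-BGHZ08}) takes: multiplicativity via essentially unique factorisation in $\II$ (mirroring the paper's explicit $D_4^{*}$ argument), geometric summation of the prime-power counts from Eq.~\eqref{csl-eq:dim4a4-crot} into the three Euler factors, identification with the zeta quotient via the splitting of primes in $\QQ(\sqrt{5}\,)$ with the $(1+5^{-s})^{-1}$ factor handling the ramified prime, and positivity of the prime-power counts giving the full spectrum $\NN$. All of your local computations check out, so there is nothing to add.
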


The function $\Psi_{\! A_4}^{\mathsf{rot}}$ is meromorphic in the
entire complex plane, and its rightmost pole is a simple pole at
$s=3$, with residue
\begin{equation}\label{csl-rhoA4rot}
\begin{split}
  \rho^{\mathsf{rot}}_{\! A_4}\, &=\, 
   \Res_{s=3}\bigl(\Psi_{\! A_4}^{\mathsf{rot}}(s)\bigr)
    \, =\, \myfrac{125}{126}\ts
      \frac{\zeta_K^{}(2)\ts\zeta(3)}{\zeta(6)\ts\zeta(4)}\\
     &=\,  \frac{450\sqrt{5}}{\pi^6}\,\zeta(3)
    \, \approx\, 1.258{\ts}124,
\end{split}
\end{equation} 
where the last equation follows by inserting the special values
\[
  \zeta(4)\, =\, \myfrac{\pi^4}{90}\ts ,\quad
  \zeta(6)\, =\, \myfrac{\pi^6}{945}\ts , \quad
  \zeta_K^{}(2)\, =\, \myfrac{2\pi^4}{75\sqrt{5}}\ts , \quad
  L(1,\chi_5)\, = \, \frac{2\log(\tau)}{\sqrt{5}}\ts ,
\]
and Ap\'{e}ry's constant $\zeta(3)\approx 1.202{\ts}056{\ts}903$; see
\cite{csl-BM,csl-BGHZ08} and references therein.

A familiar argument based on Theorem~\ref{csl-thm:meanvalues} gives us
the asymptotic growth rate of $c^{\mathsf{rot}}_{\! A_4}(m)$ as
follows.

\begin{corollary}
  With the residue\/ $\rho^{\mathsf{rot}}_{\! A_4}$ from
  Eq.~\eqref{csl-rhoA4rot}, the summatory asymptotic
  behaviour\index{asymptotic~behaviour} of\/ $c^{\mathsf{rot}}_{\!
    A_4}(m)$ is given by
  \[
    \sum_{m\leq x} c^{\mathsf{rot}}_{\!A_4}(m) \,\sim\, 
     \rho^{\mathsf{rot}}_{\! A_4} \ts \frac{x^3}{3} 
     \, \approx\,  0.419{\ts}375 \, x^3 ,\vspace{-2mm}
  \]
   as\/ $x\to\infty$.\qed
\end{corollary}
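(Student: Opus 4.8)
The plan is to apply Theorem~\ref{csl-thm:meanvalues} directly to the Dirichlet series $F(s)=\Psi_{\! A_4}^{\mathsf{rot}}(s)$, whose coefficients $c^{\mathsf{rot}}_{\! A_4}(m)$ are non-negative. First I would extract the analytic data matching the hypotheses of that theorem from the closed form
\[
   \Psi_{\! A_4}^{\mathsf{rot}}(s) \, = \, \frac{\zeta_K^{}(s-1)}{1+5^{-s}}\,
   \frac{\zeta(s)\,\zeta(s-2)}{\zeta(2s)\,\zeta(2s-2)}
\]
established in Theorem~\ref{csl-theo:Psi-A4}. Using the factorisation $\zeta_K^{}(s)=\zeta(s)\ts L(s,\chi_5^{})$ together with the fact that $L(s,\chi_5^{})$ is entire, one sees that in the half-plane $\{\Real(s)\geq 3\}$ the only source of a pole is the factor $\zeta(s-2)$, which has a simple pole at $s=3$ with residue $1$. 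Hence the relevant parameters in the theorem are $\alpha=3$ and $n=0$.

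Second, I would confirm that $F(s)$ is holomorphic at every point of the line $\{\Real(s)=3\}$ except at $s=3$. On this line the factors $\zeta(s-1)$ and $\zeta(s)$ are evaluated at points of real part $2$ and $3$, so they lie in the region of holomorphy of $\zeta$, while $\zeta(s-2)$ is holomorphic there except at the single point $s=3$, and $L(s-1,\chi_5^{})$ is entire. The denominator causes no trouble: $1+5^{-s}$ vanishes only on $\{\Real(s)=0\}$, and $\zeta(2s)$, $\zeta(2s-2)$ are evaluated at points of real part $6$ and $4$, where the Euler product forces non-vanishing, so their reciprocals are holomorphic on the line. Since the coefficients are non-negative and $s=3$ is the unique real singularity, Landau's theorem yields convergence for $\Real(s)>3$, so all hypotheses of Theorem~\ref{csl-thm:meanvalues} are met.

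Finally, writing $h(s):=(s-3)\ts\Psi_{\! A_4}^{\mathsf{rot}}(s)$, which is holomorphic at $s=3$ with $h(3)=\Res_{s=3}\bigl(\Psi_{\! A_4}^{\mathsf{rot}}(s)\bigr)=\rho^{\mathsf{rot}}_{\! A_4}$ by Eq.~\eqref{csl-rhoA4rot}, the theorem gives
\[
   \sum_{m\leq x} c^{\mathsf{rot}}_{\!A_4}(m) \, \sim \,
   \frac{h(3)}{3\cdot 0!}\, x^3 \, = \, \frac{\rho^{\mathsf{rot}}_{\! A_4}}{3}\, x^3,
\]
which is exactly the asserted asymptotics; the numerical value $\approx 0.419\ts 375$ then follows by inserting $\rho^{\mathsf{rot}}_{\! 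A_4}\approx 1.258\ts 124$. The only genuinely non-routine step is the holomorphy check on $\{\Real(s)=3\}$, where one must be sure that the zeros of $\zeta(2s)$ and $\zeta(2s-2)$ and the zeros of $1+5^{-s}$ all stay off this line; this is precisely where the arguments of the various zeta factors being located safely to the right of the critical strip (respectively on the imaginary axis) is essential.
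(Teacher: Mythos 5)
Your proposal is correct and follows exactly the paper's route: the paper likewise invokes Theorem~\ref{csl-thm:meanvalues} (Delange) applied to $\Psi_{\! A_4}^{\mathsf{rot}}(s)$, using its meromorphy with rightmost simple pole at $s=3$ and residue $\rho^{\mathsf{rot}}_{\! A_4}$, which with $\alpha=3$, $n=0$ gives $\sum_{m\leq x} c^{\mathsf{rot}}_{\!A_4}(m)\sim \tfrac{1}{3}\rho^{\mathsf{rot}}_{\! A_4}\ts x^3$. The only difference is that you spell out the hypothesis checks (pole location, non-vanishing of $\zeta(2s)$, $\zeta(2s-2)$ and $1+5^{-s}$ on the line $\{\Real(s)=3\}$, and Landau's theorem for the abscissa of convergence) that the paper compresses into ``a familiar argument''.
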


As we shall see later in Corollary~\ref{csl-cor:growth-csl-a4}, the
number of coincidence rotations and the number of CSLs of a given
index grow much faster than the number of SSLs. This is due to the
fact that the index of a primitive SSL is $\den^{}_{A_4}(q)^4$,
whereas the coincidence index $\Sig^{}_{\! A_4}(q)$ is much smaller
and satisfies the inequality $\den^{}_{A_4}(q)\leq \Sig^{}_{\!
  A_4}(q)\leq \left(\den^{}_{A_4}(q)\right)^2$.\smallskip

The key result in counting the number of distinct CSLs is the
following.

\begin{theorem}[{\cite[Thm.~7]{csl-HZ10}}]\label{dim4a4-theo:equalL}\sloppy
  Assume that\/ $q^{}_1$ and\/ $q^{}_2$ are admissible.  Then, one
  has\/ $L(R(q^{}_1))=L(R(q^{}_2))$ if and only if\/
  $|q^{}_1|^2=|q^{}_2|^2$ and\/
  $\gcld(q^{}_1,|q^{}_1\widetilde{q}^{}_1|/c)
  =\gcld(q^{}_2,|q^{}_2\widetilde{q}^{}_2|/c)$, where\/ $c=\sqrt{5}$
  if\/ $|q^{}_1|^2=|q^{}_2|^2$ is divisible by\/ $5$, and\/ $c=1$
  otherwise.\qed
\end{theorem}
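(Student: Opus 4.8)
The plan is to exploit the explicit CSL description from Theorem~\ref{dim4a4-theo:csl-a4}, namely that $L(R(q)) = L_{q^{}_{\alpha}} = (q^{}_{\alpha}\II + \II\ts\widetilde{q}^{}_{\alpha})\cap L$, and to reduce the question of when two such CSLs coincide to a question about left ideals of $\II$. This directly parallels the treatment of $D_4^{*}$ in Theorem~\ref{csl-theo:4dim-equalcslJ}, so I would first establish the analogue of Lemma~\ref{csl-lem:equal-csl} in the icosian setting: if $L(R(q^{}_1)) = L(R(q^{}_2))$, then the coincidence indices must agree, and the associated denominators must be equal. By Theorem~\ref{dim4a4-theo:csl-a4}, $\Sig^{}_{A_4}(q) = \lcm(|q|^2, |\widetilde{q}|^2)$, while the denominator is $\den^{}_{A_4}(q) = |q\widetilde{q}|$. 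This gives two necessary numerical constraints that I would then translate, via unique factorisation in $\ZZ[\tau]$, into the single norm condition $|q^{}_1|^2 = |q^{}_2|^2$; here the symmetry of the twist map ($|\widetilde{q}|^2 = (|q|^2)'$) means that fixing $|q|^2$ automatically fixes $|\widetilde{q}|^2$, so only one norm equality is needed, unlike the $D_4^{*}$ case where $|p|^2$ and $|q|^2$ are independent.

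Next I would analyse the role of the greatest common left divisor. Writing $L_{q^{}_{\alpha}}$ as a sum of ideals, the CSL is controlled by the left-ideal structure of $q^{}_{\alpha}\II$, and two admissible quaternions of equal norm produce the same sum-of-ideals exactly when they share the relevant left divisor. The key algebraic fact to prove is that $L(R(q^{}_1)) = L(R(q^{}_2))$ forces $\gcld(q^{}_1, |q^{}_1\widetilde{q}^{}_1|/c) = \gcld(q^{}_2, |q^{}_2\widetilde{q}^{}_2|/c)$, where the normalisation factor $c$ accounts for ramification at the prime $5$. Since $5 = \varepsilon(1-\xi^{}_5)^4$ is ramified in $\ZZ[\tau]$ (its norm being $\sqrt{5}$ up to units), the gcld computation behaves differently when $5 \mid |q|^2$; the factor $c = \sqrt{5}$ precisely strips out this ramified contribution so that the remaining gcld captures exactly the data distinguishing inequivalent CSLs. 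The forward direction (equality of CSLs implies these two conditions) follows by inclusion-chasing: $\den^{}_{A_4}(q^{}_i)\, L \subseteq L(R(q^{}_i)) \subseteq q^{}_{\alpha,i}\,\II\cdot(\ldots)$ in the style of Lemma~\ref{csl-lem:equal-csl}, comparing indices and using that the twist map is an anti-automorphism of $\II$.

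For the converse, I would assume $|q^{}_1|^2 = |q^{}_2|^2$ together with the gcld condition and reconstruct the equality of the CSLs directly. Because $\gcld$ is unique up to right multiplication by a unit of $\II$, the condition says $q^{}_1$ and $q^{}_2$ have the same left divisor modulo the ramified part, so $q^{}_{\alpha,1}\II + \II\ts\widetilde{q}^{}_{\alpha,1}$ and $q^{}_{\alpha,2}\II + \II\ts\widetilde{q}^{}_{\alpha,2}$ generate the same submodule of $\II$; intersecting with $L$ and invoking Theorem~\ref{dim4a4-theo:csl-a4} yields $L(R(q^{}_1)) = L(R(q^{}_2))$. Here I would lean on Fact~\ref{csl-fact:a4-eq-pr-ssl} and Lemma~\ref{csl-lem:a4-biject}, which already encode the dictionary between right ideals of $\II$ and the lattice $L$, adapted to the two-sided sum structure relevant for CSLs.

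The main obstacle I anticipate is the correct handling of the ramified prime $5$ and the resulting normalisation by $c$. Unlike the odd-index simplifications available for $D_4^{*}$ (where one can restrict to odd quaternions and avoid the ramified prime $2$ entirely), here $5$ genuinely enters the coincidence spectrum $\NN$ and cannot be excluded. Getting the index bookkeeping right when $5 \mid |q|^2$$\,$---$\,$in particular verifying that dividing by $c = \sqrt{5}$ produces exactly the invariant that separates distinct CSLs and does not over- or under-count$\,$---$\,$is the delicate point, and I would expect to follow the detailed computation in~\cite{csl-HZ10} closely at precisely this step rather than reproduce it from scratch.
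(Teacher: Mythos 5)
Your overall strategy---start from Theorem~\ref{dim4a4-theo:csl-a4}, parallel the $D_4^{*}$ analysis of Theorem~\ref{csl-theo:4dim-equalcslJ}, and reduce equality of CSLs to quaternion-arithmetic conditions---is indeed the route taken in \cite{csl-HZ10} (note that the paper itself states this theorem without proof, citing that reference). However, as it stands your proposal has a genuine gap in each direction. For the forward direction, you propose to obtain the necessary condition $|q^{}_1|^2=|q^{}_2|^2$ from the two numerical invariants supplied by the analogue of Lemma~\ref{csl-lem:equal-csl}, namely $\Sig^{}_{A_4}(q)=\lcm\bigl(|q|^2,|\widetilde q\ts|^2\bigr)$ and $\den^{}_{A_4}(q)=|q\widetilde q\ts|$. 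This cannot work: both invariants are symmetric under the algebraic conjugation $|q|^2 \mapsto (|q|^2)'=|\widetilde q\ts|^2$, so they determine at best the pair $\{|q|^2,(|q|^2)'\}$, not $|q|^2$ itself. Concretely, with $\omega=3+\tau$ a prime of $\ZZ[\tau]$ over the splitting prime $11$, admissible quaternions with $|q^{}_1|^2=\omega^2$ and $|q^{}_2|^2=(\omega')^2$ have the same coincidence index $121$ and the same denominator $11$, yet $\omega^2$ and $(\omega')^2$ are not associates. So the norm condition must be extracted from equality of the modules $q^{}_{\alpha}\II+\II\ts\widetilde{q}^{}_{\alpha}$ themselves, which requires the very analysis you have not carried out.

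The second gap is that the heart of the theorem---both directions of the $\gcld$ criterion and the precise role of $c=\sqrt{5}$---is asserted rather than proved. For the converse you claim that equal norms plus equal $\gcld\bigl(q^{}_i,|q^{}_i\widetilde{q}^{}_i|/c\bigr)$ force $q^{}_{\alpha,1}\II+\II\ts\widetilde{q}^{}_{\alpha,1}=q^{}_{\alpha,2}\II+\II\ts\widetilde{q}^{}_{\alpha,2}$, but no argument is given; a shared left divisor of the $q^{}_i$ does not a priori control a module that is the sum of a left ideal and a right ideal. The dictionary you invoke to bridge this (Fact~\ref{csl-fact:a4-eq-pr-ssl} and Lemma~\ref{csl-lem:a4-biject}) concerns right ideals $q\ts\II$ versus SSLs $qL\widetilde{q}$, and does not transfer to the CSL structure $(q^{}_{\alpha}\II+\II\ts\widetilde{q}^{}_{\alpha})\cap L$. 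Your closing remark that at the ramified prime $5$ you would follow the computation in \cite{csl-HZ10} ``rather than reproduce it from scratch'' concedes exactly this: what you have is a sensible plan plus a citation, not a proof, and the step deferred to the reference is precisely the one that carries the content of the theorem.
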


From this result, one can derive the following explicit expression for the
counting function \cite[Eq.~(5.93)]{csl-habil}
\[
   c^{}_{\! A_4}(p^r) \,=\,
   \begin{cases}
     6 \cdot 5^{2r-2}, & \mbox{if } p=5,\\[1mm]
     \frac{(p\ts\ts +1)^2}{p^3-1}\bigl(p^{2r+1}+p^{2r-2}-2\ts
     p^{\frac{r-1}{2}}\bigr),
     &\mbox{if $p \equiv \pm1\; (5)$, $r$ odd,}\\[1mm]
     \frac{(p\ts\ts +1)^2}{p^3-1}\bigl(p^{2r+1}+p^{2r-2}-2\ts
     \frac{p^2\nts +1}{p\ts\ts +1}
     \ts p^{\frac{r-2}{2}} \bigr),
     &\mbox{if $p \equiv \pm1 \; (5)$, $r$ even,}\\[1mm]
     p^{2r} \! + p^{2r-2},
         & \mbox{if } p\equiv \pm 2 \; (5).
   \end{cases}
\]
We can now summarise as follows.

\begin{theorem}[{\cite[Thm.~5.5.6]{csl-habil}}]\label{csl-theo:PsiL-CSL}
  Let\/ $c^{}_{\! A_4}(m)$ be the number of CSLs of the
  root lattice\/ $A_4$ of index\/ $m$. The Dirichlet 
  series\index{Dirichlet~series} generating function
  for\/ $c^{}_{\! A_4}(m)$ reads
  \begin{align*}
  \quad
  \Psi^{}_{\! A_4}(s)\, & =\, \sum_{n\in\NN} \frac{c^{}_{\! A_4}(n)}{n^s} \,
  =\,\Bigl(1+\myfrac{6\cdot 5^{-s}}{1-5^{2-s}}\Bigr)
  \prod_{p\equiv\pm 2(5)}\frac{1+p^{-s}}{1-p^{2-s}} \\[2mm]
  & \qquad\qquad\qquad\quad \times \!
    \prod_{p\equiv\pm 1(5)}\!\!\frac{1+p^{-s}+2p^{1-s}+2p^{-2s}+p^{1-2s}+p^{1-3s}}
    {(1-p^{2-s})(1-p^{1-2s})}\\[2mm]
  &=\,1+\tfrac{5}{2^s}+\tfrac{10}{3^s}+\tfrac{20}{4^s}+\tfrac{6}{5^s}
  +\tfrac{50}{6^s}+\tfrac{50}{7^s}+\tfrac{80}{8^s}+\tfrac{90}{9^s}
  +\tfrac{30}{10^s}+\tfrac{144}{11^s}+\tfrac{200}{12^s}+\tfrac{170}{13^s}
  +\cdots.    \quad\qed
  \end{align*}
\end{theorem}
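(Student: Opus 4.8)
The plan is to mirror exactly the strategy used for the centred hypercubic lattice $D_4^*$ in Section~\ref{csl-sec:cub4-d4}, transplanting each step to the icosian setting with the twist map playing the role of quaternionic conjugation. The multiplicativity of $c^{}_{\! A_4}(m)$ is free: by Theorem~\ref{csl-theo:csl-mult} it follows once $c^{\mathsf{iso}}_{\! A_4}(m)$ is multiplicative, and the latter follows from the essentially unique prime factorisation in $\II$ together with the norm-coprimality argument already invoked for $c^{\mathsf{rot}}_{\! A_4}$ in Theorem~\ref{csl-theo:Psi-A4}. Hence it suffices to determine $c^{}_{\! A_4}(p^r)$ at prime powers.

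First I would establish the explicit prime-power formula for $c^{}_{\! A_4}(p^r)$ displayed just before the theorem. The combinatorial input is Theorem~\ref{dim4a4-theo:equalL}, which characterises when two admissible quaternions generate the \emph{same} CSL: beyond the coincidence index $|q|^2$ being equal, one needs $\gcld(q^{}_1,|q^{}_1\widetilde q^{}_1|/c) = \gcld(q^{}_2,|q^{}_2\widetilde q^{}_2|/c)$, with the correction factor $c=\sqrt5$ exactly when $5$ divides the index. So the count of distinct CSLs of index $p^r$ is the number of extended primitive admissible quaternions, weighted by the number of distinct $\gcld$-values, precisely as in the passage $c^{}_{D_4^*}(\pi^r) = f(\pi^r)^2 + 2\sum_{s=1}^{[r/2]} f(\pi^{r-s})f(\pi^{r-2s})$. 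I would split into the three residue classes $p=5$, $p\equiv\pm1\,(5)$, $p\equiv\pm2\,(5)$ governed by the splitting behaviour of $p$ in $K=\QQ(\sqrt5)$, reusing the quaternion-counting data $24\,f(p^r)$ (over $\ZZ[\tau]$, via the icosian analogue of Eq.~\eqref{csl-eq:c-cub}) that already produced $c^{\mathsf{rot}}_{\! A_4}(p^r)$ in Eq.~\eqref{csl-eq:dim4a4-crot}. Evaluating the geometric-type sums then yields the three-case closed form, with the inert primes $p\equiv\pm2\,(5)$ contributing $p^{2r}+p^{2r-2}$ just as for $c^{\mathsf{rot}}$, since there inertia forces $|q|^2=|\widetilde q|^2$ and no splitting of CSL-classes occurs.

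Next I would assemble the Dirichlet series by taking the Euler product $\Psi^{}_{\! A_4}(s) = \prod_p \sum_{r\ge0} c^{}_{\! A_4}(p^r) p^{-rs}$, computing each local factor from the prime-power formula. The inert factor sums to $\frac{1+p^{-s}}{1-p^{2-s}}$; the split factor, after summing the $r$-odd and $r$-even expressions separately, collapses to the claimed rational function $\frac{1+p^{-s}+2p^{1-s}+2p^{-2s}+p^{1-2s}+p^{1-3s}}{(1-p^{2-s})(1-p^{1-2s})}$ — this is the direct analogue of the $D_4^*$ factor in Theorem~\ref{csl-theo:dim4cub-d4csl}, which is unsurprising since over a split prime the icosian local arithmetic matches the Hurwitz case; the ramified prime $p=5$, fed by $c^{}_{\! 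A_4}(5^r)=6\cdot5^{2r-2}$, sums to $1+\frac{6\cdot5^{-s}}{1-5^{2-s}}$. The leading coefficients $1+\frac{5}{2^s}+\frac{10}{3^s}+\cdots$ then serve as an arithmetic sanity check against the formula.

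The main obstacle will be verifying the $\gcld$-counting in the split case, i.e.\ justifying that the number of distinct left-divisors $\gcld(q,|q\widetilde q|)$ with $|q|^2=p^r$, $|\widetilde q|^2=p^{r-2s}$ equals $f(p^{r-s})$. This is the step where the $D_4^*$ argument (where the same replacement $f(\pi^r)\mapsto f(\pi^{r-s})$ appears) must be adapted through the twist map rather than ordinary conjugation, and where the correction factor $c=\sqrt5$ of Theorem~\ref{dim4a4-theo:equalL} enters delicately for $p=5$. I would lean on the proofs in~\cite{csl-HZ10} and \cite[Sec.~5.5]{csl-habil} here, since the bijective correspondence between primitive right ideals of $\II$ and primitive SSLs (Lemma~\ref{csl-lem:a4-biject}) and the index identity $\Sig^{}_{A_4}(q)=\lcm(|q|^2,|\widetilde q|^2)$ from Theorem~\ref{dim4a4-theo:csl-a4} supply exactly the ideal-theoretic scaffolding needed, and the remaining work is the careful bookkeeping of how many admissible quaternions share a given $\gcld$, which is routine once the ramified prime is handled correctly.
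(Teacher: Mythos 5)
Your proposal follows essentially the same route as the paper: the equal-CSL criterion of Theorem~\ref{dim4a4-theo:equalL} (from \cite{csl-HZ10}), the $D_4^*$-style $\gcld$-bookkeeping split by the residue class of $p$ modulo $5$ (with the ramified prime $p=5$ handled via the $c=\sqrt{5}$ correction), the resulting prime-power formula for $c^{}_{\!A_4}(p^r)$, and the Euler-product assembly — which is precisely how the paper (via \cite[Thm.~5.5.6]{csl-habil}) obtains the theorem, including the observation that the split-prime local factors coincide with those of $D_4^{*}$ and the inert ones with those of $\Psi^{\mathsf{rot}}_{\!A_4}$. The local-factor computations you sketch check out against the stated coefficients, so the plan is sound.
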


In order to compare $\Psi_{\! A_4}(s)$ and $\Psi^{\mathsf{rot}}_{\!
  A_4}(s)$, we consider the function
\[
  \psi^{}_{\! A_4}(s)\,   :=\, 
   \frac{\Psi_{\! A_4}(s)}{\Psi^{\mathsf{rot}}_{\! A_4}(s)} 
      = \, \Bigl( 1 - \myfrac{24\cdot 5^{-s}}{1+5^{1-s}} \Bigr)
          \prod_{p\equiv\pm 1(5)}\!
           \left( 1 - \frac{2\ts (p^2-1)\ts p^{-2s}}
          {(1+p^{-s})(1+p^{1-s})(1-p^{1-2s})} 
             \right). 
\]
It is analytic in the open half-plane $\{\Real(s)> \frac{3}{2}\}$, as
the Euler product converges there. This proves that $\Psi^{}_{\!
  A_4}(s)$ is a meromorphic function in the open half-plane
$\{\Real(s)> \frac{3}{2}\}$. Its rightmost pole is a simple pole at
$s=3$ with residue
\begin{equation}\label{csl-rhoA4}
  \rho^{}_{\! A_4}\, =\, \Res_{s=3}\bigl(\Psi_{\! A_4}^{}(s)\bigr)
      \, = \, \psi^{}_{\! A_4}\nts (3)\, \rho^{\mathsf{rot}}_{\! A_4}
      \, \approx \, 1.025{\ts}695, %1.025694849
\end{equation}
where $\psi^{}_{\! A_4}(3)\approx 0.815{\ts}257{\ts}622 < 1$ %0.8152576222
has been calculated numerically. Finally, we apply
Theorem~\ref{csl-thm:meanvalues}, which gives us the asymptotic growth
rate as follows.

\begin{corollary}\label{csl-cor:growth-csl-a4}
  With the residue\/ $\rho^{}_{\! A_4}$ from Eq.~\eqref{csl-rhoA4}, 
  the summatory asymptotic behaviour\index{asymptotic~behaviour}
  of\/ $c^{}_{\! A_4}(m)$ is given by
  \[
    \sum_{m\leq x} c^{}_{\! A_4}(m) \,\sim\, \rho^{}_{\! A_4} \myfrac{x^3}{3} 
      \,\approx\, 0.341{\ts}898 \, x^3,\vspace{-2mm}  %0.3418982830
   \]
   as\/ $x\to\infty$.\qed
\end{corollary}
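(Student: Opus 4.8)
The plan is to apply Theorem~\ref{csl-thm:meanvalues} (Delange's theorem) to the Dirichlet series generating function $\Psi^{}_{\! A_4}(s)$ from Theorem~\ref{csl-theo:PsiL-CSL}, exactly in the spirit of the proofs already given for $c^{\mathsf{rot}}_{\! A_4}(m)$ and for the hypercubic lattices. The residue $\rho^{}_{\! A_4}$ has already been computed in Eq.~\eqref{csl-rhoA4}, so the only task is to verify that $\Psi^{}_{\! A_4}(s)$ satisfies the analytic hypotheses of Theorem~\ref{csl-thm:meanvalues} with $\alpha=3$ and $n=0$, and then to read off the leading asymptotics.

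First I would record that the coefficients $c^{}_{\! A_4}(m)$ are non-negative (they are counting functions for CSLs), so the positivity hypothesis of Theorem~\ref{csl-thm:meanvalues} is met. Next I would invoke the factorisation
\[
  \Psi^{}_{\! A_4}(s)\, =\, \psi^{}_{\! A_4}(s)\,\Psi^{\mathsf{rot}}_{\! A_4}(s),
\]
already introduced just before Eq.~\eqref{csl-rhoA4}, where $\psi^{}_{\! A_4}(s)$ is given by its Euler product and is analytic in the open half-plane $\{\Real(s)>\frac{3}{2}\}$. Since $\Psi^{\mathsf{rot}}_{\! A_4}(s)$ is meromorphic on all of $\CC$ with its rightmost pole a simple pole at $s=3$ (as established in the discussion following Theorem~\ref{csl-theo:Psi-A4}), the product $\Psi^{}_{\! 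A_4}(s)$ is meromorphic in $\{\Real(s)>\frac{3}{2}\}$, is holomorphic on the line $\{\Real(s)=3\}$ except at $s=3$, and inherits a simple pole at $s=3$. Writing $\Psi^{}_{\! A_4}(s)=h(s)/(s-3)$ with $h$ holomorphic at $s=3$, one has $h(3)=\rho^{}_{\! A_4}=\psi^{}_{\! A_4}(3)\,\rho^{\mathsf{rot}}_{\! A_4}$.

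With $\alpha=3$, $n=0$, and $h(\alpha)=\rho^{}_{\! A_4}$, Theorem~\ref{csl-thm:meanvalues} then yields directly
\[
  \sum_{m\leq x} c^{}_{\! A_4}(m)\,\sim\,\frac{h(\alpha)}{\alpha\cdot 0!}\,x^{\alpha}
  \, =\, \frac{\rho^{}_{\! A_4}}{3}\,x^{3},
\]
which is the claimed growth rate; the numerical value $0.341898$ follows by substituting $\rho^{}_{\! A_4}\approx 1.025695$. The main subtlety — and the one point requiring genuine care rather than routine bookkeeping — is the verification that the half-plane $\{\Real(s)>\frac{3}{2}\}$ really is a region of holomorphy for $\psi^{}_{\! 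A_4}(s)$ up to the boundary behaviour demanded by Delange's theorem, i.e. that no zero of a denominator factor of $\Psi^{\mathsf{rot}}_{\! A_4}$ on the line $\{\Real(s)=3\}$ is uncancelled and that $\psi^{}_{\! A_4}$ contributes no pole there. This is exactly the convergence-of-Euler-product argument already sketched before Eq.~\eqref{csl-rhoA4}, so I would simply cite that discussion; everything else is a mechanical application of the machinery used repeatedly above.
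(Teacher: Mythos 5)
Your proposal is correct and follows essentially the same route as the paper: the text immediately preceding the corollary defines $\psi^{}_{\! A_4}(s)=\Psi^{}_{\! A_4}(s)/\Psi^{\mathsf{rot}}_{\! A_4}(s)$, uses the convergence of its Euler product in $\{\Real(s)>\frac{3}{2}\}$ to conclude that $\Psi^{}_{\! A_4}(s)$ is meromorphic there with rightmost simple pole at $s=3$ and residue $\rho^{}_{\! A_4}=\psi^{}_{\! A_4}(3)\,\rho^{\mathsf{rot}}_{\! A_4}$, and then applies Theorem~\ref{csl-thm:meanvalues} exactly as you do (with $\alpha=3$, $n=0$). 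Your additional remark about checking holomorphy on the line $\{\Real(s)=3\}$ away from $s=3$ is a hypothesis the paper leaves implicit, so making it explicit is a small improvement rather than a deviation.
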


Comparing the growth rate of the number of CSLs with that of the
coincidence rotations, we see that the former is roughly 20\% lower
than the latter. As we shall see shortly, this difference is much
bigger than in the case of the icosian ring. Yet, it is still more an
exception than a rule that two coincidence rotations that are not
symmetry related\index{symmetry~related} generate the same
CSL.

\subsection{Coincidences of $\II$}
\label{csl-sec:a4-I}

Here, we want to consider the CSMs of the icosian ring $\II$ itself,
which is an interesting example of an embedded module in $4$-space.
It is also a $\ZZ[\tau]$-lattice in $\RR^4$ in the sense of
Definition~\ref{csl-def:S-lat}.

The methods to find the CSMs are basically a combination of the tools
we used in Sections~\ref{csl-sec:cub4} and~\ref{csl-sec:a4}, as we
deal with admissible pairs of quaternions in $\II$ here.  Thus, we
will keep the presentation concise and refer to~\cite{csl-habil} for
details.

As $\II$ is a $\ZZ(\tau)$-lattice, we have
$\scal_\II(\one)=\QQ(\tau)^{\times}$ and $\Scal_\II(\one)=\ZZ(\tau)$.
Correspondingly, we call a pair $(p,q)\in\II\!\times\!\II$
\emph{primitive admissible} if $p,q$ are primitive and $|pq|^2$ is a
square in $\ZZ[\tau]$.  It follows that the coincidence
rotations\index{rotation!coincidence} are precisely those rotations
$R(q,p)x=q x p/|pq|$ that can be parametrised by a primitive
admissible pair; compare~\cite{csl-habil}.

As before, it makes sense to define   
\begin{equation} 
  \alpha^{}_{q}\,:=\,\sqrt{\frac{|p|^2}{\gcd(|q|^2, |p|^2)}} 
   \quad\text{and}\quad
  \alpha^{}_{p}\,:=\,\sqrt{\frac{|q|^2}{\gcd(|q|^2, |p|^2)}}
\end{equation}
for any primitive admissible pair $(q,p)$, where $\alpha^{}_q$ and
$\alpha^{}_{p}$ are again defined up to a unit (now in $\ZZ[\tau]$).
Correspondingly, we call
$(q^{}_{\alpha},p^{}_{\alpha})=(\alpha^{}_q \ts q,\alpha^{}_{p} \ts p)$
the extension of the primitive admissible pair $(q,p)$.  This implies
\begin{equation}\label{csl-dim4a4-eq:qapa}
  |q^{}_{\alpha}|^2\, =\, |p^{}_{\alpha}|^2
  \, = \, |q^{}_{\alpha} p^{}_{\alpha}|  \; \quad
  \text{(up to a unit).}
\end{equation}

The CSMs of $\II$ can now completely be characterised as follows,
which is the analogue of Theorem~\ref{csl-theo:csl-cub4} for the
Hurwitz ring $\JJ$.

\begin{theorem}[{\cite[Thm.~5.4.2 and ~5.4.4]{csl-habil}}]\label{csl-theo:csm-I}
  Let\/ $(q^{}_{\alpha},p^{}_{\alpha})$ be the extension of the
  primitive admissible pair\/ $(q,p)$.  Then, one has
  \[
  \II\cap \frac{q \II p}{|qp|}\,=\,
  q^{}_{\alpha}\II + \II\ts p^{}_{\alpha}\ts .
  \]
  The index\index{index} of this CSM in\/ $\II$ is given by
  \[
  \qquad\quad\quad\qquad\quad
  \Sig^{}_\II\bigl(R(q,p)\bigr)\, =\, \nr\bigl(\lcm(|q|^2,|p|^2)\bigr)
    \, =\, \nr\bigl(|q^{}_{\alpha}|^2\bigr)
    \, =\, \nr\bigl(|p^{}_{\alpha}|^2\bigr).  
    \qquad\; \qquad \quad\;  \qed
  \]
\end{theorem}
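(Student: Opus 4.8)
The plan is to follow the template established for the Hurwitz ring $\JJ$ in Section~\ref{csl-sec:cub4}, exploiting the close parallel between $\JJ$ and $\II$ as maximal orders in their respective quaternion algebras. The target equation $\II\cap \frac{q\II p}{|qp|}=q^{}_{\alpha}\II + \II\ts p^{}_{\alpha}$ is the analogue of Theorem~\ref{csl-theo:csl-cub4}, so I would transplant the two-sided sandwiching argument that proved that result. First I would establish the easy inclusion $q^{}_{\alpha}\II + \II\ts p^{}_{\alpha}\subseteq \II\cap \frac{q^{}_{\alpha}\II\ts p^{}_{\alpha}}{|q^{}_{\alpha}p^{}_{\alpha}|}$: since $q^{}_{\alpha}\II\subseteq\II$ and $\II\ts p^{}_{\alpha}\subseteq\II$ trivially, and since $|q^{}_{\alpha}|^2=|p^{}_{\alpha}|^2$ up to a unit by Eq.~\eqref{csl-dim4a4-eq:qapa}, one computes $q^{}_{\alpha}\II=\frac{q^{}_{\alpha}\II\ts p^{}_{\alpha}\ts\overline{p^{}_{\alpha}}}{|p^{}_{\alpha}|^2}\subseteq\frac{q^{}_{\alpha}\II\ts p^{}_{\alpha}}{|q^{}_{\alpha}p^{}_{\alpha}|}$, exactly as in the $\JJ$-case, with the symmetric argument for $\II\ts p^{}_{\alpha}$.

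For the reverse inclusion I would reproduce the ``factor of $2$'' lemma and then remove the $2$ using parity (here replaced by coprimality). The primitivity of the extended pair guarantees, via the analogue of $\ip{x}{v}+\ip{y}{w}=1$, that there exist $v,w\in\II$ with $\ip{q^{}_{\alpha}}{v}+\ip{p^{}_{\alpha}}{w}=1$. Expanding $2x=2\bigl(\ip{q^{}_{\alpha}}{v}+\ip{p^{}_{\alpha}}{w}\bigr)x$ for $x\in\II\cap\frac{q^{}_{\alpha}\II\ts p^{}_{\alpha}}{|q^{}_{\alpha}p^{}_{\alpha}|}$ using the identity $\ip{a}{b}=\tfrac{1}{2}(a\bar b+b\bar a)$ places $2x$ inside $q^{}_{\alpha}\II+\II\ts p^{}_{\alpha}$. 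Trivially $|q^{}_{\alpha}|^2 x\in q^{}_{\alpha}\II+\II\ts p^{}_{\alpha}$ as well. The key point is that $2$ and $|q^{}_{\alpha}|^2$ must be coprime in $\ZZ[\tau]$: after restricting to odd pairs (which we may, since $2$ is handled by the $\II$-analogue of the $r=1+\ii$ symmetry reduction), a $\ZZ[\tau]$-linear combination $a\cdot 2+b\cdot|q^{}_{\alpha}|^2=1$ yields $x=a(2x)+b(|q^{}_{\alpha}|^2x)\in q^{}_{\alpha}\II+\II\ts p^{}_{\alpha}$, completing the module identity.

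For the index formula I would use the chain $q^{}_{\alpha}\II\subseteq q^{}_{\alpha}\II+\II\ts p^{}_{\alpha}=\II\cap\frac{q^{}_{\alpha}\II\ts p^{}_{\alpha}}{|q^{}_{\alpha}p^{}_{\alpha}|}\subseteq\II$, mirroring the proof sketch of Theorem~\ref{csl-theo:index-D4}. One shows $\Sig^{}_{\II}(R(\bar q_\alpha,p_\alpha))\ts\Sig^{}_{\II}(R(q_\alpha,p_\alpha))=\nr(|q^{}_{\alpha}|^4)$ by comparing the index $[\II:q^{}_{\alpha}\II]=\nr(|q^{}_{\alpha}|^4)$ (the icosian analogue of $[\JJ:q\JJ]=|q|^4$) against the two-sided sandwich, and separately that $\Sig^{}_{\II}(R(q,p))$ divides $\nr(|q^{}_{\alpha}|^2)$; together these force $\Sig^{}_{\II}(R(q,p))=\nr(|q^{}_{\alpha}|^2)$. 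The final equalities with $\nr(\lcm(|q|^2,|p|^2))$ and $\nr(|p^{}_{\alpha}|^2)$ follow from Eq.~\eqref{csl-dim4a4-eq:qapa} and the definitions of $\alpha^{}_q,\alpha^{}_p$. \emph{The main obstacle} is verifying that all the integer-arithmetic steps of the $\JJ$-argument survive the passage from $\ZZ$ to $\ZZ[\tau]$: the norm on $\II$ lands in $\ZZ[\tau]$ rather than $\ZZ$, so ``odd/even'' parity must be replaced throughout by coprimality-to-$2$ in $\ZZ[\tau]$, and one must confirm that the prime $2$ is still (up to units) the unique obstruction to the two-sided ideal reduction and that the counting of indices via $\nr(\cdot)$ interacts correctly with the ramification of $5=\varepsilon(1-\xi_5)^4$; checking that the extended pair's primitivity genuinely delivers the Bézout relation $\ip{q^{}_{\alpha}}{v}+\ip{p^{}_{\alpha}}{w}=1$ in the $\ZZ[\tau]$-setting is where I would expect the technical work to concentrate, which is precisely why the paper defers these details to~\cite{csl-habil}.
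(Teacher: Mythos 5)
Your overall template is the one the paper itself points to (it defers all details to \cite{csl-habil}, describing the method as a combination of the tools of Sections~\ref{csl-sec:cub4} and~\ref{csl-sec:a4}), and the easy inclusion and the index bookkeeping do transplant essentially as you describe. The genuine gap is in your treatment of the prime $2$. You remove the leftover factor $2$ by ``restricting to odd pairs'', claiming this costs no generality because of an $\II$-analogue of the $r=1+\ii$ reduction used for $\JJ$. No such reduction exists. The Hurwitz reduction works because $\HH(\QQ)$ ramifies at $2$, so that $(1+\ii)\ts\JJ$ is a non-central two-sided ideal and $R(1+\ii,1+\ii)$ is a symmetry of $\JJ$. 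By contrast, $\HH(\QQ(\sqrt{5}\,))$ is unramified at \emph{every} finite place: the prime $2$ is inert in $\ZZ[\tau]$, and the unramified quadratic extension $\QQ_2(\sqrt{5}\,)$ splits Hamilton's quaternions. Hence every two-sided ideal of $\II$ is of the form $\beta\ts\II$ with $\beta\in\ZZ[\tau]$ central, and in particular $(1+\ii)\ts\II$ is \emph{not} two-sided. Since symmetry-related pairs have norms agreeing up to squares of units of $\ZZ[\tau]$, a primitive admissible pair of even norm --- for instance $q=p=1+\ii$, which is admissible with $\Sig^{}_{\II}=\nr(2)=4$ --- can never be traded for an odd one. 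This is precisely why Theorem~\ref{csl-theo:csm-I}, unlike Theorem~\ref{csl-theo:csl-cub4}, carries no parity hypothesis; as proposed, your argument establishes the module identity only when $|q^{}_{\alpha}|^2$ is prime to $2$, which is strictly weaker than the statement.

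The repair goes in the opposite direction from the one you propose: over $\II$ the factor $2$ never needs to appear. Inner products of icosians lie in $\frac{1}{2}\ZZ[\tau]$, and for a primitive $x\in\II$ one has $\{\ip{x}{v} \mid v\in\II\}=\frac{1}{2}\ZZ[\tau]$, because $\II$ is locally isomorphic to $M_2(\ZZ[\tau]_{\mathfrak{p}})$ at every finite prime $\mathfrak{p}$ (including the one above $2$), where the trace pairing against a nonzero matrix is surjective onto the local ring. Consequently, for the extended pair one gets $\{\ip{q^{}_{\alpha}}{v}+\ip{p^{}_{\alpha}}{w} \mid v,w\in\II\} = \tfrac{1}{2}\alpha^{}_q\ZZ[\tau]+\tfrac{1}{2}\alpha^{}_p\ZZ[\tau]=\tfrac{1}{2}\ZZ[\tau]$, since $\gcd(\alpha^{}_q,\alpha^{}_p)=1$ and $q,p$ are primitive. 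Choosing $v,w\in\II$ with $\ip{q^{}_{\alpha}}{v}+\ip{p^{}_{\alpha}}{w}=\tfrac{1}{2}$ and running your own expansion on $x = 2\bigl(\ip{q^{}_{\alpha}}{v}+\ip{p^{}_{\alpha}}{w}\bigr)x$ places $x$ directly in $q^{}_{\alpha}\II+\II\ts p^{}_{\alpha}$, for every primitive admissible pair, even or odd. In short, the parity restriction in the Hurwitz case is an artefact of the ramification of $2$ in $\HH(\QQ)$, and its absence over $\QQ(\sqrt{5}\,)$ is exactly what makes the icosian theorem unconditional; the ramified prime $5$, which you flagged as the danger point, is harmless here, whereas the behaviour of $2$ is where your proof, as written, fails.
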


This allows us to calculate the number of coincidence
rotations\index{rotation!coincidence} of a given index $m$, which is
given by $7200\, c^{\mathsf{rot}}_{\II}(m)$, where the factor $7200$
is the order of $\SO(\II)$, the rotation symmetry group of $\II$, and
$c^{\mathsf{rot}}_{\II}(m)$ is a multiplicative
function\index{multiplicative~function} which is completely determined
by\index{counting~function!icosian~ring}
\[
  c^{\mathsf{rot}}_{\II}(p^r)\, =\, 
  \begin{cases}
    3 \cdot 5^{r-1}(13\cdot 5^{r-1}-1), & \text{if } p=5, r\geq 1, \\[1mm]
    h(p,r), & \text{if } p\equiv \pm 1\; (5) \text{ and } r\geq 1,\\[1mm]
    \frac{p^2+1}{p^2-1}\, p^{2r-2} ( p^{2r+2}+p^{2r-2}-2 ), 
    & \text{if } p\equiv \pm 2\; (5) \text{ and }r\geq 2 \text{ even, }\\[1mm]
    0, & \text{if } p\equiv \pm 2\; (5) \text{ and } r\geq 1 \text{ odd,}
  \end{cases}
\]
with\vspace{-2mm}
\begin{equation}
\begin{split}
 h(p,r)
   & \, =\,  2\ts p^{2r-2}(p+1)^2 -4 \ts p^{r-2}
        \frac{p^{r-1}-1}{(p-1)^3} (3 p^2 + 1)(p+1) \\[1mm]
   &\qquad  + (r-1) \frac{(p+1)^2}{(p-1)^2} p^{r-2}  \bigl(p^{r-2} (p^2 + 1)^2 
       +  4 \bigr) . 
\end{split}
\end{equation}
Thus, we can calculate the corresponding generating function.

\begin{theorem}[{\cite[Thm.~5.4.5]{csl-habil}}]\label{csl-theo:Psi-rot-I}
  Let\/ $7200\, c^{\mathsf{rot}}_{\II}(m)$ be the number of
  coincidence rotations\index{rotation!coincidence} of the icosian
  ring\/ $\II$. Then, the Dirichlet series\index{Dirichlet~series}
  generating function for\/ $c^{\mathsf{rot}}_{\II}(m)$ reads
  \[
  \begin{split}
  \Psi_{\II}^{\mathsf{rot}}(s) \,
   & = \sum_{n\in\NN} \frac{c^{\mathsf{rot}}_{\II}(n)}{n^s}
    % = \frac{\zeta_K^{}(s)\zeta_K^{}(s-1)}{\zeta_K^{}(2s)}
    %     \frac{\zeta_K^{}(s-1)\zeta_K^{}(s-2)}{\zeta_K^{}(2s-2)}
    \, = \, \zeta_{\ts\II}^{\mathsf{pr}}(s) \,\zeta_{\ts\II}^{\mathsf{pr}}(s-1)
   \\[1mm]
   & = \,\frac{(1+5^{-s})(1+5^{1-s})}{(1-5^{1-s})(1-5^{2-s})} 
    \prod_{p\equiv\pm 1(5)} \!
      \! \left(\frac{(1+p^{-s})(1+p^{1-s})}{(1-p^{1-s})(1-p^{2-s})}
          \right)^{\! 2} \! \!
    \prod_{p\equiv\pm 2(5)} \!
      \frac{(1+p^{-2s})(1+p^{2-2s})}{(1-p^{2-2s})(1-p^{4-2s})}
    \\[2mm]
   &=\, \textstyle{1+\frac{25}{4^s}+\frac{36}{5^s}+\frac{100}{9^s}
    +\frac{288}{11^s}+\frac{440}{16^s}+\frac{400}{19^s}
    +\frac{900}{20^s}+\frac{960}{25^s}+\frac{1800}{29^s}+\frac{2048}{31^s}
    +\frac{2500}{36^s}
    +\cdots }
  \end{split}
  \]
  with\/ $\zeta_{\ts\II}^{\mathsf{pr}}(s)$
  as\index{zeta~function!icosian~ring} given in
  Eq.~\eqref{csl-eq:zetaIpr}.  In particular, the possible coincidence
  indices are exactly those numbers that can be represented by the
  quadratic form\/ $k^2+k\ell-\ell^2=\nr(k+\ell\tau)$.  \qed
\end{theorem}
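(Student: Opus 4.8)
The plan is to mirror exactly the strategy that was carried out for the Hurwitz ring $\JJ$ in Section~\ref{csl-sec:cub4-d4}, transplanting every step to the icosian setting where $\ZZ$ is replaced by $\ZZ[\tau]$ and the absolute value $|q|^2$ by the algebraic norm $\nr(|q|^2)$. The backbone is already supplied by Theorem~\ref{csl-theo:csm-I}, which gives the explicit CSM $\II\cap\frac{q\II p}{|qp|}=q^{}_{\alpha}\II+\II\ts p^{}_{\alpha}$ together with $\Sig^{}_\II(R(q,p))=\nr(\lcm(|q|^2,|p|^2))$. So the analytic content of the theorem reduces to two tasks: first, establish the counting function $c^{\mathsf{rot}}_{\II}(p^r)$ displayed before the statement; second, package it into the Dirichlet series and recognise the closed form $\zeta_{\ts\II}^{\mathsf{pr}}(s)\,\zeta_{\ts\II}^{\mathsf{pr}}(s-1)$.

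First I would prove multiplicativity of $c^{\mathsf{rot}}_{\II}$ by the same prime-factorisation argument used for $\JJ$: since $\II$ is a maximal order with essentially unique factorisation, an extended admissible pair with index $\nr$-value $mn$ ($m,n$ coprime) factors uniquely (up to units) into pairs of index-values $m$ and $n$, and conversely products of such pairs recombine. This lets me restrict to prime powers. The combinatorial heart is then to count, for each rational prime $p$ and exponent $r$, the extended admissible pairs $(q^{}_{\alpha},p^{}_{\alpha})$ with $\nr(|q^{}_{\alpha}|^2)=p^{?}$, splitting according to the splitting behaviour of $p$ in $K=\QQ(\mbox{\small $\sqrt{5}$}\,)$. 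The three cases $p=5$ (ramified), $p\equiv\pm1\,(5)$ (split), and $p\equiv\pm2\,(5)$ (inert) correspond precisely to the three branches of the displayed formula, and in each case the number of primitive quaternions of a given norm in $\II$ plays the role that $24\,f(\pi^r)$ played for $\JJ$. For the split case one convolves two copies of the ideal-counting data, which is where the function $h(p,r)$ and its somewhat involved closed form emerge; for the inert case the norm forces $r$ even and yields the $p^2$-rescaled expression, vanishing for $r$ odd exactly as stated.

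Once $c^{\mathsf{rot}}_{\II}(p^r)$ is in hand, assembling the Euler product is routine bookkeeping: each split prime contributes a squared factor, reflecting the two independent degrees of freedom in $(q,p)$, the ramified prime $5$ contributes a single factor, and the inert primes contribute the $2s$-rescaled factor. The decisive recognition step is that these Euler factors are precisely those of $\zeta_{\ts\II}^{\mathsf{pr}}(s)\,\zeta_{\ts\II}^{\mathsf{pr}}(s-1)$, which I would verify by comparing with the factorisation of $\zeta_{\ts\II}^{\mathsf{pr}}$ obtained from Eqs.~\eqref{csl-eq:zetaI} and~\eqref{csl-eq:zetaIpr} via $\zeta_K^{}=\zeta\cdot L(\cdot,\chi_5^{})$ and the shift $s\mapsto s-1$. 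The final clause about the coincidence spectrum follows from Theorem~\ref{csl-theo:csm-I}: the possible indices are exactly the values $\nr(|q^{}_{\alpha}|^2)$, and since $|q^{}_{\alpha}|^2$ ranges over all totally positive elements of $\ZZ[\tau]$ realised as quaternion norms (again via a Lagrange-type four-square representation over $\ZZ[\tau]$, available because $\II$ is a maximal order), these are exactly the integers represented by the norm form $\nr(k+\ell\tau)=k^2+k\ell-\ell^2$.

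The main obstacle I anticipate is the split-prime case $p\equiv\pm1\,(5)$: there the extended pair involves two quaternions whose norms are split primes in $\ZZ[\tau]$, and counting the valid combinations $(q^{}_{\alpha},p^{}_{\alpha})$ subject to the primitivity and square-norm constraints requires carefully tracking the two conjugate prime ideals above $p$ and summing a convolution over the permissible exponent pairs $(r',r'')$ with $r=\max(r',r'')$ and $r'-r''$ even. This is the computation that produces $h(p,r)$, and getting its closed form right$\,$---$\,$including the delicate $(r-1)$-weighted term$\,$---$\,$is the part most prone to arithmetic error; I would cross-check the resulting generating function against the first few explicit Dirichlet coefficients listed in the statement to catch any slip.
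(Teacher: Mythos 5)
Your proposal is correct and takes essentially the same route as the paper (which itself defers the details to the cited habilitation thesis): coincidence rotations parametrised by admissible pairs of icosians, Theorem~\ref{csl-theo:csm-I} supplying the CSM structure and the index formula, multiplicativity from essentially unique factorisation in the maximal order $\II$, prime-power counting split according to the behaviour of $p$ in $\ZZ[\tau]$ (ramified/split/inert), Euler-product assembly, and the spectrum statement from representability by the norm form. One warning for your recognition step: with $\zeta_{\ts\II}^{\mathsf{pr}}$ as literally defined in Eq.~\eqref{csl-eq:zetaIpr} (which counts ideals by their $\ZZ\ts$-module index $\nr(|q|^2)^2$, while the coincidence index is $\nr(|q|^2)$), the split-prime Euler factor of $\zeta_{\ts\II}^{\mathsf{pr}}(s)\,\zeta_{\ts\II}^{\mathsf{pr}}(s-1)$ involves $p^{-2s}$ rather than $p^{-s}$, so the identity you actually need to verify is $\Psi_{\II}^{\mathsf{rot}}(s)=\zeta_{\ts\II}^{\mathsf{pr}}(s/2)\,\zeta_{\ts\II}^{\mathsf{pr}}\bigl((s-1)/2\bigr)$ --- your planned cross-check against the displayed Dirichlet coefficients will surface exactly this normalisation, which is consistent with the explicit $s/2$ appearing in the analogous formula of Theorem~\ref{csl-theo:CSL-cub}.
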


$\Psi_{\II}^{\mathsf{rot}}(s)$ is a meromorphic function in the entire
complex plane, whose rightmost pole is a simple pole at $s=3$ with
residue (see~\cite[Eq.~(5.61)]{csl-habil})
\begin{equation}\label{csl-eq:rhoI}
\begin{split}
    \rho_\II^{\mathsf{rot}} \, & :=\, 
    \Res_{s=3}\ts\bigl(\Psi_{\II}^{\mathsf{rot}}(s)\bigr)
      \, = \, \frac{\zeta_K^{}(2)^2\zeta_K^{}(3)}{
        \zeta_K^{}(4)\ts\zeta_K^{}(6)}L(1,\chi_5) \\
     & \hphantom{:}= \,   \frac{3^5\, 5^7\, 7 \sqrt{5}}{268 \ts \pi^{12}}\log(\tau) 
             \ts \zeta_K^{}(3)
        \, \approx\,  0.593{\ts}177. %0.5931772605.
\end{split}
\end{equation}

Using Theorem~\ref{csl-thm:meanvalues}, we get the following
asymptotic behaviour.

\begin{corollary}[{\cite[Cor.~5.4.6]{csl-habil}}]
  The asymptotic behaviour\index{asymptotic~behaviour} of the
  summatory function of\/ $c^{\mathsf{rot}}_{\II}(m)$, as\/ $x \to \infty$, is
  \[
    \sum_{m\leq x} c^{\mathsf{rot}}_{\II}(m) \,\sim\, 
     \rho_\II^{\mathsf{rot}} \myfrac{x^3}{3} 
      \,\approx\, 0.197{\ts}726\, x^3, %0.1977257535\, x^3,
 \]
 with\/ $\rho_\II^{\mathsf{rot}}$ as given in
 Eq.~\eqref{csl-eq:rhoI}.  \qed
\end{corollary}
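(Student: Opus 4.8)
The plan is to apply the mean-value theorem (Theorem~\ref{csl-thm:meanvalues}, i.e. Delange's theorem) directly to the Dirichlet series $\Psi_{\II}^{\mathsf{rot}}(s)$, whose explicit form in terms of Dedekind zeta functions is already recorded in Theorem~\ref{csl-theo:Psi-rot-I}. The arithmetic function $c^{\mathsf{rot}}_{\II}(m)$ is real and non-negative, so its generating series has non-negative coefficients, and Theorem~\ref{csl-thm:meanvalues} is applicable once we verify the required analytic hypotheses. Since the statement only asks for the first-order asymptotics, the parameter $n$ in Theorem~\ref{csl-thm:meanvalues} will be $n=0$ (a simple pole), and the abscissa $\alpha$ will be $3$.

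First I would establish the analytic structure of $\Psi_{\II}^{\mathsf{rot}}(s)$. From Theorem~\ref{csl-theo:Psi-rot-I} we have $\Psi_{\II}^{\mathsf{rot}}(s)=\zeta_{\ts\II}^{\mathsf{pr}}(s)\,\zeta_{\ts\II}^{\mathsf{pr}}(s-1)$, and by Eq.~\eqref{csl-eq:zetaIpr} each factor is built from $\zeta_K^{}(2s)$, $\zeta_K^{}(2s-1)$ and $\zeta_K^{}(4s)$ with $K=\QQ(\tau)$. The factor $\zeta_{\ts\II}^{\mathsf{pr}}(s-1)$ contains $\zeta_K^{}(2s-3)$, whose pole (coming from the pole of $\zeta_K$ at argument $1$) sits at $s=2$, while the pole of $\zeta_K^{}(2s-1)$ inside $\zeta_{\ts\II}^{\mathsf{pr}}(s)$ produces a pole at $s=1$; meanwhile $\zeta_K^{}(2(s-1)-1)=\zeta_K^{}(2s-3)$ again gives $s=2$. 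The genuinely rightmost pole comes from $\zeta_K^{}(2(s-1)-1)$? No---one must track carefully: $\zeta_{\ts\II}^{\mathsf{pr}}(s-1)$ has its rightmost pole wherever $\zeta_K^{}(2(s-1)-1)=\zeta_K^{}(2s-3)$ has argument $1$, namely $s=2$. The other factor $\zeta_{\ts\II}^{\mathsf{pr}}(s)$ has rightmost pole at $s=1$. The product therefore has poles up to $s=3$ only through the combined growth, which is precisely why the Euler-product form in Theorem~\ref{csl-theo:Psi-rot-I} is decisive: reading off the local factors shows the rightmost singularity is a simple pole at $s=3$. I would confirm this via the Euler product and verify holomorphy on the rest of the line $\{\Real(s)=3\}$, so the single-pole hypothesis of Theorem~\ref{csl-thm:meanvalues} holds.

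Next I would compute the residue $\rho_\II^{\mathsf{rot}}=\Res_{s=3}\bigl(\Psi_{\II}^{\mathsf{rot}}(s)\bigr)$ displayed in Eq.~\eqref{csl-eq:rhoI}, using the known residue of $\zeta_K$ at its pole together with the special values of $\zeta_K^{}$ and $L(1,\chi_5)$ listed in the $A_4$ discussion. Writing $\Psi_{\II}^{\mathsf{rot}}(s)=h(s)/(s-3)$ near $s=3$, Theorem~\ref{csl-thm:meanvalues} with $\alpha=3$ and $n=0$ yields
\[
  \sum_{m\leq x} c^{\mathsf{rot}}_{\II}(m)\;\sim\;\frac{h(3)}{3}\,x^3
  \;=\;\frac{\rho_\II^{\mathsf{rot}}}{3}\,x^3,
\]
since $h(3)=\rho_\II^{\mathsf{rot}}$. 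Substituting the numerical value $\rho_\II^{\mathsf{rot}}\approx 0.593177$ gives the claimed constant $\frac{1}{3}\rho_\II^{\mathsf{rot}}\approx 0.197726$.

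The main obstacle is the careful bookkeeping of the pole structure: because $\Psi_{\II}^{\mathsf{rot}}(s)$ is a product of two shifted copies of $\zeta_{\ts\II}^{\mathsf{pr}}$, one must make sure that no coincidental higher-order pole or additional pole on the critical line $\{\Real(s)=3\}$ arises from the overlap of the various $\zeta_K$-factors, and that the factor $\zeta_K^{}(4s)$ (and its shift $\zeta_K^{}(4s-4)$) in the denominators does not vanish there and spoil holomorphy. The Euler-product representation makes this transparent, but verifying that $h(s)$ is genuinely holomorphic and non-vanishing at $s=3$---so that the pole is simple and the residue is exactly $\rho_\II^{\mathsf{rot}}$---is the step that requires genuine attention. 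Everything else is a direct invocation of Theorem~\ref{csl-thm:meanvalues}, entirely parallel to the corresponding corollaries for $D_4^{*}$ and $A_4$ proved earlier.
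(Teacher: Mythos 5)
Your overall strategy coincides with the paper's: apply Theorem~\ref{csl-thm:meanvalues} to $\Psi_{\II}^{\mathsf{rot}}(s)$, establish a simple rightmost pole at $s=3$, identify its residue with $\rho_\II^{\mathsf{rot}}$ from Eq.~\eqref{csl-eq:rhoI}, and read off the asymptotics with $\alpha=3$, $n=0$. The final step and the numerical value are correct. However, the step you yourself single out as the crux$\,$---$\,$the pole bookkeeping$\,$---$\,$contains a genuine error. If $\zeta_{\ts\II}^{\mathsf{pr}}$ is read literally as in Eq.~\eqref{csl-eq:zetaIpr}, then $\zeta_{\ts\II}^{\mathsf{pr}}(s)$ has its rightmost pole at $s=1$ and $\zeta_{\ts\II}^{\mathsf{pr}}(s-1)$ at $s=2$, so their product would be \emph{holomorphic} at $s=3$; a pole cannot appear there ``through the combined growth'', since a product of two functions that are each holomorphic in a neighbourhood of $s=3$ is holomorphic at $s=3$. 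As written, your argument establishes nothing about a pole at $s=3$, and the fallback of ``reading off the local factors'' of the Euler product is an assertion rather than a verification: each individual Euler factor is finite at $s=3$, and the pole comes only from the divergence of the infinite product, which is exactly what has to be proved.

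The observation that repairs this is a normalisation issue. The factorisation in Theorem~\ref{csl-theo:Psi-rot-I} uses the reduced-norm normalisation of the zeta function, that is, $\zeta_{\ts\II}^{\mathsf{pr}}$ there stands for $\zeta_K^{}(s)\ts\zeta_K^{}(s-1)/\zeta_K^{}(2s)$, which is Eq.~\eqref{csl-eq:zetaIpr} evaluated at $s/2$; this halving of the argument reflects the fact that the coincidence index $\nr\bigl(\lcm(|q|^2,|p|^2)\bigr)$ is the square root of the $\ZZ\ts$-module index of the associated ideal. One confirms this by matching Euler factors: for a split prime $p$, the factor of $\zeta_{\ts\II}^{\mathsf{pr}}(s/2)$ is $(1+p^{-s})^2/(1-p^{1-s})^2$ and that of $\zeta_{\ts\II}^{\mathsf{pr}}\bigl(\frac{s-1}{2}\bigr)$ is $(1+p^{1-s})^2/(1-p^{2-s})^2$, reproducing the product displayed in Theorem~\ref{csl-theo:Psi-rot-I}, and similarly for $p=5$ and the inert primes. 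With this reading one has
\[
\Psi_{\II}^{\mathsf{rot}}(s) \, = \,
\frac{\zeta_K^{}(s)\,\zeta_K^{}(s-1)^2\,\zeta_K^{}(s-2)}
{\zeta_K^{}(2s)\,\zeta_K^{}(2s-2)}\ts ,
\]
and the analytic verification becomes clean: on the line $\{\Real(s)=3\}$ the numerator factors $\zeta_K^{}(s)$ and $\zeta_K^{}(s-1)$ are holomorphic, the denominators $\zeta_K^{}(2s)$ and $\zeta_K^{}(2s-2)$ are non-vanishing (their Euler products converge absolutely since $\Real(2s)=6$ and $\Real(2s-2)=4$), and the only singularity is the simple pole of $\zeta_K^{}(s-2)$ at $s=3$, with
\[
\Res_{s=3}\bigl(\Psi_{\II}^{\mathsf{rot}}(s)\bigr)
\, = \, \frac{\zeta_K^{}(3)\,\zeta_K^{}(2)^2}{\zeta_K^{}(6)\,\zeta_K^{}(4)}
\ts L(1,\chi_5^{})
\, = \, \rho_\II^{\mathsf{rot}}\ts ,
\]
in agreement with Eq.~\eqref{csl-eq:rhoI}, because $\Res_{u=1}\zeta_K^{}(u)=L(1,\chi_5^{})$. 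After this correction, your invocation of Theorem~\ref{csl-thm:meanvalues} yields $\sum_{m\leq x}c^{\mathsf{rot}}_{\II}(m)\sim\frac{1}{3}\ts\rho_\II^{\mathsf{rot}}\ts x^3\approx 0.197{\ts}726\ts x^3$, exactly as in the paper.
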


In order to enumerate the CSMs themselves, we need a criterion that
tells us which rotations generate the same CSM. This is given by the
following result, which is the analogue of
Theorem~\ref{csl-theo:4dim-equalcslJ} for $\JJ$, and of
Theorem~\ref{dim4a4-theo:equalL} for the lattice $A_4$.

\begin{theorem}[{\cite[Thm.~5.4.13]{csl-habil}}]
  \label{csl-4dim-equalI}
  Let\/ $(q^{}_1,p^{}_1)$ and\/ $(q^{}_2,p^{}_2)$ be two primitive
  admissible pairs. Then, the identity
  \[
  \II\cap \frac{q^{}_1 \II\ts p^{}_1}{|q^{}_1p^{}_1|}\, =\, 
  \II\cap \frac{q^{}_2 \II\ts p^{}_2}{|q^{}_2p^{}_2|}
  \]
  holds if and only if the following conditions are satisfied
  $($up to units$\ts )$.
  \begin{enumerate}\itemsep=2pt
  \item $|q^{}_1p^{}_1|=|q^{}_2p^{}_2|$, 
  \item $\lcm(|q^{}_1|^2,|p^{}_1|^2)=\lcm(|q^{}_2|^2,|p^{}_2|^2)$, 
  \item $\gcld(q^{}_1,|p^{}_1q^{}_1|)=\gcld(q^{}_2,|p^{}_2q^{}_2|)$, and
  \item $\gcrd(p^{}_1,|p^{}_1q^{}_1| =\gcrd(p^{}_2,|p^{}_2q^{}_2|)$.    \qed
  \end{enumerate}
\end{theorem}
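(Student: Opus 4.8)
The plan is to mirror the proof strategy already established for the Hurwitz ring $\JJ$ in Theorem~\ref{csl-theo:4dim-equalcslJ}, adapting each step to the icosian setting. The key structural input is the explicit description of the CSM from Theorem~\ref{csl-theo:csm-I}, namely $\II\cap \frac{q\II p}{|qp|}=q^{}_{\alpha}\II+\II\ts p^{}_{\alpha}$, which reduces the equality of two CSMs to the equality of two sums of one-sided ideals in $\II$. Since $\II$ is a maximal order in $\HH(\QQ(\sqrt{5}\,))$ and shares the principal-ideal property and essentially unique prime factorisation of $\JJ$, the same ideal-theoretic bookkeeping should apply, with the rational norm $|\cdot|^2$ replaced throughout by the $\ZZ[\tau]$-valued norm and divisibility statements read in $\ZZ[\tau]$.

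\textbf{Sufficiency.} First I would prove that the four conditions are sufficient. Starting from $q^{}_1,p^{}_1$ and $q^{}_2,p^{}_2$ satisfying (1)--(4), I would use the $\gcld$/$\gcrd$ conditions (3) and (4) to write $q^{}_2=q^{}_1 u$ and $p^{}_2=v\ts p^{}_1$ for suitable quaternions $u,v\in\II$ whose norms are controlled by conditions (1) and (2). The hope is that (1) and (2) force $u$ and $v$ to be units of $\II$ (up to the ubiquitous $\ZZ[\tau]$-unit ambiguity), from which $q^{}_2\II=q^{}_1\II$ and $\II\ts p^{}_2=\II\ts p^{}_1$ follow, and hence the two sums $q^{}_{\alpha}\II+\II\ts p^{}_{\alpha}$ coincide. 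The extension factors $\alpha^{}_q,\alpha^{}_p$ must be tracked here, but Eq.~\eqref{csl-dim4a4-eq:qapa} guarantees $|q^{}_{\alpha}|^2=|p^{}_{\alpha}|^2=|q^{}_{\alpha}p^{}_{\alpha}|$ up to units, so the bookkeeping reduces to the primitive data.

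\textbf{Necessity.} The converse is the substantive direction and the main obstacle. Assuming $\II\cap \frac{q^{}_1 \II p^{}_1}{|q^{}_1p^{}_1|}=\II\cap \frac{q^{}_2 \II p^{}_2}{|q^{}_2p^{}_2|}$, conditions (1) and (2) on the denominator and coincidence index follow immediately from Lemma~\ref{csl-lem:equal-csl} together with Theorem~\ref{csl-theo:csm-I}, since equal CSMs share both $\den$ and $\Sig$. The delicate point, exactly as noted in Remark~\ref{csl-rem:4dim-equalcslJ} for the $\JJ$ case, is that (1) and (2) alone only yield equality of $\gcd(|q^{}_1|^2,|p^{}_1|^2)$ and $\gcd(|q^{}_2|^2,|p^{}_2|^2)$, and one genuinely needs both $\gcld$ conditions to recover $|q^{}_1|^2=|q^{}_2|^2$ and $|p^{}_1|^2=|p^{}_2|^2$ separately. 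To extract (3) and (4), I would compute $\gcld$ and $\gcrd$ of the generators directly from the sum $q^{}_{\alpha}\II+\II\ts p^{}_{\alpha}$, using that in a maximal order the left divisor $\gcld(q^{}_{\alpha},\text{(denominator)})$ is intrinsic to the module $q^{}_{\alpha}\II+\II\ts p^{}_{\alpha}$ and can be read off as a suitable invariant of the CSM. The main work, and the place where the icosian case differs from $\JJ$, is the prime $5=\varepsilon(1-\xi_5)^4$: at the ramified prime the factor $c=\sqrt{5}$ intervened in Theorem~\ref{dim4a4-theo:equalL}, and I would expect a parallel subtlety here, so the ramified case at $p=5$ (and the inert primes $p\equiv\pm2\ (5)$, where odd norm-exponents are excluded) must be handled separately from the split primes $p\equiv\pm1\ (5)$. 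Since the detailed divisor computations are technical and entirely analogous to those carried out for $\JJ$, I would refer to~\cite{csl-habil} for the full verification at each prime.
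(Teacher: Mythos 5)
Your necessity sketch is broadly on track, but the sufficiency direction rests on a hope that is demonstrably false, and this is a genuine gap. You propose to deduce from conditions (1)--(4) that $q^{}_2\ts\II=q^{}_1\II$ and $\II\ts p^{}_2=\II\ts p^{}_1$, i.e., that the two pairs agree up to units, whence the two CSMs would trivially coincide. If that implication held, two coincidence rotations could generate the same CSM only when they are symmetry related, and the counting functions $c^{}_{\II}(m)$ and $c^{\mathsf{rot}}_{\II}(m)$ would agree. They do not: comparing Theorems~\ref{csl-theo:Psi-rot-I} and~\ref{csl-theo:Psi-I}, there are $440$ classes of coincidence rotations but only $410$ distinct CSMs at index $16$, and $960$ versus $912$ at index $25$. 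The same phenomenon already occurs for the Hurwitz ring, where in the CSL count for $D_4^{*}$ a factor $f(\pi^{r})$ (counting quaternions up to units) is replaced by $f(\pi^{r-s})$ (counting distinct values of $\gcld(p,|pq|)$); conditions (1)--(4) are designed exactly to capture this \emph{coarser} equivalence, and condition (3) is strictly weaker than $q^{}_1\II=q^{}_2\II$. Your reduction also fails at the first step: from $\gcld(q^{}_1,|p^{}_1q^{}_1|)=\gcld(q^{}_2,|p^{}_2q^{}_2|)=:g$ you may only write $q^{}_i=g\ts h^{}_i$ with $h^{}_i\in\II$; nothing makes $q^{}_2$ a right multiple of $q^{}_1$. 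A correct sufficiency argument must show that the module $q^{}_{\alpha}\II+\II\ts p^{}_{\alpha}$ itself is already determined by the data in (1)--(4), for instance by reconstructing it prime by prime in $\ZZ[\tau]$ from $\gcld(q,|pq|)$, $\gcrd(p,|pq|)$, the lcm and the denominator, rather than by forcing the generators to coincide.

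Two further points. In the necessity direction, condition (2) does not follow ``immediately'' from equality of coincidence indices: by Theorem~\ref{csl-theo:csm-I} the index is the rational norm $\nr\bigl(\lcm(|q|^2,|p|^2)\bigr)$, and in $\ZZ[\tau]$ elements of equal norm need not be associates (for a split prime $p\equiv\pm1\ (5)$ with $p=\pi\ts\pi'$, the elements $\pi^{2}\pi'$ and $\pi\ts(\pi')^{2}$ have the same norm but are not associated); you need a genuinely $\ZZ[\tau]$-valued invariant of the CSM, e.g.\ the $\ZZ[\tau]$-denominator obtained by running the argument of Lemma~\ref{csl-lem:equal-csl} over $\ZZ[\tau]$, or the elementary-divisor ideals of the CSM as a $\ZZ[\tau]$-module, to pin down the lcm up to units. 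Also, the $c=\sqrt{5}$ subtlety you anticipate at the ramified prime is specific to the $A_4$ situation of Theorem~\ref{dim4a4-theo:equalL}, where the twist map and the sublattice $L\subset\II$ intervene; in the present theorem the four conditions are uniform in the prime, as the statement itself shows. Finally, note that the paper states this theorem without proof, citing the habilitation thesis, so there is no internal argument to compare against; the gap above stands on its own.
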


The effect of these criteria is that it is now a purely combinatorial
task to calculate $c^{}_{\II}(m)$ and the corresponding Dirichlet
series. For explicit expressions for $c^{}_{\II}(m)$,
see~\cite[Eq.~5.79]{csl-habil}.

\begin{theorem}[{\cite[Thm.~5.4.14]{csl-habil}}]\label{csl-theo:Psi-I}
  Let\/ $c^{}_{\II}(m)$ be the number of CSMs of the icosian ring\/
  $\II$ of index\/ $m$. Then, the Dirichlet
  series\index{Dirichlet~series} generating function for\/
  $c^{}_{\II}(m)$ reads\vspace{-1mm}
  \[
  \begin{split}
   \qquad  \;\Psi_{\II}(s)\,
   & = \sum_{n\in\NN} \frac{c^{}_{\II}(n)}{n^s} 
   \, = \, \myfrac{1+11\cdot 5^{-s}+7\cdot 5^{-2s}+5^{1-3s}}%
      {(1-5^{2-s})(1-5^{1-2s})}\\[1mm]
   &\qquad\qquad\qquad\;\times\! \prod_{p\equiv\pm 1(5)}
     \left(\frac{1+p^{-s}+2p^{1-s}+2p^{-2s}+p^{1-2s}+p^{1-3s}}%
     {(1-p^{2-s})(1-p^{1-2s})}\right)^{\!2}\\[2mm]
   &\qquad\qquad\qquad\;\times\! \prod_{p\equiv\pm 2(5)}
       \frac{1+p^{-2s}+2p^{2-2s}+2p^{-4s}+p^{2-4s}+p^{2-6s}}%
       {(1-p^{4-2s})(1-p^{2-4s})}\\[3mm]
   &=\,\textstyle{1+\frac{25}{4^s}+\frac{36}{5^s}+\frac{100}{9^s}
     +\frac{288}{11^s}+\frac{410}{16^s}+\frac{400}{19^s}
     +\frac{900}{20^s}+\frac{912}{25^s}+\frac{1800}{29^s}
     +\frac{2048}{31^s} +\cdots.}   \qquad \;\; \qed
  \end{split}
  \]
\end{theorem}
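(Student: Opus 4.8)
The plan is to follow exactly the template that produced the generating functions for $D_4^{*}$ (Theorem~\ref{csl-theo:dim4cub-d4csl}) and for $A_4$ (Theorem~\ref{csl-theo:PsiL-CSL}), since $\II$ is a maximal order with essentially unique factorisation over $\ZZ[\tau]$ and its CSMs are already completely described by Theorem~\ref{csl-theo:csm-I}. First I would establish that $c^{}_{\II}(m)$ is multiplicative. The cleanest route is to note that $c^{\mathsf{rot}}_{\II}(m)$ is multiplicative (this underlies Theorem~\ref{csl-theo:Psi-rot-I}) and that $\SO(\II)$ contains orientation-reversing companions, so $c^{\mathsf{iso}}_{\II}=c^{\mathsf{rot}}_{\II}$; multiplicativity of $c^{}_{\II}$ then follows from Theorem~\ref{csl-theo:csl-mult}. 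Alternatively, one argues directly: a primitive admissible pair $(q,p)$ with $\nr(|q^{}_{\alpha}|^2)=mn$ and $\gcd(m,n)=1$ factors, up to units, uniquely into pairs of coprime indices $m$ and $n$ by splitting the essentially unique factorisations of $q$ and $p$ in $\II$, and this splitting is compatible with the equivalence of Theorem~\ref{csl-4dim-equalI}.

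Once multiplicativity is in hand, the whole problem reduces to computing $c^{}_{\II}(p^{r})$ for a rational prime power. Here I would parametrise each CSM by the extension $(q^{}_{\alpha},p^{}_{\alpha})$ of a primitive admissible pair, whose index is $\nr(\lcm(|q|^2,|p|^2))$ by Theorem~\ref{csl-theo:csm-I}, and then count these pairs modulo the equivalence relation of Theorem~\ref{csl-4dim-equalI}. The four conditions there fix the $\ZZ[\tau]$-norms $|q|^2,|p|^2$ together with the one-sided divisors $\gcld(q,|pq|)$ and $\gcrd(p,|pq|)$, so the count is combinatorially identical in shape to the Hurwitz computation behind Theorems~\ref{csl-theo:4dim-equalcslJ} and~\ref{csl-theo:dim4cub-d4csl}: relative to the rotation count $c^{\mathsf{rot}}_{\II}$, exactly one factor counting \emph{all} primitive quaternions of a given norm is replaced by a factor counting the \emph{distinct} values of a one-sided gcd, which is precisely the mechanism that turns $f(\pi^{r})$ into $f(\pi^{r-s})$ in the $D_4^{*}$ sum. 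The computation splits into three cases according to the splitting type of $p$ in $K=\QQ(\sqrt 5)$, namely ramified ($p=5$), split ($p\equiv\pm 1\,(5)$) and inert ($p\equiv\pm 2\,(5)$), and these three cases produce precisely the three kinds of Euler factors appearing in the claimed product.

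With the prime-power values $c^{}_{\II}(p^{r})$ determined, the final step is to multiply the local series $\sum_{r\ge 0} c^{}_{\II}(p^{r})\,p^{-rs}$ into the stated Euler product and simplify each into the displayed rational function of $p^{-s}$. For the split primes the local factor is the square of the $D_4^{*}$ factor from Theorem~\ref{csl-theo:dim4cub-d4csl}, and for the inert primes it is that same factor with the substitution $p\mapsto p^{2}$, since the norm form $\nr$ forces the admissible indices to be even powers; this explains the doubling of exponents visible in the inert Euler factor. I would then read off the first Dirichlet coefficients $1+25\cdot 4^{-s}+36\cdot 5^{-s}+100\cdot 9^{-s}+\cdots$ as a numerical check, noting that they must agree with those of $\Psi_{\II}^{\mathsf{rot}}(s)$ wherever the index is squarefree in the relevant sense.

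The main obstacle is the combinatorial bookkeeping in applying Theorem~\ref{csl-4dim-equalI}, and it is worst at the ramified prime $p=5$. There the special factor $c=\sqrt 5$ entering the divisibility conditions (compare the analogous subtlety in Theorem~\ref{dim4a4-theo:equalL}) breaks the clean pattern of the split and inert cases and is responsible for the non-uniform numerator $1+11\cdot 5^{-s}+7\cdot 5^{-2s}+5^{1-3s}$. Carrying out the count here requires tracking how the prime $\sqrt 5$ (up to a unit) enters the one-sided gcd's, and this is exactly the place where the derivation cannot be reduced to a formal substitution from the Hurwitz case. For the split and inert primes, by contrast, the argument is a direct transcription of the $\JJ$-computation with all rational indices replaced by their $\ZZ[\tau]$-norms, and I expect it to go through with only routine modifications.
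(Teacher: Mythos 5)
Your proposal follows essentially the same route as the paper: the paper likewise obtains this theorem from the explicit CSM description in Theorem~\ref{csl-theo:csm-I} together with the equality criterion of Theorem~\ref{csl-4dim-equalI}, reducing everything to a multiplicative, prime-power combinatorial count split by the splitting type of $p$ in $\QQ(\sqrt{5}\,)$, with the ramified prime $5$ as the one genuinely delicate case — exactly as you outline. Your structural observations (split primes giving the square of the $D_4^{*}$ Euler factor of Theorem~\ref{csl-theo:dim4cub-d4csl}, inert primes giving that factor under the substitution $p\mapsto p^{2}$, and multiplicativity via Theorem~\ref{csl-theo:csl-mult}) match the paper's treatment.
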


We are not aware of a representation of $\Psi_{\II}(s)$ in terms of
zeta functions. Nevertheless, we can determine its analytic
properties.  We note that the Euler product
{\allowdisplaybreaks[4]\begin{align}
  \lefteqn{\hspace{-4em}
    \psi^{}_\II(s)\,:=\,\frac{\Psi_{\II}(s)}{\Psi_{\II}^{\mathsf{rot}}(s)}
    \,=\,
    \left( 1 - \myfrac{48\cdot 5^{-2s}}{(1+5^{-s})(1+5^{1-s})(1-5^{1-2s})} \right)}
      \nonumber \\[1mm]
      &\ \qquad\qquad\quad \times \prod_{p\equiv\pm 1(5)}
     \left( 1 - \frac{2(p^2-1)p^{-2s}}{(1+p^{-s})(1+p^{1-s})(1-p^{1-2s})} 
          \right)^{\! 2} \\[1mm]
      &\ \qquad\qquad\quad \times \prod_{p\equiv\pm 2(5)}    
      \left( 1 - \frac{2(p^4-1)p^{-4s}}{(1+p^{-2s})(1+p^{2-2s})(1-p^{2-4s})} 
         \right) \nonumber
\end{align}}
converges for $\Real(s)>\frac{3}{2}$, which implies that
$\Psi_{\II}(s)$ is meromorphic in the half-plane given by
\mbox{$\bigl\{\Real(s)>\frac{3}{2}\bigr\}$}. Moreover, the rightmost pole of
$\Psi_{\II}(s)$ is a simple pole located at $s=3$, with residue
\begin{equation}
  \rho_\II^{}\, :=\, \Res_{s=3}\ts\bigl(\Psi_{\II}^{}(s)\bigr)
    \,=\, \psi^{}_\II(3) \ts\rho_\II^{\mathsf{rot}}
    \,\approx\, 0.587{\ts}063. %0.5870626694.
\end{equation}
Here, $\psi^{}_\II(3)\approx 0.989{\ts}691{\ts}798 < 1$ %0.9896917978$
was calculated numerically.  Finally, we apply
Theorem~\ref{csl-thm:meanvalues} to obtain the asymptotic behaviour.

\begin{corollary}[{\cite[Cor.~5.4.15]{csl-habil}}]
  The asymptotic behaviour\index{asymptotic~behaviour} of the
  summatory function of\/ $c^{}_{\II}(m)$, as\/ $x\to \infty$, is
  \begin{align}
    \sum_{m\leq x} c^{}_{\II}(m) \,\sim\, \rho_\II^{} \myfrac{x^3}{3} 
      \,\approx\, 0.195{\ts}688\, x^3, %0.1956875565\, x^3,
  \end{align}
  with $\rho_\II^{}$ as given above. \qed
\end{corollary}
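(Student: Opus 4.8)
The plan is to apply Theorem~\ref{csl-thm:meanvalues} (Delange's theorem) to the Dirichlet series $\Psi^{}_{\II}(s)=\sum_{n\in\NN} c^{}_{\II}(n)\ts n^{-s}$ from Theorem~\ref{csl-theo:Psi-I}, exactly as was done in the preceding corollaries for $c^{\mathsf{rot}}_{\II}(m)$ and for the lattice $A_4$. First I would verify the hypotheses of Theorem~\ref{csl-thm:meanvalues}: the coefficients $c^{}_{\II}(m)$ are non-negative (they count CSMs), and the comparison factor $\psi^{}_{\II}(s)=\Psi_{\II}(s)/\Psi_{\II}^{\mathsf{rot}}(s)$ is given by a convergent Euler product in the half-plane $\{\Real(s)>\tfrac{3}{2}\}$, so $\Psi^{}_{\II}(s)$ inherits its singularity structure near $s=3$ from $\Psi_{\II}^{\mathsf{rot}}(s)$, which is meromorphic with a simple pole at $s=3$. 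Thus $\Psi^{}_{\II}(s)$ is meromorphic in $\{\Real(s)>\tfrac{3}{2}\}$, holomorphic on the line $\{\Real(s)=3\}$ except for a simple pole at $s=3$, and converges for $\Real(s)>3$.

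Next I would apply Theorem~\ref{csl-thm:meanvalues} with $\alpha=3$ and $n=0$, writing the singularity as $\Psi^{}_{\II}(s)=h(s)/(s-3)$ with $h$ holomorphic at $s=3$ and $h(3)=\rho^{}_\II=\Res_{s=3}\bigl(\Psi^{}_{\II}(s)\bigr)$. The theorem then yields the asymptotic
\[
  \sum_{m\leq x} c^{}_{\II}(m) \,\sim\, \frac{h(3)}{3\cdot 0!}\, x^3
  \,=\, \rho^{}_\II\,\frac{x^3}{3}\ts ,
\]
which is precisely the claimed growth rate. The residue itself is computed from the factorisation $\rho^{}_\II=\psi^{}_\II(3)\,\rho^{\mathsf{rot}}_\II$, using the value $\rho^{\mathsf{rot}}_\II$ from Eq.~\eqref{csl-eq:rhoI} and the numerically evaluated constant $\psi^{}_\II(3)\approx 0.989\ts691\ts798$, giving $\rho^{}_\II\approx 0.587\ts063$ and hence the numerical value $\rho^{}_\II/3\approx 0.195\ts688$.

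The main obstacle is not the application of Delange's theorem, which is entirely routine once the analytic input is in place, but rather establishing that the Euler product defining $\psi^{}_\II(s)$ genuinely converges for $\Real(s)>\tfrac{3}{2}$ and is non-vanishing at $s=3$. Each local factor is of the form $1-O(p^{-2s+2})$ for the split and inert primes (and similarly for $p=5$), so the product behaves like $\prod_p\bigl(1+O(p^{2-2\Real(s)})\bigr)$, which converges absolutely precisely when $2\Real(s)-2>1$, i.e.\ $\Real(s)>\tfrac{3}{2}$; I would make this estimate explicit by bounding each factor and comparing with $\sum_p p^{2-2\Real(s)}$. This convergence is what pins the rightmost pole of $\Psi^{}_{\II}(s)$ at $s=3$ (and confirms there are no poles further right than $\tfrac{3}{2}$ contributed by $\psi^{}_\II$), and it guarantees that $h(3)=\psi^{}_\II(3)\,\rho^{\mathsf{rot}}_\II$ is finite and positive, so that the leading asymptotic constant is genuine. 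The remaining verification, that $\psi^{}_\II(3)<1$, is a finite numerical computation and requires no further structural argument.
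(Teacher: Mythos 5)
Your proposal is correct and follows essentially the same route as the paper: it factors $\Psi^{}_{\II}(s)=\psi^{}_{\II}(s)\,\Psi_{\II}^{\mathsf{rot}}(s)$, establishes convergence of the Euler product for $\psi^{}_{\II}$ in $\{\Real(s)>\tfrac{3}{2}\}$ to locate the simple rightmost pole at $s=3$ with residue $\rho^{}_{\II}=\psi^{}_{\II}(3)\,\rho_{\II}^{\mathsf{rot}}$, and then applies Theorem~\ref{csl-thm:meanvalues}. Your explicit bound comparing the local factors with $\sum_p p^{2-2\Real(s)}$ is exactly the justification the paper leaves implicit.
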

Note that $\rho_\II^{}$ and $\rho_\II^{\mathsf{rot}}$
differ by just about 1\%.  Thus, in most cases, two coincidence
rotations that are not symmetry related\index{symmetry~related}
generate different CSMs.

\section{Multiple CSLs of the cubic lattices}
\label{csl-sec:mcsl-cub}

So far, we have mostly considered ordinary (or simple) CSLs and
CSMs. The problem of finding all \emph{multiple} CSLs (MCSLs) is more
difficult than determining all CSLs. In fact, there are only few cases
where the problem of multiple coincidences has been solved so
far. These include the two-dimensional lattices and modules of
$n$-fold symmetry~\cite{csl-Baake-rev}, which we discussed in
Section~\ref{csl-sec:csl-nfold}, and the three-dimensional cubic
lattices, which we want to discuss here;
compare~\cite{csl-pzmcsl1,csl-habil}.

Let us recall from Section~\ref{csl-sec:cubic} that any coincidence
rotation $R$ of the cubic lattices can be parametrised by primitive
Hurwitz quaternions. Moreover, there is a bijection between the CSLs
of the body-centred cubic lattice and the ideals $q \JJ$ generated by
odd primitive quaternions. In particular, we have $\Gbcc=\Imag(\JJ)$
and $\Gbcc(R(q ))=\Imag(q \JJ)$ with $\Sig(R(q))=|q |^2$ if $q $ is a
primitive odd quaternion. If $q $ is an even primitive quaternion,
then $\Sig(R(q))=\frac{|q |^2}{2}$. In this case, $q$ can be written
as the product $r\,(1,1,0,0)$ of an odd primitive quaternion with an
even one, and the corresponding CSL can be written as $\Gbcc(R(q
))=\Imag(r\JJ)$.

Consequently, it is sufficient to consider CSLs generated by primitive
odd quaternions.  Just as in the case of ordinary CSLs, we start with
the analysis of the body-centred cubic lattice and later derive the
MCSLs of the other cubic lattices in the setting of
Eq.~\eqref{csl-cubiclat}.\smallskip

Let us first discuss the coincidence spectrum. We know from
Remark~\ref{csl-rem:spec-cub} that the ordinary coincidence spectrum
for all three types of cubic lattices is $2\ts\NN_{0} +1$. Moreover,
we have seen in Section~\ref{csl-sec:mcsl} that
$\Sig(R^{}_1,\ldots,R^{}_m)$ divides the product
$\Sig(R^{}_1)\cdot\ldots\cdot\Sig(R^{}_m)$. Thus, the spectrum of
indices of MCSLs is again the set of positive odd integers.

\begin{proposition}\label{csl-prop:spectrum}
  Let\/ $\vG$ be any cubic lattice. The $($multiple$\ts )$ coincidence
  spectrum of\/ $\vG$ is given by\/ $\ts 2\ts\NN_{0} +1$.\qed
\end{proposition}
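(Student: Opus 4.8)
The plan is to establish the two inclusions $\sigma_\infty(\vG) \subseteq 2\NN_0+1$ and $2\NN_0+1 \subseteq \sigma(\vG)$, which together with the trivial chain $\sigma(\vG) \subseteq \sigma_\infty(\vG)$ from Eq.~\eqref{csl-eq:dieletzte} pin down both spectra. First I would invoke Remark~\ref{csl-rem:spec-cub}, where Proposition~\ref{csl-theo:cub-Sig} together with Lagrange's four-square theorem already shows that the \emph{ordinary} coincidence spectrum of every cubic lattice is exactly $2\NN_0+1$, so $\sigma(\vG) = 2\NN_0+1$. This immediately gives the lower bound $2\NN_0+1 = \sigma(\vG) \subseteq \sigma_\infty(\vG)$, so every odd positive integer is realised even as a simple coincidence index, hence a fortiori as a multiple one.

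For the upper bound, the key observation is Lemma~\ref{csl-lem:sig1n}, which asserts that $\Sig(R^{}_1,\ldots,R^{}_m)$ divides the product $\Sig(R^{}_1)\cdots\Sig(R^{}_m)$. Since each factor $\Sig(R^{}_i) = \Sig^{}_\vG(R^{}_i)$ lies in $\sigma(\vG) = 2\NN_0+1$ and is therefore odd, their product is odd, and any divisor of an odd number is again odd. Thus $\Sig(R^{}_1,\ldots,R^{}_m)$ is an odd positive integer for every choice of coincidence isometries $R^{}_i \in \OC(\vG)$ and every $m \in \NN$, which yields $\sigma_\infty(\vG) \subseteq 2\NN_0+1$.

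Combining the two inclusions gives $\sigma(\vG) = \sigma_\infty(\vG) = 2\NN_0+1$ for each of the three cubic lattices. The argument is uniform in the lattice type because Theorem~\ref{csl-theo:cub-equalSig} guarantees that $\OC(\Gpc) = \OC(\Gbcc) = \OC(\Gfcc)$ and that the coincidence indices agree across all three lattices; consequently the spectra coincide as well and there is no need to treat $\mathsf{pc}$, $\mathsf{bcc}$, and $\mathsf{fcc}$ separately.

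I do not anticipate a genuine obstacle here, since this statement is essentially a corollary assembled from results proved earlier in the excerpt. The only point requiring minor care is making the logical structure transparent: one must be explicit that the divisibility relation of Lemma~\ref{csl-lem:sig1n} transports the oddness from the simple spectrum (established via Lagrange's theorem in Remark~\ref{csl-rem:spec-cub}) to the multiple spectrum, rather than attempting any fresh quaternionic computation. The mild subtlety is simply to note that the surjectivity direction (every odd integer occurs) already follows from the simple case and so needs no separate treatment for multiple coincidences.
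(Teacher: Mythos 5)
Your proposal is correct and follows essentially the same route as the paper: the ordinary spectrum $2\ts\NN_{0}+1$ from Remark~\ref{csl-rem:spec-cub} (via Proposition~\ref{csl-theo:cub-Sig} and Lagrange's four-square theorem) gives the lower bound, and the divisibility relation of Lemma~\ref{csl-lem:sig1n} transports oddness to all multiple coincidence indices, giving the upper bound. The only cosmetic difference is that you invoke Theorem~\ref{csl-theo:cub-equalSig} explicitly for the uniformity over the three lattice types, which the paper already has built into Remark~\ref{csl-rem:spec-cub}.
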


Hence, no new indices occur. Nevertheless, additional lattices emerge
and the multiplicity of the corresponding index will increase. We have
seen that $c^{}_\vG(m)$ is a multiplicative function. By
Theorem~\ref{csl-thm:mc}, this implies that any ordinary CSL can be
written as 
\[ 
      \vG(R) \, = \, \vG(R^{}_1)\cap\ldots\cap \vG(R^{}_n) \ts ,
\]
where the
indices $\Sig^{}_\vG(R^{}_i)$ are powers of distinct primes. In this
case, we know that the MCSL $\vG(R^{}_1)\cap\ldots\cap \vG(R^{}_n)$ 
agrees with an
ordinary CSL.  However, if the indices of the $\vG(R_i)$ are not
relatively prime, the corresponding MCSL is, in general, not equal to
an ordinary CSL.

More generally, by an application of Theorem~\ref{csl-thm:mc}, the
multiplicativity of $c^{}_\vG(m)$ guarantees that \emph{any} MCSL
$\vG(R^{}_1,\ldots,R^{}_n)$ can be written as the intersection of
MCSLs $\vG_k$ of prime power index. Furthermore, the $\vG_k$ can be
chosen in such a way that they are intersections of at most $n$
ordinary CSLs.  Thus, we may restrict our analysis of MCSLs to those
MCSLs whose index is a prime power.

To become more concrete, we mention that the decomposition of CSLs
into CSLs of prime power index corresponds to the prime factorisation
in $\JJ$.  In particular, if $|q|^2=\pi_1^{\alpha_1}\cdot\ldots\cdot
\pi_k^{\alpha_k}$ is the prime factorisation of $|q|^2$ in $\NN$ and
$p^{}_{i}:=\gcld(q,\pi_i^{\alpha_i})$, the aforementioned
decomposition is now given by $\vG(R(q))= \vG(R(p^{}_1))\cap\ldots\cap
\vG(R(p^{}_k))$. Note that $q$ is a common right multiple of all
$p^{}_{i}$. Conversely, if the $p^{}_{i}$ are primitive odd
quaternions such that all $|p^{}_{i}|^2$ are relatively prime, then
any least common right multiple $q$ is primitive and odd, and we have
$\vG(R(q))= \vG(R(p^{}_1))\cap\ldots\cap \vG(R(p^{}_k))$.  Likewise,
if we define \mbox{$p^{}_{ij}=\gcld(q_{i},\pi^{\alpha_{ij}}_{j})$},
where the $\alpha^{}_{ij}$ are the exponents in the prime
factorisation $|q_i|^2=\pi^{\alpha_{i1}}_{1}\cdot\ldots\cdot
\pi^{\alpha_{ik}}_{k}$, then the corresponding decomposition of the
MCSL reads $\vG(R(q^{}_1),\ldots,R(q^{}_n))=\vG_1\cap\ldots\cap \vG_k$
with the lattices $\vG_\ell=\vG(R(p^{}_{1\ell}))\cap\ldots\cap
\vG(R(p^{}_{n\ell}))$.

Moreover, this guarantees the multiplicativity of the corresponding
counting functions $c^{(\infty)}_{}(m)$ and $c^{(k)}_{}(m)$, where
$c^{(\infty)}_{}(m)$ is the number of all MCSLs of a given index $m$
and $c^{(k)}_{}(m)$ the corresponding number of all MCSLs that can be
written as the intersection of at most $k$ ordinary CSLs.

As we want to enumerate the distinct MCSLs, it is an essential
question under what condition two MCSLs are equal. A preliminary
result is the following, which generalises
Lemma~\ref{csl-lem:equal-csl} to the present situation.

\begin{lemma}[{\cite[Lemma~6.1.2]{csl-habil}}]\label{csl-lem:equal-mcsl}
  Let\/ $\vG$ be any cubic lattice and assume that\vspace{-2mm}
\[
  \vG\bigl(R(q^{}_1),\ldots,R(q^{}_n)\bigr)\, =\, 
  \vG\bigl(R(q'_1),\ldots,R(q'_m)\bigr) ,
\]
where\/ $q^{}_{i}$ and\/ $q'_{j}$ are primitive odd quaternions. Then, we have
\[
    \Sig_\vG^{}
    \bigl(R(q^{}_1),\ldots,R(q^{}_n)\bigr)\, =\,
    \Sig_\vG^{}\bigl(R(q'_1),\ldots,R(q'_m)\bigr)
\]
together with\/ $\,\lcm\left(|q^{}_1|^2,\ldots, |q^{}_n|^2\right) = 
    \lcm\left(|q'_1|^2,\ldots, |q'_n|^2\right)$. \qed
\end{lemma}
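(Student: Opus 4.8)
The statement about $\Sig$ is immediate and I would dispose of it first: since the two multiple CSLs coincide as lattices, write $\vL$ for the common lattice, and then both coincidence indices equal $[\vG:\vL]$ by definition. All the work therefore goes into the equality of the two least common multiples, and my plan is to exhibit $\lcm(|q_1|^2,\ldots,|q_n|^2)$ as a quantity that can be read off from $\vL$ alone, with no reference to the generating quaternions.

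The natural invariant to use is the \emph{level} of $\vL$ inside $\vG$, which I would define as the smallest positive integer $D$ satisfying $D\vG\subseteq\vL$. This is well defined and manifestly representation-independent: the set $\{m\in\ZZ \mid m\vG\subseteq\vL\}$ is an ideal of $\ZZ$ (it is closed under $\ZZ$-linear combinations because $\vL$ is a group) and it is nonzero since $[\vG:\vL]\,\vG\subseteq\vL$ by Lagrange, so it is generated by a unique positive integer, which I call $D_\vG(\vL)$. Granting this, the entire lemma reduces to the single claim
\[
   D_\vG(\vL)\, =\, \lcm\bigl(|q_1|^2,\ldots,|q_n|^2\bigr),
\]
because applying it to both given representations of $\vL$ forces the two lcm's to coincide with $D_\vG(\vL)$, and hence with each other.

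I would establish this claim by two divisibility steps. For $D_\vG(\vL)\mid\lcm(|q_i|^2)$: Proposition~\ref{csl-theo:cub-Sig} together with the oddness of each $|q_i|^2$ gives $\Sig^{}_\vG(R(q_i))=\den^{}_\vG(R(q_i))=|q_i|^2$, and since $\Sig^{}_\vG(R(q_i))\,\vG\subseteq\vG(R(q_i))$ (Lagrange again, as $[\vG:\vG(R(q_i))]=|q_i|^2$) I get $|q_i|^2\,\vG\subseteq\vG(R(q_i))$; writing $D=\lcm(|q_i|^2)$ and using $|q_i|^2\mid D$ yields $D\vG\subseteq|q_i|^2\,\vG\subseteq\vG(R(q_i))$ for every $i$, so $D\vG\subseteq\bigcap_i\vG(R(q_i))=\vL$ and $D_\vG(\vL)\mid D$. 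For the reverse divisibility $\lcm(|q_i|^2)\mid D_\vG(\vL)$: from the chain $D_\vG(\vL)\,\vG\subseteq\vL\subseteq\vG(R(q_i))\subseteq R(q_i)\vG$ I read off $D_\vG(\vL)\,R(q_i)^{-1}\vG\subseteq\vG$, that is $D_\vG(\vL)\in\Scal^{}_\vG(R(q_i)^{-1})=\den^{}_\vG(R(q_i)^{-1})\,\ZZ$ by Lemma~\ref{csl-lem:scal-den}; since $\den^{}_\vG(R(q_i)^{-1})=\den^{}_\vG(R(q_i))=|q_i|^2$ for the cubic lattices (Corollary~\ref{csl-coro:cub-den} and the remark following it), this gives $|q_i|^2\mid D_\vG(\vL)$ for all $i$, whence $D\mid D_\vG(\vL)$.

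The two steps combine to $D=D_\vG(\vL)$, finishing the argument. The only point where the specific arithmetic of the cubic setting is genuinely used, and hence the place I would be most careful, is the second divisibility: there I need both the symmetry $\den^{}_\vG(R)=\den^{}_\vG(R^{-1})$ and the identification $\den^{}_\vG(R(q_i))=|q_i|^2$, the latter valid precisely because the $q_i$ are odd, so the power of $2$ appearing in Corollary~\ref{csl-coro:cub-den} is trivial. Everything else is formal bookkeeping with sublattice inclusions, so I anticipate no real obstacle once the invariant $D_\vG(\vL)$ is set up.
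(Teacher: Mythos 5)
Your proof is correct, and it follows essentially the route the paper intends: the lemma is quoted from the habilitation thesis without a printed proof, but it is introduced as the generalisation of Lemma~\ref{csl-lem:equal-csl}, whose printed proof uses exactly your two ingredients, namely the inclusion chain $D\ts\vG\subseteq\vL\subseteq\vG(R(q_i))\subseteq R(q_i)\vG$ converted into a divisibility statement via $\Scal^{}_\vG$, together with the cubic-lattice identities $\den^{}_\vG\bigl(R(q_i)^{-1}\bigr)=\den^{}_\vG\bigl(R(q_i)\bigr)=|q_i|^2=\Sig^{}_\vG\bigl(R(q_i)\bigr)$ for odd primitive $q_i$ and Lagrange's theorem in the form $[\vG:\vL\ts]\ts\vG\subseteq\vL$. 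Your packaging of the argument through the representation-independent invariant $D_\vG(\vL)$ (the smallest positive integer $D$ with $D\ts\vG\subseteq\vL$), which you then identify with $\lcm\bigl(|q_1|^2,\ldots,|q_n|^2\bigr)$, is a mild organisational refinement of the same two-sided divisibility argument, not a different method.
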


The conditions of the lemma are necessary, but by no means
sufficient. For ordinary CSLs, we have the much stronger condition
$q\JJ=q'\JJ$, and we expect additional conditions for MCSLs. Let us
start with the case $n=2$.

\subsection{Intersections of two CSLs}
\label{csl-sec:mcsl-cub-2}

As the body-centred cubic lattice $\vG=\Gbcc=\Imag(\JJ)$ has the most
convenient representation in terms of quaternions, we start with this
lattice.  The first step to determine all possible MCSLs
$\vG(R^{}_1,R^{}_2)$ that can be written as the intersection of at
most two ordinary CSLs is the calculation of their indices. We note
that
\[
   \vG_+(R^{}_1,R^{}_2) \, := \,
   \vG(R^{}_1)+\vG(R^{}_2) \, = \,
   \Imag(q^{}_1\JJ+q^{}_2\JJ) \, = \,
   \Imag(q \JJ) \ts ,
\]
  where $q =\gcld(q^{}_{1},q^{}_{2})$. Hence, recalling
  that we may assume $|q_i|^2$ to be odd, we have
\begin{equation}\label{csl-eq:sig12c}
  \Sig(R^{}_1,R^{}_2)\, =\, \frac{|q^{}_1|^2\ts |q^{}_2|^2}{|q |^2} .
\end{equation}
In the case that $|q^{}_1|^2$ and $|q^{}_2|^2$ are relatively prime,
this reduces to
$\Sig(R^{}_1,R^{}_2)=|q^{}_1|^2|q^{}_2|^2$.  This is the
aforementioned case where the MCSL is equal to an ordinary
CSL. Another special case occurs when $q^{}_1$ is a left divisor of
$q^{}_2$. Here, we have $\vG(R^{}_2)\subseteq \vG(R^{}_1)$, and the
MCSL $\vG(R^{}_1,R^{}_2)=\vG(R^{}_2)$ is again an ordinary CSL. In
order to understand the general situation, we start with the case that
both $|q_i|^2$ are powers of the same rational prime $p$.

Actually, the case of MCSLs of prime power index is sufficient,
because we can recover the general case from this one, as we have
mentioned before. We are mainly interested in the case of two
different CSLs none of which is a sublattice of the other one, so
neither $q^{}_1$ nor $q^{}_2$ is a right multiple of the other
one. Fortunately, we do not need to exclude the latter case
explicitly, as all formulas include the case of ordinary CSLs
implicitly.

Recall that we have an explicit expression for ordinary CSLs, namely
$\Gbcc(R(q))=\Imag(q\JJ)$. An analogous expression for MCSLs is given
by the following result.

\begin{lemma}[{\cite[Lemma~6.2.2]{csl-habil}}]\label{csl-lem:mcsl2b}
  Let\/ $q^{}_1$ and\/ $q^{}_{2}$ be primitive quaternions with\/
  $|q_i|^2=p^{\alpha_i}$, where\/ $p$ is the same odd prime for both
  quaternions. Let\/ $q $ be a least common right multiple of\/
  $q^{}_1$ and\/ $q^{}_2$.  Then, we have
\[\;\;\quad\qquad\qquad\qquad\quad
  \Gbcc(R^{}_1,R^{}_2)\,=\,
   \Imag(q \JJ+q^{}_1\JJ\ts{\bar{q}}^{}_2)\,=\, \Imag(q
  \JJ+q^{}_2\JJ\ts{\bar{q}}^{}_1)\ts .\qquad\qquad\qquad
  \quad\;\;\qed
\]
\end{lemma}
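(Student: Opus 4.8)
The plan is to translate everything into the Hurwitz ring and then match indices. Since $\Gbcc=\Imag(\JJ)$ and $\Gbcc(R(q_i))=\Imag(q_i\JJ)$ for odd primitive $q_i$ by Proposition~\ref{csl-theo:bcc-csl}, the object in question is $\Gbcc(R_1,R_2)=\Imag(q_1\JJ)\cap\Imag(q_2\JJ)=:M$. Set $g:=\gcld(q_1,q_2)$, so that $q=\lcrm(q_1,q_2)$ satisfies $q\JJ=q_1\JJ\cap q_2\JJ$ and $g\JJ=q_1\JJ+q_2\JJ$, and put $N:=q\JJ+q_1\JJ\ts\bar q_2$. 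I would first dispose of the claimed coincidence of the two right-hand sides. As $\overline{\JJ}=\JJ$, one has $\overline{q_1\JJ\bar q_2}=q_2\JJ\bar q_1$, and since $q_1\JJ\bar q_2$ is a $\ZZ$-module (hence stable under negation), $\Imag$ sends it and its conjugate to the same set, giving $\Imag(q_1\JJ\bar q_2)=\Imag(q_2\JJ\bar q_1)$. Because $\Imag$ is additive, $\Imag(A+B)=\Imag(A)+\Imag(B)$, so $\Imag(q\JJ+q_1\JJ\bar q_2)=\Imag(q\JJ+q_2\JJ\bar q_1)$ automatically, and it suffices to prove $M=\Imag N$.

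The inclusion $\Imag N\subseteq M$ is easy. From $q\JJ=q_1\JJ\cap q_2\JJ\subseteq q_i\JJ$ we get $\Imag(q\JJ)\subseteq M$. Moreover $q_1\JJ\bar q_2\subseteq q_1\JJ$ yields $\Imag(q_1\JJ\bar q_2)\subseteq\Imag(q_1\JJ)$, while the conjugation identity gives $\Imag(q_1\JJ\bar q_2)=\Imag(q_2\JJ\bar q_1)\subseteq\Imag(q_2\JJ)$; hence $\Imag(q_1\JJ\bar q_2)\subseteq M$. Adding, $\Imag N=\Imag(q\JJ)+\Imag(q_1\JJ\bar q_2)\subseteq M$.

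For the reverse inclusion I would argue by matching indices. First, $[\Gbcc:M]=|q|^2$: using $\Sig_{\vG}(R_i)=|q_i|^2$ from Proposition~\ref{csl-theo:cub-Sig}, $\Sig_+(R_1,R_2)=[\Imag\JJ:\Imag(g\JJ)]=|g|^2$, and Lemma~\ref{csl-lem:sig12}, one gets $[\Gbcc:M]=|q_1|^2|q_2|^2/|g|^2$, which equals $|q|^2$ by the norm relation $[\JJ:q_1\JJ\cap q_2\JJ]=[\JJ:q_1\JJ]\,[\JJ:q_2\JJ]/[\JJ:q_1\JJ+q_2\JJ]$ together with $[\JJ:x\JJ]=|x|^4$. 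It then remains to show $[\Gbcc:\Imag N]=|q|^2$, since $\Imag N\subseteq M\subseteq\Gbcc$ with equal outer index forces $\Imag N=M$. Here I would use the snake-lemma index relation: projecting along $\Imag\colon\HH\to\Imag\HH$ (kernel $\RR$) gives, for any full-rank sublattice $L\subseteq\JJ$, short exact sequences $0\to L\cap\RR\to L\to\Imag L\to0$ which, compared with the same for $\JJ$, yield $[\Imag\JJ:\Imag L]=[\JJ:L]/[\JJ\cap\RR:L\cap\RR]$ — exactly the mechanism behind the index $|q|^2$ in the proof of Proposition~\ref{csl-theo:cub-Sig}. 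Applying this with $L=N$, one localizes at the odd prime $p$, where $\JJ$ is unramified and $\JJ_{p}\cong M_{2}(\ZZ_p)$, conjugation becomes the adjugate, $\Real$ becomes $\tfrac12\mathrm{tr}$, and primitive quaternions of norm $p^{\alpha_i}$ become matrices with elementary divisors $(1,p^{\alpha_i})$; in these coordinates both indices $[\JJ:N]$ and $[\ZZ:N\cap\RR]$ can be read off, and their ratio equals $|q|^2$.

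The main obstacle is precisely this last local computation, because $N=q\JJ+q_1\JJ\bar q_2$ is not a one-sided ideal: the middle factor $q_1\JJ\bar q_2$ is sandwiched, so one cannot simultaneously normalize $q_1$ and $q_2$ to elementary-divisor form, and their relative position (encoded by $g=\gcld(q_1,q_2)$, equivalently the left-coprime factors $h_1,h_2$ in $q_i=gh_i$) must be carried through the whole computation. A variant closer in spirit to the proof of Theorem~\ref{csl-theo:csl-cub4} avoids the index count by element-chasing: for $m\in M$ one has $m=\Imag(q_1a)=\Imag(q_2b)$ with $q_1a-q_2b\in\ZZ$, whence $2m=q_1a-\bar a\bar q_1=q_2b-\bar b\bar q_2$, and $|q|^2m\in\Imag(q\JJ)\subseteq\Imag N$ is immediate (an index-$|q|^2$ sublattice contains $|q|^2\Gbcc$); since $|q|^2$ is odd, establishing $2m\in\Imag N$ by a B\'ezout manipulation of $h_1,h_2$ via $\ip{x}{y}=\tfrac12(x\bar y+y\bar x)$ would then give $m\in\Imag N$ by coprimality of $2$ and $|q|^2$. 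In this variant, proving $2m\in\Imag N$ cleanly is the crux.
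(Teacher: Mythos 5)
You have correctly assembled the periphery of the lemma, but not its core, and the core is the whole point. (For the record: the paper itself gives no proof here — it cites \cite[Lemma~6.2.2]{csl-habil} — so your attempt can only be judged on its own merits.) The parts you do prove are fine: the conjugation argument giving $\Imag(q\JJ+q_1\JJ\ts\bar q_2)=\Imag(q\JJ+q_2\JJ\ts\bar q_1)$, the easy inclusion $\Imag N\subseteq M:=\Imag(q_1\JJ)\cap\Imag(q_2\JJ)$ for $N=q\JJ+q_1\JJ\ts\bar q_2$, and the index $[\Gbcc:M]=|q_1|^2|q_2|^2/|g|^2=|q|^2$ via Lemma~\ref{csl-lem:sig12}, Eq.~\eqref{csl-eq:sig12c} and the norm relation. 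What is missing, in both of your routes, is the reverse inclusion $M\subseteq\Imag N$, which is the entire substance of the statement. In route A you need $[\Gbcc:\Imag N]=|q|^2$, i.e.\ the pair of indices $[\JJ:N]$ and $[\ZZ:N\cap\RR]$ for the sandwiched module $N$, and you explicitly defer exactly this computation; it cannot be absorbed into the $q\JJ$ term alone, because $q=\lcrm(q_1,q_2)$ is generically \emph{imprimitive} (e.g.\ $q_2=\bar q_1$ with $|q_1|^2=p$ forces $q=p$ up to units), so that $[\Gbcc:\Imag(q\JJ)]=c^3\lvert q'\rvert^2>|q|^2$ for $q=cq'$ with $q'$ primitive, and the term $q_1\JJ\ts\bar q_2$ contributes essentially. (For the same reason your parenthetical justification ``an index-$|q|^2$ sublattice contains $|q|^2\Gbcc$'' is inaccurate as applied to $\Imag(q\JJ)$; the containment $|q|^2\Gbcc\subseteq\Imag(q\JJ)$ you actually use is still true, but needs the factorisation $q=cq'$ rather than an index count.)

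In route B the reduction is sound — since $|q|^2$ is odd and $|q|^2m\in\Imag(q\JJ)\subseteq\Imag N$, it would suffice to show $2m\in\Imag N$ for every $m\in M$ — but that step is precisely where the work lies, and it is not routine. The analogous trick in the proof of Theorem~\ref{csl-theo:csl-cub4} succeeds because there the element $x$ itself has the form $p\ts y\ts\bar q/|pq|$, so when one expands $2x=p\bar vx+v\bar px+xq\bar w+xw\bar q$ the middle terms visibly land in $p\JJ$ and $\JJ\bar q$. Here one only knows $m=\Imag(q_1a)=\Imag(q_2b)$, i.e.\ $q_1a=q_2b+n$ with $n\in\ZZ$, and inserting a B\'ezout relation $h_1u+h_2v=1$ (where $q_i=gh_i$, $g=\gcld(q_1,q_2)$) into $2m=q_1a-\bar a\bar q_1$ does not obviously produce terms lying in $q\JJ=g\ts\lcrm(h_1,h_2)\JJ$ or in $q_1\JJ\ts\bar q_2$; the scalar $n$ and the relative position of $h_1,h_2$ have to be dealt with. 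Until one of the two deferred steps — the local index computation for $N$, or the identity $2m\in\Imag N$ — is actually carried out, this is a plan for a proof, not a proof.
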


Note that $q \JJ+q^{}_1\JJ\ts{\bar{q}}^{}_2$ need not be an ideal. If
not, $\Gbcc(R^{}_1,R^{}_2)$ is neither an ordinary CSL nor a multiple
of one. Further, note that $\Imag(q \JJ)$ is a multiple of an ordinary
CSL as $q$, in general, is not primitive here.

When enumerating MCSLs, we must make sure that we do not count any
MCSL twice.  Thus, we need a criterion when two MCSLs are equal. This
is provided by the following result.

\begin{theorem}[{\cite[Thm.~6.2.3]{csl-habil}}]\label{csl-eq2csl}
  Let\/ $q^{}_i$ with\/ $1\le i\le  4$ be primitive quaternions
  such that\/ \mbox{$|q^{}_i|^2=p^{\alpha_i}$}, where\/ $p$ is an odd
  rational prime and where\/
  $\alpha^{}_1\geq\alpha^{}_2\geq\alpha^{}_4$ and\/
  \mbox{$\alpha^{}_3\geq\alpha^{}_4$}. Let\/ $q^{}_{ij}$ with\/
  $|q^{}_{ij}|^2=p^{\alpha_{ij}}$ be the greatest common left divisor
  of\/ $q^{}_i$ and\/ $q^{}_j$. In addition, if\/
  $\alpha^{}_1=\alpha^{}_2$, let\/ $\alpha^{}_{13}\geq\alpha^{}_{23}$,
  and if\/ $\alpha^{}_3=\alpha^{}_4$, let\/
  $\alpha^{}_{13}\geq\alpha^{}_{14}$. Then, with $R_{i}=R(q^{}_i)$, we
  have
  \[
  \Gbcc(R^{}_1)\cap\Gbcc(R^{}_2)\, =\, \Gbcc(R^{}_3)\cap\Gbcc(R^{}_4)
  \]
  if and only if the conditions\/ \mbox{$\alpha^{}_1=\alpha^{}_3$},
  $\alpha^{}_2-\alpha^{}_{12}=\alpha^{}_4-\alpha^{}_{34}$,
  $\alpha^{}_1-\alpha^{}_{13}\leq\min(\alpha^{}_4-\alpha^{}_{34},\alpha^{}_{34})$
  and\/ $\alpha^{}_4-\alpha^{}_{24}\leq
  \mbox{$\min(\alpha^{}_4-\alpha^{}_{34},\alpha^{}_{34})$}$ are
  satisfied.\qed
\end{theorem}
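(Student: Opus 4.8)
The plan is to reduce the equality of two MCSLs to a set of arithmetic conditions on the $p$-adic valuations $\alpha^{}_i$ and $\alpha^{}_{ij}$, using the explicit descriptions from Lemma~\ref{csl-lem:mcsl2b} together with the combinatorial structure of ideals in the principal ideal ring $\JJ$. Since $\JJ$ is a principal ideal ring with essentially unique prime factorisation, every right ideal generated by a quaternion of norm $p^{\alpha}$ corresponds to a point in the tree of right ideals of $\JJ$ above $p$; concretely, for an odd prime $p$ the primitive quaternions of norm $p^\alpha$ parametrise the $(p{+}1)p^{\alpha-1}$ maximal chains of length $\alpha$ down from $\JJ$ in this ideal lattice. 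The invariants $\gcld(q^{}_i,q^{}_j)$ measure how long two such chains share a common initial segment, so $\alpha^{}_{ij}$ records the depth of the common ancestor of the ideals $q^{}_i\JJ$ and $q^{}_j\JJ$.

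First I would fix the prime $p$ and translate everything into this tree language. By Lemma~\ref{csl-lem:mcsl2b}, the MCSL $\Gbcc(R^{}_1)\cap\Gbcc(R^{}_2)$ equals $\Imag(q\JJ + q^{}_1\JJ\bar q^{}_2)$ with $q=\lcrm(q^{}_1,q^{}_2)$, and the key observation is that this module depends only on a small number of intrinsic invariants of the pair $(q^{}_1\JJ, q^{}_2\JJ)$: the two norms $\alpha^{}_1,\alpha^{}_2$ and the overlap $\alpha^{}_{12}$, which together fix $|q|^2$ via $|q|^2=p^{\alpha_1+\alpha_2-\alpha_{12}}$ by Eq.~\eqref{csl-eq:sig12c}. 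I would compute, for each such pair, the index of the MCSL in $\Gbcc$ (this is already forced by Lemma~\ref{csl-lem:equal-mcsl} to be an invariant) and then show that the module itself is determined by the triple $(\alpha^{}_1,\alpha^{}_2,\alpha^{}_{12})$ up to the ordering $\alpha^{}_1\ge\alpha^{}_2$. The normalisation hypotheses in the statement ($\alpha^{}_1\ge\alpha^{}_2\ge\alpha^{}_4$, $\alpha^{}_3\ge\alpha^{}_4$, and the tie-breaking conditions on $\alpha^{}_{13}$) are precisely there to pin down a canonical labelling so that ``equal triples'' becomes an honest equivalence.

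The heart of the argument is then a direct computation of when two such overlap-triples yield the same submodule of $\Gbcc$. I expect this to factor through showing that the quaternionic module $q\JJ + q^{}_1\JJ\bar q^{}_2$ is, after clearing the common scalar part, a module whose structure over $\JJ/p^N\JJ$ is governed by the relative positions of the three ideals $q\JJ$, $q^{}_1\JJ$, $q^{}_2\JJ$. One works locally at $p$ (using that $|q^{}_i|^2$ are odd prime powers, so no interference from the even prime $2$, and that the two-sided ideal structure over an odd prime $p$ splits as a matrix algebra $M_2(\mathbb{F}_p)$ upon reduction), converting the equality of modules into equality of the associated filtrations. The four stated conditions — $\alpha^{}_1=\alpha^{}_3$ (the larger norms agree), $\alpha^{}_2-\alpha^{}_{12}=\alpha^{}_4-\alpha^{}_{34}$ (the ``excess'' of the smaller ideal beyond the overlap agrees), and the two inequalities bounding $\alpha^{}_1-\alpha^{}_{13}$ and $\alpha^{}_4-\alpha^{}_{24}$ by $\min(\alpha^{}_4-\alpha^{}_{34},\alpha^{}_{34})$ — would emerge as the necessary and sufficient incidence relations for the two overlapping chains to span the same module.

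The main obstacle will be the sufficiency direction, and specifically verifying that the two cross-overlap inequalities genuinely control the module and are not merely necessary. The inequalities involve the mixed invariants $\alpha^{}_{13}$ and $\alpha^{}_{24}$ (overlaps between generators drawn from the two different MCSLs), which do not appear in Lemma~\ref{csl-lem:equal-mcsl} at all; producing them requires a more refined analysis than the index bookkeeping alone. The cleanest route is probably to exhibit, given the four conditions, an explicit element of $\JJ^{\times}$ (or a matching of the tree chains) realising the equality, and conversely to extract each condition by intersecting the candidate module with a suitable power $p^{k}\JJ$ and comparing ranks of the successive quotients. Because $\JJ$ is noncommutative, one must be careful throughout to keep left and right ideals straight and to use $\gcld$/$\lcrm$ consistently; I would lean on the uniqueness-up-to-right-units of these operations, as recorded in Section~\ref{csl-sec:quat}, to make the chain-overlap picture rigorous. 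The remaining verifications are the routine (if lengthy) $p$-adic valuation computations that I would defer to the cited source~\cite{csl-habil}.
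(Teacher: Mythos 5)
Your plan has a fatal step: the claim that the MCSL $\Gbcc(R_1,R_2)=\Imag\bigl(\lcrm(q_1,q_2)\ts\JJ+q_1\JJ\bar q_2\bigr)$ ``is determined by the triple $(\alpha_1,\alpha_2,\alpha_{12})$ up to the ordering'' is false, and its failure is precisely the content of the theorem. If it were true, equality of two MCSLs would reduce to equality of the normalised intrinsic triples, making the two cross-overlap conditions $\alpha_1-\alpha_{13}\leq\min(\alpha_4-\alpha_{34},\alpha_{34})$ and $\alpha_4-\alpha_{24}\leq\min(\alpha_4-\alpha_{34},\alpha_{34})$ vacuous. A concrete counterexample: take $q_1=q_2=2+\ii$ and $q_3=q_4=2+\jj$, all primitive of norm $5$. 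Both pairs have the triple $(1,1,1)$, and the two MCSLs are the ordinary CSLs $\Imag(q_1\JJ)$ and $\Imag(q_3\JJ)$. These are distinct: $\bar q_3 q_1=(4,2,-2,1)$, and $(4,2,-2,1)/5\notin\JJ$, so $q_1\JJ\neq q_3\JJ$, whence the CSLs differ by the bijection of Lemma~\ref{csl-lem:bcc-bij}. Indeed, by Eq.~\eqref{csl-eq:c-cub} there are $p+1$ distinct CSLs of index $p$, all sharing the triple $(1,1,1)$. The theorem detects this exactly through its third condition: here $\alpha_{13}=0$, so $\alpha_1-\alpha_{13}=1>0=\min(\alpha_4-\alpha_{34},\alpha_{34})$. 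So the intrinsic data of each pair can only account for the first two conditions; the entire difficulty is the \emph{relative} position of the two configurations in the tree of ideals, which your main construction never engages.

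The second half of your proposal tacitly concedes this (``producing them requires a more refined analysis''), but what follows is not an argument. Exhibiting ``an explicit element of $\JJ^{\times}$ realising the equality'' is not a meaningful mechanism: equality of two sublattices of $\Imag(\HH)$ is literal coincidence of point sets, not something induced by a unit or a matching of chains — units would only identify symmetry-related configurations, whereas the conditions of the theorem allow the two pairs to differ by genuinely more, with slack quantified by $\min(\alpha_4-\alpha_{34},\alpha_{34})$, and that quantity has to come \emph{out} of the analysis, not be assumed. Likewise, ``intersect with $p^k\JJ$ and compare ranks of successive quotients'' is a restatement of the problem, not a proof that these four conditions (rather than some others involving $\alpha_{14}$, $\alpha_{23}$, or finer invariants) are necessary and sufficient. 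Finally, deferring ``the remaining verifications'' to \cite{csl-habil} is circular, since that is the very result to be proved; note that the chapter itself states this theorem with a reference and no proof, so the burden of the missing analysis cannot be discharged by citation. As written, the proposal cannot be completed along the route you describe, because its first key step is false and the replacement for it is never supplied.
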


Note that the ordering conditions on the $\alpha$ coefficients do not
put any restrictions on the applicability of the theorem, since we can
always interchange the role of the $q^{}_{i}$ such that these
conditions are met.

\begin{remark}
  The two conditions $\alpha^{}_1=\alpha^{}_3$ and
  $\alpha^{}_2-\alpha^{}_{12}=\alpha^{}_4-\alpha^{}_{34}$ correspond
  to the two conditions in Lemma~\ref{csl-lem:equal-mcsl}. The first
  one means that the least common multiples of the denominators must
  be the same, and the second follows from the equality of the
  indices, which gives
  $\alpha^{}_1+\alpha^{}_2-\alpha^{}_{12}=\alpha^{}_3 +
  \alpha^{}_4-\alpha^{}_{34}$.  Furthermore, the condition
  $\alpha^{}_1-\alpha^{}_{13}\leq \alpha^{}_4-\alpha^{}_{34}$ can
  easily be understood by considering
\[
\begin{split}
  \hspace{2.7cm}
    \Gbcc(R^{}_1)\cap\vG(R^{}_3)
    \,&\supseteq\, \Gbcc(R^{}_1)\cap\Gbcc(R^{}_2)\cap
                    \Gbcc(R^{}_3)\cap\Gbcc(R^{}_4)\\[1mm]
    &=\, \Gbcc(R^{}_3)\cap\Gbcc(R^{}_4)\ts . \hspace{5.75cm}
    \mbox{\exend}
\end{split}
\]
\end{remark}

Theorem~\ref{csl-eq2csl} is not very intuitive, but we can understand
it better by comparing the quaternions involved. It basically tells us
how different the quaternions $q^{}_1,q^{}_3$ and $q^{}_2,q^{}_4$ may
be; see~\cite{csl-habil} for details. This allows us to calculate
the counting function for MCSLs that are the intersection of at most
two ordinary CSLs.

\begin{theorem}[{\cite[Thm.~6.2.4]{csl-habil}}]
  Let\/ $p$ be an odd prime number. Then, the number\/
  $c^{(2)}_{\mathsf{bcc}}(p^r)$ of distinct MCSLs of\/ $\Gbcc$ of
  index\/ $p^r$ that are an intersection of at most two ordinary CSLs
  is given by
\[
\begin{split}
c^{(2)}_{\mathsf{bcc}}(p^r)\, = \;\, & \myfrac{r+1}{2}\,(p+1)\,p^{r-1}+
\left(\myfrac{r}{2}-1\right)p^{r-2}-
\left(\myfrac{r}{2}-\left[\myfrac{r}{2}\right]\right)p^{r-4}\\[2mm]
&\mbox{}+\frac{p^{r-1}-p^{r-2[r/3]-1}}{p^2-1}+
\frac{p^{4[r/3]-r+2}-p^{4[r/2]-r-2}}{2(p^2-1)}\ts ,
\end{split}
\]
where\/ $[x]$ denotes the Gau{\ss} bracket.  \qed
\end{theorem}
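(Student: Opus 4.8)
The plan is to reduce the counting of MCSLs to a combinatorial problem about the exponents attached to Hurwitz quaternions, using the equality criterion of Theorem~\ref{csl-eq2csl} as the central device. Since $c^{(2)}_{\mathsf{bcc}}$ is multiplicative, it suffices to work with a fixed odd prime $p$ and index $p^r$. Every MCSL of index $p^r$ that is an intersection of at most two ordinary CSLs arises as $\Gbcc(R(q^{}_1))\cap\Gbcc(R(q^{}_2))$ for primitive odd quaternions with $|q^{}_i|^2=p^{\alpha_i}$, and Eq.~\eqref{csl-eq:sig12c} tells me that the index is $p^{\alpha_1+\alpha_2-\alpha_{12}}$, where $\alpha_{12}$ is the exponent of the norm of $\gcld(q^{}_1,q^{}_2)$. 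So the first step is to parametrise candidate pairs by the triple of exponents $(\alpha_1,\alpha_2,\alpha_{12})$ subject to $\alpha_1+\alpha_2-\alpha_{12}=r$, together with the data needed to pin down the greatest common left divisors up to units.

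Second, I would invoke Theorem~\ref{csl-eq2csl} to partition the set of such pairs into equivalence classes, each class corresponding to exactly one distinct MCSL. The theorem gives, after fixing the ordering conventions on the $\alpha$'s, that two pairs define the same MCSL precisely when $\alpha^{}_1=\alpha^{}_3$, $\alpha^{}_2-\alpha^{}_{12}=\alpha^{}_4-\alpha^{}_{34}$, and two further inequalities relating $\alpha^{}_1-\alpha^{}_{13}$ and $\alpha^{}_4-\alpha^{}_{24}$ to $\min(\alpha^{}_4-\alpha^{}_{34},\alpha^{}_{34})$ hold. The task is therefore to count orbits. The cleanest way is to choose, within each orbit, a canonical representative (for instance the one with $\alpha_1$ maximal and $\gcld$-data as ``aligned'' as the inequalities permit), and then to count these representatives. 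Counting the choices of left ideals of $\JJ$ of a given norm that share a prescribed greatest common left divisor is where the arithmetic of $\JJ$ enters: the number of primitive left ideals of norm $p^{2k}$ is $(p+1)p^{k-1}$ by Eq.~\eqref{csl-eq:c-cub}, and the number compatible with a fixed $\gcld$ of norm $p^{2j}$ is controlled by the same formula applied to the quotient.

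Third, I would assemble the count as a sum over the free parameters. Writing $\Sig=p^r$ and splitting according to the ``overlap'' exponent $\alpha_{12}$ (equivalently, how far the two CSLs are from being coprime, and how far from being nested), each stratum contributes a product of factors of the form $(p+1)p^{\bullet-1}$ and $p^{\bullet}$. Summing the resulting geometric series in the overlap parameter, and treating the parity of $r$ separately because of the $\min$ and the square-root-like constraints $r'-r''$ even inherited from the extended-admissible structure, should produce the displayed closed form. The Gau{\ss} brackets $[r/2]$, $[r/3]$ and the terms $p^{4[r/3]-r+2}$, $p^{4[r/2]-r-2}$ are the telltale signatures of the floor-function cutoffs where these geometric sums saturate, so I expect them to emerge naturally from the upper limits of summation.

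The main obstacle will be the bookkeeping in the third inequality of Theorem~\ref{csl-eq2csl}, namely controlling $\alpha^{}_1-\alpha^{}_{13}$ and $\alpha^{}_4-\alpha^{}_{24}$ simultaneously, since these mix the pairwise $\gcld$-exponents of \emph{different} representatives and are not independent of the norms. Correctly counting how many $\gcld$-configurations satisfy both the equality conditions and the two $\min$-inequalities$\,$---$\,$without double-counting across orbits and while respecting the parity constraint on $r'-r''$$\,$---$\,$is the delicate combinatorial heart of the argument. I would organise this by stratifying on $\min(\alpha^{}_4-\alpha^{}_{34},\alpha^{}_{34})$ and verifying that the inequalities cut out a half-open interval of admissible $\gcld$-exponents whose length is exactly what the floor functions record. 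For the technical verification that the canonical representatives are in bijection with MCSLs, I would lean on Theorem~\ref{csl-eq2csl} directly and on Lemma~\ref{csl-lem:mcsl2b} for the explicit form $\Imag(q\JJ+q^{}_1\JJ\bar q^{}_2)$, and refer to \cite{csl-habil} for the remaining routine manipulations.
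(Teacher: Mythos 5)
Your overall strategy coincides with the one the paper indicates (the paper itself gives no proof, deferring to \cite[Thm.~6.2.4]{csl-habil}): reduce to prime-power index via multiplicativity, parametrise candidate intersections by pairs of primitive odd quaternions with norms $p^{\alpha_1}$, $p^{\alpha_2}$, whose intersection has index $p^{\alpha_1+\alpha_2-\alpha_{12}}$ by Eq.~\eqref{csl-eq:sig12c}, and then count orbits of the equivalence relation cut out by Theorem~\ref{csl-eq2csl}, using the primitive-ideal count $(p+1)\ts p^{k-1}$ of Eq.~\eqref{csl-eq:c-cub} stratum by stratum, with Lemma~\ref{csl-lem:mcsl2b} supplying the explicit form of the double CSLs. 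Your identification of the two $\min$-inequalities in Theorem~\ref{csl-eq2csl} as the combinatorial heart of the matter is also accurate.

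However, one ingredient of your plan is wrong and would corrupt the count if carried out literally: there is \emph{no} parity constraint ``$r'-r''$ even'' in this problem, and nothing is ``inherited from the extended-admissible structure''. That constraint belongs to the hypercubic lattice $D_4^{*}$ of Section~\ref{csl-sec:cub4-d4}, where a \emph{single} rotation of $\RR^4$ must be parametrised by an \emph{admissible} pair, forcing $|pq|^2$ to be a perfect square. Here the two quaternions $q^{}_1, q^{}_2$ parametrise two \emph{independent} coincidence rotations of $\Gbcc$; each primitive odd quaternion yields a coincidence rotation on its own, with no compatibility condition linking the two, so all exponent pairs $(\alpha_1,\alpha_2)$ occur (for instance $\alpha_1=2$, $\alpha_2=1$, $\alpha_{12}=0$ gives a genuine MCSL of index $p^3$). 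The parity-of-$r$ terms such as $\frac{r}{2}-\left[\frac{r}{2}\right]$ in the closed formula arise purely from the orbit combinatorics$\,$---$\,$from symmetrising to $\alpha_1\geq\alpha_2$ and from the saturation of the $\min$-conditions$\,$---$\,$not from any restriction on which input pairs exist. If you were to enforce the spurious parity condition, entire strata of MCSLs (all those with $\alpha_1-\alpha_2$ odd) would be omitted, and the resulting sum could not reproduce the stated formula.
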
 

As $c^{(2)}_{\mathsf{bcc}}(n)$ is a multiplicative function, we can find an
explicit expression for its Dirichlet series generating function as usual.

\begin{theorem}\label{csl-theo:c2gen}
  Let\/ $c^{(2)}_{\mathsf{bcc}}(m)$ be the number of distinct MCSLs of
  index\/ $m$ that are an intersection of at most two ordinary CSLs.
  Then, $c^{(2)}_{\mathsf{bcc}}(\Sig)$ is a multiplicative arithmetic
  function whose Dirichlet series is given by\index{Dirichlet~series}
  {\allowdisplaybreaks[4]
  \begin{align*}
  \Psi^{(2)}_{\mathsf{bcc}}(s)\,  :=&\hphantom{i}
   \sum_{n=1}^\infty\frac{c^{(2)}_{\mathsf{bcc}}(n)}{n^s}  \, =
     \!\prod_{p\in\PP\setminus\{2\}}\! \psi_2(p,s) \\[2mm]
      =& \hphantom{I}  \myfrac{1-2^{1-3s}}{1+2^{-3s}}\,
        \frac{\zeta(3s-1)\ts\zeta(3s)}{\zeta(6s)}\,
           \varphi^{(2)}_{\mathsf{bcc}}(s)\, \Psi^{}_{\mathsf{bcc}}(s) \\[2mm]
   =& \hphantom{I} \frac{(1-2^{1-s})(1-2^{1-3s})}{(1+2^{-s})(1+2^{-3s})}\,
   \frac{\zeta(s-1)\ts\zeta(s)\ts\zeta(3s-1)\ts\zeta(3s)}
        {\zeta(2s)\ts\zeta(6s)}\,
    \varphi^{(2)}_{\mathsf{bcc}}(s) \\[3mm]
  =& \hphantom{I} \textstyle{1+\frac{4}{3^s}+\frac{6}{5^s}+\frac{8}{7^s}+
  \frac{18}{9^s}+\frac{12}{11^s}
  +\frac{14}{13^s}+\frac{24}{15^s}+\frac{18}{17^s}
  +\frac{20}{19^s}}+\frac{32}{21^s}\\[2mm]
  &\mbox{}\quad \textstyle{\mbox{}+\frac{24}{23^s}+
   \frac{45}{25^s}+\frac{76}{27^s}+\frac{30}{29^s}+\frac{32}{31^s}
   +\frac{48}{33^s}+\frac{48}{35^s}+\frac{38}{37^s}+\frac{56}{39^s}
  +\cdots} ,
  \end{align*}}
  where\/ $\psi_2(p,s)$ is the Euler factor corresponding to\/
  $c^{(2)}_{\mathsf{bcc}}(p)$, which is given by
\[
  \psi^{}_{2}(p,s) \, := \sum_{r=1}^\infty\frac{c^{(2)}_{}(p^r)}{p^{rs}} \, = \, 
  % & = 1+\frac{(p+1)}{2 p}
  %   \left(\frac{1}{\left(1-p^{1-s}\right)^2}-1\right)
  %   +\frac{(p+1)  p^{-3 s}}{2 \left(1-p^{1-3s}\right)}
  %   \left(\frac{1-p^{1-2s}}{\left(1-p^{1-s}\right)^2}+1\right) \\
   \frac{(1+p^{-s})(1+p^{-3s})}{(1-p^{1-s})(1-p^{1-3s})} \ts\times\ts C(p,s) 
\]
with
\[
  C(p,s)\, = \,  \left( 1 + \frac{p^{-2s}(p^2+p)}{2(1+p^{-s})(1-p^{1-s})}
             - \frac{p^{-4s}(p+1)}{(1+p^{-s})(1-p^{1-s})(1+p^{-3s})} \right),
\]
while\/ $\varphi^{(2)}_{\mathsf{bcc}}(s)$ is then given by
\[
   \varphi^{(2)}_{\mathsf{bcc}}(s) \; = 
   \prod_{p\in\PP\setminus\{2\}}\! C(p,s) \ts ,
\]
where the product runs over all odd rational primes. \qed
\end{theorem}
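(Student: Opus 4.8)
The plan is to compute the Dirichlet series generating function $\Psi^{(2)}_{\mathsf{bcc}}(s)$ directly from the explicit formula for $c^{(2)}_{\mathsf{bcc}}(p^r)$ given in the preceding theorem. Since $c^{(2)}_{\mathsf{bcc}}(n)$ is multiplicative, the generating function factors as an Euler product $\prod_{p\neq 2}\psi_2(p,s)$, where each local factor is $\psi_2(p,s)=\sum_{r=0}^{\infty}c^{(2)}_{\mathsf{bcc}}(p^r)p^{-rs}$ with the convention $c^{(2)}_{\mathsf{bcc}}(1)=1$. The bulk of the work is therefore the evaluation of a single Euler factor, after which everything reduces to recognising zeta functions.

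First I would substitute the closed form for $c^{(2)}_{\mathsf{bcc}}(p^r)$ into the sum $\sum_r c^{(2)}_{\mathsf{bcc}}(p^r)p^{-rs}$ and evaluate term by term. The formula is a sum of several pieces: a term $\frac{r+1}{2}(p+1)p^{r-1}$ that is linear in $r$ times a geometric factor, two lower-order polynomial-in-$r$ terms, and two genuinely geometric tails coming from the Gau{\ss}-bracket expressions $p^{r-2[r/3]-1}$, $p^{4[r/3]-r+2}$ and $p^{4[r/2]-r-2}$. Each of these is summable in closed form: the linear-in-$r$ part produces a $(1-p^{1-s})^{-2}$-type factor via $\sum_r (r+1)x^r=(1-x)^{-2}$, and the bracket terms produce geometric series whose common ratios depend on the residue of $r$ modulo $2$ or $3$, so I would split those sums according to $r\bmod 2$ and $r\bmod 3$ and sum each arithmetic progression separately. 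Collecting all contributions, the aim is to write the result in the factored shape
\[
  \psi_2(p,s)\, =\, \frac{(1+p^{-s})(1+p^{-3s})}{(1-p^{1-s})(1-p^{1-3s})}\, C(p,s),
\]
with $C(p,s)$ exactly as stated. The factor $\frac{(1+p^{-s})(1+p^{-3s})}{(1-p^{1-s})(1-p^{1-3s})}$ is engineered so that the product over $p\neq 2$ of its pieces assembles into the quotient of zeta functions: the $(1+p^{-s})/(1-p^{1-s})$ part reproduces the Euler factor of $\Psi^{}_{\mathsf{bcc}}(s)$ from Theorem~\ref{csl-theo:CSL-cub}, while $(1+p^{-3s})/(1-p^{1-3s})$ reproduces $\zeta(3s-1)\zeta(3s)/\zeta(6s)$ after the standard manipulations $\frac{1}{1-p^{1-3s}}=\frac{1-p^{2(1-3s)}}{(1-p^{1-3s})(1+p^{1-3s})}$ and $1+p^{-3s}=\frac{1-p^{-6s}}{1-p^{-3s}}$.

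The second stage is purely bookkeeping: having identified the two zeta-type factors, I would collect the leftover factor as $\varphi^{(2)}_{\mathsf{bcc}}(s):=\prod_{p\neq 2}C(p,s)$, and then restore the missing $p=2$ Euler factors in the global zeta functions by inserting the explicit correction factors $\frac{1-2^{1-s}}{1+2^{-s}}$ and $\frac{1-2^{1-3s}}{1+2^{-3s}}$ (and $\frac{1}{\zeta(2s)}$, $\frac{1}{\zeta(6s)}$ as appropriate), exactly as was done for $\Psi^{}_{\mathsf{bcc}}$ itself. Matching against $\Psi^{}_{\mathsf{bcc}}(s)=\frac{1-2^{1-s}}{1+2^{-s}}\frac{\zeta(s)\zeta(s-1)}{\zeta(2s)}$ then gives both the middle and the third displayed expressions for $\Psi^{(2)}_{\mathsf{bcc}}(s)$. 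Finally, I would verify the first several Dirichlet coefficients by direct computation of $c^{(2)}_{\mathsf{bcc}}(n)$ for small $n$ (up to $39$, using multiplicativity), which provides an independent numerical check on the algebra.

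The main obstacle will be the careful summation of the Gau{\ss}-bracket terms. Because $[r/3]$ and $[r/2]$ appear in the exponents, the summand is not a single geometric sequence but depends on the residue class of $r$, so the series must be broken into residues mod $6$ and each resummed; keeping track of the boundary indices (the formula for $c^{(2)}_{\mathsf{bcc}}(p^r)$ may behave differently for small $r$, and one must confirm $c^{(2)}_{\mathsf{bcc}}(1)=1$ is correctly produced) is the delicate part. Getting all these fractional-exponent contributions to collapse cleanly into the compact rational function $C(p,s)$ is where errors are most likely to creep in, and it is the step I expect to demand the most patience.
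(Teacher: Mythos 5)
Your proposal is correct and takes essentially the same route as the paper, which likewise obtains $\Psi^{(2)}_{\mathsf{bcc}}(s)$ from the multiplicativity of $c^{(2)}_{\mathsf{bcc}}$ by summing each Euler factor $\sum_{r\geq 0} c^{(2)}_{\mathsf{bcc}}(p^r)\ts p^{-rs}$ via the explicit prime-power formula (splitting the Gau{\ss}-bracket terms by residue class) and then reassembling the zeta functions together with the $p=2$ correction factors. One inessential slip: your parenthetical identity $\frac{1}{1-p^{1-3s}}=\frac{1-p^{2(1-3s)}}{(1-p^{1-3s})(1+p^{1-3s})}$ is false as written (its right-hand side equals $1$), but it is also unnecessary, since $\prod_{p}\bigl(1-p^{1-3s}\bigr)^{-1}=\zeta(3s-1)$ directly.
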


The explicit knowledge of $\Psi^{(2)}_{\mathsf{bcc}}(s)$ allows us to
find its analytic properties. We know from Section~\ref{csl-sec:cubic}
that $\Psi^{}_{\mathsf{bcc}}(s)$ is meromorphic function of $s$, whose
rightmost pole is located at $s=2$.  Furthermore,
$\varphi^{(2)}_{\mathsf{bcc}}(s)$ converges absolutely in the
half-plane $\bigl\{\Real(s)>\frac{3}{2}\bigr\}$, which guarantees its
analyticity there. Thus, we get the following asymptotic behaviour.

\begin{corollary}[{\cite[Cor.~6.2.6]{csl-habil}}]\label{csl-cor:asymp2}
  The asymptotic behaviour\index{asymptotic~behaviour} of the
  summatory function of\/ $c^{(2)}_{\mathsf{bcc}}(m)$ is given by
  \[
    \sum_{m\leq x}c^{(2)}_{\mathsf{bcc}}(m)
    \,\sim\, \frac{\rho^{(2)}_{\mathsf{bcc}}}{2}x^2
    \,\approx\, 0.356{\ts}491 \, x^2 ,
     %0.356491304
  \]
  as\/ $x\to\infty$, with
  \begin{align*} \qquad\qquad\quad\;\;
    \rho^{(2)}_{\mathsf{bcc}}\, &:= 
    \,\Res_{s=2}\ts\bigl(\Psi^{(2)}_{\mathsf{bcc}}(s)\bigr)
    \,=\, \myfrac{124}{325}\,
      \frac{\zeta(2)\ts\zeta(6)\ts\zeta(5)}{\zeta(4)\ts\zeta(12)}\,
                                  \varphi^{(2)}_{\mathsf{bcc}}(2)\\[1mm]
       &\hphantom{:}= \,\myfrac{3866940}{691\ts \pi^8}\,\zeta(5)\,
         \varphi^{(2)}_{\mathsf{bcc}}(2)
       \,\approx\, 0.712{\ts}983\ts .    %0.71298261
   \qquad\qquad\qquad\quad\;\,\qed
  \end{align*}
\end{corollary}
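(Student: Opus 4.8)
The plan is to apply Theorem~\ref{csl-thm:meanvalues} (Delange's theorem) to the Dirichlet series $\Psi^{(2)}_{\mathsf{bcc}}(s)$ whose explicit form in terms of zeta functions was established in Theorem~\ref{csl-theo:c2gen}. Since the coefficients $c^{(2)}_{\mathsf{bcc}}(m)$ are non-negative by construction (they count lattices), the hypotheses of Theorem~\ref{csl-thm:meanvalues} apply once we locate the rightmost singularity and verify it is a simple pole with no other singularities on the critical line. The asymptotic form $\sum_{m\leq x}c^{(2)}_{\mathsf{bcc}}(m)\sim \frac{\rho^{(2)}_{\mathsf{bcc}}}{2}x^2$ then follows immediately with $\alpha=2$, $n=0$, and $h(\alpha)=\rho^{(2)}_{\mathsf{bcc}}$, since the theorem gives leading coefficient $h(\alpha)/(\alpha\cdot n!)=\rho^{(2)}_{\mathsf{bcc}}/2$.

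First I would identify the rightmost pole. Using the factorisation from Theorem~\ref{csl-theo:c2gen},
\[
  \Psi^{(2)}_{\mathsf{bcc}}(s)\, = \, \frac{(1-2^{1-s})(1-2^{1-3s})}{(1+2^{-s})(1+2^{-3s})}\,
   \frac{\zeta(s-1)\ts\zeta(s)\ts\zeta(3s-1)\ts\zeta(3s)}
        {\zeta(2s)\ts\zeta(6s)}\,
    \varphi^{(2)}_{\mathsf{bcc}}(s),
\]
the poles of the numerator come from $\zeta(s-1)$ at $s=2$, from $\zeta(s)$ at $s=1$, from $\zeta(3s-1)$ at $s=\tfrac23$, and from $\zeta(3s)$ at $s=\tfrac13$. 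As $\varphi^{(2)}_{\mathsf{bcc}}(s)$ converges absolutely and hence is analytic in $\{\Real(s)>\tfrac32\}$, and the remaining factors (including the denominators $\zeta(2s)$, $\zeta(6s)$ and the finite $2$-Euler factor) are analytic and non-vanishing near $s=2$, the rightmost pole is the simple pole contributed by $\zeta(s-1)$ at $s=2$. One must also check there are no further singularities on the line $\{\Real(s)=2\}$; this holds because $\zeta(s-1)$ has only the single pole at $s=2$ on that line, while $\zeta(2s)$ is non-zero for $\Real(s)=2$ (as $\zeta$ has no zeros with real part $\geq 1$), so the quotient is holomorphic on the rest of the line.

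Next I would compute the residue. Writing $h(s):=(s-2)\Psi^{(2)}_{\mathsf{bcc}}(s)$, one has $\rho^{(2)}_{\mathsf{bcc}}=\Res_{s=2}\bigl(\Psi^{(2)}_{\mathsf{bcc}}(s)\bigr)=\lim_{s\to2}h(s)$. Since $\Res_{s=2}\zeta(s-1)=1$, this residue equals the value at $s=2$ of all the remaining analytic factors, namely
\[
  \rho^{(2)}_{\mathsf{bcc}}\, =\,
  \frac{(1-2^{-1})(1-2^{-5})}{(1+2^{-2})(1+2^{-6})}\,
  \frac{\zeta(2)\ts\zeta(5)\ts\zeta(6)}{\zeta(4)\ts\zeta(12)}\,
  \varphi^{(2)}_{\mathsf{bcc}}(2).
\]
Collecting the $2$-power factors and simplifying yields the closed form $\frac{124}{325}\,\frac{\zeta(2)\zeta(6)\zeta(5)}{\zeta(4)\zeta(12)}\,\varphi^{(2)}_{\mathsf{bcc}}(2)$, and inserting the known special values $\zeta(2)=\pi^2/6$, $\zeta(4)=\pi^4/90$, $\zeta(6)=\pi^6/945$, $\zeta(12)=\tfrac{691\pi^{12}}{638512875}$ gives $\frac{3866940}{691\,\pi^8}\zeta(5)\,\varphi^{(2)}_{\mathsf{bcc}}(2)$, matching the stated expression.

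The main obstacle is the arithmetic factor $\varphi^{(2)}_{\mathsf{bcc}}(s)=\prod_{p\neq2}C(p,s)$, which has no closed form in terms of standard zeta values. Its role is twofold: one must confirm that it is analytic and non-zero at $s=2$ (so that it neither introduces a spurious pole nor kills the one from $\zeta(s-1)$), and one must evaluate $\varphi^{(2)}_{\mathsf{bcc}}(2)$ numerically to obtain the final constant $\approx0.712\,983$. Analyticity at $s=2$ is guaranteed since the product converges absolutely for $\Real(s)>\tfrac32$; the numerical evaluation proceeds by noting that each $C(p,s)=1+O(p^{-2s+2})$, so the product converges geometrically at $s=2$ and can be truncated with controlled error. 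Once $\varphi^{(2)}_{\mathsf{bcc}}(2)$ is computed, the numerical value of $\rho^{(2)}_{\mathsf{bcc}}$ and hence of $\rho^{(2)}_{\mathsf{bcc}}/2\approx0.356\,491$ follows directly, completing the argument.
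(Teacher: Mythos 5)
Your proposal is correct and follows essentially the same route as the paper: both apply Delange's theorem (Theorem~\ref{csl-thm:meanvalues}) to the explicit factorisation of $\Psi^{(2)}_{\mathsf{bcc}}(s)$ from Theorem~\ref{csl-theo:c2gen}, locate the simple rightmost pole at $s=2$ coming from $\zeta(s-1)$ (the paper phrases this via the known pole of $\Psi^{}_{\mathsf{bcc}}(s)$, which is the same thing), use the analyticity of $\varphi^{(2)}_{\mathsf{bcc}}(s)$ in $\{\Real(s)>\frac{3}{2}\}$, and read off the residue from the remaining factors at $s=2$. Your evaluation of the $2$-adic Euler factors to $\frac{124}{325}$ and of the zeta values to $\frac{3866940}{691\pi^8}$ matches the stated constants exactly.
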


If we compare the asymptotic growth rates for ordinary CSLs and
\mbox{MCSLs}, we see that the latter is not much bigger than the
former. This shows that most MCSLs are ordinary CSLs. This behaviour
is not surprising, since
$c^{(2)}_{\mathsf{bcc}}(m)=c^{}_{\mathsf{bcc}}(m)$ for square-free
indices $m$. Thus, all terms $n^{-s}$ with $n$ square-free are missing
in the expansion of
$\Psi^{(2)}_{\mathsf{bcc}}(s)-\Psi^{}_{\mathsf{bcc}}(s)$, whose first
terms are given by
\[
\begin{split}
  \Psi^{(2)}_{\mathsf{bcc}}(s)-\Psi^{}_{\mathsf{bcc}}(s)\, =& \hphantom{I}
  \textstyle{\frac{6}{9^s}+\frac{15}{25^s}+\frac{40}{27^s}
  +\frac{36}{45^s}+\frac{28}{49^s}+\frac{48}{63^s}+\frac{60}{75^s}
  +\frac{174}{81^s}+\frac{72}{99^s}}\\[1mm]
  &\hphantom{I}\textstyle{\mbox{}\quad\ts +\frac{84}{117^s}
  +\frac{66}{121^s}+\frac{156}{125^s}+\frac{240}{135^s}+\frac{112}{147^s}
  +\frac{108}{153^s}
  +\cdots}
\end{split}
\]
For the determination of the counting function, it was sufficient to
have an explicit expression for $\Gbcc(R^{}_{1},R^{}_{2})$ for prime
power indices.  Nevertheless, we can give an explicit expression for
MCSLs with general index as well, which generalises
Lemma~\ref{csl-lem:mcsl2b}.

\begin{theorem}[{\cite[Thm.~6.2.7]{csl-habil}}]\label{csl-theo:mcsl2}
  Let\/ $q^{}_1$ and $q^{}_{2}$ be primitive odd quaternions and let\/
  $q$ be their least common right multiple. Then, one has\/
  $\Gbcc\bigl(R(q^{}_1),R(q^{}_2)\bigr)= 
  \Imag(q \JJ+q^{}_1\JJ\ts{\bar{q}}^{}_2)= \Imag(q
  \JJ+q^{}_2\JJ\ts{\bar{q}}^{}_1)$.  \qed
\end{theorem}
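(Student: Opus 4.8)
The plan is to establish the identity as a chain of two inclusions between the module $M:=\Imag\bigl(q\JJ+q_1\JJ\ts\bar q_2\bigr)$ and the MCSL $\Gbcc\bigl(R(q_1),R(q_2)\bigr)=\Gbcc(R(q_1))\cap\Gbcc(R(q_2))=\Imag(q_1\JJ)\cap\Imag(q_2\JJ)$, where the last equality uses Proposition~\ref{csl-theo:bcc-csl} together with the hypothesis that $q_1,q_2$ are odd and primitive. Before treating the inclusions I would record that the two stated forms of $M$ coincide: since $\overline{\JJ}=\JJ$ and $\overline{q_1\ts x\ts\bar q_2}=q_2\ts\bar x\ts\bar q_1$ for $x\in\JJ$, one has $\overline{q_1\JJ\bar q_2}=q_2\JJ\bar q_1$, and because $\Imag(\bar w)=-\Imag(w)$ while the modules involved are closed under negation, this yields $\Imag(q_1\JJ\bar q_2)=\Imag(q_2\JJ\bar q_1)$, whence $\Imag(q\JJ+q_1\JJ\bar q_2)=\Imag(q\JJ+q_2\JJ\bar q_1)$.

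The inclusion $M\subseteq\Gbcc(R(q_1),R(q_2))$ is direct. As $q=\lcrm(q_1,q_2)$ satisfies $q\JJ=q_1\JJ\cap q_2\JJ\subseteq q_i\JJ$, monotonicity of the projection gives $\Imag(q\JJ)\subseteq\Imag(q_1\JJ)\cap\Imag(q_2\JJ)$. For the second generator, $\JJ\bar q_2\subseteq\JJ$ forces $q_1\JJ\bar q_2\subseteq q_1\JJ$, so $\Imag(q_1\JJ\bar q_2)\subseteq\Imag(q_1\JJ)$, while the conjugation symmetry just noted rewrites the same module as $\Imag(q_2\JJ\bar q_1)\subseteq\Imag(q_2\JJ)$. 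Since $\Imag$ is additive, the sum of these modules lands in $\Imag(q_1\JJ)\cap\Imag(q_2\JJ)$, as required.

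The reverse inclusion $\Imag(q_1\JJ)\cap\Imag(q_2\JJ)\subseteq M$ is the substantial part, and here I would reduce to the prime-power case already settled in Lemma~\ref{csl-lem:mcsl2b}. Both sides are full-rank rational lattices in $\Imag(\HH)\otimes\QQ\cong\QQ^3$, so equality may be verified one rational prime at a time, i.e.\ after tensoring with $\ZZ_{p}$. For $p=2$ and for every odd $p\nmid|q_1|^2|q_2|^2$, the quaternions $q_1,q_2$ (and hence $q$) are units in $\JJ\otimes\ZZ_{p}$, so both lattices localise to $\Imag(\JJ)\otimes\ZZ_{p}$ and agree trivially. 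For an odd prime $p$ dividing the norms, $\JJ\otimes\ZZ_{p}$ is a maximal order in the split algebra $\HH(\QQ_{p})\cong\mathrm{M}_{2}(\QQ_{p})$; all one-sided ideals are principal, and the localisations of $q_i\JJ$, of $q\JJ$ and of $q_1\JJ\bar q_2$ depend only on the $p$-primary factors $q_{i,p}:=\gcld(q_i,p^{a_i})$, where $p^{a_i}$ is the exact power of $p$ dividing $|q_i|^2$. There $q=\lcrm(q_1,q_2)$ localises to $\lcrm(q_{1,p},q_{2,p})$, and the $p$-adic identity is precisely the localisation of Lemma~\ref{csl-lem:mcsl2b} applied to $q_{1,p},q_{2,p}$. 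Equivalently, one may argue globally by splitting each $q_i$, via the essentially unique factorisation in $\JJ$, into factors of coprime prime-power norm and using that such factors commute up to units while ideal sums and intersections distribute over this coprime decomposition.

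The main obstacle is exactly this bookkeeping: one must check that the bimodule term $q_1\JJ\bar q_2$ and the (generally non-primitive) generator $q=\lcrm(q_1,q_2)$ both respect the coprime, resp.\ $p$-adic, decomposition, so that the single-prime statement of Lemma~\ref{csl-lem:mcsl2b} assembles correctly. The subtlety is sharpened by the fact that, when $q$ is not primitive, $\Imag(q\JJ)$ has index strictly larger than $|q|^2$ in $\Gbcc$, so the extra generator $q_1\JJ\bar q_2$ is genuinely needed to fill out the MCSL. A useful cross-check is an index comparison: the index of $M$ computed from the prime-power formula must match $\Sig(R(q_1),R(q_2))=|q_1|^2|q_2|^2/|\gcld(q_1,q_2)|^2$ from Eq.~\eqref{csl-eq:sig12c}, which, combined with the already established inclusion $M\subseteq\Gbcc(R(q_1),R(q_2))$, forces equality.
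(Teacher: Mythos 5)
Your proof is correct: the easy inclusion, the conjugation symmetry identifying the two right-hand sides, and the reduction of the reverse inclusion to the prime-power case of Lemma~\ref{csl-lem:mcsl2b} by localisation at each rational prime (equivalently, by coprime factorisation in $\JJ$) are all sound, and the needed hypotheses (oddness and primitivity, which are inherited by the $p$-parts $\gcld(q^{}_i,p^{a_i})$) are respected throughout. This is essentially the approach the paper intends---it states the theorem without proof, citing \cite{csl-habil}, but its surrounding discussion explicitly reduces general MCSLs to prime-power index via the factorisation structure of $\JJ$, which is precisely what your $p$-adic assembly implements; the only loose point is the aside that coprime-norm factors ``commute up to units'' (metacommutation rearranges factorisations rather than commuting fixed factors up to units), but your primary localisation argument never relies on it.
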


\subsection{Intersections of three or more CSLs}
\label{csl-sec:mcsl-cub-3}

We can go one step further and analyse MCSLs which are the
intersection of at most three ordinary CSLs. Again, it is sufficient
to consider only MCSLs of prime power index. Also in this case, we get
an explicit expression for the MCSLs as follows.

\begin{theorem}[{\cite[Thm.~6.3.7]{csl-habil}}]\label{csl-theo1:3p}
  Let\/ $q_i$ with\/ $i\in\{1,2,3\}$ be odd primitive quaternions with
  prime power norm\/ $|q_i|^2=p^{\alpha_i}$, such that\/
  $|q^{}_1|^2\geq |q^{}_i|^2$.  Let\/ ${m}_{ij}=\lcrm (q^{}_i,q^{}_j)$
  and\/ ${g}_{ij}=\gcld(q^{}_i,q^{}_j)$. Let\/ $|{m}_{12}|^2\geq
  |{m}_{13}|^2$. Then,
  \[
  \Gbcc\bigl(R(q^{}_1),R(q^{}_2),R(q^{}_3)\bigr)
  \, =\, \Imag\bigl({m}_{12}\JJ+nq^{}_1\JJ\ts{\bar{q}}^{}_2\bigr),
  \]
  where\/
  $n=\max\Bigl(1,\frac{|q^{}_3|^2}{|{g}^{}_{13}|^2|{g}^{}_{23}|^2}\Bigr)$.\qed
\end{theorem}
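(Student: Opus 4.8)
The plan is to reduce the three-fold intersection to a composition of two-fold intersections and then apply Theorem~\ref{csl-theo:mcsl2} repeatedly. First I would observe that any MCSL of order three can be built up iteratively as $\Gbcc(R^{}_1,R^{}_2,R^{}_3) = \big(\Gbcc(R^{}_1)\cap\Gbcc(R^{}_2)\big)\cap\Gbcc(R^{}_3)$, and by Theorem~\ref{csl-theo:mcsl2} the inner intersection is $\Imag({m}_{12}\JJ+q^{}_1\JJ\,\bar{q}^{}_2)$, where ${m}_{12}=\lcrm(q^{}_1,q^{}_2)$. The goal is then to intersect this lattice with $\Gbcc(R(q^{}_3))=\Imag(q^{}_3\JJ)$ and to show that the result matches $\Imag({m}_{12}\JJ+nq^{}_1\JJ\,\bar{q}^{}_2)$ with the stated factor $n$. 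The ordering hypotheses $|q^{}_1|^2\ge|q^{}_i|^2$ and $|{m}_{12}|^2\ge|{m}_{13}|^2$ should be used only to fix which quaternion plays which role, so that the formula for $n$ comes out in the displayed form.

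Next I would work prime-locally, exploiting that $|q^{}_i|^2=p^{\alpha_i}$ are all powers of a single odd prime $p$ and that $\JJ$ has an essentially unique left-ideal arithmetic (principal ideal ring). This turns the index bookkeeping into exponent arithmetic: writing $|{g}^{}_{ij}|^2=p^{\gamma_{ij}}$ and $|{m}_{ij}|^2=p^{\mu_{ij}}$ with $\mu_{ij}=\alpha_i+\alpha_j-\gamma_{ij}$, the quantity $\frac{|q^{}_3|^2}{|{g}^{}_{13}|^2|{g}^{}_{23}|^2}=p^{\alpha_3-\gamma_{13}-\gamma_{23}}$ is a power of $p$, and $n=\max(1,\cdot)$ simply selects whether the third CSL imposes an extra scaling on the lattice $\Imag(q^{}_1\JJ\,\bar{q}^{}_2)$ or is already subsumed. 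The plan is to compute both index and inclusion: I would first verify $\Imag({m}_{12}\JJ+nq^{}_1\JJ\,\bar{q}^{}_2)\subseteq \Gbcc(R^{}_1,R^{}_2,R^{}_3)$ by checking termwise that ${m}_{12}\JJ$ and $nq^{}_1\JJ\,\bar{q}^{}_2$ both lie in each of the three CSLs (the first because ${m}_{12}$ is a common right multiple, the second because the factor $n$ is precisely chosen to force $nq^{}_1\JJ\,\bar{q}^{}_2\subseteq q^{}_3\JJ$ after imaginary projection), and then confirm equality by comparing indices in $\Gbcc$ using Eq.~\eqref{csl-eq:sig12c} and the multiplicativity machinery of Theorem~\ref{csl-thm:mc}.

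The delicate point, and the step I expect to be the main obstacle, is verifying the inclusion $\Imag(nq^{}_1\JJ\,\bar{q}^{}_2)\subseteq \Imag(q^{}_3\JJ)$ together with the \emph{sharpness} of the factor $n$. The difficulty is noncommutative: the generator $q^{}_1\JJ\,\bar{q}^{}_2$ is a two-sided object that is not itself a left ideal, so I cannot simply divide exponents as in the commutative case. The plan here is to pass through the greatest common left divisors ${g}^{}_{13}$ and ${g}^{}_{23}$, writing $q^{}_1={g}^{}_{13}q^{}_1'$ and $q^{}_3={g}^{}_{13}q^{}_3'$ with $\gcld(q^{}_1',q^{}_3')$ a unit, and similarly factoring off ${g}^{}_{23}$ from $q^{}_2$ and $q^{}_3$, so that the residual coprimality guarantees that exactly the power $p^{\alpha_3-\gamma_{13}-\gamma_{23}}$ of $q^{}_3$ survives as an obstruction and must be cleared by scaling. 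Throughout I would lean on the fact that $R(s)$ is a symmetry of $\Gbcc$ whenever $|s|^2$ is a power of $2$, so that only the odd parts matter, and on the projection identity $\Imag(q\JJ)=\Gbcc(R(q))$ from Proposition~\ref{csl-theo:bcc-csl}. Once the prime-power case and its index count are secured, the general case follows by the multiplicative decomposition already invoked for MCSLs, as in the discussion preceding Theorem~\ref{csl-theo:mcsl2}, and the technical exponent estimates are exactly the kind deferred to~\cite{csl-habil}.
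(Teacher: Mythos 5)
Your overall skeleton---iterate the two-fold formula of Lemma~\ref{csl-lem:mcsl2b}/Theorem~\ref{csl-theo:mcsl2}, work locally at the single odd prime $p$, prove one inclusion termwise, and close with an index count---is the natural route (the paper itself gives no proof but defers to \cite[Thm.~6.3.7]{csl-habil}). However, your termwise verification fails at its very first step. You justify $\Imag(m_{12}\JJ)\subseteq\Gbcc\bigl(R(q_3)\bigr)$ ``because $m_{12}$ is a common right multiple'', but $m_{12}=\lcrm(q_1,q_2)$ is a common right multiple of $q_1$ and $q_2$ only; it has no a priori relation to $q_3$. By the lemma in Section~\ref{csl-sec:cubic} (for odd norms, $\Imag(q\JJ)\subseteq\Imag(r\JJ)$ if and only if $q\JJ\subseteq r\JJ$), what you actually have to prove is that $q_3$ is a \emph{left divisor} of $\lcrm(q_1,q_2)$, i.e.\ $q_1\JJ\cap q_2\JJ\subseteq q_3\JJ$. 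This is a substantive arithmetic fact, and it is precisely here that the hypotheses $|q_1|^2\geq|q_i|^2$ and $|m_{12}|^2\geq|m_{13}|^2$ must be used---contrary to your assertion that they serve ``only to fix which quaternion plays which role''. Concretely, take $q_1=ab$ primitive of norm $p^2$, $q_2=a$, and $q_3=c$ of norm $p$ with $c\ts\JJ\neq a\JJ$: then $m_{12}=q_1=ab$, whose only norm-$p$ left divisor (up to units) is $a$, so $q_3$ does not left-divide $m_{12}$ and the proposed right-hand side is not even contained in the third CSL. This configuration is excluded exactly because $|m_{12}|^2=p^2<p^3=|m_{13}|^2$ violates the ordering hypothesis. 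A correct proof must therefore derive $m_{12}\in q_3\JJ$ from the exponent inequalities (for instance via the local identification of $\JJ$ at $p$ with $2\times 2$ matrices over the $p\ts$-adic integers, where primitivity makes the relevant quotients cyclic and the inclusion follows from the inequalities $\alpha_1\geq\alpha_2,\alpha_3$ and $\mu_{12}\geq\mu_{13}$); this containment is arguably the main new idea of the theorem, and your proposal omits it.

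A secondary weakness: your closing index comparison leans on Eq.~\eqref{csl-eq:sig12c} and Theorem~\ref{csl-thm:mc}, but Eq.~\eqref{csl-eq:sig12c} gives only the index of the \emph{double} CSL, and Theorem~\ref{csl-thm:mc} is a multiplicativity criterion, not an index formula. To finish you would need the triple index $\Sig(R_1,R_2,R_3)$---say via Lemma~\ref{csl-lem:sig1n}, which requires controlling the unknown quantity $\Sig_+(R_1,R_2;R_3)$---or a direct computation of the index of $\Imag(m_{12}\JJ+n\ts q_1\JJ\ts\bar{q}_2)$ in $\Gbcc$; neither is routine. Your instinct that the factor $n$ and the inclusion of the second term in $\Imag(q_3\JJ)$ are delicate is sound, but as written the argument breaks earlier, at the unproved containment $m_{12}\JJ\subseteq q_3\JJ$.
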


Note that the expression for the triple CSL in
Theorem~\ref{csl-theo1:3p} is very similar to the expression for the
double CSL in Lemma~\ref{csl-lem:mcsl2b}.  In fact, the only
difference is that an additional factor $n$ occurs. If $n=1$, the
triple CSL is just the intersection of two ordinary CSLs, since one
has the relation
$\Gbcc(R(q^{}_1),R(q^{}_2),R(q^{}_3)) =
\Gbcc(R(q^{}_1),R(q^{}_2))\subseteq\Gbcc(R(q^{}_1),R(q^{}_3))$
in this case. Let us note in passing that this yields a criterion for
$\vG \bigl( R(q^{}_{1}),R(q^{}_{2})\bigr) \subseteq \vG \bigl(
R(q^{}_{1}), R(q^{}_{3})\bigr)$. In particular, under the
assumptions of Theorem~\ref{csl-theo1:3p}, this inclusion holds 
if and only if
\[
    \frac{\lvert q^{}_{3} \rvert^{2}}
    {\lvert g^{}_{13} \rvert^{2} \,
     \lvert g^{}_{23} \rvert^{2}}  
    \, \le \, 1 \ts .
\]
But even if $n>1$, the triple CSL is just a multiple of a double CSL,
as we have the following result.

\begin{theorem}[{\cite[Thm.~6.3.8]{csl-habil}}]\label{csl-theo:p3CSL}
  Let\/ $\vG'$ be a sublattice of\/ $\Gbcc$ of prime power index\/
  $p^\alpha$.  Then, $\vG'$ can be represented as the intersection of
  three ordinary CSLs,
  \[
  \vG' \, =\, \Gbcc(R^{}_1)\cap\Gbcc(R^{}_2)\cap\Gbcc(R^{}_3)\ts ,
  \]
  if and only if there exists\/ $\beta\in \Nnull$ together with two
  coincidence rotations\/ $R'_1$ and\/ $R'_2$ such that\/
  $\vG'=p^\beta(\Gbcc(R'_1)\cap\Gbcc(R'_2))$.  The integer\/ $\beta$ is
  determined uniquely by\/ $\vG'$. \qed
\end{theorem}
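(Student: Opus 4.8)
The plan is to prove Theorem~\ref{csl-theo:p3CSL} by combining the explicit description of triple CSLs from Theorem~\ref{csl-theo1:3p} with the structural results on double CSLs already established. The statement has two directions. For the ``if'' direction, suppose $\vG'=p^\beta(\Gbcc(R'_1)\cap\Gbcc(R'_2))$. First I would observe that multiplication by $p^\beta$ corresponds, at the level of quaternions, to passing from primitive generators $q'_1,q'_2$ (with $R'_i=R(q'_i)$) to the quaternions $p^\beta q'_1, p^\beta q'_2$; but since these are no longer primitive, I instead want to realise the factor $p^\beta$ through a suitable third rotation. The key identity to exploit is that $\Gbcc(R(q))$ for a \emph{non}-primitive quaternion $q=p^\beta r$ equals $p^\beta\,\Imag(r\JJ)$, so a factor of $p^\beta$ can be absorbed by choosing a third generator $q^{}_3$ whose greatest common left divisors $g^{}_{13}$, $g^{}_{23}$ with $q^{}_1,q^{}_2$ are arranged so that $n=|q^{}_3|^2/(|g^{}_{13}|^2|g^{}_{23}|^2)=p^\beta$ in Theorem~\ref{csl-theo1:3p}. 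Producing such a $q^{}_3$ explicitly is the constructive heart of this direction.

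For the ``only if'' direction, I would start from $\vG'=\Gbcc(R^{}_1)\cap\Gbcc(R^{}_2)\cap\Gbcc(R^{}_3)$ with $R^{}_i=R(q^{}_i)$ and $|q^{}_i|^2$ a power of the fixed prime $p$. After relabelling the $q^{}_i$ so that the ordering hypotheses of Theorem~\ref{csl-theo1:3p} hold, that theorem gives
\[
  \vG' \, = \, \Imag\bigl({m}^{}_{12}\JJ+n\,q^{}_1\JJ\,{\bar{q}}^{}_2\bigr),
  \qquad
  n=\max\Bigl(1,\tfrac{|q^{}_3|^2}{|{g}^{}_{13}|^2|{g}^{}_{23}|^2}\Bigr).
\]
Since $n$ is a power of $p$, write $n=p^\beta$. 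Comparing this with the double-CSL formula of Lemma~\ref{csl-lem:mcsl2b}, namely $\Gbcc(R(q^{}_1),R(q^{}_2))=\Imag({m}^{}_{12}\JJ+q^{}_1\JJ\,{\bar{q}}^{}_2)$, I would show that the extra factor $n=p^\beta$ multiplying the second summand has the effect of scaling the whole module by $p^\beta$; concretely, because $p^\beta$ divides ${m}^{}_{12}$ in the relevant sense when $\beta>0$, one factors out $p^\beta$ to obtain $\vG'=p^\beta\,\Imag(\tfrac{1}{p^\beta}{m}^{}_{12}\JJ+q^{}_1\JJ\,{\bar{q}}^{}_2)=p^\beta(\Gbcc(R'_1)\cap\Gbcc(R'_2))$ for appropriate primitive $R'_1,R'_2$. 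The uniqueness of $\beta$ then follows because $p^\beta$ is recovered intrinsically as the largest power of $p$ dividing $\vG'$, i.e.\ the maximal $\beta$ with $\tfrac{1}{p^\beta}\vG'\subseteq\Gbcc$ still a CSL-intersection; this is forced by Proposition~\ref{csl-theo:cub-Sig} relating indices to norms, since a genuine double CSL has an odd, square-detectable index structure that pins down the power of $p$ that was scaled out.

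The main obstacle I anticipate is the bookkeeping in the ``only if'' direction: justifying rigorously that the factor $n=p^\beta$ in front of $q^{}_1\JJ\,{\bar{q}}^{}_2$ really can be pulled out as a global scalar $p^\beta$, rather than merely enlarging one generator. This requires checking that $p^\beta\mid {m}^{}_{12}$ (as elements of $\JJ$ up to units) precisely when $\beta>0$, which in turn rests on the divisibility relations among $|q^{}_1|^2,|q^{}_2|^2,|q^{}_3|^2$ and their pairwise gcld's that are encoded in Theorem~\ref{csl-theo1:3p}. I would handle this by translating everything into the exponents $\alpha^{}_i=\log_p|q^{}_i|^2$ and $\alpha^{}_{ij}=\log_p|g^{}_{ij}|^2$ and verifying the single inequality $\beta\le \alpha^{}_{12}$ (equivalently that $p^\beta$ divides the least common right multiple ${m}^{}_{12}$), which is exactly the combinatorial condition that makes the scaling legitimate. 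Once that inequality is secured, both directions close cleanly and the uniqueness of $\beta$ is immediate from the index/norm dictionary; for the remaining technical quaternion identities I would simply refer to~\cite{csl-habil}.
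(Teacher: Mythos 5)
A preliminary remark: the paper states Theorem~\ref{csl-theo:p3CSL} without proof (it is quoted from the habilitation thesis), so your outline can only be checked against the framework the paper does supply, namely Lemma~\ref{csl-lem:mcsl2b} and Theorem~\ref{csl-theo1:3p}; measured against that framework, both directions of your argument have genuine gaps. The ``if'' direction fails as described. Keeping $q_1=q'_1$, $q_2=q'_2$ and choosing $q_3$ so that $n=p^{\beta}$ produces, by Theorem~\ref{csl-theo1:3p}, the lattice $\Imag\bigl(m_{12}\JJ+p^{\beta}q_1\JJ\ts\bar{q}_2\bigr)$, in which only the \emph{second} summand is scaled, whereas the target $p^{\beta}\bigl(\Gbcc(R'_1)\cap\Gbcc(R'_2)\bigr)=\Imag\bigl(p^{\beta}m_{12}\JJ+p^{\beta}q_1\JJ\ts\bar{q}_2\bigr)$ scales both. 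Concretely, take $q'_1,q'_2$ non-associated primitive quaternions of norm $p$ and $\beta=1$: then $m_{12}=\lcrm(q'_1,q'_2)=p$ up to units, and your recipe yields $\Imag(p\JJ+p\ts q'_1\JJ\ts\bar{q}'_2)=p\ts\Imag(\JJ)=p\ts\Gbcc$, of index $p^{3}$, while the target has index $p^{3}\cdot p^{2}=p^{5}$ by Eq.~\eqref{csl-eq:sig12c}. Moreover, no other choice of $R_3$ repairs this while $R_1=R'_1$, $R_2=R'_2$ are kept: the desired equality forces $p\ts\Gbcc(R'_1,R'_2)\subseteq\Gbcc(R_3)$, and since $\Gbcc(R'_1,R'_2)\supseteq\Imag(m_{12}\JJ)=p\ts\Gbcc$ here, this gives $p^{2}\ts\Gbcc\subseteq\Gbcc(R_3)\subseteq R_3\ts\Gbcc$, hence $\den^{}_{\Gbcc}(R_3^{-1})\le p^{2}$ and so $\Sig(R_3)\le p^{2}$ by Proposition~\ref{csl-theo:cub-Sig}; Lemma~\ref{csl-lem:sig1n} then bounds the index of the triple intersection by $\Sig(R'_1,R'_2)\,\Sig(R_3)\le p^{4}<p^{5}$. (Your ``key identity'' is also off: Cayley's parametrisation satisfies $R(p^{\beta}r)=R(r)$, so $\Gbcc(R(p^{\beta}r))=\Imag(r\JJ)$ and not $p^{\beta}\Imag(r\JJ)$; the identity $\Imag(p^{\beta}r\JJ)=p^{\beta}\Imag(r\JJ)$ concerns the ideal, not the CSL.) So this direction requires replacing $q'_1,q'_2$ by suitably deeper quaternions as well, and proving that such a triple always exists is the actual work.

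In the ``only if'' direction there are two gaps. First, your criterion for extracting the scalar is wrong: $p^{\beta}\mid m_{12}$ is not equivalent to $\beta\le\alpha_{12}$. For non-associated $q_1,q_2$ of norm $p$ one has $\alpha_{12}=0$ yet $m_{12}=p$, while for $q_2=q_1w$ (primitive) one has $\alpha_{12}=\alpha_1$ yet $m_{12}=q_2$ is primitive. The relevant quantity is the content exponent of the lcrm, $\min(\alpha_1,\alpha_2)-\alpha_{12}$, and the needed inequality $\beta\le\min(\alpha_1,\alpha_2)-\alpha_{12}$ does follow from the ordering hypotheses of Theorem~\ref{csl-theo1:3p} (via $\beta\le\alpha_3-\alpha_{13}\le\alpha_2-\alpha_{12}$), but that computation is absent from your outline. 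Second, and more seriously, after factoring you hold $\Imag\bigl(p^{-\beta}m_{12}\JJ+q_1\JJ\ts\bar{q}_2\bigr)$, which is \emph{not} of the double-CSL form $\Imag\bigl(\lcrm(a,b)\JJ+a\JJ\ts\bar{b}\bigr)$ with $a=q_1$, $b=q_2$, precisely because $p^{-\beta}m_{12}\ne\lcrm(q_1,q_2)$. Identifying which pair $(R'_1,R'_2)$ actually works is the substance of the theorem; in examples it involves left divisors of the $q_i$, and the module can even collapse to a single ordinary CSL (for $\alpha_1=\alpha_2$ and maximal $\beta$ one gets $\Imag(g_{12}\JJ)$ with $g_{12}=\gcld(q_1,q_2)$). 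You assert this step rather than prove it. Finally, uniqueness of $\beta$ does not follow from Proposition~\ref{csl-theo:cub-Sig}, which concerns simple CSLs: what is needed is that no intersection of two ordinary CSLs is contained in $p\ts\Gbcc$, since otherwise one could trade powers of $p$ against different double CSLs. This primitivity statement can be attacked through the quotient $\JJ/p\ts\JJ$, but it requires its own proof and appears nowhere in your outline.
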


Thus, we have established a one-to-one correspondence between
intersections of three ordinary CSLs and multiples of intersections of
two ordinary CSLs.  This allows us to express
$c^{(3)}_{\mathsf{bcc}}(p^r)$ in terms of
$c^{(2)}_{\mathsf{bcc}}(p^r)$ as follows.

\begin{corollary}[{\cite[Cor.~6.3.9]{csl-habil}}]
  Let\/ $p$ be an odd prime number. Then,
  \[
  c^{(3)}_{\mathsf{bcc}}(p^r)\, =
  \sum_{0\leq n\leq r/3}c^{(2)}_{\mathsf{bcc}}(p^{r-3n})\ts ,
  \]
  where\/ $c^{(3)}_{\mathsf{bcc}}(m)$ and\/
  $c^{(2)}_{\mathsf{bcc}}(m)$ denote the number of MCSLs of index\/
  $m$ that can be written as an intersection of $($up to$\ts )$ three
  and two ordinary CSLs, respectively.\qed
\end{corollary}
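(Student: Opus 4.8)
The plan is to derive the formula as a direct counting consequence of Theorem~\ref{csl-theo:p3CSL}, which provides a one-to-one correspondence between those MCSLs of prime power index that are intersections of (up to) three ordinary CSLs and the multiples $p^\beta M$ of MCSLs $M$ arising as intersections of two ordinary CSLs. First I would record the effect of such a scaling on the index. Since $\Gbcc=\Imag(\JJ)$ is a lattice of full rank in $\RR^3$ and $M\subseteq\Gbcc$ is a full sublattice, one has $p^\beta M\subseteq p^\beta\Gbcc\subseteq\Gbcc$ together with
\[
  [\Gbcc: p^\beta M]\, =\, [\Gbcc: p^\beta\Gbcc]\,[p^\beta\Gbcc: p^\beta M]
  \, =\, p^{3\beta}\,[\Gbcc: M]\ts ,
\]
where $[\Gbcc:p^\beta\Gbcc]=(p^\beta)^3$ in dimension three and scaling by $p^\beta$ preserves indices. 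Hence, if $\vG'$ has index $p^r$ and $\vG'=p^\beta M$, the two-fold MCSL $M$ necessarily has index $p^{r-3\beta}$, which in particular forces $3\beta\leq r$.

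Next I would assemble the bijection. By Theorem~\ref{csl-theo:p3CSL}, every MCSL $\vG'$ of $\Gbcc$ of index $p^r$ that is an intersection of at most three ordinary CSLs can be written uniquely as $\vG'=p^\beta M$ with $\beta\in\Nnull$ and $M=\Gbcc(R'_1)\cap\Gbcc(R'_2)$ an intersection of two ordinary CSLs; the uniqueness of $\beta$ then makes $M=p^{-\beta}\vG'$ unique as well. Conversely, for any integer $\beta\in\Nnull$ with $3\beta\leq r$ and any two-fold MCSL $M$ of index $p^{r-3\beta}$, the lattice $p^\beta M$ is a three-fold MCSL of index $p^r$, by the index computation above and by the ``if'' direction of the same theorem. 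These two assignments are mutually inverse, so they establish a bijection between the three-fold MCSLs of $\Gbcc$ of index $p^r$ and the pairs $(\beta,M)$ with $\beta\in\Nnull$, $3\beta\leq r$, and $M$ a distinct two-fold MCSL of index $p^{r-3\beta}$.

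Counting the latter set according to the value of $\beta$ and renaming $\beta$ as $n$ then yields
\[
  c^{(3)}_{\mathsf{bcc}}(p^r)\, =\, \sum_{0\leq n\leq r/3}c^{(2)}_{\mathsf{bcc}}(p^{r-3n})\ts ,
\]
which is the claim. The only point requiring care is the faithful bookkeeping of the bijection: one must invoke the uniqueness of $\beta$ from Theorem~\ref{csl-theo:p3CSL} to guarantee that distinct pairs $(\beta,M)$ produce distinct lattices $p^\beta M$ and that no three-fold MCSL is counted twice, and one must use that distinct two-fold MCSLs of a fixed index remain distinct after multiplication by the fixed scalar $p^\beta$. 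As scaling by a nonzero integer is injective on full sublattices, this last observation is immediate, so no genuine obstacle arises beyond the careful matching of indices under the factor $p^{3\beta}$.
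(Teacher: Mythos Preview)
Your argument is correct and follows precisely the route the paper intends: the text immediately preceding the corollary states that Theorem~\ref{csl-theo:p3CSL} establishes a one-to-one correspondence between intersections of three ordinary CSLs and multiples of intersections of two ordinary CSLs, and the corollary is presented as the direct enumerative consequence. Your explicit handling of the index shift $[\Gbcc:p^\beta M]=p^{3\beta}[\Gbcc:M]$ in dimension three and the careful verification that $(\beta,M)\mapsto p^\beta M$ is a bijection (using the uniqueness of $\beta$ and the injectivity of scaling) fill in exactly the details the paper leaves implicit.
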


As $c^{(3)}_{\mathsf{bcc}}$ is once again multiplicative, we can
easily infer its generating function as
follows.\index{multiplicative~function}

\begin{theorem}[{\cite[Thm.~6.3.10]{csl-habil}}]\label{csl-theo:c3gen}
  Let\/ $c^{(3)}_{\mathsf{bcc}}(m)$ be the number of distinct MCSLs of
  index\/ $m$ that are an intersection of at most three ordinary
  CSLs. Then, $c^{(3)}_{\mathsf{bcc}}(m)$ is a multiplicative
  arithmetic function whose Dirichlet series\index{Dirichlet~series}
  is given by
\[
    \Psi^{(3)}_{\mathsf{bcc}}(s)\, :=
    \sum_{n=1}^\infty\frac{c^{(3)}_{\mathsf{bcc}}(n)}{n^s}\, =\, 
    (1-2^{-3s})\, \zeta(3s)\, \Psi^{(2)}_{\mathsf{bcc}}(s)
\]
where\/ $\Psi^{(2)}_{\mathsf{bcc}}(s)$ is given by
Theorem~$\ref{csl-theo:c2gen}$. One finds
\begin{align*}
    \Psi^{(3)}_{\mathsf{bcc}}(s) &\,=\, 
     \textstyle{1+\frac{4}{3^s}+\frac{6}{5^s}+\frac{8}{7^s}+
     \frac{18}{9^s}+\frac{12}{11^s}
      +\frac{14}{13^s}+\frac{24}{15^s}+\frac{18}{17^s}+\frac{20}{19^s}}
      +\frac{32}{21^s}\\[2mm]
    &\textstyle{\mbox{}\qquad\,+\frac{24}{23^s}+\frac{45}{25^s}
      +\frac{77}{27^s} +\frac{30}{29^s}+\frac{32}{31^s}
      +\frac{48}{33^s}+\frac{48}{35^s}+\frac{38}{37^s}+\frac{56}{39^s}
      +\cdots}
\end{align*}
for the leading terms.\qed
\end{theorem}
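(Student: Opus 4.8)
The plan is to deduce the generating function for $c^{(3)}_{\mathsf{bcc}}(m)$ directly from the preceding corollary, which expresses $c^{(3)}_{\mathsf{bcc}}(p^r)$ as a sum of values of $c^{(2)}_{\mathsf{bcc}}$ along an arithmetic progression of step $3$ in the exponent. Since $c^{(3)}_{\mathsf{bcc}}(m)$ is multiplicative (as stated), its Dirichlet series factors as an Euler product, so it suffices to compute the Euler factor at each odd prime $p$ and check the exceptional behaviour at $p=2$.

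First I would fix an odd prime $p$ and compute the local factor. Using the corollary's relation $c^{(3)}_{\mathsf{bcc}}(p^r)=\sum_{0\leq n\leq r/3}c^{(2)}_{\mathsf{bcc}}(p^{r-3n})$, I would write
\[
  \sum_{r=0}^{\infty}\frac{c^{(3)}_{\mathsf{bcc}}(p^r)}{p^{rs}}
  \, = \sum_{r=0}^{\infty}\frac{1}{p^{rs}}
       \sum_{0\leq n\leq r/3}c^{(2)}_{\mathsf{bcc}}(p^{r-3n})\ts .
\]
Interchanging the order of summation and substituting $k=r-3n$ decouples the double sum into a product,
\[
  \sum_{r=0}^{\infty}\frac{c^{(3)}_{\mathsf{bcc}}(p^r)}{p^{rs}}
  \, = \Bigl(\sum_{n=0}^{\infty} p^{-3ns}\Bigr)
       \Bigl(\sum_{k=0}^{\infty}\frac{c^{(2)}_{\mathsf{bcc}}(p^k)}{p^{ks}}\Bigr)
  \, = \, \frac{1}{1-p^{-3s}}\,\psi^{}_{2}(p,s)\ts ,
\]
where $\psi^{}_{2}(p,s)$ is the Euler factor of $\Psi^{(2)}_{\mathsf{bcc}}(s)$ from Theorem~\ref{csl-theo:c2gen}. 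Taking the product over all odd primes then yields
\[
  \Psi^{(3)}_{\mathsf{bcc}}(s)
  \, = \prod_{p\ne 2}\frac{1}{1-p^{-3s}}\,\psi^{}_{2}(p,s)
  \, = \, \Bigl(\prod_{p\ne 2}\frac{1}{1-p^{-3s}}\Bigr)\,\Psi^{(2)}_{\mathsf{bcc}}(s)\ts .
\]
To convert the first product into Riemann zeta functions, I would use the standard identity $\zeta(3s)=\prod_{p\in\PP}(1-p^{-3s})^{-1}$, so that removing the $p=2$ factor gives $\prod_{p\ne 2}(1-p^{-3s})^{-1}=(1-2^{-3s})\,\zeta(3s)$. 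This produces exactly the claimed formula $\Psi^{(3)}_{\mathsf{bcc}}(s)=(1-2^{-3s})\,\zeta(3s)\,\Psi^{(2)}_{\mathsf{bcc}}(s)$.

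The only genuine subtlety is the $p=2$ behaviour: since $c^{(2)}_{\mathsf{bcc}}(2^r)=0$ for $r\geq 1$ (there are no CSLs of even index, as the coincidence spectrum is $2\NN_0+1$ by Proposition~\ref{csl-prop:spectrum}), the corollary's recursion forces $c^{(3)}_{\mathsf{bcc}}(2^r)=0$ as well, so the Euler factor at $2$ is trivially $1$ and the $p=2$ term must indeed be excluded from the product; this is precisely what the explicit factor $(1-2^{-3s})$ accounts for relative to the full $\zeta(3s)$. The remaining task is purely verificatory: to confirm the displayed leading Dirichlet coefficients, one multiplies the series $(1-2^{-3s})\zeta(3s)$ by the expansion of $\Psi^{(2)}_{\mathsf{bcc}}(s)$ from Theorem~\ref{csl-theo:c2gen} and collects terms, the only visible change from $\Psi^{(2)}_{\mathsf{bcc}}$ in the low-order coefficients appearing at $n=27=3^3$, where the coefficient increases from $76$ to $77$ owing to the extra contribution $c^{(2)}_{\mathsf{bcc}}(3^0)=1$ from the $\zeta(3s)$ factor. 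I expect no real obstacle here, as everything reduces to the elementary manipulation of a convolution of multiplicative functions; the main care needed is bookkeeping the $p=2$ exception correctly when passing from the product over odd primes to the full zeta function.
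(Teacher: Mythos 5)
Your proof is correct and follows essentially the same route the paper indicates: multiplicativity plus the prime-power relation $c^{(3)}_{\mathsf{bcc}}(p^r)=\sum_{0\leq n\leq r/3}c^{(2)}_{\mathsf{bcc}}(p^{r-3n})$ of the preceding corollary give the local factor $(1-p^{-3s})^{-1}\ts\psi^{}_{2}(p,s)$ at each odd prime, and reassembling the Euler product yields $(1-2^{-3s})\ts\zeta(3s)\ts\Psi^{(2)}_{\mathsf{bcc}}(s)$; your identification of the single changed low-order coefficient at $27^{-s}$, from $76$ to $77$, is also exactly right.

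One point of precision: it is not the corollary's recursion that forces $c^{(3)}_{\mathsf{bcc}}(2^r)=0$. That recursion is stated only for odd primes, and if one applied it blindly at $p=2$, the term $n=r/3$ would contribute $c^{(2)}_{\mathsf{bcc}}(2^0)=1$, giving $c^{(3)}_{\mathsf{bcc}}(2^{3n})=1$ and hence the wrong formula $\zeta(3s)\ts\Psi^{(2)}_{\mathsf{bcc}}(s)$ without the factor $(1-2^{-3s})$. The vanishing at powers of $2$ comes directly from Proposition~\ref{csl-prop:spectrum}, i.e.\ from the fact that the multiple coincidence spectrum is $2\ts\Nnull+1$, which you do cite in the same breath; so the slip is one of attribution only and does not affect the validity of your argument.
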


Familiar arguments involving Theorem~\ref{csl-thm:meanvalues} yield
the following asymptotic behaviour.

\begin{corollary}[{\cite[Cor.~6.3.11]{csl-habil}}]\label{csl-cor:asymp3}
  The asymptotic behaviour\index{asymptotic~behaviour} of the summatory 
  function of\/ $c^{(3)}_{\mathsf{bcc}}(m)$, as\/ $x\to\infty$, is given by
\[
    \sum_{m\leq x}c^{(3)}_{\mathsf{bcc}}(m)
      \, =\, \frac{\rho^{(3)}_{\mathsf{bcc}}}{2}\, x^2 
     \, \approx\,  0.357{\ts}007 \, x^2  ,\vspace{-1mm}  %0.3570072
\]
  where
\begin{align*}
  \qquad\qquad\, \rho^{(3)}_{\mathsf{bcc}}\, :=
  \Res_{s=2}\,\bigl(\Psi^{(3)}_{\mathsf{bcc}}(s)\bigr)\,
    &=\,\myfrac{63}{64}\,\zeta(6)\,\rho^{(2)}_{\mathsf{bcc}}
     \, =\, \myfrac{1953}{5200}\,
      \frac{\zeta(2)\ts\zeta(6)^2\ts\zeta(5)}{\zeta(4)\ts\zeta(12)}\,
       \varphi^{(2)}_{\mathsf{bcc}}(2)\\[1mm]
       & = \,\myfrac{64449}{11056\ts \pi^2}\,
       \zeta(5)\,\varphi^{(2)}_{\mathsf{bcc}}(2)
       \,\approx\, 0.714{\ts}014\ts . \qquad\quad\qquad\qquad\; \qed   
       %0.71401435
\end{align*}
\end{corollary}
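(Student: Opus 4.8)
The plan is to read off the asymptotics directly from the explicit Dirichlet series supplied by Theorem~\ref{csl-theo:c3gen}, namely
\[
  \Psi^{(3)}_{\mathsf{bcc}}(s) \, = \, (1-2^{-3s})\, \zeta(3s)\, \Psi^{(2)}_{\mathsf{bcc}}(s),
\]
by locating its rightmost pole, computing the residue there, and then invoking Delange's theorem in the form of Theorem~\ref{csl-thm:meanvalues}. The coefficients $c^{(3)}_{\mathsf{bcc}}(m)$ are non-negative, as they count lattices, and the series converges for $\Real(s)>2$, so the hypotheses on the coefficient side are immediate. The only analytic input needed is the pole structure of the right-hand side on the vertical line $\{\Real(s)=2\}$.

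First I would observe that the prefactor $(1-2^{-3s})\zeta(3s)$ is harmless: the factor $1-2^{-3s}$ is entire, and $\zeta(3s)$ has its only pole at $s=\tfrac13$, so their product is holomorphic throughout the half-plane $\{\Real(s)>\tfrac13\}$ and, in particular, in a neighbourhood of the line $\{\Real(s)=2\}$. Consequently $\Psi^{(3)}_{\mathsf{bcc}}$ inherits its pole structure on this line from $\Psi^{(2)}_{\mathsf{bcc}}$, which by Corollary~\ref{csl-cor:asymp2} is meromorphic there with a single simple pole at $s=2$. Since the prefactor takes the value $(1-2^{-6})\zeta(6)=\tfrac{63}{64}\zeta(6)$ at $s=2$, the residue is
\[
  \rho^{(3)}_{\mathsf{bcc}} \, = \, \Res_{s=2}\bigl(\Psi^{(3)}_{\mathsf{bcc}}(s)\bigr)
   \, = \, \myfrac{63}{64}\,\zeta(6)\,\rho^{(2)}_{\mathsf{bcc}}.
\]
Inserting the explicit value of $\rho^{(2)}_{\mathsf{bcc}}$ from Corollary~\ref{csl-cor:asymp2} and simplifying the rational prefactor via $\tfrac{63}{64}\cdot\tfrac{124}{325}=\tfrac{1953}{5200}$ yields the first closed form in the statement; substituting the special values $\zeta(2)=\pi^2/6$, $\zeta(4)=\pi^4/90$, $\zeta(6)=\pi^6/945$ and $\zeta(12)=691\pi^{12}/638512875$ then cancels all but two powers of $\pi$ and produces $\rho^{(3)}_{\mathsf{bcc}}=\tfrac{64449}{11056\pi^2}\,\zeta(5)\,\varphi^{(2)}_{\mathsf{bcc}}(2)$, whose numerical value is $\approx 0.714014$.

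Finally I would apply Theorem~\ref{csl-thm:meanvalues} with $\alpha=2$ and $n=0$: writing $\Psi^{(3)}_{\mathsf{bcc}}(s)=h(s)/(s-2)$ with $h$ holomorphic at $s=2$ and $h(2)=\rho^{(3)}_{\mathsf{bcc}}$, the theorem gives
\[
  \sum_{m\leq x} c^{(3)}_{\mathsf{bcc}}(m) \,\sim\, \frac{h(2)}{2\cdot 0!}\, x^2
  \, = \, \frac{\rho^{(3)}_{\mathsf{bcc}}}{2}\, x^2 \,\approx\, 0.357007\, x^2
\]
as $x\to\infty$, which is exactly the claim. The one step that genuinely requires care is the verification that no pole of $\Psi^{(3)}_{\mathsf{bcc}}$ other than $s=2$ lies on the line $\{\Real(s)=2\}$; this is the hypothesis of Theorem~\ref{csl-thm:meanvalues} most easily overlooked. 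It is, however, already guaranteed for $\Psi^{(2)}_{\mathsf{bcc}}$ by Corollary~\ref{csl-cor:asymp2}: the factor $\varphi^{(2)}_{\mathsf{bcc}}(s)$ from Theorem~\ref{csl-theo:c2gen} converges absolutely for $\Real(s)>\tfrac32$, while the remaining zeta factors $\zeta(s)$, $\zeta(3s-1)$, $\zeta(3s)$ and the denominators $\zeta(2s)$, $\zeta(6s)$ are all holomorphic and non-vanishing on $\{\Real(s)=2\}$. This property is preserved under multiplication by the pole-free prefactor $(1-2^{-3s})\zeta(3s)$, so the only real obstacle is bookkeeping, and the argument reduces to a residue computation followed by a direct appeal to the mean-value theorem.
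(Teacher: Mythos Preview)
Your proof is correct and follows precisely the approach the paper has in mind: the text preceding the corollary says only that ``familiar arguments involving Theorem~\ref{csl-thm:meanvalues} yield the following asymptotic behaviour'', and you have carried out exactly that computation, reading off the simple pole at $s=2$ from the factorisation in Theorem~\ref{csl-theo:c3gen} together with the analytic information about $\Psi^{(2)}_{\mathsf{bcc}}$ established around Corollary~\ref{csl-cor:asymp2}. Your explicit check that no other poles sit on the line $\{\Real(s)=2\}$ is the one hypothesis worth spelling out, and you have done so correctly.
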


Comparing these results with Corollary~\ref{csl-cor:asymp2}, we see
that the difference in the growth rate is significantly below
1\%. This small difference is not surprising as genuinely triple CSLs
can only occur for indices that are divisible by $p^3$ for some odd
$p$. In particular, the first such lattice occurs for the index
$\Sig=27$. The fact that new MCSLs are rather rare is also illustrated
by the first terms of the expansion
\[
\begin{split}
\Psi^{(3)}_{\mathsf{bcc}}(s)&-\Psi^{(2)}_{\mathsf{bcc}}(s)\,
=\,\Psi^{(2)}_{\mathsf{bcc}}(s)\, \bigl((1-2^{-3s})\ts\zeta(3s)-1\bigr)\\[1mm]
&=\, \textstyle{\frac{1}{27^s}+\frac{4}{81^s}+\frac{1}{125^s}+\frac{6}{135^s}
+\frac{8}{189^s}+\frac{18}{243^s}+\frac{12}{297^s}+\frac{1}{343^s}
+\frac{14}{351^s}%+\frac{4}{375^s}
+\cdots}
\end{split}
\]
Here, all terms $n^{-s}$ with $n$ cube-free are missing, which is just
a reformulation of the fact that
$c^{(3)}_{\mathsf{bcc}}(n)=c^{(2)}_{\mathsf{bcc}}(n)$ for these $n$.

Finally, let us mention that any triple CSL is just a multiple of a
double CSL for general index $m$, as we have the following
generalisation of Theorem~\ref{csl-theo:p3CSL}.

\begin{theorem}[{\cite[Thm.~6.3.12]{csl-habil}}]
  Let\/ $R_i$ with\/ $i\in\{1,2,3\}$ be coincidence rotations of\/
  $\Gbcc$. Then, there exist rotations\/ $R^{\prime}_1$ and\/
  $R^{\prime}_{2}$ together with an integer\/ $n\in\NN$ such that\/
  $\Gbcc(R^{}_1,R^{}_2,R^{}_3)=n\Gbcc(R'_1,R'_2)$. Conversely, for any
  sublattice of the form\/ $n\Gbcc(R'_1,R'_2)$, there exist
  coincidence rotations\/ $R_i$ with\/ $i\in\{1,2,3\}$ such that
  one has the coincidence relation
  $\Gbcc(R^{}_1,R^{}_2,R^{}_3)=n\Gbcc(R'_1,R'_2)$.\qed
\end{theorem}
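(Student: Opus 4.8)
The plan is to establish the two directions of the equivalence separately, reducing each to the prime-power case already handled in Theorems~\ref{csl-theo1:3p} and~\ref{csl-theo:p3CSL}, and then assembling the general index via the multiplicative (prime-by-prime) structure of the cubic CSLs. For the forward direction, I would start from arbitrary coincidence rotations $R^{}_1,R^{}_2,R^{}_3\in\OC(\Gbcc)=\OG(3,\QQ)$, parametrised by odd primitive Hurwitz quaternions $q^{}_1,q^{}_2,q^{}_3$, and let $\Sig=\Sig(R^{}_1,R^{}_2,R^{}_3)$ with prime factorisation $\Sig=\prod_j p_j^{r_j}$. Using the decomposition machinery from Section~\ref{csl-sec:mcsl-cub-3}, the triple MCSL splits as an intersection $\Gbcc(R^{}_1,R^{}_2,R^{}_3)=\bigcap_j \vG_j$, where each $\vG_j$ is a triple MCSL of prime-power index $p_j^{r_j}$. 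By Theorem~\ref{csl-theo:p3CSL}, each local factor satisfies $\vG_j=p_j^{\beta_j}\bigl(\Gbcc(R^{(j)}_1)\cap\Gbcc(R^{(j)}_2)\bigr)$ for a uniquely determined $\beta_j\in\Nnull$ and suitable coincidence rotations. The task is then to recombine these local double-CSL representations into a single global one: setting $n:=\prod_j p_j^{\beta_j}$ and using the coprimality of the distinct prime powers, I would invoke the same multiplicativity (essentially the Chinese-remainder-type gluing that underlies the bijection in Theorem~\ref{csl-thm:csls-ideals}) to produce global rotations $R'_1,R'_2$ whose local components at $p_j$ are the $R^{(j)}_1,R^{(j)}_2$, yielding $\Gbcc(R^{}_1,R^{}_2,R^{}_3)=n\,\Gbcc(R'_1,R'_2)$.

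For the converse, I would take any sublattice of the form $n\,\Gbcc(R'_1,R'_2)$ with $n\in\NN$ and exhibit explicit $R^{}_1,R^{}_2,R^{}_3$ realising it as a triple MCSL. Here the clean strategy is again to work prime by prime: since $R(q)$ for $q=(1,1,0,0)^{\text{(odd scaling)}}$-type quaternions produce multiples of CSLs, the scalar factor $n$ can be absorbed by choosing a third rotation $R^{}_3$ whose CSL contributes exactly the extra factor at each prime dividing $n$. Concretely, writing $n=\prod_j p_j^{\beta_j}$, I would for each relevant prime select an odd primitive quaternion $q^{(j)}_3$ with $\gcld$-data arranged so that $\max\bigl(1,|q^{(j)}_3|^2/(|g^{}_{13}|^2|g^{}_{23}|^2)\bigr)=p_j^{\beta_j}$, matching the factor $n$ in Theorem~\ref{csl-theo1:3p}, and take $R^{}_1,R^{}_2$ to refine $R'_1,R'_2$ accordingly. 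Reassembling across primes by multiplicativity then gives the desired triple representation.

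The main obstacle I anticipate lies in the gluing step of the forward direction: the factor $n=\prod_j p_j^{\beta_j}$ and the local double-CSL generators must be shown to arise from \emph{genuine} coincidence rotations $R'_1,R'_2$ of $\Gbcc$, i.e.\ the quaternion pairs constructed at each prime must combine into odd primitive quaternions with a common least common right multiple, respecting the non-commutative unique factorisation in $\JJ$. Because the prime factorisation in $\JJ$ depends on the ordering of factors and is only unique up to units, some care is needed to verify that the locally chosen $\gcld$-divisors are mutually compatible and that the scalar $\beta_j$ from Theorem~\ref{csl-theo:p3CSL} aggregates correctly into $n$; this is essentially a check that the local-to-global passage of Section~\ref{csl-sec:mcsl-cub-3} commutes with the index and with the $\Imag$-projection. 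Once that compatibility is in place, the statement follows by combining Theorems~\ref{csl-theo1:3p} and~\ref{csl-theo:p3CSL} with the multiplicativity of $c^{(3)}_{\mathsf{bcc}}$, and the full result is obtained by assembling the prime-power pieces; the detailed quaternionic bookkeeping can be deferred to~\cite{csl-habil}.
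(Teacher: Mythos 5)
Your overall strategy---reduce to prime-power index via the multiplicativity of $c^{}_{\mathsf{bcc}}$ (Theorem~\ref{csl-thm:mc}), apply Theorem~\ref{csl-theo:p3CSL} locally, and reassemble---is exactly the local-to-global route that the surrounding machinery is set up for (the paper itself defers the proof to \cite{csl-habil}, so this is the intended comparison), and both directions do go through along these lines. But the step you defer as ``quaternionic bookkeeping'' is the actual crux of the assembly and should be proved, not postponed: you need the identity $\bigcap_\ell p_\ell^{\beta_\ell}\Lambda_\ell = \bigl(\prod_\ell p_\ell^{\beta_\ell}\bigr)\bigcap_\ell \Lambda_\ell$, where $\Lambda_\ell$ denotes the local double CSL of $p_\ell$-power index. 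This is true and elementary: the inclusion $\supseteq$ is clear; for $\subseteq$, note first that $\bigcap_\ell p_\ell^{\beta_\ell}\Gbcc = n\ts\Gbcc$, so any $x$ in the left-hand side equals $nz$ with $z\in\Gbcc$; then $m_\ell\ts z\in\Lambda_\ell$ with $m_\ell = n/p_\ell^{\beta_\ell}$ coprime to $p_\ell$, and since $\Gbcc/\Lambda_\ell$ is a finite $p_\ell$-group, multiplication by $m_\ell$ is invertible on it, whence $z\in\Lambda_\ell$ for every $\ell$. With this lemma in hand, your regrouping $\bigcap_\ell\bigl(\Gbcc(R_1^{(\ell)})\cap\Gbcc(R_2^{(\ell)})\bigr) = \Gbcc(R'_1)\cap\Gbcc(R'_2)$ follows from the $\lcrm$-gluing of coprime-norm odd primitive quaternions stated explicitly in Section~\ref{csl-sec:mcsl-cub} (that, rather than the ideal bijection of Theorem~\ref{csl-thm:csls-ideals} you invoke, is the right tool); note also that the rotations produced locally by Theorem~\ref{csl-theo:p3CSL} automatically have $p_\ell$-power index, hence are parametrised by quaternions of $p_\ell$-power norm, so the gluing hypothesis is satisfied.

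The second, more substantive point is one you never notice: the converse cannot hold for all $n\in\NN$ as you (and, literally read, the statement) would have it. All coincidence indices of the cubic lattices are odd (Proposition~\ref{csl-theo:cub-Sig}), so every MCSL has odd index (Proposition~\ref{csl-prop:spectrum}), whereas $[\Gbcc : n\ts\Gbcc(R'_1,R'_2)] = n^3\ts\Sig(R'_1,R'_2)$ is even whenever $n$ is even. Thus for even $n$ the lattice $n\ts\Gbcc(R'_1,R'_2)$ is not a triple MCSL at all, and your prime-by-prime construction silently breaks at $p=2$: there is no coincidence rotation of $\Gbcc$ of even index from which a factor $2^{\beta}$ could be manufactured, so no choice of $q_3^{(j)}$ with the prescribed $\gcld$-data exists there. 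The converse must therefore be read, and proved, for odd $n$ only---which is consistent with your forward direction, since the exponents $\beta_\ell$ delivered by Theorem~\ref{csl-theo:p3CSL} are attached exclusively to odd primes, so the first half only ever produces odd $n$. With the scalar-commutation lemma supplied and the parity restriction made explicit, your argument is complete.
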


In fact, this yields all MCSLs, as any MCSL of $\Gbcc$ can be written
as the intersection of three ordinary CSLs.

\begin{theorem}[{\cite[Thm.~6.4.3]{csl-habil}}]
  Let\/ $R^{}_1,\ldots, R^{}_n$ be a finite number of coincidence
  rotations of\/ $\Gbcc$. Then, there exist coincidence rotations\/
  $R'_1$, $R'_2$ and $R'_3$ such that one has\/ $\Gbcc(R^{}_1,\ldots,
  R^{}_n)=\Gbcc(R'_1,R'_2,R'_3)$.\qed
\end{theorem}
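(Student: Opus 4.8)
The plan is to show that the operation of intersecting more CSLs of $\Gbcc$ cannot produce anything genuinely new beyond triple intersections, by reducing an arbitrary finite MCSL to the three-fold case via the machinery already developed in Section~\ref{csl-sec:mcsl-cub-3}. The key structural input is Theorem~\ref{csl-theo:p3CSL} together with its general-index extension (the penultimate theorem), which asserts that every intersection of three ordinary CSLs is merely an integer multiple $n\Gbcc(R'_1,R'_2)$ of a double CSL, and conversely. Thus the class of lattices realisable as triple CSLs coincides exactly with the class $\{n\Gbcc(R'_1,R'_2) \mid n\in\NN\}$. The goal is then to prove that an arbitrary $m$-fold intersection $\Gbcc(R^{}_1,\ldots,R^{}_n)$ already lies in this class.

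First I would reduce to prime power index. By Theorem~\ref{csl-thm:mc} (applicable since $c^{}_{\mathsf{bcc}}$ is multiplicative), any MCSL decomposes as $\Gbcc(R^{}_1,\ldots,R^{}_n)=\vG_1\cap\ldots\cap\vG_k$, where each $\vG_j$ is an MCSL of prime power index, and moreover each $\vG_j$ is itself an intersection of at most $n$ ordinary CSLs sharing a single prime $p_j$. Since distinct $\vG_j$ have coprime indices, it suffices to represent each $\vG_j$ as a triple CSL and then recombine: if $\vG_j=\Gbcc(S^{(j)}_1,S^{(j)}_2,S^{(j)}_3)$ with indices powers of distinct primes, then taking least common right multiples of the corresponding quaternions (as in Section~\ref{csl-sec:mcsl}) produces three coincidence rotations $R'_1,R'_2,R'_3$ whose triple intersection is exactly $\bigcap_j\vG_j$. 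So the crux is purely the prime power case.

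For a fixed odd prime $p$, I would argue that any intersection of arbitrarily many ordinary CSLs of $p$-power index collapses to a multiple of a double CSL. The quaternionic description is the lever here: each $\Gbcc(R(q_i))=\Imag(q_i\JJ)$ with $|q_i|^2=p^{\alpha_i}$, and the intersection $\bigcap_i\Imag(q_i\JJ)$ can be analysed through common right multiples and greatest common left divisors, just as in Lemma~\ref{csl-lem:mcsl2b} and Theorem~\ref{csl-theo1:3p}. The pattern established there$\,$---$\,$that a triple CSL is $\Imag(m_{12}\JJ+nq^{}_1\JJ\bar q^{}_2)$, i.e.\ an $n$-fold dilate of a double CSL$\,$---$\,$should propagate: intersecting with further CSLs of $p$-power index can only increase the dilation factor $n$ and does not enlarge the underlying ``shape'' beyond a double CSL, because the relevant lattice is governed by at most two independent left-ideal data (one for each of the two quaternion slots $q$ and $\bar q$). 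Concretely, I expect to show by induction on the number of factors that $\Gbcc(R(q_1),\ldots,R(q_m))=p^\beta\bigl(\Gbcc(R'_1)\cap\Gbcc(R'_2)\bigr)$ for suitable $R'_1,R'_2$ and $\beta\in\Nnull$, and then invoke Theorem~\ref{csl-theo:p3CSL} to rewrite this multiple of a double CSL as a genuine triple CSL.

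The hard part will be the inductive step controlling how repeated intersection interacts with the dilation factor. The difficulty is that $q\JJ+q_1\JJ\bar q_2$ need not be an ideal, so one cannot simply manipulate generators ideal-theoretically; one must track the $\Imag$-projection and the precise exponents $\alpha_{ij}$ of the pairwise greatest common left divisors, much as in the technical conditions of Theorem~\ref{csl-eq2csl}. The combinatorial bookkeeping of these exponents$\,$---$\,$ensuring that adding a fourth, fifth, or $m$-th quaternion only contributes a scalar factor $p^\beta$ and never a genuinely three-dimensional refinement$\,$---$\,$is where the real work lies, and it is exactly the content carried out in detail in \cite{csl-habil}. Once that stabilisation at the ``double CSL up to dilation'' level is secured for prime power index, the general statement follows by the multiplicative reassembly described above.
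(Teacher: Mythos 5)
Your architecture is the right one, and it is exactly the scaffolding that the paper erects around this statement: reduce to prime-power index via multiplicativity (Theorem~\ref{csl-thm:mc}), stabilise there, and recombine the prime-power pieces by least common right multiples. The recombination step is sound as you state it: if $\vG_j=\Gbcc(S^{(j)}_1,S^{(j)}_2,S^{(j)}_3)$ has index a power of $p_j$, then each $\Sig(S^{(j)}_i)$ divides that index (the CSLs are intermediate between $\vG_j$ and $\Gbcc$) and is hence automatically a $p_j$-power, so slot-wise least common right multiples of the parametrising odd primitive quaternions yield three rotations $R'_i$ with $\Gbcc(R'_i)=\bigcap_j \Gbcc(S^{(j)}_i)$, and the triple intersection recovers $\bigcap_j\vG_j$. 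Note, however, that the paper itself gives no proof of this theorem$\,$---$\,$it is quoted from \cite[Thm.~6.4.3]{csl-habil}$\,$---$\,$so the only comparison available is with the toolkit of Sections~\ref{csl-sec:mcsl-cub-2} and~\ref{csl-sec:mcsl-cub-3}, which you deploy in the way the exposition suggests.

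The genuine gap is the inductive step, which you correctly flag as the hard part but do not carry out, and which cannot be closed with the results as stated in the paper. Concretely, after the inductive hypothesis you must show that
\[
  p^{\beta}\bigl(\Gbcc(R'_1)\cap\Gbcc(R'_2)\bigr)\cap\Gbcc\bigl(R(q_{m+1})\bigr)
  \, = \, p^{\beta'}\bigl(\Gbcc(S_1)\cap\Gbcc(S_2)\bigr)
\]
for suitable $S_1,S_2,\beta'$. None of Lemma~\ref{csl-lem:mcsl2b}, Theorem~\ref{csl-theo1:3p} or Theorem~\ref{csl-theo:p3CSL} applies to such an intersection: Lemma~\ref{csl-lem:mcsl2b} requires \emph{primitive} quaternions, whereas the dilated lattice corresponds to $p^{\beta}r\ts\JJ$ with $p^{\beta}r$ non-primitive, and since $q\JJ+q_1\JJ\ts\bar{q}_2$ is in general not an ideal, there is no ideal-theoretic manipulation that absorbs the extra scalar factor. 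Your justification$\,$---$\,$that the intersection is ``governed by at most two independent left-ideal data''$\,$---$\,$is a heuristic rather than an argument; the entire content of the theorem is precisely that this heuristic holds, i.e.\ that further intersections never produce anything beyond a dilated double CSL. Appealing to \cite{csl-habil} for this step is circular here, because the statement to be proved \emph{is} \cite[Thm.~6.4.3]{csl-habil}. To complete the proof one needs the exponent bookkeeping for greatest common left divisors (the analogue, for dilated lattices, of the $\alpha_{ij}$ conditions in Theorem~\ref{csl-eq2csl}) that the thesis carries out, or some substitute for it.
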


Consequently, no new MCSLs emerge if we consider intersections of more
than three ordinary CSLs. Hence, the total number of MCSLs of given
index $m$ is already given by $c^{(3)}_{\mathsf{bcc}}(m)$, which means
that, for all $n\geq 3$, we have
\[
  c^{(\infty)}_{\mathsf{bcc}}(m)\, =\, c^{(n)}_{\mathsf{bcc}}(m)
  \, =\, c^{(3)}_{\mathsf{bcc}}(m)\ts .
\]
A similar phenomenon has been observed in two dimensions, where the
set of MCSLs stabilises already for $n=2$; compare
Section~\ref{csl-sec:csl-nfold} and~\cite{csl-BG}.

So far, we have only discussed the body-centred cubic
lattice. However, we know from the ordinary CSLs that all three types
of cubic lattices have the same group of coincidence rotations, the
same spectrum of indices and the same multiplicity function. In fact,
this remains true in the case of MCSLs, too;
compare~\cite[Thms.~6.5.2, 6.5.4 and~6.5.5]{csl-habil}.

\begin{theorem}
  Let\/ $R^{}_1,\ldots, R^{}_n$ be coincidence rotations for the cubic
  lattices in the setting of Eq.~\eqref{csl-cubiclat}.  Then,
  \[
  \Sig_{\mathsf{pc}}^{}(R^{}_1,\ldots , R^{}_n)\, =\, 
  \Sig_{\mathsf{bcc}}^{}(R^{}_1,\ldots , R^{}_n)\, =\, 
  \Sig_{\mathsf{fcc}}^{}(R^{}_1,\ldots , R^{}_n)\ts .
  \]
  Moreover, \vspace{-2mm}
  \begin{align*}
    \Gpc(R^{}_1,\ldots,R^{}_n)&\, = \,
    \Gbcc(R^{}_1,\ldots,R^{}_n)\cap\Gpc\quad\text{and}\\[2mm]
    \Gfcc(R^{}_1,\ldots,R^{}_n)&\, = \,
    \Gbcc(R^{}_1,\ldots,R^{}_n)\cap\Gfcc
  \end{align*}
  relate the CSLs of the different cubic lattices. \qed
\end{theorem}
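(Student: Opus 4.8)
The plan is to reduce everything to the single-coincidence results already established for the cubic lattices, namely Theorem~\ref{csl-theo:cub-equalSig} and Theorem~\ref{csl-thm:csls-ideals}, applied inside each intersection. First I would prove the statement about the coincidence indices. Fixing rotations $R^{}_1,\ldots,R^{}_n\in\OC(\Gbcc)=\OG(3,\QQ)$, I would work with the body-centred lattice as the reference, since it has the cleanest quaternionic description. The key geometric input is the pair of shell-separation facts from the proof of Theorem~\ref{csl-theo:cub-equalSig}: for all $x\in\Gpc$ one has $|x|^2\in\ZZ$ while $4|x|^2\equiv 3\bmod 4$ for $x\in\Gbcc\setminus\Gpc$, and dually $|x|^2$ is even on $\Gbccstar=\Gfcc$ but odd on $\Gpc\setminus\Gbccstar$. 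These separation properties are unchanged by intersecting with finitely many rotated copies of the lattice, so I would apply Lemma~\ref{csl-lem:Sig-sublat-c} (the colour-coincidence divisibility lemma) to the sublattice $\Gpc\subset\Gbcc$ and to the dual pair, now using the multiple CSL $\Gbcc(R^{}_1,\ldots,R^{}_n)$ in place of a simple one.

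The second part, the explicit relations
\begin{align*}
  \Gpc(R^{}_1,\ldots,R^{}_n)&\,=\,\Gbcc(R^{}_1,\ldots,R^{}_n)\cap\Gpc,\\[2mm]
  \Gfcc(R^{}_1,\ldots,R^{}_n)&\,=\,\Gbcc(R^{}_1,\ldots,R^{}_n)\cap\Gfcc,
\end{align*}
I would obtain by intersecting the corresponding simple-CSL identities. Recall that Theorem~\ref{csl-thm:csls-ideals} already gives $\vG_a(R(q))=\Imag(q\JJ)\cap\vG_a=\Gbcc(R(q))\cap\vG_a$ for each fixed type $a$, whenever $|q|^2$ is odd. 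Since $\Gpc(R^{}_1,\ldots,R^{}_n)=\bigcap_i\Gpc(R^{}_i)$ by definition of the MCSL, and likewise for $\Gbcc$, I would write $\Gpc(R^{}_i)=\Gbcc(R^{}_i)\cap\Gpc$ for each $i$, intersect over $i$, and pull the fixed factor $\Gpc$ out of the intersection. The set-theoretic identity $\bigcap_i(\Gbcc(R^{}_i)\cap\Gpc)=\bigl(\bigcap_i\Gbcc(R^{}_i)\bigr)\cap\Gpc$ is immediate, and gives exactly the claimed relation; the face-centred case is identical using $\Gfcc$.

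With the geometric relations in hand, the equality of the three MCSL indices follows cleanly. Applying Lemma~\ref{csl-lem:Sig-sublat-c} with $\vL=\Gpc$ inside $\Gbcc$ shows $\Sig_{\mathsf{pc}}$ divides $\Sig_{\mathsf{bcc}}$ for the multiple coincidence, and the dual argument through $\Gfcc=\Gbccstar$ gives the reverse divisibility; since all these indices are odd (by the first part, each divides a product of odd simple indices, which are odd by Remark~\ref{csl-rem:spec-cub}), the factor-of-$2$ slack in the divisibility estimates is harmless and forces equality. The main obstacle I anticipate is verifying that the shell-separation hypothesis of Lemma~\ref{csl-lem:Sig-sublat-c}, namely $\vL\cap R(t+\vL)=\varnothing$ for all $t\in\vG\setminus\vL$, survives when $R$ ranges over the joint configuration rather than a single rotation. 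The cleanest way around this is to observe that the lemma is really a statement about each individual $R^{}_i$ separately, so I would first prove $\Gpc(R^{}_i)=\Gbcc(R^{}_i)\cap\Gpc$ for every single $i$ (which is exactly Theorem~\ref{csl-thm:csls-ideals}), and only then intersect; this sidesteps any need to re-examine the norm conditions for the combined lattice and keeps the argument purely formal once the simple case is granted.
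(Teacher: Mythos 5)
Your overall route is correct, and in fact it is the only route the paper itself suggests: the text states this theorem without proof (deferring to Thms.~6.5.2, 6.5.4 and 6.5.5 of \cite{csl-habil}), and the surrounding discussion makes clear that the intended argument is precisely a reduction to the simple-CSL results, as you propose. Your second paragraph is fully sound, with one small point to make explicit: Theorem~\ref{csl-thm:csls-ideals} gives $\vG_a(R(q))=\Imag(q\ts\JJ)\cap\vG_a=\Gbcc(R(q))\cap\vG_a$ only for $q$ odd and primitive, so for an arbitrary coincidence rotation $R^{}_i$ you must first invoke the reduction that $R^{}_i$ is symmetry related to some $R(q^{}_i)$ with $q^{}_i$ odd primitive, where the relating symmetry (built from $R(1+\ii)$ and units) lies in the \emph{common} point group of all three cubic lattices, so that $\vG_a(R^{}_i)=\vG_a(R(q^{}_i))$ for every type $a$ simultaneously (Remark~\ref{csl-rem:sym-rel}). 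After that, the identity $\bigcap_i\bigl(\Gbcc(R^{}_i)\cap\vG_a\bigr)=\Gbcc(R^{}_1,\ldots,R^{}_n)\cap\vG_a$ is indeed pure set theory.

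The loose end is your derivation of the index equality. Lemma~\ref{csl-lem:Sig-sublat-c} (and likewise Lemma~\ref{csl-lem:Sig-sublat}, whose proof uses the dual lattice, which does not interact simply with intersections) is stated and proved only for a \emph{single} coincidence isometry; you correctly flag that applying it to the joint configuration is unjustified, but you then leave the index claim resting on unproven ``divisibility estimates'' for MCSLs. The clean repair uses only what you have already established. Set $\vL:=\Gbcc(R^{}_1,\ldots,R^{}_n)$. By Lemma~\ref{csl-lem:sig1n}, $[\Gbcc:\vL\ts]$ divides $\Sig(R^{}_1)\cdots\Sig(R^{}_n)$, hence is odd by Remark~\ref{csl-rem:spec-cub}. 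Your geometric relation and the second isomorphism theorem then give
\[
   \Sig^{}_{\mathsf{pc}}(R^{}_1,\ldots,R^{}_n)\, =\,
   [\ts\Gpc:\vL\cap\Gpc\ts]\, =\, [\ts\vL+\Gpc:\vL\ts]\, =\,
   \frac{[\ts\Gbcc:\vL\ts]}{[\ts\Gbcc:\vL+\Gpc\ts]}\ts ,
\]
where $[\ts\Gbcc:\vL+\Gpc\ts]$ divides both $[\ts\Gbcc:\Gpc]=2$ and the odd number $[\ts\Gbcc:\vL\ts]$, hence equals $1$; the same argument with $\Gfcc\subset\Gbcc$ of index $4$ yields the third equality. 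With this substitution, no shell-separation hypothesis has to be re-examined for the combined lattice, and your proof is complete.
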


This result implies that the counting functions are equal for all three cubic
lattices, too. In particular, we have
\begin{align}
  c^{(n)}_{\mathsf{pc}}(m)\, =\, c^{(n)}_{\mathsf{fcc}}(m)\, =\,
   c^{(n)}_{\mathsf{bcc}}(m)
\end{align}
for any $n\in\NN\cup\{\infty\}$, and the corresponding generating
functions are equal as well.

Finally, let us mention an application to
crystallography.\index{crystallography} One object of interest to
crystallographers are so-called triple
junctions~\cite{csl-gerts2,csl-gerts1,csl-gerts3}.  Roughly speaking,
\emph{triple junctions}\index{triple~junction} are three crystal
grains meeting in a straight line. This means that there are three
pairs of grains sharing a common plane (grain boundary). They give
rise to three simple CSLs and to a double CSL, which is the
intersection of the former. In our terms, the latter is an MCSL
$\vG\cap R^{}_1\vG\cap R^{}_2\vG$, whereas the former are the simple
CSLs $\vG\cap R^{}_1\vG$, $\vG\cap R^{}_2\vG$ and $R^{}_1\vG\cap
R^{}_2\vG$, respectively. An important question is the relation of the
indices of these lattices.

Let us denote the indices of the simple CSLs by
$\Sig^{}_{i}:=\Sig(R^{}_{i})$, where $R^{}_3:=R_1^{-1}R^{}_2$. Let
$q^{}_{1}$ and $q^{}_{2}$ be the quaternions that parametrise
$R^{}_{1}$ and $R^{}_{2}$, respectively.  Then, $R^{}_{3}$ is
generated by ${\bar{q}}^{}_{1}q^{}_{2}$, which is not a primitive
quaternion in general. The corresponding primitive quaternion is given
by $q^{}_3:=\frac{{\bar{q}}_1q^{}_2}{|q^{}_{12}|^2}$, where
$q^{}_{12}=\gcld(q^{}_1,q^{}_2)$.  Hence, we can immediately reproduce
Gertsman's result~\cite{csl-gerts1} for the index
$\Sig^{}_3=\frac{\Sig^{}_1\Sig^{}_2}{\Sig_{12}^2}$, where
$\Sig^{}_{12}:=\Sig(R(q^{}_{12}))$ is the index that corresponds to
the rotation $R(q^{}_{12})$.  On the other hand, we know from
Lemma~\ref{csl-lem:sig12} and Eq.~\eqref{csl-eq:sig12c} that
\[
  \Sig(R^{}_1,R^{}_2)\, =\, \frac{\Sig^{}_1\Sig^{}_2}{\Sig^{}_{12}}
  \, =\, \Sig^{}_{12}\, \Sig^{}_3 \ts .
\]
Now, we define $q_1':=q_{12}^{-1}q^{}_1$ and
$q_2':=q_{12}^{-1}q^{}_2$. Then, we may write
\begin{equation}\label{csl-eq:qqq}
  q^{}_1\,=\, q^{}_{12}\ts q_1'\ts ,\quad
  q^{}_2\, =\, q^{}_{12}\ts q_2'\ts ,\quad
  q^{}_3\, =\, {\bar{q}}_1^{\ts\prime}q_2'
\end{equation}
and, correspondingly, we may also decompose the rotations
$R^{}_{i}$ into the `basic' constituents
$R^{}_{12}:=R(q^{}_{12})$, $R_1':=R(q_1')$ and $R_2':=R(q_2')$. We
note that the corresponding indices are multiplicative,
\[
\Sig(R^{}_1)\, =\, \Sig(R^{}_{12})\Sig(R_1')\ts , \;
\Sig(R^{}_2)\, =\, \Sig(R^{}_{12})\Sig(R_2')\ts , \;
\Sig(R^{}_3)\, =\, \Sig(R_1')\Sig(R_2')\ts .
\]
Furthermore, we see
${\bar{q}}_1'=\gcld({\bar{q}}_1,q^{}_3)=:q^{}_{13}$ and
${\bar{q}}_2'=\gcld({\bar{q}}_2,{\bar{q}}_3)=:q^{}_{23}$, whence
Eq.~\eqref{csl-eq:qqq} may be written in a more symmetric way as
\[
q^{}_1\, =\, q^{}_{12}{\bar{q}}^{}_{13}\, =\, 
\frac{q^{}_2{\bar{q}}_3}{|q^{}_{23}|^2}\ts , \quad
q^{}_2\, =\, q^{}_{12}{\bar{q}}^{}_{23}\, =\, 
\frac{q^{}_1 q^{}_3}{|q^{}_{13}|^2}\ts , \quad
q^{}_3\, =\, {\bar{q}}^{}_{13}{\bar{q}}^{}_{23}\, =\, 
\frac{{\bar{q}}_1 q^{}_2}{|q^{}_{12}|^2}\ts .
\]
If we define the corresponding indices in the obvious way, we see
that the index $\Sig(R^{}_1,R^{}_2)$ can be written as
\begin{align*}
\Sig(R^{}_1,R^{}_2)\, &=\, \frac{\Sig^{}_1\Sig^{}_2}{\Sig^{}_{12}}
\, =\, \frac{\Sig^{}_1\Sig^{}_3}{\Sig^{}_{13}}
\, =\, \frac{\Sig^{}_2\Sig^{}_3}{\Sig^{}_{23}}
\, =\, \Sig^{}_{12}\Sig^{}_3=\Sig^{}_{13}\Sig^{}_2=\Sig^{}_{23}\Sig^{}_1\\[1mm]
&=\, \Sig^{}_{12}\Sig^{}_{13}\Sig^{}_{23}
\, =\, \Sig^{}_{12}\Sig_1'\Sig_2'\, =\, 
\bigl(\Sig^{}_1\Sig^{}_2\Sig^{}_3\bigr)^{\!\frac{1}{2}}.
\end{align*}
The last expression was proved by different methods
in~\cite{csl-gerts1}.  Note that we can express $\Sig(R^{}_1,R^{}_2)$
either in terms of the simple indices $\Sig^{}_1,\Sig^{}_2,\Sig^{}_3$
or in terms of the `reduced' indices
$\Sig^{}_{12},\Sig^{}_{13},\Sig^{}_{23}$, which somehow describe the
`common' part of $R^{}_1$, $R^{}_2$ and $R^{}_3$.  Note that
$R^{}_{12}$, $R^{}_{13}$ and $R^{}_{23}$ contain the complete
information about the triple junction. In particular, we can write
$\vG(R^{}_1,R^{}_2)$ as $\vG(R^{}_1,R^{}_2)=
R^{}_{12}(R_{12}^{-1}\vG\cap R_{13}^{-1}\vG\cap R_{23}^{-1}\vG)$.\smallskip

As we have now solved the problem of MCSLs of the cubic lattices, it
is natural to ask whether these results can be
generalised. Unfortunately, not much is known about MCSLs in
dimensions $d>3$, not even for the $A_4$-lattice or the hypercubic
lattices. This is not too surprising in view of the fact that the
computation of MCSLs is substantially more difficult than the
determination of ordinary CSLs. Indeed, even for the $4$-dimensional
root lattices, no explicit expression for the MCSLs is known, which
makes the corresponding enumeration problem intractable along the
explicit route we have taken above.

Still, there are several interesting questions to address. A striking
feature of our examples is the stabilisation property of the
coincidence spectra and of the MCSLs themselves.  For the
planar lattices and modules of Section~\ref{csl-sec:csl-nfold}, any
MCSM can be represented as the intersection of at most two ordinary
CSMs, whereas for the cubic lattices up to three ordinary CSLs are
needed. One might suspect that, in dimension $d$, any MCSL can be
written as the intersection of at most $d$ ordinary CSLs, but this
seems too difficult to decide at the moment.

A somewhat easier problem is the stabilisation phenomenon of the
coincidence spectra. For the cubic lattices as well as for the planar
lattices and modules of Section~\ref{csl-sec:csl-nfold}, we have
$\sigma_\infty(\vG)=\sigma(\vG)$; compare
Proposition~\ref{csl-prop:spectrum} and
Eq.~\eqref{csl-eq:specstable}. Similarly, we have
$\sigma_\infty(\vG)=\sigma(\vG)$ for the lattices $A_4$, $D_4^{*}$ and
$\ZZ^4$. For $A_4$ and $D_4^{*}$, this follows immediately from
$\sigma(\vG)=\widehat{\sigma}(\vG)$ and
Eq.~\eqref{csl-eq:dieletzte}. For $\ZZ^4$, one has to argue
differently, as $\sigma(\vG)\neq\widehat{\sigma}(\vG)$. Here, index
considerations similar to those in Eq.~\eqref{csl-eq:chain} do the
job.

There are two further (somewhat extremal) situations where we can
prove stabilisation. If the simple spectrum is a finite set, which is
equivalent to the finiteness of the set of CSLs, the coincidence
spectra must stabilise after a finite number of steps, as the set of
all MCSLs is finite as well.  This happens for the rather large class
of planar lattices that have exactly two coincidence reflections;
compare~\cite{csl-BSZ-well}. The second situation is the case
$\sigma(\vG)=\NN$, where we obviously have
$\sigma_\infty(\vG)=\sigma(\vG)$. This happens for $\vG=\ZZ^d$ for
$d\geq 5$, as we shall see below.

\section{Results in higher dimensions}
\label{csl-sec:higher}

For dimensions $d\geq 4$, not much is known about CSLs in general, let
alone CSMs.  However, if $\vG$ is
\emph{rational},\index{lattice!rational} we have some results on the
possible indices.  In this case, the group $\OC(\vG)$ is generated by
coincidence reflections.  To be more concrete, let $R_{v}\!:\,
\RR^d\longrightarrow\RR^d$, $x \mapsto
x-2\frac{\ip{v}{x}}{\ip{v}{v}}v$ denote the reflection in the plane
perpendicular to $v\in\RR^d$. As a first result, we mention the
following characterisation of rational lattices.

\begin{theorem}[{\cite[Thm.~3.2 and Cor.~3.3]{csl-zou1}}]\label{csl-thm:zou1}
  A lattice\index{lattice!rational} \/ $\vG\subset \RR^d$ is
  ra\-tional{\ts\ts}\footnote{{\ts}meaning rational in the wider
    sense; compare Footnote~\ref{csl-foot:rat-lat} on
    page~\pageref{csl-foot:rat-lat}.}  if and only if any reflection\/
  $R_{v}$ with\/ $v\in\vG\setminus\{0\}$ is a coincidence reflection.
  \qed
\end{theorem}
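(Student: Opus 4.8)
The plan is to prove both directions of the equivalence, with the reverse direction being the more straightforward one. Throughout, recall from Corollary~\ref{csl-lem:OC-den} that $R\in\OC(\vG)$ is equivalent to $R\in\OS(\vG)$ together with $\den^{}_\vG(R)\in\NN$, so it suffices to verify that an arbitrary reflection $R_v$ with $v\in\vG\setminus\{0\}$ both scales a sublattice into $\vG$ and has integer denominator.

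First I would treat the direction ``$\vG$ rational $\Rightarrow$ every $R_v$ is a coincidence reflection.'' Here, rationality (in the wider sense of Footnote~\ref{csl-foot:rat-lat}) means we may rescale so that $\ip{x}{y}\in\QQ$ for all $x,y\in\vG$; without loss of generality assume $\ip{x}{y}\in\ZZ$ by further scaling, since scaling does not affect membership in $\OC(\vG)$ by Lemma~\ref{csl-lem:similar-csl}. The key computation is that for $v\in\vG$ and any $x\in\vG$,
\[
   R_v(x) \, = \, x - 2\,\frac{\ip{v}{x}}{\ip{v}{v}}\,v .
\]
Multiplying through by the integer $m:=\ip{v}{v}=|v|^2$, one gets
\[
   m\,R_v(x) \, = \, m\,x - 2\ts\ip{v}{x}\,v \, \in\, \vG ,
\]
because $\ip{v}{x}\in\ZZ$ and $v,x\in\vG$. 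Thus $m\,R_v\vG\subseteq\vG$, which shows $R_v\in\OS(\vG)$ and that $\den^{}_\vG(R_v)$ divides $m$, hence is a positive integer. By Corollary~\ref{csl-lem:OC-den}, $R_v\in\OC(\vG)$, so $R_v$ is a coincidence reflection.

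For the converse, ``every $R_v$ with $v\in\vG\setminus\{0\}$ is a coincidence reflection $\Rightarrow$ $\vG$ rational,'' I would argue by producing rational inner products from the integrality of denominators. If $R_v\in\OC(\vG)$, then by Corollary~\ref{csl-lem:OC-den} there is a positive integer $n=\den^{}_\vG(R_v)$ with $n\,R_v\vG\subseteq\vG$. Applying this to an arbitrary $x\in\vG$ and pairing with $v$, the reflection identity yields $n\,x - 2n\frac{\ip{v}{x}}{\ip{v}{v}}v\in\vG$; since $n\,x\in\vG$ and $v\in\vG$, rationality of the coefficient $2n\frac{\ip{v}{x}}{\ip{v}{v}}$ can be extracted once one knows $\frac{\ip{v}{x}}{\ip{v}{v}}\in\QQ$ for all pairs. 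The clean way to see this is to run $v$ over a basis of $\vG$ and use that $R_v$ fixes the hyperplane $v^\perp$ and negates $v$; applying the condition to enough reflections forces all ratios $\ip{v}{x}/\ip{v}{v}$ to be rational, and after rescaling by a common denominator one obtains $\ip{x}{y}\in\QQ$ for all $x,y\in\vG$, i.e.\ rationality in the wider sense.

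The main obstacle will be the converse direction: extracting genuine rationality of \emph{all} inner products $\ip{x}{y}$ from the integrality of the denominators of the reflections alone. The forward direction reduces to the one-line computation above, but the converse requires choosing a good spanning set of reflection vectors (e.g.\ a lattice basis together with sums of basis vectors) and combining the resulting rational ratios via a Gram-matrix argument to pin down each $\ip{x}{y}$ up to a single global scale. I would lean on the cited source \cite{csl-zou1} for the precise linear-algebraic bookkeeping, since the delicate point is ensuring that the rationality deduced from individual reflections is \emph{simultaneous} (one common scale works for the whole Gram matrix), which is exactly what ``rational in the wider sense'' demands.
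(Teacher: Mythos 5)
Note first that the paper does not prove this statement at all: it is quoted, with a \qed, from Zou's work \cite{csl-zou1}, so your proposal can only be judged on its own terms. Your overall strategy (extract rationality of the ratios $\ip{v}{x}/\ip{v}{v}$ from the coincidence condition, then assemble a rational Gram matrix) is the natural one. The forward direction is essentially complete: after rescaling so that all inner products are integers (legitimate, since $R_{\alpha v}=R_v$ and $\OC(\alpha\vG)=\OC(\vG)$ by Lemma~\ref{csl-lem:similar-csl}), the identity $\ip{v}{v}\ts R_v x = \ip{v}{v}\ts x - 2\ip{v}{x}\ts v \in\vG$ gives $\ip{v}{v}\ts R_v\vG\subseteq\vG$. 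One slip: from $m:=\ip{v}{v}\in\Scal^{}_\vG(R_v)$ you conclude that $\den^{}_\vG(R_v)$ ``divides $m$, hence is a positive integer''; dividing an integer only makes $\den^{}_\vG(R_v)=m/k$ rational. Either finish via Lemma~\ref{csl-lem:scal-d} (a rational number whose $d$-th power is an integer is itself an integer), or bypass denominators entirely: an integer $m\neq 0$ with $m R_v\vG\subseteq\vG$ already gives $R_v\vG\sim\vG$ by Lemma~\ref{csl-lem:comm-lat}, hence $R_v\in\OC(\vG)$ by definition.

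The genuine gap is the converse, in two places. First, your key sentence is circular as written: you say rationality of $2n\frac{\ip{v}{x}}{\ip{v}{v}}$ ``can be extracted once one knows $\frac{\ip{v}{x}}{\ip{v}{v}}\in\QQ$ for all pairs'' --- but that is precisely what has to be proved. The correct step is: from $nR_vx\in\vG$ and $nx\in\vG$ one gets $2n\frac{\ip{v}{x}}{\ip{v}{v}}\ts v\in\vG\cap\RR v$, which is a rank-one lattice containing $v$, and discreteness then forces $\frac{\ip{v}{x}}{\ip{v}{v}}\in\QQ$. Second, ``running $v$ over a basis'' does not suffice: for $\vG=\ZZ\times\pi\ZZ$ with basis $b_1=(1,0)$, $b_2=(0,\pi)$, every ratio $\ip{b_i}{b_j}/\ip{b_i}{b_i}$ equals $0$ or $1$ (the basis reflections are even symmetries of $\vG$), yet $\vG$ is not rational, and indeed $R_{b_1+b_2}$ fails to be a coincidence reflection. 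One must also use the vectors $v=b_i+b_j$: when $\ip{b_i}{b_j}=0$, rationality of $\ip{v}{b_i}/\ip{v}{v}$ forces $\ip{b_j}{b_j}/\ip{b_i}{b_i}\in\QQ$. You mention such sums only in your closing paragraph and then defer ``the precise linear-algebraic bookkeeping'' to the cited source --- but that bookkeeping \emph{is} the mathematical content of this direction, and it is short: once every Gram entry $\ip{b_i}{b_j}$ is a rational multiple of $\ip{b_1}{b_1}$, the single scale $\alpha=\ip{b_1}{b_1}^{-1/2}$ makes the whole Gram matrix of $\alpha\vG$ rational, so the ``simultaneity'' you flag as delicate is automatic.
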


As we have plenty of coincidence reflections for rational lattices, it
is not a surprise that they generate the group $\OC(\vG)$. In
particular, we have the following analogue of the classic
Cartan--Dieudonn\'e theorem (see \cite{csl-Cartan,csl-Garling}) for
coincidence isometries.

\begin{theorem}[{\cite[Thm.~3.1 and
    Thm.~3.5]{csl-zou1}}]\label{csl-thm:zou2}
  Let\/ $\vG\subset \RR^d$ be a rational lattice $($in the wider
  sense$\ts )$.  Then, any coincidence isometry of\/ $\vG$ is a
  product of at most\/ $d$ coincidence reflections generated by
  lattice vectors of\/ $\vG$.  \qed
\end{theorem}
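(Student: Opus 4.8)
The plan is to mimic the classical Cartan--Dieudonné theorem, which states that any isometry of a $d$-dimensional quadratic space is a product of at most $d$ reflections, but with the crucial twist that the reflecting vectors must be lattice vectors of $\vG$ and the resulting reflections must be coincidence reflections. First I would fix a coincidence isometry $R\in\OC(\vG)$ and note that, since $\vG$ is rational in the wider sense, after rescaling we may assume $\langle x\ts|\ts y\rangle\in\QQ$ for all $x,y\in\vG$. The strategy is an induction on $d$ that tracks, at each step, a vector that is \emph{fixed} by the remaining isometry and that can be chosen to lie in $\vG$. The key to keeping the reflections admissible is Theorem~\ref{csl-thm:zou1}: for a rational lattice, \emph{every} reflection $R_v$ with $v\in\vG\setminus\{0\}$ is automatically a coincidence reflection. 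So the entire difficulty reduces to producing a Cartan--Dieudonné decomposition whose reflecting vectors can be taken inside~$\vG$; admissibility then comes for free.

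The inductive step I would carry out as follows. Given $R\in\OC(\vG)$, pick a nonzero $v\in\vG$ and consider $w:=Rv-v$. If $w=0$, then $R$ fixes $v$, and I would pass to the orthogonal complement $v^{\perp}$, where $R$ restricts to a coincidence isometry of the rational lattice $\vG\cap v^{\perp}$ (of rank $d-1$), applying the induction hypothesis to obtain at most $d-1$ lattice reflections there. If $w\neq 0$, the classical trick is to compose with the reflection $R_w$: a direct computation using $|Rv|=|v|$ shows $R_w(Rv)=v$, so $R_wR$ fixes $v$. The essential point I must verify is that $w=Rv-v$ lies in $\vG$ (so that $R_w$ is a coincidence reflection by Theorem~\ref{csl-thm:zou1}). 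This is where rationality and the coincidence property of $R$ enter: because $R\in\OC(\vG)$, there is an integer $m$ with $mR\vG\subseteq\vG$, so $mRv\in\vG$ and hence $m w=mRv-mv\in\vG$; thus $w$ is at least a \emph{rational} multiple of a lattice vector, and clearing denominators gives a lattice vector defining the \emph{same} reflection hyperplane, hence the same reflection $R_{w}$. Iterating, one reflects away one dimension at a time, accumulating at most $d$ reflections in total.

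The main obstacle I anticipate is precisely the bookkeeping that guarantees the reflecting vector at each stage can be chosen in $\vG$ (not merely in $\QQ\vG$) while simultaneously respecting the rationality needed to invoke Theorem~\ref{csl-thm:zou1} on the descending chain of sublattices $\vG\supset \vG\cap v^{\perp}\supset\cdots$. One must confirm that each restricted lattice $\vG\cap v^{\perp}$ is again rational in the wider sense (which follows since the inner product restricts, and $v^{\perp}$ is a rational subspace when $v\in\vG$), and that the restriction of $R$ to this subspace is still a coincidence isometry of the smaller lattice. A subtle point is that $R_wR$ fixes $v$ as a vector but need not preserve $\vG\cap v^{\perp}$ \emph{setwise} unless we are careful; the commensurability furnished by $R\in\OC(\vG)$ is exactly what rescues this, since it ensures the relevant intersection lattices all have full rank in their respective subspaces.

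Finally I would assemble the count: each reflection removes (at least) one dimension from the space on which the remaining isometry acts nontrivially, the base case $d=0$ (or $d=1$) being trivial, so the induction terminates after at most $d$ steps and produces $R=R_{v_1}\cdots R_{v_k}$ with $k\le d$ and each $v_i\in\vG\setminus\{0\}$. Since Theorem~\ref{csl-thm:zou1} certifies that each such $R_{v_i}$ is a coincidence reflection, the decomposition is entirely within $\OC(\vG)$, completing the proof. I expect the write-up to lean almost entirely on the two prior theorems of Zou, with the only genuinely new content being the denominator-clearing argument that keeps every reflecting vector integral.
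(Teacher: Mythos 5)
The paper itself gives no proof of this theorem: it is quoted verbatim from Zou, with the \qed attached to the statement and the reference \cite[Thms.~3.1 and 3.5]{csl-zou1} doing all the work. So your proposal can only be judged on its own merits, and on those it is correct; moreover it is essentially the argument of the cited source, namely the classical Cartan--Dieudonn\'e induction with the single extra ingredient you identify. The three points where a naive transcription of Cartan--Dieudonn\'e could break are all handled properly in your sketch. First, the denominator-clearing step: since $R\in\OC(\vG)$ there is $m\in\NN$ with $mR\vG\subseteq\vG$, so $m(Rv-v)\in\vG$, and $R_{w}=R_{mw}$ because a reflection depends only on the line $\RR w$; Theorem~\ref{csl-thm:zou1} then certifies that $R_{mw}$ is a coincidence reflection. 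Second, the full-rank claim for $\vG\cap v^{\perp}$: after rescaling so that all inner products on $\vG$ are rational, the functional $x\mapsto\ip{x}{v}$ maps $\vG$ onto a finitely generated, hence cyclic, nonzero subgroup of $\QQ$, so its kernel $\vG\cap v^{\perp}$ has rank $d-1$; this is exactly where rationality is indispensable, and a generic lattice would defeat the induction here. Third, the commensurability bookkeeping: $R':=R_{w}R$ lies in $\OC(\vG)$ because $\OC(\vG)$ is a group, and since $R'$ fixes $v$ it preserves $v^{\perp}$, whence $R'(\vG\cap v^{\perp})\cap(\vG\cap v^{\perp})=(R'\vG\cap\vG)\cap v^{\perp}$ contains $k\ts(\vG\cap v^{\perp})$ for $k=[\vG:R'\vG\cap\vG]$, so the restriction is a coincidence isometry of the rank-$(d-1)$ rational lattice $\vG\cap v^{\perp}$ and the induction hypothesis applies. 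One small point worth making explicit in a write-up: the reflections produced inside $v^{\perp}$ by the induction have their vectors $u_i\in\vG\cap v^{\perp}\subseteq\vG$, so Theorem~\ref{csl-thm:zou1} applied to $\vG$ itself makes the extended reflections $R_{u_i}$ coincidence reflections of $\vG$, and the identity $R=R_{w}R_{u_1}\cdots R_{u_k}$ holds on all of $\RR^{d}$ because both sides agree on $v^{\perp}$ and fix $v$. With that, the count $1+(d-1)=d$ closes the induction exactly as you describe.
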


Theorem~\ref{csl-thm:zou2} allows us to determine the coincidence
spectrum for some rational lattices.\index{lattice!rational} As an
example, we consider~$\vG=\ZZ^d$. In this case, $\OC(\ZZ^d)$ is
generated by the reflections $R_{v}$, where $v$ runs through all
non-zero primitive lattice vectors. The coincidence index
$\Sig(R_{v})$ can be calculated explicitly and, for primitive~$v$, is
given by \cite[Thm.~3.2]{csl-zou2} as
\begin{equation}\label{csl-eq:zd-refl-index}
  \Sig(R_{v}) \, =\, 
  \begin{cases}
    \ip{v}{v}, & \mbox{if $\ip{v}{v}$ is odd,}\\
    \frac{1}{2}\ip{v}{v}, & \mbox{if $\ip{v}{v}$ is even.}
  \end{cases}
\end{equation}
As any positive integer $n$ can be written as the sum of four squares,
there exists a primitive vector $v\in\ZZ^d$ with $\ip{v}{v}=2n$ for
$d\geq 5$ (choose one of the components to be $1$, which guarantees
the primitivity, and adjust the other components to get length
$\ip{v}{v}=2n$). Hence, in $\ZZ^d$ with $d\geq 5$, all positive
integers occur as a coincidence index of some reflection, which gives
us the coincidence spectrum; compare~\cite{csl-zou2}.

\begin{fact}
  The coincidence spectrum of\/ $\ZZ^d$ for\/ $d\geq 5$ is\/
  $\NN$. \qed
\end{fact}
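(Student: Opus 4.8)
The plan is to show that every positive integer $n$ occurs as a coincidence index of some reflection in $\ZZ^d$ when $d\geq 5$. Since the coincidence spectrum always contains $\Sig(R)=1$ (the identity) and since $\sigma(\ZZ^d)\subseteq\NN$ trivially, it suffices to realise each $n\geq 1$ as $\Sig(R_v)$ for a suitable primitive lattice vector $v$. By Eq.~\eqref{csl-eq:zd-refl-index}, a reflection $R_v$ generated by a primitive $v$ has index $\ip{v}{v}$ if $\ip{v}{v}$ is odd and $\tfrac{1}{2}\ip{v}{v}$ if $\ip{v}{v}$ is even. Therefore, to produce a reflection of index $n$, I would seek a primitive vector $v\in\ZZ^d$ with $\ip{v}{v}=2n$; then $2n$ is even and $\Sig(R_v)=\tfrac{1}{2}(2n)=n$, exactly as desired.

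The key step is the existence of such a primitive vector. By Lagrange's four-square theorem, the integer $2n-1$ can be written as a sum of four squares, say $2n-1=a^2+b^2+c^2+e^2$ with $a,b,c,e\in\ZZ$. Then the vector
\[
  v\, =\, (1,a,b,c,e,0,\ldots,0)\, \in\, \ZZ^d
\]
(where we pad with zeros to reach dimension $d\geq 5$) satisfies $\ip{v}{v}=1+(2n-1)=2n$. Crucially, the first component of $v$ equals $1$, so the greatest common divisor of all components of $v$ is $1$, which means $v$ is primitive. Here the assumption $d\geq 5$ is exactly what allows room for the extra coordinate set to $1$ on top of the four squares, guaranteeing primitivity without affecting the norm parity.

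With this $v$, the reflection $R_v$ is a coincidence reflection by Theorem~\ref{csl-thm:zou1} (as $\ZZ^d$ is rational and $v\in\ZZ^d\setminus\{0\}$), and its index is $\Sig(R_v)=n$ by Eq.~\eqref{csl-eq:zd-refl-index}. Since $n\geq 1$ was arbitrary, every positive integer is realised, whence $\NN\subseteq\sigma(\ZZ^d)$; combined with $\sigma(\ZZ^d)\subseteq\NN$, we conclude $\sigma(\ZZ^d)=\NN$.

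The main obstacle, such as it is, lies entirely in the primitivity bookkeeping: one must ensure that the vector chosen is genuinely primitive (not merely of the correct norm), since the index formula in Eq.~\eqref{csl-eq:zd-refl-index} is stated only for primitive $v$, and a non-primitive vector would give a scaled and potentially different index. The device of fixing one coordinate equal to $1$ sidesteps this cleanly and also explains transparently why the argument needs $d\geq 5$ rather than $d\geq 4$: in dimension four there is no spare coordinate to force primitivity, and the pure four-square representation of $2n$ need not be primitive. One should also note in passing that the reflections automatically lie in $\OC(\ZZ^d)=\OG(d,\QQ)\cap(\text{rational reflections})$, so no separate verification that $R_v$ is a coincidence isometry is needed beyond invoking Theorem~\ref{csl-thm:zou1}.
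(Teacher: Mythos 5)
Your proof is correct and follows essentially the same route as the paper: apply Lagrange's four-square theorem to $2n-1$, set one coordinate equal to $1$ to force primitivity (which is exactly where $d\geq 5$ enters), and read off the index $n$ from the even case of Eq.~\eqref{csl-eq:zd-refl-index}. The paper states this construction only as a parenthetical hint, so your write-up simply makes the same argument explicit.
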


\begin{remark}
  Previously, we have seen that the coincidence spectrum of $\ZZ^d$ is
  a proper subset of $\NN$ for $2\leq d \leq 4$; compare
  Example~\ref{csl-ex:mcsl-square} and Remarks~\ref{csl-rem:spec-cub}
  and~\ref{csl-rem:spec-z4}.  Although Theorem~\ref{csl-thm:zou2}
  guarantees that the coincidence reflections generate $\OC(\ZZ^d)$,
  it is not evident whether they yield the whole coincidence
  spectrum. But, in fact, this is indeed the case. Moreover, for
  $2\leq d \leq 4$, it follows that $n$ is the index of a coincidence
  reflection if and only if $n$ is the index of a coincidence
  rotation.

This is obvious for $d=2$, as there is an index-preserving bijection
between coincidence reflections and coincidence rotations (observe
that any reflection is the product of complex conjugation with a
rotation).

For $d=3$ or $d=4$, there is no such bijection. Nevertheless, we get
the possible indices for coincidence reflections by evaluating
Eq.~\eqref{csl-eq:zd-refl-index}. This is straightforward for $d=4$,
where we conclude that exactly all odd positive integers and all
positive integers of the form $4n+2$ occur as coincidence indices for
some coincidence reflection. These are the same indices
 we found for the coincidence rotations of $\ZZ^4$ in
Section~\ref{csl-sec:cub4}; compare Remark~\ref{csl-rem:spec-z4}.

For $d=3$, evaluating Eq.~\eqref{csl-eq:zd-refl-index} is more
difficult.  Recall that any integer that is not of the form $4^k m$
with $m \equiv 7\bmod 8$ can be written as the sum of three
squares. Hence, for any odd $n$, there exists a vector $v\in\ZZ^3$
such that $\ip{v}{v}=2n$.  In fact, there even exists a primitive $v$,
since a positive integer $m\not\equiv 0 \bmod{4}$ can be represented
as a sum of three integers if and only if it has a primitive
representation; see~\cite{csl-CoHi} for an explicit formula for the
number of primitive representations. Thus, there is a coincidence
reflection of index $n$ for any positive odd $n$. Recall from
Section~\ref{csl-sec:cubic} that there are coincidence rotations of
index $n$ for all positive odd $n$ as well.  \exend
\end{remark}

Theorem~\ref{csl-thm:zou1} can be generalised to $\cS$-lattices, as
its proof is algebraic in nature. The analogue of a rational lattice
can be characterised as follows.\index{S-lattice@$\cS$-lattice}

\begin{theorem}[{\cite[Thm.~3.2]{csl-Huck09}}]\label{csl-thm:huck1}
  Let\/ $M\subseteq \RR^d$ be an\/ $\cS$-lattice, and let\/ $K$ be the
  field of fractions of\/ $\cS$. Then, the following properties are
  equivalent.
  \begin{enumerate}\itemsep=2pt
    \item For all\/ $u,v\in M$ and\/ $w\in M\setminus\{0\}$, we have\/ 
      $\frac{\ip{u}{v}}{\ip{w}{w}}\in K$;
    \item $R_{v}$ is a coincidence reflection for any\/ $v\in
      M\setminus\{0\}$.  \qed
  \end{enumerate}
\end{theorem}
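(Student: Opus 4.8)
The plan is to establish the two implications $(1)\Rightarrow(2)$ and $(2)\Rightarrow(1)$ separately, working throughout with the reflection $R_v$ and the explicit criterion for being a coincidence isometry furnished by Fact~\ref{csl-lem:scal-nontrivM} and Lemma~\ref{csl-lem:OC-scal}. The central object is the quantity $\ip{u}{v}/\ip{w}{w}$, and the key observation is that applying $R_v$ to a basis vector produces a vector whose coordinates are controlled precisely by such ratios. Recall from Fact~\ref{csl-fact:scal-S-lat} that for an $\cS$-lattice $M$ one has $\scal^{}_M(\one)\cup\{0\}=K$, the field of fractions of $\cS$; this is what ties the two conditions together, since membership of various scaling factors in $K$ is exactly condition~(1).

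For $(1)\Rightarrow(2)$, I would fix $v\in M\setminus\{0\}$ and a generating set $b_1,\dots,b_d$ of $M$ as an $\cS$-span. Computing $R_v b_i = b_i - 2\frac{\ip{v}{b_i}}{\ip{v}{v}}v$, the coefficient $2\ip{v}{b_i}/\ip{v}{v}$ lies in $K$ by hypothesis~(1) (taking $u=b_i$, $w=v$). Hence $R_v b_i \in M\otimes_{\cS}K$, the $K$-span of $M$, for every $i$, so $R_v M \subseteq M\otimes_{\cS}K$. Since $M\otimes_{\cS}K$ is the $\scal^{}_M(\one)$-span of $M$ in the sense relevant here, one can clear denominators: there is a common $\alpha\in\cS\setminus\{0\}$ (the $\lcm$ of the finitely many denominators, using that $\cS$ is a ring with the appropriate factorisation behaviour) with $\alpha R_v M \subseteq M$. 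By Fact~\ref{csl-lem:scal-nontrivM} this gives $R_v\in\OS(M)$, and then $1\in\scal^{}_M(R_v)$ because $R_v$ is an involution with $R_v M\sim M$ forced by commensurateness; Lemma~\ref{csl-lem:OC-scal}(1) then yields $R_v\in\OC(M)$, i.e.\ $R_v$ is a coincidence reflection.

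For the converse $(2)\Rightarrow(1)$, suppose $R_w$ is a coincidence reflection for every $w\in M\setminus\{0\}$, and fix $u,v\in M$ and $w\in M\setminus\{0\}$. By Lemma~\ref{csl-lem:OC-scal}(1), $R_w\in\OC(M)$ means $1\in\scal^{}_M(R_w)$, so $M\sim R_w M$; by Lemma~\ref{csl-lem:comm} there is an integer $m$ with $m R_w M\subseteq M$. Applying this to $u$ gives $m R_w u = m u - 2m\frac{\ip{w}{u}}{\ip{w}{w}}w \in M$, whence $2m\frac{\ip{w}{u}}{\ip{w}{w}}w\in M$. Pairing this vector with $v\in M$ and using that inner products of lattice/module elements behave well, I would extract that $\frac{\ip{w}{u}}{\ip{w}{w}}\cdot\ip{w}{v}$ is a rational multiple of an element of $\cS$, hence lies in $K$; a short manipulation (polarisation, or choosing $v=w$ to first get $\frac{\ip{w}{u}}{\ip{w}{w}}\in K$ and then bootstrapping) delivers $\frac{\ip{u}{v}}{\ip{w}{w}}\in K$ in full generality.

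The main obstacle I anticipate is the denominator-clearing step in $(1)\Rightarrow(2)$ and, more delicately, the extraction of $\ip{u}{v}/\ip{w}{w}\in K$ from the integrality statement in $(2)\Rightarrow(1)$. In the latter, knowing only that $2m\frac{\ip{w}{u}}{\ip{w}{w}}w\in M$ gives information about a single coordinate direction; to isolate the scalar $\ip{u}{v}/\ip{w}{w}$ cleanly one must exploit that $M$ has full $\RR$-span and use the $K$-bilinearity of the inner product together with Theorem~\ref{csl-theo:ScalM1} (that $\scal^{}_M(\one)\cup\{0\}$ is a field). The cleanest route is probably to first prove the special case $v=w$, obtaining $\ip{u}{w}/\ip{w}{w}\in K$ for all $u\in M$ and all $w\in M\setminus\{0\}$, and then polarise via $\ip{u}{v}=\tfrac12(\ip{u}{u+v... }$—more precisely via the identity $\ip{u}{v}\ip{w}{w}$ expressed through same-vector ratios—to reduce the general ratio to a $K$-combination of special ones. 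This reduction, rather than any single computation, is where the care lies.
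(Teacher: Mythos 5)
Your overall architecture is the natural one (and, since the paper itself only cites Huck's article for this theorem and reproduces no proof, your argument has to stand on its own): for $(1)\Rightarrow(2)$, clear denominators in $R_v b_i=b_i-2\frac{\ip{v}{b_i}}{\ip{v}{v}}v$; for $(2)\Rightarrow(1)$, prove the case $v=w$ first and bootstrap. But two of the steps you wrote down do not work as stated. In the first direction, after producing $\alpha\in\cS\setminus\{0\}$ with $\alpha R_vM\subseteq M$ (incidentally, take $\alpha$ to be the \emph{product} of the finitely many denominators; an $\lcm$ need not exist in $\cS$), you assert that $R_v\in\OS(M)$ ``and then $1\in\scal^{}_M(R_v)$ because $R_v$ is an involution with $R_vM\sim M$ forced by commensurateness''. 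That is circular: $R_vM\sim M$ \emph{is} the assertion $1\in\scal^{}_M(R_v)$, and it cannot follow from $R_v\in\OS(M)$ alone, since $\OC(M)$ is in general a proper subgroup of $\OS(M)$$\,$---$\,$this is the whole theme of Section~\ref{csl-sec:sslcsl}. The repair uses where $\alpha$ lives: $\alpha R_vM$ is a full-rank submodule of $M$, hence $\alpha R_vM\sim M$ by Lemma~\ref{csl-lem:comm}; moreover, $\alpha\in\cS\setminus\{0\}\subseteq\scal^{}_M(\one)$ by Fact~\ref{csl-fact:scal-S-lat}, so $\alpha M\sim M$ and, applying the invertible map $R_v$ to both sides, also $\alpha R_vM\sim R_vM$. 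Transitivity of commensurateness then gives $R_vM\sim M$, i.e.\ $R_v\in\OC(M)$ by Lemma~\ref{csl-lem:OC-scal}.

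In the second direction, the step that fails is the ``pairing'' argument. From $2m\frac{\ip{w}{u}}{\ip{w}{w}}\ts w\in M$ you pair with $v$ and appeal to ``inner products of lattice/module elements behave well''; they do not. An $\cS$-lattice carries no constraint whatsoever on the values $\ip{x}{y}$ for $x,y\in M$: both hypothesis and conclusion of the theorem are invariant under rescaling $M$ by an arbitrary real factor, whereas the individual inner products are not, so $\frac{\ip{w}{u}}{\ip{w}{w}}\ip{w}{v}$ will in general not lie in $K$, and even when it does you cannot divide by $\ip{w}{v}\notin K$ to isolate the ratio. The correct extraction is the coordinate argument you only allude to with ``full $\RR$-span'', and it needs no pairing at all: write $w=\sum_i s_ib_i$ and $2m\frac{\ip{w}{u}}{\ip{w}{w}}w=\sum_i s_i'b_i$ with $s_i,s_i'\in\cS$; since the $b_i$ are linearly independent over $\RR$, the $\cS$-coordinates are unique, so $2m\frac{\ip{w}{u}}{\ip{w}{w}}s_i=s_i'$ for all $i$, and any index with $s_i\neq0$ yields $\frac{\ip{w}{u}}{\ip{w}{w}}\in K$ directly. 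With this special case in hand, your bootstrap does finish the proof via $\frac{\ip{u}{v}}{\ip{w}{w}}=\frac{\ip{u}{v}}{\ip{v}{v}}\cdot\frac{\ip{v}{v}}{\ip{w}{w}}$ for $v\neq0$, but mind one detail you did not address: to see that the norm ratio $\frac{\ip{v}{v}}{\ip{w}{w}}$ lies in $K$ when $\ip{v}{w}=0$, pass through the intermediate vector $v+w$, for which $\ip{v}{v+w}=\ip{v}{v}\neq0$ and $\ip{w}{v+w}=\ip{w}{w}\neq0$.
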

For any $\cS$-lattice that satisfies the properties of
Theorem~\ref{csl-thm:huck1}, we have the following generalisation of
Theorem~\ref{csl-thm:zou2}, which again is an analogue of the
Cartan--Dieudonn\'e theorem.

\begin{theorem}[{\cite[Thm.~3.1]{csl-Huck09}}]
  Let\/ $M\subseteq \RR^d$ be an\/ $\cS$-lattice, and let\/ $K$ be the
  field of fractions of\/ $\cS$. Let\/ $M$ satisfy the conditions of
  Theorem~$\ref{csl-thm:huck1}$. Then, any coincidence isometry of\/
  $M$ can be written as the product of at most\/ $d$ coincidence
  reflections generated by non-zero vectors of\/ $M$.  \qed
\end{theorem}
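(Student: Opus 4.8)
The plan is to reduce the statement to the classical Cartan--Dieudonn\'e theorem over the field $K$, the decisive point being that condition~(1) of Theorem~\ref{csl-thm:huck1} endows the $K$-span of $M$ with the structure of a quadratic space over $K$ on which every coincidence isometry acts $K$-linearly. First I would fix a reference vector $w_0\in M\setminus\{0\}$ and set $V:=\langle b_1,\ldots,b_d\rangle_K$, the $K$-span of a generating system of the $\cS$-lattice $M$ as in Definition~\ref{csl-def:S-lat}. Since the $b_i$ are $\RR$-linearly independent, $V$ is a $d$-dimensional $K$-vector space that contains an $\RR$-basis of $\RR^d$. On $V$ I define the symmetric bilinear form $B(u,v):=\ip{u}{v}/\ip{w_0}{w_0}$. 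By condition~(1) of Theorem~\ref{csl-thm:huck1} one has $B(b_i,b_j)\in K$, so the Gram matrix of $B$ in the basis $\{b_j\}$ has entries in $K$ and $(V,B)$ is a quadratic space over $K$; as $B$ is a positive multiple of the restriction of the Euclidean form, it is positive definite, hence anisotropic and non-degenerate. Since $\QQ\subseteq K$ forces $\operatorname{char}K=0\neq 2$, the hypotheses of Cartan--Dieudonn\'e are met.

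Next I would verify that a coincidence isometry $R\in\OC(M)$ restricts to a $K$-isometry of $(V,B)$. Since $M\sim RM$, item~(\ref{csl-lem:comm-i4}) of Lemma~\ref{csl-lem:comm} provides a positive integer $m$ with $m\,RM\subseteq M$; for $x\in M$ this gives $Rx\in\tfrac{1}{m}M\subseteq V$, because $\tfrac1m\in\QQ\subseteq K$. Hence $R(M)\subseteq V$, and as $R$ is $\RR$-linear with $K\subseteq\RR$ this yields $R(V)\subseteq V$, with equality by invertibility. Consequently $R|_V$ is $K$-linear, and orthogonality of $R$ gives $B(Ru,Rv)=B(u,v)$, so $R|_V$ lies in the orthogonal group $\OG(V,B)$.

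Then the classical Cartan--Dieudonn\'e theorem over $K$ (see \cite{csl-Cartan,csl-Garling}) writes $R|_V=\tau_{u_1}\cdots\tau_{u_k}$ with $k\le\dim_K V=d$, where $\tau_u(x)=x-2\frac{B(u,x)}{B(u,u)}u=x-2\frac{\ip{u}{x}}{\ip{u}{u}}u$ for $u\in V\setminus\{0\}$; thus $\tau_u$ is exactly the restriction to $V$ of the Euclidean reflection $R_u$. To push the $u_i$ into $M$, I use that $\cS$ is an integral domain (a subring of $\RR$) with $K=\operatorname{Frac}(\cS)$: expanding $u_i$ in the basis $\{b_j\}$ and clearing denominators yields $s_i\in\cS\setminus\{0\}$ with $v_i:=s_iu_i\in M\setminus\{0\}$, and $R_{v_i}=R_{u_i}$ since a reflection depends only on the direction of its defining vector. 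Each $R_{v_i}$ preserves $V$ and restricts there to $\tau_{u_i}$, so $R$ and $R_{v_1}\cdots R_{v_k}$ agree on $V$; as $V$ contains the $\RR$-basis $\{b_j\}$ of $\RR^d$, they agree on all of $\RR^d$, giving $R=R_{v_1}\cdots R_{v_k}$. By condition~(2) of Theorem~\ref{csl-thm:huck1} every $R_{v_i}$ is a coincidence reflection, and $k\le d$, which is the claim.

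The main obstacle is conceptual rather than computational: one must recognise that condition~(1) is precisely what realises $V=K\cdot M$ as a quadratic space over $K$, and that commensurability forces every coincidence isometry to act $K$-rationally on $V$; once this reduction is set up, the bound of at most $d$ reflections is inherited verbatim from Cartan--Dieudonn\'e. The two steps that genuinely use the $\cS$-lattice structure, and hence demand care, are the $V$-invariance of $R$ (via the commensurability integer $m$) and the descent from the abstract reflections $\tau_{u_i}$ to lattice-generated reflections $R_{v_i}$ (via clearing denominators in the domain $\cS$); both are routine, but they are where the hypotheses, rather than the bare isometry property of $R$, are actually invoked.
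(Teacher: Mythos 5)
Your proof is correct, but there is nothing in the paper to compare it against: this theorem is one of the results the chapter imports verbatim from the literature, stated with a citation to \cite[Thm.~3.1]{csl-Huck09} and a closing qed-symbol, so the paper contains no proof of it (nor of its lattice antecedent, Theorem~\ref{csl-thm:zou2}, which is likewise only cited to \cite{csl-zou1}). Your argument is a clean, self-contained reconstruction that makes literal the paper's remark that the statement is an ``analogue of the classic Cartan--Dieudonn\'e theorem'': rather than rerunning the Cartan--Dieudonn\'e induction directly with module vectors (composing $R$ with reflections $R_{m(x-Rx)}$ for suitable $x\in M$, which is the route one would expect a direct adaptation of the rational-lattice argument to take), you reduce to the classical theorem over the field $K$ and isolate exactly the two points where the $\cS$-lattice hypotheses enter. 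Both are handled correctly: commensurability, via item~(\ref{csl-lem:comm-i4}) of Lemma~\ref{csl-lem:comm}, gives $R(M)\subseteq\frac{1}{m}M\subseteq V$, hence $R$ stabilises the $d$-dimensional $K$-space $V=\langle b_1,\ldots,b_d\rangle_K$ and acts there as a $K$-isometry of the $K$-valued, positive definite form $B=\ip{\cdot}{\cdot}/\ip{w_0}{w_0}$ furnished by condition~(1) of Theorem~\ref{csl-thm:huck1}; and the reflecting vectors $u_i\in V$ produced by Cartan--Dieudonn\'e can be rescaled by elements of $\cS\setminus\{0\}$ into $M\setminus\{0\}$ without changing the associated reflections, so that condition~(2) upgrades them to coincidence reflections. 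The identification $\tau_{u}=R_u\big|_V$ (the normalisation by $\ip{w_0}{w_0}$ cancels in the reflection formula) and the final step that two $\RR$-linear maps agreeing on $V\supseteq\{b_1,\ldots,b_d\}$ agree on all of $\RR^d$ are also right. What your route buys is brevity and a clear separation of concerns, since the induction is outsourced to \cite{csl-Cartan,csl-Garling}; the price is having to verify the $K$-rationality of coincidence isometries, which a direct induction on module vectors would obtain along the way --- and you pay that price correctly.
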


To get more concrete results in dimensions $d\geq 5$, it would be nice
to have an explicit parametrisation for the coincidence
isometries. For dimensions \mbox{$d=3$} and \mbox{$d=4$}, we profitted
from the parametrisation of rotation by quaternions. An obvious
candidate for higher dimensions is Cayley's
parametrisation\index{rotation!Cayley~parametrisation} of rotations
in terms of Clifford algebras. At present, however, we are not aware
of any concrete results in this direction for $d\geq 5$.\medskip

\noindent\textit{Acknowledgements}.
It is a pleasure to thank J.~Br\"{u}dern, U.~Grimm, C.~Huck,
R.V.~Moody, U.~Rehmann, R.~Scharlau and C.~Voll for cooperation and
helpful comments.


\begin{thebibliography}{AO\ts 1}           

\itemsep=2pt
      
\bibitem{csl-akh-sal10a} 
  Akhtarkavan E.\ and Salleh M.F.M.\ (2010).
  Multiple description lattice vector quantization using
  multiple ${A}_4$ quantizers, 
  \textit{IEICE Electron.\ Express} \textbf{7}, 1233--1239.
 
\bibitem{csl-akh-sal12} 
  Akhtarkavan E.\ and Salleh M.F.M.\ (2012).
  Multiple descriptions coinciding lattice vector
  quantizer for wavelet image coding,
  \textit{IEEE Trans.\ Image Processing} 
  \textbf{21}, 653--661.

\bibitem{csl-Apostol}
  Apostol T.M.\ (1984).
  \textit{Introduction to Analytic Number Theory}
  (Springer, New York); 5th corr.\ printing (1998).

\bibitem{csl-Baake-rev}
  Baake M.\ (1997).
  Solution of the coincidence problem in dimensions $d\le 4$.
  In \textit{The Mathematics of Long-Range Aperiodic Order},
  Moody R.V.\ (ed.), NATO ASI Series C 489,
  pp.~9--44 (Kluwer, Dordrecht);
  rev.\ version: \texttt{arXiv:math.MG/0605222}.

\bibitem{csl-BF05}
  Baake M.\ and Frettl\"oh D.\ (2005).
  SCD patterns have singular diffraction,
  \textit{J. Math. Phys} \textbf{46}, 033510: 1--10. 
  \texttt{arXiv:math-ph/0411052}.

\bibitem{csl-BG03}
  Baake M.\ and Grimm U.\ (2003).
  A note on shelling,
  \textit{Discr.\ Comput.\ Geom.} \textbf{30}, 573--589.\newline
  \texttt{arXiv:math.MG/0203025}.

\bibitem{csl-BG2}
  Baake M.\ and Grimm U.\ (2004).
  Bravais colourings of planar modules with $N$-fold symmetry,
  \textit{Z.\ Krist.} \textbf{219}, 72--80.
  \texttt{arXiv:math.CO/0301021}.         

\bibitem{csl-BG}
  Baake M.\ and Grimm U.\ (2006).
  Multiple planar coincidences with $N$-fold symmetry,
  \textit{Z.\ Krist.} \textbf{221}, 571--581.
  \texttt{arXiv:math.MG/0511306}.   

\bibitem{csl-TAO}
  Baake M.\ and Grimm U.\ (2013).
  \textit{Aperiodic Order. Vol.~1:\ A Mathematical Invitation}
  (Cambridge University Press, Cambridge). 

\bibitem{csl-BG00}
  Baake M., Grimm U., Joseph D.\ and Repetowicz P.\ (2000). 
  Averaged shelling for quasicrystals,
  \textit{Mat.\ Sci.\ Eng.\ A} \textbf{294--296}, 441--445.
  \texttt{arXiv:math.MG/9907156}.

\bibitem{csl-BGHZ08}
  Baake M., Heuer M., Grimm U.\ and Zeiner P.\ (2008).
  Coincidence rotations of the root lattice ${A}_4$,
  \textit{European J.\ Combin.} \textbf{29}, 1808--1819.
  \texttt{arXiv:0709.1341}.

\bibitem{csl-BHM}
  Baake M., Heuer M.\ and Moody R.V.\ (2008).
  Similar sublattices of the root lattice ${A}_4$,
  \textit{J.\ Algebra} \textbf{320}, 1391--1408.
  \texttt{arXiv:math.MG/0702448}.   

\bibitem{csl-BM1}
  Baake M.\ and Moody R.V.\ (1998).
  Similarity submodules and semigroups. In
  \textit{Quasicrystals and Discrete Geometry},
  Patera J.\ (ed.), Fields Institute Monographs,
  vol.\ 10, pp.~1--13 (AMS, Providence, RI).

\bibitem{csl-BM}
   Baake M.\ and Moody R.V.\ (1999).
   Similarity submodules and root systems in four dimensions,
   \textit{Can.\ J.\ Math.} \textbf{51}, 1258--1276.
   \texttt{arXiv:math.MG/9904028}.         

\bibitem{csl-BLP96}
  Baake M., Pleasants P.A.B.\ and Rehmann U.\ (2007).
  Coincidence site modules in $3$-space,
  \textit{Discr.\ Comput.\ Geom.} \textbf{38}, 111--138.
  \texttt{arXiv:math.MG/0609793}.

\bibitem{csl-BSZ-sim}
  Baake M., Scharlau R.\ and Zeiner P.\ (2011).
  Similar sublattices of planar lattices,
  \textit{Can.\ J.\ Math.} \textbf{63}, 1220--1237.
  \texttt{arXiv:0908.2558}.

\bibitem{csl-BSZ-well}
   Baake M., Scharlau R.\ and Zeiner P.\ (2014).
   Well-rounded sublattices of planar lattices,
   \textit{Acta Arithm.} \textbf{166.4}, 301--334.
   \texttt{arXiv:1311.6306}.

\bibitem{csl-BZ07} 
  Baake M.\ and Zeiner P.\ (2007).
  Multiple coincidences in dimensions $d\leq 3$,
  \textit{Philos.\ Mag.} \textbf{87}, 2869--2876.

\bibitem{csl-BZ08} 
  Baake M.\ and Zeiner P.\ (2008).
  Coincidences in 4 dimensions,
  \textit{Philos.\ Mag.} \textbf{88}, 2025--2032.\newline
  \texttt{arXiv:0712.0363}.

\bibitem{csl-Boll70}
   Bollmann W.\ (1970).
   \textit{Crystal Defects and Crystalline Interfaces}
   (Springer, Berlin).

\bibitem{csl-Boll82}
   Bollmann W.\ (1982).
   \textit{Crystal Lattices, Interfaces, Matrices}
   (\copyright W.\ Bollmann, Geneva).

\bibitem{csl-Borevic-Sh} 
  Borevich I.\ and Shafarevich I.\ (1966).
  \textit{Number Theory}
  (Academic Press, New York).

\bibitem{csl-Cartan}
  Cartan \'{E}.\ (1981).
  \textit{The Theory of Spinors}, reprint
  (Dover, New York).
  
\bibitem{csl-Cassels}
   Cassels J.W.S.\ (1971).
   \textit{An Introduction to the Geometry of Numbers},
   2nd corr.\ printing (Springer, Berlin).

\bibitem{csl-Cassels-Q}
   Cassels J.W.S.\ (1971).
   \textit{Rational Quadratic Forms}
   (Academic Press, London).

\bibitem{csl-consloa99}
    Conway J.H., Rains E.M.\ and Sloane N.J.A.\ (1999).
    On the existence of similar sublattices,
    \textit{Can.\ J.\ Math.} \textbf{51}, 1300--1306.
    \texttt{arXiv:math.CO/0207177}.

\bibitem{csl-Conway}
   Conway J.H.\ and Sloane N.J.A.\ (1999).
   \textit{Sphere Packings, Lattices and Groups}, 
   3rd ed.\ (Springer, New York).

\bibitem{csl-CS03}
   Conway J.H.\ and Smith D.A.\ (2003).
   \textit{On Quaternions and Octonions:\ Their Geometry, 
   Arithmetic,  and Symmetry}
   (A.K.\ Peters,  Wellesley, MA).

\bibitem{csl-CoHi}
    Cooper S.\ and Hirschhorn M.\ (2007).
    On the number of primitive representations of integers
    as sums of squares,
    \textit{Ramanujan J.} \textbf{13}, 7--25.

\bibitem{csl-Cox}
   Cox D.A.\ (2013).
   \textit{Primes of the Form $x^2 + n\ts y^2$},
   2nd ed.\ (Wiley, Hoboken, NJ).

\bibitem{csl-Coxeter}
   Coxeter H.S.M.\ (1973).
   \textit{Regular Polytopes}, 3rd ed.\ (Dover, New York).

\bibitem{csl-Urban}
   Dai M.X.\ and Urban K.\ (1993).
   Twins in icosahedral \textsf{Al-Cu-Fe},
   \textit{Philos.\ Mag.\ Lett.} \textbf{67},  67--71.

\bibitem{csl-Dan95}
   Danzer L.\ (1995).
   A family of $3D$-spacefillers not permitting any periodic or
   quasi\-periodic tiling.
   In \textit{Aperiodic '94}, Chapuis G.\ and Paciorek W.\ (eds.),
   pp.~11--17 (World Scientific, Singapore).

\bibitem{csl-Slo02a}
   Diggavi S.N., Sloane N.J.A.\ and Vaishampayan V.A.\ (2002).
   Asymmetric multiple description lattice vector quantizers, 
   \textit{IEEE Trans.\ Inf.\ Theory} \textbf{48}, 174--191.

\bibitem{csl-MagnusD}
   D\"umke M.\ (2011).
   \textit{Koinzidenzgitter von Ordnungen imagin\"ar quadratischer
     Zahl\-k\"orper}, Diploma thesis (Bielefeld University).

\bibitem{csl-duVal}
   du Val P.\ (1964).
   \textit{Homographies, Quaternions and Rotations}
   (Clarendon Press, Oxford).

\bibitem{csl-JonasF}
  Freiberger J.\ (2008).
  \textit{Koinzidenzgitter von Rechteckgittern}, Diploma thesis
  (Bielefeld University). 

\bibitem{csl-Friedel11}
   Friedel G.\ (1911).
    \textit{Le\c{c}ons de {C}ristallographie} (Hermann, Paris).

\bibitem{csl-Garling}
  Garling D.J.H.\ (2011).
  \textit{Clifford Algebras:\ An Introduction}
  (Cambridge University Press, Cambridge).
    
\bibitem{csl-gerts1} 
   Gertsman V.Y.\ (2001).
   Geometrical theory of triple junctions of {CSL} boundaries, 
   \textit{Acta Cryst.\ A} \textbf{57}, 369--377.

\bibitem{csl-gerts2} 
   Gertsman V.Y.\ (2001).
   Coincidence site lattice theory of multicrystalline ensembles, 
   \textit{Acta Cryst.\ A} \textbf{57}, 649--655.

\bibitem{csl-gerts3} 
   Gertsman V.Y.\ (2002).
   On the auxiliary lattices and dislocation reactions at triple 
   junctions, \textit{Acta Cryst.\ A} \textbf{58}, 155--161.

\bibitem{csl-SvenjaDiss}
  Glied S.\ (2010).
  \textit{Coincidence and Similarity Isometries 
   of Modules in Euclidean Space},
  PhD thesis (Bielefeld University). 

\bibitem{csl-svenja2}
  Glied S.\ (2011).
  Similarity and coincidence isometries for modules,
  \textit{Can.\ Math.\ Bull.} \textbf{55}, 98--107.
  \texttt{arXiv:1005.5237}.

\bibitem{csl-svenja1}
  Glied S.\  and Baake M.\ (2008).
  Similarity versus coincidence rotations of lattices,
  \textit{Z.\ Krist.} \textbf{223}, 770--772.
  \texttt{arXiv:0808.0109}.

\bibitem{csl-Grimmer73}
  Grimmer H.\ (1973).
  Coincidence rotations for cubic lattices,
  \textit{Scripta Met.} \textbf{7}, 1295--1300.

\bibitem{csl-Grimmer74}
  Grimmer H.\ (1974).
  Disorientations and coincidence rotations for cubic lattices,
  \textit{Acta Cryst.\ A} \textbf{30},  685--688.

\bibitem{csl-Grimmer84}
  Grimmer H.\ (1984).
  The generating function for coincidence site lattices
  in the cubic system,
  \textit{Acta Cryst.\ A} \textbf{40}, 108--112.

\bibitem{csl-Grimmer89} 
  Grimmer H.\ (1989).
  Systematic determination of coincidence orientations
  for all hexagonal lattices with axial ratio $c/a$
  in a given interval,
  \textit{Acta Cryst.\ A} \textbf{45}, 320--325.

\bibitem{csl-GBW}
  Grimmer H., Bollmann W.\ and Warrington D.H.\ (1974).
  Coincidence-site lattices and complete pattern-shift lattices
  in cubic crystals,
  \textit{Acta Cryst.\ A} \textbf{30}, 197--207.

\bibitem{csl-GW87} 
  Grimmer H.\  and Warrington D.H.\ (1987).
  Fundamentals for the description of hexagonal lattices
  in general and in coincidence orientation,
  \textit{Acta Cryst.\ A} \textbf{43}, 232--243.
       
\bibitem{csl-Grosswald}
  Grosswald E.\ (1985).
  \textit{Representations of Integers as Sums of Squares}
  (Springer, New York).

\bibitem{csl-Gruber}
  Gruber B.\ (1997).
  Alternative formulae for the number of sublattices,
  \textit{Acta Cryst.\ A} \textbf{53}, 807--808.       

\bibitem{csl-Lekker}
  Gruber P.M.\ and Lekkerkerker C.G.\ (1987).
  \textit{Geometry of Numbers}, 2nd ed.\
  (North-Holland, Amsterdam).

\bibitem{csl-Hardy}
  Hardy G.H.\ and Wright E.M.\ (2008).
  \textit{An Introduction to the Theory of Numbers},
  6th ed.\ (Oxford University Press, Oxford).

\bibitem{csl-HZ10}
  Heuer M.\ and Zeiner P.\ (2010).
  CSLs of the root lattice $A_4$,
  \textit{J.\ Phys.:\ Conf.\ Ser.} \textbf{226}, 
  012024:\ 1--6. \texttt{arXiv:1301.2001}. 

\bibitem{csl-Huck09}
  Huck C.\ (2009).
  A note on coincidence isometries of modules in Euclidean space,
  \textit{Z.\ Krist.} \textbf{224}, 341--344.
  \texttt{arXiv:0811.3551}.

\bibitem{csl-Humphreys}
  Humphreys J.E.\ (1992).
  \textit{Reflection Groups and Coxeter Groups}, 
  2nd corr.\ printing (Cambridge University Press, Cambridge).

\bibitem{csl-Hurwitz}
  Hurwitz A.\ (1919).
  \textit{Vorlesungen \"uber die Zahlentheorie der Quaternionen}
  (Sprin\-ger, Berlin).

\bibitem{csl-Rost}
  Knus M.-A., Merkurjev~A., Rost~M.\ and Tignol~J.-P.\ (1998).
  \textit{The Book of Involutions}
  (AMS, Providence, RI).

\bibitem{csl-Koecher}
  Koecher M.\ and Remmert R.\ (1991).
  Hamilton's quaternions. In \textit{Numbers},
  Ebbinghaus, H.-D., et al.\ (eds.), GTM 123,
  pp.\ 189--220 (Springer, New York).

\bibitem{csl-KW49}
  Kronberg M.L.\ and Wilson F.H.\ (1949).
  Secondary recrystallization in copper,
  \textit{Trans.\ AIME} \textbf{185}, 501--514.

\bibitem{csl-Loquias10}
 Loquias M.J.C.\ (2010). 
 \textit{Coincidences and Colorings of Lattices and $\ZZ\ts$-modules}, 
 PhD thesis (Bielefeld University). 

\bibitem{csl-LZ10}
  Loquias M.J.C.\ and Zeiner P.\ (2010). 
  Coincidence isometries of a shifted square lattice, 
  \textit{J.\ Phys.:\ Conf.\ Ser.} \textbf{226}, 
  012026:\ 1--10. \texttt{arXiv:1002.0519}.

\bibitem{csl-LZ9}
  Loquias M.J.\ and Zeiner\ P.\ (2011).
  Colourings of lattices and coincidence site lattices,
  \textit{Philos.\ Mag.} \textbf{91}, 2680--2689.
  \texttt{arXiv:1011.1001}.

\bibitem{csl-LZ14}
  Loquias M.J.\ and Zeiner P.\ (2014).
  The coincidence problem for shifted lattices and
  crystallographic point packings, 
  \textit{Acta Cryst.~A} \textbf{70}, 656--669.
  \texttt{arXiv:1301.3689}.

\bibitem{csl-LZ15}
  Loquias M.J.\ and Zeiner P.\ (2015).
  Coincidence indices of sublattices and coincidences of colorings,
  \textit{Z.\ Krist.} \textbf{230}, 749--759. \texttt{arXiv:1506.00028}.

\bibitem{csl-Lueck}
  L\"uck R.\ (1979).
  Pythagoreische Zahlen f\"ur den dreidimensionalen Raum,
  \textit{Phys.\ Bl\"atter} \textbf{35}, 72--75.

\bibitem{csl-Moody}
  Moody R.V.\ and Patera J.\ (1993).
  Quasicrystals and icosians,
  \textit{J.\ Phys.\ A:\ Math.\ Gen.} \textbf{26}, 2829--2853.

\bibitem{csl-Moody94}
  Moody R.V.\ and Weiss A.\ (1994).
  On shelling $E_8$ quasicrystals,
  \textit{J.~Number Theory} \textbf{47}, 405--412.

\bibitem{csl-Patera}
  Patera J.\ (1997).
  Non-crystallographic root systems and quasicrystals.
  In \textit{The Mathematics of Long-Range Aperiodic Order},
  Moody, R.V.\ (ed.), NATO ASI Series C 489, pp.\ 443--465
  (Kluwer, Dordrecht).

\bibitem{csl-Pleasants}
  Pleasants P.A.B., Baake M.\ and Roth J.\ (1996).
  Planar coincidences for $N$-fold symmetry,
  \textit{J.\ Math.\ Phys.} \textbf{37}, 1029--1058;
  rev.\ version: \texttt{arXiv:math.MG/0511147}.

\bibitem{csl-Radu95}
  Radulescu O.\ (1995).
  An elementary approach to the crystallography of twins in
  icosahedral quasicrystals, 
  \textit{J.~Phys.\ I (France)} \textbf{5}, 719--728.

\bibitem{csl-Radu}
  Radulescu O.\ and Warrington D.H.\ (1995).
  Arithmetic properties of module directions in quasicrystals,
  coincidence modules and coincidence quasilattices,
  \textit{Acta Cryst.\ A} \textbf{51}, 335--343.

\bibitem{csl-Ranga66}
  Ranganathan S.\ (1966).
  On the geometry of coincidence-site lattices,
  \textit{Acta Cryst.} \textbf{21}, 197--199. 

\bibitem{csl-Ranga90}
  Ranganathan S.\ (1990).
  Coincidence-site lattices, superlattices and quasicrystals,
  \textit{Trans.\ Indian Inst.\ Met.} \textbf{43}, 1--7. 

\bibitem{csl-Reiner}
  Reiner I.\ (2003).
  \textit{Maximal Orders}, 
  reprint (Clarendon Press, Oxford).

\bibitem{csl-Rodr11}
  Rodr\'{i}guez-Andrade M.A., Arag\'on-Gonz\'alez G., 
  Arag\'on J.L.\ and G\'omez-Rodr\'{i}\-guez A.\ (2011).
  Coincidence lattices in the hyperbolic plane,
  \textit{Acta Cryst.\ A} \textbf{67}, 35--44.

\bibitem{csl-Ruth1}
  Rutherford, J.S.\ (1992).
  The enumeration and symmetry-significant properties of derivative lattices,
  \textit{Acta Cryst.\ A} \textbf{48}, 500--508.

\bibitem{csl-Ruth4}
  Rutherford, J.S.\ (2009).
  Sublattice enumeration. IV. Equivalence classes of plane sublattices by 
  parent Patterson symmetry and colour lattice group type,
  \textit{Acta Cryst.\ A} \textbf{65}, 156--163.

\bibitem{csl-Sass}
  Sass L.S.\ (1985).
  Grain boundary structure. In \textit{Encyclopedia of Materials 
  Science and Engineering}, Bever M.B.\ (ed.), vol.~3,
  pp.~2041--2045 (Pergamon, Oxford).

\bibitem{csl-Schwarz}
  Schwarzenberger R.L.E.\ (1980).
  \textit{$N\nts$-Dimensional Crystallography}
  (Pitman, London).

\bibitem{csl-Serre}
  Serre J.-P.\ (1993).
  \textit{A Course in Arithmetic}, 
  4th corr.\ printing (Springer, New York).

\bibitem{csl-OEIS}
  Sloane N.J.A.\ (ed.).
  \textit{The On-Line Encyclopedia of Integer Sequences},
  available at \texttt{https://oeis.org/}

\bibitem{csl-Slo02b} 
  Sloane N.J.A.\ and Beferull-Lozano B.\ (2003).
  Quantizing using lattice intersections. In
  \textit{Discrete and Computational Geometry},
  Aronov B., Basu S., Pach J.\ and Sharir M.\
  (eds.), pp.~799--824 (Springer, Berlin).
  \texttt{arXiv:math.CO/0207147}.

\bibitem{csl-Tenenbaum}
  Tenenbaum G.\ (1995).
  \textit{Introduction to Analytic and Probabilistic Number 
    Theory} (Cambridge University Press, Cambridge).

\bibitem{csl-Vigneras}
  Vign\'{e}ras M.-F.\ (1980).
  \textit{Arithm\'etique des Alg\`ebres de Quaternions},
  LNM 800  (Springer, Berlin).

\bibitem{csl-Warrington}
  Warrington D.H.\ (1993).
  Coincidence site lattices in quasicrystal tilings,
  \textit{Mat.~Science Forum} \textbf{126--128}, 57--60.
      
\bibitem{csl-Wash}      
  Washington L.C.\ (1997).
  \textit{Introduction to Cyclotomic Fields},
  2nd ed.\ (Springer, New York).

\bibitem{csl-Weiss}
  Weiss A.\ (2000).
  On shelling icosahedral quasicrystals. In 
  \textit{Directions in Mathematical Quasicrystals}, 
  Baake M.\ and Moody R.V.\ (eds.), CRM Monograph Series,
  vol.~13, pp. 161--176
  (AMS, Providence, RI).

\bibitem{csl-Zagier}
  Zagier D.B.\ (1981). 
  \textit{Zetafunktionen und quadratische K\"{o}rper}
  (Springer, Berlin).

\bibitem{csl-Zass}
  Zassenhaus H.J.\ (1958).
  \textit{The Theory of Groups},
  2nd ed.\ (Chelsea, New York).
      
\bibitem{csl-Z2}
  Zeiner P.\ (2005).
  Symmetries of coincidence site lattices of cubic lattices,
  \textit{Z.\ Krist.} \textbf{220}, 915--920. 
  \texttt{arXiv:math.MG/0605525}.   
      
\bibitem{csl-Z3}
  Zeiner P.\ (2006).    
  Coincidences of hypercubic lattices in 4 dimensions,
  \textit{Z.\ Krist.} \textbf{221}, 105--114.
  \texttt{arXiv:math.MG/0605526}.  

\bibitem{csl-pzmcsl1} 
  Zeiner P.\ (2006).
  Multiple CSLs for the body centered cubic lattice, 
  \textit{J.\ Phys.:\ Conf.\ Ser.} \textbf{30}, 163--167.
  \texttt{arXiv:math.MG/0605521}.  

\bibitem{csl-pzcsl7}
  Zeiner P.\ (2010).
  Multiplicativity in the theory of coincidence site lattices,
  \textit{J.\ Phys.:\ Conf.\ Ser.} \textbf{226}, 
  012025:\ 1--6. \texttt{arXiv:1212.4528}.

\bibitem{csl-pzsslcsl1}
  Zeiner P.\ (2014).
  Similar submodules and coincidence site modules,
  \textit{Acta Phys.\ Pol.\ A} \textbf{126}, 641--645.
  \texttt{arXiv:1402.5013}.

\bibitem{csl-habil}
  Zeiner P.\ (2015).
  \textit{Coincidence Site Lattices and Coincidence Site Modules},
  Habilitation thesis (Bielefeld University). 

\bibitem{csl-zou1}
  Zou Y.M.\ (2006).
  Indices of coincidence isometries of the hypercubic lattice $\ZZ^n$,
  \textit{Acta Cryst.\ A} \textbf{62}, 454--458.

\bibitem{csl-zou2}
  Zou Y.M.\ (2006),
  Structures of coincidence symmetry groups,
  \textit{Acta Cryst.\ A} \textbf{62}, 109--114.

\end{thebibliography}
\end{document}